\newenvironment{SauveCompteurs}[1]{%
\newcommand{\monparametre}{#1}
\openexport{\monparametre_sauve}
  \Export{thm}\Export{section}\Export{subsection}\Export{subsubsection}
\closeexport}{}
\newenvironment{UtiliseCompteurs}[1]{%
\newcommand{\monparametre}{#1}
\openexport{\monparametre_aux}
  \Export{thm}\Export{section}\Export{subsection}\Export{subsubsection}
\closeexport
\Import{\monparametre_sauve}%
\renewcommand{\label}[1]{}
}{\Import{\monparametre_aux}}
\newcommand{\G}{\Gamma}
\newcommand{\calD}{\mathcal{D}}
\newcommand{\calE}{\mathcal{E}}
\newcommand{\calF}{\mathcal{F}}
\newcommand{\calL}{\mathcal{L}}
\newcommand{\calO}{\mathcal{O}}
\newcommand{\calP}{\mathcal{P}}
\newcommand{\calV}{\mathcal{V}}
\newcommand {\cala} {{\mathcal {A}}}   
\newcommand {\calb} {{\mathcal {B}}}   
\newcommand {\calc} {{\mathcal {C}}}   
\newcommand {\cald} {{\mathcal {D}}}   
\newcommand {\cale} {{\mathcal {E}}}   
\newcommand {\calf} {{\mathcal {F}}}   
\newcommand {\calg} {{\mathcal {G}}}
\newcommand {\call} {{\mathcal {L}}}   
\newcommand {\calm} {{\mathcal {M}}}   
\newcommand {\calo} {{\mathcal {O}}}   
\newcommand {\calp} {{\mathcal {P}}}   
\newcommand {\calq} {{\mathcal {Q}}}   
\newcommand {\calr} {{\mathcal {R}}}
\newcommand {\calv} {{\mathcal {V}}}
\newcommand {\calz} {{\mathcal {Z}}}
\newcommand {\bbF} {{\mathbb {F}}}   
\newcommand {\bbH} {{\mathbb {H}}}
\newcommand {\bbR} {{\mathbb {R}}}
\newcommand {\bbZ} {{\mathbb {Z}}}
\newcommand{\R}{\mathbb{R}}
\newcommand{\Zmax}{{\mathcal{Z}_{\mathit{max}}}}
\newcommand{\Z}{{\mathcal{Z}}}
\newtheorem{defi}{\textbf{Definition}}[section]
\newtheorem{theo}[defi]{\textbf{Theorem}}
\newtheorem{lemma}[defi]{\textbf{Lemma}}
\newtheorem{prop}[defi]{\textbf{Proposition}}
\newtheorem{coro}[defi]{\textbf{Corollary}}
\newtheorem{dfn}[defi]{Definition}
\newtheorem{thm}[defi]{Theorem}
\newtheorem{thmbis}{Theorem}
\newtheorem{propbis}[thmbis]{Proposition}
\newtheorem{corbis}[thmbis]{Corollary}
\newtheorem{lem}[defi]{Lemma}
\newtheorem{cor}[defi]{Corollary}
\theoremstyle{remark}
\newtheorem{rem}[defi]{Remark}
\newtheorem*{rem*}{Remark}
\newtheorem{remark}[defi]{Remark}
\newcommand{\inn}{\mathrm{ad}}
\edef\@tempa#1#2{\def#1{\mathaccent\string"\noexpand\accentclass@#2 }}
\@tempa\rond{017}
\newcommand{\ie} {i.~e.\ } 
\newcommand{\es}{\emptyset}
\renewcommand{\phi}{\varphi} 
\newcommand{\m}{^{-1}} 
\newcommand{\eps} {\varepsilon} 
\newcommand {\ra} {\rightarrow}
\newcommand {\onto} {\twoheadrightarrow}
\newcommand {\into} {\hookrightarrow}
\newcommand {\xra} {\xrightarrow}    
\newcommand{\actson}{\curvearrowright}
\newcommand{\ol}[1]{\overline{#1}}
\newcommand{\normal} {\vartriangleleft}
\renewcommand{\subsetneq}{\varsubsetneq}
\newcommand{\dunion}{\sqcup}
\newcommand{\grp}[1]{{\langle #1 \rangle}}
\newcommand{\Isom} {{\mathrm{Isom}}}
\renewcommand{\Im} {\mathop{\mathrm{Im}}}
\newcommand{\Stab} {{\mathrm{Stab}}}
\newcommand{\Out} {{\mathrm{Out}}}
\newcommand{\Aut} {{\mathrm{Aut}}}
\newcommand{\Homeo} {{\mathrm{Homeo}}}
\newcommand{\id} {\mathrm{id}}
\newcommand{\qie}{quasi-isometrically embeddable }
\newcommand{\qiers}{quasi-isometrically embeddable rational subset }
\newcommand{\Outu} {{\mathrm{Out_u}}}
\newcommand{\Outm} {{\mathrm{Out_m}}}
\newcommand{\Autu} {{\mathrm{Aut_u}}}
\newcommand{\Autm} {{\mathrm{Aut_m}}}
\newcommand{\Periph} {\mathit{Periph}}
\newcommand{\Cay} {\mathop{\mathrm{Cay}}}
\newcommand{\TZmax}{T_{\Zmax}}
\title{The isomorphism problem for all hyperbolic groups.}
\date{}
\author{Fran\c{c}ois Dahmani, Vincent Guirardel}
\begin{document} 

\maketitle

\begin{abstract}
 We give a solution to Dehn's isomorphism problem for the class of all hyperbolic groups,
possibly with torsion. 
We also prove a relative version for groups with peripheral structures.
As a corollary, we give a uniform solution to Whitehead's problem asking whether two tuples of elements of a hyperbolic group $G$
are in the same orbit under the action of $\Aut(G)$.
We also get an algorithm computing a generating set of 
the group of automorphisms of a hyperbolic group preserving a peripheral structure.
\end{abstract}

\section{Introduction}
 
In 1912, Dehn asked about three fundamental algorithmic problems for groups:
the word problem,  the conjugacy problem, and the isomorphism problem.
The word problem and the conjugacy problem in a group $G$ consist in deciding algorithmically
whether two words in some finite generating set represent the same or conjugate elements in $G$.
On the other hand, the isomorphism problem for a class of groups consists in deciding algorithmically whether
two  
 group presentations in this class represent isomorphic groups.
It is remarkable that the answers to such algorithmic problems, positive or negative, in generality or in particular classes, 
have repeatedly revealed deep and fruitful structures in group theory. 

In the 1950's, it was discovered that all of these problems have negative  answers in the class of all finitely presented groups. 
More precisely, Boone and Novikov proved the existence of a finitely presented group for which no algorithm can solve the word problem \cite{Boone_WP,Novikov_WP}. 
Adyan and Rabin used such a group to prove that no algorithm can decide whether an arbitrary finite presentation defines a non-trivial group 
\cite{Adyan_algorithmic,Rabin_recursive}. 

However there are many interesting  and large classes of groups for which algorithms solving  the word and conjugacy problems are well know. 
Therefore, groups with unsolvable word problem are often regarded as monsters, or as constructed ``on purpose''.

In striking contrast, the isomorphism problem is unsolvable for some very natural classes of groups,
including the class of
free-by-free groups (Miller \cite{Miller_decision}), the class of [free abelian]-by-free groups (Zimmermann \cite{Zimmermann_klassifikation}),
or the class of solvable groups of derived length $3$ (Baumslag-Gildenhuys-Strebel \cite{BGS_algorithmically2}
 following \cite{Kharlampovich_unsolvable81}). 

In fact, until recently and the use of geometric group theory, the isomorphism problem was known to be decidable in only a few cases,
including notably the class of virtually polycyclic and nilpotent groups by  Grunewald and Segal (\cite{GruSe_nilpotent,Segal_decidable}).
The isomorphism problem for the class of Coxeter groups, the class of generalised Baumslag-Solitar groups, and of one-relator groups
have been investigated but remain unsettled \cite{Bahls_isomorphism,ClFo_isomorphism,Pietrowski_isomorphism,Pride_isomorphism}.\\

Z.~Sela's solution of the isomorphism problem for the class of rigid torsion-free hyperbolic groups was certainly a great breakthrough \cite{Sela_isomorphism}.
Sela also had a  
solution for the class of all torsion-free hyperbolic groups but did not publish it.
This program was continued by D.~Groves and the first author, who simplified Sela's initial approach, and gave a proof for the class of all
 torsion free hyperbolic groups and toral relative hyperbolic groups \cite{DaGr_isomorphism}.

\paragraph{Statement of main results.}
In this paper, we give a solution to the isomorphism problem for the class of \emph{all} word-hyperbolic groups (as defined in \cite{Gromov_hyperbolic}),  
possibly with torsion.

\begin{thmbis}\label{thmbis_iso}
There is an explicit algorithm that takes as input two presentations of hyperbolic groups, and
which decides whether these groups are isomorphic or not.
\end{thmbis}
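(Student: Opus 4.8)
The plan is to reduce the isomorphism problem for arbitrary hyperbolic groups to a ``rigid core'' comparison problem via a canonical splitting, following and extending the Sela / Dahmani--Groves strategy but now allowing torsion. The starting point is that hyperbolicity of a given presentation, while not detectable in general, becomes usable once we \emph{assume} it: running a partial algorithm (e.g. Papasoglu's, or verifying a Dehn presentation) in parallel on both inputs, we may assume that both groups $G_1,G_2$ are hyperbolic and that we have in hand a Dehn presentation, hence solutions to the word and conjugacy problems, and the ability to compute in the group. So the real content is: given two hyperbolic groups one can compute with, decide isomorphism.

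\textbf{Canonical decomposition.} First I would compute, for each $G_i$, a canonical JSJ-type decomposition over finite and two-ended subgroups --- the analogue in the torsion case of the essential JSJ over $\Z$. Concretely this is a canonical graph-of-groups decomposition (a $\Z$-JSJ, or rather a canonical splitting invariant under $\Aut(G_i)$, as suggested by the notation $\TZJSJ$, $\TJSJ$, $\Zmax$ in the preamble) whose vertex groups are either rigid (no further splitting), or ``quadratically hanging'' (virtually surface-type), or maximal two-ended. The key facts I would invoke/establish are: (a) such a decomposition is algorithmically computable from a hyperbolic group (using the accessibility of hyperbolic groups over finite subgroups, Bowditch's construction of the JSJ from the topology of $\partial G$, and Sela's/Bowditch's constructions made effective); (b) it is canonical, so two groups are isomorphic iff there is an isomorphism of the underlying labelled graphs together with compatible isomorphisms of vertex groups respecting the edge identifications up to the allowed moves (surface mapping classes and twists).

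\textbf{The rigid pieces and the relative problem.} The comparison then splits into finitely many pieces. For quadratically-hanging and two-ended vertices the matching is handled by hand (surface homeomorphism classification --- now orbifolds, since torsion appears --- and elementary arithmetic of virtually cyclic groups). The crux is the rigid vertex groups: here one needs to decide, for rigid hyperbolic groups $R_1,R_2$ each equipped with a peripheral structure (the images of incident edge groups), whether there is an isomorphism respecting the peripheral structure --- this is exactly the \emph{relative} isomorphism problem for hyperbolic groups advertised in the abstract. Rigidity means no nontrivial splitting relative to the peripheral structure, so by a co-Hopf / Bestvina--Paulin argument the space of such isomorphisms (or homomorphisms) is ``finite'' up to conjugacy; the task is to make this effective, i.e. to bound the complexity of a candidate isomorphism and then verify it using the word problem. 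This uses a shortening-type argument: a relative homomorphism between rigid hyperbolic groups that is ``long'' can be shortened by precomposition with an inner automorphism, so one searches only among short maps, of which there are finitely many enumerable candidates; for each, checking that it is a well-defined isomorphism is decidable because we can solve the word problem in both groups and they are finitely presented. Assembling: $G_1\cong G_2$ iff the canonical graphs match and the finitely many induced relative-isomorphism problems on rigid pieces all have positive answers, which is now a terminating procedure.

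\textbf{Main obstacle.} The hard part is the effective rigidity / shortening step in the presence of torsion. Over torsion-free groups one has clean actions on $\R$-trees and Rips theory to run the shortening argument and to bound things; with torsion, the limit groups that arise as (ultralimits of) sequences of relative homomorphisms are hyperbolic \emph{relative} to virtually-cyclic subgroups, and one must redevelop the JSJ / Rips machinery for actions of groups with torsion on $\R$-trees (non-free, with finite arc stabilisers) to conclude that a rigid peripheral group cannot carry a sequence of pairwise-non-conjugate relative homomorphisms --- which is what makes the candidate search finite. A secondary, but also delicate, obstacle is establishing the effectivity and canonicity of the finite-and-two-ended JSJ decomposition itself (accessibility bounds, and reading the JSJ off $\partial G$) uniformly over all hyperbolic groups with torsion. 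Once these two ingredients are in place, the rest is bookkeeping: enumerate decompositions, enumerate short candidate matchings, and decide each with the word problem.
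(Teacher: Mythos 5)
Your overall strategy (canonical splitting, then a shortening/finiteness argument on rigid pieces, then assembly) is the right one in spirit, but two of its load-bearing steps are exactly the points the paper has to replace, and as written they fail. The most serious is the canonical decomposition: you propose to compute the JSJ over finite and two-ended subgroups by making Bowditch's boundary construction effective, but it is not known how to compute the two-ended (Bowditch) JSJ of a hyperbolic group with torsion. This is precisely why the paper works instead with a different canonical object, the JSJ over $\Zmax$-subgroups (maximal virtually cyclic subgroups with \emph{infinite centre}), obtained as a collapsed tree of cylinders of a maximal $\Zmax$-splitting that is computed by iterating a \emph{decidable} rigidity criterion. The $\Z$ versus dihedral-type dichotomy is not cosmetic: splittings over dihedral-type two-ended subgroups give no infinite twist group, so "rigid" must mean "no $\Zmax$-splitting relative to the peripheral structure", and the flexible vertices of the resulting decomposition are not hanging orbifold groups but orbisockets (orbifold groups with roots adjoined along boundary subgroups), whose recognition, canonical socket decomposition, and relative isomorphism problem occupy a substantial part of the argument; none of this is visible in your plan, and a two-ended JSJ would not plug into the rigidity criterion even if it were computable.

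There are two further gaps. First, in the several-ended case there is no canonical splitting over finite subgroups once torsion is allowed (Grushko-type uniqueness fails), so "the canonical graphs match" is not well defined; the paper instead computes the full finite set of reduced Stallings--Dunwoody decompositions modulo $\Out(G)$, using connectedness of the deformation space by slide moves and an orbit-decidability argument, and only then compares graphs of groups. Second, for the rigid pieces your step "finitely many enumerable short candidates, check each with the word problem" is not yet an algorithm: finiteness of short representatives in each post-conjugacy class does not come with a computable bound on their size, so naive enumeration can never certify that the list of (relative) monomorphism representatives is complete. The paper's termination certificate is the decidability of systems of equations and inequations with quasi-isometrically embeddable rational constraints in hyperbolic groups with torsion (shortness, injectivity on a ball, and non-post-conjugacy to the morphisms already found are all encoded as such a system), combined with the rigidity criterion; you would need this ingredient, or an explicit recursive bound, and the proposal supplies neither. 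You do correctly identify the torsion Rips/shortening analysis as a main obstacle, but without the equation-solving machinery and the $\Zmax$/orbisocket decomposition the procedure you describe is either not well defined or not guaranteed to halt.
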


A result by Newman shows that one-relator groups with non-trivial torsion are hyperbolic
\cite{Newman_one-relator}.
We thus get the following corollary:

\begin{corbis} The isomorphism problem for one-relator groups with non-trivial torsion is solvable.
\end{corbis}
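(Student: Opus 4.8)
The plan is to reduce the isomorphism problem for one-relator groups with non-trivial torsion to the already solved isomorphism problem for hyperbolic groups, that is, to Theorem~\ref{thmbis_iso}. The only thing that needs checking is that the inputs one is handed really do define hyperbolic groups, so that Theorem~\ref{thmbis_iso} may legitimately be applied to them.

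First I would recall the classical torsion theorem for one-relator groups (Karrass--Magnus--Solitar): a one-relator group $G=\langle X\mid w\rangle$ has non-trivial torsion if and only if, writing $w=v^{n}$ with $v$ not a proper power in the free group $F(X)$, one has $n\geq 2$; in that case every torsion element of $G$ is conjugate to a power of the image of $v$, which has order exactly $n$. Detecting this situation is algorithmic: given a word $w$ in the generators one can compute its maximal root $v$ and the exponent $n$ (an elementary computation in a free group), and one simply tests whether $n\geq 2$. Thus, on input a one-relator presentation, one can decide whether it falls in the class under consideration.

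Next, by Newman's theorem \cite{Newman_one-relator}, whenever $w=v^{n}$ with $n\geq 2$ the group $\langle X\mid w\rangle$ is word-hyperbolic. Hence, given two one-relator presentations $\langle X_{1}\mid v_{1}^{n_{1}}\rangle$ and $\langle X_{2}\mid v_{2}^{n_{2}}\rangle$ with $n_{1},n_{2}\geq 2$, both groups are hyperbolic, so they constitute admissible input for the algorithm of Theorem~\ref{thmbis_iso}; running that algorithm decides whether they are isomorphic. Combining the two steps yields an algorithm that, given two one-relator presentations with non-trivial torsion, decides isomorphism, which is exactly the assertion of the corollary.

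The proof has essentially no obstacle: all the substance is already contained in Theorem~\ref{thmbis_iso} and in Newman's theorem, and the only mild point is to ensure that the ``presentation of a hyperbolic group'' hypothesis of Theorem~\ref{thmbis_iso} is met, which is taken care of by the algorithmic recognition of proper powers together with Newman's result. One could even dispense with the recognition step altogether and simply observe that, by Newman's theorem, the class of one-relator groups with non-trivial torsion is a subclass of the class of hyperbolic groups, so that its isomorphism problem is a restriction of the problem solved by Theorem~\ref{thmbis_iso}.
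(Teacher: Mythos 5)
Your argument is correct and coincides with the paper's: the corollary is deduced exactly by combining Newman's theorem (one-relator groups with non-trivial torsion are hyperbolic) with Theorem~\ref{thmbis_iso}. The extra recognition step via the torsion theorem is harmless but, as you yourself note, unnecessary.
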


In our methods, 
the solution of the isomorphism problem is symbiotic with the computation of
a generating set of the group of automorphisms.

\begin{thmbis}\label{thmbis_gene}
  There is an explicit algorithm that takes as input a presentation of a hyperbolic group $G$, 
and which computes a generating set of $\Aut(G)$ and $\Out(G)$.
\end{thmbis}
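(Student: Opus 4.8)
The plan is to reduce the computation of $\Aut(G)$ and $\Out(G)$ for an arbitrary hyperbolic group $G$ to the same problem for the vertex and edge groups of a canonical splitting, and then to assemble the answer. The first step is to compute a canonical JSJ-type decomposition of $G$ over finite (or virtually cyclic) subgroups --- a tree $T$ (the $\mathcal{Z}$-JSJ tree, or its analogue for the given peripheral structure) which is invariant under $\Aut(G)$ up to equivariant isomorphism. Since $T$ is canonical, any automorphism of $G$ permutes the orbits of vertices and edges and induces an automorphism of the underlying graph of groups; this gives an exact sequence relating $\Out(G)$ to the automorphisms of the graph of groups, the (outer) automorphisms of the rigid vertex groups that respect the incident edge groups, and a ``twist'' subgroup generated by Dehn twists along the edges and by the mapping class groups of the quadratically-hanging (surface) vertex groups. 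The key point of the method is that the isomorphism problem and the computation of the automorphism group are intertwined: to compute $\Out(G)$ one needs, for each rigid vertex group $G_v$ with its peripheral structure coming from the incident edge groups, to compute the group $\Outm(G_v)$ of automorphisms preserving that peripheral structure, and this is exactly a relative version of the same problem one is trying to solve, applied to the (simpler, rigid) pieces.

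Concretely I would proceed as follows. (i) Using the algorithms developed earlier in the paper, compute the canonical decomposition $T_{\mathcal{Z}\text{-}JSJ}$ of $G$, identify the rigid vertices, the quadratically hanging vertices (with their surface-with-boundary structure), and the edge groups, and compute presentations of all of these together with the inclusion maps of edge groups into vertex groups. (ii) For each rigid vertex group $G_v$, equipped with the peripheral structure $\mathcal{P}_v$ given by the conjugacy classes of images of incident edge groups, compute a generating set of $\Outm(G_v) = \Out(G_v, \mathcal{P}_v)$; the rigidity of $G_v$ means its own canonical decomposition relative to $\mathcal{P}_v$ is trivial (a single vertex), so the recursion bottoms out and one is reduced to computing automorphisms of a rigid relatively hyperbolic group, which can be done by a Makanin--Razborov / effective-shortening argument (or by the results already invoked in the paper). (iii) For each quadratically hanging vertex, the relevant automorphisms form the mapping class group of the underlying surface, for which an explicit finite generating set (Dehn twists, and for non-orientable or punctured cases the appropriate generators) is classical and computable. (iv) Compute the finite group of symmetries of the underlying graph of groups that are realizable by automorphisms of $G$ --- this is a finite check, comparing vertex and edge groups with their peripheral structures up to isomorphism, which again uses the (relative) isomorphism algorithm on the pieces. (v) Finally, assemble: output the Dehn twists along all edges of $T$, the generators of the surface mapping class groups, lifts of generators of each $\Outm(G_v)$, and lifts of the realizable graph symmetries; by the structure theorem for $\Out$ of a group with a canonical splitting, these generate $\Out(G)$, and adding the inner automorphisms $\Inn(G)$ (generated by the images of a generating set of $G$) gives a generating set of $\Aut(G)$.

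The main obstacle --- and the reason this theorem is genuinely as hard as Theorem~\ref{thmbis_iso} --- is step (ii): computing $\Outm(G_v)$ for a rigid relatively hyperbolic vertex group relative to a peripheral structure. Rigidity guarantees that the group of such outer automorphisms is finite (this is the geometric input: no Dehn twists, no surface flexibility), but \emph{finiteness is not enough to make it computable}; one needs an effective bound on the complexity of a generating set, which is exactly what the effective version of Sela's shortening argument (or the Rips--Sela machinery, in its relative and torsion-allowing form developed in the body of the paper) provides. One enumerates candidate automorphisms while simultaneously searching, via the solution to the equation problem / the effective Makanin--Razborov description of $\Hom(G_v, G_v)$ respecting $\mathcal{P}_v$, for a certificate that the list is complete --- the two searches are guaranteed to terminate precisely because the set is finite and the shortening argument is effective. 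A secondary subtlety is bookkeeping the torsion: edge and vertex groups may be virtually cyclic with finite pieces, Dehn twists must be defined using the structure of these virtually cyclic subgroups (centralizers of finite-order elements, etc.), and the exact sequence for $\Out$ has to be set up with enough care that the peripheral structures on the rigid pieces are preserved; but this is the kind of delicate-but-routine adaptation that the paper's framework is built to handle, whereas the effectivity of $\Outm$ of the rigid pieces is the real crux.
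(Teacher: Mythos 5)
Your outline reproduces the paper's strategy for the \emph{one-ended} case (canonical tree, generators of $\delta\Aut$ of the graph of groups: twists, vertex automorphisms rel.\ peripheral structure, edge-group automorphisms, graph symmetries; and for rigid vertices the certificate search via equations with quasi-isometrically embedded rational constraints, which is indeed the crux and matches Theorem \ref{thm_IP_rigid} and Corollary \ref{coro_list}). But there are two genuine gaps. First, the theorem is about \emph{all} hyperbolic groups, and in the presence of torsion a group with infinitely many ends has \emph{no} canonical $\Aut(G)$-invariant splitting over finite subgroups: unlike the torsion-free Grushko case, the Stallings--Dunwoody deformation space contains many reduced trees and no preferred one, so the exact-sequence/structure theorem you invoke simply does not apply. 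The paper handles this by computing a finite set of orbit representatives of reduced Stallings--Dunwoody trees, using connectivity of the deformation space by slide moves, and assembling generators of $\Out(G)$ from the stabilisers of these trees together with elements carrying one tree onto another (Lemmas \ref{lem;around_a_vertex}, \ref{lem;stallings_DF}, \ref{lem;cas_gene}); this in turn feeds on the relative (finite peripheral structure) one-ended case. Your proposal never addresses this reduction.

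Second, the canonical splitting you propose to compute in step (i) --- the $\calz$-JSJ with quadratically hanging surface vertices treated by classical mapping class group generators --- is exactly what the paper says it \emph{cannot} compute: the $\calz$-JSJ is not known to be algorithmic, which is why the paper works with the $\Zmax$ tree $\TZmax$ (collapsed tree of cylinders of a maximal $\Z$-universally elliptic $\Zmax$-splitting). Its flexible vertices are not surface groups but \emph{orbisockets}: finite extensions of conical $2$-orbifold groups with roots adjoined along boundary subgroups. Computing $\TZmax$ requires first producing a maximal $\Zmax$-splitting via the rigidity criterion, then \emph{recognising} which vertex groups are basic orbisockets (a case analysis solved by equations in virtually free groups) and gluing them back along junction segments; and solving the marked isomorphism/automorphism problem for orbisockets and bounded Fuchsian groups needs the extension machinery (finiteness of $H^2$, orbit decidability for equivalence classes of extensions, lifting of orbifold mapping classes), not just Lickorish--Korkmaz generators. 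Dismissing this as ``delicate-but-routine torsion bookkeeping'' hides a large and essential part of the argument; as written, step (i) rests on an algorithm that is not available.
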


In our solution of the isomorphism problem (as well as in \cite{DaGr_isomorphism}), 
one needs to compute various decompositions of hyperbolic groups
as  amalgamated free products, HNN extensions, and more generally,
as graphs of groups. 
This raises the natural question whether vertex groups of a graph of groups are isomorphic \emph{relative to their adjacent edge subgroups}. 
A variation of this problem is to consider groups equipped with \emph{marked peripheral structures},  
namely a finite ordered collection of tuples of elements that are thought of as generating sets of the adjacent edge groups. More precisely, 
$(S_1,\dots, S_n)$ is a marked peripheral structure on $G$ if each $S_i$ is a tuple of elements of $G$ (each
tuple being understood up to conjugacy).

Given $G,G'$ two groups, and marked peripheral structures $(S_1,\dots,S_n)$ and $(S'_1,\dots,S'_n)$, 
 the \emph{marked isomorphism problem} consists in deciding if there exists an isomorphism $f:G\ra G'$ sending
 $S_i$ to a conjugate $S_i'^{g'_i}$ 
of $S'_i$ for all $i$.

\begin{thmbis}\label{thmbis_marked}[see Theorem \ref{thm_whitehead_marque}]
  The marked isomorphism problem is solvable among hyperbolic groups with marked peripheral structures. 

Moreover, one can algorithmically compute  a generating set of the group of automorphisms of a hyperbolic group with marked peripheral structure.
\end{thmbis}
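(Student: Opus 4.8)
The plan is to reduce the marked isomorphism problem to a relative version of the isomorphism problem for hyperbolic groups, which one solves by computing canonical (JSJ-type) decompositions relative to the peripheral structure. First I would use the word problem — which is uniformly solvable for hyperbolic groups, and for which a presentation together with a hyperbolicity constant can be algorithmically certified (enumerating presentations and Dehn presentations in parallel) — to make sense of ``tuples up to conjugacy'' effectively; note that one can decide conjugacy of elements and of tuples in a hyperbolic group. I would then form, for each $S_i$, the subgroup $\langle S_i\rangle$ it generates, determine (using the accessibility and local structure of hyperbolic groups) whether it is quasiconvex, and replace the marked peripheral structure by the associated collection of subgroups, passing to the \emph{relative} setting: $G$ together with the family of quasiconvex subgroups $\calH = (\langle S_1\rangle, \dots, \langle S_n\rangle)$. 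The marked isomorphism problem then splits into (a) deciding whether there is an isomorphism $G\to G'$ sending the conjugacy class of $\langle S_i\rangle$ to that of $\langle S_i'\rangle$, and (b) among those, deciding whether the finitely many induced identifications of the generating tuples can be realized — the latter being controlled by the (computable, by the quasiconvexity) finite outer automorphism data of the peripheral subgroups together with their $G$-normalizers.

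Next I would carry out the core step: compute a canonical decomposition of $G$ relative to $\calH$. Concretely, one runs the algorithm producing the $\mathcal{Z}$-JSJ (or the relevant canonical tree of cylinders $\TZJSJ$) of $G$ relative to the peripheral structure, whose vertex groups are either rigid, or quadratically hanging, or abelian/virtually-cyclic, and which is invariant under the subgroup of $\Out(G)$ preserving $\calH$. The decidability of ``is there a marked isomorphism'' is then reduced, via the canonicity of this decomposition, to: matching the underlying labelled graphs of groups of $G$ rel $\calH$ and $G'$ rel $\calH'$; for each pair of corresponding rigid vertices, solving the \emph{rigid} relative isomorphism problem (no essential splitting relative to incident edge groups and the peripheral data — here one invokes the rigid case machinery, i.e. a Sela/shortening-type argument giving finiteness of the relevant homomorphism set, made effective); and for quadratically hanging vertices, comparing the underlying orbifolds, which is a finite combinatorial check. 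Assembling compatible choices over the finite graph is a finite search.

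For the ``moreover'' part — computing a generating set of $\Aut(G,\calH)$ (equivalently of the stabilizer in $\Out(G)$ of the marked peripheral structure) — I would use that the canonical relative decomposition is preserved by this group, so that $\Out(G,\calH)$ maps, with computable finite kernel and cokernel pieces, into a product over vertices of (relative) mapping class groups of the quadratically hanging orbifolds, twist subgroups along the $\mathcal{Z}$-edges, and the (finite, by rigidity and quasiconvexity) outer automorphism groups of the rigid vertex groups relative to their incident and peripheral subgroups; each factor has a computable generating set (mapping class groups of orbifolds, Dehn twists, and enumerable finite groups), and these lift to generators of $\Out(G,\calH)$, whence of $\Aut(G,\calH)$ after adjoining inner automorphisms. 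The main obstacle — exactly as in the torsion-free case of \cite{DaGr_isomorphism} but now aggravated by torsion — is the \textbf{rigid relative case}: proving that for a rigid hyperbolic $G$ rel $\calH$ the set of injective homomorphisms to another such group, modulo the target's relevant automorphisms, is finite and \emph{computable}; in the presence of torsion one cannot use limit groups over free groups directly, so one needs the torsion-compatible version of the shortening argument together with an effective version of equational Noetherianity / the ability to certify that a candidate finite list is complete. Controlling the passage from ``tuples'' to ``subgroups they generate'' (in particular certifying quasiconvexity and computing the subgroup's own splittings and automorphisms) is the other technical point, but it is handled by the same effective hyperbolic-group toolbox.
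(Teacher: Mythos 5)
Your strategy --- replace the tuples by the subgroups they generate, compute a canonical decomposition of $G$ relative to these subgroups, and reduce to relative rigid and orbifold pieces --- has two genuine gaps, and they are precisely the points the paper's proof is built to avoid. First, the peripheral subgroups $\grp{S_i}$ are arbitrary finitely generated subgroups: nothing in the statement allows you to assume quasiconvexity, there is no known algorithm deciding quasiconvexity of a finitely generated subgroup of a hyperbolic group (only a semi-decision procedure), and even for quasiconvex peripheral subgroups the groups $\Out(\grp{S_i})$ are in general infinite (e.g.\ free of rank $2$), so your step (b) --- recovering the marked (tuple-level) problem from the subgroup-level one via ``computable finite outer automorphism data'' and normalisers --- breaks down. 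This is exactly why the paper's Lemma \ref{lem_eqv_um} makes the marked/unmarked passage only for finite or virtually cyclic peripheral subgroups, and why the remark after Theorem \ref{thm_IP_rigid} insists that no quasiconvexity or elementarity is assumed on peripheral subgroups.

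Second, the core step you invoke --- algorithmically computing a canonical $\Z$- or $\Zmax$-type JSJ of $G$ relative to an arbitrary marked peripheral structure --- is not available: the paper computes $\TZmax$ only in the absolute one-ended case (Proposition \ref{prop_calcul_TZmax}), and its orbisocket recognition does not carry over when arbitrary, possibly non-quasiconvex, peripheral subgroups must be made elliptic. (Incidentally, the relative rigid case you single out as the main obstacle is already settled in full generality by Theorem \ref{thm_IP_rigid}; the missing piece is the relative canonical decomposition, not the rigid case.) The paper's route is different: it closes each tuple under pairwise products so that Serre's lemma applies, then glues a fixed torsion-free one-ended hyperbolic group $R_{i,j}$ with no cyclic splitting along each infinite-order marked element, obtaining a hyperbolic group $\Hat G$ by Bestvina--Feighn; an isomorphism of marked pairs exists if and only if some isomorphism $\Hat G\ra \Hat G'$ matches the $R_{i,j}$ and the tuples up to conjugacy (Lemma \ref{lem_chapeau}). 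Since the $R_{i,j}$ are rigid JSJ vertices and the tuples are universally elliptic, their $\Out(\Hat G)$-orbits are finite, so the absolute solution (Theorem \ref{theo;IP}) together with the finite-orbit Lemma \ref{lem_probleme_orbite} decides the problem and yields generators of $\Outm(G,\calp)$; relative multi-endedness is then handled by a relative Stallings--Dunwoody deformation space, and finite peripheral subgroups by Lemma \ref{lem_periph_fini}. As written, your argument does not go through without either supplying the missing relative JSJ algorithm for arbitrary peripheral subgroups or adding a reduction of this filling type.
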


Although of more technical appearance, Theorem \ref{thmbis_marked} has a particularly nice consequence. 
The  \emph{Whitehead problem} in a  group $G$,
asks whether two tuples of elements $G$ are in the same orbit under the automorphism group of $G$.
Theorem \ref{thmbis_marked} gives a uniform solution to the Whitehead problem for all hyperbolic groups.

\begin{corbis}[see cor.\ \ref{cor_whitehead_1234}]\label{cor_whitehead}
 Given $G$  a hyperbolic group and $g_1,\dots,g_n, g'_1,\dots, g'_n\in G$,
 one can decide if there exists an automorphism $\phi\in \Aut(G)$ sending  $g_i$ to $g'_i$ for all $i$.

One can also decide if there exists an automorphism $\phi\in \Aut(G)$ sending $g_i$ to a conjugate of $g'_i$ for all $i$.
\end{corbis}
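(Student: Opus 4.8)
The plan is to deduce both assertions directly from Theorem~\ref{thmbis_marked}, applied with $G'=G$ and with marked peripheral structures manufactured from the given tuples; no new geometric input is needed.

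For the second assertion, I would equip one copy of $G$ with the ordered peripheral structure $\big((g_1),\dots,(g_n)\big)$ consisting of $n$ singleton tuples, and a second copy of $G$ with $\big((g'_1),\dots,(g'_n)\big)$. Unwinding the definition, a marked isomorphism between these two marked groups is precisely an automorphism $\phi\in\Aut(G)$ with $\phi(g_i)$ conjugate to $g'_i$ for every $i$ (the conjugating element being allowed to depend on $i$, since distinct tuples of a peripheral structure may be conjugated independently). Hence the algorithm of Theorem~\ref{thmbis_marked} settles this case.

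For the first assertion one instead uses a \emph{single} peripheral tuple: put $S_1=(g_1,\dots,g_n)$ on the first copy of $G$ and $S'_1=(g'_1,\dots,g'_n)$ on the second. Since a tuple in a marked peripheral structure is understood up to \emph{simultaneous} conjugacy of all its entries, Theorem~\ref{thmbis_marked} decides whether there exist $\phi\in\Aut(G)$ and $h\in G$ with $\phi(g_i)=h\,g'_i\,h\m$ for all $i$ at once. I claim this is equivalent to the existence of an automorphism sending $g_i$ to $g'_i$ for all $i$. Indeed, if $\phi$ and $h$ are as above, then composing $\phi$ with the inner automorphism $x\mapsto h\m x h$ yields an automorphism sending $g_i$ to $g'_i$ for every $i$; conversely, any automorphism realising $g_i\mapsto g'_i$ is a marked isomorphism of $(G,S_1)$ with $(G,S'_1)$ (take $h=1$). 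Thus the on-the-nose Whitehead problem also reduces to the marked isomorphism problem.

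I do not expect a genuine obstacle beyond Theorem~\ref{thmbis_marked} itself: all the content is there, and the corollary is a short reduction. The one point deserving care is the interplay between ``up to conjugacy'' and inner automorphisms --- namely the fact that a single tuple is read up to \emph{simultaneous} conjugation, which is exactly the slack needed to absorb an inner automorphism and turn conjugacy-of-tuples into honest equality, whereas using $n$ separate singleton tuples provides $n$ independent conjugations and so models the weaker ``each $g_i$ to a conjugate of $g'_i$'' statement. One should also note that $g_i$ and $g'_i$ are supplied as words in the generators of $G$, so that $(G,(g_1,\dots,g_n))$ and its primed counterpart are legitimate inputs to the algorithm; and if more is wanted, the ``moreover'' clause of Theorem~\ref{thmbis_marked} additionally furnishes a generating set for the group of automorphisms fixing the prescribed tuple.
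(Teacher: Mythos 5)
Your reduction is correct and coincides with the paper's own proof of Corollary \ref{cor_whitehead_1234}: the paper likewise encodes the on-the-nose problem via a single peripheral tuple $(g_1,\dots,g_n)$ (absorbing the simultaneous conjugation by an inner automorphism) and the conjugacy version via $n$ singleton tuples, then invokes the marked isomorphism problem of Theorem \ref{thm_whitehead_marque}.
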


A solution of the Whitehead problem was known for free groups \cite{Whitehead_equivalent,HigLyn_Whitehead}, and surface groups
\cite{LeVo_Whitehead}. 
It is interesting to notice that even in these cases,  
 our approach is quite different from previous ones
as it is ultimately based on the computation of relative Grushko and JSJ decompositions 
of $G$. 
\\

\paragraph{Structural features.}
Our approach follows the strategy initiated by Sela \cite{Sela_isomorphism}, and continued by Groves and the first author 
as exposed in \cite[Section 2]{DaGr_isomorphism}.

Our result is based on the following  three main structural features: 
\begin{itemize*}
\item  the Stallings-Dunwoody deformation space of maximal decompositions of $G$ over finite groups
\item a  rigidity criterion, saying that for a one-ended hyperbolic group $G$, 
$\Out(G)$ is infinite if and only if $G$ has an ``interesting''  splitting providing an infinite group of Dehn twists,
\item a particular kind of canonical JSJ decomposition, adapted to the rigidity criterion. 
\end{itemize*}
Moreover, we prove that these features are algorithmic: one can compute these invariants, and decide whether the rigidity criterion holds.
The two major algorithmic tools we use are  Gerasimov's algorithm which detects whether 
 a given  hyperbolic group splits over a finite subgroup (\cite{Gerasimov}, see also a published version in \cite{DaGr_detecting}), 
and a solution to the problem of equations in hyperbolic groups, \cite{DG1}. 
These algorithms themselves rely on interesting structures. We do not  detail them  here, instead we refer the interested reader to the indicated bibliography.
\\

Let us now give more details about these structural features and their computation.

\paragraph{The Stallings-Dunwoody deformation space.} 
The \emph{Stallings-Dunwoody deformation space} of $G$
is the set of decompositions of $G$ as graphs of groups with finite edge groups and finite or one-ended vertex groups. 
Existence of such a decomposition for a finitely presented group is Dunwoody's original accessibility,
and Stallings' theorem shows its relation with the number of ends of $G$.

Gerasimov's algorithm \cite{Gerasimov,DaGr_detecting} allows one to compute some decomposition in this deformation space.
In absence of torsion, 
the uniqueness property of the Grushko decomposition up to isomorphism 
reduces immediately the isomorphism problem to the case of freely indecomposable groups.

In presence of torsion, there is no such nice uniqueness statement. 
Instead, we use the fact that the action of $\Out(G)$ on the
Stallings-Dunwoody deformation space is cocompact, 
and that two reduced trees in this deformation space are connected by \emph{slide moves}.
Starting from a particular decomposition in this deformation space given by Gerasimov's algorithm,
we are able to compute   
the vertices of the quotient of this deformation space by $\Out(G)$, \ie 
the finite set of all isomorphism classes of reduced Stallings-Dunwoody decompositions. 
Once this is done, the isomorphism problem for several-ended hyperbolic groups reduces
to the isomorphism problem for one-ended  hyperbolic groups with a peripheral structure consisting of finite groups.
 This is done in Section \ref{sec_ends}.

\paragraph{The rigidity criterion.}
To introduce our version of the rigidity criterion, first consider the torsion-free case. 
If $G$ is a one-ended torsion-free hyperbolic group with $\Out(G)$ infinite, 
the Bestvina-Paulin argument shows that $G$ has a small action on an $\bbR$-tree \cite{Be_degenerations,Pau_topologie}.
Then by Rips theory, 
one can construct a non-trivial splitting $G=A*_C B$ (or an HNN extension $G=A*_C$, but let's ignore this case in this introduction)
over a maximal cyclic subgroup $C$ of $G$ \cite[Theorem 9.1]{Sela_isomorphism}.
Conversely, a splitting $G=A*_C B$, any $c\neq 1$ in the centre of $C$ defines a Dehn twist $\tau_c$ as the identity on $A$ and the conjugation
by $c$ on $B$. 
If $C$ is a \emph{maximal cyclic} subgroup of $G$, then  $A$ and $B$ have finite centre which guarantees
that $\tau_c$ has infinite order in $\Out(G)$.

In presence of torsion,   
the class of virtually cyclic groups naturally generalises that of cyclic groups.
However, because the infinite dihedral group $D_\infty$ has trivial centre, there is no non-trivial Dehn twist (in the sense we just discussed) 
arising from an amalgamated free product $A*_{D_\infty} B$, even though $D_\infty$ is indeed virtually cyclic.
 With this example in mind, a good dichotomy over virtually cyclic groups is to distinguish whether  their centre is infinite 
or finite.
 We call \emph{$\Z$-groups} the virtually cyclic groups with infinite centre, and \emph{$\Zmax$-subgroups} the $\Z$-subgroups maximal for inclusion.
 Only splittings over $\Z$-groups provide Dehn twists that can be of infinite order (see \cite{MNS_downunder} where this difficulty was already spotted),
and the fact that they are of infinite order is guaranteed for splittings over $\Zmax$-subgroups. 
An important observation we make, is that a  rigidity criterion remains true in presence of torsion:

\begin{propbis}\label{alt}(Rigidity criterion, see Proposition \ref{prop_alt})

  Let $G$ be a one-ended hyperbolic group.  Then the following assertions are equivalent:
  \begin{enumerate*}
  \item $G$ does not split non-trivially over a $\Zmax$-subgroup,
 \item $\Out(G)$ is finite, 
 \item There exists  $R>0$  such that, modulo inner automorphisms, 
there are only finitely many 
endomorphisms of $G$ injective on the ball of radius $R$. 
 \item For all hyperbolic group $H$, there exists  $R>0$  such that, modulo inner automorphisms of $H$, 
there are only finitely many 
morphisms $G\ra H$ injective on the ball of radius $R$ of $G$. 
  \end{enumerate*}
\end{propbis}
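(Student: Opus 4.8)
The plan is to establish the cycle of implications $(1)\Rightarrow(4)\Rightarrow(3)\Rightarrow(2)\Rightarrow(1)$. Two of these are essentially formal. For $(4)\Rightarrow(3)$ it suffices to apply $(4)$ with $H=G$: morphisms $G\to G$ are precisely the endomorphisms of $G$, and inner automorphisms of $H=G$ are inner automorphisms of $G$, so in this case the conclusion of $(4)$ is literally $(3)$. For $(3)\Rightarrow(2)$, note that every automorphism of $G$ is an endomorphism injective on every ball, and that two automorphisms lying in distinct classes of $\Out(G)$ represent distinct classes of endomorphisms modulo $\Inn(G)$; thus an infinite $\Out(G)$ would produce, for the radius $R$ given by $(3)$, infinitely many classes of endomorphisms of $G$ injective on the ball of radius $R$, a contradiction.

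For $(2)\Rightarrow(1)$ I argue by contraposition. Assume $G$ splits non-trivially over a $\Zmax$-subgroup $C$, as an amalgam $A\ast_C B$ or an HNN extension $A\ast_C$. Since $G$ is one-ended it does not split over a finite subgroup, so $C$ is infinite; and since the splitting is non-trivial and $C$ is virtually cyclic, at least one adjacent vertex group fails to be virtually cyclic, hence is a non-elementary hyperbolic group with finite centraliser in $G$ (and the centraliser in $G$ of an adjacent virtually cyclic vertex group, necessarily of dihedral type, is likewise finite). Pick $z$ generating an infinite cyclic subgroup of the centre of $C$ and let $\tau_z\in\Aut(G)$ be the associated Dehn twist, say the identity on a vertex group $A$ with finite centraliser. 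If $\tau_z^{k}=\inn(g)$ for some $g\in G$, then $g$ centralises $A$, hence has finite order $m$, so $\tau_z^{km}=\id$; reading off the effect of $\tau_z^{km}$ on the stable letter, respectively on the other vertex group of the amalgam (using finiteness of its centraliser), forces a power of $z$ to be trivial, and since $z$ has infinite order this gives $k=0$. So $\tau_z$ has infinite order in $\Out(G)$, hence $\Out(G)$ is infinite. (This is the torsion analogue of the classical statement for maximal cyclic subgroups, and is precisely why one singles out $\Z$-groups among virtually cyclic groups.)

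The substance of the proposition is $(1)\Rightarrow(4)$, which I prove by contraposition through a Bestvina--Paulin limiting argument followed by the Rips machine. Suppose some hyperbolic group $H$ violates $(4)$: for every $R$ there are infinitely many $\Inn(H)$-classes of morphisms $G\to H$ injective on the ball $B_R$ of $G$. One then extracts a sequence $\phi_n\colon G\to H$ such that $\phi_n$ is injective on $B_n$ and no two of which differ by an inner automorphism of $H$. Fix a finite generating set of $G$ and a basepoint of the Cayley graph of $H$; replacing each $\phi_n$ by a conjugate $\inn(h_n)\circ\phi_n$, we may assume that the maximal displacement $\mu_n$ of a generator at the basepoint is minimal among all conjugates of $\phi_n$ by inner automorphisms of $H$. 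If the $\mu_n$ stayed bounded, only finitely many conjugacy classes of $\phi_n$ would occur, since balls in $H$ are finite; so after passing to a subsequence $\mu_n\to\infty$. Rescaling the Cayley graph of $H$ by $1/\mu_n$ and passing to an ultralimit of the resulting based metric spaces produces, since $H$ is hyperbolic, an $\bbR$-tree $T$ equipped with an isometric action of $G$, and minimality of $\mu_n$ guarantees that $G$ has no global fixed point in $T$, so the action is non-trivial. Because $H$ is hyperbolic, the limit action is stable, with virtually cyclic stabilisers of non-degenerate arcs and finite stabilisers of tripods; and the hypothesis that $\phi_n$ is injective on balls of radius tending to infinity is what rules out degenerate limits --- in particular the configuration in which the minimal subtree of $T$ is a line on which $G$ acts with an infinite kernel (the ``axial'' case, which would only yield a splitting of $G$ over a subgroup that need not be virtually cyclic; the case where $H$ itself is virtually cyclic, which forces $T$ to be a line, is excluded the same way).

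It remains to feed the action of $G$ on $T$ into the structure theory of $\bbR$-tree actions. Since $G$ is finitely presented and acts non-trivially and stably on $T$ with finite tripod stabilisers, the Rips machine --- in the form of Guirardel's structure theorem, or of the structure theorems of Sela and of Bestvina--Feighn --- gives a non-trivial splitting of $G$ as a graph of groups with virtually cyclic edge groups. As $G$ is one-ended, no edge group is finite, so one of them, say $C_0$, is an infinite virtually cyclic subgroup. Passing if necessary to an index-two $\Z$-subgroup (a virtually cyclic group with finite centre contains one), and then enlarging the edge group to the maximal $\Z$-subgroup of its elementary closure --- such enlargements of edge groups of splittings of hyperbolic groups being again splittings --- yields a non-trivial splitting of $G$ over a $\Zmax$-subgroup, contradicting $(1)$ and closing the cycle. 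The main obstacle is exactly this analysis of the limit tree: proving it non-degenerate, controlling its arc and tripod stabilisers through the hyperbolicity of $H$, and above all ruling out the axial pieces by means of injectivity on arbitrarily large balls, so that the Rips machine returns an honest splitting of $G$ itself over a virtually cyclic, and hence ultimately over a $\Zmax$-subgroup. A second, pervasive difficulty is to carry the whole argument in the presence of torsion, replacing maximal cyclic subgroups by $\Zmax$-subgroups throughout and tracking the dichotomy between $\Z$-groups and virtually cyclic groups of dihedral type, since only the former yield Dehn twists of infinite order.
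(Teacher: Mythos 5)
Your reduction of the statement to the cycle $(4)\Rightarrow(3)\Rightarrow(2)\Rightarrow(1)\Rightarrow(4)$, with Dehn twists for $(2)\Rightarrow(1)$ and a Bestvina--Paulin limit plus the Rips machine for $(1)\Rightarrow(4)$, is exactly the paper's architecture (the paper inserts an auxiliary condition $(v)$, the existence of an $\bbR$-tree action with finite or $\Zmax$ arc stabilisers, between your last two steps). The easy implications and the Dehn twist argument are fine.

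The gap is in the last step of $(1)\Rightarrow(4)$, and it is precisely the point where torsion matters. Your limit tree only comes with \emph{virtually cyclic} arc stabilisers, so the Rips machine only hands you a splitting over a virtually cyclic subgroup, and you then claim to upgrade it to a $\Zmax$-splitting by ``passing to an index-two $\Z$-subgroup'' of a dihedral-type edge group and ``enlarging the edge group to the maximal $\Z$-subgroup of its elementary closure''. Neither move is legitimate: if $G=A*_C B$, replacing $C$ by a proper finite-index subgroup does not give a splitting of $G$ at all, and enlarging $C$ to a larger elementary subgroup is only possible by folding when that larger subgroup is elliptic, and even then may produce a \emph{trivial} splitting (the paper points this out for $G=A*_C\Hat C$ when discussing the $\Zmax$-fold). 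More fundamentally, the implication ``$G$ splits over a virtually cyclic subgroup $\Rightarrow$ $G$ splits over a $\Zmax$-subgroup'' is false in general — this is exactly the phenomenon behind the $D_\infty$ discussion in the introduction and the twice-punctured projective plane remark — so if your upgrade worked, condition $(1)$ would be equivalent to absence of all virtually cyclic splittings, which it is not.

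What the paper actually does, and what is missing from your argument, is to prove directly that the infinite arc stabilisers of the limit tree $T$ are $\Z$-groups and indeed $\Zmax$: two explicit $\delta$-hyperbolic estimates in the rescaled Cayley graphs of $H$ (using the dihedral relation $\sigma\tau\sigma^{-1}=\tau^{-1}$, respectively an element $\sigma$ of the putative larger $\Z$-group moving an endpoint of the arc) rule out dihedral-type and non-maximal arc stabilisers. Only then is Guirardel's structure theorem applied: the simplicial pieces give splittings over finite or $\Zmax$ subgroups because the arc stabilisers already are such, the axial pieces are excluded by an algebraic argument (a quotient containing $\bbZ^2$ in $\Isom(\bbR)$ with elementary kernel is impossible), and in the Seifert case one must choose a two-sided simple closed curve on a conical orbifold that does not bound a M\"obius band or a disc with two cone points of angle $\pi$, so that its preimage is genuinely $\Zmax$ in $G$. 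Your write-up acknowledges the torsion difficulty but replaces this analysis by the invalid edge-group surgery, so the crucial implication is not established.
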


See  \cite[Theorem 1.4]{Lev_automorphisms} for  a similar statement.
The first part of this equivalence is proved by observing that a  splitting over $\Zmax$-subgroups
provides Dehn twists making $\Out(G)$ infinite. 
Assuming the negation of the   
fourth assertion, 
the Bestvina-Paulin argument produces an action on an $\bbR$-tree, whose careful analysis shows the existence of a $\Zmax$ splitting.

\paragraph{Isomorphism problem and recognition of rigid groups.}
An important idea of Sela is to use, in conjunction to such a rigidity criterion,  
 a solution to the problem of equations 
to  get a finite list of morphisms $G\ra H$ containing a representative of all monomorphisms \cite{Sela_isomorphism}.
This approach was simplified in \cite{DaGr_isomorphism}, thanks to the use of \emph{rational constraints} in systems of equations.
In \cite{DG1}, we developed a solution to the problem of equations with (\emph{quasi-isometrically embedded}) rational constraints 
in hyperbolic groups with torsion.
We follow the same approach,   
using this solution to the problem of equations together with our rigidity criterion mentioned above.

Let us describe this approach.
Morphisms $\phi:G\ra H$ can be encoded by solutions in $H$ of a system of equations corresponding to the presentation of $G$;
injectivity of $\phi$ on the ball of radius $R$ can be encoded by inequations; and roughly speaking, rational constraints
can be used to ensure that two morphisms $\phi, \phi'$ do not coincide modulo inner automorphisms of $H$.
Thus, given $\phi_1,\dots,\phi_n:G\ra H$, one can produce a system of equations with rational constraints saying that
$\phi:G\ra H$ is a morphism injective on the ball of radius $R$, distinct from $\phi_1,\dots,\phi_n$ modulo inner automorphisms of $H$.
Now if Assertion 4 of the rigidity criterion holds, 
one can enumerate all morphisms $G\ra H$, and one will eventually find a finite family
$\phi_1\dots,\phi_n:G\ra H$ such that  the corresponding system has no solutions. This attests that every monomorphism $\phi:G\ra H$
is post-conjugate to some $\phi_i$.

This argument has two important consequences. First, this allows to recognise whether a one-ended hyperbolic group $G$ is rigid 
(\ie satisfies Proposition \ref{alt}) or not.
Indeed, if $G$ is rigid, one can apply the argument above with $H=G$, thus attesting that Assertion 3 of Proposition \ref{alt} holds.
If $G$ is not rigid, the fact that Assertion 1 fails can be attested by producing a splitting of $G$.
Second, if both $G$ and $H$ are rigid, one can compute two finite list of morphisms $\phi_i:G\ra H$ and $\psi_j:H\ra G$ containing
a representative of all monomorphisms up to inner automorphisms.
Then one solves the isomorphism problem between $G$ and $H$ 
by checking whether there exists $i,j$ such that $\phi_i\circ \psi_j$ and $\psi_j\circ \phi_i$ are inner automorphisms.
 This is the content of Section \ref{subsubsec_resolving}.

\paragraph{JSJ decompositions.}
Thanks to the decidability of the rigidity criterion, one can decide if $G$ has a splitting over a $\Zmax$-subgroup.
Then, using a relative version of the rigidity criterion (Proposition \ref{prop_alt}), one can decide if the vertex groups
split over $\Zmax$-subgroups relative to the incident edge groups. Iterating this procedure,
one can compute a maximal splitting over $\Zmax$-subgroups of $G$.

However, such a splitting is not unique up to automorphisms, and we cannot use it to reduce the isomorphism problem to the case of rigid hyperbolic groups 
(even rigid relative to a peripheral structure).  
 This is why we need a particular kind of JSJ decomposition, encoding splittings over $\Zmax$-subgroups.

For any class $\cala$ of subgroups of $G$, invariant under conjugation and stable under taking subgroups,
one can define JSJ decompositions over $\cala$ \cite{GL3a}.
In general, this defines a deformation space, but in the cases we consider, this deformation space
contains  a preferred canonical (\ie $\Out(G)$-invariant) decomposition, so we speak about \emph{the} JSJ decomposition over $\cala$.
In a JSJ decomposition, one distinguishes between \emph{rigid} and \emph{flexible} vertex groups, 
according to whether they are elliptic in all splittings of $G$  over subgroups in $\cala$.  

We will discuss three possibilities for $\cala$: the class of virtually cyclic groups and their subgroups,
the class $\calz$ of virtually cyclic groups with infinite centre and their subgroups,
and the class $\Zmax$ of maximal $\Z$-subgroups.

The JSJ decomposition over virtually cyclic groups is now classical. 
It coincides with Bowditch's decomposition \cite{Bo_cut} and has been widely studied  \cite{DuSa_JSJ,FuPa_JSJ},
but we still do not know whether it is computable.
Its edge groups are virtually cyclic and its flexible
subgroups are hanging \emph{bounded Fuchsian groups}, \ie finite extensions of fundamental groups of hyperbolic $2$-orbifolds, possibly with mirrors.

Over the class of $\Z$-subgroups, the flexible subgroups of the JSJ decomposition
 are finite extensions of fundamental groups of hyperbolic $2$-orbifolds
\emph{without mirrors}, and edge groups are $\calz$-groups.
 Maybe surprisingly, this decomposition can be non-trivial for 
the fundamental group $G$ of a closed orbifold with mirrors.
Indeed, we prove that the $\calz$-JSJ decomposition of an orbifold with mirrors
is the splitting over the boundary of a regular neighbourhood of the union of mirrors.
The reason is that splittings over $\Z$-subgroups of $G$ are dual to simple closed curves which don't intersect the singular locus.
One can state an interesting corollary of this observation.

\begin{UtiliseCompteurs}{aut_hyp}
\begin{cor}[see also \cite{Fujiwara_outer}]\label{cor_aut_hyp}
  Let $G$ be a one-ended hyperbolic group,   
possibly with torsion (for instance, the fundamental group of a closed orbifold with mirrors $\Sigma$).

Then there is a finite index subgroup $\Out_f(G)$ of $\Out(G)$ which is an extension
$$1\ra \mathit{Ab} \ra \Out_f(G_v)\ra \prod_{i=1}^n PMCG_f^*(S_i)\ra 1$$
where  $\mathit{Ab}$ is virtually abelian, and
$PMCG_f^*(S_i)$ is a finite index subgroup of the pure extended mapping class group of a surface with boundary.
\end{cor}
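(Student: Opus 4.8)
\emph{Proof plan.} The idea is to read off the extension from the action of $\Out(G)$ on the canonical $\calz$-JSJ decomposition $\Gamma$ of $G$. Recall from the structural discussion above that the edge groups of $\Gamma$ are $\calz$-groups, that the flexible vertex groups are finite extensions $1\to F_v\to G_v\to \pi_1^{orb}(\calO_v)\to 1$ of fundamental groups of hyperbolic $2$-orbifolds without mirrors, where the boundary components of $\calO_v$ correspond to the edges of $\Gamma$ incident to $v$, and that the remaining vertex groups are either virtually cyclic or rigid relative to their incident edge groups. As the decomposition is canonical, $\Out(G)$ acts on its Bass--Serre tree $T$; since the quotient graph $\Gamma$ and its set of vertex orbits are finite, a finite-index subgroup of $\Out(G)$ acts trivially on $\Gamma$, preserves each oriented edge and each vertex, and preserves the conjugacy class in $G_v$ of each edge group incident to $v$.

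Next I would apply the exact sequence describing such a subgroup in terms of the vertex groups (as in Levitt \cite{Lev_automorphisms}; see also \cite{Fujiwara_outer}):
$$1\longrightarrow \mathcal{T}\longrightarrow \Out^{0}(G)\longrightarrow \prod_{v}\Out(G_v;\mathrm{Inc}_v),$$
where the last arrow records the action on each vertex group relative to the collection $\mathrm{Inc}_v$ of incident edge groups, and $\mathcal{T}$ is generated by the Dehn twists along the edges of $\Gamma$. The twists along an edge $e$ are parametrised by the centraliser of $G_e$ in an adjacent vertex group; since $G_e$ is infinite and $G$ is hyperbolic, each such centraliser is virtually cyclic, so $\mathcal{T}$ is a subquotient of a finite product of virtually cyclic groups and is therefore virtually abelian.

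Now I would analyse the factors $\Out(G_v;\mathrm{Inc}_v)$. If $v$ is rigid (or virtually cyclic) then, by the relative version of the rigidity criterion (Proposition~\ref{alt}), $\Out(G_v;\mathrm{Inc}_v)$ is finite, so replacing $\Out^{0}(G)$ by a further finite-index subgroup kills its image in these factors. If $v$ is flexible, a Dehn--Nielsen--Baer argument, together with the fact that $F_v$ is finite and that the automorphisms of $G_v$ inducing the identity on $\pi_1^{orb}(\calO_v)$ form a finite group, identifies a finite-index subgroup of $\Out(G_v;\mathrm{Inc}_v)$ with the relative mapping class group of $\calO_v$ fixing its boundary data; treating the cone points of $\calO_v$ as marked points of the underlying surface-with-boundary $S_v$, this is a finite-index subgroup $PMCG_f^*(S_v)$ of the pure extended mapping class group of $S_v$ (``pure'' because the marked points and the boundary components are individually preserved, ``extended'' because orientation-reversal is allowed). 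Since each such finite-index subgroup is realised inside $\Out(G)$ by performing the mapping class on $G_v$ and extending by the identity elsewhere, the image of $\Out(G)$ in $\prod_{v}\Out(G_v;\mathrm{Inc}_v)$ contains $\prod_{v\ \mathrm{flexible}}PMCG_f^*(S_v)$ with finite index; taking $\Out_f(G)$ to be the preimage of this sub-product (intersected with the finite-index subgroups introduced above) makes $\Out_f(G)\to\prod_{v\ \mathrm{flexible}}PMCG_f^*(S_v)$ onto. Its kernel is an extension of a finite group (the automorphisms trivial on each orbifold group but possibly non-trivial on some $F_v$, plus the residual finite contributions of the rigid and virtually cyclic vertices) by the twist group $\mathcal{T}$, hence is virtually abelian; calling it $\mathit{Ab}$ and relabelling the flexible vertices $S_1,\dots,S_n$ yields the stated extension.

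The main obstacle will be the bookkeeping for the flexible vertices and for the twist sequence: one has to justify the exact sequence above in the present (torsion-allowed) setting, control the finite kernels $F_v$ so that they can be absorbed into the virtually abelian part, and carry out the Dehn--Nielsen--Baer identification for bounded Fuchsian groups relative to their boundary --- keeping careful track of orientations and of which parts of the boundary and of the singular locus are preserved pointwise versus setwise, which is exactly what forces the ``pure'' and ``extended'' qualifiers and the passage to the finite-index subgroups $PMCG_f^*(S_i)$. A secondary point to verify is that the image in the product of vertex factors is genuinely a direct product, and not merely a finite-index subgroup of the product, which is precisely what the realisation of each orbifold mapping class group inside $\Out(G)$ provides.
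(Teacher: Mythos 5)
Your plan follows essentially the same route as the paper's proof: pass to the canonical $\calz$-JSJ splitting (the paper uses the tree of cylinders $T_\calz$ to get canonicity), take the finite-index subgroup acting trivially on the quotient graph, apply Levitt's exact sequence with virtually abelian twist kernel, kill the rigid and elementary vertex factors by the rigidity criterion, and identify the flexible factors with finite-index subgroups of pure extended mapping class groups via Dehn--Nielsen--Baer. The only detail you leave implicit (and flag as an obstacle) is handled in the paper by noting that the set of equivalence classes of extensions of $\pi_1(\Sigma_v)$ by the finite kernel $F_v$ is finite, which is what guarantees that a finite-index subgroup of $\Outm(G_v;\calp_v)$ maps onto a \emph{finite-index} subgroup of $PMCG^*(S_v)$ with finite kernel.
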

\end{UtiliseCompteurs}

Unfortunately, we are not able to compute algorithmically the JSJ decomposition over $\calz$,
so we consider (a variant of) the JSJ decomposition over the class $\cala=\Zmax$.
This decomposition is different from the more usual ones, 
 and it should play a helpful role in order to extend Sela's program of elementary equivalence among hyperbolic groups 
to the case of hyperbolic groups with torsion.   

As $\Zmax$ is not stable under taking subgroups, the JSJ decomposition over $\Zmax$-subgroups does not fit into the definition of JSJ decompositions from \cite{GL3a},
but can nevertheless be defined (see Section \ref{sec_TZmax}).

Its rigid vertex groups are those with no $\Zmax$-splitting relative to incident edge groups. 
According to the rigidity criterion, they are those  with finite outer automorphism group relative to incident edge groups.
Its flexible vertices 
are \emph{orbisockets}. 
These are finite extensions of fundamental groups of $2$-orbifolds without mirrors, whose boundary subgroups are amalgamated to larger $\Z$-groups.
A typical example consists in adding a root to a boundary component as follows:
the orbisocket group is $\Sigma*_{b=c^k}\grp{c}$ where $\Sigma$ is the fundamental group of a surface with boundary,
and $\grp{b}$ is the fundamental group of a boundary component.

A key step in our proof consists in computing this JSJ decomposition over $\Zmax$-subgroups. 
To do so, one first computes a maximal decomposition $\Gamma$ of $G$ over $\Zmax$-subgroups as explained above.
In such a decomposition, an orbisocket is cut into pieces, called \emph{basic orbisockets}. 
One reconstructs the JSJ decomposition by first recognising the vertex groups of $\Gamma$ which are basic orbisockets,
and by gluing together the pieces that match.
This recognition is not immediate, even in the torsion free case \cite{DaGr_isomorphism}. 
The situation here is even more delicate, and will occupy a significant part of the study.

Once the $\Zmax$ JSJ decompositions of one-ended hyperbolic groups have been computed,
the isomorphism problem reduces to the isomorphism problem for the vertex groups  (with marked peripheral structures).
A relative version of the isomorphism problem for rigid groups (relative to marked peripheral structure)
allows to do so for rigid vertex groups,
and the isomorphism problem for orbisockets
is easy once the basic orbisockets it is made of are identified.
\\

Let us comment the structure of the paper. 
We decided to include a rather extended toolbox, in which we recall classical, but sometimes subtle, material, 
including elements of Bass-Serre theory, 
and isomorphisms of  graphs of groups. 
Section \ref{sec_rigid} is devoted to the rigidity criterion, and its application to the isomorphism problem for
rigid hyperbolic groups.
In Section \ref{sec_JSJ}, we introduce the definition and properties of the JSJ decompositions over $\Z$ and $\Zmax$-subgroups,
and we introduce orbisockets as flexible vertices of the $\Zmax$ JSJ decomposition.
Sections \ref{sec_orbi} and \ref{sec_1end} are mainly devoted to the computation of this $\Zmax$ JSJ decomposition.
The main part of section \ref{sec_orbi} is devoted to the recognition of \emph{basic} orbisockets, 
from which follows a solution of the isomorphism problem 
for orbisockets with their marked peripheral structure. 
In section \ref{sec_1end}, we compute the $\Zmax$ JSJ decomposition by gluing together basic orbisockets
of some non-canonical maximal decomposition, and we conclude our solution of the isomorphism problem for one-ended hyperbolic groups, 
Section \ref{sec_ends} is devoted to hyperbolic groups with several ends, and to the computation of the Stallings-Dunwoody deformation space,  
and finishes the solution to the isomorphism problem for all hyperbolic groups. 
Finally, in Section \ref{sec_Wh}, 
we prove a relative version of the isomorphism problem, and
we deduce a solution of Whitehead problems.

{\small 
\setcounter{tocdepth}{2}
\tableofcontents
}

\section{Tool box}\label{sec_toolbox}

\subsection{Actions of finitely generated groups on finite sets}

 Several times, we will use orbit decidability of group actions on finite sets. 
The following lemma is rather elementary, but we need a somewhat general statement.

\begin{lemma} (Orbit decidability in finite sets)  \label{lem_probleme_orbite}
  Let $G$ be a group  acting on a finite set $X$, and $\tilde{X}$ a set with a surjection $\pi:\tilde{X}\to X$ on $X$.   
  Assume that the following is known:
  \begin{itemize*}
  \item a finite generating set $S$ of $G$,
  \item an algorithm deciding whether two given elements of $\tilde{X}$ have same image in $X$.
  \item  an algorithm that, given $\tilde x\in \tilde{X}$ and $s\in S$, computes an element  $\tilde y\in \tilde{X}$ with $\pi(\tilde y)=s.\pi(\tilde x)$,
  \end{itemize*}
Then given  two elements in $\tilde x,\tilde y\in \tilde{X}$, one can decide whether $\pi(\tilde x)$ and $\pi(\tilde y)$ 
are in the same orbit under the action of $G$. 
 If they are in the same orbit, one can compute an element of $G$ (as a word on $S\cup S\m$) 
sending $\pi(\tilde x)$ to $\pi(\tilde y)$.

Moreover, given  $\tilde x\in\tilde{X}$, one can compute a generating set of the stabiliser of $\pi(\tilde x)$.
\end{lemma}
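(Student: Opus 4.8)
The plan is to run a breadth-first (or rather, orbit-closure) search through $X$ starting from $\pi(\tilde x)$, using the given data to simulate the action of $G$. First I would observe that although we cannot in general compare elements of $\tilde X$, we \emph{can} compare their images in $X$ by hypothesis, and we can apply generators at the level of $\tilde X$ in a way that covers the action on $X$. So maintain a finite list $L \subseteq \tilde X$ of ``discovered'' elements, together with, for each $\tilde z \in L$, a word $w_{\tilde z}$ on $S \cup S^{-1}$ such that $w_{\tilde z}.\pi(\tilde x) = \pi(\tilde z)$. Initialise $L = \{\tilde x\}$ with $w_{\tilde x}$ the empty word. Repeatedly: for each $\tilde z \in L$ and each $s \in S$, compute (by the third hypothesis) an element $\tilde y \in \tilde X$ with $\pi(\tilde y) = s.\pi(\tilde z)$; using the second hypothesis, check whether $\pi(\tilde y)$ equals $\pi(\tilde z')$ for some $\tilde z'$ already in $L$; if not, add $\tilde y$ to $L$ with word $w_{\tilde y} = s\, w_{\tilde z}$. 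Since $X$ is finite and $\pi$ is surjective, the images $\{\pi(\tilde z) : \tilde z \in L\}$ form a subset of $X$ that strictly grows at each ``new'' discovery, so the process terminates; at termination $\{\pi(\tilde z):\tilde z\in L\}$ is exactly the $G$-orbit $G.\pi(\tilde x)$, because it is $S$-invariant, contains $\pi(\tilde x)$, and is contained in the orbit.

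Given this, the orbit membership question is settled: $\pi(\tilde x)$ and $\pi(\tilde y)$ are in the same orbit if and only if $\pi(\tilde y)$ equals $\pi(\tilde z)$ for some $\tilde z \in L$ — decidable by the second hypothesis — and in that case $w_{\tilde z}$ is the desired element of $G$. For the stabiliser, I would use the standard Schreier-type argument. Having computed $L$, we have a transversal: for each $\tilde z \in L$ a word $w_{\tilde z}$ taking $\pi(\tilde x)$ to $\pi(\tilde z)$. For each $\tilde z \in L$ and each $s \in S$, let $\tilde z' \in L$ be the (unique up to $\pi$-image) element with $\pi(\tilde z') = s.\pi(\tilde z)$, found as above; then the element $g_{\tilde z, s} := w_{\tilde z'}^{-1}\, s\, w_{\tilde z}$ of $G$ fixes $\pi(\tilde x)$. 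I claim the finite set $\{ g_{\tilde z,s} : \tilde z \in L,\ s \in S\}$ generates $\Stab_G(\pi(\tilde x))$. This is exactly the Reidemeister--Schreier / Schreier lemma applied to the finite-index subgroup $\Stab_G(\pi(\tilde x))$ with the Schreier transversal $\{w_{\tilde z}\}$ for the coset space $G/\Stab_G(\pi(\tilde x)) \cong G.\pi(\tilde x)$; the proof is the usual one-page induction on word length rewriting a word in $S \cup S^{-1}$ that fixes $\pi(\tilde x)$ as a product of the $g_{\tilde z, s}^{\pm 1}$.

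The main obstacle — really the only subtle point — is bookkeeping around the fact that $\tilde X$ is not a set we can test equality in, only a set mapping onto $X$ with decidable \emph{$\pi$-fiber} equality; one must be careful that all loops ``for $\tilde z \in L$'' are legitimate (they are, since $L$ is a concretely enumerated finite list of elements of $\tilde X$ that we have our hands on) and that the element $\tilde z'$ realising $\pi(\tilde z') = s.\pi(\tilde z)$ is chosen consistently (any choice with the correct $\pi$-image works, since only $\pi(\tilde z')$ enters the computation, but one should fix, say, the first such $\tilde z'$ in $L$ to make the algorithm deterministic). Everything else — termination, correctness of the orbit answer, and the Schreier generation statement — is routine once this is set up, so I would state the Schreier lemma step as ``standard'' and only spell out the search procedure and its termination in detail.
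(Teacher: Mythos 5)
Your proposal is correct and follows essentially the same route as the paper: enumerate the orbit of $\pi(\tilde x)$ by repeatedly applying generators via the lifting algorithm, using finiteness of $X$ for termination (note, as the paper also leaves implicit, that a finite $S$-invariant subset is automatically $G$-invariant since each generator acts bijectively), and then decide membership and extract a connecting word. Your stabiliser argument via explicit Schreier generators $w_{\tilde z'}^{-1}\,s\,w_{\tilde z}$ is just the generator-level form of the paper's appeal to a generating set of $\pi_1$ of the Schreier graph, so the two proofs coincide in substance.
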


Let us emphasize that  the entire set $X$, and even its cardinality, is not assumed to be known, that $\Tilde X$ might be infinite,
and that $G$ does not act on $\Tilde X$. 

\begin{proof}
Consider $x=\pi(\tilde x)$ and $y=\pi(\tilde x)$.
Let $B_n(x)\subset X$ be the set of images of $x$ under elements of $G$ of length at most $n$.

By hypothesis,  one can compute from $\tilde x$ representatives in $\tilde{X}$ of $B_n(x)$. 
One can also check whether $y\in B_n(x)$
and check whether $B_n(x)=B_{n+1}(x)$.
Since $X$ is finite, $B_n(x)=B_{n+1}(x)$ for some $n$,
so $B_n(x)=Gx$ since $B_n(x)$ is invariant under the generators of $G$.
One can therefore check whether $y$ lies in $Gx$.
In this case, one easily finds a word of length $\leq n$ sending $x$ to $y$.

Let's compute generators for the stabiliser of $x$.  The argument above allows one to compute
the  Schreier graph $\Sigma$ of the action of $G$ on $Gx$:
its vertex set is $Gx$ and $x,y$ are joined by a directed edge labelled by $s\in S$ if $sx = y$. 
One can obtain a set of generators of $\Stab(x)$ by considering the words labelled by a generating set of $\pi_1(\Sigma,x)$ 
(see  also \cite[Theorem 2.7, p.89]{MKS_combinatorial}). 
\end{proof}

\subsection{Extensions}
\label{subsec_extensions}
      Let $N$ be a fixed group,
      and  $ 1\to N \to E_1 \to G_1 \to 1$,  $ 1\to N \to E_2 \to G_2 \to 1$ some extensions of $G_1,G_2$ by $N$.
      We say that an isomorphism $\alpha:G_1\ra G_2$ \emph{lifts} if there exists $\phi:E_1\ra E_2$
      making the following diagram commute:
      $$\xymatrix{
        N \ar[d]^{\id_N} \ar[r] & E_1 \ar[r] \ar[d]^{\phi} & G_1 \ar[d]^{\alpha} \\
        N               \ar[r] & E_2 \ar[r]               & G_2 
      }$$
      (note that this requires the restriction of $\phi$ to be the identity on $N$).
      Two extensions $E_1,E_2$ of the same group $G$ by $N$ are \emph{equivalent} if $\id_G$ lifts.
      We denote by $\cale(G,N)$ the set of equivalence classes of extensions of $G$ by $N$.
      Note that $\Aut(G)$ acts on the right on $\cale(G,N)$ as follows: if $N\xra{\iota} E \xra{\pi} G$ is an extension,
      then $N\xra{\iota} E \xra{\pi\circ\alpha} G$ is the new extension. This new extension is 
equivalent to the original one
      if and only if $\alpha$ lifts.

We will consider the case where $N$ is finite.
Given two groups $G,N$ described via presentations, 
an extension $1\ra N\ra E \ra G$ will be described algorithmically by
a finite presentation of $E$, a finite subgroup $N'\normal E$ (described by the list of its elements, and a presentation),
an isomorphism $N\ra N'$,
and a isomorphism between $G$ and $E/N'$ (a presentation of $E/N'$ can be deduced from the presentation of $E$ by adding elements of $N'$ as relators).

\begin{lem}\label{lem_extens}
  Given two extensions $E_1,E_2\in \cale(G,N)$ of $G$ by the same finite group $N$, and solutions to the word problem in these groups,
  one can decide whether $E_1,E_2$ are equivalent.  If they are, one can compute an isomorphism $E_1\ra E_2$ realising the equivalence.
\end{lem}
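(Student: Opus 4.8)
The plan is to reduce the equivalence-of-extensions question to a finite enumeration, exploiting the finiteness of $N$ and the algorithmic data we are given. Recall that we are handed, for each $E_i$, a finite presentation of $E_i$, a normal subgroup $N_i'\normal E_i$ given as an explicit finite list of elements together with a presentation, an isomorphism $N\ra N_i'$, and an identification of $G$ with $E_i/N_i'$. By Lemma \ref{lem_probleme_orbite} or a direct enumeration argument, I would first enumerate a finite generating set $\{g_1,\dots,g_k\}$ of $G$ and, for each $g_j$, a coset representative $e_j\in E_1$ mapping to $g_j$ under $E_1\tto E_1/N_1'\cong G$; such representatives are computable since membership in $N_1'$ is decidable (the elements of $N_1'$ are listed) and we can run the word problem in $E_1$. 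Likewise choose computable representatives $f_j\in E_2$ of the images of $g_j$ in $E_2/N_2'\cong G$.

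The key observation is that a homomorphism $\phi:E_1\ra E_2$ realising the equivalence is completely determined by its values on the $e_j$ together with the requirement that $\phi|_{N_1'}$ is the prescribed isomorphism $N_1'\cong N\cong N_2'$; and since $\phi(e_j)$ must reduce to $g_j$ modulo $N_2'$, we have $\phi(e_j)=f_j\cdot n_j$ for some $n_j\in N_2'$. As $N$ is finite, there are only finitely many choices of the tuple $(n_1,\dots,n_k)\in (N_2')^k$. So the algorithm is: for each such tuple, define the candidate map on generators $e_j\mapsto f_jn_j$ and on $N_1'$ by the given isomorphism, check (using the word problem in $E_2$, which we are given) that this assignment respects all the defining relators of a presentation of $E_1$ on the generating set $\{e_j\}\cup N_1'$ — one can obtain such a presentation from the given presentation of $E_1$ and of $N_1'$ — and check that the resulting homomorphism makes the square commute, i.e.\ is the identity on $N$ and induces $\id_G$ on the quotients. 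The latter commutation checks again reduce to finitely many word-problem instances on the fixed generators of $N$ and $G$. If some tuple passes all checks we output the corresponding $\phi$ (and note it is automatically an isomorphism: its composition with the analogous map going the other way, or the five lemma applied to the commuting diagram with $\id_N$ and $\id_G$ on the outside, forces bijectivity — concretely one checks $\phi$ is onto since it hits $N_2'$ and representatives of every $G$-coset, and injective since its kernel meets $N_1'$ trivially and maps to $1$ in $G$). If no tuple works, the extensions are not equivalent.

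To see correctness: if $E_1,E_2$ are equivalent then by definition there is a $\phi$ with $\phi|_{N_1'}$ the given isomorphism and inducing $\id_G$, hence $\phi(e_j)\in f_jN_2'$ for each $j$, so $\phi$ arises from one of the finitely many tuples and will be found. Conversely any $\phi$ produced by the algorithm satisfies exactly the conditions defining an equivalence. Termination is clear: the outer loop is over a finite set, and each iteration performs finitely many decidable (word-problem) checks. This also yields, when they are equivalent, an explicit isomorphism $E_1\ra E_2$ expressed on generators, as required.

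The main obstacle — really the only subtlety — is bookkeeping rather than mathematics: one must make sure that from the given data (a presentation of $E_1$, the list of $N_1'$, the isomorphism $N\cong N_1'$, and the identification $G\cong E_1/N_1'$) one can algorithmically write down a presentation of $E_1$ on the enlarged generating set $\{e_1,\dots,e_k\}\cup N_1'$ so that ``$\phi$ is a well-defined homomorphism'' becomes a finite checklist of relators to verify in $E_2$; this is routine Tietze-transformation manipulation (add the $e_j$ as new generators with defining words expressing them in the old generators, then rewrite), but it is what makes the finitely-many-word-problem-checks formulation legitimate. Everything else is a direct finite search enabled by $|N|<\infty$.
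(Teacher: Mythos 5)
Your proof is correct, and it rests on the same basic mechanism as the paper's: lift a generating set, observe that any equivalence must send each lifted generator to the corresponding lift in $E_2$ multiplied by one of the finitely many elements of $N$, and test each of the finitely many candidate assignments for being a homomorphism using the word problem in $E_2$. The genuine difference is how you certify that a successful candidate realises the equivalence. You enlarge the generating set of $E_1$ to $\{e_j\}\cup N$ (at the cost of a Tietze-rewriting step to produce a presentation on that set), prescribe the map on $N$ to be the given identification, and then the short five lemma makes bijectivity automatic, so a one-sided search suffices. The paper instead keeps a generating set of $E_1$ as given (no rewriting needed), enumerates candidate morphisms in \emph{both} directions $E_1\ra E_2$ and $E_2\ra E_1$, and uses the word problem to find a pair that are mutually inverse and restrict to the identity on $N$. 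Your route buys a cleaner correctness argument and avoids the symmetric enumeration; the paper's avoids the presentation rewriting and any appeal to the five lemma. Both terminate for the same reason, namely $|N|<\infty$, and both output the isomorphism explicitly on generators.
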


        \begin{proof}
          Using the given isomorphisms, we view $N$ as a subgroup of $E_1$ and $E_2$.
          Let $S_1=(s_1,\dots,s_r)$ be an ordered generating set of $E_1$, 
          and $\ol S$ its image in $G=E_1/N=E_2/N$ (recall that we are given isomorphisms
          between $E_i/N$ and $G$).
          Let $S_2=(t_1,\dots,t_r)$ be a lift of $\ol S$ in $E_2$  (maybe generating or not).   
          Since $G$ is given as a quotient of $E_2$, one can compute such $S_2$.
          Now $E_1$ and $E_2$ are equivalent if and only if there exists an isomorphism $f:E_1\ra E_2$
          inducing the identity on $N$, and
          sending $(s_1,\dots,s_r)$ to $(t_1 n_1,\dots, t_r n_r)$ where $n_i\in N$. 
          One can tell if a mapping $(s_1,\dots,s_r)\mapsto(t_1 n_1,\dots, t_r n_r)$ extends
          a homomorphisms $E_1\ra E_2$
          using the presentation of $E_1$ and a solution to the word problem in $E_2$.
          This way, we get the list of all morphisms $E_1\ra E_2$  sending $(s_1,\dots,s_r)$ to $(t_1 n_1,\dots, t_r n_r)$.
          One can similarly list all morphisms $E_2\ra E_1$ sending $(t_1,\dots,t_r)$ to $(s_1 n'_1,\dots, s_r n'_r)$.
          One can check if two such morphisms are inverse of each other, 
          and if their restriction to $N$ is the identity          using the word problem.
        \end{proof}

        \begin{lemma}
          Let $G$ be a finitely presented group, and $N$ be a finite group.
          The set $\cale(G,N)$ is finite.
        \end{lemma}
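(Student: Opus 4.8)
The plan is to show that $\cale(G,N)$ is finite by bounding the number of equivalence classes of extensions combinatorially. First I would fix a finite presentation $G = \langle x_1,\dots,x_k \mid r_1,\dots,r_m\rangle$ and recall the standard fact that any extension $1\to N\to E\to G\to 1$ is determined up to equivalence by two pieces of data: an action of $G$ on $N$ (equivalently, a homomorphism $G\to \Out(N)$ describing the outer action), and, once a set-theoretic section $s\colon G\to E$ with $s(1)=1$ is chosen, a $2$-cocycle $f\colon G\times G\to N$ recording $s(g)s(h) = f(g,h)\,s(gh)$, with equivalent extensions corresponding to cohomologous cocycles. So the goal reduces to showing that there are only finitely many possible outer actions and, for each, only finitely many cohomology classes.

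The first point is immediate: $\Out(N)$ is a finite group since $N$ is finite, so there are at most $|\Out(N)|^k$ homomorphisms $G\to\Out(N)$ determined by where the generators go, hence finitely many outer actions. For the second point, fix an outer action and lift it to an actual set map $G\to\Aut(N)$; the relevant cohomology is then $H^2(G,Z(N))$ for the induced $G$-module structure on the (finite abelian) centre $Z(N)$ — or, more carefully, one works with the appropriate non-abelian $H^2$, but the obstruction/classification is controlled by $H^2(G;Z(N))$ which is all we need to bound. The key step is therefore: for a finitely presented group $G$ and a finite $G$-module $A$, the group $H^2(G;A)$ is finite. I would prove this by taking the partial free resolution coming from the presentation, $\mathbb{Z}G^{(m)} \to \mathbb{Z}G^{(k)} \to \mathbb{Z}G \to \mathbb{Z}\to 0$ (generators and relators giving the first two terms, which is where finite presentability is used), applying $\Hom_{\mathbb{Z}G}(-,A)$, and observing that $H^2(G;A)$ is a subquotient of $\Hom_{\mathbb{Z}G}(\mathbb{Z}G^{(m)},A)\cong A^m$, which is finite since $A$ is finite. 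Summing over the finitely many outer actions gives $|\cale(G,N)| < \infty$.

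The main obstacle, and the only genuinely delicate point, is that extensions with non-abelian kernel are \emph{not} literally classified by $H^2(G;Z(N))$: given an outer action $G\to\Out(N)$, there is first an obstruction class in $H^3(G;Z(N))$ which must vanish for any extension to exist, and when it does vanish the set of equivalence classes realizing that outer action is a torsor under $H^2(G;Z(N))$ (this is the classical theory of Eilenberg–Mac Lane). So I would phrase the argument as: the number of equivalence classes inducing a given outer action is either $0$ or equal to $|H^2(G;Z(N))|$, hence in all cases at most $|H^2(G;Z(N))|$; multiplying by the number of outer actions finishes the proof. An alternative, more elementary route that avoids quoting this machinery entirely: bound directly the number of possible multiplication tables. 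Choosing lifts $\tilde x_i\in E$ of the generators $x_i$, the isomorphism type of $E$ together with the maps $N\to E$ and $E\to G$ up to equivalence is determined by the finite data of how each $\tilde x_i$ conjugates $N$ (an element of $\Aut(N)$, finitely many choices) and the values $f(x_i,x_j)\in N$ plus the elements of $N$ appearing when each relator $r_\ell$ is evaluated on the lifts — all ranging over the finite set $N$ — so there are at most $|\Aut(N)|^k\cdot |N|^{k^2}$ possibilities, a crude but sufficient finite bound. I would likely present the cohomological version as the clean statement and remark that the crude counting gives an effective bound, which dovetails with the algorithmic spirit of the surrounding lemmas.
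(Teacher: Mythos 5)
Your proposal is correct and follows essentially the same route as the paper: finitely many outer actions $G\to\Out(N)$ since $G$ is finitely generated and $\Out(N)$ is finite, and for each such action the set of equivalence classes realizing it is empty or a torsor under $H^2(G,Z(N))$ (the paper cites \cite[Theorem IV 6.6]{Brown_cohomology} for exactly this), which is finite since $G$ is finitely presented and $Z(N)$ is finite. Your added justification of the finiteness of $H^2$ via the partial resolution from the presentation, and the crude counting alternative, are fine supplements but do not change the argument.
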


        \begin{proof}  An equivalence class of extensions of $G$ by $N$ determines a morphism
        $\psi: G\to \Out (N)$ by conjugation. Such a morphism $\psi$ 
        induces a morphism $G\ra \Aut(Z(N))$ which makes the centre $Z(N)$ a $G$-module.
        By                 \cite[Theorem IV 6.6]{Brown_cohomology}, 
        the set of equivalence classes of extensions of $G$ by $N$ inducing $\psi:G\ra\Out(N)$,
        is either empty, or admits a free transitive action by the abelian group $H^2(G,Z(N))$. 
        Since $G$ is finitely presented,  and $Z(N)$ is finite,
        $H^2(G,Z(N))$ is finite. Since there are only finitely many
        possible morphisms $\psi$, $\cale(G,N)$ is itself finite.
        \end{proof}

      \begin{prop}\label{prop_iso_extension}
        There is an algorithm that takes as input two finitely presented equivalence classes of
        extensions $E_1,E_2\in \cale(G,N)$ of $G$ by the same finite group $N$, 
        a solution of the word problem in $E_1$ and $E_2$, and some automorphisms $\alpha_1,\dots,\alpha_n\in \Aut(G)$
        and which decides whether $E_1$ is in the orbit of $E_2$
        under the action of  $\grp{\alpha_1,\dots,\alpha_n}$.
        
        Moreover,  one can compute a generating set of the stabiliser of $E_1$ under the action of $\grp{\alpha_1,\dots,\alpha_n}$.
      \end{prop}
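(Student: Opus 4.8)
The key point is that the set $\mathcal{E}(G,N)$ is finite (by the preceding lemma), $\Aut(G)$ acts on it on the right, and we have an algorithm (Lemma \ref{lem_extens}) deciding when two given extensions are equivalent. So this is tailor-made for an application of Lemma \ref{lem_probleme_orbite} (orbit decidability in finite sets) with $G$ replaced by the group $\grp{\alpha_1,\dots,\alpha_n}\le\Aut(G)$, the finite set $X$ replaced by the orbit of $E_1$ (or all of $\mathcal{E}(G,N)$), and $\tilde X$ replaced by a set of ``concrete descriptions'' of extensions, namely tuples $(E,\text{presentation},N'\normal E,N\ra N', E/N'\xra{\sim}G)$ of the sort described before Lemma \ref{lem_extens}. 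I would first set up this correspondence precisely: $\pi:\tilde X\to X=\mathcal{E}(G,N)$ sends a concrete description to its equivalence class.

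\textbf{Verifying the hypotheses of Lemma \ref{lem_probleme_orbite}.} Three things are needed. First, a finite generating set of the acting group: that is just $\alpha_1,\dots,\alpha_n$, which is given. Second, an algorithm deciding whether two elements of $\tilde X$ have the same image in $X$, i.e.\ whether two concretely-given extensions are equivalent: this is exactly Lemma \ref{lem_extens} (we are also given solutions to the word problems in $E_1,E_2$, and for the finitely many extensions that arise in the enumeration the word problem is solvable since they are built explicitly from $G$ and the finite group $N$ by a $2$-cocycle — in fact one can take $\tilde X$ to consist of all extensions with underlying set $G\times N$ and explicit cocycle, where the word problem in $G$ gives the word problem in $E$). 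Third, given $\tilde x\in\tilde X$ describing an extension $\mathscr{E}:N\xra{\iota}E\xra{p}G$ and a generator $\alpha=\alpha_i$, one must compute a concrete description of $\mathscr{E}\cdot\alpha_i = (N\xra{\iota}E\xra{p\circ\alpha_i}G)$. But here we need $\alpha_i$ to be applied; since $\alpha_i\in\Aut(G)$ is given and $p$ is given (as part of $\tilde x$), the map $p\circ\alpha_i:E\to G$ is computable, and a new concrete description of the extension with this projection is immediate (the group $E$, the subgroup $N'$, the iso $N\ra N'$ are unchanged; only the identification of $E/N'$ with $G$ changes, by precomposing with $\alpha_i$). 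So all three hypotheses hold, and Lemma \ref{lem_probleme_orbite} yields: we can decide whether $\pi(\tilde x_1)$ and $\pi(\tilde x_2)$ lie in the same orbit, taking $\tilde x_1,\tilde x_2$ to be the given concrete descriptions of $E_1,E_2$; and we can compute a word in the $\alpha_i^{\pm1}$ realizing the orbit equivalence when it exists, as well as a generating set (in the $\alpha_i^{\pm1}$) of $\Stab(E_1)$.

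\textbf{Main obstacle.} The only genuinely delicate point is ensuring the decidability of equivalence (hypothesis two of Lemma \ref{lem_probleme_orbite}) uniformly over \emph{all} extensions produced by the process $\tilde x\mapsto \tilde x\cdot\alpha_i$, and in particular that these all come with a solvable word problem so that Lemma \ref{lem_extens} applies. This is handled by choosing the right $\tilde X$: represent every extension in $\mathcal{E}(G,N)$ by an explicit set-theoretic cocycle datum on $G\times N$ (a $2$-cocycle $f:G\times G\to N$ together with the action $G\to\mathrm{Out}(N)$), for which multiplication, inverses and hence the word problem reduce directly to the word problem in $G$; the action of $\alpha_i$ on such data is the obvious pullback $f\mapsto f\circ(\alpha_i\times\alpha_i)$, manifestly computable, and equivalence of two such data is decidable by Lemma \ref{lem_extens}. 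With this choice everything is effective, and the proposition follows directly from Lemma \ref{lem_probleme_orbite}.
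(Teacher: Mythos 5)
Your proof is correct and follows essentially the same route as the paper: finiteness of $\cale(G,N)$, Lemma \ref{lem_extens} to decide equivalence of concretely described extensions, and Lemma \ref{lem_probleme_orbite} applied to $\grp{\alpha_1,\dots,\alpha_n}$ acting on $\cale(G,N)$ via concrete representatives. Your worry about word problems in the extensions arising along the way (and the cocycle work-around) is unnecessary, since acting by $\alpha_i$ only changes the identification $E/N'\ra G$ and never the underlying group, so the given solutions of the word problem in $E_1,E_2$ already suffice.
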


      \begin{proof}
        The previous lemma asserts that $\cale(G,N)$ is finite.
        By Lemma \ref{lem_extens}, 
        we can compute the action of $\grp{\alpha_1,\dots,\alpha_n}$ on $\cale(G,N)$ 
        in the sense of Lemma \ref{lem_probleme_orbite} (which does not require to be able to enumerate $\cale(G,N)$),
        and thus decide whether $E_1$ and $E_2$ are in the same orbit or not, and compute a generating set of 
        the stabiliser of $E_1$.
      \end{proof}

\subsection{Algorithmic tools for hyperbolic groups}

\subsubsection{Basic algorithms}

Let $G$ be a hyperbolic group. 
Given a finite presentation of $G$, one can compute  a hyperbolicity constant $\delta$ of the property of $\delta$-thin triangles 
(\cite{Gromov_hyperbolic,Papasoglu_algorithm}).   Let us remark that there are many classical characterisations of hyperbolicity, where the constant slightly vary when passing from a definition to another (see \cite{CDP}); this is completely harmless since the formulas for changing the constant are explicit.
Many algorithms are explicitly defined from a presentation of $G$  and a hyperbolicity constant $\delta$ for this presentation.
Since $\delta$ itself is computable in terms of a presentation, one can view these algorithms as
taking only a presentation of $G$ as input.
These problems include the word problem, the conjugacy problem, 
and more generally, the problem of satisfiability of finite systems 
of equations and inequations, see \cite{DG1}. 
 Similarly, one can compute representatives of every 
conjugacy class of finite subgroup of $G$ since every such group has a conjugate consisting of elements of 
length at most $20\delta$; one can also solve the root problem, \ie determine whether  an element is a proper power  \cite{Lys_algorithmic}.

\begin{lemma}\label{lem_cent_finite}
  There is an   
algorithm that computes a set of generators 
  of the centraliser, and of the normaliser, of any given finite subgroup 
in a hyperbolic group.
\end{lemma}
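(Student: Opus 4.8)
The plan is to show that $N_G(F)$ and $C_G(F)$ are quasiconvex subgroups of $G$ with an \emph{explicitly computable} quasiconvexity constant, and then to invoke the standard fact that a $\kappa$-quasiconvex subgroup $H\le G$ with a decidable membership problem is generated by $H\cap B_G(1,2\kappa+1)$; so one obtains a finite generating set by listing this ball and testing membership. Membership in $N_G(F)$ (resp.\ $C_G(F)$) is decidable with the word problem: given $g$, test whether $gfg\m\in F$ for each $f\in F$ (resp.\ whether $gf=fg$). Let $\delta$ be a computable hyperbolicity constant, let $X=\Cay(G)$, and let $K$ be such that $F\subseteq B_G(1,K)$ (such a $K$ is read off from the description of $F$, or one conjugates $F$ into $B_G(1,20\delta)$ and treats the conjugate, using $N_G(gFg\m)=gN_G(F)g\m$).

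The heart of the argument is the quasiconvexity of $N_G(F)$. Set
$$W=\{x\in X:\ d(x,fx)\le K\ \text{ for all }f\in F\}=\bigcap_{f\in F}\{x\in X:\ d(x,fx)\le K\}.$$
Each set $\{x:d(x,fx)\le K\}$ is an almost-minimal-displacement set and is therefore quasiconvex with a constant $q_1=q_1(\delta,K)$ that is explicit (see \cite{CDP}), so $W$ is $q_W$-quasiconvex with $q_W=q_W(\delta,K,|F|)$ explicit. One has $1\in W$, and $N_G(F)\subseteq W$ since $x\in N_G(F)$ forces $x\m Fx=F\subseteq B_G(1,K)$. Conversely, for $x\in W$ the set $x\m Fx$ is a subgroup of $G$ contained in the finite set $B_G(1,K)$. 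Let $\calF$ be the (finite, computable) set of all subgroups of $G$ contained in $B_G(1,K)$. For $H\in\calF$ the set $\{x:x\m Fx=H\}$ is, if non-empty, a single right coset $N_G(F)x_H$ (because $x\m Fx=x_H\m Fx_H$ is equivalent to $xx_H\m\in N_G(F)$). Deciding for which $H$ this set is non-empty amounts to testing satisfiability of finitely many systems of equations ($\exists x:\ x\m f_ix=h_{\pi(i)}$, over the bijections $\pi\colon F\to H$), which is possible by \cite{DG1}; for each such $H$ one then produces a witness $x_H$ by brute-force enumeration of $G$ (which halts since the system is satisfiable). Thus $W=\bigsqcup_H N_G(F)x_H\subseteq N_D(N_G(F))$ in $X$, with $D=\max_H|x_H|$ computable. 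Combining: for $h_1,h_2\in N_G(F)\subseteq W$, every geodesic $[h_1,h_2]$ lies in $N_{q_W}(W)\subseteq N_{q_W+D}(N_G(F))$, so $N_G(F)$ is $(q_W+D)$-quasiconvex with a computable constant, and a generating set is computed as above.

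For $C_G(F)$, conjugation defines a homomorphism $N_G(F)\to\Aut(F)$ with kernel $C_G(F)$, so $[N_G(F):C_G(F)]\le|\Aut(F)|<\infty$. Having computed generators of $N_G(F)$, one enumerates enough elements of $N_G(F)$ to recognise the image subgroup of the finite group $\Aut(F)$, its index, and a transversal, and then computes Schreier generators of $C_G(F)$, exactly as in the Schreier-graph computation in the proof of Lemma \ref{lem_probleme_orbite}. (Alternatively, a finite-index subgroup of a quasiconvex subgroup is quasiconvex with a controlled constant, so one could instead apply the ball argument directly to $C_G(F)$.) If $F$ was replaced by a conjugate at the start, one conjugates the resulting generating sets back.

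The step I expect to be the main obstacle is the computability of the ``coboundedness constant'' $D$: one must produce, for each $H\in\calF$ conjugate into $F$, an actual conjugator of controlled length, which is why the solution of systems of equations in hyperbolic groups \cite{DG1} (equivalently, a computable bound on short conjugators between finite subgroups) is needed. The quasiconvexity of $W$ with an explicit constant is standard but requires some care, and the passage from $N_G(F)$ to $C_G(F)$ is routine.
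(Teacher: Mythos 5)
Your argument is correct in outline but runs in the opposite direction from the paper's, and it rebuilds from scratch a fact the paper simply cites. The paper treats the centraliser first: it quotes Bridson--Haefliger (\cite[Prop.\ 3.9 and its proof, Prop.\ 4.15]{BridsonHaefliger_metric}) for the statement that $Z_G(F)$ is quasiconvex and, more to the point, generated by elements of an explicitly bounded length, so a generating set is found by testing all elements up to that bound with the word problem; the normaliser is then obtained from the centraliser via the finite map $N_G(F)\to\Aut(F)$, using the simultaneous conjugacy problem to decide which $\alpha\in\Aut(F)$ are induced by conjugation and to produce witnesses $g_\alpha$. You do the reverse: you prove quasiconvexity of $N_G(F)$ directly, by identifying the bounded-displacement set $W$ with a finite union of cosets $N_G(F)x_H$ and computing the translates $x_H$ by solving conjugacy systems, and you then recover $Z_G(F)$ as the finite-index kernel of $N_G(F)\to\Aut(F)$ via Schreier generators. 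Both routes hinge on the same two ingredients (quasiconvexity with a computable constant, and the finiteness of $\Aut(F)$ together with decidable simultaneous conjugacy); yours is self-contained but pays for it by invoking the equation-solving machinery of \cite{DG1} where the simultaneous conjugacy problem already suffices, while the paper's is shorter because the explicit generation bound for centralisers is taken off the shelf.

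One step as written is not justified: the inference ``each set $\{x: d(x,fx)\le K\}$ is quasiconvex, \emph{so} their intersection $W$ is $q_W$-quasiconvex'' is false for general quasiconvex subsets of a hyperbolic space, and this is exactly where your constant $q_W$ is supposed to come from. The repair stays inside your own framework: use the coarse convexity of the displacement function (if $d(x,fx)\le K$ and $d(y,fy)\le K$ then every $z\in[x,y]$ satisfies $d(z,fz)\le K+4\delta$), so geodesics between points of $N_G(F)$ lie in the enlarged set $W_{K+4\delta}$; then run your coset argument for $W_{K+4\delta}$ rather than $W$, i.e.\ list the subgroups of $G$ contained in $B_G(1,K+4\delta)$, find conjugators $x_{H}$ by simultaneous conjugacy plus enumeration, and take $D'=\max_H\lvert x_H\rvert$. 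This gives $N_G(F)$ quasiconvex with the computable constant $D'$, after which your ball-plus-membership step and the Schreier-generator passage to $Z_G(F)$ go through as you wrote them.
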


\begin{proof}
  Let $F<G$ be a finite subgroup of a hyperbolic groups $G$.
  By  \cite[Proposition 3.9]{BridsonHaefliger_metric} 
  (and its proof, in \cite[Proposition  4.15, pp. 477-478]{BridsonHaefliger_metric}),    
  its centraliser is quasi-convex  and in fact, generated by elements of length 
  bounded by an explicit constant, depending on the hyperbolicity constant, and the length of the given elements. 
  After trying all these elements we obtain a generating set.

Now the normaliser $N$ of $F$ maps  
to a subgroup $\bar{N}$ of $Aut(F)$, with kernel  
the centraliser of $F$.
For each $\alpha\in Aut(F)$, using a solution to the simultaneous conjugacy problem in $G$, it is possible to check whether $\alpha \in \bar{N}$, and if so, to find an element  $g_\alpha \in N$ inducing $\alpha$.
A generating set of $N$ is obtained  by taking a generating set of the centraliser, and, for each $\alpha \in \bar{N}$, an element  $g_\alpha$ as found above.
\end{proof}

Let $G$ be a hyperbolic group and $\grp{S}$ a finite generating set.
Let $(S\cup S\m)^*$ be the free monoid of all words on $S\cup S\m$, and
$\pi:(S\cup S\m)^*\ra G$ the natural projection.
A \emph{regular language} of $(S\cup S\m)^*$ is a subset recognised by a finite automaton.

Recall that a $(\lambda,\mu)$-quasigeodesic in  a space $(M,d)$  is a path $p:[\alpha,\beta] \to M$ such that,  for all $t,t' \in [\alpha,\beta]$ one has  $(1/\lambda) |t'-t| - \mu \leq  d(p(t),p(t'))  \leq \lambda |t'-t| +\mu$.

A word over the generators of a group labels a path in its Cayley graph. A word is said quasi-geodesic if the labelled path is so. 
\begin{dfn}\label{dfn_qiers}
  A  subset $\calr\subset G$ is a \emph{\qie rational subset} if there exist $\lambda\geq 1,\mu\geq 0$
  and a regular language $\Tilde\calr\subset(S\cup S\m)^*$ consisting of $(\lambda,\mu)$-quasigeodesics
  such that $\pi(\Tilde\calr)=\calr$.
\end{dfn}

Given a system of equation and inequations in $G$ over the set of variables $X$,
a set of \qie \emph{rational constraints} is a family $(\calr_x)_{x\in X}$ of \qie rational languages
indexed by the variables.
A \emph{solution} of the system of equations and inequations with these rational constraints
is a solution $(g_x)\in G^X$ of the system of equations and inequations such that
for all $x\in X$, $g_x\in\calr_x$.

We will use the following result from \cite{DG1}.

\begin{theo}\label{theo;eq}
  There is an explicit algorithm that, given a hyperbolic group, 
  with a finite system of equations, inequations, 
  and \qie rational constraints, determines whether there is a solution or not.
\end{theo}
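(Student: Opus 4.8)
The plan is to adapt the Rips--Sela method of \emph{canonical representatives} to hyperbolic groups with torsion, and to make it interact with \qie rational constraints; once everything is pushed down to virtually free groups one invokes the known decidability of equations, inequations and rational constraints there. So fix a hyperbolic group $G$ with finite generating set $S$ and hyperbolicity constant $\delta$, together with a finite system consisting of equations $w_i(X)=1$, inequations $v_j(X)\neq 1$, and for each variable $x\in X$ a \qie rational constraint $\calr_x=\pi(\Tilde\calr_x)$, where $\Tilde\calr_x\subset(S\cup S\m)^*$ is a regular language of $(\lambda,\mu)$-quasigeodesics; the goal is to decide whether there is an assignment $x\mapsto g_x$ satisfying all equations and inequations with $g_x\in\calr_x$ for every $x$.

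First I would recall the Rips--Sela reduction for a system of equations, with no constraints and no torsion. A candidate solution $(g_x)$ turns each equation into a closed polygon in the Cayley graph whose sides are geodesics labelled by the $g_x$; triangulating and repeatedly using $\delta$-thinness, one produces for each $x$ a \emph{canonical} quasigeodesic word $\hat g_x=p_{x,1}\cdots p_{x,\ell_x}$, a concatenation of ``pieces'' of length at most an explicit $B=B(\delta,N)$ ($N$ denoting the total length of the system), with $\ell_x$ bounded in terms of the system as well. Let $P$ be the finite, explicitly computable set of words over $S\cup S\m$ of length $\le B$. There are then only finitely many combinatorial patterns prescribing how the pieces of the various variables coincide or cancel in the equations; each such pattern is a finite system of equations over the free group $F(P)$, and the original hyperbolic system is solvable iff at least one of these finitely many free systems is (apply $\pi$ in one direction; straighten a given solution in the other). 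Makanin's algorithm then decides each of them. (An alternative, used by Sela and in later treatments, builds a Makanin--Razborov diagram directly via the Bestvina--Paulin limit-tree and shortening machinery; for the mere decision problem the canonical-representatives route is more direct.)

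Next I would incorporate torsion. When the straightening forces two pieces to agree only after a correction by a finite-order element --- which is exactly what happens when $g_x$ runs close to a finite subgroup --- the pieces no longer live freely but in a free product of a free group with finite subgroups of $G$, that is, in a virtually free group. Since $G$ has only finitely many conjugacy classes of finite subgroups, all of bounded order, this additional data ranges over a finite explicitly computable set of graph-of-groups descriptions; equations and inequations over a virtually free group are decidable, either directly or via a Bass--Serre/Stallings reduction to the free case. This settles the problem without rational constraints.

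Finally I would treat the rational constraints, which is where the quasi-isometric-embeddability assumption is essential and where I expect the main obstacle. If $g_x\in\calr_x$ there is $u_x\in\Tilde\calr_x$ with $\pi(u_x)=g_x$ and $u_x$ a $(\lambda,\mu)$-quasigeodesic; by the Morse lemma in $G$ both $u_x$ and $\hat g_x$ stay within a distance $R=R(\delta,\lambda,\mu)$ of a geodesic from $1$ to $g_x$, hence boundedly fellow-travel. This lets one subdivide $u_x=u_{x,1}\cdots u_{x,\ell_x}$ into bounded-length \emph{syllables} tracking the pieces $p_{x,i}$, with bounded synchronisation corrections between the endpoints of $p_{x,1}\cdots p_{x,i}$ and of $u_{x,1}\cdots u_{x,i}$. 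One then enlarges the virtually free system by finitely many auxiliary variables ranging over bounded sets --- the syllables, the corrections, and the state of the automaton of $\Tilde\calr_x$ reached between consecutive syllables --- with equations tying these to the $p_{x,i}$ and a rational constraint recording the automaton's transitions; this enlarged virtually free system with rational constraints is solvable iff the original one is, and one concludes by the decidability of equations, inequations and rational constraints over free groups and its extension to virtually free groups. The hard part is the quantitative bookkeeping that makes this work: the canonical representatives must be chosen so as to be simultaneously uniformly quasigeodesic, compatible with the finite-subgroup structure coming from torsion, and close enough to the constrained regular-language words that the automaton's transitions across each bounded piece fall into only finitely many cases --- and one must check that the resulting finite list of virtually free systems with rational constraints is genuinely computable from the input. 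Reconciling all these constants with one explicit choice is the technical core of the argument.
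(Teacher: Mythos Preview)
The paper does not prove this theorem: it is simply quoted from \cite{DG1} (the authors' companion paper on equations in hyperbolic groups), with no argument given here. So there is nothing in the present paper to compare your proposal against.

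That said, your sketch is broadly the strategy of \cite{DG1}: Rips--Sela canonical representatives to reduce equations in a hyperbolic group to equations in a virtually free group, combined with a mechanism for transporting the rational constraints along this reduction, and then an appeal to decidability of equations with rational constraints in virtually free groups. Two points are worth flagging. First, the reduction in the torsion case does not quite land in a single free product of finite groups and a free group as you describe; rather, one works with a genuinely virtually free group (a finite extension of a free group), and the decidability of equations with rational constraints there is itself a substantial result that \cite{DG1} establishes separately (building on Diekert--Muscholl and related work), not something one can simply quote from Makanin. Second, your treatment of the constraints is the delicate part, as you correctly anticipate: the issue is not only bounded fellow-travelling but ensuring that the finitely many ``patterns'' of canonical representatives interact correctly with the automaton states so that the resulting constraints downstairs are again rational. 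Your outline gestures at this but does not resolve it; in \cite{DG1} this is handled by a careful construction of the canonical representatives so that the transported constraints remain regular in the virtually free target.

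In short: your proposal is a reasonable high-level plan for the cited result, but the paper under review offers no proof of its own to compare with, and the actual proof in \cite{DG1} requires substantially more than your sketch supplies, particularly on the virtually-free side and on the transport of constraints.
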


\subsubsection{Virtually cyclic groups and $\Z$-groups}

Because we will treat separately phenomenas related to finite groups, 
and to  infinite virtually cyclic groups, we make the convention that a virtually cyclic group is infinite by definition.

Recall that a virtually cyclic group $H$ can be written as an extension of exactly one of the following two forms:
$$N\normal H \onto \bbZ \quad\text{or}\quad N\normal H \onto D_\infty$$
where $N$ is finite and $D_\infty=\bbZ/2*\bbZ/2$ denotes the infinite dihedral group.
In the first case, it has infinite centre, we say that $H$ is a \emph{$\Z$-group}.
In the second case, $H$ has finite centre and  we say that $H$ is of \emph{dihedral type}.
Note that in both cases, $N$ can be characterised as the maximal finite  
normal subgroup of $H$.

Given a group $G$, we denote by $\Z$ the family of its $\Z$-subgroups, 
and by $\Zmax$ the family of $\Z$-subgroups of $G$ which are maximal for the inclusion.

 Let $G$ be a hyperbolic group. 
 Given a  virtually cyclic group $H\subset G$, 
 there exists a unique
largest virtually cyclic subgroup $VC(H)$ of $G$ 
containing $H$ (the stabiliser of the pair of points fixed by $H$ in the boundary). 
If $H$ is a $\Z$-group,
there also exists a unique $\Zmax$-subgroup $\Zmax(H)\subset G$  containing $H$: this is  
the pointwise fixator of the same pair of points in the boundary.
It has index at most 2 in $VC(H)$.

\begin{lem}\label{lem_VC2}
There is an algorithm that given a finite set $S\subset G$,
\begin{itemize}
\item decides whether $\grp{S}$ is virtually cyclic, and whether $\grp{S}$ is a $\Z$-group
\item if $\grp{S}$ is virtually cyclic, it computes its maximal finite normal subgroup and a presentation of $\grp{S}$
\item if $\grp{S}$ is virtually cyclic (resp. a $\Z$-group), it computes $VC(\grp{S})$ (resp. $\Zmax(\grp{S})$)
by giving a generating set, its maximal finite normal subgroup and a presentation, and in particular, it indicates whether $\grp{S}$ is $\Zmax$.
\end{itemize}
\end{lem}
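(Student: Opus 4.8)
The plan is to reduce everything to three basic algorithmic tools already available in a hyperbolic group $G$: the word problem (and more generally the solvability of systems of equations and inequations, Theorem \ref{theo;eq}), the computability of conjugacy-class representatives of finite subgroups (every finite subgroup has a conjugate inside the ball of radius $20\delta$), and the root problem \cite{Lys_algorithmic}. I would organise the argument around the trichotomy that $\grp S$ is either finite, or a $\Z$-group, or virtually cyclic of dihedral type, or not virtually cyclic at all.

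First I would detect virtual cyclicity. A finitely generated subgroup $\grp S$ of $G$ is virtually cyclic if and only if it is quasiconvex with two-point limit set, equivalently if and only if it has an infinite cyclic subgroup of finite index; concretely, one enumerates elements $h$ of $G$ of increasing length, tests whether $h$ has infinite order (word problem applied to powers, using that torsion elements are conjugate into a bounded ball), and then tests whether every generator $s\in S$ normalises $\grp h$ up to finite index — equivalently whether $s h^N s^{-1}$ lies in $\grp{h^M}$ for suitable bounded exponents, which is decidable since equality of elements is decidable and hyperbolicity gives an explicit bound (depending only on $\delta$ and $|h|$) on the exponents one must test. If no such $h$ is found below the relevant bound, $\grp S$ is not virtually cyclic; the bound is effective because $VC(\grp h)$ is quasiconvex with an explicit quasiconvexity constant (as in Lemma \ref{lem_cent_finite}, via \cite[Prop. 3.9, 4.15]{BridsonHaefliger_metric}). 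Once an element $h$ of infinite order with $\grp h$ of finite index is produced, $\grp S$ is virtually cyclic and one can enumerate its finitely many elements by a Todd–Coxeter-type coset enumeration over $\grp h$, hence write down a presentation.

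Next, given that $\grp S$ is virtually cyclic, I would compute its maximal finite normal subgroup $N$ and decide the dichotomy $\Z$-group versus dihedral type. Having the finite list of elements of $\grp S$, one simply collects the torsion elements and checks which ones lie in a common normal subgroup; alternatively $N$ is the kernel of the action of $\grp S$ on its two-point limit set, so one computes it as $\{g\in\grp S : g h g^{-1}\in\{h,h^{-1}\} \text{ and } g h g^{-1}=h\}$ after replacing $h$ by a generator of the maximal $\bbZ$ it generates. Then $\grp S$ is a $\Z$-group iff $\grp S/N\cong\bbZ$ iff $\grp S$ has an element acting trivially on the limit set and of infinite order modulo $N$ — decidable from the multiplication table of the finite group $\grp S/N$ together with knowledge of which cyclic subgroup is normal.

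Finally, for the computation of $VC(\grp S)$ and $\Zmax(\grp S)$: both are the stabiliser, respectively pointwise fixator, of the pair of boundary points fixed by $\grp S$, hence virtually cyclic and quasiconvex with an explicit quasiconvexity constant bounded in terms of $\delta$ and the lengths of elements of $S$; so one enumerates the finitely many elements of $G$ up to that bound, keeps those that conjugate the infinite-order element $h^{\pm}$ to $h^{\pm 1}$ (resp. to $h$ itself, after passing to the maximal root of $h$ via the root algorithm), and takes the group they generate — again finite, so one extracts its maximal finite normal subgroup and a presentation as above, and reads off whether $\grp S=\Zmax(\grp S)$ by comparing. The main obstacle is the one recurring theme: making all the ``enumerate up to a bound'' steps genuinely effective, i.e. producing explicit quasiconvexity constants for $VC(H)$, $\Zmax(H)$, and centralisers/normalisers of the relevant cyclic and finite subgroups; this is exactly the point where one must invoke the explicit form of \cite[Prop. 3.9 and 4.15]{BridsonHaefliger_metric} (as already used in Lemma \ref{lem_cent_finite}) together with the fact that an element has a bounded-length root, so that there is no circularity and every search terminates.
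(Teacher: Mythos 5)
Your reduction has a genuine error at the final step, the computation of $VC(\grp{S})$ and $\Zmax(\grp{S})$. You propose to keep the bounded-length elements $g$ with $ghg^{-1}=h^{\pm1}$ (resp.\ $ghg^{-1}=h$, after replacing $h$ by its maximal root). That computes the normaliser of $\grp{h}$ (resp.\ the centraliser of $h$), which is in general strictly smaller than $VC(\grp{S})$ and $\Zmax(\grp{S})$: an element of the maximal elementary subgroup only satisfies $ghg^{-1}=h^{\pm1}f$ with $f$ in the maximal finite normal subgroup. Concretely, let $G=(\bbZ\times S_3)*\bbZ$ and $S=\{h\}$ with $h=(1,\sigma)$, $\sigma$ of order $3$. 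Then $\grp{S}\simeq\bbZ$, $h$ has no proper root, and $VC(\grp{S})=\Zmax(\grp{S})=\bbZ\times S_3$ (all of it fixes the two limit points, since $(1,1)$ is central); but a transposition $(0,\tau)$ conjugates $h$ to $(1,\sigma^{-1})\notin\{h,h^{-1}\}$, so your test rejects it and your output is $Z(h)=\bbZ\times\grp{\sigma}$, with the wrong maximal finite normal subgroup as well. To be correct you must test commensuration, e.g.\ $gh^ng^{-1}=h^{\pm n}$ for a suitable computable power $n$ (divisible enough to kill the finite radical), or proceed as the paper does: first compute the maximal finite subgroup $N$ of $G$ normalised by $\grp{S}$ (this is the finite radical of $VC(\grp{S})$), then obtain $\Zmax(\grp{S})$ by computing, via the root algorithm, a maximal root $h_n$ of $gn$ for each $n\in N$ and keeping the one with maximal exponent, and obtain $VC$ by solving the equation $\sigma g\sigma^{-1}=gn$ over $n\in N$. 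The twisting by elements of $N$ is exactly what your recipe omits.

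Separately, your termination arguments all rest on asserted explicit bounds (the exponents $N,M$ in the commensuration test, and a length bound below which a suitable $h$ must appear, justified by ``quasiconvexity of $VC(\grp{h})$''), and these are not established; moreover the quasiconvexity of $VC(\grp{h})$ bounds where to look for elements of $VC(\grp{h})$, not for the candidate $h$ itself, which must moreover lie in $\grp{S}$ (what you actually need here is the elementary remark that if $\grp{S}$ is infinite elementary then some $s_i$ or some product $s_is_j$ already has infinite order). The coset-enumeration step also presupposes a halting criterion and membership in $\grp{h}$, which need justification. The paper deliberately avoids all such effective constants: it first computes the maximal finite subgroup $N$ normalised by $\grp{S}$ by solving equations, then runs two semi-decision procedures in parallel — an enumeration searching for a generating set of $\grp{S}$ of the form $M\cup\{g\}$ or $M\cup\{g,\sigma\}$ with $M<N$, which halts exactly when $\grp{S}$ is virtually cyclic and directly yields the finite radical, a presentation and the $\Z$/dihedral dichotomy, and a search for $g,h\in\grp{S}$ with $[g^2,h^2]\notin N$, which halts exactly when $\grp{S}$ is not elementary. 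Your bounded-search strategy could probably be repaired along these lines, but as written the bound claims are gaps and the $VC$/$\Zmax$ membership test is incorrect.
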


\begin{proof}
  In order to prove the two first points, we propose two algorithms, one of which will terminate if $\grp{S}$ is virtually cyclic,
  will produce a presentation, a maximal finite normal subgroup, and will indicates whether the group is a $\Z$-group, and the
  other will terminate if the group is non-virtually cyclic.

  As previously remarked, one can compute representatives of every conjugacy class of finite subgroup. Then one can find a maximal
  finite subgroup $N$ of $G$ normalised by $\grp{S}$. Indeed, for each representative of finite group $F$, we can solve the
  disjunction of systems of equations with one unknown $x$ that states that $ \forall s\in S, \, ( F^x )^s = F^x$.  Consider the
  largest $F$ for which this has a solution $x$, 
  then $N=F^x$.

  Now one can easily check whether $ S \subset N$ or not.  Hence, we assume that $\grp{S}$ is infinite.   Although it may
  happen that $\grp{S}$ does not contain $N$, its maximal finite normal subgroup is contained in $N$, and one can obtain the list
  of subgroups of $N$ that are normalised by the elements of
  $S$. 

  We now look for a generating set of $\grp{S}$ of the form $M\cup\{g\}$, or $M\cup\{g,\sigma\}$ where $M<N$ is normalised by $S$,
  where $g,\sigma \notin M$, and where $\sigma^2 \in M, \, (\sigma g \sigma g) \in M$. This can be done, in parallel for each
  possible such $M$, by enumeration of the elements $g,\sigma$ of $\grp{S}$, and for each family of elements, checking (using a
  solution to the word problem) whether it generates a group containing $S$. This process must terminate if the group $\grp{S}$ is
  indeed virtually cyclic. When it terminates, it shows $\grp{S}$ as a virtually cyclic group, provides the maximal finite normal
  subgroup, and the quotient (note that $g$ is necessarily of infinite order, otherwise $\grp{S}$ would have been finite).  In
  particular it determines whether $\grp{S}$ is a $\Z$-group.   Once $N,g$ and maybe $\sigma$ have been determined, one
  easily computes a presentation of $\grp{S}$.

  We now discuss the case where the enumeration above does not terminate. In this case $\grp{S}$ must be non-elementary in $G$,
  and therefore it must contain a non-abelian free subgroup. In particular there exists two elements $g,h \in \grp{S}$ such that
  $[g^2,h^2]\notin N$. Note that this cannot happen if $\grp{S}$ is virtually cyclic, since the quotient by $\grp{S}\cap N$ would
  have a cyclic subgroup of index $2$.  Since an enumeration process will find two such elements if they exists, we obtain another
  algorithm that terminates if $\grp{S}$ is not virtually cyclic. This shows the two first points of the lemma.

  For the third point, there are in principle three cases: 
  first $\grp{S} = \grp{M,g}$ is a $\Z$-group, and we look for $\Zmax(\grp{S})$;
  second, $\grp{S} = \grp{M,g}$ is a $\Z$-group, and we look for $VC(\grp{S})$;
  and third, $\grp{S}=\grp{M,g,\sigma}$ is not a $\Z$-group and we look for $VC(\grp{S})$. 
  Since $S$ normalizes $N$, the group $\grp{S,N}$ is virtually cyclic,
  and $N$ is the maximal finite normal subgroup of $VC(\grp{S})$. 
  If additionnally $\grp{S}$ is a $\Z$-group, then $N$ is the maximal finite normal subgroup of $\Zmax({N,g})$.
 
  In any case, for each element $n\in N$, compute a maximal root $h_n$ of $gn$ 
  (\ie a generator of a maximal cyclic group containing $gn$) \cite{Lys_algorithmic}
  and $k_n>0$ such that $h_n^{k_n}=gn$. For $n$ such that $k_n$ is maximal, one has $\Zmax(\grp{N,g})=\grp{N,h_n}$,
  which solves the first case.
  In the third case, $VC(\grp{S})=\grp{M,h_m,\sigma}$.
  To solve the second case, one checks the existence of $\sigma\in G$ such that $\sigma g \sigma\m= gm$ for some $n\in N$.
  If it does not exist, $VC(\grp{S})=\Zmax(\grp{S})=\grp{M,h_n}$, and if it does, $VC(\grp{S})=\grp{M,h_n,\sigma}$.  
\end{proof}

We will also need algorithms concerning virtually cyclic groups. 

\begin{lemma} \label{lem_autZ}
 \begin{enumerate*}
 \item   There is an algorithm that, given a presentation of a $\Z$-group, computes its (finite) automorphism group.

 \item There is an algorithm that,  given two presentations of $\Z$-groups, 
 decides whether they are isomorphic,   and lists the set of all isomorphisms between them.

\item 
 There is an algorithm that, given a presentation of a virtually cyclic group with finite centre, computes its (finite) outer automorphism group.

\item 
  There is an algorithm that,  given two presentations of virtually cyclic groups with finite centre, 
 decides whether they are isomorphic,  and lists the set of all isomorphisms between them, up to inner automorphisms.     
 \end{enumerate*}
\end{lemma}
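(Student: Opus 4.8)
The plan is to handle the four statements together, treating $\Z$-groups and virtually cyclic groups with finite centre uniformly via their canonical extension structure. First I would use the basic algorithmics recalled above: from a presentation of a virtually cyclic group $H$ one can compute its maximal finite normal subgroup $N$ (every finite subgroup has a conjugate inside the ball of radius $20\delta$ after embedding $H$ into itself, or more simply one enumerates normal finite subgroups directly from the presentation since $H$ is virtually cyclic and hence has solvable word problem), and one can then read off whether $H$ is a $\Z$-group (quotient $\bbZ$) or of dihedral type (quotient $D_\infty$). So fix the short exact sequence $1\to N\to H\to Q\to 1$ with $Q\in\{\bbZ,D_\infty\}$, together with an explicit lift of a generator (resp.\ generators) of $Q$.

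For item (1), the key observation is that $\Aut(H)$ is finite when $H$ is a $\Z$-group: any automorphism preserves $N$ (it is characteristic) and the centre, hence induces an automorphism of $N$ and of $H/[\text{torsion}]$, and the subgroup acting trivially on both is finite because $H$ is finite-by-cyclic with the cyclic part acted on through $\{\pm 1\}$. Concretely, an automorphism is determined by the image of $N$ (finitely many possibilities, all inside $N$ since $N$ is characteristic) and the image of a chosen infinite-order element $g$; the latter must again be an infinite-order element of $H$, and modulo $N$ it must map to $\pm$ the generator of $\bbZ$, so it lies in the coset $g^{\pm1}N$ — a finite set. One enumerates all such candidate maps on generators, and for each one checks, using the presentation of $H$ and a solution to the word problem in $H$, whether it extends to a homomorphism and whether that homomorphism is bijective (it suffices to check it is surjective, since $\Aut$ of a Hopfian group — and $H$, being virtually cyclic hence residually finite, is Hopfian — then gives injectivity; alternatively exhibit the inverse on generators). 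This yields the finite list $\Aut(H)$, and composing words gives the group law, so we have computed the group. Item (3) is identical: for $H$ of dihedral type, $\Inn(H)$ is finite of index at most $2$ in $\Aut(H)$ (the outer part again acts through $N$ and through $\{\pm1\}$ on the $D_\infty$), so $\Out(H)$ is finite; one lists all automorphisms as above, then quotients by inner automorphisms (detectable via the conjugacy problem / word problem in $H$) to get $\Out(H)$.

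For items (2) and (4), given presentations of two such groups $H_1,H_2$, one first computes the maximal finite normal subgroups $N_1,N_2$ and the quotient types; if the types differ, or if $N_1\not\cong N_2$, the groups are non-isomorphic. Otherwise any isomorphism $H_1\to H_2$ sends $N_1$ to $N_2$ and an infinite-order generator of $H_1$ into a bounded set of cosets of $N_2$ (those mapping to $\pm$ a generator of $\bbZ$, resp.\ to the appropriate generators of $D_\infty$), exactly as in item (1); so one enumerates the finitely many candidate maps on generators, tests each for being a well-defined homomorphism via the presentation of $H_1$ and the word problem in $H_2$, tests surjectivity, and by Hopficity keeps those that are isomorphisms. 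This lists all isomorphisms $H_1\to H_2$; for item (4) one then groups them modulo post-composition by $\Inn(H_2)$, again decidable by the word problem. The main (and only mild) obstacle is bookkeeping: ensuring that one has a genuinely finite, explicitly enumerable set of candidate maps. This is guaranteed precisely because $N$ is characteristic and finite and because the action on the quotient factors through the finite group $\Aut(\bbZ)=\{\pm1\}$ (resp.\ $\Aut(D_\infty)$, which is infinite but the relevant images of generators still lie in finitely many cosets of $N$ once their order and image in $Q$ are fixed) — so no genuinely new difficulty arises beyond the basic algorithms already available for hyperbolic groups.
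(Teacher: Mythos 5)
Your treatment of the $\Z$-group case (items 1 and 2) is essentially the paper's argument: since $N$ is characteristic and $H/N\cong\bbZ$, any (iso)morphism is determined by its restriction to $N$ and by the image of a lift $t$ of a generator, which must lie in the finite coset $t^{\pm1}N$; one enumerates these finitely many candidates and tests them against the presentation using the word problem. (Two small remarks: if you insist that the restriction to $N_1$ be a bijection onto $N_2$, surjectivity is automatic and injectivity follows because the kernel, being a finite normal subgroup missing $N_1$, must be trivial — so you do not need the slightly misapplied Hopficity argument, which as stated concerns endomorphisms rather than maps between two different groups; and removing repetitions in item (2) only needs the word problem, as you say.)

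The dihedral case (items 3 and 4) contains a genuine gap. For $H$ of dihedral type the centre is \emph{finite}, so $\Inn(H)\cong H/Z(H)$ is \emph{infinite}, and so is $\Aut(H)$; your claim that ``$\Inn(H)$ is finite of index at most $2$ in $\Aut(H)$'' is false, and consequently ``one lists all automorphisms as above'' cannot work — there is no finite list. The same problem breaks item (4): an isomorphism $H_1\to H_2$ induces an automorphism of $D_\infty$, and $\Aut(D_\infty)$ is infinite, so the images of your chosen lifts $\sigma_1,\tau_1$ range over infinitely many cosets of $N_2$; your parenthetical fix (``once their order and image in $Q$ are fixed'') is circular, because the image in $Q$ is exactly what is not bounded a priori. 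The missing idea — which is the paper's key step — is to work \emph{up to post-conjugation from the start}: since every unordered pair of involutions generating $D_\infty$ is conjugate to the standard pair, every isomorphism $H_1\to H_2$ is post-conjugate to one sending $\{\sigma_1,\tau_1\}$ to $\{\sigma_2 n_2,\tau_2 n_2'\}$ with $n_2,n_2'\in N_2$, which restores a finite candidate list representing all isomorphisms up to inner automorphisms (all that items 3 and 4 claim). Deduplicating this list then requires the (simultaneous) conjugacy problem in $H_2$, not merely the word problem.
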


Each isomorphism is described by giving for each generator, a word representing its image.
The meaning of the third statement is that one can produce a finite list of automorphisms of $G$
which is in bijection with the group of outer automorphisms of $G$.

\begin{proof}
Assertions 1 and 3 follow from assertions 2 and 4.

Given virtually cyclic groups $H_1,H_2$, one can compute their maximal finite normal subgroup $N_i$ by the previous lemma. Let us also remark that  we can easily deduce explicit solutions to the word problem $H_1, H_2$.

Being characteristic, we can assume that $N_1\simeq N_2$, and we can compute the finitely many isomorphisms between them.

If $H_i/N_i$ is infinite cyclic  for $i=1,2$,
then let $t_i \in H_i$ be a preimage of a generator of the cyclic group $H_i/N_i$. 
Any isomorphism $H_1 \to H_2$ is determined by the induced isomorphism $N_1\to N_2$ and by the image of $t_1$, necessarily of the form 
 $t_2^{\pm 1} n_2$, for some $n_2 \in N_2$.  One can check whether such a map extends to a morphism using 
  the presentation of $H_1$ and the solution of the word problem in $H_2$.
  Any such morphism is clearly an isomorphism, so this makes the list of
  all isomorphisms $H_1\ra H_2$.
Using the solution of the word problem in $H_2$, we can remove repetitions in this list.

If $H_i/N_i$ is infinite dihedral for $i=1,2$, 
consider $\sigma_i,\tau_i\in H_i$ mapping to elements of order $2$ generating $H_i/N_i$ (one can find such elements by enumeration).
Since any (unordered) pair of elements of order $2$ generating the infinite dihedral group is conjugate to the standard one,
any isomorphism $H_1\ra H_2$ is conjugate to one sending $\{\sigma_1,\tau_1\}$ to $\{\sigma_2n_2,\tau_2n'_2\}$
for some $n_2,n'_2\in N_2$. As above, one can extract from this list the mappings which extend to an isomorphism,
 and remove repetitions using the simultaneous conjugacy problem in $H_2$.
\end{proof}

\subsection{Marked and unmarked peripheral structures}\label{sec_periph}

Let $G$ be a group, and $\calE$ be a class of finitely generated subgroups of $G$ invariant under automorphisms of $G$. 
Typically, $\calE$ will be the class of finite subgroups, 
$\Z$, or $\Zmax$-subgroups.
If $P$ is a group in $\calE$, we denote by $[P]$ its conjugacy class in $G$.
An \emph{unmarked $\calE$-peripheral structure} on $G$ is a  
finite  unordered set $\calp=\{[P_1],\dots,[P_p]\}$
of conjugacy classes of subgroups $P_i$ in $\calE$.
When the context is clear, we just say \emph{peripheral structure} instead of \emph{unmarked $\calE$-peripheral structure}.
The groups $P_i$ and their conjugates are called the \emph{peripheral subgroups}.
Algorithmically, one represents an unmarked peripheral structure by choosing a generating set of each $P_i$.

In a graph of groups, each vertex group  
inherits a natural peripheral structure
consisting of the conjugacy classes of the images of the incident edge groups.
This fundamental example will be the source of most peripheral structures we will consider.

Given $G,G'$ two groups  with unmarked peripheral structures $\calp=\{[P_1],\dots,[P_p]\}$ and $\calp'=\{[P'_1],\dots,[P'_{p'}]\}$,
an isomorphism $\phi:(G;\calp)\ra (G';\calp')$ is an isomorphism $\phi:G\ra G'$
such that $\{[\phi(P_1)],\dots,[\phi(P_p)]\}=\{[P'_1],\dots,[P_{p'}']\}$ 
as unordered sets (in particular, $p=p'$ if   
the $[P_i]$'s and the $[P'_i]$'s are distinct).
We denote by $\Autu(G;\calp)$ the group of automorphisms of $(G;\calp)$ (where $u$ recalls that we are talking about an \emph{unmarked} peripheral structure)
\ie the subgroup of $\Aut(G)$ permuting the conjugacy classes of $P_1,\dots,P_n$.
Of course, $\Autu(G;\calp)$ contains all inner automorphisms, and we denote by $\Outu(G;\calp)$ its image in $\Out(G)$.

If $S=(s_1,\dots,s_n)\in G^n$ is a tuple, its conjugacy class $[S]$ is
the set of tuples $(s'_1,\dots,s'_n)=(s_1^g,\dots,s_n^g)$ for some $g\in G$.
We use the notation $\#S=n$.
A \emph{marked  $\calE$-peripheral structure} $\calp=([S_1],\dots,[S_p])$ on $G$ 
is a tuple of conjugacy classes of tuples $S_1\in G^{n_1},\dots,S_p\in G^{n_p}$, such that $\grp{S_i}$ lies in $\calE$.
When the context is clear, we also just say \emph{peripheral structure} instead of \emph{marked $\calE$-peripheral structure}.

Given $G,G'$ two groups with marked peripheral structures $\calp=([S_1],\dots,[S_p])$ and $\calp'=([S'_1],\dots,[S'_p])$,
an isomorphism  (resp.\ a homomorphism) $\phi:(G;\calp)\ra (G';\calp')$ is an isomorphism (resp.\ a homomorphism) $\phi:G\ra G'$
such that $[\phi(S_i)]=[S'_i]$ for all $i=1,\dots,p$,  \ie such that $\phi(S_i)=S'_i{}^{g_i}$ for some $g_i\in G'$. 
The groups $\grp{S_i}$ and their conjugates are also called the \emph{peripheral subgroups} of $\calp$.
We denote by $\Autm(G;\calp)$ the group of automorphisms of $(G;\calp)$ (where the subscript $m$ recalls that we are talking about a \emph{marked} peripheral structure),
\ie the subgroup of $\Aut(G)$ which maps each $S_i$ to a conjugate, \ie whose restriction to $\grp{S_i}$ is the conjugation by some element of $G$.
As above, $\Autm(G;\calp)$ contains all inner automorphisms, and we denote by $\Outm(G;\calp)$ its image in $\Out(G)$.

A marked peripheral structure $\calp_m=([S_1],\dots,[S_p])$ naturally induces an unmarked peripheral structure $\calp_u=\{[\grp{S_1}],\dots,[\grp{S_p}]\}$.
We also say that $\calp_m$ is a marking of $\calp_u$.
Note that $\Autm(G;\calp_m)\subset\Autu(G;\calp_u)$.

\begin{lem}\label{lem_markings}
  Let $(G,\calp_m)$ be a group with marked peripheral structure $\calp_m=(S_1,\dots,S_p)$.
Let $\calp_u=\{[\grp{S_1}],\dots,[\grp{S_p}]\}$ be the induced unmarked peripheral structure.

If $\Out(\grp{S_i})$ is finite for all $i$ then $\Out_m(G,\calp_m)$ has finite index in $\Out_u(G,\calp_u)$.
\end{lem}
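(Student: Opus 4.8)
The plan is to produce a homomorphism from $\Outu(G;\calp_u)$ to a finite group whose kernel is contained in $\Outm(G;\calp_m)$; since $\Outm(G;\calp_m)\subset\Outu(G;\calp_u)$ already, this gives the finite index statement. The natural target is built from the permutation action on the conjugacy classes together with the outer action on each peripheral group.

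More precisely, let $\phi\in\Autu(G;\calp_u)$. By definition, $\phi$ permutes the conjugacy classes $\{[\grp{S_1}],\dots,[\grp{S_p}]\}$; this gives a homomorphism $\Outu(G;\calp_u)\ra\Si_p$ (well-defined on $\Out$ since inner automorphisms fix each conjugacy class). Let $\Outu^0$ be the finite-index subgroup of $\Outu(G;\calp_u)$ mapping to the stabiliser of each index $i$, equivalently the automorphisms with $[\phi(\grp{S_i})]=[\grp{S_i}]$ for all $i$. For $\phi$ in (a representative lift of an element of) $\Outu^0$, choose $g_i\in G$ with $\phi(\grp{S_i})=\grp{S_i}^{g_i}$; then $\inn(g_i)\m\circ\phi$ restricts to an automorphism of $\grp{S_i}$, and its class in $\Out(\grp{S_i})$ does not depend on the choice of $g_i$ (two choices of $g_i$ differ by an element normalising $\grp{S_i}$, which only changes the restriction by an inner automorphism of $\grp{S_i}$) nor on the representative of $\phi$ in $\Out(G)$ (an inner automorphism $\inn(h)$ of $G$ contributes $\inn(g_i\m h)$ restricted to $\grp{S_i}$, again inner in $\grp{S_i}$ after adjusting $g_i$). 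Hence we get a well-defined homomorphism
\[
\Theta:\Outu^0\ra\prod_{i=1}^p\Out(\grp{S_i}),\qquad \Theta(\phi)=\big([\inn(g_i)\m\circ\phi|_{\grp{S_i}}]\big)_i.
\]

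By hypothesis each $\Out(\grp{S_i})$ is finite, so the target is finite, and $\ker\Theta$ has finite index in $\Outu^0$, hence in $\Outu(G;\calp_u)$. It remains to check $\ker\Theta\subset\Outm(G;\calp_m)$: if $\Theta(\phi)$ is trivial, then for each $i$ the automorphism $\inn(g_i)\m\circ\phi$ restricted to $\grp{S_i}$ is an inner automorphism of $\grp{S_i}$, say conjugation by some $s_i\in\grp{S_i}$; then $\inn(g_i s_i)\m\circ\phi$ restricts to the identity on $\grp{S_i}$, so in particular sends the tuple $S_i$ to itself, and therefore $\phi$ sends $S_i$ to $S_i^{g_i s_i}$, a conjugate of $S_i$. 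As this holds for every $i$, the class of $\phi$ lies in $\Outm(G;\calp_m)$. The main point to be careful about — the only place any real argument is needed — is the well-definedness of $\Theta$ on outer automorphism classes, i.e. checking that changing $g_i$ or changing $\phi$ by an inner automorphism of $G$ only alters the restriction to $\grp{S_i}$ by an inner automorphism of $\grp{S_i}$; everything else is bookkeeping with the permutation action and the definition of $\Outm$.
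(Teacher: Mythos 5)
There is a genuine gap, and it sits exactly at the point you single out as ``the only place any real argument is needed'': the well-definedness of $\Theta$. If $g_i$ and $g_i'$ both conjugate $\grp{S_i}$ onto $\phi(\grp{S_i})$, they differ by an element $n$ of the normaliser $N_G(\grp{S_i})$, and conjugation by $n$ restricts to an automorphism of $\grp{S_i}$ which is in general \emph{not} inner in $\grp{S_i}$: normalisers can act by outer automorphisms. Concretely, take $G=\grp{a,t\mid tat\m=a\m}$ (the Klein bottle group) with the single peripheral tuple $S_1=(a)$, so $\grp{S_1}\simeq\bbZ$ and $\Out(\grp{S_1})\simeq\bbZ/2$ is finite. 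For $\phi=\id$ the choice $g_1=1$ gives the trivial class, while the equally legitimate choice $g_1=t$ gives the class of $a\mapsto a\m$, which is non-trivial in $\Out(\bbZ)$. So $\Theta$ is not a well-defined homomorphism to $\prod_i\Out(\grp{S_i})$, and the key step ``$\ker\Theta$ has finite index because the target is finite'' is unavailable; note that the preimage of a point under a mere set map to a finite set can be a subgroup of infinite index, so the finiteness of the target alone cannot rescue the conclusion once the homomorphism is gone.

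What does survive is your identification of the would-be kernel: $\phi$ sends each $S_i$ to a conjugate of $S_i$ if and only if for \emph{some} choice of $g_i$ the restriction $(\inn(g_i)\m\circ\phi)|_{\grp{S_i}}$ is inner. The ambiguity you overlooked is precisely multiplication by the image of $N_G(\grp{S_i})$ in $\Out(\grp{S_i})$, so the quantity that is genuinely well defined is not an element of $\Out(\grp{S_i})$ but the $G$-conjugacy class of the tuple $\phi(S_i)$. This is how the paper argues: $\Outu(G,\calp_u)$ acts on the set of markings of $\calp_u$ (conjugacy classes of tuples inducing $\calp_u$), the orbit of $\calp_m$ is finite because isomorphisms onto $\grp{S_j}$ taken modulo inner automorphisms of $\grp{S_j}$ form a finite set when $\Out(\grp{S_j})$ is finite, and $\Outm(G,\calp_m)$ is exactly the stabiliser of the point $\calp_m$, hence of finite index by orbit--stabiliser. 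Your construction can be repaired in the same spirit (replace the target by the finite coset spaces of $\Out(\grp{S_i})$ modulo the image of the normaliser, or equivalently track only $[\phi(S_i)]$, and then use orbit--stabiliser rather than a kernel), but as written the homomorphism $\Theta$ does not exist.
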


This applies in particular if peripheral subgroups are virtually cyclic or finite. Then
$\Out_m(G,\calp_m)$ is finitely generated if and only if $\Out_u(G,\calp_u)$ is.

\begin{proof}
The marked peripheral structure $\calp_m=(S_1,\dots,S_p)$,
can be viewed as an element of $\calm=(G^{\#S_1}\times\dots \times G^{\#S_p})/G^p$ where $G^p$
acts on each factor by conjugation.
Let $\mathit{Markings}(\calp_u)\subset\calm$ be the set of marked peripheral structures inducing the unmarked peripheral structure $\calp_u$.
$\Out_u(G,\calp_u)$ acts on $\mathit{Markings}(\calp_u)$, and since $\Out(\grp{S_i})$ is finite,
each orbit is finite. Since $\Out_m(G,\calp_m)$ is the stabilizer of an element
of $\mathit{Markings}(\calp_u)$, it has finite index in $\Out_u(G,\calp_u)$.
\end{proof}

\begin{dfn}[Extended isomorphism problem]\label{dfn_eip}
  Consider a class $\calg$ of groups with unmarked (resp.\ marked) 
peripheral structures,
 such that $\Out_u(G,\calp)$ (resp. $\Outm(G,\calp)$) is finitely generated for all $(G,\calp)\in \calg$.
Then the \emph{extended isomorphism problem} for $\calg$ consists of the two following problems: \begin{enumerate}
\item given $(G,\calp)$, $(G',\calp')\in \calg$, decide whether $(G,\calp)\simeq(G',\calp')$
\item given $(G,\calp)\in \calg$, compute a  finite generating set of $\Outu(G,\calp)$ (resp. of $\Outm(G,\calp)$).
\end{enumerate}
\end{dfn}

\begin{rem}
  What we mean in the second assertion, is computing a finite subset in $\Aut(G)$ (each automorphism being encoded
by giving a word representing the image of each generator), whose image
generates  $\Outu(G,\calp)$ (resp.\ $\Outm(G,\calp)$). 
Clearly, computing a generating set for $\Outu(G,\calp)$ (resp. of $\Outm(G,\calp)$)
is equivalent to computing a generating set for $\Aut_u(G,\calp)$ (resp. of $\Aut_m(G,\calp)$).
\end{rem}

\begin{lem}\label{lem_eqv_um}
Let $\cale$ be the class of finite or virtually cyclic groups.
  Let $\calg_u$ be a class of groups with unmarked $\cale$-peripheral structures, with 
$\Out_u(G,\calp_u)$ finitely generated for all $(G,\calp_u)\in \calg_u$.
Let  $\calg_m$ be the class of groups with marked $\cale$-peripheral structures
whose induced unmarked peripheral structure lies in $\calg_u$.
 
Assume that for groups in  $\calg_u$, we have a solution of the simultaneous conjugacy problem,
and for subgroups in $\cale$, we can find a presentation from a generating system.

Then the extended isomorphism problem is solvable for $\calg_u$ if and only if it is solvable for $\calg_m$.
\end{lem}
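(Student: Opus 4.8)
The plan is to reduce each direction to the corresponding problem in the other setting, using the fact that for finite or virtually cyclic peripheral subgroups the ``marking data'' is finitely ramified. The key observation is the one already used in Lemma \ref{lem_markings}: if $\calp_u=\{[\grp{S_1}],\dots,[\grp{S_p}]\}$ is an unmarked $\cale$-structure and each $\Out(\grp{S_i})$ is finite (automatic here since $\cale$ consists of finite or virtually cyclic groups, using Lemma \ref{lem_autZ}), then the set $\mathit{Markings}(\calp_u)\subset\calm=(G^{\#S_1}\times\dots\times G^{\#S_p})/G^p$ of marked structures inducing $\calp_u$ is a \emph{finite} union of $\Out_u(G,\calp_u)$-orbits, and each stabiliser $\Out_m(G,\calp_m)$ has finite index in $\Out_u(G,\calp_u)$.

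\textbf{From $\calg_u$ to $\calg_m$.} Suppose the extended isomorphism problem is solvable for $\calg_u$; let $(G,\calp_m),(G',\calp'_m)\in\calg_m$ with induced unmarked structures $\calp_u,\calp'_u\in\calg_u$. First decide, using the hypothesis, whether $(G,\calp_u)\simeq(G',\calp'_u)$; if not, then $(G,\calp_m)\not\simeq(G',\calp'_m)$. If yes, compute a generating set $\alpha_1,\dots,\alpha_n$ of $\Out_u(G,\calp_u)$ and an isomorphism $\psi_0:(G,\calp_u)\ra(G',\calp'_u)$. Every isomorphism $(G,\calp_m)\ra(G',\calp'_m)$ is of the form $\psi_0\circ\alpha$ for some $\alpha\in\Aut_u(G,\calp_u)$ (composed with an inner automorphism of $G'$, which is harmless since markings are taken up to conjugacy), and such a composite respects the marked structure iff $\alpha$ sends $\calp_m$ to the marking $\psi_0^{-1}(\calp'_m)$ of $\calp_u$. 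So the marked isomorphism problem becomes: do $\calp_m$ and $\psi_0^{-1}(\calp'_m)$ lie in the same $\Out_u(G,\calp_u)$-orbit inside $\mathit{Markings}(\calp_u)$? This is an instance of orbit decidability (Lemma \ref{lem_probleme_orbite}), with $\tilde X=\calm$, $X=\mathit{Markings}(\calp_u)$, $\pi$ the quotient to conjugacy classes, and the generating set $\{\alpha_i\}$ acting by pushing forward tuples; equality of images is checked with the simultaneous conjugacy problem. The same lemma computes a generating set of the stabiliser, i.e.\ of $\Out_m(G,\calp_m)$, solving the second part.

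\textbf{From $\calg_m$ to $\calg_u$.} Suppose the extended isomorphism problem is solvable for $\calg_m$; let $(G,\calp_u),(G',\calp'_u)\in\calg_u$. From the generating sets of the $\grp{S_i}$ compute presentations (by hypothesis on $\cale$), hence by Lemma \ref{lem_autZ} a generating tuple $S_i$ and the finitely many automorphisms of $\grp{S_i}$; choose once and for all a ``canonical'' marking $\calp_m=(S_1,\dots,S_p)$ of $\calp_u$, and similarly $\calp'_m=(S'_1,\dots,S'_{p'})$ of $\calp'_u$. Then $(G,\calp_u)\simeq(G',\calp'_u)$ iff there is a bijection $\tau$ matching the $[\grp{S_i}]$ with the $[\grp{S'_{\tau(i)}}]$ and, for some choice of the isomorphism $\grp{S_i}\ra\grp{S'_{\tau(i)}}$ from the finite list (equivalently, replacing $S'_{\tau(i)}$ by its image under an automorphism of $\grp{S'_{\tau(i)}}$), one has $(G,\calp_m)\simeq(G',\tau_*\calp'_m)$ as marked structures. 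Since there are only finitely many choices of $\tau$ and of the list elements, this is a finite disjunction of instances of the marked isomorphism problem, each solvable by hypothesis; combining the (finitely many) generating sets of the marked stabilisers produced along the way with the finitely many automorphisms of the $\grp{S_i}$ gives a generating set of $\Out_u(G,\calp_u)$.

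\textbf{Main obstacle.} The one delicate point is the bookkeeping in the $\calg_m\to\calg_u$ direction: an unmarked isomorphism is allowed to permute the peripheral classes \emph{and} to induce any outer automorphism of each $\grp{S_i}$, so one must verify that quantifying over the finitely many permutations $\tau$ and the finitely many elements of each $\Out(\grp{S_i})$ (lifted to $\Aut(\grp{S_i})$, then realised as a change of the representative tuple $S'_i$) genuinely exhausts all unmarked isomorphisms, and conversely that each such combination yields a \emph{bona fide} unmarked isomorphism. This is where one uses that $\cale$-subgroups have computable presentations and, crucially, that $\Out(\grp{S_i})$ is finite and effectively computable (Lemma \ref{lem_autZ}); without finiteness of $\Out(\grp{S_i})$ the disjunction would be infinite and the reduction would break down.
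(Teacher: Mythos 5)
Your overall route is the same as the paper's: the first direction reduces the marked problem to deciding whether two markings of $\calp_u$ lie in the same $\Outu(G,\calp_u)$-orbit, solved by Lemma \ref{lem_probleme_orbite} with equality tested by the simultaneous conjugacy problem, and the second direction enumerates the finitely many permutations and peripheral isomorphisms given by Lemma \ref{lem_autZ} and reduces to finitely many marked instances. But your stated ``key observation'' --- that $\mathit{Markings}(\calp_u)$ is a \emph{finite} union of $\Outu(G,\calp_u)$-orbits --- is false in general, and it is what you invoke in order to apply Lemma \ref{lem_probleme_orbite} with $X=\mathit{Markings}(\calp_u)$, whose hypotheses require $X$ to be finite. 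For instance, with $G$ free on $a,b$ and $\calp_u=\{[\grp{a}]\}$, the length-two tuples $(a,a^n)$ give infinitely many pairwise non-conjugate markings of $\calp_u$, lying in infinitely many orbits: any automorphism preserving $[\grp{a}]$ restricts, up to conjugacy, to $a\mapsto a^{\pm1}$, so the orbit of $[(a,a^n)]$ is $\{[(a^{\pm1},a^{\pm n})]\}$ and distinct $n$ give distinct orbits. What is true, and all the argument needs, is Lemma \ref{lem_markings}: the orbit of each \emph{individual} marking is finite, because the restriction of an automorphism to $\grp{S_i}$ is, up to conjugation in $G$, one of the finitely many isomorphisms onto some $\grp{S_{\sigma(i)}}$. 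So you should run Lemma \ref{lem_probleme_orbite} with $X$ equal to the union of the two orbits of $\calp_m$ and of $\psi_0^{-1}(\calp'_m)$ (this is exactly the paper's choice); with that replacement your first direction, including the computation of $\Outm(G,\calp_m)$ as a stabiliser, is the paper's proof.

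A smaller point in the converse direction: a generating set of $\Outu(G,\calp_u)$ is not obtained by adjoining ``the finitely many automorphisms of the $\grp{S_i}$'' --- those are not automorphisms of $G$. The correct assembly (and the paper's) is: fix one marking $\calp_m$ of $\calp_u$; for each of the finitely many twisted markings $\calp'_m$ (a permutation $\sigma$ together with isomorphism classes $f_i:\grp{S_i}\ra\grp{S_{\sigma(i)}}$) for which your marked algorithm answers that $(G,\calp_m)\simeq(G,\calp'_m)$, produce by enumeration (using the simultaneous conjugacy problem) an explicit automorphism $\alpha$ of $G$ realising it; since every element of $\Outu(G,\calp_u)$ carries $\calp_m$ to one of these twisted markings, the usual coset argument shows that these $\alpha$'s together with a generating set of $\Outm(G,\calp_m)$ generate $\Outu(G,\calp_u)$. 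This is presumably what you intend, but as written the last sentence of your reduction does not yield it.
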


\begin{proof}
Note that $\Out_m(G,\calp_m)$ is finitely generated for all $(G,\calp_m)\in \calg_m$ by Lemma \ref{lem_markings}.

Assume that the extended isomorphism problem is solvable for unmarked peripheral structures.
Consider $(G,\calp_m),(G',\calp'_m)\in \calg_m$, and we want to decide whether $(G,\calp_m)\simeq(G',\calp'_m)$.
We denote by $\calp_u,\calp'_u$ be the  unmarked peripheral structure defined by $\calp_m,\calp'_m$.
We can decide whether $(G,\calp_u)\simeq (G',\calp'_u)$, and we can assume that it is the case.
Using some isomorphism $f$ between $(G,\calp_u)$ and $ (G',\calp'_u)$ (one can compute one by enumeration),
we can define $\calp''_m=f\m(\calp'_m)$ a marked peripheral structure of $G$ inducing $\calp_u$.
Write $\calp_m=(S_1,\dots,S_p)$ and $\calp''_m=(S''_1,\dots,S''_p)$ where can assume $\#S_i=\#S''_i$.
Using the notations of the proof of Lemma \ref{lem_markings},
$\calp_m,\calp''_m$ are elements of $\mathit{Markings}(\calp_u)$, and 
we have to decide whether they are in the same orbit under $\Outu(G,\calp_u)$.
We can compute  some generating set $S_u$ of $\Outu(G,\calp_u)$.
Since $\Out(\grp{S_i})$ is finite for all $i$, 
the orbit of $\calp_m$ and $\calp''_m$ in $\mathit{Markings}(\calp_u)$ are finite.
Using the simultaneous conjugacy problem, one can decide whether two tuples represent the same element of $\mathit{Markings}(\calp_u)$.
Applying Lemma \ref{lem_probleme_orbite} to $X$ consisting of the union of these two orbits,
one can decide whether these orbits coincide, and compute a system of generators of $\Outm(G;\calp)$.
This concludes the proof of the first implication.

Conversely, assume that the extended isomorphism problem is solvable for marked peripheral structures.
Consider $(G,\calp_u),(G',\calp'_u)\in \calg_u$, and we want to decide whether $(G,\calp_u)\simeq(G',\calp'_u)$.
We first note that we can decide when two subgroups $\grp{S},\grp{S'}$ in the class $\cale$ are conjugate.
Indeed one can find a presentation of these groups by hypothesis, and
one can list all isomorphisms $\grp{S}\ra \grp{S'}$ up to conjugacy by Lemma \ref{lem_autZ},
and then solve the simultaneous conjugacy problem to check whether such an isomorphism is induced by a conjugation.
This way, we can decide which peripheral subgroups in $\calp_u$ (resp. in $\calp'_u$)
are conjugate to each other in $G$ (resp. in $G'$), and remove redundant conjugacy classes.
Write $\calp_u=\{[P_1],\dots,[P_p]\}$ and $\calp'_u=\{[P'_1],\dots,[P'_{p}]\}$ (we can assume that $\#\calp_u=\#\calp'_u$).
 Choose a generating system $S_i$ of $P_i$. This defines a marked peripheral structure $\calp_m$ inducing $\calp_u$. 
Compute presentations of each peripheral subgroup.
For each permutation $\sigma$ of $\{1,\dots,n\}$,
make a list of all conjugacy classes of isomorphisms $f_i:P_i\ra P'_{\sigma(i)}$  (Lemma \ref{lem_autZ}).
For each such choice of $\sigma$ and of isomorphisms $f_i$, define $S'_i=f_i(S_i)$, and let $\calp'_m$ be 
the corresponding marked peripheral structure of $G'$.
Then $(G,\calp_u)\sim(G',\calp'_u)$ if and only if $(G,\calp_m)\sim(G',\calp'_m)$ for some choice,
which solves the unmarked isomorphism problem.
Computing a generating set of $\Outu(G,\calp_u)$ is similar: fix a marking $\calp_m$ of $\calp_u$ as above,
and consider all other markings $\calp'_m$ corresponding to some permutation $\sigma$ and isomorphisms $f_i$.
One can decide if $(G,\calp_m)\simeq(G,\calp'_m)$, and in this case, find an isomorphism $\alpha$.
Such $\alpha$ lies in $\Outu(G,\calp_u)$, and adding all those elements $\alpha$
to a generating set of $\Outm(G,\calp_m)$ gives a generating set of $\Outu(G,\calp_u)$.
\end{proof}

\begin{lem}\label{lem_periph_fini}
  Let $\calg$ be a class of hyperbolic groups groups with marked peripheral structures whose peripheral subgroups are infinite,
and for which the extended isomorphism problem is solvable.
Let $\calg_F$ be the class of groups $(G,\calp\cup\calq)$ with marked peripheral structures
such that $(G,\calp)\in\calg$, and all peripheral subgroups in $\calq$ are finite.

Then the extended isomorphism problem is solvable for $\calg_F$.
\end{lem}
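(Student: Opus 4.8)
The plan is to reduce the extended isomorphism problem for $\calg_F$ to the one we already have for $\calg$, by treating the finite peripheral subgroups in $\calq$ as combinatorial data attached to $(G,\calp)$ rather than as genuinely new structure. The key observation is that, since each peripheral subgroup in $\calq$ is finite, the ambient group $G$ and the subgroup $(G,\calp)$ only ``see'' finitely many choices for where such a finite tuple can go under an isomorphism. First I would record, given $(G,\calp\cup\calq)\in\calg_F$ with $\calq=([T_1],\dots,[T_q])$, the conjugacy classes of the finite subgroups $\grp{T_j}$; since in a hyperbolic group every finite subgroup is conjugate into the ball of radius $20\delta$, and the conjugacy problem for finite subgroups is solvable, these are computable, and one can list all the finitely many tuples in $G$ representing a given $[T_j]$ whose entries lie in a fixed finite ball (up to the action of the finite normaliser/centraliser data from Lemma \ref{lem_cent_finite}).

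The main step is then: an isomorphism $\phi\colon(G,\calp\cup\calq)\to(G',\calp'\cup\calq')$ is the same thing as an isomorphism $\psi\colon(G,\calp)\to(G',\calp')$ together with the constraint that $\psi$ sends each $[T_j]$ to $[T'_{\sigma(j)}]$ for some bijection $\sigma$ matching the $\calq$-classes to the $\calq'$-classes. Using the algorithm for $\calg$, I would compute a finite generating set $\Sigma$ of $\Outm(G,\calp)$ (via $\Aut_m(G,\calp)$, encoded by words on the generators of $G$), and pick one explicit isomorphism $\psi_0\colon(G,\calp)\to(G',\calp')$ if one exists (if none exists, neither does any isomorphism of the $\calg_F$-structures, and we are done). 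Transporting $\calq'$ back by $\psi_0\m$ gives a second marked finite peripheral structure $\calq''$ on $G$ inducing the same conjugacy-class data up to permutation as $\calq$; the question becomes whether some element of $\Outm(G,\calp)$, acting on the finite set $X$ of marked finite peripheral structures on $G$ whose underlying unmarked classes match those of $\calq$, carries $\calq$ to $\calq''$. This set $X$ is finite because each $\grp{T_j}$ is finite (so $\Aut(\grp{T_j})$ is finite and there are only finitely many markings of each conjugacy class, exactly as in Lemma \ref{lem_markings} and Lemma \ref{lem_eqv_um}); the action of the generators $\Sigma$ on $X$ is computable using the solution to the simultaneous conjugacy problem in $G$ (which is hyperbolic); and equality of two elements of $X$ is decidable for the same reason. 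Applying Lemma \ref{lem_probleme_orbite} to this action then decides whether $\calq$ and $\calq''$ lie in the same $\Outm(G,\calp)$-orbit, and, when they do, produces a word in $\Sigma$ realising it; composing with $\psi_0$ gives the desired isomorphism of $\calg_F$-structures.

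For the second half of the extended isomorphism problem — computing a generating set of $\Outm(G,\calp\cup\calq)$ — I would apply the stabiliser part of Lemma \ref{lem_probleme_orbite} to the same action of $\Outm(G,\calp)$ on $X$ at the point $\calq$: this yields a finite generating set of $\mathrm{Stab}_{\Outm(G,\calp)}(\calq)$, which is precisely $\Outm(G,\calp\cup\calq)$ (an automorphism preserves $\calp\cup\calq$ iff it preserves $\calp$ and fixes the class of each $T_j$). One should also note that $\Outm(G,\calp\cup\calq)$ is indeed finitely generated, so that the statement of the extended isomorphism problem applies: it is the stabiliser of a point in a finite set under the finitely generated group $\Outm(G,\calp)$, hence of finite index in it, hence finitely generated. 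The only genuinely delicate point is the computability of the action of $\Outm(G,\calp)$ on $X$: one must be careful that the generators of $\Outm(G,\calp)$ are returned as explicit words on the generators of $G$, so that one can evaluate them on the tuples $T_j$ and then use the simultaneous conjugacy problem to re-express the result as one of the finitely many standard representatives listed in $X$. All of this is routine given the hypotheses (hyperbolicity, solvability of simultaneous conjugacy, and the extended isomorphism algorithm for $\calg$), so the proof is essentially an application of Lemma \ref{lem_probleme_orbite}, and I expect no serious obstacle beyond this bookkeeping.
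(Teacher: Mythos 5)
Your proof is correct and follows essentially the same route as the paper: first decide whether $(G,\calp)\simeq(G',\calp')$, pull back $\calq'$ by one explicit such isomorphism, then apply Lemma \ref{lem_probleme_orbite} to the action of $\Outm(G,\calp)$ on the finite set of marked peripheral structures with finite peripheral subgroups (finiteness coming from the finitely many conjugacy classes of finite subgroups of $G$), the stabiliser computation giving $\Outm(G;\calp\cup\calq)$. The only small slip is the bijection $\sigma$: since the structures are \emph{marked} (ordered tuples of conjugacy classes), an isomorphism must send $[T_j]$ to $[T'_j]$ for each $j$, not merely up to a permutation, but this does not affect the rest of your argument.
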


\begin{proof}
Consider $(G;\calp\cup\calq),(G';\calp'\cup\calq')\in\calg_F$.
Since peripheral subgroups of $\calp$ are infinite and since those of $\calq$ are finite,
 $(G;\calp\cup\calq)\simeq(G';\calp'\cup\calq')$ implies $\#\calp=\#\calp'$, $\#\calq=\#\calq'$ and
  $(G;\calp)\simeq(G';\calp')$.
One can decide whether $(G;\calp)\simeq(G',\calp')$, and we can assume that this is the case.
Moreover, one can compute a finite generating set of $\Out(G,\calp)$.

Consider $f:G\ra G'$ an isomorphism sending $\calp$ to $\calp'$ (one can construct such $f$ by enumeration and
using the simultaneous conjugacy problem).
Let $\calq''=f\m(\calq')$ be the pullback of the marked peripheral structure.
Denote by $\calq=(S_1,\dots,S_p)$ and $\calq''=(S''_1,\dots,S''_p)$ the marked peripheral structures.
Let $\Periph$ be the following set of marked peripheral structures:
$\Periph$ is set of tuples $(\Sigma_1,\dots,\Sigma_p)\in \calm=(G^{\#S_1}\times\dots \times G^{\#S_p})/G^p$ 
such that $\#\grp{S_i}<\infty$.
 The simultaneous conjugacy problem allows one to decide when two tuples represent the same marked peripheral structure.
We view $\calq$ and $\calq''$ as two elements of $\Periph$.
Then $(G;\calp\cup\calq)\simeq(G';\calp'\cup\calq')$ if and only if $\calq$ is in the same orbit as $\calq''$ 
under the natural action of $\Out(G,\calp)$ on $\Periph$.

Since $G$ has only finitely many conjugacy classes of finite subgroups, $\Periph$ is finite.
Since we know a generating set of $\Out(G,\calp)$, the orbits of its action on $\Periph$ can be computed
by Lemma \ref{lem_probleme_orbite}.
Computing a system of generators of $\Outm(G;\calp\cup\calq)$ also follows from  Lemma
  \ref{lem_probleme_orbite}. 
\end{proof}

\subsection{Orbifolds and their mapping class groups}
\label{sec_orbi_mcg}

      Consider $\Sigma$  a $2$-orbifold with boundary, whose fundamental group is not virtually abelian (hence it is hyperbolic). 
      We assume that $\Sigma$ is \emph{of conical type}, \ie that all
      its singularities are cone points (in other words $\Sigma$ has no mirrors).
      The boundary subgroups of $\Sigma$ define an unmarked peripheral structure $\calb$ of $\pi_1(\Sigma)$.
      Choosing an ordering of the boundary components and a generator of each peripheral subgroup,
      we obtain a corresponding marked peripheral structure $\calb_m$.
      The group $\Autu(\pi_1(\Sigma);\calb)$ is the group of automorphisms of $\pi_1(\Sigma)$ preserving the 
      conjugacy class of the peripheral subgroups, and $\Aut_m(\pi_1(\Sigma);\calb)$ is its subgroup
      sending each generator of a boundary subgroup to a conjugate of itself.

      Let $\Sigma_0$ be the surface underlying $\Sigma$, with a marked point at each conical singularity,
      this point carrying a weight corresponding to the order of its isotropy group in $\Sigma$.
      Let us denote by $MCG^*(\Sigma)$ the group of isotopy classes of homeomorphisms $h$ of $\Sigma_0$ and preserving
      the weights of the marked points. This is an extended mapping class group since we don't
      assume $h$ to preserve the orientation (which makes sense only if $\Sigma$ is orientable).
      Homeomorphisms and isotopies are not required to be the identity on boundary components of $\Sigma_0$.
      On the other hand, we define $PMCG^*(\Sigma)$ (where $P$ stands for pure) as the subgroup of $MCG^*(G)$
      of isotopy classes of homeomorphisms whose restriction to each boundary component $b$
      maps $b$ to itself, preserving the orientation of $b$.
Note that we still don't assume that $h$ preserves the orientation (when $\Sigma$ is orientable) although this is necessarily the
case if the boundary of $\Sigma$ is non-empty.
      Any $h\in MCG^*(\Sigma)$ (resp. in $PMCG^*(\Sigma)$) induces an outer automorphism of $\pi_1(\Sigma_0)$ preserving the 
      unmarked (resp.\ marked) peripheral structure.
                                
      \begin{prop}\label{prop_extMCG} 
        The morphisms described above induce isomorphisms
        $$MCG^*(\Sigma)\xra{\,\textstyle{}_{\sim}} \Outu(\pi_1(\Sigma);\calb),\qquad 
        PMCG^*(\Sigma)\xra{\,\textstyle{}_{\sim}} \Outm(\pi_1(\Sigma);\calb_m).$$ 
        In particular, we have extensions
        $$1\to \pi_1(\Sigma) \to \Autu(\pi_1(\Sigma);\calb ) \to MCG^*(\Sigma) \to 1$$ 
        $$1\to \pi_1(\Sigma) \to \Autm(\pi_1(\Sigma);\calb_m ) \to PMCG^*(\Sigma) \to 1.$$ 
\end{prop}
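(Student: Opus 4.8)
The plan is to establish the statement as a Dehn--Nielsen--Baer theorem for $2$-orbifolds with boundary. Write $\Gamma=\pi_1(\Sigma)$ for the orbifold fundamental group: by hypothesis it is a non-elementary hyperbolic group, and since $\Sigma$ has boundary it is virtually free. As $\Gamma$ is not virtually abelian, $\Sigma$ carries a hyperbolic structure with totally geodesic boundary; this realises $\Gamma$ as a convex-cocompact Fuchsian subgroup of $\mathrm{PSL}_2(\bbR)$ acting on $\bbH^2$, with limit set $\Lambda\subset S^1=\partial\bbH^2$ canonically identified with $\partial\Gamma$ (a Cantor set) and orbifold universal cover $\widetilde{\Sigma}=\mathrm{hull}(\Lambda)$, a convex region with compact quotient $\widetilde{\Sigma}/\Gamma=\Sigma$; each boundary geodesic of $\Sigma$ lifts to a $\Gamma$-orbit of geodesic lines whose endpoints are the two fixed points of a generator of a peripheral subgroup. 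First I would check that the maps are well defined: a homeomorphism of $\Sigma_0$ preserving the weighted marked points is, up to isotopy, an orbifold self-homeomorphism of $\Sigma$, hence acts on $\Gamma$; isotopic homeomorphisms induce the same outer automorphism; preserving the boundary componentwise permutes the peripheral conjugacy classes of $\calb$, and componentwise with orientation fixes the marked generators of $\calb_m$ up to conjugacy. Since distinct boundary components of $\Sigma$ represent non-conjugate elements and each has been oriented, the preimage of $\Outm(\Gamma;\calb_m)$ under $MCG^*(\Sigma)\to\Outu(\Gamma;\calb)$ is exactly $PMCG^*(\Sigma)$, so the marked assertion will follow from the unmarked one; and as $\Gamma$ is centreless, the two extensions will follow by pulling $1\to\Gamma\to\Autu(\Gamma;\calb)\to\Outu(\Gamma;\calb)\to1$ back along the isomorphism. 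It remains to prove that $\Psi\colon MCG^*(\Sigma)\to\Outu(\Gamma;\calb)$ is a bijection.

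For injectivity, suppose $h\in MCG^*(\Sigma)$ induces an inner automorphism. I would lift $h$ to a homeomorphism $\widetilde h$ of $\widetilde{\Sigma}$ and, correcting by a deck transformation, arrange that $\widetilde h$ commutes with the $\Gamma$-action. Then $x\mapsto d(\widetilde h(x),x)$ is $\Gamma$-invariant, hence bounded by compactness of $\Sigma$; so $\widetilde h$ extends by the identity on $\Lambda$, and by equivariance it preserves each boundary geodesic of $\widetilde{\Sigma}$ and fixes the fixed point of each elliptic element (with which it commutes). The geodesic straight-line homotopy from $\id$ to $\widetilde h$ stays in the convex region $\widetilde{\Sigma}$, is $\Gamma$-equivariant, and fixes every elliptic fixed point throughout, so it descends to a homotopy from $\id$ to $h$ through orbifold maps fixing all cone points. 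Since a homeomorphism of a surface with marked points that is homotopic to the identity rel those points is isotopic to it, $h$ is trivial in $MCG^*(\Sigma)$.

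For surjectivity, take $\phi\in\Autu(\Gamma;\calb)$. As an automorphism of the hyperbolic group $\Gamma$, $\phi$ induces a homeomorphism $\partial\phi$ of $\Lambda=\partial\Gamma$ intertwining the $\Gamma$-action with its $\phi$-twist. The components of $S^1\setminus\Lambda$ form a $\Gamma$-invariant family of open arcs, each stabilised by a conjugate $\grp{c}$ of a peripheral subgroup and bounded by the two fixed points of $c$; since $\phi$ preserves $\calb$, $\partial\phi$ carries the endpoint pair of each such arc to that of another. Extending $\partial\phi$ over one arc in each $\Gamma$-orbit so as to conjugate the action of $\grp{c}$ to that of $\phi(\grp{c})$, and propagating by equivariance, I obtain a homeomorphism $\Phi_\infty$ of $S^1$ with $\Phi_\infty\gamma=\phi(\gamma)\Phi_\infty$ for all $\gamma\in\Gamma$. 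The conformally natural (Douady--Earle) extension $\Phi$ of $\Phi_\infty$ to $\overline{\bbH^2}$ then still satisfies $\Phi\gamma=\phi(\gamma)\Phi$ (using $\gamma,\phi(\gamma)\in\mathrm{PSL}_2(\bbR)$), hence descends to an orbifold self-homeomorphism of $\bbH^2/\Gamma$ inducing $\phi$ on $\pi_1$; collapsing the funnels isotopes it to a homeomorphism preserving the convex core $\Sigma$, which yields $h\in MCG^*(\Sigma)$ with $\Psi(h)=\phi$.

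The step I expect to be the main obstacle is this realisation direction: manufacturing an honest orbifold homeomorphism from an abstract automorphism. Its delicate points are the $\Gamma$-equivariant extension of $\partial\phi$ over the domain of discontinuity — precisely where the hypothesis that $\phi$ preserves $\calb$ is used — and the descent from the complete hyperbolic orbifold $\bbH^2/\Gamma$ with its funnels back to the compact orbifold $\Sigma$, together with the check that the resulting homeomorphism respects the weighted marked points. An alternative route avoiding the boundary extension would pass to a characteristic finite-index surface subgroup $\Gamma_0\normal\Gamma$, invoke classical Dehn--Nielsen--Baer on the covering surface, and use an equivariant (Nielsen-type) realisation to descend to $\Sigma$; this seems heavier, so I would favour the argument above.
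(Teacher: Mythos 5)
The paper gives no argument for this proposition: it is quoted from the literature (Dehn--Nielsen--Baer, Magnus and Zieschang for surfaces with boundary, Maclachlan--Harvey for orientable orbifolds with cone points and boundary, Fujiwara for the non-orientable case), with the remark that the marked case is similar. You instead supply a direct proof in the classical Dehn--Nielsen--Baer style for convex-cocompact Fuchsian realisations: injectivity via bounded displacement of an equivariant lift, the geodesic straight-line homotopy inside the convex hull (which fixes elliptic fixed points), and homotopy-implies-isotopy rel marked points; surjectivity by extending the boundary homeomorphism of the Cantor limit set over the intervals of discontinuity equivariantly --- exactly where preservation of $\calb$ enters, since those intervals correspond bijectively to conjugates of peripheral subgroups via their fixed pairs --- followed by a conformally natural extension and descent. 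This buys a self-contained proof that isolates where the peripheral hypothesis is used, and your reductions (marked case from unmarked via the boundary-permutation dichotomy, the two extensions from centrelessness of $\pi_1(\Sigma)$) are sound. The one compressed step in the realisation direction, continuity of $\Phi_\infty$ at points of $\Lambda$, is fine but deserves its one-line justification: only finitely many complementary arcs exceed any given length, and the image arc bounded by endpoints close to a limit point must be the short one, since the long one contains points of $\Lambda$ far away.

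Two points need attention. First, the proposition covers non-orientable $\Sigma$ (the paper cites Fujiwara precisely for that case), whereas you realise $\Gamma$ inside $PSL_2(\bbR)$, which is possible only when $\Sigma$ is orientable; in general $\Gamma\subset\Isom(\bbH^2)$ contains glide reflections (though no reflections, by conical type). The argument survives, but you must either pass to the orientation double cover or check naturality of the barycentric extension under anticonformal elements, and note that the supporting-line argument still shows no element of $\Gamma$ can swap the endpoints of a boundary geodesic of the hull, so peripheral stabilisers are generated by hyperbolic translations. Second, your identification of $PMCG^*(\Sigma)$ with the preimage of $\Outm(\pi_1(\Sigma);\calb_m)$ uses not only that generators of distinct boundary components are non-conjugate, but also that no peripheral generator $c$ is conjugate to $c^{-1}$ (otherwise ``maps $b$ to itself preserving its orientation'' would not be detected by ``sends $c$ to a conjugate of $c$''). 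This is true here --- an element conjugating $c$ to $c^{-1}$ would swap the endpoints of the corresponding boundary geodesic, hence swap the two half-planes it bounds, which is impossible since the hull of the non-elementary limit set lies on one side --- but it should be stated, as it is the crux of the marked-versus-unmarked comparison.
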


      We give references.       
      The isomorphism is due to Dehn, Nielsen and Baer  for
      closed oriented surface groups, 
      and was generalised by  Magnus and Zieschang for surfaces with boundary,  
      by Maclachlan and Harvey for 
      orientable orbifolds with conical singularities and boundaries \cite[Theorem 1]{MacH_MCG}, 
      and by Fujiwara \cite[sec.3, p.281]{Fujiwara_outer}, for the non-orientable case
      (these authors consider the unmarked case, the marked case is similar).
      Maclachlan and Harvey also observed in \cite[Corollary 3]{MacH_MCG} that in the definition of the mapping class group,
      one can replace boundary components by marked points (of infinite weight).

Recall that the extended isomorphism problem consists in solving the isomorphism problem, and in
computing generators of the corresponding automorphism groups (Definition \ref{dfn_eip}).

\begin{prop}\label{prop_IP_orbifold} 
The extended isomorphism problem is solvable for the class of pairs $(\pi_1(\Sigma),\calb)$ 
  where $\Sigma$ is a compact $2$-orbifold of conical type  whose fundamental group is not virtually abelian,
  and $\calb$ is the unmarked peripheral structure defined by its boundary subgroups.
\end{prop}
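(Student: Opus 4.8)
The plan is to reduce the extended isomorphism problem for pairs $(\pi_1(\Sigma),\calb)$ to the classification of compact $2$-orbifolds of conical type, which is a purely combinatorial datum, and to the computation of mapping class groups, via Proposition \ref{prop_extMCG}. First I would observe that by Proposition \ref{prop_extMCG} we have $\Outu(\pi_1(\Sigma);\calb)\cong MCG^*(\Sigma)$, and this group is finitely generated (indeed by classical results it is finitely presented); so the hypothesis on finite generation needed to even state the extended isomorphism problem is satisfied. The orbifold $\Sigma$ is determined up to homeomorphism by the genus of the underlying surface $\Sigma_0$, its orientability, the number of boundary components, and the (unordered) list of weights of the cone points; call this the \emph{combinatorial type} of $\Sigma$. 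Two orbifolds of conical type with non-virtually-abelian fundamental group are homeomorphic if and only if they have the same combinatorial type, and in that case $(\pi_1(\Sigma),\calb)\simeq(\pi_1(\Sigma'),\calb')$ --- this is the ``easy'' direction.

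The real content is the converse and its algorithmic form: given a presentation of a hyperbolic group $G$ together with a peripheral structure $\calp$, recognise whether $(G,\calp)$ is of the form $(\pi_1(\Sigma),\calb)$, and if so compute the combinatorial type of $\Sigma$. The key point is that if $(G,\calp)\simeq(\pi_1(\Sigma),\calb)$ then this combinatorial type is an \emph{invariant} of $(G,\calp)$, so it suffices to extract it algorithmically. One way: enumerate all compact $2$-orbifolds $\Sigma'$ of conical type of non-virtually-abelian fundamental group (there are countably many, listed by combinatorial type), write down the standard presentation of $\pi_1(\Sigma')$ together with its boundary peripheral structure $\calb'$, and search in parallel for an explicit isomorphism $(G,\calp)\to(\pi_1(\Sigma'),\calb')$ (enumerating candidate maps, checking the defining relations via the word problem in $\pi_1(\Sigma')$, checking injectivity by finding an inverse, and checking that the peripheral subgroups are sent to conjugates using the simultaneous conjugacy problem, which is solvable in hyperbolic groups). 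Since the combinatorial type is a complete invariant, at most one $\Sigma'$ will succeed, and if $(G,\calp)$ really is an orbifold group this search terminates. Running this search for both $(G,\calp)$ and $(G',\calp')$ and comparing the resulting combinatorial types solves the isomorphism problem. For the second part, once $\Sigma$ is identified, a finite generating set of $MCG^*(\Sigma)$ (Dehn twists along a standard curve system, together with finitely many ``exchange'' and reflection homeomorphisms) is known explicitly from the combinatorial type; transporting these through the identified isomorphism $(G,\calp)\cong(\pi_1(\Sigma),\calb)$ and through $MCG^*(\Sigma)\cong\Outu(\pi_1(\Sigma);\calb)$ yields a generating set of $\Outu(G,\calp)$, each element written as a word giving the image of each generator.

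I expect the main obstacle to be making the recognition step genuinely rigorous rather than a naive ``enumerate and match''. One must be careful that (i) the enumeration of orbifolds is effective, which reduces to a finiteness statement: for fixed $G$ the Euler characteristic of any $\Sigma$ with $\pi_1(\Sigma)\cong G$ is bounded, so only finitely many combinatorial types can occur and the parallel search is effective; (ii) the comparison $(G,\calp)\simeq(\pi_1(\Sigma'),\calb')$ is decidable, which uses that $\pi_1(\Sigma')$ is hyperbolic with solvable word and simultaneous conjugacy problems and that, by Proposition \ref{prop_extMCG}, $\Outu(\pi_1(\Sigma');\calb')$ is finitely generated so one can in fact list all isomorphisms up to post-composition by an inner automorphism; and (iii) the explicit generating set of $MCG^*(\Sigma)$ must be produced uniformly from the combinatorial type, for which one cites the literature on mapping class groups of surfaces with boundary and marked points (the orbifold case being handled as in \cite{MacH_MCG,Fujiwara_outer}, where marked points of finite or infinite weight play the role of cone points and boundary components). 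Assembling these pieces gives the solution; the bookkeeping between the marked and unmarked versions is routine given Lemma \ref{lem_eqv_um}.
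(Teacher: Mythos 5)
Your proposal is correct and follows essentially the same route as the paper: recover the topological type of the orbifold by an enumeration that terminates because the input is promised to lie in the class (the paper searches via Tietze transformations for a standard presentation exhibiting the standard peripheral structure, which is equivalent to your parallel search over combinatorial types together with explicit isomorphisms), and then obtain generators of $\Outu(\pi_1(\Sigma);\calb)$ by transporting the known Lickorish/Korkmaz generating sets of $MCG^*(\Sigma)$ through Proposition \ref{prop_extMCG}. The only inessential blemishes are the parenthetical claim that one could list all isomorphisms up to inner automorphisms (not possible, and not needed, since only the positive semi-decision search is used) and the superfluous Euler characteristic bound; like the paper, you rely implicitly on the classical fact that the pair $(\pi_1(\Sigma),\calb)$ determines the combinatorial type.
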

                                
\begin{proof}
      An explicit generating system for $MCG^*(\Sigma)$  can  be obtained from Lickorish \cite{Lickorish_homeotopy} in the orientable case, 
      and Korkmaz \cite{Korkmaz_MCG} (see also Chillingworth \cite{Chillingworth_generator}) in the non-orientable case. 
      One can compute the effect of this generating system on a generating system of $\pi_1(\Sigma)$,
      and thus obtain a generating set for  $\Outu(\pi_1(\Sigma);\calb)$.
      The isomorphism problem among orbifold groups is solved as follows: each orbifold group has a standard presentation (with a corresponding peripheral structure)
      on which on can read the topology of the orbifold. Starting from  an arbitrary presentation of an orbifold group,
      using Tietze transformation, one will find  such a standard presentation, in which one can decide if the given peripheral structure
      coincide with the standard one. This procedure will terminate sometime, which solves the unmarked 
 extended isomorphism problem. \end{proof}

      By Lemma \ref{lem_eqv_um}, the same statement holds in the marked case.

\subsection{Bounded Fuchsian groups}\label{sec_bfg}

        Following \cite{Bo_cut}, we call \emph{bounded Fuchsian group} a non-elementary finitely generated group $G_0$ 
        having a proper discontinuous action by isometries without parabolics on $\bbH^2$.
        We don't ask for the action to be faithful, the kernel may be a finite group.
        Equivalently, a bounded Fuchsian group is a finite extension of a non-elementary convex co-compact discrete subgroup of $PSL_2(\bbR)$.
        Note that the kernel $F$ of the action of $G_0$ on $\bbH^2$ is the largest finite normal subgroup of $G_0$.

        The quotient $\ol G_0=G_0/F$ is the fundamental group of the orbifold $K/ \ol G_0$ where $K\subset \bbH^2$ is the convex core of $G_0$.
        We call $\ol G_0$ the \emph{orbifold group} of $G_0$.

        We say that $G_0$ is \emph{without reflection} if no element of $G_0$ acts as a reflection on $\bbH^2$,
        \ie if the quotient orbifold has no mirror.
        We also say that $G_0$ is of \emph{conical type} in this case.

             The stabiliser of a  boundary component of $K$ 
             is called a \emph{boundary subgroup}.
        Boundary subgroups are virtually cyclic subgroups of $G_0$, and $\Z$-subgroups if $G_0$ is without reflection.
        The set of boundary subgroups consists of finitely many conjugacy classes of virtually cyclic subgroups $B_1,\dots,B_n$,
        which defines an unmarked peripheral structure of $G_0$.
        This peripheral structure is empty if and only if $G_0$ acts cocompactly on $\bbH^2$.
  We will mostly work with a marked peripheral structure $\calb$ inducing this unmarked peripheral structure
        (see section \ref{sec_periph} for definitions).

We will need a solution to the marked isomorphism problem for bounded Fuchsian groups
with a marking of their natural peripheral structure.
Recall that we represent algorithmically the groups $G$, $G'$ by some presentations, and the marked peripheral structures $\calp_m,\calp'_m$
by tuples of words in the generators.

\begin{prop}\label{prop_IP_BFG}
The extended isomorphism problem is solvable for bounded Fuchsian group without reflection. More precisely:

  There is an explicit algorithm that takes as input two bounded Fuchsian groups $(G_1;\calb_{1})$, $(G_2;\calb_{2})$  
without reflection 
  with marked peripheral structure,
  and decides whether  $(G_1;\calb_{1})\simeq (G_2;\calb_{2})$.
              
  Moreover, there is an algorithm that, given a bounded Fuchsian group $(G,\calb)$ with a marked peripheral structure,  
  computes a set of generators for the group $\Autm(G;\calb)$.
\end{prop}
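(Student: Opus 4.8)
**Plan for proving Proposition \ref{prop_IP_BFG} (extended isomorphism problem for bounded Fuchsian groups without reflection).**

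The strategy is to reduce to the orbifold-group case treated in Proposition \ref{prop_IP_orbifold}, by passing to the orbifold group $\ol G = G/F$ where $F$ is the maximal finite normal subgroup, and then to climb back up to $G$ using the extension machinery of Section \ref{subsec_extensions}. First I would recall that, since $G$ is hyperbolic (being non-elementary virtually a convex cocompact Fuchsian group), Lemma \ref{lem_VC2} and the basic algorithms of Section 2.3.1 let us compute $F$, the maximal finite normal subgroup, so we obtain a presentation of $\ol G = G/F$ and an explicit description of the extension $1\to F\to G\to \ol G\to 1$. The group $\ol G$ is the fundamental group of a compact $2$-orbifold of conical type (no mirrors, since $G$ is without reflection), and the marked peripheral structure $\calb$ of $G$ maps to a marked peripheral structure $\ol\calb$ of $\ol G$. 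Since $F$ is characteristic, any isomorphism $(G_1;\calb_1)\to(G_2;\calb_2)$ descends to an isomorphism $(\ol G_1;\ol\calb_1)\to(\ol G_2;\ol\calb_2)$; conversely an isomorphism downstairs lifts iff the corresponding action on extension classes matches. So the first step is: decide $(\ol G_1;\ol\calb_1)\simeq(\ol G_2;\ol\calb_2)$ using Proposition \ref{prop_IP_orbifold} (and its marked version, valid by Lemma \ref{lem_eqv_um}, whose hypotheses — simultaneous conjugacy and presentations of virtually cyclic subgroups — hold here by the basic hyperbolic algorithms and Lemma \ref{lem_VC2}). If they are not isomorphic, neither are $G_1,G_2$.

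Assuming $(\ol G_1;\ol\calb_1)\simeq(\ol G_2;\ol\calb_2)$, I would compute — again via Proposition \ref{prop_IP_orbifold} in the marked form — a finite generating set of $\Autm(\ol G;\ol\calb)$, realized concretely as automorphisms of $\ol G$. Now $G_1$ and $G_2$ are two extensions of the (identified) group $\ol G$ by the finite group $F$ (we may first check $F_1\simeq F_2$, else stop). Using a fixed isomorphism $f:\ol G_1\to\ol G_2$ sending $\ol\calb_1$ to $\ol\calb_2$, we pull back the extension $G_2$ to an extension $E_2$ of $\ol G_1$ by $F$, so both $G_1$ and $E_2$ are elements of $\cale(\ol G,F)$. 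Then $(G_1;\calb_1)\simeq(G_2;\calb_2)$ iff $G_1$ and $E_2$ lie in the same orbit of $\cale(\ol G,F)$ under the action of the subgroup $\Autm(\ol G;\ol\calb)\subset\Aut(\ol G)$ — one must be a little careful that the marked peripheral structure upstairs is recovered: since the peripheral subgroups of $G$ are precisely the preimages in $G$ of peripheral subgroups of $\ol G$ intersected appropriately, controlling $[\phi(S_i)]$ for the generators $S_i$ amounts, modulo the finitely many choices of lifts in $F$, to controlling the image peripheral subgroups of $\ol G$, and the residual finite ambiguity is handled by the simultaneous conjugacy problem in $G$. This orbit question is exactly what Proposition \ref{prop_iso_extension} solves: it decides membership in the orbit under $\grp{\alpha_1,\dots,\alpha_n}$ and computes a generating set of the stabilizer. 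Feeding in the generators $\alpha_i$ of $\Autm(\ol G;\ol\calb)$, we decide $(G_1;\calb_1)\simeq(G_2;\calb_2)$.

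For the second assertion — computing generators of $\Autm(G;\calb)$ — I would use the stabilizer output of Proposition \ref{prop_iso_extension} applied with $E_1=E_2=G$ and the $\alpha_i$ generating $\Autm(\ol G;\ol\calb)$: this yields a finite set of elements of $\Aut(\ol G)$ generating the subgroup of those (marked-peripheral) automorphisms of $\ol G$ that lift to $G$. Each such generator lifts to an automorphism of $G$ (computable by solving, as in the proof of Lemma \ref{lem_extens}, for the images of lifts of generators modulo $F$ and checking the relations via the word problem), and together with $\Inn(G)$ and the automorphisms of $G$ acting trivially on $\ol G$ — i.e. those coming from $\mathrm{Der}(\ol G,Z(F))$, a computable finite-dimensional gadget since $\ol G$ is finitely presented and $Z(F)$ finite — these generate $\Autm(G;\calb)$. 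The main obstacle I anticipate is the bookkeeping at the level of \emph{marked} peripheral structures: ensuring that controlling conjugacy classes of the given tuples $S_i$ in $G$ is genuinely equivalent to controlling their images in $\ol G$ up to the finite indeterminacy introduced by $F$, and packaging this so that Proposition \ref{prop_iso_extension} (which is stated for plain extensions, without peripheral data) can be invoked — in practice by enlarging the finite set $X$ in the underlying orbit-decidability Lemma \ref{lem_probleme_orbite} to track the peripheral tuples modulo conjugacy alongside the extension class, and verifying termination using finiteness of $\cale(\ol G,F)$ together with finiteness of the conjugacy classes involved.
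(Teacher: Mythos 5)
Your overall route is the paper's own: compute the maximal finite normal subgroup $F$, pass to the orbifold group $\ol G=G/F$, solve the marked problem downstairs via Proposition \ref{prop_IP_orbifold}, then regard $G_1$ and a pullback of $G_2$ as extensions in $\cale(\ol G,F)$ and decide orbit equivalence under $\Autm(\ol G;\ol\calb)$ by Proposition \ref{prop_iso_extension}, finishing with the finite orbit Lemma \ref{lem_probleme_orbite} for the residual ambiguity in the upstairs marking. Two points of bookkeeping where the paper is more careful than your sketch: equivalence in $\cale(\ol G,F)$ requires the \emph{identity} on $F$, while an isomorphism $G_1\to G_2$ may induce any isomorphism $F_1\to F_2$, so you must enumerate the finitely many isomorphisms $\nu:F_1\to F_2$ and run the extension step once for each $\nu$ (the paper lists all $\nu:N_1\to N_2$), rather than merely checking $F_1\simeq F_2$; and the isomorphism $\phi'$ produced by the extension step only matches the peripheral structures \emph{downstairs}, so the upstairs correction is an unavoidable separate step, not just "residual bookkeeping".

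The genuine gap is in your treatment of the second assertion. The set you propose --- lifts of generators of the stabiliser of the extension class, together with $\Inn(G)$ and the automorphisms acting trivially on $\ol G$ (those coming from $\mathrm{Der}(\ol G,Z(F))$) --- generates, at best, the group of automorphisms of $G$ whose induced automorphism of $\ol G$ preserves $\ol\calb$; this is in general strictly larger than $\Autm(G;\calb)$, and in fact its typical elements do not lie in $\Autm(G;\calb)$ at all: a derivation automorphism $g\mapsto g\,d(\ol g)$ multiplies the entries of a peripheral tuple $S_j$ by central elements of $F$, producing a different lift of $\ol S_j$ which is usually not conjugate to $S_j$. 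So "these generate $\Autm(G;\calb)$" fails as stated. The missing step, which your last paragraph only gestures at, is precisely how the paper concludes: form the group $\Aut_0(G)$ of automorphisms restricting to the identity on $F$ and preserving $\ol\calb$ downstairs (its generating set is obtained by listing \emph{all} lifts of the stabiliser generators, which is where the trivial-on-$\ol G$ automorphisms are absorbed); for each $j$ let $X_j$ be the finite set of tuples in $G$ projecting to $\ol S_j$, with $\sim_j$ the conjugacy relation; then $\Aut_0(G)$ acts on $\prod_j X_j/\sim_j$, the action is computable via the simultaneous conjugacy problem, and Lemma \ref{lem_probleme_orbite} both decides the orbit question (completing the isomorphism test) and computes the stabiliser of $(S_1,\dots,S_p)$, which is how the paper obtains $\Autm(G;\calb)$. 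Without this final stabiliser computation your algorithm outputs generators of the wrong group.
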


\begin{proof}
Let $N_1, N_2$ be the (unique) maximal finite normal subgroups of the two bounded Fuchsian groups $G_1, G_2$.  
Let $\ol G_i = G_i/N_i$, so we have $1 \to N_i \xra{j_i} G_i \xra{\pi_i} \ol G_i \to 1$.
Since the finite groups $N_i$ can be computed, one can find a presentation of $\ol G_i$.

Denote the marked peripheral structures by $\calb_{i}=(S_1^{(i)},\dots,S_p^{(i)})$. 
We can obviously assume that
$\calb_{1}$ and $\calb_{2}$ have the same number $p$ of tuples, and that $\#S_i^{(1)}=\#S_i^{(2)}$ for all $i=1,\dots,p$
as otherwise, $(G;\calb_m)\not\simeq (G';\calb'_m)$.
Whether  the order preserving map
$S_j^{(1)}\ra S_j^{(2)}$ extends to an isomorphism $\phi_j:\grp{S_j^{(1)}}\ra\grp{S_j^{(2)}}$ can be checked
(see Lemma \ref{lem_VC2} and \ref{lem_autZ}),
so we can assume that $\phi_j$ exists.

Since the peripheral subgroups are $\Zmax$, they contain the normal finite group $N_i$, and the quotient is cyclic. 
Denote by $\ol\calb_{i}$ the image of $\calb_{i}$ in $\ol G_i$.
Of course, any isomorphism
 $\phi:(G_1;\calb_{1})\ra (G_2;\calb_{2})$ sends $N_1$ to $N_2$ 
  and induces an isomorphism $\ol \phi:(\ol G_1,\ol \calb_{1})\ra (\ol G_2,\ol \calb_{2})$. 

By Proposition \ref{prop_IP_orbifold},  
we can determine if there exists an isomorphism $\ol \phi:(\ol G_1,\ol \calb_{1})\ra (\ol G_2,\ol \calb_{2})$,
  and produce one if it exists. If there is no such isomorphism, we are done.
Still using  Proposition \ref{prop_IP_orbifold},  
compute a generating set of  $\Autm(G_1,\ol \calb_{1})$.

We list all isomorphisms $\nu:N_1\ra N_2$, and for each of them, 
we look for isomorphisms  $\phi:G_1\ra G_2$ which coincide with $\nu$ on $N_1$.
In a first step, we don't ask $\phi$ to preserve the peripheral structures.

\begin{lem}\label{lem_obs3}
One can decide whether there exists an isomorphism 
$\phi':G_1\ra G_2$ whose restriction to $N_1$ is $\nu$,
and such that the induced isomorphism $\ol\phi':\ol G_1\ra \ol G_2$ maps the marked peripheral structure $\ol \calb_{1}$ to $\ol\calb_{2}$.
We can find such a $\phi'$ if it exists.

Moreover, one can construct a generating set of the group $\Aut_0(G_1)$ of all automorphisms of 
$G_1$ whose restriction to $N_1$ is the identity, and whose induced automorphism on $\ol G_1$
preserves $\ol\calb_{1}$.
\end{lem}

\begin{proof}
We already have an isomorphism $\ol \phi:(\ol G_1,\ol \calb_{1})\ra (\ol G_2,\ol \calb_{2})$.
One has to decide the existence of $\ol\alpha\in \Autm(\ol G_1,\ol \calb_{1})$,
such that $\ol\phi\circ\ol\alpha$ lifts to a isomorphism $\phi':G_1\ra G_2$ whose restriction to $N_1$ is $\nu$.
The isomorphisms $\ol\phi$, and $\nu$ allow us to view $G_2$ as an extension of $\ol G_1$ by $N_1$:
$1\to N_1\xra{j_2\circ\nu} G_2 \xra{\ol\phi\m\circ\pi_2} \ol G_1$, and 
we are asking whether the two extensions $G_1,G_2\in \cale(\ol G_1,N_1)$ 
are in the same orbit under the action of $\Autm(\ol G_1,\ol \calb_{1})$
(see Section \ref{subsec_extensions}).
Since we have computed a generating set of  $\Autm(\ol G_1,\ol \calb_{1})$,
Proposition \ref{prop_iso_extension} allows us to decide whether this holds or not, and gives such $\ol\alpha$ if it exists.
One finds a lift of  $\ol\phi\circ\ol\alpha$ inducing $\nu$ by trying all possibilities as in Proposition \ref{prop_iso_extension}.

The same proposition gives a generating set of the stabiliser of the extension $G_1$ under the action of $\Autm(\ol G_1,\ol \calb_{1})$.
By listing all lifts to $G_1$ of this generating set, one gets a generating set of $\Aut_0(G_1)$.
\end{proof}

If no isomorphism $\phi'$ as in Lemma \ref{lem_obs3} exists, we are done.
Otherwise, we now modify $\phi'$ to ensure that it preserves the peripheral structures.

Fix $S_j=S_j^{(1)}=(s_1,\dots,s_r)\in G_1^r$ a tuple of $\calb_{1}$,
and $S'_j=S_j^{(2)}\in G_2^r$ be the corresponding tuple of $\calb_{2}$.
Let $\ol S_j \in \ol G_1^r$ be the image of $S_j$ in $\ol G_1$
 and $X_j=s_1N_1\times \dots \times s_rN_1 \subset G_1^r$ 
be the finite set of tuples in $G_1^r$ whose image in $G_1$ is $\ol S_j$.
Let $\sim_j$ be the equivalence relation on $X_j$ defined by
$(t_1,\dots,t_r)\sim (t'_1,\dots,t'_r)$ if and only if $(t_1,\dots,t_r)=(t'_1{}^g,\dots,t'_r{}^g)$ for some $g\in G_1$.
Note that $\Aut_0(G_1)$ acts on $X_j/\sim_j$. 
Indeed, since any $\psi\in\Aut_0(G_1)$ preserves the marked peripheral structure $\ol \calb_1$ on $\ol G_1$, 
there exists $g\in G_1$ such that $\psi(S_j)^g\in X_j$, and its equivalence class in $X_j/\sim_j$ does not depend on the choice of $g$.

Since $\ol\phi'$ preserves marked peripheral structures, 
there exists $g\in G_1$ such that $\phi'{}\m(S'_j)^g\in X_j$, (and the choice of $g$ does not change the
class of $S'_j$ modulo $\sim_j$). 

The existence of an isomorphism $\phi'':(G_1,\calb_1)\ra (G_2,\calb_2)$ 
is equivalent to the existence of $\psi\in \Aut_0(G_1)$
 mapping $S_j$ to $S'_j$ in $X_j/\sim_j$ for all $j$.
Indeed, if $\phi''$ exists then $\psi=\phi'{}\m\phi''\in \Aut_0(G_1)$ sends $S_j$ to $S'_j$ in $X_j/\sim_j$,
and conversely, if $\psi$ sends  $S_j$ to $S'_j$ in $X_j/\sim_j$, then one can take $\phi''=\phi'\psi$.

Since the simultaneous conjugacy problem is solvable, the equivalence relation $\sim_j$ and the action of $\Aut_0(G_1)$ on $\prod_j X_j/\sim_j$ are computable.
Since we know a generating set for $\Aut_0(G_1)$, Lemma \ref{lem_probleme_orbite} allows us to
decide the existence of $\psi$, and proves the first assertion of the proposition.
The same lemma allows us to compute the stabiliser of $(S_1^{(1)},\dots,S_r^{(1)})\in \prod_j X_j/\sim_j$
which is exactly $\Autm(G_1;\calb_{1})$.
This ends the proof of Proposition \ref{prop_IP_BFG}.
\end{proof}

  \subsection{Trees, graph of groups decompositions}

\subsubsection{Generalities}

A \emph{$G$-tree} is a simplicial tree endowed with an action of $G$.
All actions are supposed to be without inversion: no edge is mapped to itself reversing the orientation.
A $G$-tree is \emph{non-trivial} if no point of $T$ is fixed by $G$,
and \emph{minimal} if $T$ has no  proper $G$-invariant subtree.
All $G$-trees considered are minimal, unless otherwise mentioned.
If $(G;\calp)$ is a peripheral structure of $G$, then an action of the pair $(G;\calp)$ on a tree
(or an action of $G$ relative to $\calp$)
is a $G$-tree such that all peripheral subgroups of $\calp$ are elliptic (\ie fix a point) in $T$. 
This makes sense whether $\calp$ is a marked or unmarked peripheral structure.

A $G$-tree $T$ is \emph{irreducible} if it is non-trivial, and no end and no line of $T$ is $G$-invariant
(in other words $T$ is not abelian and not dihedral). If $G$ is hyperbolic, and not virtually cyclic,
any non-trivial $G$-tree with elementary edge stabiliser is irreducible \cite[Prop. 2.6]{Pau_Gromov}.

We identify two $G$-trees if there is a $G$-equivariant isomorphism between $T$ and $T'$.
Equivalently, this means that $T$ and $T'$ have the same length function \cite{AlperinBass_length,CuMo}.

If $\cale$ is a class of subgroups of $G$  (\textit{e.g.} the class  $\Z$), 
we say that a $G$-tree with edge stabilisers in $\cale$ is an $\cale$-tree.

\subsubsection{Bass-Serre theory} \label{sec;trees}

We first recall some classical material, but we hope that stating this will help to clarify certain issues.
    
We use Serre's notations for graphs \cite{Serre_arbres}: a \emph{graph} $X$ consists of a vertex set $V(X)$, a set of oriented edges $E(X)$
endowed with the fixed-point free involution $e\mapsto \ol e$ reversing the orientation of an edge, 
and $t:E(X)\ra V(X)$ a map assigning to an oriented edge its terminal vertex.
The origin $o(e)$ of an edge $e$ is $t(\ol e)$.

A \emph{graph of groups} consists of a graph $X$, a vertex group $\G_v$  for each $v\in V(X)$,
an edge group $\G_e$ for each $e\in E(X)$ with $\G_e=\G_{\ol e}$, and for each $e\in E(X)$, 
a monomorphism $i_e: \G_e\to \G_{t(e)}$. If $\G$ is a graph of groups, we often, by a slight abuse of notation, denote by $\G$ the underlying graph.

The \emph{Bass group} $B(\Gamma)$ of $\Gamma$ is the free product of the vertex groups and of the free group on $E(\Gamma)$ subject to the relations
\begin{itemize}
\item $\ol e=e^{-1}$ for all $e\in E(\Gamma)$
\item $e i_e(g) e^{-1} = i_{\ol e} (g)$ for all $e\in E(\Gamma)$ and $g\in \G_{e}$.
\end{itemize}
An element of the Bass group is a \emph{path} from $v$ to $w$ if it is of the form
$g_0 e_1 g_1 \dots e_n g_n$ where $e_1\dots e_n$ is a path in the graph $\Gamma$ joining $v$ to $w$
and $g_i\in\Gamma_{t(e_i)}=\Gamma_{o(e_{i+1})}$.

The \emph{fundamental group} $\pi_1(\Gamma,v)$, where $v\in V(\Gamma)$ is a base point,
is by definition the subgroup of $B(\Gamma)$ consisting of elements which are paths from $v$ to $v$.
Given a base point $v\in \Gamma$, there is a \emph{universal covering}
$\Tilde \Gamma$ (which is called the Bass-Serre tree of $(\Gamma,v)$)  
endowed with an action of $\pi_1(\Gamma,v)$.
If $p\in B(\Gamma)$ is a path from $w$ to $v$, the map $\phi_p:\pi_1(\Gamma,v)\ra\pi_1(\Gamma,w)$
defined by $g\mapsto pgp\m$ is an isomorphism which induces a $\phi_p$-equivariant
isomorphism between the universal coverings of $\Gamma$ relative to $v$ and $w$.

If $G$ is a group, a \emph{marking} of $\pi_1(\Gamma,v)$ is an isomorphism  $\mu:\pi_1(\Gamma,v)\ra G$.
This marking defines an action of $G$ on the universal cover $\Tilde \Gamma$ (relative to the base point $v$).
Allowing to change the marking by some isomorphism $\phi_p$,
one can suppress the reference to a base point, and the marking is only defined up to the choice of a base point and composition
by an inner automorphism. Since these operations do not change the action $G\actson \Tilde \Gamma$ up to equivariant isometry,
we make the abuse of saying that a marking is an isomorphism $\mu:\pi_1(\Gamma)\ra G$ (without reference to a base point).

If $\tau\subset E(\Gamma)$ is a maximal subtree, the fundamental group $\pi_1(\Gamma,\tau)$
is the quotient of $B(\Gamma)$ by the edges occurring $\tau$.
The composition $\pi_1(\Gamma,v) \into B(\Gamma) \onto \pi_1(\Gamma,\tau)$ is an isomorphism, see \cite[Chap. 1 Prop. 20]{Serre_arbres}.

\emph{Convention about notations} 
 We choose to write $G$ for a group, and keep the letter $\G$ for  graphs of groups. However, by convention, in a graph of groups $\G$, the symbols $\G_v, \G_e$ will denote the groups of the vertex $v$, of the edge $e$. But, if a group $G$ acts on a tree, the stabiliser of a vertex $v$ will be denoted by $G_v$. 
\\

Given a $G$-tree, the graph $X=T/G$ can be endowed with a structure of a graph of groups $\G$ as follows.
For each $x\in V(X)\cup E(X)$, choose a lift $\Tilde x$ in $T$, so that $\Tilde {\ol e}=\ol{\Tilde e}$. 
Contrary to \cite{Serre_arbres}, we don't assume the connectedness of the lift of $X$, we rather choose arbitrary lifts of edges and vertices.
For all $e\in E$, consider $v=t(e)$, then there exists $h_e\in G$ such that $h_e.t(\Tilde e)=\Tilde v$.
For all $x\in E(X)\cup V(X)$, we define $\Gamma_x=G_{\Tilde x}<G$ as the stabiliser of $\Tilde x$, 
and $i_e$ as the restriction to $\Gamma_{\Tilde e}$ of the inner automorphism $\inn_{h_e}:g\mapsto h_e g h_e\m$.

Given these choices, there is a morphism $\mu:B(\Gamma)\ra G$ defined by its value on its generating set as follows:
$\mu$ is defined on $\Gamma_v$ as the inclusion in $G$, and for each edge $e\in E(\Gamma)$, 
$\mu(e)=h_{\ol e}h_e\m$.
The restriction of $\mu$ to $\pi_1(\Gamma,v)$ is an isomorphism to $G$ 
\cite[Section 3.3]{Bass_covering} (where the proof there is given only in the case the lift of $\Gamma$ is a tree).
Moreover, to the marking $\mu$ corresponds
a $\mu$-equivariant isomorphism $\Tilde\mu:\Tilde \Gamma\ra T$.

The graph of groups $\Gamma$ and the marking depend on choices of the lifts $\tilde x$ of all $x\in V(X)\cup E(X)$, 
of the elements $h_e\in G$, and of the base point $v\in V(X)$. 
But of course, the action of $G$ on $\Tilde \Gamma$  induced by $\mu$ does not depend on choices up to equivariant isometry since
it is isomorphic to $G\actson T$.

Algorithmically, one describes a graph of groups by a presentation of edge and vertex groups,
and the monomorphisms $i_e$ by the images of the generators of $G_e$.
One can compute a presentation of $\pi_1(\Gamma,v)$ using the isomorphism with $\pi_1(\Gamma,\tau)$.
One describes an action of $G$ on a tree up to $G$-equivariant isomorphism by
giving a graph of groups $\Gamma$, and an isomorphism $\pi_1(\Gamma,v)\ra G$.

\subsubsection{Isomorphisms of graph of groups}

All graph of groups are supposed to be minimal and irreducible (\ie their Bass-Serre trees are assumed to be so).

\begin{dfn}\label{dfn_Phi}
An \emph{isomorphism} $\Phi=(F,(\phi_e),(\phi_v),(\gamma_e))$ of graph of groups is 
a graph isomorphism $F:\Gamma\ra \Gamma'$,
a collection of isomorphisms $\phi_e:\Gamma_e\ra \Gamma'_{F(e)}$  and $\phi_v:\Gamma_v\ra \Gamma'_{F(v)}$ for all $e\in E(\Gamma)$ and all 
$v\in V(\Gamma)$,
and a collection elements $\gamma_e\in \Gamma'_{F(t(e))}$ for all $e\in E(\Gamma)$,
such that $\phi_e=\phi_{\ol e}$ and such that the following diagram commutes:
$$\xymatrix{
\Gamma_e\ar[d]^{\phi_e} \ar[rr]^{i_e}  & &\Gamma_v\ar[d]^{\phi_v}\\
\Gamma'_{F(e)}\ar[r]^{i_{F(e)}} &\Gamma'_{F(v)}\ar[r]^{\inn_{\gamma_e}}  &\Gamma'_{F(v)}\\
}$$
where $v=t(e)$.  
\end{dfn}

\begin{rem*}
 Bass' definition of an isomorphism of graphs of groups 
is more general than ours as it also involves elements $(\gamma_v)_{v\in V(\Gamma)}$ in the Bass group \cite{Bass_covering}.  
In terms of Bass' notations, we work only with morphisms of
the form $\delta\Phi$, where $\Phi$ is a morphism in Bass sense, $\delta\Phi$ being defined in
\cite[2.9]{Bass_covering}.
Lemma \ref{lem_tree2Phi} below shows that any tree isomorphism is induced
by some $\Phi$ as in our definition.
\end{rem*}

Such an isomorphism of graphs of groups $\Phi:\Gamma\ra \Gamma'$ induces an isomorphism $\Phi_B$ of the Bass groups 
defined on the generating set of $B(\Gamma)$ as follows: on $\Gamma_v$, $\Phi_B$ is defined as $\phi_v$,
and $\Phi_B(e)=\gamma_{\ol e}\m F(e) \gamma_e$ for $e\in E(\Gamma)$.
The restriction of $\Phi_B$ induces  an isomorphism of fundamental groups 
$\Phi_*:\pi_1(\Gamma,v)\ra\pi_1(\Gamma',F(v))$.
Note however that $\Phi_B$ does not usually factor into a map $\pi_1(\Gamma,\tau)\ra \pi_1(\Gamma',F(\tau))$.

\begin{lem}[{\cite[\S 2.3]{Bass_covering}}]\label{lem_Phi2tree}
An isomorphism of graphs of groups $\Phi$ induces a $\Phi_*$-equivariant 
isomorphism of the universal coverings $\Tilde\Phi:\Tilde \Gamma\ra\Tilde \Gamma'$.

Moreover,  
if all vertex and edge groups are finitely presented, one can algorithmically compute $\Phi_*$ from $\Phi$.
\end{lem}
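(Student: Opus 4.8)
The plan is to build $\Tilde\Phi$ out of the homomorphism $\Phi_B:B(\Gamma)\ra B(\Gamma')$ already attached to $\Phi$ and to check that it descends to the universal covers. First I would recall the standard path model of the Bass--Serre tree: fixing base vertices $v_0\in V(\Gamma)$ and $F(v_0)\in V(\Gamma')$, the vertices of $\Tilde\Gamma$ lying over $v\in V(\Gamma)$ are the classes of paths $p$ in $B(\Gamma)$ from $v_0$ to $v$, with $p\sim q$ iff $p\m q\in\Gamma_v$, an edge over $e$ being a class of paths $pe$ with $p$ from $v_0$ to $o(e)$, modulo the relation given by $i_{\ol e}(\Gamma_e)$; the operations $o,t,\ol{\,\cdot\,}$ are induced by path operations and $\pi_1(\Gamma,v_0)$ acts by prepending loops. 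Since $\Phi_B$ carries a path from $v_0$ to $v$ to a path from $F(v_0)$ to $F(v)$, restricts to $\phi_v$ on each $\Gamma_v$, and equals $\gamma_{\ol e}\m F(e)\gamma_e$ on each edge $e$ with $\gamma_e\in\Gamma'_{F(t(e))}$, the commuting square of Definition \ref{dfn_Phi} should show that $\Phi_B$ sends the equivalence relations defining $\Tilde\Gamma$ into those defining $\Tilde\Gamma'$, hence induces a graph morphism $\Tilde\Phi:\Tilde\Gamma\ra\Tilde\Gamma'$. To see it is an isomorphism of trees I would run the very same construction on $\Phi\m$, which is again an isomorphism of graphs of groups since $F$ and all the $\phi_v,\phi_e$ are invertible, obtaining a two-sided inverse. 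Equivariance is then immediate: $\Phi_B(\gamma p)=\Phi_B(\gamma)\Phi_B(p)=\Phi_*(\gamma)\Phi_B(p)$ for $\gamma\in\pi_1(\Gamma,v_0)$.

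For the algorithmic statement I would fix maximal subtrees $\tau\subset\Gamma$ and $\tau'\subset\Gamma'$, together with the $\tau$-geodesic paths $p_v$ from $v_0$ to each $v$, and use the computation of a presentation of $\pi_1(\Gamma,v_0)$ through $\pi_1(\Gamma,\tau)$ recalled in Section \ref{sec;trees}; its standard generating set is the collection of conjugates $p_v\Gamma_vp_v\m$ for $v\in V(\Gamma)$ together with the stable letters $p_{o(e)}\,e\,p_{t(e)}\m$ for $e\notin\tau$, and likewise for $\Gamma'$. The map $\Phi_B$ is explicitly computable on words of $B(\Gamma)$ from the data $\Phi$ (apply $\phi_v$ on vertex groups, replace each $e$ by $\gamma_{\ol e}\m F(e)\gamma_e$, and use multiplicativity), so I can evaluate it on every standard generator of $\pi_1(\Gamma,v_0)$, getting a loop $g_0e_1g_1\cdots e_ng_n$ of $B(\Gamma')$ based at $F(v_0)$. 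The final step is to rewrite such a loop in terms of the standard generators of $\pi_1(\Gamma',F(v_0))$: insert $p_{o(e_i)}\m p_{o(e_i)}$ and $p_{t(e_i)}p_{t(e_i)}\m$ around each letter $e_i$, which turns the loop into an alternating product of conjugates $p_v g p_v\m$ with $g\in\Gamma'_v$ and of the stable letters, the $\tau'$-edges being eliminated using the tree structure of $\tau'$. This is a finite, purely combinatorial procedure; applying it to each generator outputs $\Phi_*$ as an explicit list of words.

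The routine but slightly delicate point, which I expect to be the main obstacle, is the well-definedness verification in the first part: one must track base points and the correction elements $\gamma_e$ carefully and check that $\Phi_B$ really maps each subgroup $i_{\ol e}(\Gamma_e)\leq\Gamma'_{\ol e}$-side into the corresponding $i_{\ol{F(e)}}(\Gamma'_{F(e)})$ up to the relevant conjugacy, so that the defining relations $e\,i_e(g)\,e\m=i_{\ol e}(g)$ of $B(\Gamma')$ are respected on classes; this is essentially a clean reproduction of Bass, \S 2.3. Once this bookkeeping is in place the rest is formal, and the effectiveness claim is just the observation that, under the finite-presentation hypothesis, every object in sight — the presentations of the two fundamental groups, the values of $\Phi_B$ on generators, and the rewriting into standard form — is finite and explicitly produced.
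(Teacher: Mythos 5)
Your proposal is correct, but it takes a noticeably different route from the paper on both halves. For the first statement the paper does not reprove anything: it simply cites Bass \S 2.3 for the existence of the $\Phi_*$-equivariant isomorphism $\Tilde\Phi$, whereas you re-derive it via the path model of the universal cover; your sketch is sound (the commuting square of Definition \ref{dfn_Phi} does make $\Phi_B$ respect the defining equivalences, and the inverse of $\Phi$ is again a graph-of-groups isomorphism with $\gamma'_{F(e)}=\phi_{t(e)}^{-1}(\gamma_e^{-1})$, so that $(\Phi^{-1})_B\circ\Phi_B=\mathrm{id}_{B(\Gamma)}$ and the induced tree maps are mutually inverse), though that last composition check is the one bookkeeping step you should make explicit rather than wave at. For the algorithmic statement the paper's proof is deliberately lazy: it computes the images of the generators of $\pi_1(\Gamma,v)$ under $\Phi_B$ as words in $B(\Gamma')$, and then, using that $B(\Gamma')$ is finitely presented and that these images are known a priori to lie in $\pi_1(\Gamma',F(v))$, it blindly enumerates equivalent expressions until one is visibly a word in the generators of $\pi_1(\Gamma',F(v))$. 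You instead give the classical explicit rewriting: choose maximal trees and edge-path base paths $p_v$, insert $p_{t(e_i)}p_{t(e_i)}^{-1}$ around each edge letter, observe that $p_{o(e)}\,e\,p_{t(e)}^{-1}$ reduces freely to $1$ for $e\in\tau'$, and read off a word in the standard generators. Both arguments are valid; the paper's enumeration is shorter to state and needs no choice of trees or base paths, while your rewriting is a genuine algorithm with no unbounded search, at the cost of the base-point and $\gamma_e$ bookkeeping you correctly identify as the delicate part.
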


\begin{proof} 
Given \cite[\S 2.3]{Bass_covering}, we only need to show that $\Phi_*: \pi_1(\Gamma,v)\ra\pi_1(\Gamma',F(v)) $ can be algorithmically computed. 
One can easily compute explicit presentation of $\pi_1(\Gamma,v)$ and of the Bass group $B(\Gamma)$,
and can compute the embedding $\pi_1(\Gamma,v)$ in $B(\Gamma)$ (by giving the image of the generators).
One can also easily compute the image by $\Phi_B$ (hence by $\Phi_*$)  of the generators of $\pi_1(\Gamma,v)$, 
in  $B(\Gamma')$ (in fact we know that their image are in $\pi_1(\Gamma',F(v))$, but they are not expressed in terms of the generators of this group yet).  
Since $B(\Gamma)$ is finitely presented, one can enumerate the different expressions of a given element. 
This allows us to eventually express the images by $\Phi_*$ of the  generators of $\pi_1(\Gamma,v)$ in terms of the generators of  $\pi_1(\Gamma',F(v))$.
\end{proof}

\begin{rem}
  Let $\Gamma$ be a graph of groups, and $\Gamma'$ be the graph of groups 
with the same underlying graph, the same edge and vertex groups as $\Gamma$
but whose edge morphisms are defined by $i'_e=\inn_{\gamma_e}\circ i_e$
for some $\gamma_e\in \Gamma_{t(e)}$.
Then $\Gamma$ and $\Gamma'$ are isomorphic under $\Phi=(\id_\Gamma,\id_{G_e},\id_{G_v},\gamma_e)$.
In other words, changing the conjugacy classes of the edge morphisms of $\Gamma$ does not change the dual tree
up to the change of marking above.
\end{rem}

The lemma below shows that conversely, an isomorphism of trees is always induced by a (not uniquely defined) 
isomorphism of graph of groups.

\begin{lem}[{\cite[Bass, Prop 4.]{Bass_covering}}] \label{lem_tree2Phi} 
Any equivariant isomorphism of trees induces an automorphism of graph of groups. More precisely one has the following.   
 
 Let $G\actson T$ and $G'\actson T'$ be two minimal irreducible actions on trees. 
Let $\Gamma,\Gamma'$ be the corresponding quotient graphs of groups corresponding to some choices,
and $\mu:\pi_1(\Gamma,v)\ra G$,  $\mu':\pi_1(\Gamma',v')\ra G'$, $\Tilde\mu:\Tilde \Gamma\ra T$,
 $\Tilde \mu':\Tilde \Gamma'\ra T'$ the corresponding marking.

If $\phi:G\ra G'$ is an isomorphism, and $f:T\ra T'$ is a $\phi$-equivariant isomorphism,
then, after performing a change of marking induced by a path $p\in B(\Gamma')$, 
there exists an isomorphism $\Phi$ from $\Gamma$ to $\Gamma'$ inducing $\phi$ and $f$ in the sense
that the following diagrams commute (for the new markings):
$$\xymatrix{G \ar[r]^{\phi} & G'                      && T \ar[r]^f & T'              \\
\Tilde\pi_1(\Gamma,v) \ar[r]^{\Phi_*} \ar[u]^{\mu}& \pi_1(\Gamma',F(v)) \ar[u]^{\Tilde\mu'}    &&  
\Tilde\Gamma \ar[r]^{\Tilde \Phi}\ar[u]^{\Tilde\mu} & \Tilde \Gamma' \ar[u]^{\Tilde\mu'}    
}$$
\end{lem}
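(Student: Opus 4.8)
The plan is to reduce everything to a combinatorial bookkeeping exercise, constructing the data of $\Phi$ directly from the equivariant isomorphism $f$. First I would recall that a graph of groups structure on $T/G$ (resp.\ $T'/G'$) comes with a chosen lift $\Tilde x \in T$ of each $x \in V(X)\cup E(X)$ and elements $h_e \in G$ with $h_e \cdot t(\Tilde e) = \widetilde{t(e)}$, as set up in Section~\ref{sec;trees}, and similarly primed data for $T'$. Since $f$ is a $\phi$-equivariant isomorphism, it induces a graph isomorphism $\bar f : T/G \ra T'/G'$, and we set $F = \bar f$. For each $x \in V(X) \cup E(X)$, the vertices $f(\Tilde x)$ and $\widetilde{F(x)}'$ lie in the same $G'$-orbit, so there is $k_x \in G'$ with $k_x \cdot f(\Tilde x) = \widetilde{F(x)}'$; we may take $k_x$ for edges to be compatible with the involution, i.e.\ $k_{\ol e} = k_e$, by choosing $\Tilde{\ol e} = \ol{\Tilde e}$ on both sides. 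Then define $\phi_x : \Gamma_x \ra \Gamma'_{F(x)}$ by $\phi_x(g) = k_x\, \phi(g)\, k_x\m$, which indeed sends $\Stab_G(\Tilde x)$ isomorphically onto $\Stab_{G'}(\widetilde{F(x)}')$ because $f$ is $\phi$-equivariant.

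Next I would produce the elements $\gamma_e$. For each edge $e$ with $v = t(e)$, we have two edge morphisms to compare: the given $i_{F(e)} : \Gamma'_{F(e)} \ra \Gamma'_{F(v)}$ and the map $\phi_v \circ i_e \circ \phi_e\m$. Both are injections of $\Gamma'_{F(e)}$ into $\Gamma'_{F(v)}$ whose images are conjugate subgroups (they are both conjugates of the stabiliser of the same edge of $T'$ incident to the lift $\widetilde{F(v)}'$), and chasing the definitions of the $h$'s, $h'$'s and $k$'s shows the precise relation is $\phi_v \circ i_e \circ \phi_e\m = \inn_{\gamma_e} \circ i_{F(e)}$ for $\gamma_e := \phi_v(h_e)\m \cdot k_v \cdot \phi(h_e) \cdot h'_{F(e)}\m \cdot k_e\m$ — or more safely: observe that $\widetilde{F(v)}' = k_v f(\Tilde v) = k_v f(h_e t(\Tilde e)) = k_v \phi(h_e) f(t(\Tilde e))$ while also $\widetilde{F(v)}' = h'_{F(e)} t(\widetilde{F(e)}') = h'_{F(e)} k_e f(t(\Tilde e))$, so $\gamma_e' := h'_{F(e)} k_e \phi(h_e)\m k_v\m$ fixes $f(t(\Tilde e))$ hence $\widetilde{F(v)}'$, i.e.\ $\gamma_e' \in \Gamma'_{F(v)}$; one then checks by direct substitution that conjugating $i_{F(e)}$ by this element (or its inverse) matches $\phi_v i_e \phi_e\m$, giving the required commuting square of Definition~\ref{dfn_Phi}. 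This defines $\Phi = (F, (\phi_e), (\phi_v), (\gamma_e))$, an honest isomorphism of graphs of groups.

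Finally I would check that $\Phi$ induces $\phi$ and $f$ in the stated sense, up to a change of marking. By Lemma~\ref{lem_Phi2tree}, $\Phi$ gives $\Phi_* : \pi_1(\Gamma, v) \ra \pi_1(\Gamma', F(v))$ and a $\Phi_*$-equivariant $\Tilde\Phi : \Tilde\Gamma \ra \Tilde\Gamma'$. Transporting through the markings $\mu, \mu', \Tilde\mu, \Tilde\mu'$, both $\Tilde\mu' \circ \Tilde\Phi$ and $f \circ \Tilde\mu$ are $\psi$-equivariant isomorphisms $\Tilde\Gamma \ra T'$ for the composed isomorphism $\psi = \mu' \circ \Phi_* \circ \mu\m$, and they agree on the single vertex $\Tilde v$ mapsto $\widetilde{F(v)}'$ by construction of the $k_x$; since an equivariant tree isomorphism is determined by its value at one point together with the accompanying group isomorphism, it follows that $f \circ \Tilde\mu = \Tilde\mu' \circ \Tilde\Phi$, and hence $\psi = \phi$, provided the marking $\mu'$ has been adjusted so that its base point matches $F(v)$. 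That adjustment is exactly a change of marking by a path $p \in B(\Gamma')$ from $v'$ to $F(v)$, as in the statement; the composite $\Phi_*$ followed by $\phi_p$ lands in $\pi_1(\Gamma', v')$ and the diagrams commute for the new markings.

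I expect the main obstacle to be purely bookkeeping: getting the conjugating elements $\gamma_e$ and the base-point/path correction exactly right so that all four of the defining square of Definition~\ref{dfn_Phi} and the two commuting squares of the conclusion hold simultaneously — the individual identities are one-line chases through the definitions of $h_e$, $h'_e$, $k_x$, but there are several of them and the involution-compatibility $\phi_e = \phi_{\ol e}$ must be maintained throughout; since this is precisely \cite[Prop.~4]{Bass_covering}, I would in the write-up mostly cite Bass for the existence of $\Phi$ and concentrate on spelling out the marking change.
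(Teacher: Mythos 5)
Your explicit construction of $\Phi=(F,(\phi_e),(\phi_v),(\gamma_e))$ is essentially sound and is a genuinely different route from the paper's: taking $\phi_x=\inn_{k_x}\circ\phi$ on stabilisers and $\gamma_e=k_{t(e)}\,\phi(h_e)\,k_e\m\,(h'_{F(e)})\m$ (up to the usual inversion conventions, as you hedge), one does check that $\gamma_e$ fixes $\widetilde{F(t(e))}'$ and that the square of Definition \ref{dfn_Phi} commutes. The paper never builds $\Phi$ by hand; it quotes Bass for the existence of a morphism in his \emph{more general} sense (with elements $\gamma_v$ in the Bass group), passes to $\delta\Phi$, and then uses \cite[6.3(6)]{BaJi_automorphism} to see that $\Phi_*\m\delta\Phi_*$ is inner, kills it by a change of marking, and concludes with the uniqueness Lemma \ref{lem_unq}. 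So if your direct construction were completed, it would be more self-contained than the paper's argument.

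However, the final verification — the actual content of the lemma — has a genuine gap as written. You assert that both $f\circ\Tilde\mu$ and $\Tilde\mu'\circ\Tilde\Phi$ are $\psi$-equivariant for $\psi=\mu'\circ\Phi_*\circ\mu\m$; but $f\circ\Tilde\mu$ is $(\phi\circ\mu)$-equivariant, and the identity $\phi\circ\mu=\mu'\circ\Phi_*$ (after the marking change) is exactly the left-hand square you are trying to prove, so the argument is circular. Nor can that square be recovered afterwards from equality of the tree maps, since the $G'$-action on $T'$ need not be faithful. Also, the two maps do not agree at the base vertex "by construction of the $k_x$": $f\circ\Tilde\mu$ sends it to $f(\Tilde v)$, while $\Tilde\mu'\circ\Tilde\Phi$ sends it to $\widetilde{F(v)}'=k_v\,f(\Tilde v)$, so besides moving the base point from $v'$ to $F(v)$ the change of marking must also absorb $k_v$ (a loop at $F(v)$ representing $k_v$ under $\mu'$). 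The honest completion of your route is the telescoping computation, on loops in the Bass group, showing $\mu'\circ\Phi_*=\inn_{k_v}\circ\phi\circ\mu$; once that group-level square is in place, the tree-level square follows from Lemma \ref{lem_unq} (both maps are then $(\phi\circ\mu)$-equivariant isomorphisms, hence equal). Your fallback of "mostly citing Bass for the existence of $\Phi$" does not close this gap, because Bass's morphisms do not a priori satisfy Definition \ref{dfn_Phi} — that mismatch is precisely what the paper's proof (via $\delta\Phi$, Bass--Jiang, and Lemma \ref{lem_unq}) is there to repair, and your proposal does not mention that repair.
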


Applying the Lemma to the case $T=T'$, we see that changing the choices (of lifts and of elements $h_e$) needed
to define the quotient graph of groups does not change the graph of groups, up to isomorphism and change of base point.

\begin{proof}
The given reference \cite[Proposition 4.4]{Bass_covering} says that there exists such a morphism $\Phi$, but in the more general definition of
an automorphism used in \cite{Bass_covering}.  We use Bass' notations. 
We prove that $\delta\Phi$ (which satisfies our definition
of a isomorphism of graph of groups) still satisfies the Lemma.

Let $\Tilde\Phi:\Tilde\Gamma\ra \Tilde\Gamma'$ (resp. $\Tilde{\delta\Phi}$) be the $\Phi_*$-equivariant (resp.\ $\delta\Phi_*$) 
isometry induced by $\Phi$ (resp. $\delta\Phi)$.
Then $\Tilde\Phi\m\Tilde{\delta\Phi}$ is a $\Phi\m_*\delta\Phi_*$-equivariant automorphism of $T$.
By \cite[6.3(6)]{BaJi_automorphism}, $\Phi\m_*\delta\Phi_*$ is an inner automorphism, say $\inn_g$.
Up to changing the marking of $\Gamma'$ using a path in $B(\Gamma')$, we can ensure that $g=1$.
By Lemma \ref{lem_unq} below, for any isomorphism $G\ra G'$ there is at most one $f:T\ra T'$ which is $G$-equivariant. 
It follows that $\Tilde\Phi=\Tilde{\delta\Phi}$, so
$\delta\Phi$ still satisfies the Lemma. 
\end{proof}

\begin{lem}\label{lem_unq}
Let $T$ (resp. $T'$) be a  minimal, irreducible $G$-tree (resp. $G'$-tree). 
Let  $\phi:G\ra G'$ be an isomorphism. Then,  there is at most one $\phi$-equivariant
  isomorphism $f:T\ra T'$. 
\end{lem}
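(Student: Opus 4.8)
The plan is to use the standard rigidity of minimal irreducible actions on trees, namely that such an action is determined by its translation length function, together with the fact that two equivariant isomorphisms differ by an equivariant automorphism of the target. Suppose $f_1, f_2 : T \to T'$ are both $\phi$-equivariant isomorphisms. Then $g = f_2 \circ f_1^{-1} : T' \to T'$ is an automorphism of $T'$, and I claim it is $G'$-equivariant: for $h' \in G'$, writing $h' = \phi(h)$, we have $g(h'.x) = f_2(f_1^{-1}(\phi(h).x)) = f_2(h.f_1^{-1}(x)) = \phi(h).f_2(f_1^{-1}(x)) = h'.g(x)$, using $\phi$-equivariance of $f_1$ (hence of $f_1^{-1}$) and of $f_2$. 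So it suffices to show that the only $G'$-equivariant automorphism of a minimal irreducible $G'$-tree is the identity.

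To see this, I would argue that $g$ preserves the translation length function of $T'$ (indeed $\ell_{T'}(h') = \inf_{x} d(x, h'.x)$ is manifestly preserved by any equivariant isometry), and then invoke the fact, cited earlier in the excerpt via \cite{AlperinBass_length,CuMo} (and implicitly \cite{CuMo}), that a minimal irreducible action is determined by its length function up to equivariant isomorphism — but here I need the sharper statement that the equivariant automorphism group is trivial, which is really a statement about irreducibility. Concretely: since $T'$ is irreducible, there exist two hyperbolic elements $h'_1, h'_2 \in G'$ whose axes are distinct and whose axes do not share an end (this is exactly what irreducibility of the action provides — the action is neither abelian nor dihedral, so one can find independent hyperbolic isometries). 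An equivariant automorphism $g$ must send the axis of $h'_i$ to the axis of $g h'_i g^{-1}$; but $g$ commutes with the $G'$-action, so $g h'_i g^{-1} = h'_i$ (here I should be careful — $g$ is not an element of $G'$, so instead I should say: $g$ maps $\mathrm{Axis}(h'_i)$ to $\mathrm{Axis}(h'_i)$ since $g(h'_i.x) = h'_i.g(x)$ shows $g(\mathrm{Axis}(h'_i))$ is $h'_i$-invariant and isometric to a line, hence equals the axis, and $g$ preserves the translation direction and length). A characteristic set computation then shows $g$ fixes the two axes pointwise: $g$ restricted to $\mathrm{Axis}(h'_1)$ is a translation commuting with $h'_1$, and similarly on $\mathrm{Axis}(h'_2)$; comparing the two forces the translation amount to be zero, because the two axes meet in a bounded (possibly empty but then joined by a bridge) set and a nonzero translation along one axis would move the bridge, contradicting that it also fixes setwise the second axis. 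Hence $g$ fixes a point, and a fixed point together with preservation of every axis forces $g = \mathrm{id}$ on the convex hull of all axes, which is all of $T'$ by minimality.

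The main obstacle, and the step I would be most careful about, is the argument that an equivariant automorphism preserving every axis and its translation data must actually be the identity — getting the case analysis right when axes are disjoint versus overlapping, and handling the bridge between two disjoint axes, requires a little care but is entirely elementary tree geometry. An alternative, cleaner route that sidesteps this: observe that $g$ fixes setwise $\mathrm{Axis}(h'_1) \cap \mathrm{Axis}(h'_2)$, or more robustly, the set $Y$ of all points $x$ with $x \in [\mathrm{Axis}(h'_1), \mathrm{Axis}(h'_2)]$; since $g$ preserves both axes and the combinatorics of how they sit in $T'$, it fixes the (unique) shortest connecting segment, in particular fixes its two endpoints; having a fixed point and preserving the axis of every hyperbolic element, $g$ is the identity on the union of all axes, which is $T'$ by minimality and irreducibility. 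I would write it this way to keep the proof short. The finite-order / torsion issues present elsewhere in the paper do not intrude here since the statement is purely about the underlying tree.
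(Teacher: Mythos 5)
Your proof is correct, and it takes a route that differs in structure (though not in underlying geometry) from the paper's. The paper argues directly that any $\phi$-equivariant isomorphism $f$ is pointwise determined: $f$ must send the axis of $g$ to the axis of $\phi(g)$, hence the bridge between two disjoint axes to the corresponding bridge (with endpoints matched, so $f$ is forced on the whole bridge), and then it quotes Paulin's fact that in an irreducible tree every segment is contained in the bridge between the axes of two hyperbolic elements --- so $f$ is forced everywhere, with no need to compose two candidate maps. You instead reduce to showing that a $G'$-equivariant automorphism $g=f_2\circ f_1^{-1}$ of the minimal irreducible $G'$-tree is the identity, extract a fixed point from two independent hyperbolic elements (handling both the disjoint-axes case via the bridge and the overlapping case via the bounded intersection), and then propagate, using that the minimal subtree of such an action is the union of the axes. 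What the paper's version buys is brevity: the cited lemma covers $T$ by bridges in one stroke. What your version buys is that it only uses the two standard facts (existence of two independent hyperbolics in an irreducible action, and minimal $=$ union of axes) rather than Paulin's specific covering statement. The one place you should tighten is the final propagation step, which you state but do not justify: once $g$ fixes a point $x_0$ and preserves every axis $A$ (acting on it as a translation, since it commutes with the corresponding hyperbolic element), note that $g$ fixes the projection of $x_0$ to $A$, because that projection is characterized equivariantly; a translation of a line with a fixed point is trivial, so $g$ fixes every axis pointwise, hence all of $T'$. This is a one-line observation, but as written ("a fixed point together with preservation of every axis forces $g=\mathrm{id}$") it is asserted rather than proved; with it included, your argument is complete.
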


\begin{proof}
For each hyperbolic element $g\in G$, any such $f$ maps the axis of $g$ to the axis of $\phi(g)$.
In particular, if the axes of $g,h$ are disjoint, $f$ maps the bridge between them to the bridge between the axes of $\phi(g),\phi(h)$.
Since $T$ is irreducible, every segment of $T$ is contained in the bridge between two hyperbolic elements \cite{Pau_Gromov}. The lemma follows.
\end{proof}

Two graph of groups automorphisms can be composed \cite[2.11]{Bass_covering} by
$$(F',(\phi'_{e'}),(\phi'_{v'}),(\gamma'_{e'}))\circ (F,(\phi_e),(\phi_v),(\gamma_e))=
(F'\circ F,(\phi'_{F(e)}\circ \phi_e),(\phi'_{F(v)}\circ \phi_v),\phi'_{F(t(e))}(\gamma_e)\gamma'_{F(e)})).$$
By \cite[\S 2.12]{Bass_covering} this composition law makes the set of
of graph of groups automorphisms of $\Gamma$ a group. Following \cite{Bass_covering}, we denote this group by $\delta \Aut(\Gamma)$.

Recall that $\Out(G)$ acts on the set of $G$-trees up to equivariant isomorphism, by precomposition of the action.
We denote by $\Out_T(G)<\Out(G)$ the stabiliser of $T$ for this action.
Lemmas \ref{lem_Phi2tree} and \ref{lem_tree2Phi} say that if $G\actson T$ is dual to $\Gamma$ under some marking,
then one has a natural epimorphism $\delta\Aut(\Gamma)\onto \Out_T(G)$.

Let $\Homeo(\Gamma)$ be the group of graph automorphisms of the graph underlying  $\Gamma$.
By definition of $\delta Aut(\Gamma) $,  one has natural morphisms: 
\begin{itemize}
\item $q_\Gamma: \delta Aut(\Gamma) \to \Homeo(\Gamma)$, defined by $\Phi=(F,(\phi_e),(\phi_v),(\gamma_e))\mapsto F$
\item $q_E:  \ker (q_\Gamma) \to \prod_{e\in E(\Gamma)} \Aut (\G_e)$ 
defined by $\Phi=(\id_\Gamma,(\phi_e),(\phi_v),(\gamma_e))\mapsto (\phi_e)$
\item  $q_V: \ker(q_E) \to \prod_{v\in V(\Gamma)} \Aut (\G_v)$,
defined by $\Phi=(\id_{\Gamma},(\id_{\Gamma_e}),(\phi_v),(\gamma_e))\mapsto (\phi_v)$ 
\item $q_D: \ker (q_V) \to \prod_{e\in E(\Gamma)} {\G_{t(e)}} $  
defined by $\Phi=(\id_\Gamma,(\id_{\Gamma_e}),(\id_{\Gamma_v}),(\gamma_e))\mapsto (\gamma_e)$. 
\end{itemize}
Note that these morphisms need not be surjective.

\begin{lemma}\label{lem_ext}
The morphism  $q_D$ is injective, and its image is  $\prod_{e} Z_{\G_{t(e)}}(\G_e)$.

  The image of $q_V$ is  $\prod_v \Autm(\Gamma_v;\calp_v)$, where $\calp_v$ is a marked peripheral
structure induced by $\Gamma$ on $\Gamma_v$; in other words, $\Autm(\Gamma_v;\calp_v)$ 
is the group of automorphisms of $\Gamma_v$ which coincide with an inner automorphism of $\Gamma_v$
in restriction to each neighbouring edge group.
\end{lemma}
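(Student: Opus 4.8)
The plan is to unwind the definitions for each of the four maps. For $q_D$, an element of $\ker(q_V)$ is of the form $\Phi=(\id_\Gamma,(\id_{\Gamma_e}),(\id_{\Gamma_v}),(\gamma_e))$, so $q_D$ is manifestly injective: the data of $\Phi$ is \emph{exactly} the tuple $(\gamma_e)$. To identify the image, I would plug $\phi_e=\id_{\Gamma_e}$, $\phi_v=\id_{\Gamma_v}$ into the commuting square of Definition \ref{dfn_Phi}: the condition becomes $i_e = \inn_{\gamma_e}\circ i_e$ on $\Gamma_e$, i.e. $\gamma_e$ centralises $i_e(\Gamma_e)$ inside $\Gamma_{t(e)}$, which is the subgroup $Z_{\Gamma_{t(e)}}(\Gamma_e)$ (identifying $\Gamma_e$ with its image under $i_e$, as is done throughout). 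Conversely any such tuple defines a valid $\Phi$, since the compatibility $\phi_e=\phi_{\ol e}$ is automatic and no further constraint is imposed. Hence $\Im(q_D)=\prod_e Z_{\Gamma_{t(e)}}(\Gamma_e)$.

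For the statement about $q_V$, first I would pin down what the marked peripheral structure $\calp_v$ on $\Gamma_v$ is: for each oriented edge $e$ with $t(e)=v$, the subgroup $i_e(\Gamma_e)\le\Gamma_v$, taken up to conjugacy (i.e., $\calp_v$ is the collection of these, indexed by the edges of $\Gamma$ incident to $v$, with markings given by chosen generating tuples of the $\Gamma_e$). Given $\Phi=(\id_\Gamma,(\id_{\Gamma_e}),(\phi_v),(\gamma_e))\in\ker(q_E)$, the commuting square now reads $\inn_{\gamma_e}\circ i_e = \phi_v\circ i_e$ on $\Gamma_e$; that is, $\phi_v$ restricted to $i_e(\Gamma_e)$ agrees with conjugation by $\gamma_e$. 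This is precisely the assertion that $\phi_v\in\Autm(\Gamma_v;\calp_v)$: each $\phi_v$ in the image carries each edge subgroup to a conjugate of itself (with the conjugating element recorded by $\gamma_e$), so $\Im(q_V)\subseteq\prod_v\Autm(\Gamma_v;\calp_v)$.

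For the reverse inclusion I would start from an arbitrary tuple $(\phi_v)\in\prod_v\Autm(\Gamma_v;\calp_v)$ and reconstruct a $\Phi\in\ker(q_E)$ mapping to it. By definition of $\Autm$, for each oriented edge $e$ with $t(e)=v$ there exists $\gamma_e\in\Gamma_v$ with $\phi_v\circ i_e=\inn_{\gamma_e}\circ i_e$ on $\Gamma_e$; choose one such $\gamma_e$ for every oriented edge. Then $\Phi=(\id_\Gamma,(\id_{\Gamma_e}),(\phi_v),(\gamma_e))$ satisfies Definition \ref{dfn_Phi} by construction. The one point deserving care—and the main (minor) obstacle—is the compatibility requirement built into the data: one must check the choices of $\gamma_e$ and $\gamma_{\ol e}$ for the two orientations of a single geometric edge are \emph{independent} (they live in the possibly different groups $\Gamma_{t(e)}$ and $\Gamma_{o(e)}=\Gamma_{t(\ol e)}$, and there is no cross-condition in Definition \ref{dfn_Phi} linking them once $\phi_e=\phi_{\ol e}$ is fixed, which here is $\id_{\Gamma_e}=\id_{\Gamma_{\ol e}}$, trivially satisfied), so the choices can indeed be made edge-orientation by edge-orientation. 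This gives $\Phi\in\delta\Aut(\Gamma)$ with $q_\Gamma(\Phi)=\id_\Gamma$, $q_E(\Phi)=(\id_{\Gamma_e})$, hence $\Phi\in\ker(q_E)$ and $q_V(\Phi)=(\phi_v)$, proving $\Im(q_V)=\prod_v\Autm(\Gamma_v;\calp_v)$.
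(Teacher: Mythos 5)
Your proposal is correct and follows essentially the same route as the paper: both simply unwind Definition \ref{dfn_Phi}, reading off $\Im(q_D)=\prod_e Z_{\G_{t(e)}}(\G_e)$ from the commuting square with $\phi_e=\id$, $\phi_v=\id$, and constructing the extension $\Phi=(\id_\Gamma,(\id_{\Gamma_e}),(\phi_v),(\gamma_e))$ from an arbitrary tuple $(\phi_v)\in\prod_v\Autm(\G_v;\calp_v)$ by choosing the $\gamma_e$ edge by edge. Your extra remark that the choices of $\gamma_e$ and $\gamma_{\ol e}$ are independent is a harmless clarification the paper leaves implicit.
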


\begin{proof}
 Injectivity of $q_D$ is clear, and the image is easily obtained by the commutation of the diagram of Definition \ref{dfn_Phi}.

  The fact that $\Im q_V\subset \prod_v \Autm(\Gamma_v;\calp_v)$ follows from the commutation of this diagram with
  $\phi_e=\id$.  Conversely, consider $(\phi_v)$ is a collection of automorphisms in $\Autm(\Gamma_v;\calp_v)$, for all edge $e$
  there exists $\gamma_e\in G_{t(e)}$ such that $\phi_v\circ i_e=\inn_{\gamma_e}\circ i_e$.  Then the automorphism $\Phi\in
  \delta\Aut(\Gamma)$ defined by $F=\id$, $\phi_e=\id$, $\phi_v$ and $\gamma_e$ shows that $(\phi_v)\in\Im q_V$.
  $\Phi$ is called an \emph{extension} of $(\phi_v)_{v\in V}$ to $\Gamma$.
\end{proof}

The following lemma is routine, given the chain of quotients above. 
          
\begin{lemma}\label{lem_gen_set_deltaAut}
  $\delta\Aut(\Gamma)$ is generated by:
  \begin{itemize}
  \item elements of the form $\Phi=(Id,\id_{\Gamma_e},\id_{\Gamma_v},(\gamma_e))$ where $(\gamma_e)_{e\in E(\Gamma)}$ 
    ranges over a generating set of $\prod_{e} Z_{\G_{t(e)}}(\G_e) $ 
  \item for each element $(\phi_v)$ of a generating set of $\prod_v \Autm(\G_v;\calp_v)=\Im q_V$, 
    an extension of $(\phi_v)$ to $\Gamma$ (as in the proof of Lemma \ref{lem_ext})
  \item for every element in the image of $q_E$, a preimage by $q_E$  
  \item for every element in the image of $q_\Gamma$, a preimage by $q_\Gamma$.    
  \end{itemize}  
  
If one has a marking of $\Gamma$ by $G$,
the image of this generating set under the epimorphism $\delta\Aut(\Gamma)\onto \Out_T(G)$ 
gives a generating set for $\Out_T(G)$. 
\end{lemma}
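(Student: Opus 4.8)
The proof is a routine diagram chase through the chain of morphisms
$$q_\Gamma:\delta\Aut(\Gamma)\to\Homeo(\Gamma),\quad q_E:\ker q_\Gamma\to\textstyle\prod_e\Aut(\G_e),\quad q_V:\ker q_E\to\textstyle\prod_v\Aut(\G_v),\quad q_D:\ker q_V\to\textstyle\prod_e\G_{t(e)},$$
using that a group built from such a filtration is generated by a set of preimages of generators of each successive image together with a generating set of the bottom kernel. The plan is to first record this elementary group-theoretic fact: if $1\to K\to H\xra{q} Q$ is exact and $H$ is generated by (a lift of a generating set of $\Im q$) together with (a generating set of $K$), then one can iterate. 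So I would proceed in four layers, from the bottom up.

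First, the bottom layer: by Lemma \ref{lem_ext}, $q_D$ is injective with image $\prod_e Z_{\G_{t(e)}}(\G_e)$, so $\ker q_V$ is generated by the elements $\Phi=(\id,\id_{\Gamma_e},\id_{\Gamma_v},(\gamma_e))$ with $(\gamma_e)$ ranging over a generating set of $\prod_e Z_{\G_{t(e)}}(\G_e)$ — this is exactly the first bullet. Next, $\ker q_E$ is generated by $\ker q_V$ together with a preimage under $q_V$ of each element of a generating set of $\Im q_V$; by Lemma \ref{lem_ext}, $\Im q_V=\prod_v\Autm(\G_v;\calp_v)$, and an explicit preimage of $(\phi_v)$ is an extension of $(\phi_v)$ to $\Gamma$ as constructed in the proof of Lemma \ref{lem_ext} — this is the second bullet. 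Then $\ker q_\Gamma$ is generated by $\ker q_E$ together with one preimage under $q_E$ of each element of a generating set of $\Im q_E$ — the third bullet. Finally $\delta\Aut(\Gamma)$ is generated by $\ker q_\Gamma$ together with one preimage under $q_\Gamma$ of each element of a generating set of $\Im q_\Gamma$ — the fourth bullet. Composing these four statements (using that the composition law on $\delta\Aut(\Gamma)$ from \cite[2.11]{Bass_covering} makes it a group, so that all the relevant preimages indeed lie in $\delta\Aut(\Gamma)$) yields the claimed generating set.

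For the last sentence: given a marking of $\Gamma$ by $G$, Lemmas \ref{lem_Phi2tree} and \ref{lem_tree2Phi} provide a natural epimorphism $\delta\Aut(\Gamma)\onto\Out_T(G)$; the image of a generating set under an epimorphism generates the target, so the image of the above generating set generates $\Out_T(G)$. There is essentially no obstacle here: the lemma is labelled ``routine'' precisely because every nontrivial input (the description of $\Im q_D$ and $\Im q_V$, the existence of the epimorphism onto $\Out_T(G)$) has already been established. The only point requiring any care is the elementary lemma on filtered groups — namely that a group admitting a finite normal series in which each quotient is generated by the image of chosen lifts is generated by those lifts together with generators of the bottom term — which is a standard induction on the length of the series and which I would state once and apply four times rather than re-derive inline.
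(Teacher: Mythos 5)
Your proposal is correct and matches the paper's intent: the paper gives no written proof beyond remarking that the lemma is "routine, given the chain of quotients above," and your four-layer filtration argument through $q_D$, $q_V$, $q_E$, $q_\Gamma$ (using Lemma \ref{lem_ext} for the bottom two layers and Lemmas \ref{lem_Phi2tree}--\ref{lem_tree2Phi} for the epimorphism onto $\Out_T(G)$) is precisely the routine argument intended.
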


\subsubsection{Isomorphism problem for graph of groups}

The \emph{isomorphism problem for graph of groups} asks for an algorithm that, given two  finite graphs of groups $\Gamma_1,\Gamma_2$, 
decides whether there exists an isomorphism of graph of groups $\Phi$ between $\Gamma_1$ and $\Gamma_2$.
If $\pi_1(\Gamma_i)$ is induced with a marked peripheral structure $\calq_i$,
we want additionally that the induced map $\Phi_*:\pi_1(\Gamma_1)\ra\pi_1(\Gamma_2)$ sends $\calq_1$ to $\calq_2$.
The goal of this section is to obtain a solution to the isomorphism problem for certain graph of groups.

\begin{lem}\label{lem_gog_periph}
  Let $\Gamma_1,\Gamma_2$ be two graphs of groups,
and $\calq_i=(S_1^{(i)},\dots,S_{p}^{(i)})$ be a marked peripheral structure of $\pi_1(\Gamma_i)$ 
where $\grp{S_j^{(i)}}$ is contained in a vertex group $\Gamma_{v^{(i)}_j}$ of $\Gamma_i$,
 and not conjugate into any incident edge group.
For each vertex $v\in \Gamma_i$, the sub-tuple of $\calq_i$ consisting of $S_j^{(i)}$'s with $v^{(i)}_j=v$ 
defines a marked peripheral structure $\calq_v$ of $\Gamma_v$.

Let $\Phi=(F,(\phi_e),(\phi_v),(\gamma_e))$ be an isomorphism of graphs of groups.

Then $\Phi_*:\pi_1(\Gamma_1)\ra \pi_1(\Gamma_2)$ is compatible with the peripheral structures $\calq_1,\calq_2$
if and only if 
 for each $v\in V(\Gamma_1)$, $\phi_{v}:\Gamma_v\ra\Gamma_{F(v)}$ is compatible with the peripheral structures $\calq_v,\calq_{F(v)}$,
and
$F$ is compatible with the peripheral structure in the sense that
$F(v_j^{(1)})=v_j^{(2)}$ for all $j\in \{1,\dots,p\}$. 
\end{lem}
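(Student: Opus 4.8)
The plan is to unwind the definitions of $\Phi_*$ and of the induced peripheral structures, and check the equivalence generator by generator. Recall that $\calq_i$ is a marked peripheral structure on $\pi_1(\Gamma_i)$, where each tuple $S_j^{(i)}$ generates a subgroup living inside a specified vertex group $\Gamma_{v_j^{(i)}}$ and not conjugate into an incident edge group. The first observation I would record is that the hypothesis ``$\grp{S_j^{(i)}}$ is not conjugate into any incident edge group'' is exactly what makes the vertex $v_j^{(i)}$, and hence the marked peripheral structure $\calq_v$, canonically determined by $[S_j^{(i)}]$ inside $\pi_1(\Gamma_i)$: a subgroup elliptic in a tree fixes a unique vertex unless it is contained in an edge stabiliser, so up to conjugacy $\grp{S_j^{(i)}}$ determines the vertex $v_j^{(i)}$ of $\Gamma_i/\!\sim$. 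This will be used to translate ``$\Phi_*$ sends $[S_j^{(1)}]$ to $[S_j^{(2)}]$'' into the conjunction of a combinatorial statement about $F$ and an algebraic statement about the $\phi_v$'s.

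For the direction ($\Leftarrow$): assume $F(v_j^{(1)})=v_j^{(2)}$ for all $j$, and that each $\phi_v:\Gamma_v\to\Gamma_{F(v)}$ sends $\calq_v$ to $\calq_{F(v)}$, i.e.\ $\phi_v(S_j^{(1)})$ is conjugate in $\Gamma_{F(v)}$ to $S_j^{(2)}$ whenever $v_j^{(1)}=v$. I would then use the description of $\Phi_B$ on the Bass group: $\Phi_B$ restricted to $\Gamma_v$ equals $\phi_v$, and $\Phi_B(e)=\gamma_{\ol e}\m F(e)\gamma_e$. Since $S_j^{(1)}\in\Gamma_{v_j^{(1)}}\subset B(\Gamma_1)$, we get $\Phi_B(S_j^{(1)})=\phi_{v_j^{(1)}}(S_j^{(1)})$, which by hypothesis is conjugate inside $\Gamma_{F(v_j^{(1)})}=\Gamma_{v_j^{(2)}}$, hence a fortiori inside $\pi_1(\Gamma_2)$, to $S_j^{(2)}$. (One has to be slightly careful that conjugation happens inside $\pi_1(\Gamma_2)$ rather than $B(\Gamma_2)$: the conjugating element lies in $\Gamma_{v_j^{(2)}}\subset\pi_1(\Gamma_2,v_j^{(2)})$ after choosing the base point at $v_j^{(2)}$, and a change of base point is an inner automorphism, which does not affect the conjugacy class.) Thus $\Phi_*$ is compatible with $\calq_1,\calq_2$.

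For ($\Rightarrow$): suppose $\Phi_*$ sends $[S_j^{(1)}]$ to $[S_j^{(2)}]$ for each $j$. Pass to the Bass-Serre trees: $\Phi$ induces a $\Phi_*$-equivariant isomorphism $\Tilde\Phi:\Tilde\Gamma_1\to\Tilde\Gamma_2$ (Lemma \ref{lem_Phi2tree}), and on quotients $\Tilde\Phi$ descends to the graph isomorphism $F$. The element $S_j^{(1)}$ fixes the vertex of $\Tilde\Gamma_1$ above $v_j^{(1)}$ and no edge incident to it (by the non-conjugacy hypothesis); equivariance forces $\Phi_*(S_j^{(1)})$ to fix the image vertex, which lies above $F(v_j^{(1)})$. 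On the other hand $S_j^{(2)}$, being conjugate to $\Phi_*(S_j^{(1)})$, must fix a vertex above the same point of $\Gamma_2/G$, so $v_j^{(2)}=F(v_j^{(1)})$; this gives compatibility of $F$. Having identified the vertices, the conjugacy $\Phi_*(S_j^{(1)})=(S_j^{(2)})^{g}$ with $g\in\pi_1(\Gamma_2)$ can be pushed into the vertex group: both sides fix the vertex above $v_j^{(2)}$, and two elements fixing a common vertex that are conjugate in $\pi_1(\Gamma_2)$ are conjugate by an element fixing that vertex (standard Bass-Serre), i.e.\ by an element of $\Gamma_{v_j^{(2)}}$. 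Since $\Phi_*(S_j^{(1)})=\Phi_B(S_j^{(1)})=\phi_{v_j^{(1)}}(S_j^{(1)})$ as computed above, this says precisely $\phi_{v_j^{(1)}}(S_j^{(1)})$ is conjugate to $S_j^{(2)}$ in $\Gamma_{v_j^{(2)}}=\Gamma_{F(v_j^{(1)})}$, i.e.\ $\phi_v$ is compatible with $\calq_v,\calq_{F(v)}$ for every $v$.

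The main obstacle, and the point deserving the most care, is the bookkeeping around base points and the Bass group: $\Phi_*$ is only canonically defined up to inner automorphism (choice of base point), and the identity $\Phi_B(S_j^{(1)})=\phi_{v_j^{(1)}}(S_j^{(1)})$ holds in $B(\Gamma_2)$, so one must check that the various conjugacies genuinely take place in $\pi_1(\Gamma_2)$, respectively in the vertex group, rather than merely in the Bass group. Once one is comfortable that ``elliptic subgroup not inside an edge group $\Rightarrow$ unique fixed vertex'' and ``conjugate elliptic elements with a common fixed vertex are conjugate by a vertex-group element'' — both entirely standard Bass-Serre facts — the rest is a direct unwinding of Definition \ref{dfn_Phi} and the formula for $\Phi_B$.
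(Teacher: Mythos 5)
Your proof is correct and follows essentially the same route as the paper's: the forward direction is the observation that $\Phi_*(S_j^{(1)})=\phi_{v_j^{(1)}}(S_j^{(1)})$ lands in $\Gamma_{v_j^{(2)}}$ and is conjugate there to $S_j^{(2)}$, and the converse uses exactly the paper's two ingredients, namely that $\grp{S_j^{(i)}}$ fixes a unique vertex of the Bass--Serre tree (since it is not conjugate into an incident edge group), which forces $F(v_j^{(1)})=v_j^{(2)}$, and then that the conjugator fixes the base vertex and hence lies in $\Gamma_{v_j^{(2)}}$, all after the same base-point adjustment. The only caution: your quoted ``standard fact'' that conjugate elements with a common fixed vertex are conjugate by a vertex-group element is false in general (e.g.\ for elements of an edge group); it holds precisely because the fixed vertex is \emph{unique} here, which you do establish and use, so no actual gap remains.
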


\begin{rem}
A priori, the $\pi_1(\Gamma_i)$-conjugacy class of a  subgroup $\grp{S}$ may intersect $\Gamma_v$ in several $\Gamma_v$-conjugacy classes.
The hypothesis that $\grp{S}$ is not conjugate into an incident edge group actually prevents this.
\end{rem}

\begin{proof}
Assume $\phi_v$ maps $\calq_v$ to $\calq_{F(v)}$ for all $v$, and that $F( v_j^{(1)} )=v_j^{(2)}  $ for all $j$. Consider the image of $S_j^{(1)} \subset \G_{v_j^{(1)}} $ by $\Phi_*$. It is in $\G_{F(v_j^{(1)})} = \G_{v_j^{(2)}}$, and conjugated in this group to $S_j^{(2)}$.  Therefore,  $\Phi_*$ maps $S_j^{(1)}$ to a   $\pi_1(\Gamma_2)$-conjugate of $S_j^{(2)}$,
so $\Phi_*$ maps $\calq_1$ to $\calq_2$.

Conversely, consider $S=S_j^{(1)}\subset \Gamma_{v}$ of $\calq_1$, and $S'=S_j^{(2)}\subset \Gamma_{v'}$ the corresponding one in $\calq_2$.
Assuming that $\Phi_*(S)=S'^g$ for some $g\in \pi_1(\Gamma_2,F(v))$, we need to prove that $v'=F(v)$ and that $\phi_{v}(S)$
is conjugate to $S'$ in $\Gamma_{v'}$.

We take $v$ (resp.\  $F(v)$) as a base point for $\Gamma_1$ (resp.\ $\Gamma_2$).
This may change $\Phi_*$ by an inner automorphism, but does not affect the result.
Since $\Phi_*(\grp{S})$ and $\grp{S'}$ are conjugate and fix a unique vertex in the Bass-Serre tree $\Tilde\Gamma_2$,
these vertices have to be in the same orbit so $F(v)=v'$.

Then $\Phi_*(S)=\phi_v(S)\subset\Gamma_{v'}\subset\pi_1(\Gamma_2,v')$.
By assumption, there exists $g\in \pi_1(\Gamma_2,v_2)$ such that $\phi_v(S)=S'^g$. We claim that $g\in \Gamma_{v'}$, which concludes the proof. 
Indeed, $\phi_v(S)$ and $S'$ both fix the the base point of $\Tilde \Gamma_2$, and no other point.
It follows that $g$ fixes the base point of $\Tilde \Gamma_2$, \ie $g\in \Gamma_{v'}$.
\end{proof}

In what follows, $\calv$ is a class of groups $(G,\calp_u)$ with unmarked peripheral structures.
 By abuse of notation, we also say that a group with \emph{marked} peripheral structure $(G,\calp_m)$
lies in $\calv$ if the induced unmarked peripheral structure does.
We assume that one can solve the \emph{marked isomorphism problem} in $\calv$,
\ie that one can decide whether two groups with marked peripheral structures
 $(G;\calp_m)$,  $(G';\calp'_m)$ in $\calv$ are isomorphic.

\begin{prop}\label{prop_IP_gog}
  Assume that $\calV$ is a class of finitely presented groups with unmarked peripheral structures for which
  the marked isomorphism problem is solvable.
          
  Then, given an input consisting of 
  \begin{itemize*}
      \item   $\Gamma_1,\Gamma_2$ two finite graphs of groups, 
        with marked peripheral structures $\calq_1,\calq_2$ for $\pi_1(\Gamma_1)$ and $\pi_1(\Gamma_2)$
such that 
\begin{itemize*}
\item peripheral subgroups are conjugate in a vertex group but not in an edge group,
\item for each vertex $v\in V(\Gamma_i)$, $\Gamma_v$ endowed with its unmarked peripheral structure induced by $\Gamma_i$ and by $\calq_i$ 
(as in Lemma \ref{lem_gog_periph}), lies in $\calv$.
\end{itemize*}
      \item a graph isomorphism $F: \Gamma_1 \to \Gamma_2$,
      \item a collection of isomorphisms $\phi_e: \Gamma_e \to \Gamma_{F(e)}$ for   $e\in E(\Gamma_1)$,
  \end{itemize*}
  one can decide whether there exists an isomorphism of graph of groups $\Phi$ of the form
  $\Phi=(F,(\phi_e),(\phi_v),(\gamma_e))$ (where $F$ and $(\phi_e)_{e\in E(\Gamma_1)}$ are the ones given as input),
  and such that $\Phi_*$ maps $\calq_1$ to $\calq_2$,
  and produces one if it exists.
\end{prop}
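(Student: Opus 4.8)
The idea is to reduce the question to finitely many instances of the marked isomorphism problem, one per vertex of $\Gamma_1$. Since $F$ and the edge isomorphisms $(\phi_e)_{e\in E(\Gamma_1)}$ are part of the input (and we may assume $\phi_{\bar e}=\phi_e$), the only data still to be chosen in a putative $\Phi=(F,(\phi_e),(\phi_v),(\gamma_e))$ are the vertex isomorphisms $\phi_v$ and the conjugators $\gamma_e\in\Gamma_{F(t(e))}$. Fix, once and for all, for each $e\in E(\Gamma_1)$ a finite generating set $T_e$ of $\Gamma_e$ with $T_{\bar e}=T_e$; then $\phi_e(T_e)$ generates $\Gamma_{F(e)}$. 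The key observation is that, given $\phi_v$ and an oriented edge $e$ with $t(e)=v$, there exists $\gamma_e$ making the square $\phi_v\circ i_e=\inn_{\gamma_e}\circ i_{F(e)}\circ\phi_e$ commute if and only if $\phi_v$ sends the tuple $i_e(T_e)\in\Gamma_v$ to a $\Gamma_{F(v)}$-conjugate of the tuple $i_{F(e)}(\phi_e(T_e))\in\Gamma_{F(v)}$: indeed, two homomorphisms $\Gamma_e\to\Gamma_{F(v)}$ that agree on the generating set $T_e$ coincide, so ``the square over $e$ can be closed'' becomes a purely marked-peripheral condition on $\phi_v$.

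Here is the algorithm. Write $\calq_i=(S_1^{(i)},\dots,S_p^{(i)})$ with $\grp{S_j^{(i)}}$ conjugate into the vertex group $\Gamma_{v_j^{(i)}}$. First check the finite condition $F(v_j^{(1)})=v_j^{(2)}$ for all $j$; if it fails there is no suitable $\Phi$, by Lemma \ref{lem_gog_periph}. Otherwise, for each $v\in V(\Gamma_1)$ put $v'=F(v)$ and form a \emph{marked} peripheral structure $\calq_v^+$ on $\Gamma_v$ consisting of the tuples $S_j^{(1)}$ with $v_j^{(1)}=v$, together with the tuples $i_e(T_e)$ for all oriented edges $e$ with $t(e)=v$; similarly form $\calq_{v'}^+$ on $\Gamma_{v'}$ from the matching tuples $S_j^{(2)}$ (matched through the index $j$) and the tuples $i_{F(e)}(\phi_e(T_e))$ (matched through $e\mapsto F(e)$). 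Since $\grp{i_e(T_e)}=i_e(\Gamma_e)$, the unmarked peripheral structure induced by $\calq_v^+$ is exactly the one $\Gamma_v$ inherits from $\Gamma_1$ and $\calq_1$, which by hypothesis lies in $\calv$; likewise for $\calq_{v'}^+$. Hence the marked isomorphism problem for $(\Gamma_v,\calq_v^+)$ and $(\Gamma_{v'},\calq_{v'}^+)$ is solvable; run this test for every $v$ and answer ``no'' as soon as one test is negative.

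It remains to check correctness. If all vertex tests succeed, pick for each $v$ a solution $\phi_v\colon(\Gamma_v,\calq_v^+)\to(\Gamma_{v'},\calq_{v'}^+)$, and for each oriented edge $e$ pick, by enumerating elements of $\Gamma_{F(t(e))}$ (the word problem in the vertex groups being available in the situations where this proposition is applied), an element $\gamma_e$ realising $\phi_{t(e)}(i_e(T_e))=\bigl(i_{F(e)}(\phi_e(T_e))\bigr)^{\gamma_e}$. By the key observation the square over each $e$ commutes, and $\phi_{\bar e}=\phi_e$ holds by hypothesis, so $\Phi=(F,(\phi_e),(\phi_v),(\gamma_e))$ is an isomorphism of graphs of groups; moreover each $\phi_v$ carries the $\calq_1$-tuples at $v$ to conjugates of the matching $\calq_2$-tuples and $F(v_j^{(1)})=v_j^{(2)}$, so $\Phi_*$ maps $\calq_1$ to $\calq_2$ by Lemma \ref{lem_gog_periph}. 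Conversely, any $\Phi$ with the prescribed $F,(\phi_e)$ and $\Phi_*(\calq_1)=\calq_2$ restricts, vertex by vertex, to a solution of the corresponding test, again using the commuting squares and Lemma \ref{lem_gog_periph}. The one delicate point is that the tests genuinely decouple over the vertices: this is so precisely because $F$ and the $\phi_e$ are frozen by the input, which leaves each $\gamma_e$ free to be chosen once $\phi_{t(e)}$ is fixed, with no residual gluing constraint linking distinct vertex isomorphisms. The procedure above also exhibits $\Phi$ when it exists, and $\Phi_*$ can then be computed by Lemma \ref{lem_Phi2tree} if desired.
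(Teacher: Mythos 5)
Your proposal is correct and takes essentially the same approach as the paper's proof: transporting generating tuples of the edge groups via $F$ and the $\phi_e$, augmenting each vertex group's marked peripheral structure with these tuples (and the $\calq_i$-tuples, handled through Lemma \ref{lem_gog_periph}), and reducing the existence of $\Phi$ to one instance of the marked isomorphism problem per vertex, with the $\gamma_e$ recovered afterwards by enumeration. Your write-up merely makes explicit the "key observation" and the decoupling over vertices that the paper leaves implicit.
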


\begin{proof}
Assume first that $\Gamma_i$ have no peripheral structure.
  Choose an ordering of the oriented edges of $\Gamma_1$, and
  for each edge $e \in E(\Gamma_1)$, let $S_{e}$ be a tuple generating the group $G_e$.
  Transport this ordering and these markings to $\Gamma_2$ using $F$ and $\phi_e$.
  This way, each vertex group of $\Gamma_1$ and $\Gamma_2$ inherits an induced marked peripheral structure.
  The existence of $\Phi$ is equivalent to the existence of isomorphisms $\phi_v:\G_v \to \G_{F(v)}$
  and elements $\gamma_e$ making the diagram of Definition \ref{dfn_Phi} commute.
  This is equivalent to the fact that $\phi_v$ preserves the induced marked peripheral structures of the vertex groups.
  The proposition follows.

In presence of peripheral structures $\calq_i$, the same argument applies thanks to Lemma \ref{lem_gog_periph}.
\end{proof}

We can now state a solution to the isomorphism problem for certain graphs of groups.

\begin{coro}\label{coro_IP_gog}
  Assume that $\calE$ is either the class of finite groups, or of $\Z$-groups,
  and that $\calV$ is a class of  finitely presented  
groups with unmarked peripheral structures whose peripheral subgroups are in $\cale$, 
  for which the marked isomorphism problem is decidable.

Consider $\calc$ the following class of  graphs of groups $\Gamma$, maybe together with a marked peripheral structure $\calq$ of $\pi_1(\Gamma)$:
  \begin{itemize*}
  \item  edge groups of $\Gamma$ are in $\cale$
  \item each peripheral subgroup of $\calq$ lies in $\cale$, is conjugate into a vertex group, but not into an edge group
  \item each vertex group $\Gamma_v$ of $\Gamma$, endowed with the unmarked peripheral structure induced by $\Gamma$ and by $\calq$, lies in $\calV$. 
  \end{itemize*}

Then the   isomorphism problem for graph of groups in $\calc$ (with peripheral structure)
 is effectively solvable. 
\end{coro}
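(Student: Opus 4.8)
The plan is to reduce the isomorphism problem for graphs of groups in $\calc$ to finitely many applications of Proposition \ref{prop_IP_gog}. Given $\Gamma_1,\Gamma_2\in\calc$ (with marked peripheral structures $\calq_1,\calq_2$), first note that there are only finitely many graph isomorphisms $F:\Gamma_1\to\Gamma_2$ of the underlying finite graphs, and one can enumerate them all; if $\Gamma_1$ and $\Gamma_2$ do not have isomorphic underlying graphs we are immediately done. For each such $F$ we then need to enumerate, for every edge $e\in E(\Gamma_1)$, all isomorphisms $\phi_e:\Gamma_e\to\Gamma_{F(e)}$; since edge groups are in $\cale$ (finite or $\Z$-groups), this is possible by Lemma \ref{lem_autZ} together with Lemma \ref{lem_VC2} (for $\Z$-groups one lists all isomorphisms; for finite groups one simply checks all bijections using the word problem), and moreover each such list is finite. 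We also must respect the constraint $\phi_e=\phi_{\ol e}$, so really we enumerate choices indexed by geometric edges. For each of the finitely many resulting tuples $(F,(\phi_e)_{e\in E(\Gamma_1)})$, we invoke Proposition \ref{prop_IP_gog} to decide whether there is an isomorphism of graphs of groups $\Phi=(F,(\phi_e),(\phi_v),(\gamma_e))$ extending this data and carrying $\calq_1$ to $\calq_2$, and to produce one if it exists. Then $\Gamma_1\simeq\Gamma_2$ (in the sense of the isomorphism problem for graphs of groups with peripheral structure) if and only if for at least one choice of $(F,(\phi_e))$ the algorithm of Proposition \ref{prop_IP_gog} answers yes.

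To apply Proposition \ref{prop_IP_gog} we must check its hypotheses hold. The class $\calV$ is finitely presented with the marked isomorphism problem solvable by assumption. The vertex groups of $\Gamma_i$, equipped with the unmarked peripheral structure induced by $\Gamma_i$ together with $\calq_i$ (in the sense of Lemma \ref{lem_gog_periph}), lie in $\calV$ by the definition of the class $\calc$. The conditions that peripheral subgroups of $\calq_i$ are conjugate into a vertex group but not into an edge group, and that they lie in $\cale$, are again built into the definition of $\calc$. Note that the induced unmarked peripheral structure on a vertex group combines two sources: the conjugacy classes of incident edge groups (which are in $\cale$) and the peripheral subgroups of $\calq_i$ landing in that vertex (also in $\cale$); so it is an $\cale$-peripheral structure as required. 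Thus all hypotheses of Proposition \ref{prop_IP_gog} are met for each enumerated $(F,(\phi_e))$, and the reduction goes through.

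The only genuinely delicate point is ensuring that the enumeration of isomorphisms between edge groups is both \emph{effective} and \emph{finite}: effectiveness comes from Lemma \ref{lem_autZ} (for $\Z$-groups one is given presentations, lists all isomorphisms) and from decidability of the word problem in finite groups; finiteness is automatic since $\Aut$ of a finite or virtually cyclic group is finite, so there are only finitely many isomorphisms between any two edge groups. The rest is bookkeeping: there are finitely many $F$, and for each, a finite product of finite sets of $\phi_e$'s, hence finitely many calls to the algorithm of Proposition \ref{prop_IP_gog}, and the overall procedure terminates. When an isomorphism $\Phi$ is found one records it, and via Lemma \ref{lem_Phi2tree} one can moreover compute the induced isomorphism $\Phi_*:\pi_1(\Gamma_1)\to\pi_1(\Gamma_2)$ on fundamental groups if desired. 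This completes the plan.
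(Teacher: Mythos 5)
Your proposal is correct and follows essentially the same route as the paper's own proof: enumerate the finitely many graph isomorphisms $F$ and the finitely many edge-group isomorphisms $\phi_e$ (possible since edge groups lie in $\cale$), then invoke Proposition \ref{prop_IP_gog} for each choice. The extra details you supply (effectiveness via Lemma \ref{lem_autZ}, the constraint $\phi_e=\phi_{\ol e}$, and the hypothesis check for Proposition \ref{prop_IP_gog}) are consistent with, and merely elaborate on, the argument in the paper.
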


\begin{proof}
If two groups in $\calE$ are isomorphic, we can, with our assumption, list the whole set of isomorphisms
between them. In particular, one can list all graph isomorphisms $F:\Gamma_1\ra \Gamma_2$ such
that $\Gamma_e\simeq\Gamma_{F(e)}$, and list all possible isomorphisms $\phi_e:\Gamma_e\ra\Gamma_{F(e)}$.
For any such choice, we apply Proposition \ref{prop_IP_gog}, which allows us to decide the existence of a graph of groups isomorphism $\Phi$
between $\Gamma_1$ and $\Gamma_2$ preserving the peripheral structures.
\end{proof}

\begin{coro}\label{coro_gene_Aut_gog}  
Consider $\calE$ and $\calV$, 
and $\calc$   
classes of graph of groups $\Gamma$ (maybe with a marked peripheral structure $\calq$) defined in Corollary \ref{coro_IP_gog}.

Assume moreover that there is an effective algorithm that computes a generating set
of $\Autm(H;\calp_m)$ for all group with marked peripheral structure $(H;\calp_m)\in \calv$,
and an algorithm computing a generating set of the centraliser $Z_H(P)$ of a peripheral subgroup $P$ of $(H;\calp)\in \calv$.

Then given $\Gamma$ (maybe with a marked peripheral structure $\calq$)
in $\calc$,
one can compute a system of generators for $\Out_{\Tilde\Gamma}(\pi_1(\Gamma))$
(or of $\Out_{m,\Tilde\Gamma}(\pi_1(\Gamma),\calq)$
of outer automorphisms of $\pi_1(\Gamma)$
preserving both $\Tilde \Gamma$ and $\calq$).

\end{coro}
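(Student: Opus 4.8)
The plan is to combine the structural description of $\delta\Aut(\Gamma)$ given by Lemma \ref{lem_gen_set_deltaAut} with the algorithmic tools already developed, along the chain of quotients $q_\Gamma, q_E, q_V, q_D$. Concretely, one computes a generating set of $\delta\Aut(\Gamma)$ piece by piece, then pushes it forward to $\Out_{\Tilde\Gamma}(\pi_1(\Gamma))$ via the epimorphism $\delta\Aut(\Gamma)\onto\Out_{\Tilde\Gamma}(\pi_1(\Gamma))$ of Lemmas \ref{lem_Phi2tree} and \ref{lem_tree2Phi}, using the effective computation of $\Phi_*$ from $\Phi$ guaranteed by Lemma \ref{lem_Phi2tree}. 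In the presence of a peripheral structure $\calq$, Lemma \ref{lem_gog_periph} tells us exactly which $\Phi$ preserve $\calq$: those whose graph part $F$ fixes each $v_j$ and whose vertex isomorphisms $\phi_v$ preserve the induced vertex peripheral structures $\calq_v$; so the same scheme works provided at each stage we restrict to the $\calq$-compatible sub(quotient)groups.

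The steps, in order. First, enumerate the finitely many graph automorphisms $F\in\Homeo(\Gamma)$ (compatible with $\calq$, i.e.\ fixing each $v_j$); for each, using that edge groups lie in $\cale$ and that isomorphisms between groups in $\cale$ are listable (Lemma \ref{lem_autZ}, resp.\ Lemma \ref{lem_extens} for finite groups), test via Proposition \ref{prop_IP_gog} whether $F$ extends to an element of $\delta\Aut(\Gamma)$; keep one preimage for each $F$ that does. This handles the generators of type $q_\Gamma$. Second, for the kernel of $q_\Gamma$: enumerate tuples $(\phi_e)_{e\in E(\Gamma)}$ of automorphisms of edge groups (again finitely many, listable), and for each test with Proposition \ref{prop_IP_gog} (with $F=\id$) whether it lies in $\Im q_E$; keep one preimage per element of the image, giving the type-$q_E$ generators. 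Third, for $\Im q_V$: by Lemma \ref{lem_ext} this is $\prod_v\Autm(\Gamma_v;\calp_v)$ where $\calp_v$ is the vertex peripheral structure induced by \emph{both} the edge markings and $\calq$; by hypothesis each $(\Gamma_v;\calp_v)\in\calv$ and we can compute a generating set of $\Autm(\Gamma_v;\calp_v)$, and by the proof of Lemma \ref{lem_ext} each generator extends explicitly to a $\Phi\in\delta\Aut(\Gamma)$ (choosing the $\gamma_e$ by solving a conjugacy problem in vertex groups). Fourth, for $\Im q_D$: by Lemma \ref{lem_ext} this is $\prod_e Z_{\Gamma_{t(e)}}(\Gamma_e)$, and we are assuming an algorithm computing $Z_H(P)$ for peripheral subgroups; this gives the type-$q_D$ (Dehn twist) generators directly. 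By Lemma \ref{lem_gen_set_deltaAut} the union of these four families generates $\delta\Aut(\Gamma)$ (and, restricting throughout to $\calq$-compatible elements, generates the subgroup preserving $\calq$). Finally, apply $\Phi\mapsto\Phi_*$ (Lemma \ref{lem_Phi2tree}, effectively computable) and a fixed marking $\mu:\pi_1(\Gamma)\ra\pi_1(\Gamma)$ to obtain the desired generating set of $\Out_{\Tilde\Gamma}(\pi_1(\Gamma))$, resp.\ of $\Out_{m,\Tilde\Gamma}(\pi_1(\Gamma),\calq)$.

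The main obstacle I expect is bookkeeping the peripheral structures correctly: when the edges of $\Gamma$ are marked to apply Proposition \ref{prop_IP_gog}, each vertex group acquires a marked peripheral structure coming from incident edge groups, and one must \emph{merge} this with the given marked peripheral structure $\calq_v$ so that what one feeds to the vertex-group isomorphism algorithm is the combined structure $\calp_v$ — and one must check that the hypotheses of Lemma \ref{lem_gog_periph} (peripheral subgroups of $\calq$ conjugate into a vertex group but not into an edge group) are preserved and that the combined $(\Gamma_v;\calp_v)$ still lies in $\calv$, which is exactly the class $\calc$ assumption. A secondary subtlety, already flagged in the proof of Lemma \ref{lem_tree2Phi}, is that the epimorphism $\delta\Aut(\Gamma)\onto\Out_{\Tilde\Gamma}(\pi_1(\Gamma))$ is only surjective after allowing changes of marking by paths in the Bass group; but since we only need a \emph{generating set} of the image and surjectivity of the map holds, this causes no difficulty — every outer automorphism preserving $\Tilde\Gamma$ is hit, so the images of our generators generate.
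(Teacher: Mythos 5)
Your proposal is correct and follows essentially the same route as the paper's proof: compute the four families of generators from Lemma \ref{lem_gen_set_deltaAut} (centralisers for $q_D$, the hypothesis on $\Autm(\Gamma_v;\calp_v\cup\calq_v)$ for $q_V$, enumeration of edge-group isomorphisms and graph automorphisms tested by Proposition \ref{prop_IP_gog} for $q_E$ and $q_\Gamma$), restrict to $\calq$-compatible data via Lemma \ref{lem_gog_periph}, and push forward through $\delta\Aut(\Gamma)\onto\Out_{\Tilde\Gamma}(\pi_1(\Gamma))$. The only cosmetic difference is that the paper handles the $q_E$- and $q_\Gamma$-preimages in a single enumeration over all pairs $(F,(\phi_e))$, whereas you split them; this changes nothing.
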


\begin{proof}
We first treat the absolute case (where no $\calq$ is given).
 We will compute the four sets mentioned in Lemma \ref{lem_gen_set_deltaAut}.

Let $q_V:\ker(q_E)\subset \delta Aut(\Gamma) \to \prod_{v\in V(\Gamma)} \Aut (\G_v)$, be the map defined before Lemma \ref{lem_ext}. 
Denote by $\calp_v$ a marking of the peripheral structure of a vertex group $\Gamma_v$ defined by incident edge groups.
By  Lemma \ref{lem_ext}, 
the image of $q_V$ is precisely $\prod_v \Autm(\G_v;\calp_v)$.

First, a generating set for
  $\prod_{e} Z_{\G_{t(e)}} (\G_e)$ is computable by assumption.
  Second, by assumption, we can compute a generating set for $\Im q_V = \prod_v \Autm(\G_v;\calp_v)$.  
 We can compute a preimage under $q_V$ of each element in
  this generating set by finding correct values for $\gamma_{e}$ for every
  oriented edge so that the diagram of Definition \ref{dfn_Phi} commutes.  This can be done by enumeration. 
  Third, one can list all  maps $F\in\Homeo(\Gamma)$,  and all families of isomorphisms  $(\phi_e)\in\prod_e \Isom(\G_e, \G_{F(e)})$.
  For every such $F$ and $(\phi_e)$, we need to check whether it induces an automorphism of $\Gamma$.
  This is exactly done by Proposition \ref{prop_IP_gog}.

  According to Lemma \ref{lem_gen_set_deltaAut}, 
  we have computed a set of generators for $\delta Aut (\Gamma)$,
  and hence of $\Out_{\Tilde\Gamma}(\pi_1(\Gamma))$.

Assume now that we are given a peripheral structure $\calq$.  
Instead of  $\partial \Aut(\Gamma)$, we have to work with the subgroup $\partial \Aut(\Gamma,\calq)$ 
consisting of automorphisms $\Phi=(F,(\phi_e),(\phi_v),(\gamma_e))$
such that $\Phi_*:\pi_1(\Gamma)\ra\pi_1(\Gamma)$ preserve $\calq$.
Denote by $\calq_v$ the marked peripheral structure induced by $\calq$ on $\Gamma_v$ (as in Lemma \ref{lem_gog_periph}).
By Lemma \ref{lem_gog_periph}, 
$\Phi_*$  is compatible  with $\calq$
  if and only if $F$ fixes all vertices containing a peripheral subgroup in $\calq$, 
and each $\phi_v$ is compatible with $\calq_v$.
The first step is the same since $\ker q_V$ automatically preserves $\calq$, so the image of $q_D$ restricted to $\partial \Aut(\Gamma,\calq)$
is still  $\prod_{e} Z_{\G_{t(e)}} (\G_e)$.
For step 2, Lemma \ref{lem_gog_periph} says that $q_V(\ker q_E\cap \partial \Aut(\Gamma,\calq)) = \prod_v \Autm(\G_v;\calp_v\cup\calq_v)$,
and we can compute a generating set by hypothesis.
Finally, one can list all maps  $F\in\Homeo(\Gamma)$ fixing vertices involved in $\calq$, 
and all families of isomorphisms $(\phi_e)\in\prod_e \Isom(\G_e, \G_{F(e)})$.
Proposition \ref{prop_IP_gog} allows us to check whether 
such $F$ and $(\phi_e)$, extend to an automorphism of $\Gamma$ preserving $\calq$.
This concludes the proof.
\end{proof}

\subsection{Deformation spaces of trees}  
\label{sec_deformation}

    Given two $G$-trees $T_1,T_2$  we say that $T_1$ \emph{dominates} $T_2$ if  there is an equivariant 
    (but otherwise arbitrary) map $T_1\to T_2$. 
    Equivalently, $T_1$ dominates $T_2$ if every vertex stabiliser of $T_1$ is elliptic in $T_2$.
    We say that two $G$-trees $T_1, T_2$ are in the same \emph{deformation space} if they dominate each other, or
    equivalently, if they have the same elliptic subgroups.
    This is clearly an equivalence relation. The group of outer automorphisms $\Out(G)$ acts on the set of $G$-trees by precomposition 
    (recall that we identify equivariantly isomorphic $G$-trees). This induces an action on each invariant deformation space (see \cite{GL2} for more details).

    One says that a $G$-tree $T$ is \emph{reduced} if no orbit of edges of $T$ can be collapsed so that $T$
    and the collapsed tree lie in the same deformation space.  
Equivalently, $T$ is reduced if for all oriented edge $e$ of the graph of groups $T/G$ 
        such that the edge morphism $i_e$ is onto, $e$ is a loop (\ie $o(e)=t(e)$).

    In all $G$-trees we will consider, $G$ will be hyperbolic, and edge stabilisers will be finite or virtually cyclic.
    In particular, no edge stabiliser is properly contained in a conjugate of itself. 
    It follows that all deformation spaces $\cald$ that we will consider are \emph{non-ascending}, a technical condition
    asking that no graph of groups of a tree in $\cald$ has an edge $e$ with both endpoints at $v$, and such that
    $i_e(G_e)=G_v$ and $i_{\ol e}(G_e)\subsetneq G_v$ \cite[Prop. 7.1]{GL2}.

\paragraph{The Stallings-Dunwoody deformation space.}   
The \emph{Stallings-Dunwoody deformation space} of a group $G$ 
is the set of $G$-trees with finite edge stabilisers and finite or one-ended vertex groups. 
The fact that this is indeed a deformation space follows from the fact that vertex groups of
such a $G$-tree do not split over finite groups by Stallings theorem. Existence of such $G$-trees, for a finitely presented group $G$, 
is Dunwoody's original accessibility.

A relative version also exists for $(G,\calp)$ a group with peripheral structure, where we consider only $G$-trees
in which peripheral subgroups are elliptic.

    \paragraph{Slide moves.} Consider a $G$-tree $T$, and 
    a pair of adjacent edges $e_1=[a,b],e_2=[b,c]$ not in the same orbit, such that $G_{e_1}\subset G_{e_2}$.
    Let $T'$ be the $G$-tree having the same vertex set as $T$, and whose
edges are obtained from the edges of $T$ by replacing each edge
$g.e_1=[g.a,g.b]$ by an edge joining $g.a$ to $g.c$.
This definition does not depend on the choice of $g$ because $G_{e_1}\subset G_{e_2}$.
We say that $T'$ is obtained by  \emph{sliding $e_1$ across $e_2$}.

By \cite[Th. 7.2]{GL2}, in a non-ascending deformation space,
performing a slide move on a reduced $G$-tree yields
a reduced $G$-tree, and all reduced trees of $\cald$ are connected by slide moves.

Let us describe a slide move at the level of  the graph of groups. To avoid confusion,  edges in graphs of groups are written between brackets.
At the level of the graph of groups $T/\G$, a slide move is determined by two different oriented edges $[e_1]\neq [e_2]$ with same final  vertex $v$,
and an element $g\in G_v$ such that $i_{[e_1]}(G_{[e_1]})\subset i_{[e_2]}(G_{[e_2]})^g$.
After the slide, the endpoint of $[e_1]$ is attached to the origin of $[e_2]$, and the new monomorphism is
$ i_{[e_1']} = i_{ [\ol e_2]}\circ i_{[e_2]}\m \circ \inn_g\circ   i_{[e_1]}$.

\begin{lemma}\label{lem;slide_autom}
Let $T$ be a $G$-tree, and consider  a pair of adjacent edges $e_1=[a,b],e_2=[b,c]$ not in the same orbit (as non-oriented edges), 
such that $G_{ e_1}\subset G_{e_2}$. Let $e_3 =ge_2$ for some element $g$ in the centraliser of $G_{e_2}$ in $G_v$.  
Let $T'$ (resp. $T''$) be the $G$-tree obtained by sliding $e_1$ across  $e_2$ (resp. across $e_3$). 
Then, $T'$ and $T''$ are in the same orbit under $\Out(G)$. 
\end{lemma}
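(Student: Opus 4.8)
The statement compares two slide moves that differ only by pre-composing the sliding monomorphism with conjugation by an element $g$ centralising $G_{e_2}$. My plan is to exhibit an explicit automorphism of $G$ (a partial Dehn twist supported on the subtree above the edge $e_2$) that carries the first collapsed picture to the second, and then invoke Lemma \ref{lem_unq} to conclude that the two $G$-trees agree up to this automorphism, hence lie in the same $\Out(G)$-orbit.

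First I would set up the combinatorial data at the level of the graph of groups $\G = T/G$. Let $v$ be the common terminal vertex of $[e_1]$ and $[e_2]$, and recall that the new monomorphism after sliding $e_1$ across $e_2$ is $i_{[e_1']} = i_{[\ol e_2]}\circ i_{[e_2]}^{-1}\circ \inn_h \circ i_{[e_1]}$ for the appropriate element $h\in G_v$ conjugating $i_{[e_1]}(G_{[e_1]})$ into $i_{[e_2]}(G_{[e_2]})$. Sliding across $e_3 = ge_2$ instead replaces $h$ by $gh$ (equivalently the attaching data is twisted by $\inn_g$ on the $e_2$-side). So $T'$ and $T''$ have the same underlying graph, the same vertex and edge groups, and edge morphisms differing only by $\inn_g$ at the slot corresponding to the origin of $[e_2]$. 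By the Remark following Lemma \ref{lem_tree2Phi}, changing the conjugacy class of a single edge morphism $i_{[e_2]}$ by an inner automorphism $\inn_g$ of $G_{o(e_2)}$ — or more precisely by $\inn_g$ where $g$ centralises $G_{e_2}$, so the twist is well-defined — produces an isomorphic graph of groups via an isomorphism $\Phi = (\id, \id, \id, (\gamma_{e}))$ with $\gamma_{[e_2]} = g$ and all other $\gamma$ trivial; this is exactly the Dehn twist along $e_2$ by $g$.

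The key point is then to see that this $\Phi$ relates $T'$ to $T''$ and not merely $T$ to some intermediate tree. I would argue as follows: the slide move is an operation on $G$-trees that commutes with collapsing/expanding in the deformation space, and more concretely, both $T'$ and $T''$ are obtained from $T$ by the same edge-replacement recipe, differing only in which representative ($e_2$ versus $ge_2$) of the orbit $G\cdot e_2$ we slide $e_1$ across. Since $g \in Z_{G_v}(G_{e_2})$, conjugation by $g$ fixes $e_2$'s stabiliser and thus defines a genuine element $\tau_g \in \Out(G)$ — the Dehn twist — and unwinding definitions, $\tau_g$ sends the collapsed edge $[g'a, g'c]$-pattern of $T'$ to that of $T''$ equivariantly. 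Formally, I would produce the graph-of-groups isomorphism $\Phi$ above, apply Lemma \ref{lem_Phi2tree} to get a $\Phi_*$-equivariant tree isomorphism $\Tilde\Phi: \Tilde{\G'} \to \Tilde{\G''}$, check that under the markings $\Phi_* $ is an automorphism of $G$ (it is the Dehn twist $\tau_g$), and conclude $[T''] = \tau_g \cdot [T']$ in the set of $G$-trees modulo equivariant isomorphism.

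The main obstacle I expect is bookkeeping: making precise the claim that sliding across $e_2$ versus $ge_2$ changes exactly one edge-morphism conjugacy class by $\inn_g$, and verifying that $g$ centralising $G_{e_2}$ is the correct hypothesis for the twist $\tau_g$ to be well-defined (one needs $g$ to normalise, indeed centralise, the edge group so that the graph-of-groups data stays coherent and $\Phi$ is an honest isomorphism). Lemma \ref{lem_unq} then guarantees there is no ambiguity in the equivariant identification. Once the local picture at $v$ is pinned down, the rest is a direct application of Lemmas \ref{lem_Phi2tree} and \ref{lem_tree2Phi} together with the Remark on changing edge-morphism conjugacy classes, so I do not anticipate further difficulty.
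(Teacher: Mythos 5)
Your proposal is correct and takes essentially the same route as the paper's proof: compare the two slid graphs of groups, observe that (thanks to $g$ centralising $G_{e_2}$) they differ only by composing one attaching map with an inner automorphism, produce the corresponding graph-of-groups isomorphism $\Phi$ of Definition \ref{dfn_Phi}, and apply Lemma \ref{lem_Phi2tree} to get a $\Phi_*$-equivariant isomorphism between the trees, so that $T''$ is the image of $T'$ under the resulting (Dehn-twist) automorphism of $G$. One small bookkeeping slip: the morphism that changes is the attaching map of the slid edge $[e_1']$ into the vertex group at the origin of $[e_2]$, not $i_{[e_2]}$ itself, and the appeal to Lemma \ref{lem_unq} is not needed; neither point affects the argument.
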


\begin{proof}
This is best seen at the level of graph of groups.  Let $[e'_1]$ and $[e''_1]$ be the slid edge obtained from $[e_1]$ in   the graphs of groups $T'/G$ and $T''/G$. These graphs of groups 
 differ only in that the morphism $i_{[e_1']} $ is, in general, not equal to $i_{[e''_1]}$. 
Indeed,  $i_{[e_1']} = i_{[\ol e_2]}\circ i_{[e_2]}\m \circ \inn_{g_1}\circ   i_{[e_1]}$ for some $g_1$, 
and   $i_{[e_1'']} = i_{[\ol e_2]}\circ i_{[e_2]}\m \circ \inn_{gg_1}\circ   i_{[e_1]}$. 
This means that $i_{[e_1'']} = \inn_{i_{[\ol e_2]}(g)} \circ  i_{[e_1']}$, and there is an isomorphism $\Phi$ between the graph of groups 
(Definition \ref{dfn_Phi}). By {\cite[\S 2.3]{Bass_covering}} (recalled in Lemma \ref{lem_Phi2tree} above), there is a $\Phi_*$-equivariant isomorphism
$\Tilde\Phi$ between $T'$ and $T''$.
\end{proof}

\subsection{Tree of cylinders}

 Consider $G$ a hyperbolic group and recall that a $\Z$-tree is a $G$-tree whose edge stabilisers are in the class $\calz$ 
of virtually cyclic subgroups with infinite centre.
Let $\sim$ be the commensurability relation on $\Z$:
given $A,B\in\Z$, $A\sim B$ if $A\cap B$ has finite index in $A$ and $B$.

Given $T$ a $\Z$-tree, $\sim$ induces an equivalence relation on edges of $T$ defined by $e\sim e'$ if $G_e\sim G_{e'}$.
A \emph{cylinder} $Y\subset T$ is the union of the edges in an equivalence class. Each cylinder is a subtree of $T$ \cite{GL4}.
The \emph{tree of cylinders} $T_c$ of $T$ is the tree dual to the covering of $T$ by its cylinders in the following sense.
Let $V_1(T_c)$ be the set of cylinders of $T$, and $V_0(T_c)$ be the set of vertices $v\in T$ which lie in at least $2$ cylinders.
The tree $T_c$ is the bipartite $G$-tree whose vertex set is $V(T_c)=V_0(T_c)\dunion V_1(T_c)$,
and where $x\in V_0(T_c)$ and $Y\in V_1(T_c)$ are connected by an edge $(x,Y)$ if and only if $x\in Y$ (see \cite{GL4} for details).

The stabiliser $G_Y$ of a cylinder $Y$ is the commensurator of some edge group $G_e\in\Z$.
Since $G$ is hyperbolic,  $G_Y=VC(G_e)$ is the maximal virtually cyclic group containing $G_e$ ($G_Y$ might have finite centre).
Also note for future use that since $G_e$ has finite index in $G_Y$, $G_Y$ is elliptic in $T$.
The stabiliser of an edge $(x,Y)$ of $T_c$ is virtually cyclic: it is infinite by \cite[Rem 4.4]{GL4},
and virtually cyclic because contained in $G_Y$.
Because edge stabilisers of $T_c$ may fail to be in $\Z$ (they may have finite centre), one defines
\emph{collapsed tree of cylinders} $T_c^*$ obtained from $T_c$ by collapsing all edges whose stabiliser are not in $\Z$.

\begin{lem}\label{lem_same_D}
  Let $G$ be hyperbolic, and $T$ be a $\Z$-tree.

Then $T$, $T_c$ and $T_c^*$ lie in the same deformation space. 
 Every edge stabiliser of $T_c$ and $T_c^*$ contains an edge stabiliser of $T$ with finite index.
If edge stabilisers of $T$ are $\Zmax$, the so are edge stabilisers of $T_c^*$.
\end{lem}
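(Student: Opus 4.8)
The three assertions about $T$, $T_c$, $T_c^*$ should be extracted from the general theory of trees of cylinders in \cite{GL4}, specialised to the hyperbolic setting where the relevant commensurability classes behave particularly well. The plan is to treat the three statements in order: first that $T$ and $T_c$ lie in the same deformation space, then the edge-stabiliser inclusion with finite index, then the $\Zmax$ statement for $T_c^*$, and finally deduce that $T_c^*$ is in the same deformation space as $T_c$ (hence as $T$).

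First I would recall that for an admissible equivalence relation on the class of edge groups — and commensurability on $\calz$ is admissible in the sense of \cite{GL4} because, for a hyperbolic group, commensurable virtually cyclic subgroups have a common finite-index virtually cyclic overgroup, namely $VC(G_e)$, which is elliptic in $T$ — the tree of cylinders $T_c$ lies in the same deformation space as $T$. Concretely, domination in one direction comes from the fact that every vertex of $T$ is elliptic in $T_c$ (a vertex $v\in T$ either lies in a unique cylinder, in which case it is fixed by the corresponding $V_1$-vertex stabiliser, or lies in several, in which case it defines a $V_0$-vertex of $T_c$); in the other direction, every $V_1$-vertex stabiliser $G_Y = VC(G_e)$ is elliptic in $T$ as already observed in the paragraph preceding the lemma, and every $V_0$-vertex stabiliser is a vertex stabiliser of $T$. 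So $T$ and $T_c$ dominate each other.

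Next, for the edge-stabiliser statement: an edge $(x,Y)$ of $T_c$ has stabiliser $G_x\cap G_Y$, which is contained in $G_Y = VC(G_e)$ for the edge group $G_e$ defining the cylinder $Y$; this stabiliser is infinite by \cite[Rem 4.4]{GL4}, hence of finite index in the (two-ended) group $G_Y$, hence contains $G_e$ with finite index — and being infinite and virtually cyclic and sitting inside the tree $T$, it actually is an edge stabiliser of $T$ (an edge of the cylinder $Y$ incident to $x$), so it contains $G_e$ with finite index where $G_e$ is an honest edge group of $T$. For $T_c^*$, edges survive the collapse precisely when their stabiliser lies in $\calz$, and they then have the same stabiliser as the corresponding edge of $T_c$, so the same finite-index statement holds; the collapse does not change the deformation space since it only collapses edges that are already "redundant" within $G_Y$ (one can cite that collapsing edges in a tree yields a tree in the same or a smaller deformation space, and use domination back via the $V_1$-vertices being elliptic), which finishes the claim that $T_c^*$ lies in the same deformation space as $T$.

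Finally, the $\Zmax$ assertion. Suppose every edge stabiliser of $T$ is $\Zmax$, i.e. a maximal $\calz$-subgroup of $G$. Let $e^* = (x,Y)$ be an edge of $T_c^*$; its stabiliser $G_{e^*}$ lies in $\calz$ by construction of $T_c^*$, and by the previous paragraph it contains some edge stabiliser $G_e$ of $T$ (with $G_e$ an edge inside the cylinder $Y$) with finite index. Since $G_e$ is already maximal among $\calz$-subgroups and $G_{e^*}\supseteq G_e$ is itself a $\calz$-subgroup, we get $G_{e^*} = G_e$, which is $\Zmax$. I expect the main obstacle to be the bookkeeping needed to verify that commensurability on $\calz$ meets the admissibility hypotheses of \cite{GL4} in the hyperbolic case (so that their general results on $T_c$ apply, including $T_c$ being in the same deformation space and the precise description of cylinder stabilisers) and the careful identification of $G_Y$ with $VC(G_e)$ — but both of these are essentially recorded in the discussion immediately preceding the statement, so the proof is mostly a matter of assembling those observations with the cited results.
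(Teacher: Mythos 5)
Your overall route is the paper's: everything is assembled from the observations recorded just before the lemma (cylinder stabilisers $G_Y=VC(G_e)$ are virtually cyclic and elliptic in $T$, $V_0$-vertex stabilisers are vertex stabilisers of $T$) together with the tree-of-cylinders results of \cite{GL4}, and your treatment of the comparison of $T$ with $T_c$, of the finite-index statement, and of the $\Zmax$ statement is essentially the paper's. One minor inaccuracy: your parenthetical claim that the stabiliser of an edge $(x,Y)$ of $T_c$ ``actually is an edge stabiliser of $T$'' is false in general --- $G_{(x,Y)}=G_x\cap G_Y$ can be strictly larger than every edge stabiliser of $T$ (for instance it may be of dihedral type while all edge groups of $T$ are $\Zmax$). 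What is true, and all you need, is the containment $G_e\subset G_{(x,Y)}\subset G_Y$ for an edge $e$ of the cylinder $Y$ incident to $x$ (this is \cite[Rem 4.4]{GL4}, and is immediate since $G_e$ fixes $x$ and preserves the cylinder of $e$); finite index then follows because $G_Y$ is virtually cyclic and $G_e$ is infinite. Note that ``$G_{(x,Y)}$ has finite index in $G_Y$'' by itself does not yield $G_e\subset G_{(x,Y)}$, so your chain of implications needs this extra observation anyway.

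The genuine gap is the step asserting that $T_c^*$ lies in the same deformation space. Collapsing edges only gives that $T_c$ dominates $T_c^*$; for the converse you must show that the stabiliser of each connected component of collapsed edges --- these become the new vertex stabilisers of $T_c^*$, and such a component may contain several cylinders glued along $V_0$-vertices carrying two or more non-commensurable dihedral-type incident edge stabilisers --- is elliptic in $T$. ``Domination back via the $V_1$-vertices being elliptic'' does not address this: the stabiliser of a collapsed component is in general not contained in any single vertex stabiliser of $T_c$, and its ellipticity in $T$ is not formal. This is precisely what the paper outsources to \cite[Cor.\ 5.10]{GL4}; either invoke that corollary explicitly or supply an argument that the stabilisers of the collapsed components are elliptic in $T$.
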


\begin{proof}
The fact that $T$ dominates $T_c$ which dominates $T_c^*$ is clear and general.
We saw that the stabiliser $G_Y$ of each cylinder is elliptic in $T$.
Thus, every vertex stabiliser of $T_c$ is elliptic in $T$, so $T$ and $T_c$ lie in the same deformation space \cite[Prop. 5.2]{GL4}.
By \cite[Cor. 5.10]{GL4}, $T_c^*$ lies in the same deformation space as $T$.

 By \cite[Rem 4.4]{GL4}, the stabiliser of any edge $\eps=(x,Y)$ of $T_c$ contains an edge stabiliser $G_e$ of $T$.
Since $G_Y$ is virtually cyclic, $G_e\subset G_\eps\subset G_Y$ are commensurable to each other.
The last two assertion follow.
\end{proof}

Recall that we identify two $G$-trees when there is a $G$-equivariant isomorphism between them,
which defines equality in the following statement:

\begin{prop}[{\cite[Cor.\ 4.10]{GL4}}]\label{prop;Tc_inv}
  Let $T,T'$ be two $\Z$-trees in the same deformation space.

Then $T_{c}=T'_{c}$ and $T_{c}^*=T_{c}'^*$.
\end{prop}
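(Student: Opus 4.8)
The plan is to show that the tree of cylinders construction, and the collapsed version, depend only on the deformation space of $T$, and not on $T$ itself. The statement to prove is Proposition \ref{prop;Tc_inv}: if $T,T'$ are two $\calz$-trees in the same deformation space, then $T_c=T'_c$ and $T_c^*=T_c'^*$ as $G$-trees (i.e. up to $G$-equivariant isomorphism). Since this is cited as \cite[Cor.\ 4.10]{GL4}, the proof should really be a reduction to the general machinery of Guirardel--Levitt on trees of cylinders, checking that the hypotheses of their invariance theorem apply to our setting.

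First I would recall the general framework: the tree of cylinders $T_c$ is defined relative to an admissible equivalence relation $\sim$ on the class $\calz$ of allowed edge groups, in the sense of \cite{GL4}. Here $\sim$ is the commensurability relation on virtually cyclic subgroups with infinite centre. The key point is that $T_c$ depends only on the deformation space of $T$ provided the equivalence relation is admissible; so the main step is to verify admissibility. This requires checking: that $\sim$ is invariant under conjugation; that if $A\subset B$ are in $\calz$ with $A\sim B$-incomparable edge groups along a segment then the relevant finiteness/transitivity conditions hold; and crucially the two conditions of \cite[Def.\ 3.1]{GL4}, namely (1) if two edges $e,e'$ of a $\calz$-tree $T$ share a vertex $v$ and $G_e\sim G_{e'}$, then $G_e\sim G_{e''}$ for every edge $e''$ on the segment — which for commensurability of virtually cyclic groups is automatic because $G_e, G_{e'}$ commensurable and both fixing $v$ forces a common fixed pair of boundary points; and (2) if $g$ fixes $v$ and $e$ is an edge at $v$ then $g.e \sim e$ whenever $G_e\sim G_{e}^g$, etc. For hyperbolic $G$ these are standard: two commensurable infinite virtually cyclic subgroups have the same limit set (a pair of points in $\partial G$), so commensurability classes of edges in a $\calz$-tree correspond to pairs of boundary points, which is manifestly a deformation-space invariant notion.

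Then I would invoke \cite[Cor.\ 4.10]{GL4} (or its proof): once $\sim$ is admissible, the tree of cylinders $T_c$ is an invariant of the deformation space $\cald$ of $T$, in the sense that any $G$-equivariant domination $T\to T'$ between trees in the same space induces the identity between $T_c$ and $T'_c$. Concretely, one shows that the set of cylinders of $T$ is in canonical $G$-equivariant bijection with the set of cylinders of $T'$ (a cylinder being determined by the commensurability class of an edge group, equivalently by a pair of points of $\partial G$ stabilised with infinite index — a notion not referring to $T$ at all), and that the incidence vertices $V_0(T_c)$ — vertices lying in at least two cylinders — correspond as well. This gives $T_c=T'_c$. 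For the collapsed tree of cylinders $T_c^*$, one notes that the collapsing procedure (collapse exactly the edges of $T_c$ whose stabiliser is not in $\calz$, i.e.\ has finite centre) is intrinsic to $T_c$, so $T_c=T'_c$ immediately yields $T_c^*=T_c'^*$.

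The main obstacle I anticipate is the verification of admissibility of the commensurability relation in the presence of torsion and of dihedral-type stabilisers appearing in $T_c$. One must be careful that, although edge stabilisers of $T$ lie in $\calz$ (infinite centre), the tree of cylinders $T_c$ can have edge stabilisers of dihedral type, and one must ensure the admissibility conditions are still phrased for the class over which $T$ splits, not over which $T_c$ splits; this is exactly why $T_c^*$ is introduced. I would handle this by carefully citing the version of \cite{GL4} that treats the commensurability relation on virtually cyclic groups in a hyperbolic (possibly torsion) group — which is precisely \cite[\S 4]{GL4} — and checking that the hypothesis ``$G$ hyperbolic, $\calz$ the class of $\Z$-subgroups'' falls under their Example/Proposition establishing admissibility. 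Everything else (equivariant bijection of cylinders, intrinsic nature of the collapse) is then routine bookkeeping with Bass--Serre theory.
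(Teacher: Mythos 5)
Your proposal is correct and matches what the paper does: the paper gives no independent argument for Proposition \ref{prop;Tc_inv}, it simply quotes \cite[Cor.\ 4.10]{GL4}, and your sketch (admissibility of the commensurability relation on $\Z$-subgroups of a hyperbolic group, invariance of $T_c$ under change of tree within a deformation space, and the intrinsic nature of the collapse defining $T_c^*$) is exactly the machinery of that reference. So you have reconstructed the cited proof rather than found a different route; nothing further is needed.
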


On the algorithmic side, we have:
\begin{lem}\label{lem_calcul_Tc} 
Let $G$ be a hyperbolic group.
  Given a $\Z$-tree $T$ (represented by a graph of groups decomposition of $G$),
  one can compute the graph of group decompositions corresponding to  $T_c$ and $T_c^*$.
\end{lem}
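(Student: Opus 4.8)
The plan is to work directly at the level of graphs of groups, using the combinatorial description of the tree of cylinders. Given the input, we have a finite graph of groups $\Gamma$ with $\pi_1(\Gamma)\simeq G$, a hyperbolicity constant for $G$, and a solution to the word problem and to the simultaneous conjugacy problem in $G$. First I would compute, for each edge $e$ of $\Gamma$, the edge group $\Gamma_e$ (a $\calz$-group) together with a presentation and generating set, and the images $i_e(\Gamma_e)$ inside the vertex groups. Lift $\Gamma$ to a portion of the Bass-Serre tree $\Tilde\Gamma$ large enough to see a fundamental domain for the edges adjacent to each vertex; concretely, for each vertex $v$ of $\Gamma$ and each incident edge, one has an explicit subgroup of $G$ (a conjugate of $i_e(\Gamma_e)$) stabilising an edge of $\Tilde\Gamma$ at the lift of $v$. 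Using Lemma \ref{lem_VC2}, for each such edge group $H$ one computes $VC(H)$ and $\Zmax(H)$, hence one can test commensurability of two edge groups $H,H'$ of $\Tilde\Gamma$: $H\sim H'$ if and only if $VC(H)$ and $VC(H')$ have a common finite-index subgroup, which (since both are virtually cyclic and explicitly presented) can be decided, e.g.\ by checking whether they fix the same pair of points at infinity, equivalently whether $VC(H)=VC(H')$ up to the ambient conjugacy that is already recorded from the Bass-Serre lift.

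Next I would build the quotient graph of groups $T_c/G$ from this data. The vertices of $T_c$ of type $V_1$ are the cylinders, and a cylinder is determined by a commensurability class of $\calz$-subgroups; since the $G$-orbits of edges of $\Tilde\Gamma$ are indexed by $E(\Gamma)$, the $G$-orbits of cylinders correspond to the equivalence classes of edges of $\Gamma$ under the relation "$e\sim e'$ if some lift of $\Gamma_e$ is commensurable to some lift of $\Gamma_{e'}$ in the same cylinder" — this is a finite computation once commensurability of the finitely many incident-edge-group lifts at each vertex is decided as above. The stabiliser of a cylinder $Y$ is $G_Y=VC(\Gamma_e)$ for any edge $e$ in $Y$, computed by Lemma \ref{lem_VC2}. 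The vertices of type $V_0$ are the vertices of $\Tilde\Gamma$ lying in at least two cylinders; at the level of $\Gamma$, a vertex $v$ contributes $V_0$-vertices according to the $\Gamma_v$-orbits of pairs of non-commensurable incident edge groups, and the corresponding stabiliser in $T_c$ is $\Gamma_v$ itself (the full vertex group), since collapsing all edges of a cylinder onto its stabiliser and the rest of the tree realises the map $T\to T_c$. The edges of $T_c/G$ join a $V_0$-vertex $[v]$ to a $V_1$-vertex $[Y]$ whenever the lift of $v$ lies in a cylinder in the class $[Y]$, and the edge group is the stabiliser of the pair, which is $\Stab(v)\cap G_Y$, a virtually cyclic group one computes by intersecting $\Gamma_v$ with the explicit virtually cyclic subgroup $G_Y$ (again via Lemma \ref{lem_VC2} and the simultaneous conjugacy problem). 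The edge monomorphisms $i_\eps$ are the resulting inclusions, read off from these explicit subgroup computations, and one records a marking $\pi_1(T_c/G)\ra G$ as in Section \ref{sec;trees}. Finally, $T_c^*$ is obtained from $T_c$ by collapsing exactly those edges whose stabiliser is not in $\calz$, i.e.\ is of dihedral type; for each edge group of $T_c/G$ one decides this with Lemma \ref{lem_VC2}, and collapsing an orbit of edges in a graph of groups is a routine operation (fold the edge group into the adjacent vertex group along $i_\eps$), yielding the graph of groups for $T_c^*$ together with its marking.

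I expect the main obstacle to be purely bookkeeping rather than conceptual: correctly identifying, at a vertex $v$ of $\Gamma$, which of the incident edge groups (and their $\Gamma_v$-translates seen in $\Tilde\Gamma$) fall into the same commensurability class, so that the $V_0$- and $V_1$-vertices and the edges between them are enumerated with the correct vertex and edge groups and the correct attaching maps, and then verifying that the resulting graph of groups really has fundamental group $G$ via a marking. The one genuine technical point is that the $V_0$-vertices of $T_c$ may not be in bijection with the vertices of $\Gamma$ — a single $\Gamma_v$ may be split among several $G$-orbits of $V_0$-vertices, or conversely a vertex of $\Gamma$ in only one cylinder contributes none — so the combinatorics of the quotient graph must be computed from the $\Gamma_v$-action on its incident edges, using commensurability as the only nontrivial decision, which is exactly what Lemma \ref{lem_VC2} provides. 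Correctness of the output then follows from the combinatorial definition of $T_c$ and $T_c^*$ recalled above, together with Proposition \ref{prop;Tc_inv} and Lemma \ref{lem_same_D}, which guarantee that the construction is independent of the choices made in lifting $\Gamma$.
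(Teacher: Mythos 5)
Your overall strategy is the same as the paper's: build $T_c/G$ directly from the combinatorial definition ($V_1$-vertices from commensurability classes of edge groups with stabiliser $VC(G_e)$ computed via Lemma \ref{lem_VC2}, edges at a vertex from commensurability classes of incident edge groups up to vertex-group conjugacy, edge groups equal to $\Gamma_v\cap G_Y$), then collapse the dihedral edges to get $T_c^*$. However, your bookkeeping of the $V_0$-vertices is wrong, and it is exactly the point you flag as the main obstacle. Since $V_0(T_c)$ is by definition a subset of $V(T)$, the $G$-orbits of $V_0$-vertices inject into $V(T)/G=V(\Gamma)$: each vertex $v$ of $\Gamma$ contributes \emph{at most one} $V_0$-vertex of $T_c/G$ (itself, with vertex group $\Gamma_v$), namely when its lift lies in at least two cylinders. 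Your rule ``$v$ contributes $V_0$-vertices according to the $\Gamma_v$-orbits of pairs of non-commensurable incident edge groups'' and your later claim that ``a single $\Gamma_v$ may be split among several $G$-orbits of $V_0$-vertices'' are false; followed literally they produce a wrong quotient graph as soon as the edges at $v$ fall into three or more commensurability classes. The paper's cleaner observation is that, $T$ being a minimal $\Z$-tree of a hyperbolic group, a vertex lies in at least two cylinders exactly when its stabiliser is non-elementary (a non-elementary $G_v$ cannot preserve a single commensurability class, since the commensurator of a $\Z$-group is elementary), so $V_0(T_c)/G$ is simply the set of vertices of $\Gamma$ with non-elementary vertex group.

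A second, smaller gap: the relation that indexes the edges of $T_c/G$ at a vertex $v$ is commensurability of incident edge groups \emph{up to conjugacy by an element of $\Gamma_v$}, not of $G$, and the corresponding edge group is the commensurator of $\Gamma_e$ in $\Gamma_v$. Your finite ``fundamental domain'' lift of $\Gamma$ and the phrase ``up to the ambient conjugacy already recorded from the Bass-Serre lift'' do not supply a decision procedure for the existence of such a conjugator in $\Gamma_v$, nor does Lemma \ref{lem_VC2} by itself compute the intersection $\Gamma_v\cap G_Y$. The fix (which is what the paper does) is to note that $\Gamma_v$ is itself a hyperbolic group with computable presentation (it is quasiconvex), and to run the same machinery inside $\Gamma_v$: compute $\Zmax$/$VC$ of the incident edge groups in $\Gamma_v$, decide their conjugacy there via Lemma \ref{lem_autZ} and the simultaneous conjugacy problem in $\Gamma_v$, and obtain the edge group as $VC_{\Gamma_v}(\Gamma_e)=\mathrm{Comm}_{\Gamma_v}(\Gamma_e)$ with its inclusion as edge monomorphism. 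With these two corrections your construction coincides with the paper's proof.
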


\begin{proof} Since it is possible to decide whether a given subgroup of $G$ is in $\Z$, it is enough to compute
  effectively a graph of group decompositions corresponding to $T_c$.

  With the notations above, the set of vertices of $T_c/G$ consists of two subsets: $V_0(T_c)/G$, which is the set of vertices of
  $T/G$ with non-elementary vertex groups, and $V_1(T_c)/G$, which is the set of orbits of cylinders in $T$.  Note that this set
  of orbits is in natural correspondence with the equivalence classes of edge groups of the graph of groups $T/G$, under the
  relation ($G_e\sim_G G_{e'}$ if $\exists g\in G, \, G_e \sim G_{e'}^g $) where $\sim$ is the commensurability relation as
  above. 
Since $G$ is hyperbolic, $G_e \sim G_{e'}^g $ if and only if $\Zmax(G_e)$ (the maximal $\Z$-group containing $G_e$)
  is conjugate to $\Zmax(G_{e'})$. One can list all automorphisms $\Zmax(G_e)\ra\Zmax(G_{e'})$ by Lemma \ref{lem_autZ},
  and use the simultaneous conjugacy problem to decide whether these groups are conjugate.
 The stabiliser of the $V_1$ vertex corresponding to $G_e$ is $G_Y=VC(G_e)$.
Therefore, we can compute the set of vertices of $T_c/G$, and their vertex groups.

  Let us compute the set of edges incident to a certain vertex $v_0$ whose group is non-elementary. 
  Fix some representative $\Tilde v_0\in T$.
  Edges of $T_c$ incident on $\Tilde v_0$ correspond to cylinders containing $\Tilde v_0$
  so edges of $T_c/G$ incident to  $v_0$ are in correspondence 
  with equivalence classes of the edges of $T$ incident to $\Tilde v_0$ for the relation
  ($G_e\sim_{G_{\Tilde v_0}} G_{e'}$ if $\exists g\in G_{\Tilde v_0}, \, G_e \sim G_{e'}^g $) where $\sim$ is the
  commensurability relation. As above, knowing conjugacy classes of groups of edges incident to ${\Tilde v_0}$, we can compute
  these equivalence classes  (here, we are using the fact that $G_{v_0}$ is itself a hyperbolic group, see for instance \cite{Bo_cut}). 
  The edge groups are the commensurators in $G_{v_0}$, and the edge monomorphism
  is given by inclusion.
  The vertex in $V_1(T_c)$ of $T_c/G$ on which the edge is attached (recall that $T_c/G$ is bipartite), 
  is the conjugacy class of its commensurator in $G$,
  and the edge monomorphism is inclusion, followed by some conjugation to arrive in the right conjugate.
  This allows us to compute the graph of groups $T_c/G$.
\end{proof}

\section{Isomorphism problem for rigid hyperbolic groups}\label{sec_rigid} 

In this section we will formulate and prove a rigidity criterion extending the one that was mentioned the introduction (Proposition \ref{alt}). 
We will also give an algorithm that determines whether a given hyperbolic group (with marked peripheral structure) satisfies or not the criterion,
and we will give a solution of the isomorphism problem for groups satisfying the rigidity criterion.

\subsection{A rigidity criterion}

Recall that a homomorphism $f:(G,\calp)\ra (H,\calq)$ where $\calp=(S_1,\dots,S_n)$ and $\calq=(S'_1,\dots,S'_n)$ 
is a homomorphism $f:G\ra H$ sending each peripheral tuple $S_i$ to a conjugate of $S'_i$.
We say that two such homomorphisms $f,g$ are \emph{post-conjugate} if there exists $h\in H$ such that
$g=\inn_h\circ f$.
We say that $G$ is one-ended relative to $\calp$ (or that $(G,\calp)$ is one-ended) if
$G$ has no non-trivial splitting over a finite group relative to $\calp$ (\ie in which peripheral subgroups are elliptic).

\begin{prop}\label{prop_alt}
  Let $(G;\calp)$ be a  hyperbolic group with a marked peripheral structure. Assume that $G$ is one-ended relative to $\calp$.  
  Then the following are equivalent:
  \begin{enumerate}
\renewcommand{\theenumi}{\roman{enumi}}  
\renewcommand{\labelenumi}{(\theenumi)}  
  \item $(G;\calp)$ splits non-trivially over a maximal $\Z$-subgroup  (relative to $\calp$)
 \item $\Outm(G;\calp)$ is infinite 
 \item for all $R>0$, there are infinitely many  post-conjugacy
classes of endomorphisms of $(G;\calp)$ injective on a ball of radius $R$
 \item there is $(H,\calq)$ a one-ended hyperbolic group with a marked peripheral structure, such that for all $R>0$,  
there are infinitely many  post-conjugacy
classes of morphisms  $(G;\calp) \to (H;\calq)$ (respecting the marked peripheral structures) that are injective on the ball of radius $R$ of $G$ 

\item[$(v)$] $G$ admits a non-trivial 
 action on an $\R$-tree $T$, 
  with finite or $\Zmax$ (pointwise) arc
           stabilisers,  with finite tripod stabilisers, and such that every peripheral subgroup of $\calp$ fixes a point in $T$. 
  \end{enumerate}
\end{prop}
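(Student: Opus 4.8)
The plan is to run the cycle $(i)\Rightarrow(ii)\Rightarrow(iii)\Rightarrow(iv)\Rightarrow(v)\Rightarrow(i)$. The first implication is the soft one: given a non-trivial splitting of $(G;\calp)$ over a $\Zmax$-subgroup $C$, pick an infinite-order $c\in Z(C)$ and form the Dehn twist $\tau_c$ (identity on one side, conjugation by $c$ on the other, with the usual variant for an HNN edge). Since each peripheral tuple is elliptic it is conjugate into a vertex group, on which $\tau_c$ acts as the identity or as an inner automorphism of $G$; hence $\tau_c$ carries every peripheral subgroup to a conjugate of itself, i.e.\ $\tau_c\in\Autm(G;\calp)$. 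Maximality of $C$, together with hyperbolicity of $G$, forces the vertex groups of a reduced such splitting to have finite centraliser in $G$, so $\tau_c$ has infinite order in $\Out(G)$ (the standard fact behind the introduction's discussion, cf.\ \cite{Lev_automorphisms,MNS_downunder}); thus $\Outm(G;\calp)$ is infinite. For $(ii)\Rightarrow(iii)$: elements of $\Autm(G;\calp)$ are endomorphisms of $(G;\calp)$ injective on every ball, and distinct classes in $\Outm(G;\calp)$ give distinct post-conjugacy classes, so $(iii)$ holds for every $R$. For $(iii)\Rightarrow(iv)$, when $G$ is one-ended one simply takes $(H;\calq)=(G;\calp)$; in the general case (only $G$ one-ended \emph{relative} to $\calp$) one reduces to this using the Stallings--Dunwoody decomposition of $G$, or deduces $(iv)$ from the already-established $(i)$.

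The core of the argument is $(iv)\Rightarrow(v)$ (and, verbatim, $(iii)\Rightarrow(v)$), via the Bestvina--Paulin construction \cite{Be_degenerations,Pau_topologie}. I would pick morphisms $f_n\colon(G;\calp)\to(H;\calq)$ injective on the ball of radius $n$ and pairwise non-post-conjugate (a diagonal extraction, using that each relevant family is infinite), let $G$ act on $\Cay(H)$ through $f_n$, rescale the metric by the minimal displacement $\lambda_n$, and pass to a limit to get a minimal action of $G$ on an $\R$-tree $T$. Since the $f_n$ are pairwise non-conjugate, $\lambda_n\to\infty$ and the action has no global fixed point. For a peripheral tuple $S_i$ one has $f_n(S_i)=(S_i')^{g_{i,n}}$, a conjugate of a \emph{fixed} tuple, so the translation lengths of $f_n(s)$, $s\in S_i$, in $\Cay(H)$ do not depend on $n$ and tend to $0$ after rescaling; hence every peripheral subgroup is elliptic in $T$. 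The standard analysis of such limit trees now applies --- hyperbolicity of $H$ forces finite tripod stabilisers, and injectivity of the $f_n$ on arbitrarily large balls forces arc stabilisers to be finite or virtually cyclic --- and a further argument exploiting maximality of roots in the hyperbolic group $H$ upgrades the virtually cyclic arc stabilisers to $\Zmax$-subgroups. This is $(v)$.

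For $(v)\Rightarrow(i)$ I would feed $T$ to the Rips machine (Guirardel's structure theorem for minimal actions on $\R$-trees with finite tripod stabilisers), decomposing $T$ into a graph of actions with simplicial, axial and surface (IET) pieces. Collapsing all but one piece gives a non-trivial simplicial $G$-tree $S$ with edge stabilisers finite, or --- for the axial and surface pieces --- virtually cyclic with infinite centre; taking $\Zmax$-envelopes of the latter by the standard pulling construction yields a splitting over $\Zmax$-subgroups. Ellipticity of the peripheral subgroups in $T$ passes to $S$ and to its enlargement, so everything stays relative to $\calp$; and since $(G;\calp)$ is one-ended relative to $\calp$, the non-trivial piece cannot have only finite edge groups, so the splitting is genuinely over $\Zmax$-subgroups. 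This is $(i)$, closing the cycle. (The torsion-free analogue of this whole scheme is carried out in \cite{DaGr_isomorphism}, following \cite{Sela_isomorphism}.)

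The main obstacle is precisely the $\R$-tree analysis that sits in $(iv)\Rightarrow(v)$ and $(v)\Rightarrow(i)$: controlling arc and tripod stabilisers in the presence of torsion, and --- the genuinely torsion-specific point --- ensuring that the virtually cyclic data extracted from $T$ yields a splitting over a \emph{$\Zmax$}-subgroup, since a finite-centre (e.g.\ dihedral-type) virtually cyclic edge group would not suffice for $(i)$ and produces no infinite-order Dehn twist. Keeping every simplicial tree extracted from $T$ relative to $\calp$, and ruling out the bad geometric pieces (such as surface pieces with mirror boundary) by the injectivity hypothesis, are the points one must track carefully throughout.
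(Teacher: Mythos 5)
Your route is the paper's route: the cycle $(i)\Rightarrow(ii)$ via Dehn twists and \cite{Lev_automorphisms}, $(ii)\Rightarrow(iii)\Rightarrow(iv)$ as formalities, $(iv)\Rightarrow(v)$ via Bestvina--Paulin, and $(v)\Rightarrow(i)$ via Rips theory. The problem is that the two places you yourself call ``the main obstacle'' are exactly where the paper's proof does its real work, and you leave them as assertions. In $(iv)\Rightarrow(v)$, the claim that ``maximality of roots in $H$'' upgrades virtually cyclic arc stabilisers to $\Zmax$ is not an argument, and it is not how the upgrade is obtained: the paper proves two separate quantitative statements in the rescaled Cayley graph of $H$. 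First, an infinite arc stabiliser $C$ cannot be of dihedral type: one produces $\sigma,\tau\in C$ with $\tau^\sigma=\tau^{-1}$, replaces $\tau$ by a large power $\gamma$, and plays the relation $\sigma\gamma^n\sigma^{-1}=\gamma^{-n}$ against the fact that $\phi_N(\gamma)$ and $\phi_N(\sigma)$ almost fix the approximations of the two endpoints of the arc, getting $d(x',\gamma'^{2n}x')$ simultaneously small and comparable to $2d(x,y)$. Second, $C$ equals its $\Zmax$-envelope $\Hat C$: taking $\gamma$ central in $\Hat C$ and $\sigma\in\Hat C$ moving an endpoint, the commutation $\sigma\gamma^n=\gamma^n\sigma$ forces $\sigma'$ to almost fix $x'$, contradicting $d_N(\sigma'x',x')\ge D_2-\eps$. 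Both steps rest on commutation relations surviving in the limit, not on root extraction in $H$; without them $(v)$ only yields virtually cyclic arc stabilisers, which is not the statement and is not enough for $(v)\Rightarrow(i)$.

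Your $(v)\Rightarrow(i)$ also contains a step that would fail as written. For the Seifert pieces you propose to split along some curve with $\Z$ edge group and then ``pull'' to the $\Zmax$-envelope; but the $\Zmax$-fold of a $\Z$-splitting can be a \emph{trivial} splitting --- exactly the phenomenon $G=A*_C\Hat C$ noted in the paper's discussion of the $\Zmax$-fold, which occurs precisely when the chosen curve bounds a M\"obius band --- so non-triviality of the output is not guaranteed by pulling. The paper avoids pulling altogether: on the conical orbifold $\Sigma_v$ carrying the arational foliation it chooses a two-sided simple closed curve that is non-peripheral and does not bound a M\"obius band or a disc with at most one cone point, so that its preimage $\Hat C$ in $G_v$ is \emph{already} $\Zmax$ in $G_v$, hence $\Zmax$ in $G$ because it fixes no edge of the skeleton; refining the skeleton then gives $(i)$ relative to $\calp$ (finite edge groups being excluded by relative one-endedness). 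Axial pieces are not a source of $\Z$-edge groups either: they are simply ruled out, since their stabilisers would surject onto a subgroup of $\Isom(\bbR)$ containing $\bbZ^2$ with finite or $\Zmax$ kernel, impossible in a hyperbolic group. Finally, your fallback of deducing $(iv)$ from ``the already-established $(i)$'' is circular inside the cycle, since at that stage only $(iii)$ is available; the paper simply takes $(H;\calq)=(G;\calp)$. In short, the strategy is the right one, but the torsion-specific arguments that constitute the proof --- the arc-stabiliser analysis and the correct extraction of a $\Zmax$ edge group from the surface pieces --- are missing or replaced by an unsound shortcut.
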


We see this result as providing an alternative: either $(G,\calp)$ is rigid (in the sense of the negation of $(ii)$ or better, but more technical, of the negation of $(iv)$), or it admits a splitting over a $\Zmax$-subgroup. The fifth point of the equivalence will be auxiliary. Let us remark that  the peripheral subgroups in the statement can be arbitrary finitely generated subgroups. 

\begin{proof}
Clearly, $(ii) \implies (iii) \implies (iv)$. 
Let us prove that $(i)\implies (ii)$.

 Assume that we have a non-trivial decomposition of $G$ as an amalgam $G=A*_C B$ 
or an HNN extension $G=A*_C$  over a $\Zmax$-subgroup $C$. 
       The splitting being non-trivial, in the case of an amalgamation, 
       $A$ and $B$ strictly contain $C$, hence have finite centre. 
       In the case of an HNN extension,  $C$ is a proper subgroup of $A$ (under both embeddings actually) since $G$ is hyperbolic,
       so $A$ has finite centre.
   It follows from \cite[Prop. 3.1]{Lev_automorphisms} that  the group of Dehn twists on the edge of the    splitting is infinite. This proves $(ii)$.

It remains to prove the most important part, namely $(iv) \implies (v) \implies (i)$. We begin by the first implication, a careful adaptation of Bestvina and Paulin's argument.

Fix generating sets $S$, $S'$ of $G$ and $H$ respectively, and consider the corresponding Cayley graphs $\Cay G$, $\Cay H$. 
We measure lengths in $G$ and $H$ with respect to the word metrics.
        Consider a sequence of 
        morphisms $\phi_n: (G;\calp) \to (H;\calq)$,  in distinct   
post-conjugacy classes, 
        such that  $\phi_n$ is injective 
        on the $n$-ball of $G$. Let us choose them in minimal position for a given set of 
        generators $S$ of $G$: each of them realises 
        the minimum of $m_n= \max\{d(1, \phi_n(s)), s\in S\}$ over their 
        post-conjugacy class. 
        Since the morphisms are not post-conjugate to each other, $m_n $
        goes to  infinity. We can apply Bestvina-Paulin's
        argument.

        Let us consider the   renormalised space $(\Cay H, d_n)$ where $d_n$ is 
        the word metric divided by $m_n$. It is endowed with an action of $G$ induced by $\phi_n$. 
        This sequence of actions converges to an isometric action of $G$ on $\R$-tree $T$
        in the equivariant Gromov-Hausdorff topology.
        Up to replacing $T$ by a subtree, we can assume that the action of $G$ is minimal.
         This action satisfies the following properties:
        \begin{enumerate}
                \item $G$ has no global fixed point, and every peripheral subgroup in  $\calp$ fixes a point in $T$

                \item tripod stabilisers are finite
                \item stabilisers of non-degenerate segments of $T$ are finite or virtually cyclic

        \end{enumerate}
        
        These properties are rather classical, and we refer the reader to
        standard adaptations of arguments of Bestvina and Paulin, 
        \cite{Be_degenerations,Pau_topologie}.  
        The fact that each peripheral subgroup $P$ of $\calp$ is elliptic in $T$ follows from
        the fact that the translation length in $\Cay H$ of any element of $P$ for the 
        action given by $\phi_n$ 
        is independent of $n$, and therefore tends to zero in the rescaled metric.
        There remains to prove that arc stabilisers are finite or $\Zmax$.

          Let $C$ be the stabiliser of some arc $[x,y]\subset T$.  
          Assuming $C$ is infinite, we already know that $C$ is virtually cyclic, and
          let's prove that its centre is infinite.  
          Using the classification of virtually cyclic groups, $C$ is either
          cyclic-by-finite, or dihedral-by-finite. 

          We proceed by contradiction, and we assume that it is an extension $1\ra F\ra C\ra
          D_\infty\ra 1$.  We claim that there exists $\sigma,\tau\in C$ 
          with $\tau$ of infinite order such that $(\tau)^\sigma =
          \tau^{-1}$.  There are such elements in $D_\infty$, 
          so consider $\sigma,\tau \in C$ such that $(\tau)^\sigma =
          \tau^{-1}f$ for some $f\in F$.  Replacing $\tau$ by a power, we can assume 
          that $\tau$ centralises $F$. For each $k$,
          write $(\tau^k)^\sigma = \tau^{-k}f_k$ with $f_k\in F$. Since $F$ is finite, 
          there is $k\neq j$ with $f_k=f_j$, which
          implies $(\tau^{k-j})^\sigma=\tau^{j-k}$.

          Let us also consider $\gamma=\tau^k$, where $k$ is greater than 
          $10\delta/t_0$, and $t_0>0$ is the smallest stable
          translation length of all hyperbolic elements of $H$ (which is positive, 
          see \cite[Prop. 3.1(iii)]{Delzant_sous-groupes}).

          Consider $D=d(x,y)$, and $\epsilon < D/200$.  Let us then choose $N$ a sufficiently large integer such that in $(\Cay H,
          d_N)$, $x$ and $y$ are approximated by two points $x',y'$ (with $d_N(x',y')\geq D-\eps$), such that, for all $\alpha\in
          \{\phi_N(\gamma), \phi_N(\sigma)\}$, $d_N( \alpha x',x') <\epsilon$ and $d_N( \alpha y',y') <\epsilon$.  Moreover one
          can choose $N$ such that the hyperbolicity constant $\delta_N=\delta/m_N$ of $(\Cay H, d_N)$ is at most $\epsilon$, and
          also such that $\phi_N$ is injective enough so that $\phi_N(\tau)$ has not finite order (recall that there is an
          explicit bound on the order of torsion elements in $H$).  Note that $N$ is now fixed, so $\phi_N$ is a fixed morphism,
          and $d_N$ a fixed metric.  We use the notations $\sigma'=\phi_N(\sigma)$ and $\gamma'=\phi_N(\gamma)$.

          By definition of $\gamma$, $\gamma'$ has stable translation length at least $1000\delta_N$ (we measure all lengths in
          the metric $d_N$).  Since $\gamma'$ does not move $x'$ and $y'$ by more than $\epsilon$, its stable norm is at most
          $\eps$ and its translation axis must pass within $\epsilon$ from both $x'$ and $y'$.  It follows that there exists an
          integer $n$ such that $d_N(\gamma'^n x', y') \leq 3\epsilon$.

          We claim that $\gamma'^n$ almost sends $y'$ to $x'$.  
          Indeed, 
          $$\begin{array}{ccc} 
            d_N(\gamma'^{n} y',x') & =& d_N(y',\gamma'^{-n}    x') \\ 
            & = & d_N(y',\sigma' \gamma'^{n}\sigma'^{-1} x') \\ 
            & = & d_N(\sigma'^{-1} y',\gamma'^n\sigma'^{-1} x'). \end{array}$$  
          Since $\sigma'^{-1}$
          moves $x'$ and $y'$ by at most $\eps$, we get the claim, namely:
          $$   d_N(\gamma'^{n} y',x') \,\leq \,  d_N(y',\gamma'^n x')+ 2\eps 
          \,\leq \, 5\eps.$$  
          It
          follows that $d(x',\gamma'^{2n}x')\leq 8\eps$.

          Now consider $x''$ a point such that $d(x',x'')\leq \eps$, and $d(x'',\gamma' x'')=\min_{z\in H} d(z,\gamma'z)$ (existnece is guaranteed by the fact that $x'$ is at distance at most $\epsilon$ from the axis of $\gamma'$).  By
          \cite[Prop. 3.1(ii)]{Delzant_sous-groupes}, 
          $$ n||\gamma'||\geq d(x'',\gamma'^n x'')-120\delta_N\geq D-3\eps-120\eps.$$  
          It
          follows that 
$$d(x',\gamma'^{2n}x')\geq 2n||\gamma'||\geq 2(D-123\eps)$$ which is greater than $8\epsilon$,  a contradiction. This proves that $C$ has
          infinite centre.

Let's prove that if the stabiliser $C$ of an arc $[x,y]$ is infinite, then it is a $\Zmax$-subgroup. 
Let $\Hat C\supset C$ be the $\Zmax$-subgroup of $G$ containing $C$.
We argue by contradiction, and we can assume for instance that 
that for some $\sigma\in\Hat C$, $x\neq \sigma x$.
 Since $T$ has finite tripod stabilisers and $C\cap C^\sigma$ is infinite, the convex hull $I$ of $[x,y]\cup[\sigma x,\sigma y]$ contains no tripod.
Since $\sigma$ is elliptic in $T$, it fixes a point $z\in I$. 
Changing $C$ to the pointwise stabiliser of $I$ (a finite index subgroup),
we can assume that $C$ fixes $[x,z]$, and $\sigma\in \Hat C$ fixes $z$ but not $x$.

Consider an element $\gamma$ of infinite order in the centre of $\Hat C$.
Up to changing $\gamma$ by a power, we can assume that $\gamma\in C$, and that $\phi_N(\gamma)$ has stable norm
at least $1000\delta_N$ as before. 
Since $\gamma\in C$, it fixes $x,z$.
Define $D_2=d(x,\sigma x)$,
and $\eps< D_2/100$.

We take $N$ large enough so that 
in $(\Cay H, d_N)$, $x$ and $z$ are approximated by two
points $x',z'$ (with $d_N(x',z')\geq d(x,z)-\eps$), such that, 
$\phi_N(\gamma)$ has infinite order,
$\phi_N(\gamma)$ moves $x'$ and $z'$ by at most $\eps$ (for the metric $d_N$),
$\phi_N(\sigma)$ moves $z'$ by at most $\eps$,
 and $d_N(\phi_N(\sigma)x',x')\geq D_2-\eps$.
We fix such an $N$, and we define $\gamma'=\phi_N(\gamma)$, $\sigma'=\phi_N(\sigma)$ as above.
As before, the axis of $\gamma'$ passes close to $x'$ and $z'$, and there exists $n$ such that
$d_N(\gamma'^n z',x')<3\eps$.
Then,   since $\sigma$ commutes with $\gamma$.
$$d_N(\sigma' x',x')\, \leq\,  d_N(\sigma' \gamma'^n z',x')+3\eps \, =\, d_N(\gamma'^n\sigma' z',x')+3\eps. $$ 
Since $\sigma'$ almost fixes $z'$, we get 
$$d_N(\sigma' x',x')\, \leq \,d_N(\gamma'^n z',x')+4\eps \, \leq \, 7\eps.$$
This contradicts $d_N(\sigma'x',x')\geq D_2-\eps$.
Thus $C$ is a $\Zmax$-subgroup.

This shows 
$(iv)\implies (v)$.  It remains to show that $(v)\implies (i)$.

  We apply a result of Rips Theory obtained by the second author (Main Theorem of \cite{Gui_actions}).
Recall that an arc $I$ is \emph{unstable} if some subarc $J\subset I$ has strictly larger stabiliser than $I$.
Since arc stabilisers of $T$ are either finite or $\Zmax$, unstable arc stabilisers are finite.
Recall that a \emph{graph of action}  on $\bbR$-trees is an $\bbR$-tree obtained by gluing equivariantly a family $Y_v$ of
$\bbR$-trees (called vertex trees) along points.
The combinatorics of the gluing is given by a simplicial tree $S$ endowed with an action of $G$:
to each vertex $v\in S$ corresponds an $\bbR$-tree $Y_v$ endowed with an action of $G_v$,
and to each edge $e=[u,v]$ of $S$ corresponds a pair of points in $Y_u,Y_v$ to be glued together.
By \cite[Theorem 5.1]{Gui_actions}, either $G$ splits over a finite group relative to $\calp$, or
$T$ has a decomposition into a graph of actions where each vertex action is either
\begin{itemize}
\item simplicial: $Y_v$ is a simplicial tree;
\item of Seifert type: $G_v\actson Y_v$ has finite kernel $N_v$, $G_v/N_v$ is the fundamental
  group of a conical $2$-orbifold $\Sigma_v$ with boundary, 
  and $\pi_1(\Sigma_v)\actson Y_v$ is dual to an arational measured foliation on this orbifold;
\item axial: $Y_v$ is a line, and $G_v$, its stabiliser,
  is finitely generated with dense orbits in $Y_v$ .
\end{itemize}

The finiteness of the kernel of Seifert type vertex actions follows from the finiteness of tripod stabilisers.
If some $Y_v$ is a non-degenerate simplicial tree, then collapsing all the vertex trees not in the orbit of $Y_v$
provides a simplicial tree $T_0$ with an action of $G$. Since arc stabilisers of $T$ are finite or $\Zmax$,
this gives a splitting of $G$ over a finite or a $\Zmax$-subgroup.
There is no axial components since $G_v$ would have some  quotient isomorphic to   a subgroup of $\Isom(\bbR)$ containing $\bbZ^2$
 with  finite or  $\Zmax$  kernel. 
This is impossible since such a $G_v$ would not contain a free group, so would be virtually cyclic,
and could not map onto such a group.

Finally, assume that some $Y_v$ is of Seifert type, hence a hanging orbifold vertex of $S$.
Since $\Sigma_v$ admits an arational measured foliation, it contains a two-sided simple closed curve, not parallel to the boundary in $\Sigma_v$,
and not bounding a M\"obius strip or a disk with at most one cone point.
Then this curve defines a splitting of $\pi_1(\Sigma_v)$ over  
a group $C\simeq\bbZ$ which is $\Zmax$ in $\pi_1(\Sigma_v)$.  
Let $\Hat C\subset G_v$ be the preimage of $C$ so that $G_v$ splits
over $\Hat C$. Refining $S$ using this splitting of $G_v$ defines a splitting of $G$ over $\Hat C$.
Since $\Hat C$ fixes no edge of $S$ and is a $\Zmax$-subgroup of $G_v$, it is a $\Zmax$-subgroup of $G$.

\end{proof}

\subsection{Satisfiability of the criterion, and isomorphism problem}\label{subsubsec_resolving}

In the rest of the section, our main goal is the following result, that allows us to decide  whether 
 a given group satisfies the rigidity criterion, and to solve the isomorphism problem for such groups.

\begin{thm}\label{thm_IP_rigid}
There exists an algorithm which takes as input 
 two  non-elementary hyperbolic groups with marked peripheral structures $(G;\calp)$, $(H;\calq)$,
which always stops, and outputs  either
\begin{enumerate*}\renewcommand{\theenumi}{\alph{enumi}}  
\renewcommand{\labelenumi}{(\theenumi)}  
  \item \label{it_split} a non-trivial splitting of $(G;\calp)$ over a finite or $\Zmax$-subgroup,
  \item \label{it_list} or  a finite list $\calf$ of morphisms $(G;\calp)\ra (H;\calq)$, such that
any monomorphism $(G;\calp)\ra (H;\calq)$ is  post-conjugate to an element in $\calf$.
\end{enumerate*}
\end{thm}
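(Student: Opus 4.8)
The plan is to run two semi-algorithms in parallel and argue that at least one of them halts. The first semi-algorithm searches for a non-trivial splitting of $(G;\calp)$ over a finite or $\Zmax$-subgroup: it enumerates all candidate one-edge splittings (over finite subgroups, of which there are finitely many conjugacy classes by the toolbox, and over $\Zmax$-subgroups, which can be enumerated as maximal $\Z$-subgroups generated by tuples of elements using Lemma \ref{lem_VC2}), and for each candidate uses the solution to equations in hyperbolic groups (Theorem \ref{theo;eq}) together with Gerasimov's algorithm to verify whether it is indeed a splitting of $G$ in which the peripheral subgroups $S_i$ are elliptic; if it finds one, it outputs option (\ref{it_split}) and stops.

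The second semi-algorithm tries to build the list $\calf$ in option (\ref{it_list}). The key observation, exactly as in Sela's and in \cite{DaGr_isomorphism}'s approach, is that for a fixed radius $R$ one can express ``$\phi:(G;\calp)\to(H;\calq)$ is a homomorphism respecting the marked peripheral structures, injective on the $R$-ball of $G$, and not post-conjugate to any of $\phi_1,\dots,\phi_k$'' as a system of equations, inequations, and \qie rational constraints over $H$: the equations come from the relators of a presentation of $G$ and from the conditions $\phi(S_i)=S_i'^{g_i}$ (with the $g_i$ as extra variables); injectivity on the $R$-ball is a finite conjunction of inequations; and non-post-conjugacy to a given $\phi_j$ is encoded, as explained in the paragraph ``Isomorphism problem and recognition of rigid groups,'' using rational constraints forcing the relevant Cayley-graph words to avoid a prescribed pattern. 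By Theorem \ref{theo;eq} the satisfiability of such a system is decidable. The second semi-algorithm therefore enumerates all homomorphisms $(G;\calp)\to(H;\calq)$ (enumerate tuples in $H$, check the relations and the peripheral conditions using the word problem), maintaining a finite list $\calf=\{\phi_1,\dots,\phi_k\}$ and a current radius $R$; at each stage it tests whether the system above (with these $\phi_j$ and this $R$) has a solution. If it has no solution, then every homomorphism $(G;\calp)\to(H;\calq)$ injective on $B_R$ is post-conjugate to some $\phi_j$; since every monomorphism is injective on every ball, option (\ref{it_list}) is attained and the algorithm outputs $\calf$ and stops. If it does have a solution, one adjoins that new morphism to $\calf$, increases $R$, and continues.

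Termination is where the rigidity criterion Proposition \ref{prop_alt} enters, and this is the main obstacle to organize cleanly. Suppose neither semi-algorithm halts. Since the second never halts, for every finite $\calf$ and every $R$ there is a morphism $(G;\calp)\to(H;\calq)$ injective on $B_R$ and not post-conjugate to anything in $\calf$; iterating, one produces, for a suitable single target $(H';\calq')$, infinitely many pairwise non-post-conjugate morphisms injective on arbitrarily large balls. Here one must first note that $(G;\calp)$ is one-ended relative to $\calp$ — otherwise the first semi-algorithm, finding a splitting over a finite group, would have halted — so Proposition \ref{prop_alt} applies; the passage from ``$H$'' in the statement of the theorem to the one-ended hyperbolic $(H';\calq')$ required in clause $(iv)$ of Proposition \ref{prop_alt} is handled by replacing $H$ by a one-ended factor of a Stallings--Dunwoody decomposition in which the image lies (or, more simply, by first applying the trivial case $H=H$ and quoting the equivalence). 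Then clause $(iv)\Rightarrow(i)$ of Proposition \ref{prop_alt} gives a non-trivial splitting of $(G;\calp)$ over a $\Zmax$-subgroup, contradicting the assumption that the first semi-algorithm never halts, since a splitting, once it exists, is found by exhaustive search and certified by Theorem \ref{theo;eq}. Hence one of the two semi-algorithms must halt, which proves the theorem. The delicate points to get right in the write-up are the precise rational-constraint encoding of non-post-conjugacy (referring to \cite{DG1} and \cite{DaGr_isomorphism}) and the reduction from the arbitrary target $H$ to a one-ended target so that Proposition \ref{prop_alt} is genuinely applicable.
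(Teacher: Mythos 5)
Your overall architecture — run in parallel a machine enumerating relative splittings over finite or $\Zmax$-subgroups and a machine that builds $\calf$ by repeatedly testing a satisfiability question via Theorem \ref{theo;eq}, then invoke Proposition \ref{prop_alt} for termination — is exactly the paper's strategy. The genuine gap is in the step you yourself flag as ``delicate'': the treatment of post-conjugacy. The condition ``$\phi$ is not post-conjugate to $\phi_j$'' is a universally quantified statement over the conjugator $h\in H$ (for every $h$ some generator satisfies $\phi(s)\neq h\phi_j(s)h^{-1}$), and such a condition cannot be written as an existential system of equations, inequations and rational constraints, so Theorem \ref{theo;eq} does not apply to the system you describe; the vague ``rational constraints forcing the Cayley-graph words to avoid a prescribed pattern'' does not repair this. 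Nor can you simply weaken the condition to the expressible ``$\phi\neq\phi_j$'': then each post-conjugacy class contributes infinitely many distinct solutions, the satisfiability test keeps succeeding, and the second machine never halts even when $(G;\calp)$ is rigid. What the paper actually does is choose $a,b\in G$ generating a non-abelian free subgroup (computable via Delzant's constant) and single out \emph{short} morphisms: shortness is a Boolean combination of membership conditions $\phi(g)\in\calL_h$ for the finitely many $h$ of length $200\delta$, and each $\calL_h$ is a \qie rational subset (Lemma \ref{lem;qiers}), so shortness \emph{is} expressible as a rational constraint. The substantive content is then Proposition \ref{prop;shortness} (via Lemmas \ref{lem_Kg} and \ref{lem_raccourcir} and Propositions \ref{prop;finiteness_result}, \ref{prop;existence_result}): every relevant post-conjugacy class contains at least one and at most finitely many short representatives. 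With this, the system only needs ``short, injective on $B_R$, peripheral-compatible, and literally different from $\phi_1,\dots,\phi_k$'', which is decidable, and both correctness of the output list and termination in the rigid case follow. This hyperbolic-geometry device is missing from your proposal and is not a citation-level detail; it is the heart of the proof of Proposition \ref{prop;list1}.

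Two smaller remarks. First, your concern about the one-endedness of the target in clause $(iv)$ of Proposition \ref{prop_alt} is not where the difficulty lies, and your proposed fix is incorrect as stated: a morphism injective on a large ball need not have image elliptic in a Stallings--Dunwoody decomposition of $H$, so you cannot ``replace $H$ by a one-ended factor in which the image lies''. The paper's termination argument is simply: if the splitting machine never halts then $(G;\calp)$ is in particular one-ended relative to $\calp$, and if the list machine never halts then for every $R$ there are infinitely many post-conjugacy classes of morphisms almost injective with respect to $a,b,R$, whence the Bestvina--Paulin/Rips part of Proposition \ref{prop_alt} yields a $\Zmax$-splitting relative to $\calp$, contradicting the first assumption. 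Second, your verification of candidate splittings by equations is shakier than the paper's Proposition \ref{enumerate_ess_splittings}, which enumerates presentations by Tietze transformations and reads off relative splittings; you should use that enumeration rather than trying to certify a guessed one-edge splitting.
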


\begin{rem}
  \begin{itemize}\item
    The splittings and morphisms in the statement are compatible with
    the peripheral structure.
    There is no assumption on the peripheral subgroups defined by
    $\calp$ and $\calq$ (they are not assumed to be elementary, or
    quasiconvex). 
  \item  The assumption that the groups are not elementary is superfluous, as one could easily extend the algorithm in this case.
  \end{itemize}
\end{rem}

Given $(G;\calp)$ and $(H;\calq)$, there may exist both a splitting as in (\ref{it_split}) and a finite list as in (\ref{it_list}).
In this case, we cannot guess which output the algorithm will provide. 
However, if $(G;\calp)=(H;\calq)$, and if $(G;\calp)$ is one-ended, the equivalence $(i)\Leftrightarrow (iii)$ in the rigidity criterion \ref{prop_alt}
ensures that only one output is possible. 
 Therefore, one can decide whether a one-ended $(G;\calp)$ satisfies the rigidity criterion  by running the algorithm of Theorem \ref{thm_IP_rigid}
with $(H;\calq)=(G;\calp)$: this occurs if and only if the output is a list of morphisms as in (\ref{it_list}).
If $G$ is virtually cyclic and one-ended relative to $\calp$, then some peripheral subgroup has finite index in $G$
 so $(G,\calp)$ always satisfies the rigidity criterion.
One deduces immediately the following corollary.

\begin{cor}
\label{cor_decision_rigid}
Given a hyperbolic group with marked peripheral structure $(G;\calp)$ that does not split over a finite group relative to $\calp$,
 one can decide whether $(G;\calp)$  satisfies the rigidity criterion \ref{prop_alt}.
\end{cor}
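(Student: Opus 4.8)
The plan is to reduce to Theorem~\ref{thm_IP_rigid} applied with $(H;\calq)=(G;\calp)$, using Proposition~\ref{prop_alt} to translate its output into a statement about rigidity, after first disposing of the case where Theorem~\ref{thm_IP_rigid} does not apply, namely when $G$ is elementary. So the first step is to decide, using Lemma~\ref{lem_VC2} together with the basic algorithms for hyperbolic groups, whether $G$ is finite or virtually cyclic. If it is, then $\Out(G)$ is finite --- trivially when $G$ is finite, and by Lemma~\ref{lem_autZ} when $G$ is virtually cyclic (part~1 if $G$ is a $\Z$-group, part~3 if $G$ is of dihedral type) --- hence $\Outm(G;\calp)\subset\Out(G)$ is finite. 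By hypothesis $(G;\calp)$ does not split over a finite group relative to $\calp$, so Proposition~\ref{prop_alt} applies; since assertion~$(ii)$ there fails, assertion~$(i)$ fails too, i.e.\ $(G;\calp)$ is rigid, and the algorithm reports this.

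Now suppose $G$ is non-elementary. Run the algorithm of Theorem~\ref{thm_IP_rigid} on the pair $(G;\calp)$, $(G;\calp)$; it halts. If it returns a non-trivial splitting of $(G;\calp)$ over a finite or $\Zmax$-subgroup relative to $\calp$ (output~\ref{it_split}), then, since by hypothesis there is no splitting over a finite group relative to $\calp$, this splitting is in fact over a $\Zmax$-subgroup; so assertion~$(i)$ of Proposition~\ref{prop_alt} holds, hence $\Outm(G;\calp)$ is infinite and $(G;\calp)$ is \emph{not} rigid. If instead it returns a finite list $\calf$ of morphisms $(G;\calp)\ra(G;\calp)$ such that every monomorphism $(G;\calp)\ra(G;\calp)$ is post-conjugate to an element of $\calf$ (output~\ref{it_list}), then every $\alpha\in\Autm(G;\calp)$ --- being such a monomorphism --- is post-conjugate to some $\phi_i\in\calf$; as $\alpha$ is bijective so is $\phi_i$, and $[\alpha]=[\phi_i]$ in $\Out(G)$. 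Hence $\Outm(G;\calp)$ is contained in the finite set of classes of the invertible elements of $\calf$, so it is finite; assertion~$(ii)$ of Proposition~\ref{prop_alt} fails, and therefore so does~$(i)$: $(G;\calp)$ is rigid.

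In both outcomes we have decided whether $(G;\calp)$ satisfies the rigidity criterion: output~\ref{it_split} forces non-rigidity and output~\ref{it_list} forces rigidity, and although we cannot predict which output Theorem~\ref{thm_IP_rigid} produces, no consistency issue arises because rigidity is an intrinsic property of $(G;\calp)$ by Proposition~\ref{prop_alt}. I expect no real obstacle: beyond invoking Theorem~\ref{thm_IP_rigid} and Proposition~\ref{prop_alt}, the only points requiring care are handling the elementary case separately (since Theorem~\ref{thm_IP_rigid} assumes non-elementarity) and using the hypothesis at the single place where it is needed, to upgrade a splitting of type~\ref{it_split} to a splitting over a $\Zmax$-subgroup.
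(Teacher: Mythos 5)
Your argument is correct and matches the paper's: both run the algorithm of Theorem \ref{thm_IP_rigid} with $(H;\calq)=(G;\calp)$, use the hypothesis that $(G;\calp)$ is one-ended relative to $\calp$ so that Proposition \ref{prop_alt} turns the output into a verdict on rigidity, and dispose of the elementary case separately. The only immaterial difference is that you interpret output (\ref{it_list}) via finiteness of $\Outm(G;\calp)$, i.e.\ clause $(ii)$ of Proposition \ref{prop_alt}, whereas the paper invokes the equivalence $(i)\Leftrightarrow(iii)$ to observe that only one output is possible.
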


One also 
deduces the following:

\begin{cor}\label{coro_list}
The extended isomorphism problem is solvable for  one-ended rigid hyperbolic groups with marked peripheral structure.  More precisely:

There is an algorithm which takes as input two hyperbolic groups with marked peripheral structures
$(G;\calp)$ and $(H;\calq)$ which don't split over a finite or a $\Zmax$-subgroup (relative to their peripheral structure),
and which decides whether $(G;\calp)$ is isomorphic to $(H;\calq)$ (respecting the peripheral structures), and computes the finite group
  $\Outm(G;\calp)$. 
\end{cor}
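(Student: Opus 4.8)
The plan is to reduce the extended isomorphism problem for rigid hyperbolic groups with marked peripheral structure to the output of Theorem \ref{thm_IP_rigid}, much as described in the paragraph ``Isomorphism problem and recognition of rigid groups'' of the introduction. Given $(G;\calp)$ and $(H;\calq)$ neither of which splits over a finite or $\Zmax$-subgroup relative to its peripheral structure, we first run the algorithm of Theorem \ref{thm_IP_rigid} on the pair $((G;\calp),(H;\calq))$ and on the pair $((H;\calq),(G;\calp))$. By the non-splitting hypothesis, outcome (\ref{it_split}) is impossible in either run (a splitting of $(G;\calp)$ over a finite or $\Zmax$-subgroup would contradict the hypothesis, and likewise for $(H;\calq)$), so each run must terminate with outcome (\ref{it_list}): a finite list $\calf=\{\phi_1,\dots,\phi_k\}$ of morphisms $(G;\calp)\to(H;\calq)$ containing a post-conjugate of every monomorphism $(G;\calp)\to(H;\calq)$, and symmetrically a finite list $\calf'=\{\psi_1,\dots,\psi_\ell\}$ of morphisms $(H;\calq)\to(G;\calp)$ containing a post-conjugate of every monomorphism $(H;\calq)\to(G;\calp)$.

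Next I would decide whether $(G;\calp)\simeq(H;\calq)$. If there is an isomorphism $f:(G;\calp)\to(H;\calq)$, then $f$ is in particular a monomorphism, so it is post-conjugate to some $\phi_i$; hence $\phi_i$ is itself an isomorphism of marked pairs. Conversely any $\phi_i$ that happens to be an isomorphism of $(G;\calp)$ onto $(H;\calq)$ witnesses $(G;\calp)\simeq(H;\calq)$. So it suffices, for each $i$, to decide whether $\phi_i:G\to H$ is an isomorphism respecting the peripheral structures. Surjectivity of $\phi_i$ can be tested because $H$ is given by a finite presentation: one checks algorithmically whether each generator of $H$ lies in the image of $\phi_i$, which amounts to solving equations (or simply enumerating) using the solution to the word problem in $H$ together with $\phi_i\circ\psi_j$; more directly, one composes: $\phi_i$ is an isomorphism iff there is $j$ with $\phi_i\circ\psi_j=\inn_h$ for some $h\in H$ and $\psi_j\circ\phi_i=\inn_{h'}$ for some $h'\in G$. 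Whether a given endomorphism of $H$ (resp.\ $G$) is an inner automorphism is decidable using the solution to the word and conjugacy problems in hyperbolic groups (check that each generator is sent to a uniform conjugate). Compatibility with the marked peripheral structures is already built into the lists $\calf,\calf'$. Thus one decides isomorphism by searching the finite set of pairs $(i,j)$.

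Finally, to compute the finite group $\Outm(G;\calp)$, apply the same machinery with $(H;\calq)=(G;\calp)$: Theorem \ref{thm_IP_rigid} outputs a finite list $\calf$ of endomorphisms of $(G;\calp)$ containing a post-conjugate representative of every monomorphism, in particular of every automorphism in $\Autm(G;\calp)$. Discard from $\calf$ those morphisms that are not automorphisms (decidable as above, by composing with elements of $\calf$ and testing for inner automorphisms). The images in $\Out(G)$ of the remaining elements form a finite subset of $\Outm(G;\calp)$ that meets every class; one removes duplicates by checking, for pairs $\phi_i,\phi_j$, whether $\phi_i\circ\phi_j\m$ is inner, using the conjugacy problem. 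By the equivalence $(ii)\Leftrightarrow(iii)$ of Proposition \ref{prop_alt} (applied to each one-ended factor; if $G$ is virtually cyclic and one-ended relative to $\calp$, then $\Outm(G;\calp)$ is finite for the reason noted after Theorem \ref{thm_IP_rigid}), the hypothesis that $(G;\calp)$ does not split over a $\Zmax$-subgroup guarantees $\Outm(G;\calp)$ is finite, so this enumeration genuinely produces a complete (finite) list of its elements, which is the desired generating set.

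The main obstacle is not in this corollary itself --- given Theorem \ref{thm_IP_rigid} the argument is essentially bookkeeping with compositions and the word/conjugacy problems --- but rather that everything rests on Theorem \ref{thm_IP_rigid}, whose proof (the dichotomy: produce a $\Zmax$-splitting, or else bound the morphisms via equations with rational constraints and the rigidity criterion) is where the real work lies; here one only needs to be careful that all the decision steps (surjectivity, recognising inner automorphisms, eliminating redundancy) are uniformly algorithmic for hyperbolic groups, which they are by the basic algorithms recalled in Section \ref{sec_toolbox}.
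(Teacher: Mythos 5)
Your proposal is correct and follows essentially the same route as the paper: run the algorithm of Theorem \ref{thm_IP_rigid} on both ordered pairs, note that the non-splitting hypothesis forces output (\ref{it_list}), and decide isomorphism (resp.\ compute $\Outm(G;\calp)$, with $(H;\calq)=(G;\calp)$) by searching for pairs $\phi\in\calf$, $\psi\in\calf'$ whose two compositions are inner, using the simultaneous conjugacy problem to test innerness and to remove post-conjugate repetitions. The only point the paper treats that you gloss over is the case where $G$ or $H$ is virtually cyclic (Theorem \ref{thm_IP_rigid} as stated takes non-elementary inputs), which the paper detects with Lemma \ref{lem_VC2} and handles via Lemma \ref{lem_autZ}; this is a minor bookkeeping step, especially since the paper remarks that the non-elementarity assumption is removable.
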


\begin{proof}[Proof of the corollary]
 One can decide whether $G$ or $H$ is virtually cyclic by Lemma \ref{lem_VC2}.
If $G$ or $H$ is virtually cyclic, the corollary follows easily from Lemma \ref{lem_autZ}.

Otherwise,  apply the algorithm of Theorem \ref{thm_IP_rigid} to $((G;\calp),(H;\calq))$.
  Because $(G;\calp)$  does not split over finite or $\Zmax$-subgroups,
 we get a list $\calf$ of morphisms $(G;\calp)\ra (H;\calq)$ containing a post-conjugate of any monomorphism.
Applying the same algorithm to $((H;\calq),(G;\calp))$, we get a similar list $\calf'$ of morphisms
$(H;\calq)\ra(G;\calp)$. 
Since $\calf$ and $\calf'$ contain a representative of the post-conjugacy class of any monomorphism,
$(G;\calp)$ and $(H;\calq)$ are isomorphic if and only if there exists
$\phi\in\calf$ and $\phi'\in\calf'$ such that $\phi\circ \phi'$ and $\phi'\circ \phi$ are  inner automorphisms.
This can be tested using the simultaneous conjugacy problem.

In order to compute $\Outm(G;\calp)$, we apply the algorithm of Theorem \ref{thm_IP_rigid} for $(G;\calp)$, and $(H;\calq) = (G;\calp)$. 
Since $G$ does not split over a finite or $\Zmax$-subgroup, we obtain a list of morphisms $\calf$. 
 As above, a morphism $\phi\in\calf$ is an automorphism if and only if there exists  $\psi\in\calf$
such that $\psi\circ\phi$ and $\phi\circ\psi$ are inner automorphisms.  
We can thus sort out which morphisms are automorphisms, and we can effectively check whether two of them are post-conjugate, 
and therefore we find a section  of $\Outm(G;\calp)$ in $\calf\subset\Autm(G;\calp)$.  
Using again a solution to the simultaneous conjugacy problem in $G$, one can compute the multiplication table of $\Outm(G;\calp)$ on these representatives.
\end{proof}

\subsubsection{Short morphisms}

We introduce the notion of \emph{short} morphism, which provide good representatives of a post-conjugacy class of morphism.
We follow \cite{DaGr_isomorphism}.

Let $H$ be a hyperbolic group, and   $\delta$  a hyperbolicity constant for a given set of generators. Let us define  
 for all $h\in H$, 
 the set $\calL_h \subset H$ of all elements $g$ at distance at least $1000\delta$ from $1$, and such that some geodesic segment 
 $[1,g]$ passes at distance at most   $20\delta$ from both $h$ and $gh$.
 
 \begin{lemma}\label{lem;qiers}
   For all $h$, $\calL_h \subset H$ is a \qiers (see Definition \ref{dfn_qiers}).
 \end{lemma}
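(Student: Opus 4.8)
The plan is to produce a regular language of geodesic words over $S\cup S\m$ whose image under $\pi$ is exactly $\calL_h$; since every geodesic word is a $(1,0)$-quasigeodesic, this is precisely what Definition~\ref{dfn_qiers} asks for, so $\calL_h$ will then be a \qie rational subset. We may assume that $\delta$ is an integer (replacing it by $\lceil\delta\rceil$ affects nothing). Recall that the language $\mathit{Geo}\subset (S\cup S\m)^*$ of geodesic words of $\Cay H$ is regular. I would take $\Tilde\calL_h$ to be the set of $w=s_1\cdots s_n\in\mathit{Geo}$ such that $n\geq 1000\delta$, some prefix element $g_i:=\pi(s_1\cdots s_i)$ satisfies $d(g_i,h)\leq 20\delta$, and some prefix element $g_j$ satisfies $d(g_j,\pi(w)h)\leq 20\delta$.

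The first point to check is that $\pi(\Tilde\calL_h)=\calL_h$. If $w\in\Tilde\calL_h$, then the geodesic edge-path from $1$ to $g:=\pi(w)$ labelled by $w$ has length $n\geq 1000\delta$ and passes through the vertices $g_i$ and $g_j$, which lie within $20\delta$ of $h$ and of $gh$ respectively; hence $g\in\calL_h$. Conversely, given $g\in\calL_h$, fix a geodesic segment $[1,g]$ in $\Cay H$ and let $w$ be the geodesic word labelling it, so $\pi(w)=g$ and $|w|=d(1,g)\geq 1000\delta$. The point of $[1,g]$ lying within $20\delta$ of $h$ is contained in some edge of the path; by the triangle inequality one of the two endpoints of that edge is within $20\delta+\tfrac{1}{2}$ of $h$, hence (vertex distances being integers and $\delta$ being an integer) within $20\delta$ of $h$, and this endpoint is a prefix element $g_i$. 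The same argument applied to $gh$ produces a prefix element $g_j$ within $20\delta$ of $gh$. Thus $w\in\Tilde\calL_h$ and $g\in\pi(\Tilde\calL_h)$.

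It remains to prove that $\Tilde\calL_h$ is regular, which I would do by exhibiting it as an intersection of finitely many regular languages. The language $\mathit{Geo}$ is regular, and so is $\{w:|w|\geq 1000\delta\}$. For the condition involving $h$: since $w$ is geodesic, $d(1,g_i)=i$, so $d(g_i,h)\leq 20\delta$ forces $i$ to lie within $20\delta$ of $|h|$; hence only prefixes of length at most $|h|+20\delta$ can witness it, and a finite automaton tracking the current element $g_i$ while $|g_i|\leq |h|+20\delta$ (there being finitely many such elements), together with one Boolean flag, recognises this condition. For the condition involving $gh$: writing $g=\pi(w)$, one has $d(g_i,g)=n-i$ and $d(g,gh)=|h|$, so $d(g_i,gh)\leq 20\delta$ forces $n-i\leq |h|+20\delta$; hence only the last $|h|+20\delta$ letters of $w$ are relevant, and since $g_i\m g=s_{i+1}\cdots s_n$ one has $d(g_i,gh)=|s_{i+1}\cdots s_n\,h|$, so a finite automaton maintaining a sliding window of the last $|h|+20\delta$ letters read can, upon reaching the end of the input, test whether some suffix $v$ of that window satisfies $|vh|\leq 20\delta$. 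The product of these finitely many finite automata recognises $\Tilde\calL_h$, and this finishes the proof. The only genuinely delicate point --- the one I would be most careful with --- is these two boundedness claims, which make the automata finite; they follow immediately from the fact that the distance to $1$ grows linearly along a geodesic word, the one cosmetic nuisance being the half-edge slack, absorbed above by taking $\delta$ integral. This step runs parallel to the corresponding one in \cite{DaGr_isomorphism}.
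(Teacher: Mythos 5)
Your proof is correct and takes essentially the same route as the paper's: intersect the regular language of geodesic words with the conditions that some prefix lands $20\delta$-close to $h$ and some suffix represents an element $20\delta$-close to $h^{-1}$ (equivalently, a prefix close to $\pi(w)h$). The paper simply notes that ``prefix in the finite set $W(h)$ of geodesic words for the $20\delta$-ball around $h$, suffix in $W(h^{-1})$'' is regular, whereas you build the automata explicitly and take care of the mid-edge/integrality point it glosses over --- same argument, just in more detail.
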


 \begin{proof}
   Consider the ball of radius $20\delta$ around
   $h$, and for each element $x$ in it, the set of geodesic
   words representing $x$. Let $W(h)$ be this collection of
   words. This is finite and computable.  The language of all
   geodesics in $H$ is a regular language, and $\calL_h$ is the
   image of the geodesics words that have a prefix in $W(h)$
   and a suffix in $W(h^{-1})$. This subset of geodesic words
   is clearly regular, and this makes $\calL_h \subset H$ a
   \qiers. 
 \end{proof}

 Let $G$ be a group and choose two elements $a,b\in G$.
 \begin{dfn}
   Let us say that a morphism $\varphi:G \to H$ is \emph{long} (for
   the given choice of $a,b$) if there is $h$ of length $200\delta$
   such that (at least) one of the three situations occurs:
   \begin{itemize}
   \item $\varphi(a)$ and $\varphi(b)$ are in $\calL_h$, or 
   \item $\varphi(b)$, $\varphi(ab)$ and $\varphi(a^{-1}b)$ are
     in $\calL_h$, or
   \item $\varphi(a)$, $\varphi(ba)$ and $\varphi(b^{-1}a)$ are in $\calL_h$.  
   \end{itemize}
 \end{dfn}
 Let us say that a morphism $\varphi:G \to H$ is \emph{short} if it is not long.

\begin{prop}\label{prop;shortness}
  In any post-conjugacy class of morphisms $\phi:G\ra H$ that sends $a,b,ab^{\pm1}$ to infinite order elements generating a non-elementary subgroup of $H$,
  there is at least one, and at most finitely many short morphisms.
\end{prop}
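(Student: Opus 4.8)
The plan is to run the shortening/descent argument of \cite[\S 4]{DaGr_isomorphism}, adapted to the present definition of $\calL_h$ and of a long morphism. Throughout, $\delta$ is a fixed hyperbolicity constant for $H$, and for a morphism $\varphi\colon G\to H$ we set $\mu(\varphi)=\max\bigl(d(1,\varphi(a)),d(1,\varphi(b))\bigr)\in\mathbb{N}$. For the \emph{existence} of a short morphism I would pick $\varphi_0$ minimising $\mu$ in its post-conjugacy class (a minimum exists since $\mu$ is $\mathbb{N}$-valued) and suppose, for contradiction, that $\varphi_0$ is long, witnessed by some $h$ with $d(1,h)=200\delta$; then I claim $\mu(\inn_{h^{-1}}\circ\varphi_0)<\mu(\varphi_0)$. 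The basic estimate is that if $g\in\calL_h$ then
$$d\bigl(1,\inn_{h^{-1}}(g)\bigr)=d(h,gh)\le d(1,g)-320\delta,$$
obtained by removing from a geodesic $[1,g]$ the two subsegments of length $\ge 180\delta$ running $20\delta$-close to $h$ and to $gh$ respectively, which are disjoint because $d(1,g)\ge 1000\delta$. In the first alternative of the definition this applies to $g=\varphi_0(a)$ and $g=\varphi_0(b)$, so $\mu$ drops. In the second alternative it gives the drop of $d(1,\varphi_0(b))$; for $\varphi_0(a)$ one argues separately that $d\bigl(1,\inn_{h^{-1}}(\varphi_0(a))\bigr)<\mu(\varphi_0)$: either $d(1,\varphi_0(a))$ was already $\lesssim 200\delta$, or, since $\varphi_0(ab)$ and $\varphi_0(a^{-1}b)$ lie in $\calL_h$ for the \emph{same} $h$ and $\varphi_0(ab)\varphi_0(a^{-1}b)^{-1}=\varphi_0(a)^2$, the point $h$ is forced to lie $O(\delta)$-close to the common prefix of geodesics $[1,\varphi_0(a)]$ and $[1,\varphi_0(a)^{-1}]$ — i.e. $h$ is essentially an initial segment of the element conjugating $\varphi_0(a)$ to cyclically reduced form — so conjugating by $h^{-1}$ shortens $\varphi_0(a)$ as well. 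The third alternative is symmetric in $a$ and $b$. In each case $\mu$ drops, contradicting minimality; hence a $\mu$-minimiser of the class is short.

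For \emph{finiteness}, fix a short $\varphi_0$ in the class (it exists by the previous step); every short morphism in the class is $\inn_h\circ\varphi_0$ for some $h\in H$, so it suffices to bound $d(1,h)$ for such $h$. Suppose $d(1,h)$ is large relative to $\delta$ and $\max(d(1,\varphi_0(a)),d(1,\varphi_0(b)))$, which are fixed. If $d(1,h\varphi_0(a)h^{-1})\ge 1000\delta$ and $d(1,h\varphi_0(b)h^{-1})\ge 1000\delta$, then, since $d(1,\varphi_0(a))$ and $d(1,\varphi_0(b))$ are fixed, the geodesics $[1,h\varphi_0(a)h^{-1}]$ and $[1,h\varphi_0(b)h^{-1}]$ both $O(\delta)$-fellow-travel a geodesic $[1,h]$ well past length $200\delta$ and also its reverse near their far endpoints; hence the length-$200\delta$ initial subword $h'$ of $[1,h]$ satisfies $h\varphi_0(a)h^{-1},h\varphi_0(b)h^{-1}\in\calL_{h'}$ and $\inn_h\circ\varphi_0$ is long (first alternative), a contradiction. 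If instead $d(1,h\varphi_0(a)h^{-1})<1000\delta$, then $h$ lies within bounded distance of one of finitely many cosets of the virtually cyclic subgroup $C_H(\varphi_0(a))$; if also $d(1,h\varphi_0(b)h^{-1})<1000\delta$, then likewise for $C_H(\varphi_0(b))$, and — since $\varphi_0(a),\varphi_0(b)$ generate a non-elementary subgroup, so these two virtually cyclic groups have distinct fixed-point pairs on $\partial H$ — the two families of cosets have bounded coarse intersection, so $d(1,h)$ is bounded; otherwise $h\varphi_0(b)h^{-1}$ is long, and, because $\varphi_0(b)$ is not contained in the elementary closure of $\varphi_0(a)$, the three elements $h\varphi_0(b)h^{-1}$, $h\varphi_0(ab)h^{-1}$, $h\varphi_0(a^{-1}b)h^{-1}$ are simultaneously long with a common prefix longer than $200\delta$, whose initial length-$200\delta$ subword witnesses the second alternative, so $\inn_h\circ\varphi_0$ is long again. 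The symmetric possibility $d(1,h\varphi_0(b)h^{-1})<1000\delta$ uses the third alternative. In all cases $d(1,h)$ is bounded, so the class contains only finitely many short morphisms.

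The difficulty is concentrated in the hyperbolic-geometry bookkeeping: checking that the thresholds $20\delta$, $200\delta$, $1000\delta$ in the definitions of $\calL_h$ and of a long morphism are consistent enough that (i) a $\mu$-minimiser is forced to be short, and (ii) whenever a conjugated product is long, a \emph{single} $200\delta$-segment can be pinned down that simultaneously witnesses $\calL$-membership of all the elements occurring in one of the three alternatives. The hypotheses that $a,b,ab,a^{-1}b$ have infinite-order images generating a non-elementary subgroup are used precisely to exclude the degenerate configurations: a torsion image is too short to belong to any $\calL_h$, so can never make a morphism long; and if $\varphi_0(a)$ and $\varphi_0(b)$ shared a virtually cyclic elementary closure, one could conjugate arbitrarily far along their common axis while staying short, producing infinitely many short morphisms. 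Finiteness of the centralisers $C_H(\varphi_0(a)),C_H(\varphi_0(b))$ used above is the standard fact that in a hyperbolic group the centraliser of an element of infinite order is virtually cyclic.
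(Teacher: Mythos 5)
Your overall strategy is the one the paper follows: for existence, minimise $Q(\varphi)=\max(|\varphi(a)|,|\varphi(b)|)$ over the post-conjugacy class and show a long morphism can be strictly shortened; for finiteness, control the possible conjugators through centralisers and non-elementarity. But the existence half has a genuine gap at exactly the step the paper isolates as the hard one. When only the second alternative holds ($\varphi(b),\varphi(ab),\varphi(a^{-1}b)\in\calL_h$, first alternative failing), you claim that either $|\varphi(a)|$ is at most about $200\delta$ or $h$ must lie $O(\delta)$-close to the common prefix of $[1,\varphi(a)]$ and $[1,\varphi(a)^{-1}]$, so that conjugation by $h^{-1}$ shortens $\varphi(a)$ as well. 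This dichotomy is false. In the tree model (and, with $O(\delta)$ adjustments, in any hyperbolic group) take reduced words $\varphi(a)=hv$ and $\varphi(b)=hvhwh^{-1}$ with $v,w$ long and generic: then $\varphi(ab)=h(vhvhw)h^{-1}$ and $\varphi(a^{-1}b)=hwh^{-1}$, so all three lie in $\calL_h$ and the second alternative holds with witness $h$, yet $\varphi(a)$ is cyclically reduced (trivial common prefix), $|\varphi(a)|$ is arbitrarily large, and $|h^{-1}\varphi(a)h|=|\varphi(a)|$ is not shortened. Your identity $\varphi(ab)\varphi(a^{-1}b)^{-1}=\varphi(a)^2$ does not rescue the claim, because the cancellation between the two factors can absorb the terminal $h^{-1}$, as it does here. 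What the descent actually needs, and what is true, is only the weaker inequality $|h^{-1}\varphi(a)h|<\max(|\varphi(a)|,|\varphi(b)|)$: in configurations like the one above the conjugate of $\varphi(a)$ is not shorter than $\varphi(a)$ but is still dominated by $|\varphi(b)|$. Proving that inequality is precisely the approximation-tree case analysis of Lemma \ref{lem_raccourcir}, where in several cases the bound comes via $|\varphi(b)|$ rather than via $|\varphi(a)|$; your argument is missing those cases.

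The finiteness half is essentially the paper's argument (your coset bookkeeping for $C_H(\varphi_0(a))$, $C_H(\varphi_0(b))$ and the use of non-elementarity correspond to the sets $N_a,N_b,N_{ab},N_{a^{-1}b}$ in the paper), but your case division by the absolute threshold $d(1,h\varphi_0(x)h^{-1})\geq 1000\delta$ is mis-calibrated: $|\varphi_0(a)|$ is fixed but may itself exceed $1000\delta$, and then $h$ a large power along the axis of $\varphi_0(a)$ gives $h\varphi_0(a)h^{-1}=\varphi_0(a)$ of length at least $1000\delta$ whose geodesic does not come near the $200\delta$-prefix of $h$; so your case (1) conclusion fails while your case (2) hypothesis also fails, and this $h$ escapes the trichotomy. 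The correct dichotomy is whether $d(h^{-1},Z(\varphi_0(x)))$ exceeds a constant $K(\varphi_0(x))$, equivalently whether $|h\varphi_0(x)h^{-1}|$ exceeds $|\varphi_0(x)|$ plus a fixed multiple of $\delta$; that is the content of Lemma \ref{lem_Kg}, and with this replacement your finiteness argument does go through.
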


The proposition follows form Propositions \ref{prop;finiteness_result}, and \ref{prop;existence_result} below.
Our argument is essentially extracted from \cite[Corollary 4.14 and 4.18]{DaGr_isomorphism}. 

For $g\in H$, we denote by $Z(g)$ its centraliser.

\begin{lemma} (Compare with \cite[Lemma 4.13]{DaGr_isomorphism})\label{lem_Kg}
  For all $g\in H$, there is a constant $K(g)$ such that if $d(h\m,Z(g))\geq K(g)$,
  and if $h_0\in [1,h]$ is the point such that $|h_0|=200\delta$, then $hgh^{-1}$ is in $\calL_{h_0}$.
\end{lemma}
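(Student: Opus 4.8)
The idea is to produce one geodesic $[1,hgh\m]$ in $\Cay H$ that fellow-travels, near each of its two endpoints, the ``hairpin'' path $1 \xra{[1,h]} h \xra{[h,hg]} hg \xra{[hg,hgh\m]} hgh\m$. The hypothesis on $d(h\m,Z(g))$ enters only through one consequence: it forces $|hgh\m|$ to be large, and that is exactly what prevents the hairpin from backtracking near its ends, so that a genuine geodesic stays close to it there. Everything then comes down to computing two Gromov products.

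\textbf{Step 1: the length estimate.} I would first prove that for every $C>0$ there is a constant $K(g)$, depending only on $g$, $\delta$ and the generating set, such that $d(h\m,Z(g))\ge K(g)$ implies $|hgh\m|>C$. Since $d(a,b)=|a\m b|$, one has $|hgh\m|=d(h\m,gh\m)$, so it suffices to show that the ``almost-$g$-fixed'' locus $Y_C:=\{x\in H: d(x,gx)\le C\}$ lies in a bounded neighbourhood of $Z(g)$. Now $d(x,gx)=|x\m gx|\le C$ holds iff $x\m gx$ is one of the finitely many conjugates $g_1,\dots,g_m$ of $g$ of length $\le C$; and for each $i$, fixing any $t_i$ with $t_i\m g t_i=g_i$, one checks that $\{x: x\m gx=g_i\}=Z(g)t_i$ (if $x\m g x=t_i\m g t_i$ then $x t_i\m$ commutes with $g$). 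Hence $Y_C=\bigcup_{i=1}^m Z(g)t_i$, and if $K_1$ denotes the maximum over these finitely many cosets of the length of a shortest coset representative, then $Y_C\subseteq N_{K_1}(Z(g))$; set $K(g):=\max(200\delta,K_1+1)$. (The constant $K_1$ is finite as a maximum of finitely many integers; it is moreover computable from $g$ and $\delta$ via the effective conjugacy problem in $H$.) Note also that $1\in Z(g)$, so $|h|=d(h\m,1)\ge d(h\m,Z(g))\ge 200\delta$, which makes the point $h_0$ well defined.

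\textbf{Step 2: locating a geodesic from $1$ to $k:=hgh\m$.} I would fix $C:=2|g|+1000\delta$ and the corresponding $K(g)$. Using $kh=hg$, $d(k,kh)=|h|$ and the triangle bounds $|gh\m|,|hg|\le|g|+|h|$, the Gromov products based at $1$ and at $k$ satisfy
\[ (h\mid k)_1=\tfrac12\bigl(|h|+|k|-|gh\m|\bigr)\ge\tfrac12(|k|-|g|)\ge\tfrac12(C-|g|)\ge 500\delta, \]
\[ (1\mid kh)_k=\tfrac12\bigl(|k|+|h|-|hg|\bigr)\ge\tfrac12(|k|-|g|)\ge 500\delta . \]
Now pick any geodesic $[1,k]$. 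Since $(h\mid k)_1\ge 500\delta\gg 200\delta$, a standard thin-triangle estimate gives that the initial length-$200\delta$ subsegments of $[1,h]$ and of $[1,k]$ stay within $20\delta$ of each other (with a lot of room to spare), so $d(h_0,[1,k])\le 20\delta$. Taking $k\cdot[1,h]$ as the geodesic $[k,kh]$, the point $kh_0$ lies on it at distance $d(k,kh_0)=d(1,h_0)=200\delta$ from $k$; since $(1\mid kh)_k\ge 500\delta\gg 200\delta$, the same estimate applied at $k$ gives $d(kh_0,[1,k])=d(kh_0,[k,1])\le 20\delta$.

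\textbf{Step 3 and the main difficulty.} Putting this together, $|k|=|hgh\m|>C\ge 1000\delta$ and the single geodesic $[1,k]$ chosen above passes within $20\delta$ of both $h_0$ and $kh_0$, which is exactly the definition of $hgh\m\in\calL_{h_0}$. The only point that needs any care is Step 1, namely the identification of $\{x:d(x,gx)\le C\}$ with a finite union of cosets of $Z(g)$ each at bounded distance from $Z(g)$; this uses nothing beyond finiteness of the ball of radius $C$ (together with the effective conjugacy bound in $H$ if one wants $K(g)$ produced explicitly), and it is uniform in whether $g$ has finite or infinite order. Steps 2 and 3 are then a routine computation.
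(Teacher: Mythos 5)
Your proof is correct, and its first half is the same as the paper's: the paper likewise observes that $hgh^{-1}=h'gh'^{-1}$ forces $hZ(g)=h'Z(g)$, so only finitely many cosets of $Z(g)$ can yield a conjugate of length at most $460\delta+|g|$, and it chooses $K(g)$ beyond all of them --- exactly your description of $Y_C$ as a finite union of cosets $Z(g)t_i$ at bounded distance from $Z(g)$. Where you genuinely diverge is in the geometric half. The paper argues by contraposition: it builds an approximation tree for the quadrilateral $(1,h,hg,hgh^{-1})$, goes through the three combinatorial types of that tree, and shows that if $h_0$ stays $20\delta$ away from $[1,hgh^{-1}]$ then $|hgh^{-1}|\le 450\delta+|g|$, contradicting the lower bound; the statement for the second point $hgh^{-1}h_0$ is then obtained ``by symmetry'', replacing $g$ by $g^{-1}$. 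You instead bound the two Gromov products $(h\mid hgh^{-1})_1$ and $(1\mid hgh^{-1}h)_{hgh^{-1}}$ below by $500\delta$ using only the triangle inequality together with the lower bound $|hgh^{-1}|>2|g|+1000\delta$, and then use the standard fact that sides of a $\delta$-thin triangle fellow-travel up to the Gromov product (since $d(1,[h,k])\ge (h\mid k)_1$, the point $h_0$ at distance $200\delta$ from $1$ must be close to $[1,k]$ rather than to $[h,k]$). This buys two things: it avoids the case analysis on approximation trees, and it produces both near-incidences for one and the same chosen geodesic $[1,hgh^{-1}]$, which is what the definition of $\calL_{h_0}$ literally requires (the paper gets this only because its estimate applies to every geodesic from $1$ to $hgh^{-1}$). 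The trade-off is negligible; your route is a clean, slightly more conceptual version of the same lemma, making explicit that the hypothesis $d(h^{-1},Z(g))\ge K(g)$ is used solely to make $|hgh^{-1}|$ large compared to $|g|$ and $\delta$.
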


\begin{proof}
Note that $d(h\m,Z(g))=d(1,hZ(g))$.
If $hgh^{-1} = h'gh'^{-1}$ then $h'^{-1} h \in Z(g)$ and so $hZ(g) = h'Z(g)$.  Therefore there is $K(g)$
such that if $d(1,hZ(g))\geq K(g)$, then $d(1,hgh^{-1})> 460\delta + d(1,g)$.  
Consider $h_0$ as defined by the lemma.

\begin{figure}[htbp]
  \centering
  \includegraphics[width=\textwidth]{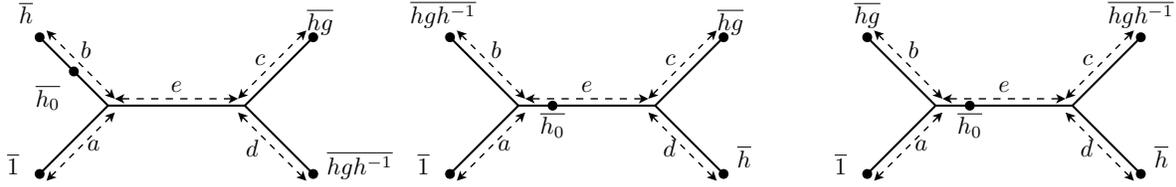}
  \caption{Three cases in Lemma \ref{lem_Kg}}
  \label{fig_Kg}
\end{figure}

We claim that if $h_0$ is at distance at least $20\delta$ from $[1,hgh\m]$, then $|hgh\m|\leq450\delta+|g|$. 
The Lemma will follow by applying the same argument to $g\m$ in place of $g$.
We work in an approximation tree $T$ for the quadrilateral $(1,h,hg,hgh^{-1})$, and we denote by $\ol x$ the point of $T$,
corresponding to some $x$ in this quadrilateral. 
There are three combinatorial possibilities for $T$ as shown on Figure \ref{fig_Kg},
and we denote by $a,b,c,d,e$ the lengths as shown on Figure \ref{fig_Kg}.
Note that in all cases $a\leq d(1,\ol{h_0})$
since otherwise, $\ol{h_0}\in [\ol 1,\ol{ghg\m}]$ so $d(h_0,[1,hgh\m]\leq 10\delta$, a contradiction.
Thus, $a\leq |h_0|+10\delta=210\delta$.
In the first case, we have $d(\ol 1,\ol{ghg\m})=a+e+d\leq a+e+|h|+10\delta\leq a+e+a+b+20\delta
= 2a+(e+b)+20\delta
\leq 420\delta +|g|+    
30\delta$, so $|ghg\m|\leq 460\delta+|g|$.
In the two other cases, since $d(1,h)= d(hg,hgh\m)$, we have $a+d=b+c$ up to an error of at most $10\delta$.
Thus 
in case 2, $d(\ol 1,\ol{hgh\m})=a+b\leq a+(a+d-c)+10\delta  \leq 2a+d+10\delta \leq 420\delta +|g|+20\delta$,
and $|hgh\m|\leq 450\delta+|g|$.
Similarly, in case 3, $d(\ol 1,\ol{hgh\m})=a+e+c\leq a+e+(a+b-d)+10\delta \leq 2a+(b+e)+10\delta \leq 420\delta +|g|+20\delta$.
\end{proof}

\begin{prop}\label{prop;finiteness_result} (Compare with \cite[Corollary 4.14]{DaGr_isomorphism})
  Consider a morphism $\varphi:G \to H$ that sends $a,b,ab^{\pm1}$ to torsion free elements generating a non-elementary subgroup of $H$.
Then only finitely many post-conjugates of $\varphi$ are short.
\end{prop}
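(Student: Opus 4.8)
The plan is to argue by contradiction: suppose there are infinitely many pairwise distinct post-conjugates of $\varphi$ that are short. Write them as $\varphi_n = \inn_{g_n}\circ\varphi$ with $g_n\in H$ in distinct cosets of the intersection of centralisers $Z=\bigcap\{Z(\varphi(w)):w\in\{a,b,ab,ab^{-1}\}\}$; since $\varphi$ sends $a,b,ab^{\pm1}$ to infinite order elements generating a non-elementary subgroup of $H$, this $Z$ is finite, so the $g_n$ can be taken to lie in distinct left cosets $g_nZ$, whence $d(1,g_nZ)\to\infty$ along a subsequence. First I would fix, for each of the finitely many generators $w$ of $\{a,b,ab,ab^{-1}\}$, the constant $K(\varphi(w))$ provided by Lemma \ref{lem_Kg}: once $d(g_n^{-1}, Z(\varphi(w)))\geq K(\varphi(w))$, the element $g_n\varphi(w)g_n^{-1}=\varphi_n(w)$ lies in $\calL_{h_0^{(n)}}$ where $h_0^{(n)}$ is the point of length $200\delta$ on a geodesic $[1,g_n]$.

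The key step is to route the divergence of $d(1,g_nZ)$ into the divergence of $d(g_n^{-1},Z(\varphi(w)))$ for enough of the generators $w$ simultaneously, so that the "long" condition is triggered. Here I would exploit the non-elementary hypothesis: the centralisers $Z(\varphi(a))$, $Z(\varphi(b))$ are (virtually cyclic, hence) quasiconvex subgroups whose limit sets in $\partial H$ are distinct pairs of points, and similarly for $\varphi(ab^{\pm1})$; a standard hyperbolic-geometry argument shows that a point $g_n^{-1}$ far from $Z$ must be far from at least two of these centralisers among a suitably chosen triple $\{b, ab, a^{-1}b\}$ or $\{a, ba, b^{-1}a\}$ (this is exactly why the definition of \emph{long} morphism lists three alternative configurations rather than one). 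Concretely: if $g_n^{-1}$ stays bounded-distance from $Z(\varphi(a))$ along a subsequence, then it diverges from $Z(\varphi(b))$, $Z(\varphi(ab))$ and $Z(\varphi(a^{-1}b))$ (none of which is commensurable to $Z(\varphi(a))$, since $\varphi(a),\varphi(b)$ generate a non-elementary group), so by Lemma \ref{lem_Kg} applied to each of these three elements, for $n$ large $\varphi_n(b),\varphi_n(ab),\varphi_n(a^{-1}b)$ all lie in $\calL_{h_0^{(n)}}$ — that is, $\varphi_n$ is long, a contradiction. Symmetrically if $g_n^{-1}$ stays close to $Z(\varphi(b))$. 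And if $g_n^{-1}$ diverges from both $Z(\varphi(a))$ and $Z(\varphi(b))$, the first alternative of \emph{long} applies.

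Thus in every case all but finitely many $\varphi_n$ are long, contradicting that infinitely many are short. I expect the main obstacle to be the geometric lemma underlying the trichotomy: making precise the claim that a point far from $Z=\bigcap Z(\varphi(w))$ must be far from two out of three of the relevant centralisers, with the three being chosen from $\{b,ab,a^{-1}b\}$ or $\{a,ba,b^{-1}a\}$. The point is that the axes of $\varphi(a)$ and $\varphi(b)$ have bounded overlap (non-elementarity), so the nested coset-distance functions $d(\cdot, Z(\varphi(a)))$ and $d(\cdot, Z(\varphi(b)))$ cannot both stay bounded while $d(\cdot, Z)$ diverges; and when exactly one stays bounded, replacing $b$ by the triple $\{b,ab,a^{-1}b\}$ (resp. $a$ by $\{a,ba,b^{-1}a\}$) produces elements whose centralisers' axes are pairwise transverse, so divergence from one forces divergence from all three. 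This is the content we borrow from \cite[Corollary 4.14]{DaGr_isomorphism}, adapted to allow torsion in $H$ via the uniform bound on the order of torsion elements and the quantitative statement of Lemma \ref{lem_Kg}.
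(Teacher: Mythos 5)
Your argument is correct and is essentially the paper's own proof: both rest on Lemma \ref{lem_Kg} together with the fact that, by non-elementarity, bounded neighbourhoods of the centralisers $Z(\varphi(a))$, $Z(\varphi(b))$, $Z(\varphi(ab))$, $Z(\varphi(a^{-1}b))$ pairwise intersect in finite sets, and both derive the contradiction with shortness from the three alternatives in the definition of a long morphism. The paper organises the case analysis via the sets $N_x=\{h\in H : d(h^{-1},Z(\varphi(x)))\leq K\}$ rather than your subsequence trichotomy (and does not need the detour through the finite group $\bigcap_w Z(\varphi(w))$), but this is only a difference of bookkeeping.
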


\begin{proof}
   Let $K> \max\{ K(a^{\pm 1}), K(b^{\pm 1}), K(a^{\pm1} b^{\pm1}), K(b^{\pm1} a^{\pm1})\}$.  For all $x\in G$, write $N_x=
\{h\in H, d(h\m, Z(\varphi(x)))  \leq K\}$. 
Since, by assumption, $\langle \varphi(a) \rangle$ and $\langle\varphi(b) \rangle$ are infinite cyclic subgroups not in the same
elementary subgroup of $H$, the $K$-neighbourhoods of their centralisers have finite intersection, therefore $N_a \cap N_b$ is
finite. Similarly, $N_a\cap N_{ab}$ and $N_a\cap N_{a^{-1}b}$ are finite. 

Assume that, for an infinite family of different
elements $h_n$, the morphism $h_n\varphi(\cdot)h_n^{-1}$ is short. By the
previous lemma, for all $n$, $h_n \in N_a \cup N_b$ (falsification of the first point of the definition of long). Thus we can
assume that for all $n$, $h_n \in (N_a\setminus N_b)$, or that for all $n$, $h_n \in (N_b\setminus N_a)$. 
In the first case, $\varphi(b^{\pm 1} ) \in \calL_{(h_n)_0}$ (with notation
as in the previous lemma).  By falsification of the second point of the definition of long (and the previous lemma), $h_n \in
N_{ab} \cup N_{a^{-1} b}$ for all $n$. But both $N_a\cap N_{ab}$ and $N_a\cap N_{a^{-1}b}$ are finite (as we noted earlier). This
is a contradiction.
The second case is proved similarly using the third point of the definition of long.
\end{proof}

Define, for a morphism $\varphi$, the quantity $Q(\varphi) = \max\{|\varphi(a)|, |\varphi(b)| \}$.

\begin{lemma}(Compare with \cite[Lemma 4.17]{DaGr_isomorphism})\label{lem_raccourcir}
  Let $\varphi:G \to H$ be a long morphism, and let $h$ be as in the definition of long. Then $Q(h^{-1} \varphi(\cdot)h) <
  Q(\varphi)$.
  
\end{lemma}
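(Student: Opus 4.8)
The statement asserts that a long morphism can be strictly shortened, measured by $Q(\varphi)=\max\{|\varphi(a)|,|\varphi(b)|\}$, by post-conjugating by the element $h$ provided by the definition of \emph{long}. The plan is to treat the three cases of the definition of long in turn; by symmetry (swapping the roles of $a$ and $b$) the second and third cases are essentially the same, so the work concentrates on the first two. Throughout I will work in the Cayley graph $\Cay H$ with the fixed hyperbolicity constant $\delta$, use that $|h|=200\delta$, and exploit the defining property of $\calL_{h'}$ for an element $h'$ of length exactly $200\delta$: namely that $g\in\calL_{h'}$ means $|g|\geq 1000\delta$ and some geodesic $[1,g]$ passes within $20\delta$ of both $h'$ and $gh'$.

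\emph{First case.} Suppose $\varphi(a),\varphi(b)\in\calL_h$. I want to bound $|h^{-1}\varphi(a)h|$ and $|h^{-1}\varphi(b)h|$. Fix $g\in\{\varphi(a),\varphi(b)\}$ and pick a geodesic $[1,g]$ passing within $20\delta$ of $h$, say at a point $p$ with $|p|=:r$, and within $20\delta$ of $gh$, say at a point $q$ with $|q|=:s$; since $g\in\calL_h$ we have $|g|\geq 1000\delta$, and one checks $r\leq |h|+20\delta=220\delta$ while $|g|-s\leq |gh|-|g|+\ldots$ — more precisely $s\geq |g|-|h|-20\delta\geq |g|-220\delta$. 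Now estimate
$$|h^{-1}gh|=d(h,gh)\leq d(h,p)+d(p,q)+d(q,gh)\leq 20\delta+(s-r)+20\delta\leq (|g|-r)-(|g|-s)+40\delta,$$
and since $r\geq |h|-20\delta=180\delta$ and $|g|-s\geq |h|-20\delta=180\delta$ we get $d(h,p)\geq\ldots$; collecting the bounds gives $|h^{-1}gh|\leq |g|-2\cdot 180\delta+40\delta=|g|-320\delta<|g|$. Since this holds for both $g=\varphi(a)$ and $g=\varphi(b)$, we get $Q(h^{-1}\varphi(\cdot)h)\leq Q(\varphi)-320\delta<Q(\varphi)$.

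\emph{Second case.} Suppose $\varphi(b),\varphi(ab),\varphi(a^{-1}b)\in\calL_h$. Here $|h^{-1}\varphi(b)h|<|\varphi(b)|$ by the first-case computation applied to $g=\varphi(b)$. For $\varphi(a)$, write $h^{-1}\varphi(a)h$ and observe $h^{-1}\varphi(a)h=(h^{-1}\varphi(ab)h)(h^{-1}\varphi(b)h)^{-1}$ — no, rather use $\varphi(a)=\varphi(ab)\varphi(b)^{-1}$ and also $\varphi(a)=\varphi(a^{-1}b)^{-1}\varphi(b)$; the point is that $\varphi(ab)$ and $\varphi(a^{-1}b)$ both lie in $\calL_h$, so $d(h,\varphi(ab)h)$ and $d(h,\varphi(a^{-1}b)h)$ are both small (at most $|\varphi(ab)|-320\delta$, resp.\ $|\varphi(a^{-1}b)|-320\delta$), and one bounds $d(h,\varphi(a)h)=|h^{-1}\varphi(a)h|$ by a thin-quadrilateral / approximation-tree argument using that $h$ is close to the geodesics $[1,\varphi(ab)]$ and $[1,\varphi(a^{-1}b)]$. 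Concretely, $\varphi(a)h$ lies on (or near) $[\,\varphi(ab)h,\,(\varphi(a^{-1}b))^{-1}\cdot(\text{something})\,]$; the cleanest route is the one in \cite[Lemma 4.17]{DaGr_isomorphism}: since $h$ is within $20\delta$ of a geodesic from $1$ to $\varphi(ab)$ and within $20\delta$ of a geodesic from $1$ to $\varphi(a^{-1}b)$, and $\varphi(a)=\varphi(ab)\varphi(b^{-1}a^{-1})\varphi(ab)$ — I'll instead follow the approximation-tree computation for the triangle with vertices $1,\varphi(ab),\varphi(a^{-1}b)$ together with the midpoint-type point near $h$, deducing $|h^{-1}\varphi(a)h|\leq\max(|\varphi(ab)|,|\varphi(a^{-1}b)|)-\text{(positive multiple of }\delta)$, hence $<Q(\varphi)$ as $|\varphi(ab)|,|\varphi(a^{-1}b)|\leq |\varphi(a)|+|\varphi(b)|$, which after the shortening is still controlled; combined with $|h^{-1}\varphi(b)h|<|\varphi(b)|\leq Q(\varphi)$ this yields $Q(h^{-1}\varphi(\cdot)h)<Q(\varphi)$. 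The third case is identical with $a\leftrightarrow b$.

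\emph{Main obstacle.} The first case is a routine $\delta$-chase; the real work — and the step most likely to need care with constants — is the second (and symmetrically third) case, where one must bound $|h^{-1}\varphi(a)h|$ even though $\varphi(a)$ itself need not be in any $\calL_{h'}$. The key is to extract the required estimate from the facts that $\varphi(b),\varphi(ab),\varphi(a^{-1}b)$ are all in $\calL_h$ and that $h$ has the \emph{fixed} length $200\delta$, via an approximation tree for a suitable quadrilateral; one must verify that all the error terms accumulated (each a bounded multiple of $\delta$) still leave a net strict decrease, i.e.\ that the choice of the threshold $1000\delta$ in the definition of $\calL_h$ and the length $200\delta$ of $h$ are large enough relative to the $\delta$-errors. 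This is exactly the bookkeeping carried out in \cite[Lemma 4.17]{DaGr_isomorphism}, and the argument there transfers verbatim since it uses only hyperbolicity of $H$, not torsion-freeness.
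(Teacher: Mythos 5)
Your first case is fine (and in fact more explicit than the paper, which just declares $g\in\calL_h\Rightarrow |h^{-1}gh|<|g|$ obvious); the computation giving $|h^{-1}gh|\le |g|-320\delta$ is correct. But the heart of the lemma is the second (and symmetric third) case, and there your argument has a genuine gap: you never carry out the approximation-tree estimate, and the bound you announce would not suffice even if established. You claim $|h^{-1}\varphi(a)h|\le\max(|\varphi(ab)|,|\varphi(a^{-1}b)|)-(\text{positive multiple of }\delta)$ and conclude ``hence $<Q(\varphi)$''; this is a non sequitur, since $|\varphi(ab)|$ and $|\varphi(a^{-1}b)|$ can be as large as $|\varphi(a)|+|\varphi(b)|$, i.e.\ roughly $2Q(\varphi)$, so subtracting a bounded multiple of $\delta$ proves nothing. (The stray identities $\varphi(a)=\varphi(a^{-1}b)^{-1}\varphi(b)$ and $\varphi(a)=\varphi(ab)\varphi(b^{-1}a^{-1})\varphi(ab)$ are also false, though you abandon them.) Deferring the remaining bookkeeping to \cite[Lemma 4.17]{DaGr_isomorphism} ``verbatim'' is not a proof of this statement.

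What is actually needed, and what the paper does, is structurally different from your sketch. One first disposes of the case $|\varphi(a)|\le 470\delta$ (then $|h^{-1}\varphi(a)h|\le 870\delta<|\varphi(b)|$ since $|\varphi(b)|\ge 1000\delta$), which reflects the important point that the goal is only $|h^{-1}\varphi(a)h|<\max(|\varphi(a)|,|\varphi(b)|)$, not $<|\varphi(a)|$. Then one works in an approximation tree for the triangle $(1,\varphi(a),\varphi(ab))$, using the points $h_{ab}\in[1,\varphi(ab)]$ near $h$ and $v_a=\varphi(a)h_b\in[\varphi(a),\varphi(ab)]$, and runs a case analysis on where $\ol{h_{ab}}$ and $\ol{v_a}$ sit relative to the centre of the tripod; in the two non-trivial configurations the output is either $|h^{-1}\varphi(a)h|<|\varphi(a)|$ or $|h^{-1}\varphi(a)h|\le|\varphi(b)|-90\delta$. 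The hypothesis $\varphi(a^{-1}b)\in\calL_h$ enters only in the last configuration, and not in the way you describe: one translates the geodesic $[1,\varphi(a^{-1}b)]$ by $\varphi(a)$ to see that $v_a$ lies within $\approx 50\delta$ of $[\varphi(a),\varphi(b)]$ in a quadrilateral approximation for $(1,\varphi(a),\varphi(ab),\varphi(b))$, which is what forces $d(\ol{h_{ab}},\ol{v_a})\le|\varphi(b)|-110\delta$. None of this case analysis, nor the mechanism by which $\varphi(a^{-1}b)$ is used, appears in your proposal, so as written the proof does not close.
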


\begin{proof}  First, it is obvious that if $g\in
  \calL_h$, then $| h^{-1} g h| <|g|$. This already indicates
  that if $\varphi$ satisfies the first point of the definition of
  long, the statement is true.  

By symmetry of the argument, we can assume
   that the second point of the definition holds, so $\varphi(b)$, $\varphi(ab)$ and $\varphi(a^{-1}b)$ are
  in $\calL_h$. As we said, $|h^{-1} \varphi(b) h|  <|\varphi(b)|$. 
  We need to show that $| h^{-1} \varphi(a) h|  < \max(|\phi(a)|,|\phi(b)|)$.
  Recall that since $\phi(b)\in\call_h$, $|\phi(b)|\geq 1000\delta$. 
  In particular, if $|\phi(a)|\leq 470\delta$, then 
  $|h\m\phi( a)h|\leq 2|h|+|\phi(a)|\leq 870\delta <|\phi(b)|$ and we are done.

 The discussion will now take place in the triangle $(1, \varphi(a), \varphi(ab))$.
We work in an approximation triangle $T$, and we denote by $\ol x$ a point of $T$ 
corresponding to a point $x$ of this triangle.
Consider $h_{b}\in [1,\phi(b)]$ (resp.\ $h_{ab}\in [1,\phi(ab)]$) at distance at most $20\delta$ from $h$,
and $v_a=\phi(a).h_b\in [\phi(a),\phi(b)]$.
If the segments $[\ol 1,\ol{h_{ab}}]$ and $[\ol{\phi(a)},\ol{v_a}]$ have a common point in $T$,
then $|\phi(a)|\leq  d(\ol 1,\ol{h_{ab}})+d(\ol{\phi(a)},\ol{v_a})+10\delta \leq 2|h|+70\delta\leq 470\delta$,
a case already treated.
If $\ol{h_{ab}}$ and $\ol{v_{a}}$ both lie in $[\ol 1,\ol{\phi(a)}]$,
then $|h\m\phi(a)h|<|\phi(a)|$ and we are done. 

\begin{figure}[htbp]
  \centering
  \includegraphics{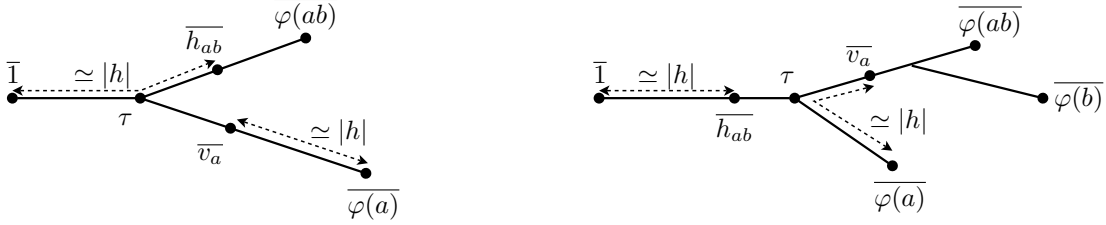}
  \caption{Two cases in Lemma \ref{lem_raccourcir}}
  \label{fig_raccourcir}
\end{figure}

 Let $\tau$ be the centre of $T$.
There are two remaining cases: either $\ol{h_{ab}}\in [\tau,\ol{\phi(ab)}]$ and $\ol{v_{a}}\in [\tau, \ol{\phi(a)}]$,
or $\ol{h_{ab}}\in [\ol 1,\tau]$ and $\ol{v_{a}}\in [\tau, \ol{\phi(ab)}]$ (see Figure \ref{fig_raccourcir}).
In the first case, we have $|h\phi(a)h\m|=d(h,\phi(a)h)\leq 40\delta + d(h_{ab},v_a)\leq 50\delta+ d(\ol{h_{ab}},\ol{v_a})
\leq 50\delta +d(\ol{\phi(a)},\ol{\phi(ab)})-d(\ol{\phi(a)},\ol{v_a})\leq 70\delta+|\phi(b)|-d(\phi(a),v_a)\leq 110\delta +|\phi(b)|-|h|
=|\phi(b)|-90\delta$ (recall that $|h|=200\delta$) and we are done.
In the second case, we consider an approximation tree of the quadrilateral $(1,\phi(a),\phi(ab),\phi(b))$.
If $d(\ol{v_a},\tau)\leq 50 \delta$, then $|h\m\phi(a)h|<|\phi(a)|$ and we are done. 
Let $\tau'$ be the centre of $(\ol{\phi(a)},\ol{\phi(ab)},\ol{\phi(b)})$.
Since $\phi(a\m b)\in\call_h$, $d(\ol{v_a},[\ol{\phi(a)},\ol{\phi(b)}])\leq 50\delta$.
In particular, either $\tau'\in [\ol{v_a},\ol{\phi(ab)}]$ (as on Figure \ref{fig_raccourcir}),
or $d(\tau',\ol{v_a})\leq 50\delta$.
In both cases, $d(\ol{h_{ab}},\ol{v_a})\leq d(\ol{h_{ab}},\ol{\phi(b)})+50\delta= d(\ol 1,\ol{\phi(b)})-d(\ol 1,\ol{h_{ab}})+50\delta
\leq |\phi(b)|-d(1,h_{ab})+70\delta\leq |\phi(b)|-110\delta$.
As above, this implies $|h\m\phi(a)h|<|\phi(b)|$.

\end{proof}

\begin{prop}\label{prop;existence_result}
  Let $\varphi:G \to H$ be a morphism, then some post-conjugate of $\varphi$ is short.
\end{prop}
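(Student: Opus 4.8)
The plan is to run the evident descent argument powered by Lemma \ref{lem_raccourcir}. The quantity $Q(\varphi)=\max\{|\varphi(a)|,|\varphi(b)|\}$ is a non-negative integer, and Lemma \ref{lem_raccourcir} says precisely that whenever $\varphi$ is long one can post-conjugate it, by the element $h$ of length $200\delta$ furnished by the definition of long, so as to strictly decrease $Q$. So I would argue by induction on $Q(\varphi)$, or equivalently by well-ordering of $\bbZ_{\geq 0}$.

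First I would record the elementary fact that post-conjugacy is closed under composition: if $\psi=\inn_{h_1}\circ\varphi$ and $\chi=\inn_{h_2}\circ\psi$ then $\chi=\inn_{h_2h_1}\circ\varphi$, so a post-conjugate of a post-conjugate of $\varphi$ is again a post-conjugate of $\varphi$. Then the argument is: if $\varphi$ is already short, we are done. Otherwise $\varphi$ is long; let $h$ be as in the definition of long and set $\varphi_1=\inn_{h^{-1}}\circ\varphi$, i.e.\ $g\mapsto h^{-1}\varphi(g)h$, which by Lemma \ref{lem_raccourcir} satisfies $Q(\varphi_1)<Q(\varphi)$. Iterating this produces a sequence of post-conjugates $\varphi=\varphi_0,\varphi_1,\varphi_2,\dots$ of $\varphi$ with $Q(\varphi_0)>Q(\varphi_1)>Q(\varphi_2)>\cdots$, a strictly decreasing sequence of non-negative integers, which must therefore be finite. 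The construction can only stop because some $\varphi_n$ fails to be long, that is, is short; and $\varphi_n$ is a post-conjugate of $\varphi$, which is exactly the assertion.

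The one point worth flagging is that Lemma \ref{lem_raccourcir}, in contrast to Proposition \ref{prop;finiteness_result}, carries no standing hypothesis that $a,b,ab^{\pm1}$ are sent to infinite-order elements generating a non-elementary subgroup, so the descent is valid for an arbitrary morphism $\varphi:G\to H$, as the statement of Proposition \ref{prop;existence_result} requires. (Incidentally, the extreme case causes no trouble: if $Q(\varphi)=0$ then $\varphi(a)=\varphi(b)=1$, hence $\varphi$ sends $a,b,ab^{\pm1},ba^{\pm1}$ all to $1$, which lies in no $\calL_h$, so such a $\varphi$ is automatically short; but the termination argument above does not actually need this as a separate base case.) I expect no real obstacle in writing this out — the entire content of the statement is concentrated in Lemma \ref{lem_raccourcir}, which has already been proved.
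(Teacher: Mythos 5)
Your proof is correct and is essentially the paper's argument: the paper simply picks a post-conjugate minimising $Q$ and invokes Lemma \ref{lem_raccourcir} to conclude it is short, which is the same well-ordering idea as your iterative descent. No further comment is needed.
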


\begin{proof}  
Choose a post-conjugate $\phi'$ of $\phi$ with $Q(\phi')$ minimal.
 By the previous lemma, $\phi'$ is short.
\end{proof}

Proposition \ref{prop;shortness} follows from Propositions \ref{prop;finiteness_result} and \ref{prop;existence_result}. 

\subsubsection{Listing morphisms}

Consider $(G;\calp)$ and $(H;\calq)$ two groups with marked peripheral structure, $a,b\in G$ generating a free subgroup, and some radius $R\geq 0$. 
We assume that we have a finite generating set for $G$, so that we can talk of $B_G(R)$, its ball of radius $R$ for the word metric.

Say that a morphism $\phi:(G;\calp)\ra(H;\calq)$ is \emph{almost injective with respect to $a,b,R$} if
$\phi(a),\phi(b),\phi(ab^{\pm1})$  have infinite order 
and generate a non-elementary subgroup of $H$, and $h$ is injective in restriction to $B_G(R)$.
Clearly, if $a,b$ generate a non-abelian free subgroup of $G$, 
then any monomorphism is almost injective with respect to $a,b,R$ for any $R\geq 0$.

From \cite{Delzant_sous-groupes} we know that there is a computable constant $N$ such that, for any two  elements 
$x,y$ of $H$, $x^N$, and $y^N$ generate a free group (which may be trivial or cyclic). 
Therefore, for $a,b\in G$ and any morphism $\phi:G\ra H$  such that $\phi(a)$ and $\phi(b)$ have infinite order,
$\grp{\phi(a),\phi(b)}$ is non-elementary if and only if
$[\phi(a)^N,\phi(b)^N]\neq 1$.
Also note that there exists a computable constant $K$ such that $x\in H$ has infinite order if and only if it has order greater than $K$.

\begin{prop} \label{prop;list1}
  There is an algorithm as follows.
Its input is a finitely presented group with a marked 
peripheral structure
 $(G;\calp)$, a solution of the word problem in $G$, two elements $a,b \in G$, 
and a hyperbolic group $(H;\calq)$, with a marked peripheral structure.

This algorithm terminates if and only if 
there exists $R_0>0$ such that there are only finitely many post-conjugacy classes of morphisms $(G;\calp)\ra (H;\calq)$ 
that are almost injective with respect to $a,b,R_0$.

If the algorithm stops, its output is a finite list of morphisms $(G;\calp)\ra (H;\calq)$,  
containing a post-conjugate every such almost injective morphism, and in particular, of any monomorphism $(G;\calp)\ra (H;\calq)$.
\end{prop}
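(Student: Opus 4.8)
The plan is to run two semi-algorithms in parallel, one that enumerates candidate finite lists and tests them, and one that detects non-termination is actually unnecessary — the algorithm will simply fail to halt in that case, which is exactly what the statement allows. Concretely, I would proceed as follows. First, using the computable constants $N$ and $K$ from \cite{Delzant_sous-groupes} recalled before the statement, I can, for any given morphism $\phi:G\ra H$ (described by the images of a generating set, subject to the relators of $G$, checkable via the word problem in $H$), decide whether $\phi(a),\phi(b),\phi(ab^{\pm1})$ have infinite order and whether $\grp{\phi(a),\phi(b)}$ is non-elementary (test $[\phi(a)^N,\phi(b)^N]\neq 1$), and whether $\phi$ is injective on $B_G(R)$ for a given $R$; so ``almost injective with respect to $a,b,R$'' is a decidable property of $(\phi,R)$. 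Next, by Proposition \ref{prop;shortness}, every post-conjugacy class of a morphism sending $a,b,ab^{\pm1}$ to infinite-order elements generating a non-elementary subgroup contains at least one and at most finitely many \emph{short} morphisms (with respect to the fixed choice $a,b$); and shortness, by definition, only constrains the images $\phi(a),\phi(b),\phi(ab^{\pm1}),\phi(ba^{\pm1})$ via the regular sets $\calL_h$ for $h$ of length $200\delta$ — in particular, being long/short is a decidable property of $\phi$. Crucially, a \emph{short} almost-injective morphism has $Q(\phi)=\max\{|\phi(a)|,|\phi(b)|\}$ bounded: if $\phi$ is short then neither $\phi(a)$ nor $\phi(b)$ lies in any $\calL_h$ with $|h|=200\delta$, and one reads off an explicit bound $Q_0$ on $Q(\phi)$ from the definition of $\calL_h$ (elements of $\calL_h$ of ``short'' length are excluded, so being outside all such $\calL_h$ forces $|\phi(a)|,|\phi(b)|$ small) — this is the same kind of bound used implicitly in Lemma \ref{lem_raccourcir}. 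Hence short morphisms form a \emph{finite, computable} set: enumerate all maps from the generators of $G$ to $B_H(Q_0')$ for a suitable computable $Q_0'$ (the bound on $Q(\phi)$ controls $\phi(a),\phi(b)$; since $a,b$ generate a free subgroup containing a generating set only after bounded-length words, one needs a mild additional argument, see below), keep those that are homomorphisms and short.

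The algorithm is then: for $R=0,1,2,\dots$, compute the finite list $\mathcal{S}_R$ of all short morphisms $(G;\calp)\ra(H;\calq)$ that are almost injective with respect to $a,b,R$; check (using the simultaneous conjugacy problem in $H$, available since $H$ is hyperbolic) whether every short morphism that is almost injective with respect to $a,b,0$ is post-conjugate to one appearing in $\mathcal{S}_R$ — equivalently, whether increasing $R$ stops removing post-conjugacy classes. More robustly: at stage $R$ output $\mathcal{S}_R$ and halt if one can certify that no post-conjugacy class of an almost-injective-w.r.t.-$a,b,R'$ morphism for $R'>R$ has been lost, i.e. if $\mathcal{S}_{R+1}$ has the same post-conjugacy classes as $\mathcal{S}_R$ — but since that equality can hold accidentally at one step and fail later, the clean formulation is: halt at the first $R$ for which the set of post-conjugacy classes in $\mathcal S_R$ equals the set of post-conjugacy classes of \emph{all} short almost-injective-w.r.t.-$a,b,R$ morphisms and this stabilises; if the hypothesis of the proposition holds (some $R_0$ with only finitely many post-conjugacy classes of morphisms almost injective w.r.t.\ $a,b,R_0$), then by Proposition \ref{prop;shortness} there are only finitely many such short morphisms up to post-conjugacy, their total length $Q$ is bounded, so they all appear in $\mathcal{S}_{R_0}$ and the algorithm halts by stage $R_0$. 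Conversely if for every $R$ there are infinitely many post-conjugacy classes of almost-injective-w.r.t.-$a,b,R$ morphisms, the short representatives keep appearing without stabilising and the algorithm runs forever — consistent with the statement. Correctness of the output list: by Proposition \ref{prop;existence_result} every morphism that is almost injective w.r.t.\ $a,b,R_0$ has a short post-conjugate, which is then almost injective w.r.t.\ $a,b,R_0$ as well (almost injectivity is a post-conjugacy invariant for the relevant radius, since conjugation preserves orders, (non-)elementarity and injectivity on balls), hence lies in $\mathcal{S}_{R_0}$; and any monomorphism is almost injective w.r.t.\ $a,b,R$ for all $R$ (as $a,b$ generate a free subgroup), so is covered.

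The main obstacle I expect is \textbf{extracting the explicit computable bound} $Q_0$ on $Q(\phi)$ for short $\phi$, and then translating a bound on $Q(\phi)=\max\{|\phi(a)|,|\phi(b)|\}$ into a bound on the images of an actual generating set of $G$ in order to get a genuinely \emph{finite} search space. A bound on $|\phi(a)|,|\phi(b)|$ alone does not bound $\phi$ on arbitrary generators; one must instead search over tuples $(\phi(s))_{s\in S}$ where $S$ is the generating set, using the presentation relators to cut down, and argue that among \emph{short} almost-injective morphisms only finitely many survive — this requires combining the $Q(\phi)$ bound with the fact that, modulo post-conjugacy, one may further normalise so that some generator has bounded length, and then hyperbolicity (the relators force the remaining generators to have length bounded in terms of $Q(\phi)$ and $\delta$, up to the finitely many homomorphisms realising a given restriction). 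This bookkeeping is routine but fiddly; it is essentially \cite[proof of the listing proposition]{DaGr_isomorphism} adapted, and I would import it, citing Propositions \ref{prop;finiteness_result}, \ref{prop;existence_result} and the constants of \cite{Delzant_sous-groupes} rather than redoing the estimates.
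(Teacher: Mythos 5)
The central claim your enumeration strategy rests on is false: shortness of $\varphi$ does \emph{not} bound $Q(\varphi)=\max\{|\varphi(a)|,|\varphi(b)|\}$, and there is no computable $Q_0$ to extract. Being long means that for \emph{some} single $h$ of length $200\delta$ one of three simultaneous conditions holds (e.g.\ $\varphi(a)$ and $\varphi(b)$ both lie in $\calL_h$ for the \emph{same} $h$); its negation does not put $\varphi(a)$ or $\varphi(b)$ outside all the sets $\calL_h$, and even that would not bound their lengths, since $\calL_h$ only detects geodesics fellow-travelling $h$ and $gh$, not long elements per se. Shortness is a minimality-type condition \emph{within} a post-conjugacy class: Proposition \ref{prop;shortness} gives at least one and at most finitely many short representatives \emph{per class}, but short morphisms in different classes have unbounded $Q$ (think of automorphisms of a free group of arbitrarily large complexity, each already minimal in its class). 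Consequently your set $\mathcal S_R$ of all short almost-injective morphisms is in general infinite, and in any case not computable by searching a ball of radius $Q_0'$; the "mild additional argument" you defer is not fiddly bookkeeping but the missing substance. Relatedly, your halting test ("the post-conjugacy classes in $\mathcal S_R$ stabilise") cannot be implemented: by pure enumeration you can only semi-decide the \emph{existence} of a new short almost-injective morphism, never certify its \emph{non-existence}, which is exactly what halting requires.

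This is where the paper's proof uses machinery you left out. It encodes, for a current finite list $\calF$ and radius $R$, the statement "there exists a short morphism $(G;\calp)\to(H;\calq)$, almost injective with respect to $a,b,R$, not post-conjugate-equal to anything forced by $\calF$" as a disjunction of finite systems of equations and inequations with \qie rational constraints: the relators give equations, injectivity on $B_G(R)$ and the order/non-elementarity conditions give inequations, compatibility with $\calp,\calq$ is handled by auxiliary conjugating variables, and shortness is a Boolean combination of membership conditions in the sets $\calL_h$, which are \qie rational by Lemma \ref{lem;qiers}. Theorem \ref{theo;eq} then \emph{decides} satisfiability, so the algorithm can either find a new short morphism (add it to $\calF$, increment $R$) or certify there is none and halt; termination under the finiteness hypothesis then follows from Proposition \ref{prop;shortness} exactly as you intended. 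Without this (or an equivalent decision procedure), your approach cannot produce the "only if" direction of the statement, so the proposal as written has a genuine gap rather than a routine one.
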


The proof is similar to that of \cite[Theorem 4.4]{DaGr_isomorphism}, which treats the case of torsion free hyperbolic groups.

\begin{proof}
  Consider the following problem: given $R\geq 0$,
  and a family $\calF$ of morphisms $G\to H$, does there exist a
   morphism $\varphi : G\to H$ which is short, not in $\calf$,
sends the peripheral structure $\calp$ to $\calq$,
and is almost injective with respect to $a,b,R$.

If $\grp{X|\calr}$ is a presentation of $G$, morphisms $\phi:G\ra H$ correspond to solutions in $H$
of the system of equations $\calr$ over the set of variables $X$.
Since we have a solution of the word problem, we can compute the ball $B_G(R)$.
The injectivity on $B_G(R)$ can then be encoded by inequations, 
 the fact that $\phi(a),\phi(b),\phi(ab^{\pm1})$ have infinite order is encoded by inequations saying that they have order $>K$,
and the fact that $\grp{\phi(a),\phi(b)}$ non-elementary is encoded by
an inequation saying $[\phi(a)^N,\phi(b)^N]\neq 1$.
The fact that $\phi$ maps the marked peripheral structure $\calp=(P_1,\dots,P_k)$ to $\calq=(Q_1,\dots,Q_k)$ 
can be encoded using new variables $z_1,\dots z_k$ and equations saying that
 $z_i$ conjugates the tuple $\phi(P_i)$ to $Q_i$.
The fact that $\phi\notin\calf$ can be encoded by a system  of disjunctions of inequations.
Shortness of $\phi$ consists of a Boolean combination of conditions of the form $\phi(g)\in\call_h$
where $g\in\{a,b,a^{\pm1}b^{\pm1}\}$, and $h$ varies among all elements of $H$ of length $200\delta$.

Thus, the existence of $\phi$ in the problem above is equivalent to the existence
of a solution for a Boolean combination of systems of equations and inequations with rational constraints,
which can be rewritten as a disjunction of systems of equations and inequations with rational constraints.
Since $\calL_h$ is a \qiers (Lemma \ref{lem;qiers}), this is solvable by  Theorem \ref{theo;eq}. 

The algorithm starts with the empty list $\calf$, and with a radius $R=0$.
If there is a short morphism $\varphi : (G;\calp)\to (H;\calq)$, not in $\calf$,
and is almost injective with respect to $a,b,R$, it adds it to the list $\calf$, and increments $R$ by $1$.
It repeats this operation until there is no such $\phi$, in which case the algorithm stops and outputs $\calf$.
  
By Proposition \ref{prop;shortness} any post-conjugacy class of morphism  almost injective with respect to $a,b,R$ has a short representative.
Therefore, if the algorithm stops  at some value of $R$, every  post-conjugacy class of morphisms $(G;\calp)\ra(H;\calq)$ 
almost injective with respect to $a,b,R$ has a representative in $\calf$.

Conversely, assume that the set $C$
of post-conjugacy classes of morphisms $(G;\calp)\ra(H;\calq)$
almost injective with respect to $a,b, R_0$ is finite.
When $R>R_0$, the post-conjugacy class of any morphism added to $\calf$ by the algorithm lies in $C$.
Since there are only finitely many
short morphisms in each post-conjugacy class of morphism mapping $a,b$ to a non-elementary subgroup,
the algorithm will stop.
\end{proof}

\subsubsection{Splittings of $G$}

\begin{prop}\label{enumerate_ess_splittings} 
  There is an algorithm that, given a  hyperbolic group $(G;\calp)$ with a  peripheral 
  structure, enumerates all the non-trivial splittings of $(G;\calp)$ over $\Zmax$-subgroups.

There is a similar algorithm enumerating all the non-trivial splittings of $(G;\calp)$ over finite subgroups. 
\end{prop}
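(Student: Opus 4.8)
The plan is an enumeration-and-verification scheme; I describe it for $\Zmax$-subgroups, the case of finite subgroups being identical with finite edge groups. I enumerate candidate splittings encoded concretely inside $G$: a candidate consists of a finite connected graph $X$, a finite subset $S_v\subset G$ for each vertex $v$, a finite subset $S_e=S_{\ol e}\subset G$ for each unoriented edge $e$, and an element $h_e\in G$ for each oriented edge, subject to $h_e\grp{S_e}h_e\m\subset\grp{S_{t(e)}}$. Such data define a finite graph of groups $\Gamma$ with $\Gamma_v=\grp{S_v}$, $\Gamma_e=\grp{S_e}$ and $i_e=\inn_{h_e}|_{\Gamma_e}$, together with the canonical marking $\mu:\pi_1(\Gamma)\ra G$ of Section~\ref{sec;trees}, which restricts to the inclusion on each vertex group and sends the oriented edge $e$ to $h_{\ol e}h_e\m$. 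I dovetail over all such data, keeping only those for which every $\grp{S_e}$ is a $\Z$-group (recognizable by Lemma~\ref{lem_VC2}).

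For a kept candidate, one first tests with the word problem in $G$ that $\mu$ is onto, \ie that the elements of $\bigcup_v S_v$ together with the elements $h_{\ol e}h_e\m$ generate $G$; otherwise discard it. The crucial step is to decide whether $\mu$ is injective, \ie an isomorphism. I run two searches in parallel: the first looks for a word $w$ in the generators of $\pi_1(\Gamma)$ that is non-trivial in $\pi_1(\Gamma)$ but satisfies $\mu(w)=1$ in $G$; the second looks, over all assignments of words of $\pi_1(\Gamma)$ to the generators of $G$, for a homomorphism $\nu:G\ra\pi_1(\Gamma)$ with $\mu\circ\nu=\id_G$ and $\nu\circ\mu=\id_{\pi_1(\Gamma)}$. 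Exactly one of the two halts: if $\mu$ is an isomorphism the second produces $\nu=\mu\m$ and no $w$ exists, while if $\mu$ is onto but not injective the first produces such a $w$ and no such $\nu$ can exist. Both searches require a solution to the word problem in $\pi_1(\Gamma)$; for the candidates that are genuine splittings this is available, because the edge groups $\grp{S_e}$ are virtually cyclic and the vertex groups $\grp{S_v}$, being vertex groups of a splitting of a hyperbolic group over quasiconvex subgroups, are quasiconvex in $G$, so one can compute from the sets $S_v,S_e$ presentations, quasiconvexity constants, and solutions to the word and membership problems of all vertex and edge groups, hence a solution to the word problem in $\pi_1(\Gamma)$. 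For spurious candidates this preliminary computation may fail to terminate, which is harmless under dovetailing.

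Once $\mu$ is certified to be an isomorphism, with explicit inverse $\nu$, I perform the remaining checks on the associated $G$-tree $T$ (the Bass--Serre tree of $\Gamma$ with the $G$-action carried across by $\mu$). Using Lemma~\ref{lem_VC2} one verifies that each edge group is a maximal $\Z$-subgroup of $G$, \ie $\grp{S_e}=\Zmax(\grp{S_e})$. The tree $T$ is non-trivial precisely when no $\grp{S_v}$ equals $G$, which the word problem in $G$ decides; one may further pass to the computable minimal subtree and reduced representative. Relative to $\calp$: each peripheral subgroup $P=\grp{S_P}$ must be elliptic in $T$, and by Serre's fixed-point lemma this holds if and only if all the elements $\nu(p)$ for $p\in S_P$ and $\nu(pq)$ for $p,q\in S_P$ are elliptic in the Bass--Serre tree of $\Gamma$; ellipticity of a single element of a finite graph of groups with solvable word problems is decidable, by reducing its cyclic normal form. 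If every check passes, output the splitting.

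It remains to see that the enumeration is exhaustive. Any non-trivial splitting of $(G;\calp)$ over $\Zmax$-subgroups, viewed as a minimal $G$-tree $T$, is equivariantly isomorphic to the graph of groups produced by the construction of Section~\ref{sec;trees} applied to $T$; since the vertex and edge stabilizers occurring there are finitely generated quasiconvex subgroups of $G$ (the edge ones being $\Z$-subgroups), this graph of groups is, with suitable choices, literally one of the candidates in the enumeration, and it passes all the checks above. The only serious difficulty is certifying injectivity of $\mu$; the device that makes it effective is the parallel search for an explicit two-sided inverse, which rests on the solvability of the word problem both in $G$ and in $\pi_1(\Gamma)$.
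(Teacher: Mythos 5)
Your scheme works, but it is a genuinely different and much heavier route than the paper's. The paper's proof is a few lines: it enumerates \emph{presentations} of $G$ by Tietze transformations and keeps those on which one can read a non-trivial graph-of-groups structure relative to $\calp$ (peripheral subgroups appearing explicitly inside vertex groups), the edge groups being certified $\Zmax$ (or finite) by Lemma \ref{lem_VC2}. Since every presentation reached by Tietze moves is automatically a presentation of $G$, and every genuine splitting is exhibited by some such presentation, there is nothing to certify: no surjectivity or injectivity test, no quasiconvexity detection, no word problem for $\pi_1(\Gamma)$. You instead enumerate candidate splittings as subgroup data inside $G$ and must then certify that the marking $\mu:\pi_1(\Gamma)\ra G$ is an isomorphism; that is where all the weight of your argument sits (quasiconvexity of vertex groups, Kapovich's semi-algorithm \cite{Kapovich_detecting} to extract presentations and membership, the parallel search for a kernel element or a two-sided inverse). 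What your version buys is an output in which the vertex and edge groups come with explicit generating sets in $G$; what the paper's version buys is the elimination of the entire certification layer.

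Two of your intermediate claims are overstated and must be downgraded for the argument to be literally correct. First, surjectivity of $\mu$, and likewise the test $\grp{S_v}\neq G$, are not decided ``with the word problem in $G$'': the word problem only lets you \emph{verify} generation when it holds (by finding expressions of the generators of $G$), not refute it, and deciding whether a finite subset generates a hyperbolic group is not available in this generality. In your enumeration this is harmless provided you state these steps as semi-decisions (a candidate on which the verification never completes is simply never output), or reroute them through data you already have: once the quasiconvexity automaton for $\grp{S_v}$ has been computed, membership of the generators of $G$ in $\grp{S_v}$ \emph{is} decidable, which settles both $\grp{S_v}=G$ and non-triviality. Second, solving the word problem in $\pi_1(\Gamma)$, cyclically reducing elements for the ellipticity test, and passing to a minimal or reduced representative all require decidable (indeed constructive) membership in the images of the edge groups inside the vertex groups, not just word problems; this is available here because the edge groups are finite or virtually cyclic, hence quasiconvex, but it should be said. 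With these repairs your enumeration is sound and exhaustive, and the Serre-lemma treatment of the relative condition matches what the paper does elsewhere (Section \ref{sec_Wh}).
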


\begin{rem}
  In this statement, it makes no difference whether $\calp$ is a marked or unmarked peripheral structure.
\end{rem}

\begin{proof} 
List all presentations of $G$ using Tietze transformations.
Keep only those exhibiting  some non-trivial splitting over $\Zmax$-subgroups relative to $\calp$ (in which peripheral subgroups appear as explicit subgroups of vertex groups)  
This is possible since by Lemma \ref{lem_VC2}, it is possible to algorithmically check whether the corresponding edge group is $\Zmax$,
and in the case of an amalgam, that the vertex groups are not themselves $\Zmax$.

The case of splittings over finite groups is similar.
\end{proof}

\subsubsection{ Proof of Theorem \ref{thm_IP_rigid}}

We are now ready  to prove Theorem \ref{thm_IP_rigid}.

\begin{proof}[Proof of Theorem \ref{thm_IP_rigid}]

Recall that we are given  two non-elementary hyperbolic groups with marked peripheral structures $(G;\calp)$, $(H;\calq)$, 
and that we are required to algorithmically find a splitting of $G$ over a finite or $\Zmax$-subgroup, 
or a finite list a finite list $\calf$ of morphisms $(G;\calp)\ra (H;\calq)$, such that
any monomorphism $(G;\calp)\ra (H;\calq)$ has a post-conjugate in $\calf$.

Since $G$ is non-elementary, we can find $a,b\in G$ generating a non-abelian free subgroup.
Indeed, according to \cite{Delzant_sous-groupes}, there is a computable constant $N$ such that 
for any two elements $a,b\in G$,  $a^N$ and $b^N$ generate a (maybe cyclic) free subgroup. 
Enumerating all pairs of $N$-powers of elements of $G$ and using a solution of the word problem in $G$,
one can find a pair with $[ a^N, b^N]\neq 1$, so  $\grp{a^N,b^N}$ is free of rank $2$.

Our algorithm runs two machines in parallel. 
The first machine is the algorithm given by Proposition \ref{prop;list1}.
It stops if and only if there exists $R_0> 0$ such that
there are only finitely many post-conjugacy classes of homomorphisms
 $(G;\calp)\ra(H;\calq)$ almost injective with respect to $a,b,R_0$. 
If this machine stops  first, we output a finite list $\calf$ containing a post-conjugate of any
monomorphism  $(G;\calp)\ra(H;\calq)$, and we stop and output $\calf$.

In parallel, we run the  a machine which lists
all non-trivial splittings over a finite or $\Zmax$-subgroups as described in Proposition \ref{enumerate_ess_splittings}.
If this machine produces such a splitting before the first machine stopped, then we stop and output this splitting.

Our algorithm always stops.  Indeed, the rigidity criterion \ref{prop_alt} implies that if the first machine does not stop,
then $(G,\calp)$ has splitting over a finite or $\Zmax$-subgroup, and it will be  found by the second machine.
\end{proof}

\section{JSJ decompositions}\label{sec_JSJ}

Given two $G$-trees $T,T'$, one says that $T$ is \emph{elliptic} with respect to $T'$ if
 every edge stabiliser of $T$ fixes a point in $T'$.
In this case, one can construct a \emph{blowup} $\Hat T$ of $T$ relative to $T'$ as follows.
For each vertex $v\in T$, choose a $G_v$-invariant subtree $Y_v\subset T'$ in a $G$-equivariant way (if $G_v$ fixes no point, one can take the 
minimal $G_v$-invariant subtree),
and for each oriented edge $e$ of $T$, choose a point $p_e\in Y_{t(e)}$ fixed by $G_e$.
Then define $\Hat T$ as the minimal subtree of the refinement of $T$ obtained by replacing each vertex $v\in T$ by $Y_v$, and 
gluing back each edge $e$ on $p_e$.
The tree $\Hat T$ collapses to $T$ and dominates $T'$.

Consider $\cala$ a family  of subgroups of $G$, stable under conjugacy and taking subgroups. A JSJ splitting of $G$ over $\cala$
is an $\cala$-tree which is \emph{$\cala$-universally elliptic} (\ie elliptic with respect to any $\cala$-tree)
and which  dominates all $\cala$-universally elliptic splittings (see \cite{GL3a}).
The set of JSJ splittings is a deformation space called the \emph{JSJ deformation space} over $\cala$. 
Assuming that $\cala$ is invariant under automorphisms (which will always be the case here), 
the JSJ deformation space is invariant under $\Out(G)$.

A vertex group $G_v$ of a JSJ splitting is \emph{flexible} if there exists some $\cala$-tree in which $G_v$ does not fix a point.
Other vertex groups $G_v$ are  called \emph{rigid}. Equivalently, $G_v$ is rigid if and only if $(G_v;\calp_v)$ has no non-trivial splitting 
over $\cala$ relative to the incident edge groups.
 Note that in general, this notion of rigid vertex group is distinct from the notion in the rigidity criterion,
except when $\cala=\Zmax$.

Instead of $G$, one may start with a pair $(G;\calp)$ with $G$ finitely presented and $\calp$ a finite collection of finitely generated peripheral subgroups.
An $\cala$-tree is \emph{relative to $\calp$} if  peripheral subgroups are elliptic.
We say that this is an $(\cala,\calp)$-tree.
Then one can define the JSJ deformation space of $G$ relative to $\calp$ in this context: 
universal ellipticity means ellipticity with respect to $(\cala,\calp)$-trees,
and JSJ splittings are maximal universally elliptic $(\cala,\calp)$-trees.

\subsection{Virtually-cyclic JSJ decomposition}

Consider $G$ a one-ended hyperbolic group, and $\cala$ the class of finite or two-ended subgroups.
Since $G$ is one-ended, all $\cala$-trees have infinite edge stabilisers.
Bowditch constructed a JSJ splitting over $\cala$ from the boundary of $G$ in  \cite[Theorem 0.1]{Bo_cut}.

A \emph{hanging bounded Fuchsian group} of a graph of groups $\Gamma$ over virtually cyclic groups,
is a vertex group $(G_v;\calp_v)$ with its unmarked peripheral structure induced by $\Gamma$,
such that $G_v$ has a structure of bounded Fuchsian group such that
each group of $\calp_v$ is a subgroup of a boundary group of $G_v$.

The flexible vertices $(G_v;\calp_v)$ of Bowditch's splitting are hanging bounded Fuchsian groups, and
more precisely, the peripheral structure $\calp_v$ is precisely given by the boundary groups of $G_v$.
Note that bounded Fuchsian groups may have reflections.

\subsection{$\Z$-JSJ decomposition}

Because splittings over two-ended subgroups with finite centre produce only a finite group
of Dehn twists, we want to study only $\Z$-splittings, i.e.\ splittings over virtually cyclic subgroups
with infinite centre.
Of course, if $G$ has no $2$-torsion, the class of $\Z$-subgroup of $G$ is its class of two-ended subgroups.

Take for $\cala$ the class of all subgroups  of $\Z$-subgroups of $G$,
and note that any splitting of $G$ over $\cala$ is indeed a $\Z$-splitting since $G$ is one-ended.
By \cite{GL3a}, the JSJ deformation space over $\cala$ exists, we denote it by $\cald_\Z$.
We denote by $T_\Z$ the collapsed tree of cylinders of this deformation space.
 By Proposition \ref{prop;Tc_inv}, this is an $\Out(G)$-invariant splitting, and by Lemma \ref{lem_same_D}, this is a $\Z$-JSJ splitting. 
\\

\subsubsection{$\Z$-JSJ decomposition of orbifold groups}

Before describing $T_\Z$ in general, we first consider the case of a hyperbolic orbifold $\Sigma$.
 This might be a non-trivial decomposition if $\Sigma$ has mirrors.

Let's first describe quickly what an orbifold $\Sigma$ with mirrors looks like (see \cite{Scott_geometries} for additional details).
We view $\Sigma$ as $C/G$ where $G\subset\Isom(\bbH^2)$ is a non-elementary discrete subgroup acting cocompactly on 
a closed convex set $C\subset \bbH^2$ with geodesic boundary. 
If $g\in G$ is a reflection, its set of fixed points is an infinite geodesic $\gamma\subset\bbH^2$.
The image in $\Sigma$ of  $C\cap\gamma$ is a \emph{mirror} of $\Sigma$.
A mirror is either a circle or a segment, and is contained in the topological boundary of $\Sigma$ (\ie the boundary when one forgets
about the orbifold structure). 
If it is a segment, its endpoints may be in another mirror, or in the image of a boundary component.
Boundary components of $\calc$ are geodesics whose stabiliser is either infinite cyclic or infinite dihedral.
The corresponding subset of $\Sigma$ is either a circle or a segment whose two endpoints are singular points, lying in mirrors.
We call such a subset a \emph{peripheral} circle or segment. 

Thus, each component of the topological boundary of $\Sigma$ is either a peripheral circle, 
or is a union of mirrors and of peripheral segments.

We now recall the structure of splittings of an orbifold group over virtually cyclic subgroups.

\begin{lem}[{\cite[Theorem III.2.6]{MS_valuationsI}}]\label{lem;splitting_curve}
Let $(G;\calb)$ be a hyperbolic orbifold group with its peripheral structure, and $T$ be a $(G;\calb)$-tree
with virtually cyclic edge stabilisers.

Then $T$ is dual to a finite disjoint union of non-trivial two-sided simple $1$-suborbifold  
of $\Sigma$ 
which don't intersect the peripheral circles and segments.
\end{lem}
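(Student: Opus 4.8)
The plan is to reduce the statement to the well-known classification of splittings of surface groups over $\bbZ$, passing through the orbifold structure of $\Sigma$ with mirrors. First I would recall that $G$ acts cocompactly on a closed convex set $C\subset\bbH^2$ with geodesic boundary, so that $\Sigma=C/G$. The key geometric input is that any virtually cyclic subgroup $C_e\le G$ arising as an edge stabilizer of $T$ stabilizes either a bi-infinite geodesic in $\bbH^2$ (if $C_e$ is a $\Z$-group or of dihedral type fixing a pair of points in $\partial\bbH^2$) or is finite; since $G$ is one-ended and $\Sigma$ is a closed-or-bordered orbifold, all edge groups of $T$ are infinite, hence two-ended, hence stabilize an axis $\alpha_e\subset\bbH^2$. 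The image of $\alpha_e$ in $\Sigma$ is a closed $1$-suborbifold. The main point is to show these suborbifolds can be taken to be \emph{simple}, \emph{two-sided}, \emph{essential}, and \emph{disjoint from the mirrors and peripheral circles/segments}, and that conversely the tree dual to such a family realizes $T$.

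The first step is to invoke the cited result \cite[Theorem III.2.6]{MS_valuationsI} essentially verbatim: Morgan--Shalen's analysis of measured laminations / group actions on trees for surface (and orbifold) groups gives exactly that an action on a tree with virtually cyclic edge stabilizers is dual to a finite disjoint union of two-sided simple $1$-suborbifolds. So at the level of the statement, the proof is a citation. What I would add, to make the statement precise in the present context, is the verification that the suborbifolds in question avoid the peripheral circles and segments: this is because each boundary subgroup $B_i$ of $G$ lies in a conjugate of a vertex group (it is elliptic, since $T$ is a $(G;\calb)$-tree), so the axis of $B_i$ in $\bbH^2$ does not cross any of the $\alpha_e$; hence the corresponding peripheral circle or segment of $\Sigma$ is disjoint from (or can be isotoped off) the dual suborbifolds. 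Here one must be a little careful with peripheral \emph{segments}, whose endpoints are cone/mirror points: I would argue that a simple closed curve or arc dual to a $\Z$-splitting, being essential and avoiding the singular locus needed for the splitting to be over a torsion-free group (or over a $\Z$-group more generally), can be pushed off the regular neighbourhood of the boundary pattern consisting of mirrors and peripheral segments.

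The second step is the converse: given a finite disjoint union $L$ of non-trivial two-sided simple $1$-suborbifolds of $\Sigma$ disjoint from the peripheral circles and segments, cutting $\Sigma$ along $L$ yields a graph-of-orbifolds decomposition of $\Sigma$, hence a graph-of-groups decomposition of $G$ with edge groups the subgroups carried by the components of $L$ — these are virtually cyclic because each component of $L$ is a closed $1$-suborbifold (circle or mirror-free arc) — and with peripheral subgroups elliptic because $L$ avoids the peripheral pattern. Its Bass--Serre tree is a $(G;\calb)$-tree with virtually cyclic edge stabilizers. Matching this to $T$ up to equivariant isomorphism is the content of the Morgan--Shalen correspondence, so there is nothing new to prove here beyond bookkeeping.

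I expect the main obstacle to be not the topology of surfaces, which is classical, but the careful handling of the mirror boundary and peripheral \emph{segments}: one must check that the simple $1$-suborbifolds dual to $T$ genuinely do not intersect the peripheral segments (not merely the peripheral circles), which uses both the ellipticity of boundary subgroups in $T$ and the fact that mirrors contribute $2$-torsion reflections whose fixed geodesics are not crossed by any $\Z$-type edge stabilizer. Once that separation is established, the rest follows by citing \cite[Theorem III.2.6]{MS_valuationsI} and routine Bass--Serre theory.
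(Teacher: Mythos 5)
Your proposal is, at bottom, the same as the paper's: the lemma is attributed to Morgan--Shalen and the paper gives no proof beyond a short remark. But there is a genuine gap in how you invoke the citation. Morgan--Shalen's Theorem III.2.6 is stated for \emph{surfaces}, not orbifolds, and the actual content that needs an argument here is precisely the extension to $2$-orbifolds with cone points and mirrors. You assert that their analysis applies to "surface (and orbifold) groups" essentially verbatim; it does not, and this is where the dual objects change nature (for instance, a two-sided simple $1$-suborbifold may be an embedded arc joining two mirrors, not a closed curve). The paper's remark supplies exactly the missing step: pass to a finite Galois covering $\Sigma_0\to\Sigma$ which is a surface, apply Morgan--Shalen to the restricted action of $\pi_1(\Sigma_0)$ to get a dual family of disjoint simple closed curves, replace them by geodesic representatives with multiplicities so that the family becomes invariant under the full orbifold group, and then descend to $\Sigma$. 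Your alternative sketch via axes of edge stabilizers in $\bbH^2$ does not substitute for this: projecting axes to $\Sigma$ gives closed $1$-suborbifolds, but simplicity and mutual disjointness of these projections (the heart of the duality) is exactly what Morgan--Shalen-type arguments establish, and you do not prove it.

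Two smaller points. First, your derivation that edge stabilizers are infinite "since $G$ is one-ended" is both unnecessary (the paper's convention makes virtually cyclic groups infinite by definition, so this is a hypothesis) and shaky, since a hyperbolic orbifold group with boundary need not be one-ended. Second, your treatment of the peripheral circles and segments is in the right spirit (ellipticity of boundary subgroups forces the dual pieces off the boundary pattern), but once the covering argument is in place this comes out of the relative version of the duality together with the equivariant geodesic choice, rather than needing a separate isotopy argument; the point you flag about pushing curves off peripheral segments is handled automatically because geodesic representatives disjoint from the boundary geodesics and mirror geodesics stay disjoint after descending.
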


A non-trivial simple $1$-suborbifold is the generalisation of a simple closed curve.
It is the image in the orbifold $\Sigma$ of an infinite properly embedded topological line $\gamma\subset C\subset\bbH^2$,
with cocompact stabiliser, 
and disjoint from its translates (\ie $g.\gamma\cap\gamma=\es$ for all $g\in G\setminus \Stab(\gamma)$).
Two sided means that $\Stab(\Gamma)$ does not exchange the two connected components of $\bbH^2\setminus\gamma$.
Note that 
an embedded segment joining two mirrors is a two sided simple $1$-suborbifold.
An embedded segment joining two cone points with cone angle $\pi$ (or a mirror to a cone point of angle $\pi$) 
is a one-sided simple sub-orbifold.
In general, a two-sided simple $1$-suborbifold cannot contain any cone point.

\begin{rem} 
  Morgan-Shalen's result is stated for surfaces, but it easily extends to orbifolds.
Indeed, there is a finite Galois covering $\Sigma_0$ of $\Sigma$ which is a surface, and the action of $\pi_1(\Sigma)$ 
is dual to a family of disjoint simple closed curves.
Up to changing these curves to geodesics with multiplicities in $\Sigma$,  this family of curves is
$\pi_1(\Sigma_0)$-invariant, and the result easily follows.
\end{rem}

Since a two-sided simple $1$-suborbifold which intersects a mirror has infinite dihedral fundamental group, we get:

\begin{lem}\label{lem_curves_orbifold}
 Let $(G,\calb)$ be a hyperbolic orbifold group with its peripheral structure, and $G\actson T$ be a minimal action on a simplicial tree relative to $\calb$.

If edge stabilisers of $T$ are $\Z$-groups, then $T$ is dual to a finite disjoint union of two-sided simple closed curves in $\Sigma$, which 
don't intersect the peripheral subset,  the conical singularities and the mirrors of $\Sigma$.
\end{lem}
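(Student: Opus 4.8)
The plan is to deduce Lemma~\ref{lem_curves_orbifold} from Lemma~\ref{lem;splitting_curve} by showing that, when all edge stabilisers of $T$ are $\Z$-groups, none of the simple $1$-suborbifolds dual to $T$ can be anything other than an ordinary (two-sided) simple closed curve avoiding the exceptional locus. First I would invoke Lemma~\ref{lem;splitting_curve} to write $T$ as the tree dual to a finite disjoint union $\mathcal{C}$ of non-trivial two-sided simple $1$-suborbifolds of $\Sigma$ which are disjoint from the peripheral circles and segments. The edge stabilisers of $T$ are exactly (conjugates of) the fundamental groups $\pi_1(\gamma)$ of the components $\gamma$ of $\mathcal{C}$, so the hypothesis says each such $\pi_1(\gamma)$ is a $\Z$-group, i.e.\ virtually cyclic with infinite centre, hence \emph{not} of dihedral type.

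Next I would run through the possible types of a two-sided simple $1$-suborbifold $\gamma$ in $\Sigma$ and eliminate all but the closed-curve case. A component $\gamma$ is the image of a properly embedded line in $C\subset\bbH^2$ with cocompact stabiliser; topologically in $\Sigma$ it is either a circle or a compact arc whose endpoints lie on the topological boundary. If $\gamma$ is an arc, then its endpoints necessarily lie on mirrors (since $\gamma$ is disjoint from the peripheral circles and segments, and a boundary component is a union of mirrors and peripheral segments, an endpoint on the boundary that is not on a peripheral segment must be on a mirror); but an arc joining two mirrors (or a mirror to itself) has stabiliser acting on its preimage line as an infinite dihedral group, so $\pi_1(\gamma)\cong D_\infty$ up to finite kernel, i.e.\ it is of dihedral type, contradicting that $\pi_1(\gamma)$ is a $\Z$-group. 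Likewise $\gamma$ cannot meet a mirror in its interior, as the reflection would again force dihedral type; and a two-sided simple $1$-suborbifold contains no cone point, as recalled in the text. Hence each component $\gamma$ is an embedded circle in the underlying surface disjoint from the mirrors, the peripheral circles and segments, and the conical points, and $\pi_1(\gamma)\cong\bbZ$ as required. Two-sidedness is part of the conclusion of Lemma~\ref{lem;splitting_curve}, so it is automatic.

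Finally I would note that minimality of $G\actson T$ guarantees the suborbifold is non-peripheral in the usual sense (no component bounds a collared boundary region), and conclude that $T$ is dual to a finite disjoint union of two-sided simple closed curves in $\Sigma$ avoiding the peripheral subset, the conical singularities, and the mirrors. I expect the only genuinely delicate point to be the case analysis of which boundary stratum an arc endpoint can land on: one must use carefully the description of the topological boundary of an orbifold with mirrors as a union of peripheral circles and of chains of mirrors and peripheral segments, together with the fact that $\gamma$ is disjoint from all peripheral circles and segments, to force any arc endpoint onto a mirror and hence produce the dihedral contradiction. Everything else is a direct reading-off of virtually cyclic type from the local picture of the suborbifold, using the dichotomy ($\Z$-group versus dihedral type) established earlier in the excerpt.
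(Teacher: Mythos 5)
Your proposal is correct and follows essentially the same route as the paper: the authors deduce the lemma from Lemma~\ref{lem;splitting_curve} by the single observation that a two-sided simple $1$-suborbifold meeting a mirror has infinite dihedral fundamental group (hence is not a $\Z$-group), together with the remark already made in the text that two-sided simple $1$-suborbifolds contain no cone points. Your write-up merely fleshes out this case analysis (arcs ending on mirrors, interior intersections with mirrors), which is exactly the content of the paper's one-line argument.
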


Let $N$ be a regular neighbourhood of the union of mirrors and peripheral segments of $\Sigma$, avoiding
cone singularities. Then the curves of $\partial N$ define a $\Z$-splitting of $G$ which we call
the \emph{mirrors} splitting of $G$.
This splitting might be trivial if $N$ is connected and $\Sigma\setminus N$ is an annulus, or a disk with at most one cone point. 

\begin{prop}\label{prop_mirrors}
The mirrors splitting of an orbifold with mirrors $(G;\calb)$ 
is a $\Z$-JSJ splitting of $(G;\calb)$.
\end{prop}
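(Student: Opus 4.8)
I want to show that the mirrors splitting of an orbifold with mirrors $(G;\calb)$ — the $\Z$-splitting of $G$ dual to the boundary curves $\partial N$ of a regular neighbourhood $N$ of the union of mirrors and peripheral segments — is a $\Z$-JSJ splitting of $(G;\calb)$. Recall (Section \ref{sec_JSJ}) that a JSJ splitting over the class $\cala$ of subgroups of $\Z$-subgroups is a $\Z$-tree that is $\cala$-universally elliptic and dominates every $\cala$-universally elliptic $(\cala,\calb)$-tree. So the proof naturally splits into two parts: universal ellipticity of the mirrors splitting, and maximality (domination).

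\textbf{Step 1: universal ellipticity.} Let $T_N$ denote the Bass-Serre tree of the mirrors splitting, with edge groups the cyclic subgroups carried by the components of $\partial N$. I must check each such edge group is elliptic in every $(\cala,\calb)$-tree $T$, i.e.\ every $\Z$-tree relative to $\calb$. By Lemma \ref{lem_curves_orbifold}, such a $T$ is dual to a finite disjoint union of two-sided simple closed curves in $\Sigma$ avoiding the peripheral subset, the conical singularities and the mirrors. The key geometric point is that any such curve can be isotoped off the (compact, thin) neighbourhood $N$: its complement $\Sigma \setminus N$ carries all of them. Concretely, a curve of $\partial N$ is freely homotopic into $\Sigma\setminus N'$ for a slightly larger neighbourhood $N'$, hence its fundamental group can be conjugated into a vertex group of the splitting dual to the curve system of $T$ — equivalently, it fixes a point of $T$. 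This is the standard fact that disjoint simple closed curves, one of which ($\partial N$) is disjoint from the others (the curve system of $T$), are simultaneously realised, so each is elliptic with respect to the splitting dual to the other. I would phrase this via the surface/orbifold intersection-number argument: $i(\partial N, c) = 0$ for every curve $c$ dual to an edge of $T$.

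\textbf{Step 2: domination (maximality).} Given any $\cala$-universally elliptic $(\cala,\calb)$-tree $T$, I must show the mirrors splitting dominates it, i.e.\ every vertex stabiliser of $T_N$ is elliptic in $T$. The vertex groups of $T_N$ are: the subgroup $\pi_1^{orb}(N)$ (a virtually free or virtually cyclic group carried by the regular neighbourhood of the mirrors), and the pieces $\pi_1^{orb}(\Sigma\setminus N)$ which are orbifold groups of conical type (no mirrors). For the mirror-neighbourhood vertex: $N$ deformation retracts onto a graph-like $1$-orbifold, and its fundamental group is generated by elements each lying in a boundary/dihedral subgroup that is peripheral or bounds; since $T$ is relative to $\calb$ and its curves avoid the mirrors, I argue this vertex group is elliptic in $T$ — this uses that the curves dual to $T$ miss $N$ entirely, so $N$ sits inside a single complementary piece, hence inside a vertex group of the splitting dual to $T$. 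For the conical pieces $\Sigma\setminus N$: here I invoke universal ellipticity of $T$ the other way. The edges of $T_N$ are $\Zmax$-ish cyclic groups supported on $\partial N$; by Step 1 these are elliptic in $T$. A vertex group of $T_N$ that is a conical orbifold group could a priori act non-trivially on $T$, but then — by Lemma \ref{lem;splitting_curve} applied to the induced action — it would split over a virtually cyclic subgroup along an essential curve of $\Sigma\setminus N$, and this curve is a curve of $\Sigma$ disjoint from the mirrors, hence (being two-sided and not touching mirrors) has $\Z$ stabiliser; refining $T_N$ by it yields a $\Z$-splitting of $G$ relative to $\calb$ in which the $\partial N$-curves remain, contradicting that $T$ is universally elliptic unless that extra curve is already elliptic in $T$ — i.e.\ the conical vertex is elliptic in $T$ after all. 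I would make this precise using the standard "induced splitting on a vertex group" argument (cf.\ \cite{GL3a}).

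\textbf{Main obstacle.} The delicate point is Step 2, specifically controlling the mirror-neighbourhood vertex group and showing the argument is not circular: I am using universal ellipticity of the unknown tree $T$ to bound the action of vertex groups of $T_N$, while the very definition of JSJ maximality quantifies over such $T$. The clean way is to show directly that the curve system dual to any relative $\Z$-tree $T$ can be isotoped into $\Sigma\setminus N$ (since by Lemma \ref{lem_curves_orbifold} it avoids the mirrors and peripheral subset, and a neighbourhood of the mirrors can be shrunk), so $T_N$ is obtained from $T$ by collapses and blow-ups supported inside $\Sigma\setminus N$ together with the addition of the $\partial N$ curves — giving domination by inspection of the curve complexes. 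I would also need the auxiliary fact that among $\Z$-trees relative to $\calb$, the deformation space containing $T_N$ is the JSJ one, which follows once both universal ellipticity and domination are established, together with the observation (already used in the excerpt for $T_\Z$) that the flexible vertices of a $\Z$-JSJ of a conical orbifold are the whole orbifold group, so the $\Sigma\setminus N$ pieces are genuinely the flexible parts and $\pi_1^{orb}(N)$ is the (only) new rigid vertex.
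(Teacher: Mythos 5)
Your Step 1 is fine and matches the paper. The gap is in Step 2, and it is the heart of the matter: nowhere do you establish the key fact that a $\Z$-universally elliptic splitting of $(G;\calb)$ can only be dual to curves \emph{parallel to} $\partial N$. Your contradiction argument for a conical piece does not work as written: if a vertex group $G_v$ of the mirrors splitting acts non-elliptically on $T$, the essential curve $c\subset\Sigma\setminus N$ produced by Lemma \ref{lem;splitting_curve} has stabiliser commensurable with an \emph{edge} stabiliser of $T$, so it is automatically elliptic in $T$ and in the refinement of the mirrors splitting you build; no contradiction with universal ellipticity of $T$ is ever derived, and the conclusion ``the conical vertex is elliptic in $T$ after all'' is a non sequitur. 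What is actually needed is the crossing-curve lemma used in the paper (\cite[Lemma 5.3]{Gui_reading}): any essential simple closed curve in $\Sigma\setminus N$ that is \emph{not} boundary-parallel is essentially intersected by another non-peripheral simple closed curve in $\Sigma\setminus N$, hence its $\Z$-group fails to be universally elliptic. Applied to the curve $c$ above (which is not parallel to $\partial N$, since a boundary-parallel curve would give a trivial splitting of $G_v$), this contradicts the universal ellipticity of $T$ and forces every universally elliptic $T$ to be dual to curves parallel to $\partial N$, whence domination. Without some such transversality input the maximality claim simply does not follow.

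Your proposed ``clean way'' in the final paragraph cannot repair this, because it would prove too much: isotoping the curves of an \emph{arbitrary} relative $\Z$-tree $T$ into $\Sigma\setminus N$ does not give domination of $T$ by the mirrors splitting. If $T$ is dual to a single essential curve of $\Sigma\setminus N$ not parallel to $\partial N$, the conical vertex group of the mirrors splitting is not elliptic in $T$, so the mirrors splitting does not dominate $T$ --- and it is not supposed to, since such a $T$ is not universally elliptic. Domination is only asserted for universally elliptic trees, and distinguishing those is exactly where the crossing-curve argument enters; ``inspection of the curve complexes'' conflates the two quantifiers. (Your worry about circularity is unfounded --- quantifying over universally elliptic $T$ is exactly what the JSJ definition asks --- but the resolution is the intersection argument, not the isotopy one.)
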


\begin{proof}
  By  
 Lemma \ref{lem;splitting_curve} any $\Z$-splitting of $G$ relative to $\calb$ is dual to a curve in $\Sigma\setminus N$,
so the mirrors splitting is clearly $\Z$-universally elliptic.
Let's prove maximality. Consider $T$ another $\Z$-splitting of $\calb$, dual to some disjoint union of  curves $c_i\subset \Sigma\setminus N$.
If some $c_i$ is not parallel to the boundary of $N$, then by \cite[Lemma 5.3]{Gui_reading}, there exists another 
non-peripheral simple closed curve $c'\subset \Sigma\setminus N$
which intersects $c_i$ in an essential way, so $T$ is not $\Z$-universally elliptic.
 It follows that all curves $c_i$ are parallel to the boundary of $N$, and $T$ is dominated by the mirrors splitting.
\end{proof}

Lemma \ref{lem_curves_orbifold} and Proposition \ref{prop_mirrors} extend obviously to a bounded Fuchsian group $G$
(\ie an extension of an orbifold group by a finite group $F$, see Section \ref{sec_bfg}).
Indeed, any minimal action of $G$ on a tree factors through $G/F$ as 
the fixed subtree of $F$ is non-empty and $G$-invariant.
\\

\subsubsection{$\Z$-JSJ decomposition of hyperbolic groups}

We now extend this description to the $\Z$-JSJ decomposition of any one-ended hyperbolic group.
 Recall a group of  \emph{dihedral type} is a virtually cyclic group with finite centre.

\begin{prop}\label{prop_ZJSJ}
  Let $G$ be a one-ended hyperbolic group, and $T$ be a JSJ splitting over two-ended subgroups.
Then one can obtain a $\Z$-JSJ splitting for $G$ by
\begin{itemize}
\item first refining $T$ using the mirrors splitting of all its hanging bounded Fuchsian groups;
\item then collapsing all edges whose stabiliser is of dihedral type.
\end{itemize}

In particular, flexible groups of the $\Z$-JSJ splitting of $G$ are hanging bounded Fuchsian groups
without reflection.
\end{prop}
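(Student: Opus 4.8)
The plan is to realize the $\Z$-JSJ deformation space explicitly by modifying Bowditch's JSJ over two-ended subgroups, and then to identify the collapsed tree of cylinders with the tree obtained by the two operations described. Let me write the strategy.

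\medskip

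First I would set up the candidate tree. Let $T$ be a JSJ splitting of $G$ over the class of two-ended subgroups (Bowditch's splitting). Its flexible vertices are hanging bounded Fuchsian groups, with peripheral structure given by their boundary subgroups, and its edge groups are two-ended. For each hanging bounded Fuchsian vertex group $G_v$, the mirrors splitting of $(G_v;\calb_v)$ is a $\Z$-splitting relative to $\calb_v$ (by the discussion preceding Proposition \ref{prop_mirrors}); its edge groups are $\Z$-subgroups of $G_v$, hence $\Z$-subgroups of $G$ (being of infinite order and contained in a two-ended subgroup of the ambient hyperbolic group). Since the edges of $T$ incident to $v$ carry two-ended stabilisers that are elliptic in the mirrors splitting (they lie in boundary subgroups of $G_v$, which are elliptic), $T$ is elliptic with respect to the mirrors splitting of each $G_v$, so one can form the blowup $\hat T$ of $T$ refining each hanging Fuchsian vertex by its mirrors splitting. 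Then let $T_0$ be the tree obtained from $\hat T$ by collapsing every edge whose stabiliser is of dihedral type. I claim $T_0$ is a $\Z$-JSJ splitting.

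\medskip

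The core of the argument is: (1) $T_0$ is a $\Z$-tree; (2) $T_0$ is $\Z$-universally elliptic; (3) $T_0$ dominates every $\Z$-universally elliptic $\Z$-tree, i.e.\ $T_0$ lies in the JSJ deformation space $\cald_\Z$. For (1): edges of $\hat T$ are either edges of $T$ (stabiliser two-ended — either $\Z$ or dihedral type) or edges coming from the mirrors splittings of hanging Fuchsian vertices (stabiliser $\Z$). After collapsing the dihedral-type edges, all remaining edges have $\Z$-stabiliser. For (2): an edge of $T_0$ is $\Z$-universally elliptic iff its stabiliser is elliptic in every $\Z$-tree $S$. Edges coming from $T$ are universally elliptic already over two-ended subgroups, hence a fortiori over $\Z$-subgroups. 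Edges coming from the mirrors splitting of a hanging Fuchsian $G_v$: their stabiliser is a boundary-type $\Z$-subgroup of $G_v$; using Lemma \ref{lem_curves_orbifold} restricted to $G_v$, any $\Z$-splitting of $G$ restricts to a splitting of $G_v$ dual to simple closed curves avoiding mirrors and cone points, and by Proposition \ref{prop_mirrors} the mirrors splitting is $\Z$-universally elliptic in $(G_v;\calb_v)$, so these edge groups are elliptic in every $\Z$-tree. For (3): given a $\Z$-universally elliptic $\Z$-tree $S$, I must show every vertex stabiliser of $T_0$ is elliptic in $S$. Vertex stabilisers of $T_0$ are of two kinds: rigid vertex groups of $T$ (which are elliptic in every two-ended tree, in particular in $S$ after noting that $S$ is a two-ended tree), and rigid vertices of the mirrors splittings of hanging Fuchsian $G_v$'s, i.e.\ the pieces $\Sigma_v \setminus N_v$ — by Proposition \ref{prop_mirrors} (maximality of the mirrors splitting), every $\Z$-splitting of $(G_v;\calb_v)$ is dominated by the mirrors splitting, so these pieces are elliptic in the restriction of $S$ to $G_v$, hence in $S$. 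One also has to handle the collapsed dihedral edges: collapsing does not destroy the deformation space membership because the collapsed edge groups (dihedral type) are never the stabiliser of any edge of a $\Z$-tree, and one checks that $T_0$ still dominates all $\Z$-universally-elliptic $\Z$-trees and is dominated by $\hat T$, which dominates $T$ hence all two-ended-universally-elliptic trees. Finally, by Lemma \ref{lem_same_D} and Proposition \ref{prop;Tc_inv}, the collapsed tree of cylinders of $\cald_\Z$ is the canonical $\Z$-JSJ splitting, and one checks $T_0$ is already collapsed-tree-of-cylinders-like (its edge groups are $\Zmax$ up to the tree of cylinders operation), so it represents the $\Z$-JSJ.

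\medskip

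For the last sentence of the statement — flexible vertices of the $\Z$-JSJ are hanging bounded Fuchsian groups without reflection — I would argue as follows. A flexible vertex group of the $\Z$-JSJ is not elliptic in some $\Z$-tree. Tracking through the construction, the flexible vertices of $T_0$ are exactly the pieces $\Sigma_v \setminus N_v$ of the hanging bounded Fuchsian groups of $T$, which are themselves bounded Fuchsian groups; since $N_v$ was a regular neighbourhood of all the mirrors and peripheral segments, $\Sigma_v \setminus N_v$ has no mirrors, i.e.\ the corresponding vertex group is a bounded Fuchsian group without reflection, with peripheral structure given by its boundary subgroups (the incident edges being the $\partial N_v$ curves together with surviving edges of $T$, all of which land in boundary subgroups). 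One must also check no new flexible vertices are created by the collapse of dihedral-type edges and that rigid vertices of $T$ stay rigid: a rigid vertex of $T$ has no splitting over two-ended subgroups relative to its incident edge groups, hence none over $\Z$-subgroups, so it is rigid in $T_0$ as well. The main obstacle I anticipate is step (3): carefully verifying that restricting an arbitrary $\Z$-universally-elliptic $\Z$-tree $S$ to a hanging Fuchsian vertex group $G_v$ yields a tree to which Lemma \ref{lem_curves_orbifold} and Proposition \ref{prop_mirrors} genuinely apply — in particular controlling the peripheral structure $\calb_v$ (the edges of $T$ incident to $v$ must remain elliptic in $S|_{G_v}$, which holds because $S$ is a $\Z$-tree and those edge groups need not be elliptic a priori — one uses that $T$ is universally elliptic over two-ended groups to see they are elliptic in $S$) and ensuring the bookkeeping of which edges get collapsed is consistent with the tree of cylinders operation of Lemma \ref{lem_same_D}.
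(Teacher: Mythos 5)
Your strategy is the same as the paper's (refine Bowditch's tree by the mirrors splittings, collapse the dihedral-type edges, then verify universal ellipticity and maximality via Proposition~\ref{prop_mirrors}), but two steps, as written, do not go through. First, the assertion in your step (3) that ``every $\Z$-splitting of $(G_v;\calb_v)$ is dominated by the mirrors splitting'' is false: Proposition~\ref{prop_mirrors} only says the mirrors splitting is a $\Z$-JSJ splitting of $(G_v;\calb_v)$, so it dominates the \emph{universally elliptic} relative $\Z$-splittings, not all of them (if $\Sigma_v$ has no mirrors, the mirrors splitting is trivial and dominates nothing non-trivial, while $(G_v;\calb_v)$ has many splittings dual to curves). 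What is actually needed, and what the paper uses, is that since $S$ is $\Z$-universally elliptic as a $G$-tree and the incident edge groups of $v$ (the boundary subgroups) are elliptic in $S$, the induced action of $(G_v;\calb_v)$ on $S$ is itself universally elliptic among relative $\Z$-splittings of $G_v$; only then does maximality of the mirrors JSJ give that it dominates $G_v\curvearrowright S$, so that \emph{all} vertex groups of the mirrors splitting (both the $N_v$-pieces and the $\Sigma_v\setminus N_v$-pieces, which, incidentally, are the flexible Fuchsian-without-reflection pieces, not the ``rigid'' ones) are elliptic in $S$. You have the hypothesis in hand, but the sentence you invoke to conclude is wrong, so the deduction is unjustified as stated.

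Second, and more seriously, the collapse of the dihedral-type edges is not handled. Vertex groups of $T_0$ are not vertex groups of $\hat T$: the collapse amalgamates several vertex groups of $\hat T$ (rigid vertices of $T$ and regular-neighbourhood vertices) along dihedral-type edge groups, and your enumeration in step (3) and in the flexible-vertex discussion ignores these amalgamated vertices; you flag the issue (``one checks\dots'', ``one must also check\dots'') but give no argument, and this is exactly where the real work lies. One must show such an amalgam is elliptic in every $\Z$-tree: since a dihedral-type group is not a subgroup of any $\Z$-group, it fixes a unique point in any $\Z$-tree, and the two adjacent elliptic vertex groups, containing it, must both fix that same point, so the amalgam is elliptic (this is the paper's closing argument; for the domination step one can alternatively take an equivariant map $\hat T\ra T'$ and observe it must collapse every dihedral edge, hence factors through $T_0$). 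Without this, neither the claim that $T_0$ dominates every universally elliptic $\Z$-tree nor the claim that no new flexible vertices are created is proved. Finally, your concluding remarks about the tree of cylinders are unnecessary --- the proposition only asserts that $T_0$ lies in the $\Z$-JSJ deformation space --- and the claim that its edge groups are $\Zmax$ is neither needed nor true in general.
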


\begin{rem}
  The same statement holds for a one-ended finitely presented group in general, with essentially the same proof.
\end{rem}

\begin{proof}
  By definition, $T$ is elliptic in any splitting of $G$ over two-ended subgroups,
and in particular, in any $\Z$-splitting of $G$.
Let $\Hat T$ be the refinement of $T$ obtained using the mirrors splitting of its hanging bounded Fuchsian groups.
By Proposition \ref{prop_mirrors}, $\Hat T$ is $\Z$-universally elliptic since in any $\Z$-splitting of $G$,
the boundary subgroups of the hanging bounded Fuchsian groups of $T$ are elliptic.
It follows that the $G$-tree $T_0$ obtained from $\Hat T$ by collapsing all edges whose stabiliser is not in $\Z$
is $\Z$-universally elliptic.

Let $T'$ be any  $\Z$-universally elliptic $\Z$-splitting.
Since $T'$ is a splitting over two-ended subgroups, rigid subgroups of $T$ are elliptic in $T'$.
Fix a hanging bounded Fuchsian group $G_v$ of $T$ with its peripheral structure $\calb$.
Every group of $\calb$ is elliptic in $T'$ by universal ellipticity of $T$.
Universal ellipticity of $T'$ says that the action of $(G_v,\calb)$ on $T'$ 
is elliptic with respect to any $\Z$-action
of $G_v$ relative to $\calb$. 
By Proposition \ref{prop_mirrors}, the mirrors splitting
of $G_v$ dominates $G_v\actson T'$. It follows that $\Hat T$ dominates $T'$.
Let $f:\Hat T\ra T'$ be an equivariant map.
Since  edge stabilisers of $T'$ are in $\Z$, $f$ collapses any edge whose stabiliser is of dihedral type,
and hence factors through $T_0$.

We prove that flexible vertices are hanging bounded Fuchsian groups
without reflection.

Vertex groups of $\Hat T$ are of three kinds: 
those coming from rigid vertex groups of $T$, which fix a point in any virtually cyclic splitting,
groups corresponding to the regular neighbourhoods of the mirrors and peripheral segments 
in the mirror splitting of a hanging bounded Fuchsian group,
and hanging  bounded Fuchsian groups
without reflection coming from the mirrors splitting.
Vertex groups of the second kind are elliptic in any $\Z$-tree by Proposition \ref{prop_mirrors}.
All edges groups of $\Hat T$ incident on a vertex of the third kind are $\Z$-groups.
Therefore, there are not collapsed in $T_0$.
It follows that a vertex group $G_{v_0}$ of $T_0$ that is not a hanging bounded Fuchsian group
without reflection is obtained by amalgamating along  subgroups of dihedral type, 
one or several vertices of the first or second kind.

If two such  vertices $u,v$  in $\hat{T}$ are joined by an edge $e$ with $G_e$ of dihedral type,
then $G_u$ and $G_v$ have  a unique common fix point in any $\Z$-tree $S$ as $G_e$ fixes no edge in $S$.
It follows that $G_{v_0}$ itself fixes a point in any $\Z$-tree. Hence it is not flexible.
\end{proof}

The following corollary applies for instance to an orbifold group with mirrors.
\begin{SauveCompteurs}{aut_hyp}
\begin{cor}[see also \cite{Fujiwara_outer}]\label{cor_aut_hyp}
  Let $G$ be a one-ended hyperbolic group. 
Then there is a finite index subgroup $\Out_f(G)$ of $\Out(G)$ which is an extension 
$$1\ra \mathit{Ab}  \ra \Out_f(G_v)\ra \prod_{i=1}^n PMCG_f^*(S_i)\ra 1$$
where  $\mathit{Ab}$ is virtually abelian, and
$PMCG_f^*(S_i)$ is a finite index subgroup  of $PMCG^*(S_i)$, the pure extended mapping class group of a surface with boundary
$S_i$ as defined in Section \ref{sec_orbi_mcg}.
\end{cor}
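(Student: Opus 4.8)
Let $T_{\Z}$ be the collapsed tree of cylinders of the $\Z$-JSJ deformation space $\cald_\Z$ and $\Gamma=T_{\Z}/G$ the corresponding graph of groups. Since $\cald_\Z$ is $\Out(G)$-invariant and, by Proposition~\ref{prop;Tc_inv}, the collapsed tree of cylinders is the same for all trees of $\cald_\Z$, the tree $T_{\Z}$ is fixed by all of $\Out(G)$, so $\Out(G)=\Out_{T_{\Z}}(G)$. Fixing a marking of $\Gamma$ by $G$, Lemmas~\ref{lem_Phi2tree} and~\ref{lem_tree2Phi} give an epimorphism $\rho:\delta\Aut(\Gamma)\onto\Out(G)$, and the plan is to run $\Out(G)$ through the filtration $q_\Gamma,q_E,q_V,q_D$. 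As $\Gamma$ is a finite graph, $\Homeo(\Gamma)$ is finite, and as every edge group $\Gamma_e$ of $T_{\Z}$ is a $\Z$-group, $\Aut(\Gamma_e)$ is finite (Lemma~\ref{lem_autZ}); hence $\ker q_E$ has finite index in $\delta\Aut(\Gamma)$, and by Lemma~\ref{lem_ext} it fits into
\[
1\longrightarrow \prod_{e} Z_{\Gamma_{t(e)}}(\Gamma_e)\longrightarrow \ker q_E \xrightarrow{\;q_V\;}\prod_{v}\Autm(\Gamma_v;\calp_v)\longrightarrow 1 ,
\]
where $\calp_v$ is the marked peripheral structure induced on $\Gamma_v$ by its incident edge groups. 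Each $Z_{\Gamma_{t(e)}}(\Gamma_e)$ lies inside the centraliser in $G$ of the infinite virtually cyclic group $\Gamma_e$, which is virtually cyclic, so $\prod_e Z_{\Gamma_{t(e)}}(\Gamma_e)$ is virtually abelian.

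The next step is the descent from $\delta\Aut(\Gamma)$ to $\Out(G)$: by Levitt's exact sequence (see~\cite{Lev_automorphisms}), the kernel of the natural map $\Out(G)\to\prod_v\Outm(\Gamma_v;\calp_v)\times\prod_e\Out(\Gamma_e)$ is the group of twists $\mathcal T$, a quotient of $\prod_e Z_{\Gamma_{t(e)}}(\Gamma_e)$ and therefore virtually abelian. As $\prod_e\Out(\Gamma_e)$ is finite, the subgroup $\Out_0(G):=\ker\!\big(\Out(G)\to\prod_e\Out(\Gamma_e)\big)$ has finite index in $\Out(G)$, and it fits into $1\to\mathcal T\to\Out_0(G)\xrightarrow{\pi}\prod_v\Outm(\Gamma_v;\calp_v)$. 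Moreover $\pi$ is surjective, since $q_V$ already maps $\ker q_E$ onto $\prod_v\Autm(\Gamma_v;\calp_v)$ by Lemma~\ref{lem_ext}.

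It then remains to identify the factors $\Outm(\Gamma_v;\calp_v)$. By Proposition~\ref{prop_ZJSJ}, each flexible vertex group $\Gamma_{v_i}$ ($i=1,\dots,n$) is a hanging bounded Fuchsian group without reflection, hence a finite extension $1\to F_i\to\Gamma_{v_i}\to\pi_1(\Sigma_i)\to 1$ of the fundamental group of a compact $2$-orbifold $\Sigma_i$ of conical type, with $\calp_{v_i}$ induced by its boundary subgroups. The group $F_i$ being characteristic (it is the maximal finite normal subgroup of $\Gamma_{v_i}$), $\Outm(\Gamma_{v_i};\calp_{v_i})$ is commensurable with $\Outm(\pi_1(\Sigma_i);\ol\calp_{v_i})$, which equals $PMCG^*(\Sigma_i)$ by Proposition~\ref{prop_extMCG}; and by~\cite[Corollary 3]{MacH_MCG} the latter is the pure extended mapping class group $PMCG^*(S_i)$ of the surface with boundary $S_i$ obtained from $\Sigma_i$ by turning cone points and boundary circles into marked points. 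On the other hand, each rigid vertex group $\Gamma_w$ is one-ended relative to $\calp_w$ --- otherwise a nontrivial splitting of $\Gamma_w$ over a finite group relative to $\calp_w$ would refine $T_{\Z}$ and, after collapsing, split $G$ over a finite group, contradicting one-endedness of $G$ --- and, being rigid in the $\Z$-JSJ, admits no nontrivial splitting over a $\Z$-subgroup, hence none over a $\Zmax$-subgroup, relative to $\calp_w$; the rigidity criterion, Proposition~\ref{prop_alt}, then gives that $\Outm(\Gamma_w;\calp_w)$ is finite.

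Finally I would assemble the pieces: in $\prod_v\Outm(\Gamma_v;\calp_v)$ choose, for each flexible vertex, a finite-index subgroup $P_i\le\Outm(\Gamma_{v_i};\calp_{v_i})$ isomorphic to a finite-index subgroup $PMCG_f^*(S_i)$ of $PMCG^*(S_i)$, and set $\Out_f(G):=\pi^{-1}\!\big(\prod_i P_i\times\prod_w 1\big)$. Since $\prod_w\Outm(\Gamma_w;\calp_w)$ is finite and each $P_i$ has finite index, $\Out_f(G)$ has finite index in $\Out_0(G)$, hence in $\Out(G)$; and $\pi$ restricts to a surjection $\Out_f(G)\onto\prod_i P_i\cong\prod_{i=1}^n PMCG_f^*(S_i)$ with kernel $\mathcal T$, so taking $\mathit{Ab}:=\mathcal T$ gives the desired extension. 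The step I expect to require the most care is the descent of the second paragraph --- pinning down the kernel of $\Out(G)\to\prod_v\Outm(\Gamma_v;\calp_v)$ as the virtually abelian twist group rather than something larger; a secondary obstacle is establishing relative one-endedness of the rigid vertex groups (so that the rigidity criterion applies) together with the finite-kernel commensurability statement used for the Fuchsian vertices.
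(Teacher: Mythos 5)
Your route is essentially the paper's: the canonical ($\Out(G)$-invariant) tree of cylinders of the $\Z$-JSJ, a Levitt-type analysis giving a virtually abelian kernel and a finite-index surjection onto $\prod_v\Outm(\Gamma_v;\calp_v)$, the rigidity criterion for rigid vertices, and mapping class groups for the flexible ones; the detour through the $\delta\Aut(\Gamma)$ filtration is harmless and the relative one-endedness argument for rigid vertices is fine. The genuine gap is exactly the step you flag at the end but do not prove: the assertion that $\Outm(\Gamma_{v_i};\calp_{v_i})$ is commensurable with $PMCG^*(S_i)$ ``because $F_i$ is characteristic''. Characteristicness only produces a homomorphism $\Outm(\Gamma_{v_i};\calp_{v_i})\ra\Outm(\pi_1(\Sigma_i);\ol\calp_{v_i})\cong PMCG^*(S_i)$, whose kernel becomes finite after restricting to the finite-index subgroup acting trivially on $F_i$; it gives no control whatsoever on the image. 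The crux is that a mapping class of $S_i$ lifts to an automorphism of $\Gamma_{v_i}$ only if it stabilises the equivalence class of the extension $1\ra F_i\ra\Gamma_{v_i}\ra\pi_1(\Sigma_i)\ra 1$ in $\cale(\pi_1(\Sigma_i),F_i)$. The paper supplies precisely this missing ingredient: by the lemma of Section \ref{subsec_extensions}, $\cale(\pi_1(\Sigma_i),F_i)$ is finite, $\Aut(\pi_1(\Sigma_i))$ acts on it, so the stabiliser of the class of $\Gamma_{v_i}$ has finite index and its elements lift; hence the image of the subgroup $O_{v_i}$ acting trivially on $F_i$ is a finite-index subgroup $PMCG_f^*(S_i)$. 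Without this argument your subgroups $P_i$ need not exist and the final surjection onto $\prod_i PMCG_f^*(S_i)$ is unjustified.

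A secondary, easily repaired point: even with the lifting argument one only obtains a surjection $O_{v_i}\onto PMCG_f^*(S_i)$ with finite kernel, not a subgroup of $\Outm(\Gamma_{v_i};\calp_{v_i})$ isomorphic to $PMCG_f^*(S_i)$, so you should not insist on $P_i\cong PMCG_f^*(S_i)$. Instead, as in the paper, define $\Out_f(G)$ as the preimage under $\pi$ of $\prod_i O_{v_i}$ (times the trivial group at rigid vertices, up to finite index) and take $\mathit{Ab}$ to be the kernel of the composed map $\Out_f(G)\onto\prod_i PMCG_f^*(S_i)$; this kernel is a finite extension of your twist group $\mathcal T$, hence still virtually abelian, which is all the statement requires.
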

\end{SauveCompteurs}

\begin{proof}
  Let $T_\calz$ be the tree of cylinders of the $\calz$-JSJ deformation space.
Then $T_\calz$ is $\Out(G)$-invariant. Let $\Out'_0(G)$ be the finite index
subgroup of $\Out(G)$ acting as the identity on  $\Gamma=T_\calz/G$.
We apply \cite{Lev_automorphisms}.
Let $\rho:\Out(G)\ra \prod_{v\in V(\Gamma)} \Out(G_v)$.
For each vertex $v\in \Gamma$, let $\calp_v$ be a marking of the peripheral structure
of $G_v$ induced by the incident edge groups.
Then $\Outm(G_v,\calp_v)$ is denoted by $PMCG(G_v)$ in \cite{Lev_automorphisms}.
By \cite[Prop 2.3]{Lev_automorphisms}, the kernel of $\rho$ is virtually abelian,
and its image contains $\prod_{v\in\Gamma} \Outm(G_v;\calp_v)$ with finite index.

For each rigid vertex $v$, $\Outm(G_v;\calp_v)$ is finite by the rigidity criterion \ref{prop_alt}.
For each flexible vertex $v$, $(G_v;\calp_v)$ is isomorphic to a finite extension 
$1\ra F\ra G_v\ra \pi_1(\Sigma_v)\ra 1$
where $\Sigma_v$ is a $2$-orbifold without mirror.

Denote by $\ol\calp_v$ the image of $\calp$ in $\Sigma_v$.
As recalled in Section \ref{sec_orbi_mcg},
 $\Outm(\pi_1(\Sigma_v),\ol\calp_v)\simeq PMCG^*(S_v)$
where $S_v$ is the surface with weighted marked points underlying $\Sigma_v$.

A finite index subgroup $O_v$ 
of $\Outm(G_v;\calp_v)$ acts as the identity on $F$ and maps with finite kernel 
to $PMCG^*(S_v)$.
Since the set of equivalence classes of extensions of $\pi_1(\Sigma_v)$ by $F$ is finite,
the image of $O_v$
in $PMCG^*(S_v)$ is a subgroup $PMCG_f^*(S_v)$ of finite index.
Define $\Out_f(G)$ as the preimage  under $\rho$ of the product of the $O_v$'s
where $v$ ranges over flexible subgroups of $\Gamma$.
The kernel $Ab$ of the  
morphism $\Out_f(G)\onto \prod_v PMCG_f^*(S_v)$ is a finite extension of $\ker \rho$,
hence is virtually abelian.
\end{proof}

\subsection{Splittings over $\Zmax$-subgroups}

Recall that the class of maximal $\Z$-subgroups of a hyperbolic group $G$ is denoted by $\Zmax$.
We are interested in splittings of a hyperbolic group $G$ over $\Zmax$-subgroups
because of the rigidity criterion (Proposition \ref{prop_alt}) witnessing their better behaviour with respect to automorphisms.
We start by generalities about $\Zmax$-trees (\ie actions of $G$ on trees whose edge stabilisers are $\Zmax$).

\begin{lem}\label{lem_inter_Zmax}
Let $G$ be hyperbolic.
  Let $T$ be a $\Zmax$-tree, and $v$ a vertex. For any $\Z$-subgroup $H$ of $G$,
either $H\subset G_v$ or $H\cap G_v$ is  finite.

In particular, a subgroup of $G_v$ is $\Zmax$ as subgroup of $G_v$ if and only if 
it is $\Zmax$ as a subgroup of $G$.
\end{lem}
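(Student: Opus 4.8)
The plan is to prove both assertions, starting from the first. Let $H$ be a $\Z$-subgroup of $G$, and let $c\in H$ be an element of infinite order in the centre of $H$ (so $H\subset VC(\langle c\rangle)$ and in fact $H$ is contained in the $\Zmax$-subgroup $\Zmax(\langle c\rangle)$). Suppose $H\cap G_v$ is infinite; I want to show $H\subset G_v$. Since $H\cap G_v$ is infinite and virtually cyclic, it contains a power $c^k$ with $k\geq 1$. Then $c^k\in G_v$ fixes the vertex $v$; moreover, since $c^k$ has infinite order and $G$ acts on a tree, $c^k$ is elliptic and its fixed-point set $\Fix(c^k)$ is a subtree of $T$ containing $v$.

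The key point is then to show that $H$ stabilises this fixed-point set, or rather that $H$ must fix $v$ itself. First I would observe that $c^k$ has infinite order, so it is not contained in any finite edge group — but edge groups here are $\Zmax$, hence infinite, so this alone is not enough. Instead I argue as follows: $c^k$ is central in $H$, so $H$ normalises $\langle c^k\rangle$, hence $H$ preserves $\Fix(c^k)$ (an element permuting a subtree of fixed points of a normal subgroup). If $H$ does not fix $v$, then $\Fix(c^k)$ is a subtree on which $H$ acts non-trivially, and since $H$ is virtually cyclic this action is either by a translation (giving a line) or dihedral. Pick an edge $e$ in $\Fix(c^k)$; then $\langle c^k\rangle \subset G_e$, and $G_e$ is a $\Zmax$-subgroup of $G$. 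But $c^k\in H$ and $H$ is itself a $\Z$-subgroup containing $c^k$ with $\langle c^k\rangle$ of finite index in $H$; since $c^k\in G_e$ and $G_e$ is $\Zmax$, commensurability of $\langle c^k\rangle$, $H$, and $G_e$ (all containing the infinite cyclic $\langle c^k\rangle$ with finite index, using that in a hyperbolic group a $\Z$-group has a canonical maximal $\Z$-overgroup, the pointwise stabiliser of the corresponding pair of boundary points) forces $H\subset \Zmax(\langle c^k\rangle) = G_e$-related group. More precisely: $\Zmax(\langle c^k\rangle)$ is the unique $\Zmax$-subgroup containing $\langle c^k\rangle$, it contains both $H$ and $G_e$, and $G_e$ being $\Zmax$ means $G_e = \Zmax(\langle c^k\rangle) \supset H$. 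Then $H\subset G_e \subset G_v$ after all, contradicting the assumption that $H$ does not fix $v$. Hence $H$ fixes $v$, i.e. $H\subset G_v$.

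For the ``in particular'' clause: let $L\subset G_v$ be a $\Z$-subgroup. If $L$ is $\Zmax$ as a subgroup of $G$, it is certainly maximal among $\Z$-subgroups of $G_v$ (any $\Z$-overgroup in $G_v$ is a $\Z$-overgroup in $G$, hence equals $L$), so it is $\Zmax$ in $G_v$. Conversely, suppose $L$ is $\Zmax$ in $G_v$, and let $L'$ be the $\Zmax$-subgroup of $G$ containing $L$ (i.e. $L' = \Zmax_G(L)$). Then $L'$ is a $\Z$-subgroup of $G$ with $L'\cap G_v\supset L$ infinite, so by the first part $L'\subset G_v$. But then $L'$ is a $\Z$-subgroup of $G_v$ containing $L$, and maximality of $L$ in $G_v$ forces $L' = L$; thus $L$ is $\Zmax$ in $G$.

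The main obstacle I expect is making rigorous the commensurability bookkeeping in the first paragraph — specifically, correctly invoking the uniqueness of the maximal $\Z$-overgroup $\Zmax_G(\langle c^k\rangle)$ in a hyperbolic group (as the pointwise fixator of a pair of boundary points, discussed just before Lemma~\ref{lem_VC2}) and checking that $G_e$, being a $\Zmax$-subgroup containing $\langle c^k\rangle$, must coincide with it. One should also be slightly careful that $H$ itself, being virtually cyclic with infinite centre containing $c$, satisfies $\langle c\rangle$ of finite index in $H$ and hence $H$ and $\langle c^k\rangle$ are commensurable, so $H\subset \Zmax_G(\langle c^k\rangle)$; these are routine but worth stating cleanly. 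Everything else is a standard Bass--Serre argument about elliptic elements and fixed subtrees.
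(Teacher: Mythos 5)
Your argument is correct in substance and rests on exactly the same key point as the paper's proof: an edge stabiliser of $T$ containing an infinite subgroup commensurable with $H$ must equal the unique $\Zmax$-overgroup of that subgroup, hence contains $H$ (the paper packages this slightly differently — $H\cap G_v$ has finite index in $H$, so $H$ is elliptic with some fixed vertex $u$, and one applies the observation to the edges of the arc $[u,v]$ rather than to $\Fix(c^k)$). One small repair is needed in your write-up: for an arbitrary edge $e$ of $\Fix(c^k)$ the inclusion $G_e\subset G_v$ that you invoke is not automatic, so either take $e$ to be the first edge of the segment of $\Fix(c^k)$ issuing from $v$ (such an edge exists as soon as $\Fix(c^k)\neq\{v\}$, which holds under your contradiction hypothesis), or observe that \emph{every} edge of $\Fix(c^k)$ has stabiliser $\Zmax(\grp{c^k})\supset H$, so $H$ fixes $\Fix(c^k)$ pointwise and in particular fixes $v$. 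Your handling of the ``in particular'' clause is correct and matches what the paper leaves implicit.
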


\begin{proof}
Assume $H\cap G_v$ is infinite. Then $H$ is elliptic so $H\subset G_u$ for some vertex $u$. 
If $u=v$ we are done.
Otherwise, $H\cap G_v$ fixes the arc $[u,v]$ and so does $H$ since edge stabilisers are $\Zmax$.
Thus $H\subset G_v$.
\end{proof}

\begin{lem}\label{lem_sup_Zmax}
Let $G$ be a one-ended hyperbolic group.
  Let $T,T'$ be $\Zmax$-trees, such that edge stabilisers of $T$ are elliptic in $T'$.

Then for each vertex $v\in T$, the action of $G_v$ on its minimal subtree in $T'$
has $\Zmax$ edge stabilisers,
and
the blowup $\Hat T$ of $T$ relative to $T'$ (as constructed in the beginning of Section \ref{sec_JSJ}) is a $\Zmax$-tree.
\end{lem}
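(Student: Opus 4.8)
The plan is to classify the edges of $\hat T$ and read off their stabilisers. First one checks that the blowup is even well defined: for every edge $e$ of $T$ the group $G_e$ is $\Zmax$, hence a $\Z$-subgroup, and it is elliptic in $T'$ by hypothesis; projecting a point of $T'$ fixed by $G_e$ onto the minimal $G_{t(e)}$-invariant subtree $Y_{t(e)}\subseteq T'$ gives a point $p_e\in Y_{t(e)}$ fixed by $G_e$, so the attaching points required in the construction exist. The edges of $\hat T$ then fall into two families: those descending from an edge $e$ of $T$, whose stabiliser is the corresponding $G_e$ (already $\Zmax$), and those lying inside a translate of some $Y_v$. An edge of the second kind is an edge $\epsilon$ of $T'$ contained in $Y_v$, and its stabiliser in $\hat T$ is $\Stab_G(\epsilon)\cap G_v=G'_\epsilon\cap G_v$, where $G'_\epsilon$ is the stabiliser of $\epsilon$ in $T'$; since $T'$ is a $\Zmax$-tree, $G'_\epsilon$ is $\Zmax$ in $G$. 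So everything reduces to understanding $G'_\epsilon\cap G_v$ for $\epsilon$ an edge of $Y_v$, which is exactly an edge stabiliser of the action $G_v\actson Y_v$; this will simultaneously give the first assertion of the lemma. When $G_v$ is elliptic in $T'$ there is nothing to prove, as $Y_v$ is a point.

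Next I would apply the dichotomy of Lemma \ref{lem_inter_Zmax} to the $\Zmax$-tree $T$ and the vertex $v$: as $G'_\epsilon$ is a $\Z$-subgroup, either $G'_\epsilon\subseteq G_v$, in which case $G'_\epsilon\cap G_v=G'_\epsilon$ is $\Zmax$ in $G$ and hence $\Zmax$ in $G_v$ by the second part of Lemma \ref{lem_inter_Zmax}; or $G'_\epsilon\cap G_v$ is finite. The whole content of the lemma is that this second case does not occur, and this is where one-endedness of $G$ is used — this is the main obstacle.

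To rule out the finite case, suppose $G'_\epsilon\cap G_v$ were finite. Collapsing in $Y_v$ every orbit of edges except that of $\epsilon$ produces a $G_v$-tree $Y_v'$ with a single orbit of edges, of finite stabiliser $G'_\epsilon\cap G_v$; it is minimal (collapses of minimal trees are minimal) and nontrivial, since at least the orbit of $\epsilon$ survives. Moreover each incident edge group $G_e$ (for $e$ incident to $v$ in $T$) fixes the point $p_e\in Y_v$, hence remains elliptic in the collapse $Y_v'$. Thus one may refine $T$ by blowing up $v$ and its $G$-orbit using $Y_v'$; since the incident edge groups are elliptic in $Y_v'$ this is a legitimate refinement, and being a refinement of a minimal tree by minimal trees along elliptic edge groups, the result is again a minimal $G$-tree. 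It carries an edge with finite stabiliser (every edge of each $Y_v'$ is crossed by the axis of a hyperbolic element of $G_v$, hence lies in the minimal subtree), so collapsing to that edge orbit exhibits a nontrivial splitting of $G$ over a finite group, contradicting one-endedness of $G$. Hence $G'_\epsilon\cap G_v$ is always $\Zmax$; this is the first assertion, and then both families of edges of $\hat T$ have $\Zmax$ stabilisers, so $\hat T$ is a $\Zmax$-tree.

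The delicate point is precisely the refinement argument: one must make sure that the blow-up of $T$ by $Y_v'$ is a genuine nontrivial minimal $G$-tree, so that the finite-stabiliser edge really certifies a splitting of $G$. Here minimality of $Y_v'$ (for nontriviality), minimality of $T$ (to control the refinement), and ellipticity of the incident edge groups $G_e$ in $Y_v'$ are exactly the ingredients needed; the rest is bookkeeping around Lemma \ref{lem_inter_Zmax}.
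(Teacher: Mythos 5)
Your proof is correct and follows essentially the same route as the paper: identify the edge stabilisers of $\Hat T$ as the $G_e$'s and groups of the form $G_v\cap G_{e'}$ with $e'$ an edge of $T'$, use one-endedness of $G$ to rule out finite ones, and conclude with Lemma \ref{lem_inter_Zmax}. The only difference is that the paper dismisses the finite case in one line (a finite edge stabiliser of the minimal tree $\Hat T$ would give a splitting of $G$ over a finite group), while you carry out that collapse/refinement argument explicitly via the auxiliary tree $Y_v'$ — a more detailed rendering of the same step, not a different method.
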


\begin{proof}
Denote by $Y_v\subset T'$ the minimal $G_v$-invariant subtree.
Since $G$ is one-ended, 
  Edge stabiliser of $\Hat T$, and therefore of $G_v\actson Y_v$ are infinite.
By construction, edge stabilisers of $\Hat T$ and of $G_v\actson Y_v$ are of the form $G_v\cap G_{e'}$ where $v$ is a vertex of $T$ and $e'$ an edge of $T'$.
Lemma \ref{lem_inter_Zmax} concludes.
\end{proof}

Now we introduce the \emph{$\Zmax$-fold} of a $\Z$-tree $T$.
Given an edge $e$ of $T$, denote by $\Hat G_e$ the $\Zmax$-subgroup of $G$ containing $G_e$.
We define the $\Zmax$-fold  of $T$ as the  
minimal subtree of the quotient $T/\sim$
where we define $e \sim e'$ if $e=g.e'$ for some $g\in \Hat G_e$.

To prove that $T/\sim$ is indeed a tree, we can construct it using sequence of folds. Consider and edge $e$ such
that $G_e\subsetneq \Hat G_e$. Since $\Hat G_e$ is elliptic, one can find another such edge $e$
such that $o(e)$ is fixed by $\Hat G_e$. The quotient of $T$ by the equivariant equivalence relation generated by
$e\sim \Hat G_e.e$ is a fold (of type II in the terminology of \cite{BF_bounding}), and is in particular a $G$-tree. This new $G$-tree has 
strictly fewer orbits of edges with $G_e\subsetneq \Hat G_e$, and we proceed by induction.
Note however that $T/\sim$ may fail to be minimal, and it may be a trivial splitting even if $T$ is not:
this happens for example if $T$ is dual to a splitting $G=A*_C \Hat C$ where $\Hat C$ is the $\Zmax$-subgroup containing $C$.

The following lemma describes vertex stabilisers  of the $\Zmax$-fold, and their peripheral structure.

\begin{lem}\label{lem_Zfold}
  Let $T$ be a $\Z$-tree, and $T'$ its $\Zmax$-fold.
Consider a vertex $v'\in T'$ with non-elementary stabiliser, and $\{[G_{e'_1}],\dots,[G_{e'_k}]\}$
the peripheral structure of $G_{v'}$ induced by $T'$.

Then there exists a vertex $v\in T$ with peripheral structure
$\{[G_{e_1}],\dots,[G_{e_n}]\}$ for some $n\geq k$, such that,
\begin{enumerate}
\item $G_{e'_i}=\Hat G_{e_i}$ for $i\leq k$
\item $G_{e_i}$ is properly contained in $\Hat G_{e_i}$ for $k<i\leq n$
\item $G_{v'}=G_{v}(*_{G_{e_i}} \Hat G_{e_i})_{i=1}^{n}$
\end{enumerate}
\end{lem}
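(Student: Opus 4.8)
The plan is to track what happens to a single vertex of $T$ under the sequence of folds that produces $T' = T/\sim$, and then read off the vertex group and its peripheral structure from the Bass-Serre theory of folds. First I would fix the vertex $v' \in T'$ with non-elementary stabiliser. Since $G$ is one-ended and hyperbolic, and edge stabilisers of $T$ are in $\calz$ (hence two-ended), the preimage of $v'$ in $T$ under the quotient map $T \to T'$ is a single orbit: indeed a fold of type II identifies edges sharing an endpoint that is fixed by the relevant $\Zmax$-group, so it never identifies two vertices of $T$ unless one of them has elementary stabiliser and gets absorbed — but $G_{v'}$ is non-elementary, so it cannot arise by amalgamating two vertex groups of $T$ (a fold of type II keeps the same vertex set up to collapsing segments with elementary stabiliser). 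More carefully, I would use the description recalled before the lemma: $T'$ is obtained from $T$ by a finite sequence of type-II folds, each of which collapses an orbit $e \sim \hat G_e . e$ along an edge $e$ whose origin $o(e)$ is fixed by $\hat G_e$. Such a fold does not merge the two endpoints of $e$; it merely enlarges the stabiliser of $o(e)$ from $G_{o(e)}$ to $G_{o(e)} *_{G_e} \hat G_e$ and deletes the surplus edge orbits in the $G_{o(e)}$-orbit of $e$. So after all the folds, the vertex $v' $ comes from a well-defined vertex $v \in T$, and its stabiliser is obtained from $G_v$ by successively amalgamating $\hat G_{e_i}$ along $G_{e_i}$ for each edge $e_i$ incident to $v$ in $T$ with $G_{e_i} \subsetneq \hat G_{e_i}$.

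Concretely, I would enumerate the edges $e_1, \dots, e_n$ incident to $v$ in $T$ (up to the $G_v$-action on the star of $v$), relabelled so that $e_1, \dots, e_k$ are those with $G_{e_i} = \hat G_{e_i}$ already maximal, and $e_{k+1}, \dots, e_n$ are those with $G_{e_i} \subsetneq \hat G_{e_i}$. Wait — I need to be careful: the $e_i$ with $i \le k$ should be exactly the ones surviving to give the $k$ incident edges of $v'$, so I should show that the incident edges of $v'$ correspond bijectively to the $G_v$-orbits of those $e_i$ with $G_{e_i}$ maximal, and that folding the others into $G_v$ does not create new incident edge orbits. That $G_{e'_i} = \hat G_{e_i}$ for $i \le k$ is immediate since folding fixes those edges (their stabilisers are already $\Zmax$), so claim (1) holds. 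Claim (2) is just the relabelling convention. For claim (3), I would argue by induction on the number of fold steps: each type-II fold along $e_i$ ($i > k$) replaces the vertex group at $v$ by its amalgamated product with $\hat G_{e_i}$ over $G_{e_i}$, and since distinct $\hat G_{e_i}$ meet $G_v$ and each other in finite (hence, within these two-ended groups, in subgroups of the respective $G_{e_j}$) — here I invoke Lemma \ref{lem_inter_Zmax}, which gives that for distinct incident edges the intersection of the corresponding $\Zmax$-groups with $G_v$ is controlled — the result is the iterated amalgam $G_v (*_{G_{e_i}} \hat G_{e_i})_{i=1}^n$.

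The main obstacle I expect is verifying cleanly that the preimage of $v'$ in $T$ is a single orbit of a single vertex $v$, rather than a subtree of $T$ that gets collapsed, and relatedly that no spurious extra incident edges of $v'$ appear beyond the $k$ claimed. The point is that a type-II fold collapses an edge $e$ onto its terminal vertex only when $o(e)$ is \emph{already} fixed by $\hat G_e$, so the ``collapsed'' part is precisely the extra translates in the $G_{o(e)}$-orbit of $e$, not any vertex; thus the vertex set of $T'$ injects into that of $T$ and $v' \leftrightarrow v$. I would also need to check that $G_{v'}$ being non-elementary forces $v$ to be non-elementary (otherwise $v$ could get absorbed into an adjacent vertex), which follows because amalgamating two-ended groups $\hat G_{e_i}$ onto $G_v$ over two-ended subgroups $G_{e_i}$ cannot turn an elementary $G_v$ non-elementary unless it was already non-elementary or two of the amalgamations interact — and the one-endedness of $G$ plus Lemma \ref{lem_inter_Zmax} rules out the degenerate interactions. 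Once these structural facts about type-II folds are pinned down, assertions (1)--(3) are formal consequences of Bass-Serre theory applied step by step, with the peripheral structure of $G_{v'}$ read off from the edges incident to $v'$ in $T'$, which are exactly the images of $e_1, \dots, e_k$.
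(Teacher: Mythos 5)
Your first half is essentially the paper's: the fold-by-fold bookkeeping, where each type~II fold replaces an edge $G_u*_{G_e}G_w$ (with $\Hat G_e\subset G_u$) by $G_u*_{\Hat G_e}(\Hat G_e*_{G_e}G_w)$, does yield $G_{v'}=G_v(*_{G_{e_i}}\Hat G_{e_i})_{i=1}^n$ with peripheral structure $\{[\Hat G_{e_1}],\dots,[\Hat G_{e_n}]\}$ for the image of $v$ in $T/\sim$. (Minor slip: with the paper's convention $\Hat G_e$ fixes $o(e)$, so it is the stabiliser of $t(e)$, not of $o(e)$, that gets amalgamated; this is only an orientation issue.)

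The genuine gap is that you take $T'=T/\sim$, whereas the $\Zmax$-fold is defined as the \emph{minimal subtree} of $T/\sim$, and the whole content of the dichotomy $i\le k$ versus $k<i\le n$ lives in that passage. The split is \emph{not} ``$G_{e_i}$ already $\Zmax$'' versus ``$G_{e_i}\subsetneq\Hat G_{e_i}$'': an edge with $G_{e_i}\subsetneq\Hat G_{e_i}$ can perfectly well survive as an edge of $T'$ incident on $v'$ (with stabiliser $\Hat G_{e_i}$, which is all that assertion (1) claims), while the edges indexed by $i>k$ are those whose image in $T/\sim$ falls outside the minimal subtree, and assertion (2) is the nontrivial claim that these necessarily satisfy $G_{e_i}\subsetneq\Hat G_{e_i}$. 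Your proposal declares (1) ``immediate'' and (2) ``just the relabelling convention'' under this misreading, so the actual content goes unproved. The paper's argument for (2) is: if the image edge $e'$ is not in the minimal subtree, there is a valence-one vertex $u'$ of $(T/\sim)/G$ with $G_{u'}\subset G_{e'}$; its preimage $u$ in $T$ has valence one in $T/G$ (the quotient graphs are identified), so by minimality of $T$ the $\Z$-group $G_u$ fixes only $u$; since $G_{e'}=\Hat G_e$ is elliptic in $T$ and contains $G_u$, it also fixes only $u$, whence $G_u=G_{e'}$, $G_{e'}$ fixes no edge of $T$, and therefore $G_e\subsetneq G_{e'}$. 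Some argument of this kind (using minimality of $T$ and ellipticity of $\Hat G_e$) is indispensable and is absent from your proposal.
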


\begin{proof}
We view $T/\sim$ as obtained by a finite sequence of folds as above.
Each fold amounts to change an edge of the graph of groups labelled $G_u*_{G_e} G_v$ to $G_u*_{\Hat G_e} G_{v'}$, where
$\Hat G_e$ is assumed to be contained in $G_u$, 
and $G_{v'}=\Hat G_e *_{G_e} G_v$.
It is clear from this construction that for each vertex $v\in T$ with peripheral structure 
$\{[G_{e_1}],\dots,[G_{e_n}]\}$, the stabiliser of its image $v'\in T/\sim$
is $G_{v'}=G_{v}(*_{G_{e_i}} \Hat G_{e_i})_{i=1}^{n}$, its peripheral structure 
is $\{[\Hat G_{e_1}],\dots,[\Hat G_{e_n}]\}$.
Now going from $T/\sim$ to the minimal subtree $T'$, some edges attached on $v'$ may disappear.

We have to check that for any such edge $e'$ and any preimage $e$ in $T$, $G_{e}\subsetneq G_{e'}$.
Since $e'$ does not lie in the minimal subtree of $T/\sim$, there is some vertex $u'\in T/\sim$ which has valence one in $(T/\sim)/G$,
and with $G_{u'}\subset G_e'$.
Consider $u$ a preimage of $u'$ in $T$, and note that $G_{u}\subset G_{u'}$ is a $\Z$-group. 
As the map $T\ra T/\sim$ induces the identity on the quotient graphs, 
$u$ has valence one in $T/G$, so $G_{u}$ fixes no other vertex that $u$ by minimality of $T$.
The group $ G_{e'}=\Hat G_e$ is elliptic in $T$, and fixes no other vertex than $u$ as it contains $G_{u}$.
So $G_{u}= G_{e'}$, and $G_{e'}$ fixes no edge of $T$. Therefore, $G_{e}\subsetneq G_u= G_{e'}$.
\end{proof}

\subsection{JSJ decomposition over $\Zmax$-subgroups: the tree $\TZmax$}\label{sec_TZmax}

We want to construct a JSJ splitting for  $\Zmax$-splittings.
In the torsion-free case, this is related to what Sela calls the essential JSJ decomposition of $G$. 
For simplicity reasons, we will 
 rather use a variant of the $\Zmax$-JSJ decomposition:
a $\Z$-universally elliptic $\Zmax$-decomposition maximal for domination.

\begin{prop}\label{prop_JSJZmax}
Let $G$ be a hyperbolic group.
Consider the set $X$ of all $\Z$-universally elliptic $\Zmax$-trees (up to equivariant isomorphism). 

Then the set of largest elements of $X$ for domination is a (non-empty) deformation space denoted by $\cald_{\Zmax}$.
Moreover, $\cald_{\Zmax}$ contains the $\Zmax$-fold of any $\Z$-JSJ splitting.

We define $\TZmax$ as the collapsed tree of cylinders $T_c^*$ 
of any $T\in\cald_{\Zmax}$.
\end{prop}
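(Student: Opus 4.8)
The plan is to verify the three assertions of Proposition~\ref{prop_JSJZmax} in order: non-emptiness of $\cald_{\Zmax}$, the fact that it is a single deformation space, and that it contains the $\Zmax$-fold of any $\Z$-JSJ splitting. The key technical input is the $\Zmax$-fold construction from the previous subsection together with the ellipticity/blowup machinery at the top of Section~\ref{sec_JSJ}.

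\emph{Non-emptiness and the $\Zmax$-fold.} First I would start from a $\Z$-JSJ tree $T_{\Z}$ (which exists by \cite{GL3a}, or take its collapsed tree of cylinders as above) and form its $\Zmax$-fold $T'$. By construction $T'$ is a $\Zmax$-tree. I claim $T'$ is $\Z$-universally elliptic: every edge group of $T'$ is of the form $\Hat G_e$ for some edge $e$ of $T_{\Z}$ (Lemma~\ref{lem_Zfold}), and $G_e$ is elliptic in every $\Z$-tree by $\Z$-universal ellipticity of $T_{\Z}$; since $G$ is one-ended, $\Hat G_e$ is commensurable to $G_e$, hence also elliptic in every $\Z$-tree (an infinite elliptic subgroup and any commensurable subgroup fix a common point when edge stabilisers are non-ascending, as in Section~\ref{sec_deformation}). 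So $X\neq\es$. To see $T'$ is a \emph{largest} element of $X$ for domination, take any $S\in X$. Then $T_{\Z}$ dominates $S$ because $T_{\Z}$ is a JSJ tree over $\Z$ and $S$ is a $\Z$-universally elliptic $\Z$-tree (being a $\Zmax$-tree, hence a $\Z$-tree). The collapse map $T_{\Z}\to S$ factors through $T'$: indeed, if two edges $e,ge'$ of $T_{\Z}$ are identified in $T'$ then $g\in\Hat G_e$, and $\Hat G_e$ is elliptic in $S$ fixing the image of $o(e)$ (this uses again one-endedness so that $\Hat G_e$ fixes the same point as $G_e$ in $S$), so $e$ and $ge'$ have the same image in $S$. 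Hence $T'$ dominates $S$, so $T'$ is a largest element of $X$.

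\emph{$\cald_{\Zmax}$ is a deformation space.} The largest elements of $X$ for domination all dominate one another, hence all lie in one deformation space $\cald_{\Zmax}$; conversely any $\Zmax$-tree $S''$ in the same deformation space as $T'$ has the same elliptic subgroups as $T'$, so it is $\Z$-universally elliptic (ellipticity is detected by elliptic subgroups, which are the same) and is dominated by — and dominates — $T'$, hence lies in $X$ and is a largest element. So $\cald_{\Zmax}$ is exactly the deformation space of $T'$. Finally, since the $\Zmax$-fold of \emph{any} $\Z$-JSJ splitting is a largest element of $X$ by the argument above (the argument did not use any particular choice of $\Z$-JSJ tree beyond its defining JSJ property), all such folds lie in $\cald_{\Zmax}$. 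The definition of $\TZmax$ as the collapsed tree of cylinders $T_c^*$ of any $T\in\cald_{\Zmax}$ is then legitimate: by Proposition~\ref{prop;Tc_inv}, $T_c$ and $T_c^*$ depend only on the deformation space, so $\TZmax$ is well-defined, it lies in $\cald_{\Zmax}$ by Lemma~\ref{lem_same_D} (which also preserves the $\Zmax$ property of edge stabilisers), and it is canonical, hence $\Out(G)$-invariant.

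\emph{Main obstacle.} The delicate point I expect is the precise verification that the $\Zmax$-fold $T'$ is genuinely \emph{maximal} for domination among all of $X$, not merely among $\Zmax$-folds of $\Z$-JSJ trees: one must argue that an arbitrary $\Z$-universally elliptic $\Zmax$-tree $S$ is already ``$\Zmax$-folded'' in the relevant sense (its edge groups are their own $\Zmax$-hull inside the ambient group), so that the collapse $T_{\Z}\to S$ really does factor through the fold and not just through some intermediate tree. This requires carefully combining Lemma~\ref{lem_inter_Zmax} (so that $\Zmax$ inside a vertex group is $\Zmax$ in $G$) with the non-ascending property of the deformation spaces in play, and checking there are no subtleties coming from non-minimality of $T/\sim$ as flagged after the definition of the $\Zmax$-fold. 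One-endedness of $G$ is used repeatedly to guarantee all relevant edge groups are infinite, so commensurable subgroups share fixed points.
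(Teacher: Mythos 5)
Your proposal is correct and follows essentially the same route as the paper: form the $\Zmax$-fold of a $\Z$-JSJ tree, show it is $\Z$-universally elliptic because its edge groups contain those of $T_\Z$ with finite index, and prove it dominates any $S\in X$ by factoring the domination map $T_\Z\to S$ through the fold, the well-definedness of $\TZmax$ then following from Lemma \ref{lem_same_D} and Proposition \ref{prop;Tc_inv}. The only point to tighten is the factorisation step: the reason $\Hat G_e$ fixes the image of $e$ in $S$ is not one-endedness but that $S$ has $\Zmax$ edge stabilisers (apply Lemma \ref{lem_inter_Zmax}, or the paper's observation that the stabiliser of the image edge is a $\Zmax$-group containing $G_e$, hence equals $\Hat G_e$), and one needs $\Hat G_e$ to fix the images of both endpoints of $e$, not only of $o(e)$ — which the same argument gives.
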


 When there is a risk of ambiguity, we denote $\TZmax=\TZmax(G)$.

\begin{rem}
  By Lemma \ref{lem_same_D}, $\TZmax$ is itself a $\Zmax$-decomposition in $\cald_\Zmax$.
\end{rem}

We will describe flexible vertices of $\TZmax$ as \emph{orbisockets}, see section \ref{sec_orbi}.

\begin{rem}
A few (technical) words about the difference between a $\Zmax$-JSJ decomposition and the one considered above.
  Consider $(G,\calb)$ is the fundamental group of a once punctured Klein bottle $\Sigma$.
Splittings over $\Zmax$-subgroups of $(G,\calb)$ correspond to two-sided simple closed curves not bounding a M\"obius band.
But $\Sigma$ has a unique homotopy class of two-sided simple closed curve not bounding a M\"obius band.
This means that the $\Zmax$-JSJ splitting of $(G,\calb)$ is the splitting corresponding to this curve,
which differs from $\TZmax$ which is a trivial splitting. 
In other words, while it is true that any simple closed curve $c$ carrying a $\Zmax$-subgroup (\ie not bounding a M\"obius band)
has positive intersection number with some other simple closed curve $c'$ (\cite[th. 3.5]{Gui_reading}), 
there may not exist such curve $c'$ carrying a $\Zmax$-subgroup. 
This example extends to the case of a Klein bottle with a single cone point, and 
these are the only examples among  orbifolds of conical type.

\end{rem}

\begin{proof} 
Consider $T,T'$ two $\Z$-universally elliptic $\Zmax$-trees.
In particular, $T$ is elliptic with respect to $T'$. The blowup of $T$ relative to $T'$ is $\Z$-universally elliptic
 (because the stabiliser of any of its edge is contained in an edge stabiliser of $T$ or $T'$),
and is a $\Zmax$-tree by Lemma \ref{lem_sup_Zmax}.
This clearly implies that if nonempty, 
the set $\cald_\Zmax$ of maximal $\Z$-universally elliptic $\Zmax$-trees is a deformation space.

We now prove the second assertion (which implies that $\cald_\Zmax$ is non-empty).
Let $T_\Zmax$ the $\Zmax$-fold of a $\Z$-JSJ splitting $T_\Z$.
Since every edge group of $T_\Zmax$ contains an edge group of $T_\Z$ with finite index,
$T_{\Zmax}$ is $\Z$-universally elliptic.

We need to prove that $T_{\Zmax}$ dominates any other $\Z$-universally elliptic $\Zmax$-splitting $T'$.
By maximality of the $\Z$-JSJ splitting, 
there exists an equivariant map $f:T_\Z\ra T'$. 
Up to subdividing $T_\Z$, we can assume that $f$ maps each edge to an edge or a vertex. 
We claim that $f$ factors through $T_\Zmax$.
Consider two equivalent edges of $T_\Z$, so that $e=g.e'$ for some $g\in \Hat G_e$. 
If $e$ is collapsed under $f$, so is $e'$.
If not, $\Hat G_e$ stabilises $f(e)$ because $T'$ is a $\Zmax$-splitting, so 
$f(e')=f(e)$.
Thus $f$ factors through the $\Zmax$-fold of $T_\Z$ and the proposition follows.
\end{proof}

\begin{prop}\label{prop_TZmax_is_canonical}
Let $G_1,G_2$ be two one-ended hyperbolic groups.
Let $\Gamma_i$ be the quotient graph of groups of $T_{\Zmax}(G_i)$, and identify $G_i$ with $\pi_1(\Gamma_i,v)$.

Then any isomorphism $\phi:G_1\ra G_2$ is induced by an isomorphism of graphs of groups $\Phi:\Gamma_1 \to \Gamma_2$ (in the sense of definition \ref{dfn_Phi}).
\end{prop}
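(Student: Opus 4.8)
The statement says that the $\Zmax$-JSJ decomposition $\TZmax$ is \emph{canonical}: any isomorphism $\phi\colon G_1\to G_2$ carries $\TZmax(G_1)$ to $\TZmax(G_2)$, and moreover it is induced by an isomorphism of the graph-of-groups structures in the strong sense of Definition \ref{dfn_Phi}. The plan is to first establish canonicity at the level of trees, then upgrade to an isomorphism of graphs of groups using Lemma \ref{lem_tree2Phi}.

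First I would argue that $\phi$ induces a $\phi$-equivariant isometry $T_{\Zmax}(G_1)\to T_{\Zmax}(G_2)$. The key point is that $\TZmax$ is defined as the collapsed tree of cylinders $T_c^*$ of any tree in the deformation space $\cald_{\Zmax}$, and both the deformation space $\cald_{\Zmax}$ (being defined purely in terms of $\Z$-universal ellipticity and maximality for domination among $\Zmax$-trees, see Proposition \ref{prop_JSJZmax}) and the collapsed-tree-of-cylinders operation (Proposition \ref{prop;Tc_inv}, which asserts $T_c^*=T_c'^*$ for trees in the same deformation space) are invariant under isomorphism. Concretely: $\phi$ transports any $\Z$-tree (resp.\ $\Zmax$-tree) of $G_1$ to one of $G_2$, preserves domination and $\Z$-universal ellipticity, hence maps $\cald_{\Zmax}(G_1)$ onto $\cald_{\Zmax}(G_2)$; and since the commensurability relation on $\Z$-subgroups and the passage to commensurators $VC(\cdot)$ are preserved, $\phi$ transports the tree of cylinders $T_c$ and the collapsed version $T_c^*$ accordingly. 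Therefore $\phi$ maps $\TZmax(G_1)$ to $\TZmax(G_2)$ up to equivariant isometry; by Lemma \ref{lem_unq} this equivariant isometry $f\colon \TZmax(G_1)\to\TZmax(G_2)$ is unique.

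Next I would invoke Lemma \ref{lem_tree2Phi}: given the isomorphism $\phi\colon G_1\to G_2$ and the $\phi$-equivariant isomorphism of trees $f\colon \TZmax(G_1)\to \TZmax(G_2)$, after a change of marking induced by a path $p\in B(\Gamma_2)$ there is an isomorphism of graphs of groups $\Phi\colon \Gamma_1\to \Gamma_2$ inducing both $\phi$ and $f$, where $\Gamma_i$ is the quotient graph of groups of $\TZmax(G_i)$. This is exactly the conclusion required, once one observes that a change of marking by a path in the Bass group only modifies the identification $\pi_1(\Gamma_2,v)\cong G_2$ by an inner automorphism, which does not affect the statement "$\phi$ is induced by an isomorphism of graphs of groups." One should also check that the hypotheses of Lemma \ref{lem_tree2Phi} are met, namely that the $G_i$-trees $\TZmax(G_i)$ are minimal and irreducible: minimality is built into the convention on trees, and irreducibility holds because $G$ is one-ended hyperbolic, not virtually cyclic, and edge stabilisers are elementary, so by \cite[Prop.\ 2.6]{Pau_Gromov} any non-trivial such tree is irreducible — and $\TZmax(G_i)$ is non-trivial precisely when $G_i$ splits over a $\Zmax$-subgroup; in the degenerate case where $\TZmax$ is a point the statement is vacuous (any isomorphism trivially "realises" the trivial graph of groups).

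The main obstacle is the first step — establishing that $\TZmax$ is genuinely canonical, i.e.\ that $\cald_{\Zmax}$ depends only on the isomorphism type of $G$ and that the collapsed tree of cylinders is a well-defined $\Out(G)$-invariant. This is where the work of Proposition \ref{prop_JSJZmax} (existence and uniqueness of $\cald_{\Zmax}$ as a deformation space) and Proposition \ref{prop;Tc_inv} (invariance of $T_c^*$ within a deformation space) are combined; once canonicity of the tree is in hand, the passage to an isomorphism of graphs of groups is a formal consequence of Bass's machinery (Lemmas \ref{lem_tree2Phi} and \ref{lem_unq}). The one subtlety to be careful about is that Lemma \ref{lem_tree2Phi} forces a change of marking of $\Gamma_2$; I would spell out that this is harmless because we only claim $\phi$ is induced by \emph{some} isomorphism $\Phi$ of graphs of groups, not that $\Phi$ is compatible with a pre-specified marking.
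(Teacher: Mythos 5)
Your proposal is correct and follows essentially the same route as the paper: pull back trees along $\phi$, use the canonicity of $\cald_{\Zmax}$ (Proposition \ref{prop_JSJZmax}) together with the invariance of the collapsed tree of cylinders (Proposition \ref{prop;Tc_inv}) to get a $\phi$-equivariant isomorphism $\TZmax(G_1)\to\TZmax(G_2)$, and then apply Lemma \ref{lem_tree2Phi} to upgrade this to an isomorphism of graphs of groups. The extra checks you include (irreducibility, harmlessness of the change of marking, Lemma \ref{lem_unq}) are consistent with, and merely elaborate on, the paper's argument.
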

                        
\begin{proof}
Given any action $G_1\actson T$, denote by $\phi^* T$ the action $G_2\actson T$ induced by precomposition by $\phi\m$.
The definition of the deformation space $\cald_\Zmax$ being canonical,
for any $T_1\in \cald_\Zmax(G_1)$, the action $\phi^*T_1$ lies in $\cald_\Zmax(G_2)$.
Going to the collapsed tree of cylinders, we get that $\phi^* T_\Zmax(G_1)=T_\Zmax(G_2)$,
in other words, that there is a $\phi$-equivariant isomorphism between 
$T_{\Zmax}(G_1)$ and $T_{\Zmax}(G_2)$.
By Lemma \ref{lem_tree2Phi}, there exists an isomorphism of graphs of groups $\Gamma_1 \to \Gamma_2$. 
\end{proof}

\subsection{Orbisockets as flexible vertices}\label{subsec_orbi}

        We now study \emph{orbisockets}, which, as we will see, occur as flexible vertices of $\TZmax$.
        The most basic example is the fundamental group of a surface with boundary, to which one has added 
        some roots to the elements representing the boundary components.
        In \cite{Sela_structure}, Sela calls these groups ``sockets'' in the context of 
        vertex groups of a JSJ decomposition.  
        We see it as a deformation of the English word ``socks'' and of the french word ``socquettes'' 
        (meaning  low socks): 
        a pair of pants comes with  socks, that are attached to  boundary components, see Figure \ref{fig_chaussette}.
        Unfortunately, as we will see, \emph{les socquettes ne sont pas toujours propres}.

        \begin{figure}
          \centering
          \includegraphics{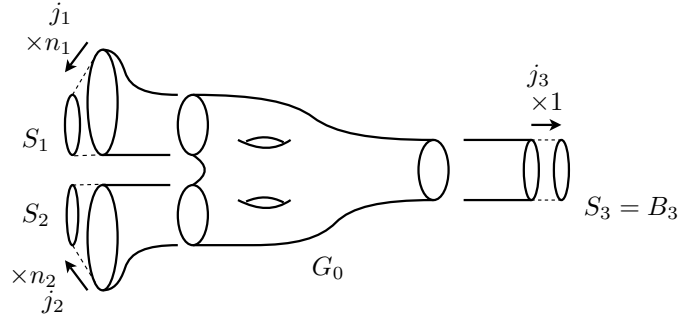}
          \caption{An orbisocket with two proper sockets ($n_1, n_2 \neq 1$), and an improper one.}
          \label{fig_chaussette}
        \end{figure}

        Let $G_0$ be a bounded Fuchsian group without reflection, with its peripheral structure $\calb=\{[B_1],\dots,[B_n]\}$.
        Let $k\in\{0,\dots,n\}$, and consider some $\Z$-groups $S_1,\dots,S_k$  with monomorphisms
        $j_i:B_i\ra S_i$ for $i=1,..,k$. We assume that no $j_i$ is onto (see Figure \ref{fig_chaussette}, where $n=3$ and $k=2$). 
        Let $\calo$ be the multiple amalgam $\calo=G_0(*_{B_i} S_i)_{i=1}^k$ 
        (\ie a tree of groups with a central vertex $G_0$ and one edge for each $i\in\{1,\dots,k\}$).
        For notational convenience, we define $S_i=B_i$ and $j_i=\id$ for $i\geq k+1$; all the groups $S_1,\dots,S_n$ and their conjugates in $\calo$ are called \emph{sockets}.
        The \emph{proper sockets} are the conjugate of the groups $S_1,\dots,S_k$ (characterised by $B_i\neq S_i$), the other ones are \emph{improper}.

        \begin{dfn}[Orbisocket]\label{dfn_orbisocket}
          An \emph{orbisocket} is a group $(\calo;\calp_\calo)$ with unmarked peripheral structure
          obtained from a bounded Fuchsian group without reflection $G_0$ as a tree of groups $\calo=G_0(*_{B_i} S_i)_{i=1}^k$ as described above,
          and whose peripheral structure $\calp_\calo$ consists of all improper sockets, possibly with additional proper sockets.

          A decomposition of $\calo$ as a tree of groups as above is \emph{a socket decomposition} of $\calo$.

         A \emph{hanging orbisocket} in a graph of groups $\Gamma$ is 
          a vertex $v\in\Gamma$ such that the vertex group $G_v$ together with its peripheral structure $\calp_v$ induced by $\Gamma$
          is isomorphic to some orbisocket $(\calo;\calp_\calo)$.
         \end{dfn}

         We also say that $(G_v,\calp_v)$ is a hanging orbisocket.
         
        Starting with an orbifold group and a boundary subgroup $\grp{b}$, the amalgam consisting in adding a square root to $b$ is equivalent
        to gluing a M\"obius band to the orbifold. In general, we say that $S_i$ is  a  \emph{M\"obius socket} if 
        \begin{itemize}
        \item 
          $B_i$ has index $2$ in $S_i$, and $B_i$ and $S_i$ have the same maximal normal finite subgroup $F$ ($F$ is also the
          maximal finite normal subgroup of $G$)
        \item $S_i$ does not lie in the peripheral structure of $\calo$. 
        \end{itemize}
        In this case, $G_0'=G_0*_{B_i} S_i$ is itself a bounded Fuchsian group without reflection, and one can define a new orbisocket with same group
        and same peripheral structure, but with $G'_0$ as the Fuchsian group, and $S_1,\dots, S_{i-1},S_{i+1},\dots,S_k$ as sockets.
        Doing this for all M\"obius sockets, one can transform an orbisocket into an orbisocket without M\"obius socket.
        This shows that the socket decomposition of $\calo$ is not determined by $\calo$ and its peripheral structure.
        However, we will see in the next section that once M\"obius sockets have been removed, the socket decomposition is unique.

\begin{dfn}\label{dfn_flex}
A vertex $v$ of $\TZmax$ is \emph{flexible} if there exists a $\Zmax$-tree in which $G_v$ is not elliptic.   
\end{dfn}

\begin{prop}
Let $G$ be a one-ended hyperbolic group. 

Then, any flexible vertex of $\TZmax$ is a hanging orbisocket. 
\end{prop}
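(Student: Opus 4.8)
The strategy is to relate the flexible vertices of $\TZmax$ to the flexible vertices of the virtually-cyclic JSJ decomposition (i.e.\ hanging bounded Fuchsian groups) via the $\Z$-JSJ decomposition and the $\Zmax$-fold. Recall from Proposition~\ref{prop_JSJZmax} that $\TZmax$ is the collapsed tree of cylinders of $\cald_\Zmax$, and that $\cald_\Zmax$ contains the $\Zmax$-fold $T_\Zmax$ of a $\Z$-JSJ splitting $T_\Z$. First I would fix a flexible vertex $v$ of $\TZmax$ and a $\Zmax$-tree $S$ in which $G_v$ is not elliptic. Since $\TZmax$ and $T_\Zmax$ lie in the same deformation space (Lemma~\ref{lem_same_D}), one can transfer the discussion to the vertex groups of $T_\Zmax$: the non-ellipticity of a vertex group of $\TZmax$ forces non-ellipticity of a corresponding vertex group of $T_\Zmax$ (using that edge stabilisers of $T_\Zmax$ and $\TZmax$ are commensurable, and Lemma~\ref{lem_inter_Zmax}). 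So we may assume $v$ is a vertex of $T_\Zmax$, with $G_v$ not elliptic in some $\Zmax$-tree $S$.

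Next, by Lemma~\ref{lem_Zfold}, $G_v$ is obtained from a vertex group $G_w$ of $T_\Z$ by amalgamating larger $\Z$-groups $\Hat G_{e_i}$ along some incident edge groups $G_{e_i}$: explicitly $G_v=G_w(*_{G_{e_i}}\Hat G_{e_i})_{i=1}^n$, with peripheral structure given by the $\Hat G_{e_i}$ (those with $k<i\le n$ being the proper sockets, the others improper). By Proposition~\ref{prop_ZJSJ}, the flexible vertices of $T_\Z$ are hanging bounded Fuchsian groups without reflection; the rigid vertices of $T_\Z$ are elliptic in every $\Z$-splitting, hence (since $S$ is a $\Zmax$-tree, in particular a $\Z$-tree) elliptic in $S$. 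So I would argue: if $G_w$ were rigid in $T_\Z$, then $G_w$ and each $\Hat G_{e_i}$ (being a $\Z$-group commensurable with an edge group $G_{e_i}$ of $T_\Z$, hence elliptic in $S$ by $\Z$-universal ellipticity of $T_\Z$) would be elliptic in $S$; using that adjacent such groups share edge stabilisers which fix no edge in the $\Zmax$-tree $S$, one deduces $G_v$ itself is elliptic in $S$, contradicting flexibility. Hence $G_w$ is a hanging bounded Fuchsian group without reflection, say a finite extension of $\pi_1(\Sigma)$ with boundary peripheral structure $\calb$.

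It remains to identify $G_v=G_w(*_{G_{e_i}}\Hat G_{e_i})$ as an orbisocket in the sense of Definition~\ref{dfn_orbisocket}. For this I would check that each $G_{e_i}$ is contained in a boundary subgroup $B_i$ of $G_w$, or more precisely that the peripheral structure of $G_w$ in $T_\Z$ is exactly its boundary structure $\calb$ (this is the content of Proposition~\ref{prop_ZJSJ}: flexible vertices of the $\Z$-JSJ decomposition have peripheral structure given by boundary subgroups). The subtlety is that several incident edges of $T_\Z$ may carry the \emph{same} boundary subgroup $B_i$ up to conjugacy, or an edge group may be a proper subgroup of a boundary subgroup; one must organise the amalgamations so that exactly one $\Z$-group $S_i$ is glued per boundary component $B_i$ of $G_w$, which is possible after collapsing/rearranging since $\Hat G_{e_i}\supset G_{e_i}$ and the maximal $\Z$-subgroups of $G$ containing commensurable $\Z$-groups coincide. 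Once this is arranged, $G_v$ has exactly the form $G_0(*_{B_i}S_i)_{i=1}^k$ with $G_0=G_w$ a bounded Fuchsian group without reflection, the $S_i=\Hat G_{e_i}$ are $\Z$-groups with $j_i:B_i\hookrightarrow S_i$ not onto (non-ontoness holds precisely because the $\Zmax$-fold only genuinely enlarges edge groups that were not already maximal; edges where $G_e=\Hat G_e$ give improper sockets $S_i=B_i$), and the peripheral structure of $G_v$ induced by $T_\Zmax$ consists of all the improper sockets together with the proper ones appearing as incident edge groups — which is exactly the definition of an orbisocket.

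\textbf{Main obstacle.} The delicate point is the bookkeeping in the last paragraph: matching incident edges of $T_\Z$ at $w$ with boundary components of the Fuchsian group $G_w$, and verifying that after passing to the $\Zmax$-fold one gets a \emph{socket decomposition} in the precise sense (one socket per boundary component, $j_i$ non-surjective for proper sockets). One also has to be careful that collapsing to the tree of cylinders $\TZmax=T_c^*$ does not destroy the orbisocket structure — but since the vertex group $G_v$ with its peripheral structure is determined by the deformation space (Lemma~\ref{lem_same_D} and invariance of the tree of cylinders), the orbisocket structure of $G_v$ is preserved. The Fuchsian/surface topology input — that the relevant splittings of $G_w$ relative to $\calb$ are dual to simple closed curves (Lemma~\ref{lem_curves_orbifold}) — is what ultimately guarantees $G_w$ must be the whole Fuchsian piece and not a strictly larger or smaller group.
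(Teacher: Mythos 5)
Your proposal is correct and follows essentially the same route as the paper: reduce to the $\Zmax$-fold $T$ of a $\Z$-JSJ tree via Lemma \ref{lem_Zfold}, rule out rigidity of the underlying $\Z$-JSJ vertex by the ellipticity argument in a $\Zmax$-tree $S$ (edge groups fix the arc between fixed points, hence so do the $\Zmax$-groups $\Hat G_{e_i}$), invoke Proposition \ref{prop_ZJSJ} to get a hanging bounded Fuchsian group without reflection, and pass to $\TZmax=T_c^*$ using that trees in the same deformation space have the same non-elementary vertex stabilisers and induced peripheral structures. The only differences are cosmetic: you transfer from $\TZmax$ to the fold at the outset rather than at the end, and you spell out the socket/boundary bookkeeping that the paper leaves implicit.
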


\begin{proof}
Let $T_\Z$ be a $\Z$-JSJ tree, $T$ the $\Zmax$-fold of $T_\Z$ (see section \ref{sec_TZmax}).

We first prove that if $v$ is a flexible vertex of $T$, then $v$ is a hanging orbisocket. \
Write $G_v=G_{\Tilde v}(*_{G_{\Tilde e_i}} \Hat G_{\Tilde e_i})_{i=1}^{n}$ for some $\Tilde v\in T_\Z$ as in Lemma \ref{lem_Zfold},
and consider a $\Zmax$-tree $S$ in which $G_v$ is not elliptic.
Since flexible vertices of $T_\Z$ are hanging bounded Fuchsian group without reflection (Proposition \ref{prop_ZJSJ}), 
we need only to prove that $\Tilde v$ is flexible.
Assuming by contradiction that $G_{\Tilde v}$ is rigid, then $G_{\Tilde v}$ fixes a point $u\in S$,
and groups $\Hat G_{\Tilde e_i}$ fix some $u_i\in S$. 
If $u_i\neq u$, then $G_{\Tilde e_i}$ fixes the arc $[u_i,u]$, and so does $\Hat G_{\Tilde e_i}$ 
since $S$ is a $\Zmax$-tree.
It follows that $G_v$ fixes $u$, a contradiction.

Now recall that  the tree $\TZmax$ is, by definition,  
the collapsed tree of cylinders $T_c^*$ of $T$.
Since $T$ and $T_c^*=\TZmax$ are $\Zmax$-trees in the same deformation space (Lemma \ref{lem_same_D}),
they have the same non-elementary vertex stabilisers (see \cite[Cor. 4.5]{GL2} for this general fact).
Moreover, as they are two $\Zmax$-trees in the same deformation space, 
$T$ and $T_c^*$ induce the same peripheral structure on such a vertex stabiliser $G_v$.
Indeed, the stabilisers of edges incident on $v$
are characterised as infinite groups of the form $G_v\cap G_u$ with $G_u\neq G_v$ non-elementary vertex stabilisers
(see \cite[Prop. 4.10 and Def. 4.11]{GL2} for a general statement or more details).
Since flexible vertices of $T$ are hanging orbisockets, so are flexible vertices of $T_c^*$.

\end{proof}

\begin{rem}
  In fact, some hanging Fuchsian groups of $T_\Z$ may fail to split over $\Zmax$-subgroups although they split over some $\Z$-group 
(this is for instance the case of a twice punctured projective plane). They give rigid vertices of $\TZmax$.
\end{rem}

\begin{lem}\label{lem_orbi_are_ell}
  Let $\Gamma$ be a $\Zmax$-splitting, and $v\in\Gamma$ be a hanging orbisocket.
Then $G_v$ is elliptic in $\TZmax$.
\end{lem}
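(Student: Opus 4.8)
The plan is to show that $G_v$ is elliptic in any $\Zmax$-tree, which by the definition of $\cald_\Zmax$ (and the fact that $\TZmax$ lies in $\cald_\Zmax$, being the collapsed tree of cylinders of an element of $\cald_\Zmax$) will give ellipticity in $\TZmax$. So let $S$ be any $\Zmax$-tree of $G$; I want to prove that the hanging orbisocket $G_v$ fixes a point of $S$. Write a socket decomposition $\calo = G_0(*_{B_i} S_i)_{i=1}^k$ for $(G_v,\calp_v)$, where $G_0$ is a bounded Fuchsian group without reflection with boundary subgroups $B_1,\dots,B_n$, and $S_i$ (for $i\le k$) is the $\Z$-group obtained by adjoining a root to $B_i$; set $S_i=B_i$ for $i>k$. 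The key point is that each $S_i$ is a $\Z$-subgroup of $G$, hence is elliptic in the $\Zmax$-tree $S$, and in fact, being a $\Z$-group, is contained in a $\Zmax$-subgroup of $G$; more importantly each $S_i$ fixes a \emph{unique} point of $S$ unless $S_i$ is itself conjugate into an edge stabiliser — but even then it fixes a well-defined subtree. I first treat $G_0$: I claim $G_0$ is elliptic in $S$. Since $G_0$ is a bounded Fuchsian group without reflection, any action of $G_0$ on a tree with $\Z$ edge stabilisers relative to $\calb$ is dual to a disjoint union of two-sided simple closed curves in the underlying orbifold (Lemma \ref{lem_curves_orbifold}, which extends to bounded Fuchsian groups as noted after Proposition \ref{prop_mirrors}). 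Restricting the $G$-action on $S$ to $G_0$ and passing to the minimal $G_0$-invariant subtree $Y\subset S$, the edge stabilisers of $Y$ are $G_0\cap(\text{edge stab of }S)$, which are $\Z$-groups (they are $\Zmax$ in $G$ by Lemma \ref{lem_inter_Zmax}, hence a fortiori $\Z$). But each boundary subgroup $B_i$ of $G_0$ is contained in $S_i$, and I will show $S_i$, hence $B_i$, is elliptic in $S$ (immediate: $S_i$ is virtually cyclic so elliptic since $G$ is one-ended and $S$ is a $\Zmax$-tree — any infinite virtually cyclic subgroup is elliptic, and a $\Z$-group which is not elliptic would force a $\Z$ edge to be flipped, impossible as $\Zmax\subset$ virtually cyclic with the relevant non-nesting). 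Thus $G_0\actson Y$ is relative to $\calb$, so it is dual to a family of two-sided simple closed curves disjoint from the singular locus; but such a curve would give a $\Z$-splitting of $G_0$ in which $B_i$ is elliptic, and \emph{any} such curve carries a $\Z$-subgroup which is $\Zmax$ \emph{in} $G_0$ — wait, this is precisely where care is needed, since a curve bounding a Möbius band carries a non-maximal $\Z$. So the argument should instead be: if $G_0$ were not elliptic in $S$, then $Y$ is a non-trivial $G_0$-tree with $\Zmax$ (in $G$) edge stabilisers, relative to $\calb$; by Lemma \ref{lem_inter_Zmax} these are also $\Zmax$ in $G_0$; but an orbifold curve carrying a subgroup that is $\Zmax$ must have positive intersection number with some other curve (Guirardel, \cite[th. 3.5]{Gui_reading}), yet here it cannot since the situation is forced — actually the cleanest route is to use that $\TZmax(G_0)$ is trivial because $G_0$ is a flexible vertex being \emph{rebuilt}, so I instead argue directly as follows.

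Here is the cleaner argument I would actually carry out. Let $S$ be a $\Zmax$-tree and consider the action of $\calo=G_v$ on $S$. Each socket $S_i$ is virtually cyclic, hence elliptic in $S$; let $\Fix(S_i)\subset S$ be its (non-empty) fixed subtree. Since $B_i\subset S_i$, we have $\Fix(S_i)\subset\Fix(B_i)$. Now the point is that the socket decomposition exhibits $G_v$ acting on its Bass–Serre tree $\calT$ (a finite tree of groups: central vertex $G_0$, one edge to each $S_i$), and I want a $G_v$-equivariant map $\calT\to S$, which exists iff $G_0$ and all $S_i$ are elliptic in $S$ and appropriately compatible. So it suffices to prove $G_0$ is elliptic in $S$ (the $S_i$ already are). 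Suppose not. Restrict to $Y\subset S$ the minimal $G_0$-subtree; its edge stabilisers are of the form $G_0\cap G_{e}$ for edges $e$ of $S$, which are infinite (one-endedness of $G$) virtually cyclic, and $\Zmax$ in $G_0$ by Lemma \ref{lem_inter_Zmax}. The boundary subgroups $B_i$ fix points of $Y$ since $S_i\supset B_i$ does. Thus $G_0\actson Y$ is a non-trivial $\Z$-tree relative to $\calb$ with $\Zmax$ (in $G_0$) edge stabilisers. By Lemma \ref{lem_curves_orbifold} (extended to bounded Fuchsian groups), $Y$ is dual to a non-empty disjoint union of two-sided simple closed curves $c_j$ in the orbifold of $G_0$, disjoint from the singular locus and from the boundary; the stabiliser of such a $c_j$ is a $\Zmax$-subgroup of $G_0$. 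But then $c_j$ does not bound a Möbius band (a curve bounding a Möbius band carries a non-maximal $\Z$), and by \cite[th. 3.5]{Gui_reading} there is another simple closed curve $c'$ with $i(c_j,c')>0$; then $c'$ defines a $\Z$-splitting of $G_0$ relative to $\calb$ in which the stabiliser of $c_j$ is not elliptic — but then, blowing up or refining, this would contradict universal ellipticity built into the construction… The direct contradiction is: $Y$ being a non-trivial $\Z$-tree for $G_0$ relative to $\calb$ shows $G_0$ splits over a $\Z$-subgroup relative to $\calb$; lifting, $G_v=\calo$ splits over a $\Z$-subgroup, and we may arrange the edge group to be $\Zmax$ in $G_v$ hence $\Zmax$ in $G$ (Lemma \ref{lem_inter_Zmax}). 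This contradicts the hypothesis that $v$ is a vertex of a $\Zmax$-splitting $\Gamma$: a flexible piece that splits freely over $\Zmax$ relative to its incident edge groups would already appear in $\TZmax$ as the JSJ piece, but in $\Gamma$ it is a single vertex — hmm, $\Gamma$ is an arbitrary $\Zmax$-splitting, not necessarily $\TZmax$, so this is not yet a contradiction.

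Given the subtlety, the proof I will commit to is the following, using the canonicity of $\TZmax$ rather than a direct argument. Since $v\in\Gamma$ is a hanging orbisocket, $(G_v,\calp_v)$ is an orbisocket. By the previous results of this subsection (in particular the description of flexible vertices, and the construction of $\TZmax$ for the group $G_v$ itself), the $\Zmax$-JSJ-type tree $\TZmax(G_v)$ relative to $\calp_v$ is either trivial or has $G_v$'s Fuchsian part as a flexible orbisocket vertex absorbing everything; in either case $G_v$ admits no $\Zmax$-splitting relative to $\calp_v$ that is $\Z$-universally elliptic and non-trivial — which is exactly the statement that the socket decomposition is the JSJ and the Fuchsian group $G_0$ is flexible (not rigid), so its non-trivial $\Z$-splittings are never $\Z$-universally elliptic. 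Therefore, for any $\Zmax$-tree $S$ of $G$, the restricted action $G_v\actson S$, which \emph{is} $\Z$-universally elliptic relative to $\calp_v$ (its edge stabilisers are $\Zmax\subset\Z$-subgroups of $G_v$, elliptic in every $\Z$-tree of $G_v$ by the universal ellipticity inherent in $\TZmax(G_v)$), must be trivial, i.e.\ $G_v$ fixes a point of $S$. Applying this to $S=\TZmax$ finishes the proof. The main obstacle, as the above wrestling shows, is pinning down exactly why an orbisocket has no non-trivial $\Z$-universally-elliptic $\Zmax$-splitting relative to its peripheral structure: one must use that all $\Z$-splittings of the Fuchsian part come from curves, and every such curve carrying a $\Zmax$-subgroup has positive self-intersection-type interaction with another curve (\cite[th. 3.5]{Gui_reading}), so is never universally elliptic — with the Klein-bottle-type exceptions noted in the remark after Proposition \ref{prop_JSJZmax} handled separately since there the relevant $\Zmax$-splitting simply does not exist.
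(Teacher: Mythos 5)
Your final, committed argument attempts to prove something strictly stronger than the lemma --- that $G_v$ is elliptic in \emph{every} $\Zmax$-tree $S$ of $G$ --- and that stronger statement is false, so the proof cannot be correct as written. The hinge is your parenthetical assertion that the restricted action of $G_v$ on its minimal subtree in $S$ is ``$\Z$-universally elliptic relative to $\calp_v$'' because its edge stabilisers are $\Zmax$-subgroups of $G_v$. Having $\Zmax$ edge stabilisers does not make a splitting universally elliptic: a splitting of the orbisocket dual to an essential two-sided curve in its Fuchsian part has $\Zmax$ edge groups and is not universally elliptic, and this is exactly why orbisockets occur as \emph{flexible} vertices (Definition \ref{dfn_flex}); Lemma \ref{lem_courbes_orbisocket} explicitly analyses $\Zmax$-trees of $G$ in which $\calo$ is \emph{not} elliptic, so your conclusion would contradict flexibility. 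Nowhere do you use the one property that distinguishes $\TZmax$ from an arbitrary $\Zmax$-tree, namely that it is $\Z$-universally elliptic (it lies in $\cald_\Zmax$, by Proposition \ref{prop_JSJZmax} and Lemma \ref{lem_same_D}). Your other key claim --- that $(G_v,\calp_v)$ admits no non-trivial $\Z$-universally elliptic $\Zmax$-splitting --- is also not established: the justification offered (``the socket decomposition is the JSJ and $G_0$ is flexible, so its non-trivial $\Z$-splittings are never universally elliptic'') is not a proof, and its reasoning is off, since the socket decomposition itself \emph{is} a non-trivial universally elliptic $\Z$-splitting (Proposition \ref{prop_JSJ_orbi}); it merely fails to be $\Zmax$.

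For comparison, the paper argues quite differently and more locally: refine $\Gamma$ at $v$ using the socket decomposition, so that the Fuchsian group $G_0$ becomes a hanging Fuchsian vertex of the refined splitting; by \cite[Prop.~7.16]{GL3a} such a vertex group is elliptic in any $\Z$-universally elliptic splitting, hence fixes a vertex $u\in\TZmax$. Each socket $S_i$ contains the boundary subgroup $B_i\subset G_0$ with finite index, hence is elliptic and fixes some $u_i$; if $u_i\neq u$, then $B_i$ fixes the segment $[u_i,u]$, and since edge stabilisers of $\TZmax$ are $\Zmax$ they contain $S_i$, so $S_i$ fixes $u$ as well, whence $G_v$ fixes $u$. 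If you wish to salvage your route, you must (a) work with $S=\TZmax$ only and prove that the edge stabilisers of the minimal $G_v$-subtree are universally elliptic in $(G_v,\calp_v)$, using that $\TZmax$ is universally elliptic in $G$ and that $\Z$-splittings of $(G_v,\calp_v)$ extend to $\Z$-splittings of $G$ by refining $\Gamma$ at $v$, and (b) actually prove that a universally elliptic $\Zmax$-splitting of the orbisocket relative to $\calp_v$ is trivial (for instance: it is dominated by the socket decomposition, and since the sockets are $\Zmax$ in $\calo$, ellipticity of $G_0$ and of the $S_i$ forces a common fixed point). Both of these steps are missing from the proposal.
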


\begin{proof}
  Let $\Gamma'$ be obtained from $\Gamma$ by refining $v$ using its socket decomposition.
Let $v'\in\Gamma'$ be the corresponding hanging Fuchsian vertex. By \cite[Prop.~7.16]{GL3a}, 
$G_v'$ is elliptic in any $\Z$-universally elliptic splitting, hence fixes some vertex $u\in \TZmax$.
Each socket group $S_i$ of $v'$ fixes a vertex $u_i\in\TZmax$ as some finite index subgroup fixes $u$.
If $u_i\neq u$, then $S_i$ fixes $[u_i,u]$ as edge stabilisers of $\TZmax$ are $\Zmax$.
It follows that $G_v$ fixes $u$.
\end{proof}

\begin{lem}\label{lem_courbes_orbisocket}
          Let $(\calO;\calp_\calo)$ be a hanging orbisocket of $\TZmax$. 
Consider a socket decomposition $\calO=G_0(*_{B_i}S_i)_{i=1}^p$ of $(\calO;\calp_\calo)$ without M\"obius sockets.   
Let $T$ be a $\Zmax$-decomposition of $G$ in which $\calo$ is not elliptic.

Then the minimal $\calo$-invariant subtree $Y_\calo$ of $T$ is dual to a finite set of curves in the following sense:
it is obtained by first refining the socket decomposition of $\calo$ using a splitting of 
$G_0$ dual to a finite set of disjoint non-peripheral  $2$-sided simple closed curves in the orbifold underlying $G_0$,
and then by collapsing all the edges of the socket decomposition.
\end{lem}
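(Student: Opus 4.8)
\textbf{Proof plan for Lemma \ref{lem_courbes_orbisocket}.}

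The plan is to analyze the action of $\calo$ on the minimal subtree $Y_\calo \subset T$ and show that each socket group $S_i$ must be elliptic there, so that the only ``interesting'' splitting comes from $G_0$ acting on a tree where its boundary subgroups $B_i$ are elliptic; then invoke Lemma \ref{lem_curves_orbifold} (in the bounded Fuchsian version) to realize that action by simple closed curves. First I would observe that since $T$ is a $\Zmax$-tree and $G$ is one-ended, edge stabilisers of $Y_\calo$ are $\Zmax$-subgroups of $G$; by Lemma \ref{lem_inter_Zmax} applied inside $\calo$ (or directly, since $\calo$ is a vertex group of the $\Zmax$-tree $\TZmax$ and hence of $T$ after suitable domination arguments), they are also $\Zmax$ in $\calo$. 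The key point is then: \emph{each socket $S_i$ fixes a point of $Y_\calo$}. Indeed $B_i$ is a $\Z$-group of finite index in $S_i$, and $S_i$ is virtually cyclic, so $S_i$ is elliptic in $T$ (virtually cyclic groups acting on trees either fix a point or act with a fixed end/line, but an infinite-order isometry with a power fixing a point forces a fixed point). Thus $S_i$ fixes some vertex $u_i \in Y_\calo$.

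Next, consider the socket decomposition $\calo = G_0(*_{B_i} S_i)_{i=1}^p$ as a graph-of-groups, and let $\Hat{\calo}$ be its Bass-Serre tree. Since $S_i$ is elliptic in $Y_\calo$ for every $i$, there is a $G_0$-equivariant map (domination) from the action of $G_0$ on its minimal subtree in $Y_\calo$ to nothing obstructs; more precisely, I would build the blowup/refinement: replace the central vertex of the socket decomposition by the minimal $G_0$-subtree $Y_0 \subset Y_\calo$, and attach each socket edge at the point of $Y_0$ (or of the $S_i$-fixed subtree) fixed by $B_i$. This produces a refinement $\Hat T_\calo$ of the socket decomposition which collapses onto $Y_\calo$ by collapsing the socket edges — this is exactly the statement we want, \emph{provided} $Y_0$ is a genuine (possibly trivial) splitting of $G_0$ with $\Zmax$ edge stabilisers relative to $\calb$. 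The edge stabilisers of $Y_0$ are of the form $G_0 \cap (\text{edge stab of } Y_\calo)$, hence finite or $\Z$-groups; since $\calo$ is not elliptic in $T$ and all sockets are, $Y_0$ must be non-trivial, and after passing to the $\Zmax$-fold / noting $Y_\calo$ already has $\Zmax$ edge stabilisers, the edge stabilisers of $Y_0$ are $\Zmax$ in $G_0$ relative to $\calb$ (using Lemma \ref{lem_inter_Zmax} again, and that a $\Z$-subgroup of $G_0$ maximal in $G$ is maximal in $G_0$). Now apply Lemma \ref{lem_curves_orbifold} to $(G_0,\calb)$: the action $G_0 \actson Y_0$ is dual to a finite disjoint union of two-sided simple closed curves in the orbifold underlying $G_0$, not meeting the peripheral subset, the cone points, or the mirrors (there are no mirrors since $G_0$ is without reflection).

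The main obstacle I expect is the bookkeeping around M\"obius sockets and the non-peripherality of the curves. The hypothesis that the socket decomposition has \emph{no} M\"obius sockets is used precisely to prevent a curve of $Y_0$ from being parallel to a boundary component $B_i$ that carries a proper (index-$2$, M\"obius-type) socket: such a curve would be ``absorbed'' and the corresponding splitting of $\calo$ would be trivial (the socket $S_i$ and the Fuchsian group would recombine), contradicting non-triviality or forcing a smaller description. So I would argue: if some curve $c$ of $Y_0$ were peripheral, say parallel to $B_i$, then the associated edge of $Y_0$ together with the socket edge for $S_i$ would give an edge of $Y_\calo$ with stabiliser commensurable to $B_i$; maximality ($\Zmax$) forces this stabiliser to equal $\Hat{B_i} = S_i$ or its $\Zmax$-hull, and since $S_i$ is elliptic, the curve contributes nothing to $Y_\calo$ after collapsing sockets — but then we may as well have chosen $c$ non-peripheral, or $S_i$ was M\"obius, excluded by hypothesis. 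Making this last dichotomy precise (distinguishing the genuinely M\"obius case from the case where the curve can simply be discarded) is where the care is needed; everything else is a routine blowup-and-collapse argument combined with the already-established structure of $\Zmax$-trees (Lemmas \ref{lem_inter_Zmax}, \ref{lem_sup_Zmax}, \ref{lem_Zfold}) and the orbifold splitting lemma (Lemma \ref{lem_curves_orbifold}).
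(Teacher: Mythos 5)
There are two genuine gaps. First, your key step ``each socket $S_i$ fixes a point of $T$'' is justified only by the remark that $S_i$ is virtually cyclic, and that is not a reason: a $\Z$-subgroup can perfectly well act hyperbolically on a $\Zmax$-tree (think of a stable letter acting by translation on its axis), and your parenthetical about ``a power fixing a point'' presupposes exactly what has to be proved. The ellipticity of the sockets is the place where the canonical structure of $\TZmax$ enters: in the paper one views $\TZmax$ as (the collapsed tree of cylinders of) the $\Zmax$-fold of a $\Z$-JSJ tree $T_\Z$, so that by Lemma \ref{lem_Zfold} every socket, proper or improper, contains with finite index an edge group of $T_\Z$; these edge groups are $\Z$-universally elliptic, hence elliptic in $T$, and then the whole socket is elliptic. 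Without an argument of this kind (note that the proper sockets outside $\calp_\calo$ are not even edge groups of $\TZmax$, so you cannot simply quote ellipticity of incident edge groups), your whole construction does not get off the ground.

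Second, even granting ellipticity of the sockets, you only construct an equivariant map from the refined-then-collapsed tree $S$ to $Y_\calo$ and then assert that this ``is exactly the statement we want''. But the lemma claims that $Y_\calo$ \emph{is} that collapsed refinement, i.e.\ that the map $S\ra Y_\calo$ is an equivariant isomorphism; a priori $Y_\calo$ could be a proper folded quotient of $S$. The paper proves injectivity by showing that two distinct $\calo$-translates of $Y_0$ have no edge in common: since the curves dual to $G_0\actson Y_0$ are non-peripheral (this already follows from Lemma \ref{lem_curves_orbifold} together with minimality of $Y_0$), the $\Zmax$-subgroup of $G$ containing each curve stabiliser lies in $G_0$, which forces the disjointness of translates along edges and makes the map locally injective, hence an isomorphism onto the minimal tree. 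This step is entirely absent from your plan; instead your discussion of non-peripherality is aimed at M\"obius sockets and peripheral curves, which is not where the difficulty lies (the paper's proof does not use the no-M\"obius hypothesis for that purpose). So the missing ideas are (a) the JSJ/$\Zmax$-fold argument for ellipticity of all sockets, and (b) the disjointness-of-translates argument identifying $Y_\calo$ with the refined and collapsed socket decomposition.
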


\begin{proof}

Viewing $\TZmax$ as the $\Zmax$-fold of a $\Z$-JSJ splitting $T_\Z$,
Lemma \ref{lem_Zfold} implies that all (proper and improper) sockets $S_i$ of $\calo$ have a finite index subgroup
 fixing an edge of $T_\Z$ and are therefore universally elliptic, hence elliptic in $T$.
In particular, boundary subgroups $B_i$ of $G_0$ are elliptic in $T$.
          Let $Y_0\subset Y_\calo\subset T$ be the $G_0$-minimal subtree. 
          By Lemma \ref{lem_curves_orbifold}, $G_0\actson Y_0$ is dual to a finite set $c_1,\dots, c_n$ 
          of non-peripheral disjoint simple closed curves 
          in the orbifold $\Sigma_0$ underlying $G_0$. 
          Because each of these curves is non-peripheral, the $\Zmax$-subgroup of $G$ containing its stabiliser 
          is contained in $G_0$. This implies that two distinct $\calo$-translates of $Y_0$ have no edge in common.

          Let $S'$ be the refinement of the socket decomposition dual to the curves $c_1,\dots, c_n$,
          and $S$ be obtained from $S'$ by collapsing all edges of the socket decomposition.
          Since each $B_i$ fixes a point $u_i\in Y_0$, the corresponding socket $S_i$ fixes some point, hence fixes $u_i$
          since edge stabilisers of $T$ are $\Zmax$. This means that the inclusion $Y_0\ra Y_\calo$ extends to
          an equivariant map $f':S'\ra Y_\calo$ which is constant on edges of the socket decomposition. 
          This map thus factors into a map $f:S\ra Y_\calo$.
          Since $f$ is injective on $Y_0$ and since distinct translates of $Y_0$ have no edge in common,
          $f$ locally injective, hence an isomorphism.
\end{proof}

\subsection{The orbisocket decomposition is canonical}
\label{sec_orbi_canonical}

In this section, we prove that the socket decomposition of an orbisocket is canonical as long as there is no M\"obius socket.
We prove this by showing that this decomposition is actually a $\Z$-JSJ decomposition.
We first prove one-endedness.

 One says that $G$ is one-ended relative to a peripheral structure $\calp$ if
 $(G;\calp)$ has no non-trivial splitting over a finite group (relative to $\calp$).

        \begin{prop}\label{prop_orbi_ends}
                Let $\calO$ be an orbisocket, with its peripheral structure $\calp_\calo$.
                Then $\calO$ is one-ended relative to $\calp_\calo$.
        \end{prop}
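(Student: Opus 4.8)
The plan is to reduce one-endedness of the orbisocket $\calO$ relative to $\calp_\calo$ to the known fact that a bounded Fuchsian group $G_0$ is one-ended relative to its boundary peripheral structure $\calb$. Recall that $\calO = G_0(*_{B_i} S_i)_{i=1}^k$ as a tree of groups, with $\calp_\calo$ consisting of the improper sockets together with possibly some proper sockets. First I would set up the contrapositive: suppose $\calO$ splits non-trivially over a finite subgroup $F$ relative to $\calp_\calo$, and let $T$ be the corresponding Bass-Serre tree, on which all peripheral subgroups (in particular all improper sockets) are elliptic.

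The key step is to show that every socket $S_i$ is elliptic in $T$, and then that $G_0$ is elliptic in $T$. For a socket $S_i$ with $i \le k$ (proper) or $i > k$ (improper): improper sockets lie in $\calp_\calo$ hence are elliptic by hypothesis. For a proper socket $S_i$ that is not in $\calp_\calo$: $S_i$ is a $\Z$-group, so it is infinite virtually cyclic; since $F$ is finite and $S_i$ does not split over a finite subgroup (a $\Z$-group is one-ended, i.e.\ has one end, as an infinite group with infinite centre — more precisely it cannot act on a tree with finite edge stabilisers without a global fixed point), $S_i$ is elliptic in $T$. So in all cases each $S_i$ fixes a point; in particular each boundary subgroup $B_i \le S_i$ is elliptic in $T$. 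Thus the action of $G_0$ on $T$ is a splitting of $G_0$ (or trivial) in which every $B_i$ is elliptic, i.e.\ an action relative to $\calb$. Now I invoke one-endedness of the bounded Fuchsian group $G_0$ relative to $\calb$: since $F$ is finite, $G_0$ cannot split non-trivially over $F$ relative to $\calb$, so $G_0$ fixes a point $x_0 \in T$. (One-endedness of $G_0$ relative to its boundary structure follows from the fact that a $2$-orbifold group is one-ended relative to its boundary, or directly from accessibility / Stallings; this is standard and can be cited from the orbifold/Fuchsian discussion in Section~\ref{sec_bfg} and \ref{sec_orbi_mcg}.)

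The final step is to glue: $G_0$ fixes $x_0$, and each socket $S_i$ fixes some point $x_i$. Since $S_i \supseteq B_i$ and $B_i$ fixes both $x_0$ (as $B_i \le G_0$) and $x_i$, the infinite group $B_i$ fixes the segment $[x_0, x_i]$, which forces $F$-type arguments: actually $B_i$ fixes the whole segment, and hence so does $S_i$? — here is the one subtlety: edge stabilisers of $T$ are finite, so an infinite subgroup like $B_i$ fixing $x_0$ and $x_i$ is impossible unless $x_0 = x_i$. Therefore $x_i = x_0$ for all $i$, so $G_0$ and all sockets $S_i$ fix $x_0$, whence $\calO = \langle G_0, S_1, \dots, S_k\rangle$ fixes $x_0$, contradicting non-triviality of the splitting. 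This contradiction proves $\calO$ is one-ended relative to $\calp_\calo$.

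I expect the only real obstacle to be the clean statement and citation of ``$G_0$ is one-ended relative to its boundary peripheral structure $\calb$'' for a bounded Fuchsian group without reflection — one must be careful that boundary subgroups are allowed to be dihedral-type in general, but this does not affect relative one-endedness, which only concerns splittings over \emph{finite} groups; since a hyperbolic $2$-orbifold with non-empty boundary deformation-retracts onto a graph and its fundamental group relative to the peripheral cyclic/dihedral subgroups does not split over a finite group (Stallings' theorem applied to $G_0$ relative to $\calb$, or the observation that $K/\bar G_0$ is aspherical with incompressible boundary), this is routine. Everything else — ellipticity of $\Z$-groups over finite edge stabilisers, and the ``two elliptic points with infinite common stabiliser must coincide'' argument — is elementary Bass-Serre theory.
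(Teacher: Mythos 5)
There is a genuine gap, and it sits exactly at the crux of the proposition. Your key step asserts that a proper socket $S_i$ not belonging to $\calp_\calo$ must be elliptic in the tree $T$ because ``a $\Z$-group is one-ended \dots it cannot act on a tree with finite edge stabilisers without a global fixed point.'' This is false: a $\Z$-group is an infinite virtually cyclic group, hence \emph{two}-ended, and it does split over a finite subgroup — writing $N$ for its maximal finite normal subgroup, $S_i$ is the HNN extension $N*_N$ (for $S_i\simeq\bbZ$ this is just $\bbZ$ acting freely by translations on a line, with trivial edge stabilisers). So nothing forces $S_i$, or its finite-index subgroup $B_i$, to be elliptic in a splitting of $\calO$ over a finite group relative to $\calp_\calo$, and without that you cannot invoke relative one-endedness of $G_0$ (which needs \emph{all} the $B_i$ elliptic), nor run the final gluing step. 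Note that this is not a removable technicality: when $\calp_\calo$ contains no proper sockets (e.g.\ an orbisocket with only proper sockets, such as $\grp{a,b,c\mid c^2=[a,b]}$, which the proposition asserts is absolutely one-ended), the hypothesis gives you no elliptic socket at all, and your argument produces nothing.

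The paper handles exactly this difficulty by a different mechanism. It first proves Lemma \ref{lem_one_end_rel}: if $G=A*_C B$ (or $A*_C$) with $C$ virtually cyclic and $G$ is one-ended relative to $\{C,C_1,\dots,C_n\}$, then $G$ is one-ended relative to $\{C_1,\dots,C_n\}$ alone. Its proof does not claim $C$ is elliptic in a given finite splitting; instead it takes a relative Stallings--Dunwoody tree $S$, factors an equivariant map $S\to T$ (where $T$ is the tree of the virtually cyclic splitting) through a sequence of folds, and locates the last intermediate tree with finite edge stabilisers — at the next fold an infinite-order element commensurable into $C$ becomes elliptic, producing a finite splitting in which $C$ \emph{is} elliptic, contradicting the stronger relative one-endedness. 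The proposition then follows by attaching the proper sockets $S_i$ to $G_0$ one at a time over $B_i$ and using the lemma to delete each $B_i$ from the relative structure, ending with one-endedness relative to the improper sockets only, which all lie in $\calp_\calo$. Your remaining steps (relative one-endedness of the bounded Fuchsian group $G_0$, and the ``infinite group fixing two points forces the points to coincide'' argument) are fine, but to repair the proof you need an argument of the above folding/accessibility type to promote the given splitting to one where the socket groups are elliptic — ellipticity cannot be had for free from the structure of $\Z$-groups.
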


        In particular, if $\calo$ has no improper socket, then $\calo$ is one-ended (absolutely).

        The following lemma will be proved below.
        \begin{lemma}\label{lem_one_end_rel}
          Consider a non-trivial decomposition of $G$ into an amalgam  $G=A*_C B$ or an HNN extension $G=A*_C$
          with $C$ virtually cyclic.
          If $G$ is one-ended relative to $C,C_1,\dots, C_n$, then it is one-ended relative to $C_1,\dots,C_n$.

          In particular, if $A$ and $B$ are one-ended relative to the conjugates of $C,C_1,\dots C_n$ contained in 
          $A$ or $B$, then $G$ is one-ended relative to $C_1,\dots,C_n$..
        \end{lemma}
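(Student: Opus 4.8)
The plan is to deduce Lemma~\ref{lem_one_end_rel} from standard facts about ends of groups and accessibility, using the given splitting to transfer relative one-endedness.

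First I would set up the contrapositive: suppose $G$ is \emph{not} one-ended relative to $C_1,\dots,C_n$, so there is a non-trivial $G$-tree $S$ with finite edge stabilisers in which each $C_i$ is elliptic. The goal is to build from $S$ a non-trivial $G$-tree with finite edge stabilisers in which $C, C_1,\dots,C_n$ are all elliptic, contradicting the hypothesis that $G$ is one-ended relative to $\{C, C_1,\dots,C_n\}$. The only obstruction is that $C$ need not be elliptic in $S$. Since $C$ is virtually cyclic and edge stabilisers of $S$ are finite, if $C$ is not elliptic then it acts on its minimal subtree (a line) with finite edge stabilisers, i.e.\ $C$ splits over a finite group; equivalently $C$ has more than one end. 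So we may assume $C$ is two-ended and, after passing to the line $\mathrm{Min}_S(C)$, that $S/G$ has structure making $C$ hyperbolic in $S$.

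Next I would use the $G$-tree $T$ dual to the given splitting $G=A*_C B$ (resp.\ $G=A*_C$): here $C$ is elliptic (it is an edge stabiliser), and $A,B$ are the vertex stabilisers. The key step is to combine $T$ and $S$: since $C$ is virtually cyclic and $T$ has $C$ as an edge stabiliser, one looks at how $A$ and $B$ act on $S$. Each of $A$, $B$ acts on $S$ with finite edge stabilisers; take minimal invariant subtrees $Y_A, Y_B$. Now I would refine $T$ by blowing up its vertices using $Y_A, Y_B$ — the blowup construction recalled at the start of Section~\ref{sec_JSJ} — provided the edge group $C$ is elliptic in the relevant $Y$. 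If $C$ is elliptic in both $Y_A$ and $Y_B$ (equivalently in $S$), the blowup $\hat T$ has finite edge stabilisers (each is contained in an edge stabiliser of $T$ or of $S$), contains the conjugates of the $C_i$ as elliptic subgroups (they were elliptic in $S$ hence in the $Y$'s, and they are conjugate into $A$ or $B$), and is non-trivial (it dominates $T$). Since $C$ is an edge group of $T$ it is elliptic in $\hat T$ as well. This is exactly the contradiction, so the only remaining case is when $C$ is \emph{not} elliptic in $S$, i.e.\ $C$ itself has more than one end. But then $C$ is two-ended, and its splitting over a finite group, together with the splitting of $G$ over $C$, can be composed (first split $G$ over $C$, then split the copies of $C$ inside $A$ and $B$ further over the finite group), yielding a non-trivial splitting of $G$ over a finite group in which $C$ and all $C_i$ are elliptic — again contradicting one-endedness relative to $\{C, C_1,\dots,C_n\}$. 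Hence $G$ is one-ended relative to $\{C_1,\dots,C_n\}$.

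For the ``in particular'' clause: if $A$ and $B$ are one-ended relative to the conjugates of $C, C_1,\dots,C_n$ they contain, then by the combination theorem for relative one-endedness (any splitting of $G$ over a finite group relative to $C, C_1,\dots,C_n$ restricts, via Bass--Serre theory, to splittings of $A$ and $B$ over finite groups relative to their inherited peripheral subgroups, which must be trivial), $G$ is one-ended relative to $\{C, C_1,\dots,C_n\}$; then the first part gives one-endedness relative to $\{C_1,\dots,C_n\}$. The main obstacle I anticipate is the bookkeeping in the blowup step — checking that the refined tree really has finite edge stabilisers and that the peripheral subgroups stay elliptic after blowing up — but this is routine given the blowup construction and the fact that edge stabilisers of a blowup are subgroups of edge stabilisers of the pieces. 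A clean alternative, which I would probably prefer in the write-up, is to phrase the whole argument in terms of Bass--Serre theory directly: take the $G$-tree $S$ witnessing a splitting over a finite group relative to $\{C_1,\dots,C_n\}$, intersect with the action on $T$, and apply the standard fact that a group which is one-ended relative to a family remains so after splitting over a member of that family.
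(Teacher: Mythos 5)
The ``in particular'' deduction and the case where $C$ is elliptic in $S$ are essentially fine, but there is a genuine gap in the first assertion, precisely in the case you dispose of in one sentence: when $C$ is \emph{not} elliptic in $S$. Your proposed fix --- ``first split $G$ over $C$, then split the copies of $C$ inside $A$ and $B$ further over the finite group'' --- is not a valid construction: a splitting of the edge group $C$ over a finite group does not induce a refinement of $G=A*_C B$ unless $A$ and $B$ themselves split compatibly over that finite group, which is exactly what you do not know. If such a composition worked, every group with a non-trivial splitting over a two-ended group would split non-trivially over a finite group, which is false (a closed hyperbolic surface group splits over a cyclic group but is one-ended). Note that in this case you never use $S$ beyond concluding that $C$ is two-ended, whereas the whole content of the lemma lies in exploiting the interplay between $S$ and $T$; likewise, the ``standard fact'' invoked at the end of your alternative sketch is essentially the lemma itself and cannot be cited. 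Two smaller slips in the other case: the blowup $\hat T$ does \emph{not} have finite edge stabilisers (its edges coming from $T$ carry conjugates of $C$, which is infinite by the paper's convention on virtually cyclic groups), and the $C_i$ need not be conjugate into $A$ or $B$, so they need not be elliptic in $\hat T$; these are harmless only because, when $C$ is elliptic in $S$, the tree $S$ itself already contradicts one-endedness relative to $C,C_1,\dots,C_n$, with no blowup needed.

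For comparison, the paper avoids your bad case by choosing $S$ more carefully and comparing it with $T$, rather than trying to refine $T$. It takes $S$ to be a \emph{maximal} splitting over finite groups relative to $C_1,\dots,C_n$ (a relative Stallings--Dunwoody decomposition). If some vertex stabiliser of $S$ is not elliptic in $T$, its relative one-endedness forces it to meet an edge stabiliser of $T$ in an infinite subgroup, so a finite-index subgroup of a conjugate of $C$ is elliptic in $S$, hence so is $C$, and $S$ itself gives the required splitting relative to $C,C_1,\dots,C_n$. Otherwise $S$ dominates $T$, and one factors an equivariant map $S\to T$ through a collapse followed by a sequence of folds $S_0,S_1,\dots,S_n=T$; taking the last tree $S_{i_0}$ in which all edge stabilisers are finite, the next fold produces an infinite-order element fixing an edge of $T$, hence elliptic in $S_{i_0}$ and of finite index in a conjugate of $C$, so $C$ is elliptic in $S_{i_0}$; since collapses and folds preserve ellipticity of the $C_i$ and each $S_i$ is non-trivial (it maps onto the non-trivial $T$), $S_{i_0}$ is the desired non-trivial splitting over a finite group relative to $C,C_1,\dots,C_n$. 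Some argument of this kind --- using maximality of $S$ and the passage from $S$ to $T$ by folds --- is what your write-up is missing in the hyperbolic case.
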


        \begin{proof}[Proof of Proposition \ref{prop_orbi_ends}]
          Let $\calo$ be an orbisocket groups, $G_0<\calo$ be the corresponding bounded Fuchsian group, 
          $B_1,\dots,B_n$ be the boundary subgroups of $G_0$,  
	  and $S_{1},\dots,S_k$ be the improper sockets of $\calo$. 
          We know that $G_0$ is one-ended relative to $B_1,\dots B_n$.
          Lemma \ref{lem_one_end_rel} says that $G_0*_{B_{n}} S_{n}$ is one-ended relative to $B_1,\dots B_{n-1}$,
          and repeating this argument, that $\calo$ is one-ended relative to $B_1,\dots B_{k}$.
          Since all improper sockets $B_1,\dots ,B_k$ appear in $\calp_\calo$, the lemma follows.
        \end{proof}

        \begin{proof}[Proof of Lemma \ref{lem_one_end_rel}]
          We prove the contraposition of the first assertion.
          Assume that $G$ is not one-ended relative to $C_1,\dots,C_n$.
          Let $G\actson S$ be a minimal Bass-Serre tree of a Stallings-Dunwoody decomposition of $G$ relative to $C_1,\dots,C_n$,
          \ie a splitting over finite groups relative to $C_1,\dots,C_n$, maximal for domination (existence follows from  relative Dunwoody accessibility
          see for instance \cite[Th. 4.8]{GL3a}).

         Let $T$ be the Bass-Serre tree of the decomposition $G=A*_C B$ (or $G=A*_C$).         
         Assume first that some vertex stabiliser $G_v$ of $S$ is not elliptic in $T$.
         Let $M\subset T$ be the minimal subtree, and consider $e$ an edge of $M$.
         By one-endedness of $G_v$  relative to the conjugate of $C_1,\dots,C_n$ it contains, 
         $G_v\cap G_e$ is infinite. Since $G_e$ is conjugate to $C$, and thus virtually cyclic,
         a finite index subgroup of $C$ is contained in $G_v$, so $C$ is elliptic in $S$, so $G$ is not one-ended relative to $C,C_1,\dots,C_n$.
         
         Assume now that every vertex stabiliser of $S$ is elliptic in $T$, so $S$ dominates $T$.
         Then one may factor $S\ra T$ into a collapse map $S\ra S_0$ 
	 followed by a sequence of folds $S_0,S_1,\dots, S_n=T$ as in \cite{BF_complexity}.
         Each $S_i$ is non-trivial because $T$ is non-trivial.
         Let $i_0$ be the last index such all edge stabilisers of $S_i$ are finite.

         Then there is an edge $e$ of $S_{i_0}$, and some $g\in G_{o(e)}$ of infinite order which fixes
         the image of $e$ in $S_{i_0+1}$.
         Then $\grp{g}$ fixes an edge 
         in $T$, so it has finite index in some conjugate of $C$.
         Since $g$ is elliptic in $S_{i_0}$, so is $C$, and $S_{i_0}$ is a splitting of $G$ with finite edge stabilisers
         relative to $C,C_1,\dots,C_n$.
        \end{proof}

Since an orbisocket is one-ended (relative to its peripheral structure), 
we now consider its $\Z$-JSJ decomposition.

\begin{lem}\label{lem_scindt_flexible}
 Let $(G,\calP)$ be a hyperbolic group with a peripheral structure in
 $\Z$.   Assume that there is a splitting  of  $(G,\calp)$ over a $\Z$-group $G_e$,  
whose Bass-Serre tree $S$ is not $\Z$-universally elliptic. 

Then, either $G_e$ is in $\Zmax$, or is of index $2$ in its maximal virtually cyclic subgroup, with same maximal finite normal  subgroup.
\end{lem}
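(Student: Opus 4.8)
Let $(G,\calP)$ be a hyperbolic group with peripheral structure in $\Z$. Assume there is a splitting of $(G,\calp)$ over a $\Z$-group $G_e$ whose Bass-Serre tree $S$ is not $\Z$-universally elliptic. Then either $G_e\in\Zmax$, or $G_e$ has index $2$ in $VC(G_e)$ with the same maximal finite normal subgroup.

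\medskip

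\noindent\textbf{Proof plan.} Put $C=G_e$, let $\widehat C=\Zmax(C)$ be the maximal $\Z$-subgroup of $G$ containing $C$, and let $\bar C=VC(C)$ be the maximal virtually cyclic subgroup containing $C$, so that $C\subseteq\widehat C\subseteq\bar C$ with $[\bar C:\widehat C]\le 2$. If $C=\widehat C$ then $C\in\Zmax$ and we are done; so assume $C\subsetneq\widehat C$, and the goal is to show that $\bar C$ is a $\Z$-group (hence $\bar C=\widehat C$), that $[\bar C:C]=2$, and that $C$ and $\bar C$ have the same maximal finite normal subgroup. Since $S$ is not $\Z$-universally elliptic and $C$ is its unique edge stabiliser up to conjugacy, there is a $\Z$-tree $S'$, which we may take relative to $\calp$, in which $C$ is not elliptic; being virtually cyclic, $C$ then contains a hyperbolic element of $S'$ and preserves a unique line $L\subseteq S'$, its axis.

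\emph{The soft part.} For $b\in\bar C$ the subgroup $C\cap bCb\m$ is of finite index in $C$, hence infinite, hence contains a hyperbolic element of $S'$ whose axis is both $L$ and $bL$; so $\bar C$, and a fortiori $\widehat C$, preserves $L$. The kernel of the action of $\widehat C$ on $L$ fixes every edge of $L$, hence lies in an edge stabiliser of $S'$ and is finite or a $\Z$-group; if it were infinite it would be of finite index in $\widehat C$, making $\widehat C$ (and $C$) elliptic in $S'$, a contradiction. So $\widehat C$ acts on $L\cong\bbR$ with finite kernel, and its image is a $\Z$-group acting faithfully and cocompactly on $\bbR$. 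Such a group acts by translations only: any reflection is an involution, hence torsion, hence lies in the maximal finite normal subgroup $N$ of the image; but conjugating such a reflection by a translation $\tau$ in the image gives a reflection $\tau r\tau\m\in N$ about another point, whence $r\cdot\tau r\tau\m\in N$ is a nontrivial translation --- impossible in a finite group. It follows that the kernel of the action of $\widehat C$ on $L$ is exactly its maximal finite normal subgroup, and likewise for $C$; consequently, writing $F(\cdot)$ for maximal finite normal subgroups, $F(C)=C\cap F(\widehat C)$, and the claim reduces to proving: $\bar C$ is a $\Z$-group, $[\bar C:C]\le 2$, and $F(\widehat C)\subseteq C$. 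The same analysis of the action of $\bar C$ on $L$ shows its image is isomorphic to $\bbZ$ or $D_\infty$, the first case being equivalent to $\bar C$ being a $\Z$-group; so everything hinges on excluding the dihedral behaviour and bounding the index, which the line argument alone cannot do.

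\emph{The hard part.} Here one must use that $C$ is an \emph{edge} group of a splitting that is not $\Z$-universally elliptic, together with the one-endedness of $(G,\calp)$ relative to $\calp$ (a standing hypothesis in this context; if necessary one first passes to the one-ended factor containing $C$ in a Stallings--Dunwoody decomposition). I would compare $S$ with the $\Z$-JSJ deformation space $\cald_\Z$ of $(G,\calp)$: the tree $T_\Z$ being $\Z$-universally elliptic is elliptic with respect to $S$, so forming the blow-up of $T_\Z$ relative to $S$ locates $C$ --- after checking that $C$ is not commensurable to an edge group of $T_\Z$, which would make $C$ itself $\Z$-universally elliptic --- inside a flexible vertex group $G_v$ of $T_\Z$, which by Proposition~\ref{prop_ZJSJ} is a hanging bounded Fuchsian group without reflection, as the edge stabiliser of a $\Z$-splitting of $G_v$ relative to its boundary subgroups. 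By Lemma~\ref{lem_curves_orbifold} that splitting is dual to a two-sided simple closed curve $c$ on the orbifold underlying $G_v$, disjoint from the singular locus, the mirrors and the peripheral locus, and $C$ is the $\Z$-group carried by $c$, that is, the preimage of $\langle c\rangle$ in $G_v$. Since $c$ avoids the cone points, $\langle c\rangle$ is maximal cyclic unless $c$ bounds a M\"obius band; as we are assuming $C\subsetneq\widehat C$, the latter must occur, say $c=\partial M$ with core $c_0$. Then $\bar C=VC(C)$ is the preimage of $\langle c_0\rangle$, which is a $\Z$-group (so $\bar C=\widehat C$), $C$ is the preimage of $\langle c_0^2\rangle$, these have index $2$, and both share the maximal finite normal subgroup of $G_v$ --- exactly the second alternative. \textbf{The main obstacle is precisely this last paragraph}: turning ``edge group of a non-$\Z$-universally-elliptic $\Z$-splitting'' into a definite location on an orbifold --- notably handling the case where $C$ acts hyperbolically on $T_\Z$, which requires a careful analysis of how the axis of $C$ meets the blow-up --- and then reading off the M\"obius dichotomy while excluding all other ways $C$ could fail to be $\Zmax$. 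The preceding tree and $\bbR$-line arguments, by contrast, are routine.
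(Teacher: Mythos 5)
Your plan has the same skeleton as the paper's proof (a JSJ relative to $\calp$ with Fuchsian flexible vertices, dual curves, then the M\"obius dichotomy), but the step you yourself flag as ``the main obstacle'' is exactly where the content lies, and your proposal both leaves it open and points at the wrong difficulty. There is no ``case where $C$ acts hyperbolically on $T_\Z$'' to handle, because the argument never needs to locate $G_e$ in the JSJ tree a priori. It runs in the opposite direction: if every flexible vertex group of the relative JSJ $T$ were elliptic in $S$, then $T$ would dominate $S$ (rigid vertices are elliptic in $S$ by definition), and an equivariant map $T\to S$ forces some edge stabiliser of $T$ to fix a point in the interior of an edge of $S$; thus $G_e$ contains an edge group of $T$ with finite index and would be $\Z$-universally elliptic, contradicting the hypothesis. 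Hence some flexible vertex group $G_v$ is \emph{not} elliptic in $S$, and one studies the minimal $G_v$-subtree of $S$, whose edge stabilisers are by construction the infinite groups $G_v\cap G_e^{g}$; Morgan--Shalen (Lemma \ref{lem;splitting_curve}) then exhibits a finite-index subgroup $C$ of a conjugate of $G_e$ as the group carried by a two-sided simple $1$-suborbifold. The identifications you assert without proof (``$C$ is the preimage of $\langle c\rangle$ in $G_v$'', ``$\bar C=VC(C)$ is the preimage of $\langle c_0\rangle$'') need the short fixed-point argument that is also missing from your write-up: since $C$ is not universally elliptic it fixes no edge of $T$, so $\mathrm{Fix}(C)=\{v\}$, and $VC(C)$, containing $C$ with finite index, is elliptic with fixed set inside $\mathrm{Fix}(C)$, whence $VC(G_e)\subseteq G_v$ and maximality computed in $G_v$ transfers to $G$.

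Two further points. First, you invoke Proposition \ref{prop_ZJSJ} for the $\Z$-JSJ of the \emph{pair} $(G,\calp)$, but that proposition is proved in the paper only in the absolute case; the paper instead takes the virtually cyclic JSJ relative to $\calp$ (whose flexible vertices are bounded Fuchsian groups possibly \emph{with} reflections), uses the infinite centre of $C\subseteq G_e$ to force the dual suborbifold off the mirrors, and uses non-universal-ellipticity to exclude the curve bounding an annulus with a circular mirror or a regular neighbourhood of the mirror locus (Proposition \ref{prop_mirrors}). Second, your closing dichotomy ``$\langle c\rangle$ is maximal cyclic unless $c$ bounds a M\"obius band'' silently discards the case where $c$ bounds a disk with two cone points of angle $\pi$: there $G_e$ is $\Zmax$ but has index $2$ in a dihedral-type $VC(G_e)$, which is precisely why the lemma's conclusion is stated as a disjunction; your reduction ``assume $C\subsetneq\widehat C$, hence M\"obius'' is salvageable only after proving both that this dihedral configuration leaves $C$ maximal as a $\Z$-subgroup and that maximality passes from $G_v$ to $G$, neither of which you address. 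The ``soft part'' with the axis in $S'$ is correct but, as you suspected, does no real work once the orbifold picture is in place.
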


\begin{proof}
Let $T$ be a virtually-cyclic JSJ decomposition of $G$  relative to $\calp$.
By \cite[Th. 7.33(2)]{GL3a}, 
 its flexible vertices are hanging bounded Fuchsian groups (maybe with reflections).
Since $S$ is not universally elliptic, some flexible subgroup $G_v$ of $T$ is not elliptic in $S$.
Since $G_v$ is a finite extension of an orbifold group $\pi_1(\Sigma)$,
the action of $G_v$ on its minimal subtree in $S$ is dual to a non-peripheral simple two-sided $1$-suborbifold $\gamma$ contained in $\Sigma$.
Let $C$ be the preimage in $G_v$ of its fundamental group.
Up to conjugacy, one can assume $C\subset G_e$. Thus, $C$ has infinite centre so $\gamma$ intersects no mirror.

 Since $C$ is not universally elliptic, 
$\gamma$ does not bound an annulus with a circular mirror (neither does it bound a regular neighbourhood of the mirrors and peripheral segments of $\Sigma$).
If $\gamma$ does not bound a  M\"obius band or a disk with two cone points of angle $\pi$,
then $C$ is $\Zmax$ (even maximal virtually cyclic).
Otherwise, $C$ has index $2$ in its maximal virtually cyclic subgroup,
with same maximal normal finite subgroup $\Hat C$ (and $\Hat C$ has infinite centre only in the case of a M\"obius band).
The lemma follows.
\end{proof}

        \begin{prop} \label{prop_JSJ_orbi}
                Let $(\calO;\calp_\calo)$ be an orbisocket. Let $\calo\actson T$ be the Bass-Serre tree 
                of a socket decomposition of $\calo$  without M\"obius socket.
                
                Then $T$ is a $\Z$-JSJ splitting of $\calo$ relative to $\calp_\calo$,
                and it is its own tree of cylinders.
        \end{prop}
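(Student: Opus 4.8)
\textbf{Proof plan for Proposition \ref{prop_JSJ_orbi}.}

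The plan is to verify the two defining properties of a $\Z$-JSJ splitting relative to $\calp_\calo$ — $\Z$-universal ellipticity and maximality for domination among $\Z$-universally elliptic splittings — and then to identify the socket tree with its own tree of cylinders. Throughout I will use the socket decomposition $\calo = G_0(*_{B_i}S_i)_{i=1}^p$ without M\"obius sockets, with central Fuchsian vertex $v_0$ of group $G_0$ and one pendant vertex $w_i$ of group $S_i$ for each $i\le p$; recall that $\calp_\calo$ consists of the improper sockets (those with $B_i=S_i$, which are leaves $w_i$ with $i>k$) together with possibly some proper sockets. By Proposition \ref{prop_orbi_ends}, $\calo$ is one-ended relative to $\calp_\calo$, so every $\Z$-tree relative to $\calp_\calo$ has infinite edge stabilisers, which will be used freely.

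\emph{Universal ellipticity.} First I would show $T$ is $\Z$-universally elliptic, i.e.\ every edge stabiliser $G_e$ of $T$ is elliptic in every $\Z$-tree $S$ of $\calo$ relative to $\calp_\calo$. Each such $G_e$ is conjugate into some socket $S_i$, and contains the boundary subgroup $i_e(B_i)$ of $G_0$ with finite index (index $1$ if improper). So it suffices to see that each socket $S_i$ is elliptic in $S$. The improper sockets are in $\calp_\calo$, hence elliptic by hypothesis; and then Lemma \ref{lem_curves_orbifold} applied to the Fuchsian group $G_0$ (with its boundary peripheral structure) shows $G_0\actson S$ is dual to a disjoint union of two-sided simple closed curves avoiding the boundary, cone points and mirrors, so all boundary subgroups $B_i$ of $G_0$ are elliptic in $S$; for a proper socket $S_i$, since $B_i\subset S_i$ has finite index and $B_i$ is elliptic, $S_i$ is elliptic too (using that edge stabilisers of $S$ are $\Z$, hence contain no subgroup of infinite index commensurable to themselves — more precisely, a $\Z$-group cannot properly contain a finite-index $\Z$-subgroup fixing strictly more, so $S_i$ fixes the same point as $B_i$). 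Hence all edge groups of $T$ are elliptic in $S$.

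\emph{Maximality.} Next I would show $T$ dominates every $\Z$-universally elliptic $\Z$-tree $S$ relative to $\calp_\calo$. Equivalently, I must show every vertex group of $T$ is elliptic in $S$: the sockets $S_i$ are elliptic as above (they are universally elliptic themselves once one knows $B_i$ is — or directly, improper ones lie in $\calp_\calo$ and proper ones have a finite-index elliptic subgroup), and it remains to show $G_0$ is elliptic in $S$. Suppose not; then $G_0\actson S|_{G_0}$ is dual to a nonempty family of non-peripheral two-sided simple closed curves $c_1,\dots,c_n$ in the orbifold $\Sigma_0$ underlying $G_0$ (Lemma \ref{lem_curves_orbifold}). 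Since $S$ is $\Z$-universally elliptic and some $c_j$ is non-peripheral, Lemma \ref{lem_scindt_flexible} applied inside $G_0$ — more precisely, the curve $c_j$ together with \cite[Lemma 5.3]{Gui_reading} or \cite[Th. 3.5]{Gui_reading} — produces another simple closed curve $c'$ in $\Sigma_0$ with positive intersection number with $c_j$; this $c'$ gives a $\Z$-splitting of $G_0$ hence of $\calo$ (relative to $\calp_\calo$, since $c'$ is non-peripheral so its dual splitting keeps all boundary subgroups, hence all sockets, elliptic) in which $C_j=\pi_1(c_j)$ is not elliptic, contradicting universal ellipticity of $S$. Here the no-M\"obius-socket hypothesis is what guarantees that a curve bounding a M\"obius band in $\Sigma_0$ does not secretly carry an edge stabiliser of $T$: such a curve gives a $\Z$-splitting of $G_0$ which is \emph{not} universally elliptic, so it cannot be one of the $c_j$ coming from a universally elliptic $S$, and indeed, since it bounds a M\"obius band, its dual splitting can be pushed off (the M\"obius band argument of \cite[Th. 3.5]{Gui_reading}). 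This is the step I expect to be the main obstacle: carefully ruling out the M\"obius-band and twice-punctured-disk exceptional curves, and checking that the curve $c'$ produced really does define a splitting \emph{relative to} $\calp_\calo$ and over a $\Z$-group (not merely virtually cyclic), so that it legitimately contradicts universal ellipticity of $S$ in the class $\Z$.

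\emph{Tree of cylinders.} Finally, to see $T$ is its own tree of cylinders, I would use that the commensurability classes of edge groups of $T$ are distinct: the edge adjacent to socket $w_i$ has stabiliser commensurable to $S_i$ (equivalently to $B_i$), and distinct boundary curves of $\Sigma_0$ are non-commensurable in $G_0$, hence in $\calo$ by Lemma \ref{lem_inter_Zmax}-type reasoning (a $\Z$-subgroup of $\calo$ meeting $G_0$ infinitely lies in $G_0$). So each cylinder of $T$ is a single edge together with its two endpoints; the central vertex $v_0$ lies in $p$ cylinders and each leaf $w_i$ in exactly one. Thus $V_0(T_c)=\{v_0\}$, $V_1(T_c)$ is the set of the $p$ cylinders, and $T_c$ — the bipartite tree joining $v_0$ to each cylinder and each cylinder to the corresponding leaf $w_i$ — is obtained from $T$ by subdividing each edge once. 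Since each edge stabiliser of $T_c$ equals the corresponding $S_i\in\Z$ (the stabiliser of the cylinder is $VC(B_i)$, but the stabiliser of the edge between $v_0$ (resp.\ $w_i$) and the cylinder is $i_e(B_i)$ (resp.\ $S_i$), both in $\Z$ since $G_0$ and $S_i$ are $\Z$-groups on their boundary; if $VC(B_i)$ were strictly larger one would collapse, but by construction the socket decomposition already records the maximal root), collapsing edges with non-$\Z$ stabilisers changes nothing, so $T_c=T_c^*$, and contracting each subdivided edge back recovers $T$; hence $T$ and $T_c$ define the same deformation space and, being reduced, are equal up to the identification we use. Combined with the first two parts, $T$ is a $\Z$-JSJ splitting of $(\calo;\calp_\calo)$ equal to its own tree of cylinders.
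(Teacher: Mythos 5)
Your argument for $\Z$-universal ellipticity is circular, and this is precisely where the real content (and the no-M\"obius-socket hypothesis) lives. You invoke Lemma \ref{lem_curves_orbifold} for the action of $G_0$ on an arbitrary $(\Z,\calp_\calo)$-tree $S$ in order to conclude that the boundary subgroups $B_i$ are elliptic in $S$; but that lemma assumes the action is \emph{relative to} the boundary peripheral structure, i.e.\ it presupposes exactly the ellipticity you are trying to establish (for a proper socket $S_i\notin\calp_\calo$ there is no a priori reason for $B_i$ to be elliptic in $S$). A symptom that the step cannot be repaired as written is that your ellipticity argument never uses the absence of M\"obius sockets, whereas without that hypothesis the conclusion is false: if $B_i\subset S_i$ is of M\"obius type, the splitting over $B_i$ is crossed by a curve and is not universally elliptic. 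The paper's proof goes the other way around: it assumes some edge group $B_i$ of $T$ is not universally elliptic and applies Lemma \ref{lem_scindt_flexible}, which forces $B_i$ to be $\Zmax$ (hence $B_i=S_i$, an improper socket lying in $\calp_\calo$, contradiction since the splittings considered are relative to $\calp_\calo$) or of M\"obius type (excluded by hypothesis). Your maximality step is then an acceptable variant in outline (the paper instead quotes \cite[Prop.~7.4]{GL3a} to get ellipticity of the hanging Fuchsian vertex in a JSJ tree, and deduces ellipticity of the sockets from finite index), but as you wrote it, it too leans on the circular first step, and your parenthetical about M\"obius bands there is misplaced — the hypothesis is consumed in the ellipticity step, not in the crossing-curve step.

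The tree-of-cylinders paragraph is also wrong as stated. For a proper socket $S_i$, the edges of $T$ incident to the socket vertex $w_i$ form the $S_i$-orbit of one edge, and their stabilisers are $S_i$-conjugates of $B_i$, all of finite index in $S_i$, hence mutually commensurable; so a cylinder is not ``a single edge together with its two endpoints'' but the whole star of $w_i$ (and malnormality of boundary subgroups in $G_0$ is what prevents it from being any larger — this is the paper's one-line argument). With the correct description, $T_c$ has $V_1$-vertices in bijection with socket vertices and $V_0$-vertices the $G_0$-vertices, and its edges correspond to the edges of $T$, so $T_c\cong T$ equivariantly; no subdivision appears, and no appeal to ``same deformation space, hence equal'' is needed (nor valid: the statement is that $T$ equals its tree of cylinders, not that they are merely in the same deformation space, and a barycentric subdivision of $T$ would not equal $T$).
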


        \begin{proof}
          As discussed above, $\calO$ is one-ended relative to $\calp_\calo$.
        Consider $\calo\actson T$ the Bass-Serre tree of the sockets decomposition of $\calo$, and $\calo\actson T_{JSJ}$ a tree in the 
        $\Z$-JSJ deformation space of $(\calo;\calp_\calo)$. We use the same notations as in Definition \ref{dfn_orbisocket}: 
        $G_0$ is the Fuchsian group with boundary subgroups $B_i$, and $S_i$ are the sockets.

        We claim that $T$ is universally elliptic (for the class of $\Z$-splittings of $\calo$ relative to $\calp_\calo$).
        Assume on the contrary that the edge group $B_i$ is not $\Z$-universally elliptic, 
        and note that $S_i$ is the maximal $\Z$-subgroup of $\calo$ containing $B_i$.
        Then by Lemma \ref{lem_scindt_flexible},
        either  the inclusion $B_i\subset S_i$ is of M\"obius type in contradiction with the hypothesis, or $B_i=S_i$.
        If $B_i=S_i$, $[S_i]$ lies in the peripheral structure $\calp_\calo$, 
        and is universally elliptic since we consider splittings relative to $\calp_\calo$.
        Thus $T$ is $\Z$-universally elliptic, and therefore dominated by $T_{JSJ}$.

        By \cite[Prop. 7.4]{GL3a}, 
	the hanging Fuchsian group $(G;\calp)$ fixes a point in $T_{JSJ}$.
        Since $B_i\subset G_v$ and $S_i$ contains $B_i$ with finite index, $S_i$ also fixes a point in $T_{JSJ}$.
        Thus, $T$ dominates $T_{JSJ}$, and $T$ is a $\Z$-JSJ splitting of $(\calo;\calp_\calo)$.
        
        Since a boundary subgroup of an orbifold group is malnormal in it, 
        each cylinder of $T$ is the ball of radius $1$ around a point stabilised by a conjugate of $S_i$.
        It immediately follows that $T$ is its own tree of cylinders.

        \end{proof}

\section{Isomorphism problem for orbisockets}\label{sec_orbi}

        \subsection{Recognition of  basic orbisockets}

        Consider an orbisocket $(\calO,\calp_\calo)$ with its peripheral structure. 

        \begin{dfn}\label{dfn_basic}
          We say that $(\calo,\calp_\calo)$ is a \emph{basic} orbisocket if it admits no non-trivial $\Zmax$ relative splitting, 
          and if $\calo$ has at least one improper socket.
        \end{dfn}

    Let us remark that if an orbisocket admits no non-trivial $\Zmax$ relative splitting, and has only proper sockets, then it is a rigid    one-ended hyperbolic group (see Proposition \ref{prop_orbi_ends}).

        Basic orbisockets are the hanging orbisockets one gets in a maximal $\Zmax$-splitting of a non-rigid one-ended hyperbolic group.
        The goal of this section is to describe all possible basic orbisockets, and to provide an algorithm recognising them.

        \begin{thm}\label{thm_recon_orbisocket}
          There exists an algorithm which takes as input  a hyperbolic group $(G;\calp)$ with an unmarked peripheral structure (given
          by a presentation and generating sets of peripheral subgroups)
          and which decides whether $(G;\calp)$ is isomorphic to a basic orbisocket.
          
          If this is the case, the algorithm provides a socket decomposition of $(G;\calp)$. 
        \end{thm}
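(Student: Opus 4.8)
## Proof Plan for Theorem \ref{thm_recon_orbisocket}

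The plan is to first classify, up to finitely many combinatorial possibilities, what a basic orbisocket can look like, and then to run a search that reconstructs such a structure when it exists, terminating with a negative answer otherwise. Since $(G,\calp)$ is a basic orbisocket, it is one-ended relative to $\calp$ (Proposition \ref{prop_orbi_ends}), so first I would check one-endedness relative to $\calp$ using Gerasimov's algorithm together with Proposition \ref{enumerate_ess_splittings}; if $G$ splits over a finite group relative to $\calp$, we answer negatively. The key structural input is that a basic orbisocket $\calo = G_0(*_{B_i}S_i)_{i=1}^k$ has a $G_0$ which is a \emph{small} bounded Fuchsian group without reflection: since $\calo$ admits no non-trivial $\Zmax$-splitting relative to $\calp_\calo$, the underlying orbifold $\Sigma_0$ cannot contain any non-peripheral two-sided simple closed curve carrying a $\Zmax$-subgroup (such a curve would give a $\Zmax$-splitting by Lemma \ref{lem_courbes_orbisocket} and the rigidity criterion \ref{prop_alt}). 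By the analysis already indicated in the excerpt (following \cite{Gui_reading}, cf.\ the remark after Proposition \ref{prop_JSJ_orbi}), this forces $\Sigma_0$ to be one of a short explicit list of orbifolds — essentially a pair of pants, a once-punctured Klein bottle, a once-punctured torus, a twice-punctured projective plane, a disk with few cone points, or their small relatives — and moreover all but at most one boundary component of $G_0$ must carry a \emph{proper} socket root (an improper socket on a boundary curve $c$ would let another curve intersect $c$ essentially unless $\Sigma_0$ is one of these minimal pieces). This yields a finite, explicitly enumerable list $\mathcal{L}$ of candidate socket decompositions: finitely many topological types for $\Sigma_0$, finitely many conjugacy classes of the finite kernel $F$ and of the finite groups involved, and for each boundary subgroup $B_i$ a $\Z$-group $S_i$ with $[S_i:B_i]=m_i$ where only finitely many $m_i$ need be tried before the answer can be read off (the largest $m_i$ is bounded once one fixes which boundary subgroup, if any, is improper).

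Given this classification, the recognition algorithm proceeds as follows. First I would enumerate all presentations of $G$ via Tietze transformations and, in parallel, all ways of writing $G$ as the fundamental group of a graph of groups whose central vertex group $G_0$ is a bounded Fuchsian group without reflection with boundary subgroups $B_1,\dots,B_n$, with pendant edges to $\Z$-groups $S_i$ via monomorphisms $j_i:B_i\hookrightarrow S_i$ with $j_i$ not onto for $i\le k$, and with the peripheral structure $\calp$ matching the improper sockets plus possibly some proper sockets. For each such candidate I would: check that $G_0$ really is a bounded Fuchsian group without reflection using Proposition \ref{prop_IP_orbifold} and Lemma \ref{lem_VC2} (computing the maximal finite normal subgroup of $G_0$ and deciding the orbifold type from a standard presentation); verify using Lemma \ref{lem_VC2} that each $S_i$ is a $\Z$-group and each $B_i$ has the prescribed index in it; discard the candidate if any socket is a M\"obius socket, absorbing it into $G_0$ as in Section \ref{subsec_orbi} so that we only ever produce M\"obius-free decompositions; check that the induced peripheral structure on the vertex group equals $\calp$ up to conjugacy (using the simultaneous conjugacy problem); and finally verify that the candidate orbisocket is indeed \emph{basic}, i.e.\ has no non-trivial $\Zmax$-splitting relative to $\calp_\calo$, which by the rigidity criterion \ref{prop_alt} and Corollary \ref{cor_decision_rigid} is decidable for one-ended groups relative to their peripheral structure. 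If a candidate passes all checks, output its socket decomposition.

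To see the algorithm is correct and halts: if $(G,\calp)$ is a basic orbisocket, then by the classification above it admits some M\"obius-free socket decomposition lying in our finite list of candidate types, so the enumeration will eventually hit a presentation exhibiting it and all verification steps succeed, and we output a valid socket decomposition. Conversely, every verification step only accepts when it has genuinely certified a basic-orbisocket structure, so a positive output is always correct. For termination in the negative case, I would run in parallel the algorithm of Theorem \ref{thm_IP_rigid} applied to $(G;\calp)$: if $(G,\calp)$ is not a basic orbisocket then either it is not one-ended relative to $\calp$ (detected by Proposition \ref{enumerate_ess_splittings}), or it admits a non-trivial $\Zmax$-splitting relative to $\calp$ (detected by Theorem \ref{thm_IP_rigid}, which then cannot also certify rigidity by the rigidity criterion, so it outputs a splitting), or it is one-ended and rigid relative to $\calp$ but has no improper socket, in which case \emph{no} candidate in the enumeration will ever pass the check that the peripheral structure matches $\calp$ \emph{and} includes an improper socket — and rigidity is certified in finite time by Corollary \ref{cor_decision_rigid}. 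I expect the main obstacle to be the first part: establishing the finite classification of the orbifolds $\Sigma_0$ and the socket indices $m_i$ that can occur in a basic orbisocket, i.e.\ proving that the absence of a relative $\Zmax$-splitting genuinely bounds the topological complexity of $\Sigma_0$ and the root orders, uniformly enough that the search space is finite; this requires a careful case analysis of two-sided simple closed curves on surfaces with a few marked points and their interaction with the socket roots, in the spirit of \cite[Th.\ 3.5]{Gui_reading} and the remark following Proposition \ref{prop_JSJ_orbi}.
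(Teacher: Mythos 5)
Your plan has two genuine gaps, and they sit exactly where the paper locates the difficulty. First, your claimed finite candidate list $\mathcal{L}$ rests on the assertion that the socket indices $m_i=[S_i:B_i]$ are bounded ``once one fixes which boundary subgroup, if any, is improper.'' This is not true: a proper socket attaches a root of arbitrary order to a boundary element, and the result is still a basic orbisocket for every order (adding a root creates no new curves and no new $\Zmax$-splittings). The remark immediately after the theorem in the paper makes precisely this point --- there are infinitely many candidate orbisockets because there is no a priori bound on the socket indices, which is why one cannot simply compare $(G;\calp)$ against a finite list using the rigid isomorphism algorithm. (Your topological list is also off: a once-punctured torus or Klein bottle admits $\Zmax$-splittings relative to the boundary, and a basic orbisocket may have several improper sockets; after orienting via M\"obius sockets the underlying orbifold is only a disk with two cone points, a twice-punctured sphere with one cone point, or a pair of pants.) Second, and more seriously, your halting argument in the negative case fails. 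Certifying that $(G;\calp)$ is one-ended and rigid relative to $\calp$ (via Corollary \ref{cor_decision_rigid}) does not decide the question: a rigid relatively one-ended pair need not be an orbisocket at all, and in that case your Tietze enumeration of candidate socket decompositions simply never terminates, while ``no candidate will ever pass the check'' is not something an infinite enumeration can certify. Your case analysis (not one-ended / has a $\Zmax$-splitting / rigid with no improper socket) omits the main negative case, namely rigid but not an orbisocket, and even in the cases you list the algorithm has no finite-time witness of non-existence.

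For comparison, the paper avoids both problems by making the search space canonical rather than bounded. It classifies basic orbisockets algebraically as fundamental groups of explicit graphs of \emph{finite} groups (types 1, 2, 3a, 3b, 3c), computes a reduced Stallings--Dunwoody decomposition of $G$ by Gerasimov's algorithm, and uses Levitt's rigidity criterion to decide whether that deformation space has a unique reduced tree. In the rigid types the decomposition is canonical, and the unknown socket indices are not searched for but \emph{computed} from the abelianization of $G$ modulo torsion (the index of the image of a peripheral subgroup); only finitely many choices then remain to check against $\calp$. In types 3b and 3c, where there are infinitely many nice decompositions, the existence of the required elements ($h_1,h_2,b_1,b_2$, a basis in the sense of Lemma \ref{lem_basis}, or a lift of a free basis via Dehn--Magnus--Nielsen) is encoded as a finite disjunction of systems of equations over $G$ with finitely many parameters in the finite subgroups, decided by the equation-solving algorithm of \cite{DG1}. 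It is this reduction to equation solvability and to abelianization invariants --- not an enumeration with a rigidity certificate --- that yields a terminating negative answer. To repair your proposal you would need, at minimum, a mechanism to extract the socket indices from $(G;\calp)$ and a decision procedure (not an enumeration) for the existence of the socket data once the combinatorial type is fixed.
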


        \begin{rem}
          In particular, when $(G;\calp)$ is an orbisocket, the theorem allows one to know which sockets are proper.
        \end{rem}
        
        Several cases will be considered, and Theorem \ref{thm_recon_orbisocket} 
        will follow from Propositions \ref{prop_recon_rigid}, \ref{prop_recon_3b} and \ref{prop_recon3c}.

     \begin{rem}
       Although $(G;\calp)$ has a finite group of outer automorphisms, one cannot use directly the solution of the isomorphism problem
       for rigid hyperbolic pairs (Theorem \ref{thm_IP_rigid}) to answer this question because there are infinitely many candidate orbisockets  as
       we have no a priori bound on the index of the sockets.
     \end{rem}

        \subsubsection{ The underlying orbifold}

        \begin{lem}
	  For every  orbisocket $(\calo,\calp_\calo)$, 
	there is a socket decomposition (possibly with M\"obius sockets) such that   the orbifold underlying the decomposition
          is orientable.

        \end{lem}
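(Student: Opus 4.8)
The plan is to realize the given orbisocket by a \emph{new} socket decomposition, obtained from any given one by surgery on the underlying orbifold along one-sided curves, turning each cross-cap into a M\"obius socket. Start from a socket decomposition $\calo=G_0(*_{B_i}S_i)_{i=1}^k$, with $G_0$ a bounded Fuchsian group without reflection and maximal finite normal subgroup $F$, so that $\ol G_0=G_0/F=\pi_1(\Sigma)$ for a conical $2$-orbifold $\Sigma$; the orbifold underlying the decomposition is $\Sigma$, and orientability concerns only the underlying surface $|\Sigma|$ (cone points are irrelevant). If $|\Sigma|$ is orientable there is nothing to do, so assume it has $g\geq 1$ cross-caps.

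First I would fix $g$ pairwise disjoint one-sided simple closed curves $\gamma_1,\dots,\gamma_g\subset|\Sigma|$ avoiding the finitely many cone points, chosen so that the complement $\Sigma'$ of regular (M\"obius band) neighbourhoods $M_1,\dots,M_g$ of the $\gamma_j$ is orientable; such curves exist by the normal form of a surface (present $|\Sigma|$ as a sphere with $g+b$ holes, $g$ of them capped by M\"obius bands; removing these $g$ bands leaves a sphere with $g+b$ holes, whether or not $\Sigma$ had boundary). Writing $c_j=\partial M_j$, a two-sided simple closed curve in $\Sigma$, and letting $G_0'<G_0$ be the preimage of $\pi_1(\Sigma')$, the splitting of $G_0$ dual to $c_1,\dots,c_g$ is a tree of groups with central vertex $G_0'$ and leaves $S_j<G_0$ carried by $\gamma_j$; combining with the original decomposition yields
\[
\calo=G_0'\,(*_{B_i}S_i)_{i=1}^{k}\,(*_{B_j'}S_j)_{j=1}^{g},
\]
where $B_j'<G_0'$ is the boundary subgroup carried by $c_j$. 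Collapsing the $g$ new edges recovers $G_0$, so this is a genuine tree-of-groups decomposition of $\calo$; its underlying orbifold is $\Sigma'$, which is orientable; and since the $\gamma_j$ are interior, the sockets $S_1,\dots,S_k$ are attached exactly as before.

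It then remains to check that this decomposition satisfies Definition \ref{dfn_orbisocket}: that $G_0'$ (suitably finitely extended) is a bounded Fuchsian group without reflection, that each inclusion $B_j'\hookrightarrow S_j$ is an embedding which is not onto, and that the peripheral structure is unchanged. The group $G_0'$ is convex-cocompact and reflection-free as a subgroup of $G_0$ acting on the same $\bbH^2$; it is non-elementary, since otherwise $\calo$ would be built from virtually cyclic (or finite) groups by amalgamations over subgroups of finite index on one side, hence virtually cyclic, contradicting non-elementarity of $G_0$. Since $\gamma_j$ is one-sided, $S_j/F$ is infinite cyclic and $B_j'$ is the index-$2$ subgroup generated by $\gamma_j^2$ together with $F$, so $B_j'\hookrightarrow S_j$ is not onto; moreover $F$ is the maximal finite normal subgroup of $S_j$ (a finite normal subgroup vanishes in $S_j/F\cong\Z$) and of $\calo$, so each $S_j$ is in fact a M\"obius socket and need not be added to $\calp_\calo$. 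Hence $\calp_\calo$ is unchanged, and we obtain the desired orientable socket decomposition.

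I expect the only real difficulty to be bookkeeping rather than anything conceptual: ensuring the surgery is compatible with the already-present sockets and with the orbifold singular set (handled by keeping the $\gamma_j$ interior and disjoint from the cone points), and confirming that the new sockets genuinely fit the M\"obius-socket definition from Section \ref{subsec_orbi} — in particular that $F$ remains the ambient maximal finite normal subgroup — so that the whole configuration stays inside the class of orbisockets of Definition \ref{dfn_orbisocket}. The case where $\Sigma$ is closed is not exceptional: the same surgery replaces a closed non-orientable $|\Sigma|$ by a genus-$0$ orientable surface with $g$ boundary circles carrying $g$ M\"obius sockets.
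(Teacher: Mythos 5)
Your proposal is correct and takes essentially the same route as the paper: the paper cuts along the boundary of one embedded M\"obius band at a time, each cut producing an extra M\"obius socket, and terminates by an induction on the complexity of the underlying orbifold, while you perform all $g$ cuts simultaneously using the normal form of the non-orientable surface — the mechanism is identical. One minor caveat: your justification that the new central vertex is non-elementary (``\dots hence virtually cyclic'') is not a valid deduction as stated (an amalgam of virtually cyclic groups over finite-index subgroups need not be virtually cyclic, e.g.\ $\bbZ *_{2\bbZ} \bbZ$), although the claim itself is true and easily checked, for instance by listing the elementary sphere-with-holes orbifolds and observing that each would force $G_0$ to be elementary or non-hyperbolic.
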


        \begin{proof}
          If the underlying orbifold $\Sigma$ of $\calo$ is non-orientable, then $\Sigma$ contains an embedded M\"obius strip.
          Cutting $\Sigma$ along its boundary gives a new representation of $\calo$ as a socket decomposition with one additional M\"obius socket.
          Since this operation decreases the complexity 
          of the underlying orbifold (measured for instance by the opposite of its Euler characteristic relative to the boundary), 
          this operation can be performed only finitely many times.
        \end{proof}
        
        From now on, we only consider orbisocket decompositions of $(\calo,\calp_\calo)$ whose underlying orbifold $\Sigma$ is orientable.

        \begin{lemma}\label{lem_class_orbi_1}
          Consider a basic orbisocket $(\calo,\calp_\calo)$.
	  Then it has an orbisocket decomposition whose underlying 2-orbifold $\Sigma$ is either 
          \begin{itemize}
          \item a disk with two cone points (type 1),
          \item a twice punctured sphere with one cone point (type 2),
          \item or a thrice punctured sphere (type 3).
          \end{itemize}
          Moreover it has at most three, and at least one, peripheral subgroups. 
        \end{lemma}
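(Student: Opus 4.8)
The plan is to run a Euler characteristic argument combined with the defining property of a basic orbisocket, namely that it admits no non-trivial $\Zmax$ relative splitting. First I would fix a socket decomposition $\calo=G_0(*_{B_i}S_i)_{i=1}^k$ with underlying orientable $2$-orbifold $\Sigma$ (using the previous lemma), and I may further assume this socket decomposition has no M\"obius socket, since M\"obius sockets can be absorbed into $G_0$ as explained in Section \ref{subsec_orbi} (this only changes $\Sigma$ by gluing on M\"obius bands, keeping it a valid orbifold; orientability may be re-established by cutting again if needed, but in any case the argument below only uses that the resulting $\Sigma$ has no mirrors). Now, by Lemma \ref{lem_courbes_orbisocket} (or directly from Lemma \ref{lem_curves_orbifold} applied to $G_0$ with its boundary peripheral structure), any non-peripheral two-sided simple closed curve in $\Sigma$ that carries a $\Zmax$-subgroup of $\calo$ produces a non-trivial $\Zmax$ relative splitting of $\calo$. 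Since $(\calo,\calp_\calo)$ is basic, no such curve exists.

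The heart of the matter is therefore to classify the orientable $2$-orbifolds $\Sigma$ with boundary, without mirrors, whose fundamental group is not virtually abelian, such that every essential two-sided simple closed curve in $\Sigma$ either bounds a M\"obius band or a disc with at most one cone point, or bounds a disc with two cone points of angle $\pi$ (the last two cases being exactly the curves which do \emph{not} carry a $\Zmax$-subgroup of $\pi_1(\Sigma)$, cf.\ the discussion around Lemma \ref{lem;splitting_curve} and the remark after Proposition \ref{prop_JSJZmax}). On an orientable surface there are no M\"obius bands, so after discarding them the condition becomes: every essential two-sided simple closed curve bounds a disc with one or two cone points (those with two cone points necessarily of angle $\pi$). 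This is a small, completely classical list. Concretely: write the Euler characteristic of the underlying surface-with-marked-points; a surface of genus $g\ge 1$ always carries an essential curve not bounding a disc with $\le 2$ marked points, so $g=0$; if the number $b$ of boundary components plus the number $n$ of cone points satisfies $b+n\ge 4$, one can again find such a curve; hence $b+n\le 3$. Ruling out the virtually abelian cases ($b+n\le 2$, or a sphere with three cone points with small orders), the surviving possibilities with at least one boundary component (one needs $b\ge 1$ since $\calo$ has an improper socket, which forces a genuine boundary component of $G_0$) are exactly: $b=1$, $n=2$ (disc with two cone points); $b=2$, $n=1$ (twice punctured sphere with one cone point); and $b=3$, $n=0$ (thrice punctured sphere). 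In all three cases the number of boundary components of $G_0$, hence of sockets of $\calo$, hence of elements of $\calp_\calo$ (which consists of all improper sockets, plus possibly some proper ones), is between $1$ and $3$, giving the last assertion. The case $b=1,n=2$ with both cone angles $\pi$ must be examined separately: there the boundary curve itself bounds, on the other side, a disc with two cone points of angle $\pi$, so $\pi_1(\Sigma)$ is dihedral type rather than a $\Zmax$; but one then checks the boundary subgroup is still not all of $\pi_1(\Sigma)$ and the improper-socket hypothesis is unaffected, so this stays type 1.

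The main obstacle I expect is the bookkeeping in the reduction step — precisely determining, for each topological type of $\Sigma$, which simple closed curves carry a $\Zmax$-subgroup of $\pi_1(\Sigma)$ versus which bound a ``small'' sub-orbifold, and making sure every $\Sigma$ outside the three listed types really does contain a $\Zmax$-carrying essential curve, while also excluding the finitely many sporadic orbifolds with virtually abelian (non-hyperbolic, hence not permitted as $G_0$) fundamental group. This is where one must be careful with cone points of angle exactly $\pi$, since a simple closed curve bounding a disc with two such points fails to carry a $\Zmax$-subgroup even though it is essential, and with the two exceptional examples (Klein bottle and Klein bottle with one cone point) noted in the remark after Proposition \ref{prop_JSJZmax} — but those are non-orientable, so the reduction to orientable $\Sigma$ conveniently sidesteps them. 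Once the topological classification is in hand, reading off ``at most three and at least one peripheral subgroup'' is immediate from counting boundary components of the underlying orientable $\Sigma$ in each of the three cases.
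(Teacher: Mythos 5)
Your overall strategy (rule out positive genus and too many boundary components/cone points by exhibiting a curve that yields a relative $\Zmax$-splitting) is the same as the paper's, but your identification of which curves carry $\Zmax$-subgroups is wrong, and this creates a genuine gap. You claim that an essential two-sided simple closed curve bounding a disc with two cone points of angle $\pi$ does not carry a $\Zmax$-subgroup of $\pi_1(\Sigma)$. It does: such a curve carries the index-two cyclic subgroup of $\bbZ/2*\bbZ/2$, which is a maximal $\Z$-subgroup (its maximal virtually cyclic overgroup is of dihedral type, hence not a $\Z$-group), so the dual splitting is a non-trivial $\Zmax$-splitting. The paper uses exactly this: in the proof of Proposition \ref{prop_alt} the curve is only required not to bound a M\"obius band or a disc with at most one cone point, and in Section \ref{sec_1end} the boundary subgroup of a disc with two angle-$\pi$ cone points is described as \emph{the} $\Zmax$-subgroup of $D_\infty$. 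The only curves failing to give $\Zmax$-splittings are those bounding M\"obius bands (impossible here, $\Sigma$ being orientable) or discs with at most one cone point (they carry finite groups). Because you admit the extra class of curves, your key counting step --- ``$b+n\ge 4$ forces a disallowed curve'' --- is false under your own criterion: in a disc with three cone points of angle $\pi$, every essential simple closed curve bounds a disc with two angle-$\pi$ cone points, so your classification would not exclude this orbifold, which is not of type 1, 2 or 3; it is excluded precisely because such curves \emph{do} give $\Zmax$-splittings. With the corrected criterion your argument collapses to the paper's shorter one: any non-peripheral essential simple closed curve in the orientable conical orbifold $\Sigma$ yields a non-trivial $\Zmax$-splitting of $(\calo,\calp_\calo)$ relative to $\calp_\calo$, so the genus is $0$ and there are at most three boundary components plus cone points; the bound on peripheral subgroups and the existence of at least one (from the improper socket) then follow as you say.

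Two smaller points. Your treatment of the case $b=1$, $n=2$ with both cone angles $\pi$ (``this stays type 1'') is also off: there $\pi_1(\Sigma)$ is infinite dihedral, so $G_0$ would be elementary, contradicting the definition of a bounded Fuchsian group; this case simply cannot occur (compare the hypothesis that $\sigma_1,\sigma_2$ are not both of order $2$ in Lemma \ref{lem_type1_CNS}). And the preliminary manoeuvre of removing M\"obius sockets while worrying about orientability is unnecessary: the preceding lemma already provides a socket decomposition with orientable underlying orbifold, which is all the argument needs.
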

        
        \begin{proof}
          An orientable orbifold of conical type with positive genus, or containing more than 
          three boundary components or conical singularities contains a non-peripheral simple closed curve.
          This defines a $\Zmax$-splitting of $(\calo,\calp_\calo)$, contrary to the definition of a basic orbisocket.
          Since $\Sigma$ has at most three boundary components, 
          the orbisocket has at most three peripheral subgroups. By definition of a basic orbisocket, at least one socket is improper,
          so there is at least one peripheral subgroup.
        \end{proof}
  
        \subsubsection{Algebraic characterisation}

        A basic orbisocket $(\calo,\calp_\calo)$ is a virtually free group with a particular peripheral structure.
        We aim to write $\calo$ as a graph of finite groups, and to read its peripheral structure there.

        \begin{lemma} \label{lem_type1_CNS} 
          Let $(G;\calp)$ be a hyperbolic group with an unmarked $\Z$-peripheral structure.
          
          Then $(G;\calp)$ is isomorphic to a basic orbisocket of type 1 (as defined in Lemma \ref{lem_class_orbi_1})
          if and only if  
          \begin{itemize}
          \item  $G$ splits as a graph of finite groups (not relative to $\calp$) $G =\pi_1 \Big(
            \xymatrix{ \ar@{{*}{-}{*}}[r]_F^{ \langle \sigma_1,
                F \rangle  \quad \langle F,\sigma_2 \rangle } & } \Big)
            $, 
            with $F$  normal in $G$, $\sigma_i \notin F$, with $\sigma_1$ and $\sigma_2$ not both of order 2 in $G/F$.
          \item $\calp=\{[\langle \sigma_1 \sigma_2,F \rangle]\}$
          \end{itemize}
        \end{lemma}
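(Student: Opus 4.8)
The strategy is to understand explicitly what a basic orbisocket of type 1 looks like as a graph of groups, and then to match this description with the graph-of-finite-groups presentation in the statement. Recall that a type-1 orbisocket has underlying orbifold a disk $\Sigma$ with two cone points, of orders $p_1, p_2$ say, where $\{p_1,p_2\}\neq\{2,2\}$ (otherwise the orbifold group would be virtually abelian, excluded). The bounded Fuchsian group $G_0$ is then a finite extension of $\pi_1(\Sigma)$, which is itself the infinite dihedral-ish group $\bbZ/p_1 * \bbZ/p_2$; the unique boundary subgroup $B$ of $G_0$ is generated (mod the finite kernel) by the product of the two torsion generators. So first I would write $G_0 = \pi_1(\Gamma_0)$ where $\Gamma_0$ is an edge of groups with edge group $F$ (the maximal finite normal subgroup), and vertex groups $\langle F,\sigma_1\rangle$, $\langle F,\sigma_2\rangle$ with $\sigma_i$ projecting to the torsion generators; this is the standard graph-of-finite-groups decomposition of a virtually free group dual to the orbifold structure of $\Sigma$. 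The boundary subgroup is then $\langle \sigma_1\sigma_2, F\rangle$ up to conjugacy, and the peripheral structure $\calp_\calo$ of the basic orbisocket consists of this single socket $S_1 \supseteq B$. Here the key point is that since $(G;\calp)$ is \emph{basic}, the socket must be improper: a proper socket would provide a vertex group strictly larger than the peripheral subgroup, and I claim a type-1 orbisocket with a proper socket would admit a further $\Zmax$-splitting — so in fact for type 1 the only improper socket means $S_1 = B$, i.e.\ $\calp = \{[\langle\sigma_1\sigma_2,F\rangle]\}$. (I should double-check this claim against Definition \ref{dfn_basic} and the discussion after it.)

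For the forward direction, I would take $(G;\calp)$ isomorphic to such a basic orbisocket, use Lemma \ref{lem_class_orbi_1} to fix the socket decomposition with $\Sigma$ a disk with two cone points, and then exhibit the graph-of-finite-groups decomposition described above. I must verify: (a) $F$ is normal in $G$ — this follows since $F$ is the maximal finite normal subgroup of $G_0$ and, as $\Sigma$ is a disk with two cone points, $G_0 = \calo$ so $F$ is actually the maximal finite normal subgroup of the whole orbisocket group $G$; (b) $\sigma_i\notin F$, clear since $\sigma_i$ maps to a nontrivial torsion element of $\pi_1(\Sigma)$; (c) $\sigma_1,\sigma_2$ are not both of order $2$ in $G/F$ — this is exactly the exclusion of the $(2,2)$ orbifold, equivalent to $\pi_1(\Sigma)$ not being virtually abelian, i.e.\ to $G$ being hyperbolic and non-elementary; (d) $\calp = \{[\langle\sigma_1\sigma_2,F\rangle]\}$, which is the identification of the boundary subgroup carried out above.

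For the converse, suppose $G$ is presented as the stated amalgam $\langle F,\sigma_1\rangle *_F \langle F,\sigma_2\rangle$ with $F\trianglelefteq G$ and $\sigma_1,\sigma_2$ not both order $2$ mod $F$, and $\calp = \{[\langle\sigma_1\sigma_2,F\rangle]\}$. Then $G/F$ is $\langle\bar\sigma_1\rangle * \langle\bar\sigma_2\rangle$ with $\bar\sigma_i$ finite (of some order $p_i\geq 2$, not both $2$), hence the fundamental group of a disk with two cone points of orders $p_1,p_2$; so $G$ is a bounded Fuchsian group without reflection whose underlying orbifold $\Sigma$ is exactly that disk, and $G$ is an orbisocket with $k=0$ proper sockets, $G_0 = G = \calo$, and a single improper socket equal to its unique boundary subgroup $B$. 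I then check that $B$ is generated mod $F$ by $\bar\sigma_1\bar\sigma_2$ (the boundary word of $\Sigma$), so $[B] = [\langle\sigma_1\sigma_2,F\rangle]$ matches $\calp$, hence $\calp = \calp_\calo$; and that $\calo$ is basic, i.e.\ admits no nontrivial $\Zmax$ relative splitting — this is because $\Sigma$, being a disk with two cone points, contains no non-peripheral two-sided simple closed curve not bounding a disk with at most one cone point (apply Lemma \ref{lem;splitting_curve} / Lemma \ref{lem_curves_orbifold}), so by Proposition \ref{prop_JSJ_orbi} its $\Z$-JSJ relative to $\calp_\calo$ is trivial, a fortiori no $\Zmax$-splitting — and it has an improper socket by construction.

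The main obstacle I expect is pinning down the boundary-subgroup identification precisely at the level of conjugacy classes in $G$ (not just mod $F$): one must be careful that $\langle\sigma_1\sigma_2, F\rangle$ is genuinely the full preimage of the boundary subgroup $\bar B = \langle\bar\sigma_1\bar\sigma_2\rangle$ of $\pi_1(\Sigma)$ and that no conjugacy subtlety (coming from the choice of $\sigma_i$ within their $F$-cosets, or from automorphisms of $F$) spoils the match — this is where the normality of $F$ and the structure of the amalgam over $F$ are used, together with the fact that $\langle\sigma_1\sigma_2,F\rangle$ maps onto $\bar B$ with kernel exactly $F$. Everything else is bookkeeping with the dictionary between bounded Fuchsian groups, their underlying orbifolds, and graphs of finite groups, which is standard (cf.\ Section \ref{sec_bfg} and \ref{sec_orbi_mcg}).
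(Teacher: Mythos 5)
Your proposal is correct and follows essentially the same route as the paper: a basic type-1 orbisocket has a trivial socket decomposition, so $\calo=G_0$ is the bounded Fuchsian group of a disk with two cone points; translating the extension $1\to F\to G_0\to \bbZ/r_1*\bbZ/r_2\to 1$ gives the amalgam $\grp{\sigma_1,F}*_F\grp{F,\sigma_2}$ with boundary subgroup $\grp{\sigma_1\sigma_2,F}$, and the converse reverses this dictionary. The one misstep is your auxiliary claim that a type-1 orbisocket with a proper socket would admit a further $\Zmax$-splitting: that claim is both unnecessary and unsupported (adding a root to the boundary of a disk with two cone points need not create any $\Zmax$-splitting, since the underlying orbifold carries no essential non-peripheral curve; such orbisockets fail to be basic only because Definition \ref{dfn_basic} explicitly demands an improper socket). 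The correct and simpler argument, which is the paper's, is purely definitional: basicness requires at least one improper socket, while a type-1 orbisocket has a single boundary component and hence at most one socket, so the unique socket is improper and the socket decomposition is trivial. Since you flagged exactly this point for verification against Definition \ref{dfn_basic}, the fix is immediate, and the rest of your argument (identification of the boundary subgroup, the role of ``not both of order $2$'' in excluding the virtually cyclic case, and the curve argument showing no nontrivial relative $\Zmax$-splitting in the converse) matches the paper's proof.
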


        \begin{proof}
          If $(\calo,\calp_\calo)$ is a basic orbisocket of type $1$ over the bounded Fuchsian group $G_0$,
          then it has at most one socket, and by definition of a basic orbisocket, it has at least one improper socket.
          Therefore, the socket decomposition of $\calo$ is the trivial decomposition $\calo=G_0$, 
          and its peripheral structure consists in the boundary subgroups of $G_0$.
          Since $G_0/F\simeq \bbZ/r_1 *\bbZ/r_2$, the two assertions follows.

          Conversely, assume that $(G;\calp)$ satisfies both points of the lemma.
          Then $G/F$ is the fundamental group of a disk with two cone points of angles
          $2\pi/r_1$, $2\pi/r_2$ where $r_i$ is the order of $\sigma_i$ in $G/F$ and the boundary subgroup is conjugate to $\grp{\sigma_1\sigma_2}$.
          It follows that $G$ is a basic orbisocket with one improper socket $\grp{\sigma_1\sigma_2,F}$ and no proper socket.
        \end{proof}

        \begin{lemma} \label{lem_type2_CNS} 
          Let $(G;\calp)$ be a hyperbolic group with an unmarked $\Z$-peripheral structure.
          
          Then $(G;\calp)$ is isomorphic to a basic orbisocket of type 2 if and only if  
          \begin{itemize}

          \item  $G$  splits as a graph of finite groups (not relative to $\calp$)
            $$\displaystyle G=\pi_1 \Big( \input{HNN1.pst} \Big )$$
            with             $\sigma \notin F$ and $\sigma$ normalises $F$.  
          \item Denote by $H_1=F_1*_{F_1}$ the HNN extension on the right. Then there exists $b\in H_1$,
            with translation length $n>0$ in the corresponding Bass-Serre tree,
            such that $b$ normalises $F$, and 
            \begin{itemize}
            \item either $\calp=\{ [\grp{b \sigma,F}],[H_1] \}$
            \item             or $\calp=\{ [\grp{b \sigma,F}] \}$.
            \end{itemize}
            If $F=F_1$ and $n=1$, then it is the first possibility which occurs.
          \end{itemize}
          If these conditions are satisfied, the decomposition 
          $\calo\simeq  \grp{b,\sigma,F} *_{\grp{b,F}} H_1$ is an orbisocket decomposition, the sockets being $H_1$ and $\grp{b\sigma,F}$.
        \end{lemma}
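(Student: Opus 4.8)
The plan is to prove the "if and only if" by unravelling what a basic orbisocket of type 2 looks like on the one hand, and what the stated graph of finite groups and element $b$ encode on the other; in both directions the bridge is the structure of the underlying bounded Fuchsian group $G_0$ over a twice punctured sphere with one cone point.

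\textbf{Forward direction.} Suppose $(G;\calp)$ is a basic orbisocket of type 2, with socket decomposition $\calo = G_0(*_{B_i}S_i)$ over the bounded Fuchsian group $G_0$ whose orbifold $\Sigma$ is a twice punctured sphere with one cone point. First I would describe $G_0$ itself. Its maximal finite normal subgroup is $F$, and $\ol G_0 = G_0/F = \pi_1(\Sigma) \simeq \bbZ/r * \bbZ$, generated by an elliptic element $\ol\sigma$ of order $r$ at the cone point and an element $\ol b$ of infinite order corresponding to the "puncture-swapping loop"; the three boundary subgroups of $\Sigma$ are generated (up to conjugacy) by $\ol\sigma$-avoiding peripheral elements. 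Pulling back to $G_0$, one gets a decomposition of $G_0$ as an amalgam/HNN of finite groups over $F$: concretely $G_0 \simeq \grp{\sigma,F} *_{F} (F_1 *_{F_1})$ where $F_1 = F$ and the HNN part accounts for the infinite-order generator, and $\sigma$ normalises $F$. Now among the three boundary subgroups $B_1,B_2,B_3$, exactly one is improper-or-peripheral-forced; since $\calo$ is basic there is at least one improper socket, and one shows that after possibly relabelling, $S_1 = B_1 = \grp{b\sigma, F}$ is improper (so it is peripheral), $S_2$ corresponds to $H_1 := F_1 *_{F_1}$ which is either proper (a genuine socket giving no peripheral entry) or equal to a boundary group (giving the entry $[H_1]$ in $\calp$), and $S_3$ is absorbed by a change of socket decomposition exactly as the M\"obius-socket and "$G_0' = G_0 *_{B_i} S_i$ is again bounded Fuchsian" argument at the end of Section \ref{subsec_orbi} shows. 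The element $b$ is obtained as (a lift of) $\ol b$ inside $H_1$; its translation length $n$ in the Bass-Serre tree of $H_1$ is the index of $F$ in the $\Z$-group it generates together with $F$, so $n > 0$ and $b$ normalises $F$. The special case $F = F_1$, $n = 1$ is precisely the degenerate case where $H_1$ is forced to be improper (its boundary is $H_1$ itself), giving the first possibility for $\calp$.

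\textbf{Converse direction.} Conversely, given the data $(F, \sigma, F_1, H_1, b, \calp)$ as in the statement, I would reconstruct the orbisocket. From the graph of finite groups one reads $G/F' = \grp{\ol\sigma} * (F_1/F' *) $ for the appropriate finite quotient, but more to the point: set $G_0 := \grp{b, \sigma, F}$, which by the hypotheses ($\sigma$ and $b$ both normalise $F$, $\sigma \notin F$, $b$ of infinite order) is the bounded Fuchsian group whose orbifold $\Sigma$ is the twice-punctured sphere with one cone point — the cone point of order $= $ order of $\sigma$ mod $F$, one puncture carrying $\grp{b,F}$ and the reconstituted boundary, and one puncture carrying $\grp{b\sigma, F}$; a short computation in $\bbZ/r * \bbZ$ checks that $\ol\sigma$, $\ol b$, $\overline{b\sigma}$ are indeed the three peripheral conjugacy classes. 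Then $\calo \simeq G_0 *_{\grp{b,F}} H_1$ glues the Fuchsian group to $H_1$ along the boundary subgroup $\grp{b,F}$ of index $n$ in the $\Z$-group $H_1$ (using that $b$ has translation length $n$ in $H_1$, so $\grp{b,F} \subsetneq H_1$, i.e. the socket is proper, unless $F=F_1, n=1$). One then checks that the peripheral structure $\calp$ in the statement is exactly the one carried by the remaining sockets $H_1$ (improper iff it appears in $\calp$) and $\grp{b\sigma, F}$ (always improper, always in $\calp$), and that the resulting orbisocket is basic: it has an improper socket by construction, and it has no non-trivial relative $\Zmax$-splitting because $\Sigma$ contains no non-peripheral two-sided simple closed curve (Lemma \ref{lem_curves_orbifold} together with the fact that a twice-punctured sphere with one cone point has none), and the sockets are elliptic by Lemma \ref{lem_orbi_are_ell}/Proposition \ref{prop_JSJ_orbi}.

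\textbf{Main obstacle.} The delicate point — as the authors themselves flag for the torsion-free case in \cite{DaGr_isomorphism} and warn is "even more delicate" here — is the bookkeeping of which of the three boundary subgroups of $\Sigma$ becomes a genuine peripheral subgroup, which becomes a proper socket, and which gets absorbed into $G_0$ by a change of socket decomposition, together with the degenerate case $F=F_1$, $n=1$ where the distinction collapses. I expect the bulk of the work (and the place where one must be careful about finite normal subgroups, e.g. checking that $b$ and $\sigma$ normalise the \emph{same} $F$ and that $F$ is genuinely the maximal finite normal subgroup of the reconstructed $\calo$) to be in matching up the finite-group data on the two sides so that the graph of finite groups in the statement really is (a collapse of) the socket decomposition, rather than in any serious algebraic content, which is essentially the structure theory of $\bbZ/r * \bbZ$ and of bounded Fuchsian groups already recalled in Sections \ref{sec_bfg} and \ref{subsec_orbi}.
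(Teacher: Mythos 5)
Your converse direction is essentially the paper's argument: set $G_0=\grp{b,\sigma,F}$, note that $F$ is normal in $G_0$ so $G_0/F\simeq \bbZ/r\bbZ*\bbZ$ is the fundamental group of a twice-punctured sphere with one cone point, glue $H_1$ along the boundary subgroup $\grp{b,F}$, and observe that this socket is improper exactly when $n=1$ and $F=F_1$. The forward direction, however, contains a genuine error. The underlying orbifold of a type 2 orbisocket has exactly \emph{two} boundary components; the cone point contributes the elliptic element $\bar\sigma$, which is not a peripheral class (boundary subgroups of a bounded Fuchsian group without reflection are $\Z$-groups, whereas $\grp{\sigma,F}$ is finite). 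So your ``three boundary subgroups $B_1,B_2,B_3$'', the claim that $\bar\sigma,\bar b,\overline{b\sigma}$ are ``the three peripheral conjugacy classes'', and the step where a phantom third socket $S_3$ gets ``absorbed'' by a M\"obius-type move, are all spurious: there is nothing to absorb, and the M\"obius-socket argument (which concerns a socket of index $2$ with the same maximal finite normal subgroup) is not what is needed here.

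More importantly, you never actually produce the graph of finite groups in the statement for $\calo$ itself. You write $G_0\simeq\grp{\sigma,F}*_F(F*_F)$ and assert $F_1=F$, but in the statement $F_1$ is the maximal finite subgroup of the (possibly proper) socket $S\supseteq\grp{b,F}$, which may strictly contain $F$; the actual content of the forward direction is that the socket decomposition $\calo\simeq G_0*_{\grp{b,F}}S$ with $G_0=\grp{\sigma,F}*_F\grp{b,F}$ collapses to $\calo\simeq\grp{\sigma,F}*_F(F_1*_{F_1})$, with $b$ viewed as an element of $H_1=S$ which normalises $F$ and has some translation length $n>0$ in the Bass--Serre tree of $F_1*_{F_1}$. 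Your identification of $n$ as ``the index of $F$ in the $\Z$-group it generates together with $F$'' is also wrong (that index is infinite); $n$ is the index of $\grp{b,F_1}$ in $H_1$, i.e.\ the image of $b$ in $H_1/F_1\simeq\bbZ$. The peripheral structure is then read off exactly as the paper does: the improper socket $\grp{b\sigma,F}$ always appears in $\calp$, and $[H_1]$ appears if and only if the socket $S$ is improper, which is forced when $F=F_1$ and $n=1$.
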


        \begin{rem}
          To define $\sigma$ and $b$, we identify $G$ with $\pi_1(\Gamma,\tau)$ where $\tau$ is the maximal subtree of 
          the graph of finite groups $\Gamma$ above. Once this is done, this identifies $\grp{\sigma,F}$ and $F_1*_{F_1}$
          with well defined subgroups of $G$ (not just independent conjugacy classes).
        \end{rem}

        \begin{proof}  
          Assume that $\calo$ is a basic orbisocket of type 2.
          The corresponding Fuchsian group $G_0$ can be written as $G_0= \grp{\sigma,F} *_F \grp{b,F}$
          with $F$ normal in $G_0$, $\sigma$ of finite order corresponding to the cone point,
          and with boundary subgroups conjugate to $\grp{b,F}$ and $\grp{\sigma b,F}$.
          By definition, a basic orbisocket has at least one improper socket, say at $b\sigma$. 
          Thus, it has at most one proper socket, and the socket decomposition of $\calo$ can be written as 
          $\calo \simeq G_0 *_{\grp{b,F}} S$.
          Since $S$ is a $\Z$-group, it can be written as an HNN extension $S=F_1*_{F_1}$ where $F_1$ is its maximal finite subgroup.
          It follows that $\calo\simeq \grp{\sigma,F} *_F S\simeq  \grp{\sigma,F} *_F (F_1 *_{F_1})$.
           The peripheral structure consists of the improper socket $\grp{\sigma b}$, maybe together
          with the socket $S=H_1$.
                    The assertions of the lemma follow.

          Assume conversely that $G$ satisfies all the points of the  lemma.  
          Using the trivial splitting
          $H_1 \simeq \grp{b,F} *_{\grp{b,F}} H_1$, one can write 
          $$G\simeq G_0  *_{\grp{b,F}} H_1 \text{ with } G_0=\grp{\sigma,F} *_F \grp{b,F}.$$ 
          Now $F$ is a normal subgroup of $G_0$ since it is normalised by $b$ and
          $\sigma$.  
          Thus, $G_0/F \simeq \grp{\bar{\sigma}} * \langle\bar{b} \rangle \simeq
          \mathbb{Z}/_{r\mathbb{Z}} *\mathbb{Z}$
          is the fundamental group of 
          twice punctured sphere with one cone point of angle $2\pi/r$ 
          where $r$ is the order of the image $\bar\sigma$ of $\sigma$ in $\grp{\sigma,F}/F$,
          and the boundary subgroups are conjugate to $\grp{b}$ and $\grp{b\sigma}$.
          Thus $G_0$ is a bounded Fuchsian group whose boundary subgroups are
          $\langle b ,F \rangle $ and $\langle b\sigma,F \rangle $. 
          The amalgam $G\simeq G_0  *_{\grp{b,F}} H_1$ is therefore an orbisocket decomposition for $G$, with a socket corresponding to $H_1$,
          and an improper socket corresponding to $\grp{b\sigma,F}$. 
          The first socket is improper if and only if $H_1=\grp{b,F}$ \ie if $n=1$ and $F=F_1$ (recall that $F$ is normalised by $b$).
          The hypothesis on $\calp$ guarantees that each peripheral subgroup is a socket group and that
          every improper socket lies in $\calp$. It follows that $(G;\calp)$ is isomorphic to a basic orbisocket of type 2.
        \end{proof}

        \begin{lemma} \label{lem_type3_CNS} 
          Let $(G;\calp)$ be a hyperbolic group with an unmarked $\Z$-peripheral structure.
          
          Then $(G;\calp)$ is isomorphic to a basic orbisocket of type 3 if and only if  
          \begin{itemize}
          \item  $G$ splits as a graph of finite groups
 $$G = \pi_1 \Big(  \xymatrix{ 
              \ar@{{-}{-}{-}}@(ul,dl)_{F_1} \ar@{{-}{-}{-}}@(dl,ul)  \ar@{{*}{-}{*}}[r]_F^{F_1 \quad F_2} &    
              \ar@{{-}{-}{-}}@(ur,dr)^{F_2} \ar@{{-}{-}{-}}@(dr,ur)
            }  \Big)$$  

          \item Consider $H_1=F_1*_{F_1}$ and $H_2=F_2*_{F_2}$ the left and right HNN extensions.
            There exist elements $b_1\in H_1$ and $b_2\in H_2$ normalising $F$,
            with positive translation length $n_1,n_2$ such that
             $\calp$ consists of $[\grp{b_1b_2,F}]$, maybe together with $[H_1]$ or $[H_2]$ (or both);
            if $F_i=F$ and $n_i=1$, then necessarily $[H_i]\in \calp$.
          \end{itemize}
          If these conditions are satisfied, on can get an orbisocket decomposition as
          $\calo\simeq H_1*_\grp{b_1,F_1} \grp{b,\sigma,F}*_\grp{b_2,F_2} H_2$, the sockets being $H_1,H_2$ and $\grp{b_1b_2,F}$.
        \end{lemma}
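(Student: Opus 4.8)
The plan is to prove both implications by passing between the presentation of $(\calo,\calp_\calo)$ as a bounded Fuchsian group with $\Z$-sockets and the presentation of $G$ as the graph of finite groups displayed in the statement, exactly as in the proofs of Lemmas \ref{lem_type1_CNS} and \ref{lem_type2_CNS}.

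\emph{Forward implication.} Suppose $(G;\calp)\simeq(\calo,\calp_\calo)$ is a basic orbisocket of type $3$. By Lemma \ref{lem_class_orbi_1} fix a socket decomposition of $\calo$ whose underlying (orientable) orbifold is a thrice punctured sphere; write $G_0$ for the corresponding bounded Fuchsian group without reflection and $F$ for its maximal finite normal subgroup, so $G_0/F$ is free of rank $2$. Pulling the action of $G_0/F=\grp{\bar b_1}*\grp{\bar b_2}$ on its Bass--Serre tree back to $G_0$ yields $G_0=\grp{b_1,F}*_F\grp{b_2,F}$ for suitable lifts $b_1,b_2$ (the edge stabiliser becomes $F$, the vertex stabilisers the preimages $\grp{b_i,F}$), and the boundary subgroups of $G_0$ are then, up to conjugacy, $\grp{b_1,F}$, $\grp{b_2,F}$ and $\grp{b_1b_2,F}$. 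As $\calo$ is basic it has an improper socket, and since the three cuffs of a pair of pants play symmetric roles I may relabel so that the socket at $\grp{b_1b_2,F}$ is improper. Writing the socket $H_i$ at the boundary $\grp{b_i,F}$ ($i=1,2$) as the HNN extension $F_i*_{F_i}$ over its maximal finite subgroup $F_i$ — its Bass--Serre tree is a line, so $H_i$ is automatically a $\Z$-group — arranging $F\subseteq F_i$, and replacing in $G_0=\grp{b_1,F}*_F\grp{b_2,F}$ each vertex group $\grp{b_i,F}=F\rtimes\grp{b_i}$ by $H_i$ (which reduces to the HNN presentation $F*_F$ of $\grp{b_i,F}$ itself precisely when the socket at $\grp{b_i,F}$ is improper) turns the socket decomposition $\calo=G_0*_{\grp{b_1,F}}H_1*_{\grp{b_2,F}}H_2$ into exactly the two-vertex graph of finite groups of the statement, the central edge group $F$ mapping into $F_i$ and $b_i\in H_i$ normalising $F$ with positive translation length $n_i$ in the Bass--Serre line of $H_i$. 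The socket at $\grp{b_i,F}$ is improper iff $H_i=\grp{b_i,F}$ iff $F_i=F$ and $n_i=1$. Since $\calp_\calo$ consists of $[\grp{b_1b_2,F}]$, of all improper sockets, and of some subset of the proper sockets among $[H_1],[H_2]$, this is the asserted description of $\calp$.

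\emph{Converse.} Assume the listed data. Put $H_1=F_1*_{F_1}$, $H_2=F_2*_{F_2}$, so the fundamental group $G$ of the displayed graph equals $H_1*_FH_2$ (amalgamation over the central edge group $F\subseteq F_i\subseteq H_i$). Each $b_i$ has infinite order and normalises $F$, so $\grp{b_i,F}=F\rtimes\grp{b_i}$ is a $\Z$-subgroup of $H_i$. Using the trivial amalgams $H_i=H_i*_{\grp{b_i,F}}\grp{b_i,F}$ and associativity of amalgams I rewrite $G=H_1*_{\grp{b_1,F}}G_0*_{\grp{b_2,F}}H_2$ with $G_0:=\grp{b_1,F}*_F\grp{b_2,F}=\grp{b_1,b_2,F}$. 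Since $b_1,b_2$ normalise $F$, $F$ is normal in $G_0$ and $G_0/F=\grp{\bar b_1}*\grp{\bar b_2}$ is free of rank $2$; realising this as the fundamental group of a pair of pants whose cuff loops are $\bar b_1$, $\bar b_2$, $(\bar b_1\bar b_2)^{-1}$ (any free basis of $\bbF_2$ arises this way, by defining the isomorphism on the basis) and composing $G_0\onto G_0/F\hookrightarrow PSL_2(\bbR)\subset\Isom(\bbH^2)$ presents $G_0$ as a non-elementary bounded Fuchsian group without reflection, with boundary subgroups $\grp{b_1,F}$, $\grp{b_2,F}$, $\grp{b_1b_2,F}$. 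Hence $G=H_1*_{\grp{b_1,F}}G_0*_{\grp{b_2,F}}H_2$ is an orbisocket decomposition with sockets $H_1$, $H_2$ and the always improper socket $\grp{b_1b_2,F}$; $H_i$ is improper precisely when $\grp{b_i,F}=H_i$, i.e. $F_i=F$ and $n_i=1$, in which case the side hypothesis forces $[H_i]\in\calp$. So the hypothesis on $\calp$ says exactly that $\calp$ is the peripheral structure of this orbisocket. Finally, a thrice punctured sphere carries no essential non-peripheral simple closed curve, so by the correspondence between $\Z$-splittings of orbisockets and curves (Lemmas \ref{lem_curves_orbifold} and \ref{lem_courbes_orbisocket}, and Prop.\ \ref{prop_JSJ_orbi}) the pair $(\calo;\calp_\calo)$ admits no non-trivial $\Zmax$-splitting relative to $\calp_\calo$; together with the improper socket $\grp{b_1b_2,F}$ this makes it basic, of type $3$.

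\emph{Main obstacle.} The steps I expect to need the most care are: recognising the central piece $G_0=\grp{b_1,b_2,F}$ of the graph of finite groups as a bounded Fuchsian group without reflection realising a pair of pants with the three prescribed boundary subgroups; tracking which boundary components carry proper versus improper sockets and checking that the degeneracy ``$F_i=F$ and $n_i=1$'' is exactly what characterises improper sockets, so that the hypothesis on $\calp$ matches $\calp_\calo$; and deducing basicness from the absence of essential non-peripheral curves in a pair of pants. The remaining bookkeeping — identifying $b_1,b_2$ and $F$ as honest subgroups of $G$ via a choice of maximal subtree, and the amalgam manipulations — is routine and follows the proof of Lemma \ref{lem_type2_CNS} closely.
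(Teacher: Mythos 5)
The forward direction, the amalgam rewriting $G=H_1*_{\grp{b_1,F}}\grp{b_1,b_2,F}*_{\grp{b_2,F}}H_2$, the identification of $\grp{b_1,b_2,F}$ as a bounded Fuchsian pair-of-pants group, and the equivalence ``socket improper $\Leftrightarrow$ $F_i=F$ and $n_i=1$'' are all sound, and this is exactly the route the paper intends (it only says the proof is ``similar to the previous cases''). The genuine gap is the last step of your converse, where you deduce that $(\calo;\calp_\calo)$ is \emph{basic} from the absence of essential non-peripheral curves on the pair of pants. The results you cite do not give this: Lemma \ref{lem_courbes_orbisocket} and Proposition \ref{prop_JSJ_orbi} are stated only for socket decompositions \emph{without M\"obius sockets} (and Lemma \ref{lem_courbes_orbisocket} is moreover about hanging orbisockets of $T_{\Zmax}$ inside an ambient group), whereas the decomposition you construct over the pair of pants may contain M\"obius sockets — precisely when $F_i=F$, $n_i=2$ and $[H_i]\notin\calp$, which your hypotheses allow. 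To control relative $\Zmax$-splittings one must first absorb the M\"obius sockets, which changes the underlying orbifold, and then the conclusion can actually fail: take $F=F_1=F_2=1$, $b_i=t_i^2$ and $\calp=\{[\grp{t_1^2t_2^2}]\}$. This satisfies both bullets, but $(\calo;\calp_\calo)$ is the once-punctured Klein bottle relative to its boundary, which by the remark following Proposition \ref{prop_JSJZmax} admits a non-trivial splitting over a $\Zmax$-subgroup relative to the boundary (dual to the unique two-sided curve not bounding a M\"obius band), hence is not basic. So no argument along your lines can close this case; in the paper the lemma is only invoked (Proposition \ref{prop_calcul_TZmax}) for vertex groups of a maximal $\Zmax$-splitting, where the absence of relative $\Zmax$-splittings is part of the input rather than something to be derived from the displayed conditions.

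Even in the M\"obius-free configurations the one-line deduction ``no essential curve, hence no relative $\Zmax$-splitting'' skips real content: given a non-trivial relative $\Zmax$-splitting $T$, you need to show the sockets are elliptic in $T$ (universal ellipticity of the M\"obius-free socket decomposition, Proposition \ref{prop_JSJ_orbi}, plus the fact that $B_i$ has finite index in $H_i$), then that the Fuchsian vertex is elliptic (Lemma \ref{lem_curves_orbifold} applied to its minimal subtree, using that the available curves carry no $\Zmax$-subgroup), and finally that ellipticity of all three vertex groups forces $T$ to be trivial (the non-elementary group $\grp{b_1,b_2,F}$ fixes a unique point since edge stabilisers of $T$ are elementary, and a bridge argument shows this point is also fixed by $H_1$ and $H_2$). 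These steps are available from the paper's toolbox, but they are not supplied by citing the curve classification on the pair of pants, and without the M\"obius discussion the statement you are proving is false as written in the degenerate case above.
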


        \begin{dfn}\label{dfn_type3}
                    We say that $\calo$ is of type $3a$ if $F\neq F_1, F_2$, $3b$ if $F=F_2\subsetneq F_1$,
          and $3c$ if $F=F_1=F_2$.
        \end{dfn}
  
        \begin{rem}
          The translation length $n_i$ is also the index of $\grp{b_i,F_i}$ in $H_i$.
        \end{rem}

          The proof is similar to the previous cases,
          we leave it to the reader.

          \subsubsection{The rigid case}

Recall that the Stallings-Dunwoody deformation space of $G$ 
is the set of $G$-trees with finite edge stabilisers and finite or one-ended vertex groups 
Recall that a graph of groups $\Gamma$ is reduced if for all oriented edge $e$ 
        such that the edge morphism $i_e$ is onto, $e$ is a loop (\ie $o(e)=t(e)$).
        We say that a deformation space is \emph{rigid} if it contains a unique reduced tree.
        If $T$ is this unique reduced tree, we also say that $T$ is rigid.
        We will use Levitt's characterisation of rigid deformation spaces, stated here under an additional hypothesis.

        \begin{thm}[{\cite[Theorem 1]{Lev_rigid}}]\label{thm_levitt_rigid}
          Let $G\actson T$ be a reduced $G$-tree and $\Gamma$ the corresponding graph of groups.
          Assume that no edge group is strictly contained in a conjugate of itself, 
          (this holds if $G$ is hyperbolic, and edge groups are finite or virtually cyclic).

          Then the deformation space of $T$ is rigid if and only if
                  for each vertex $v\in\Gamma$, and for each pair of distinct oriented edges $e_1,e_2$ incident on $v$
        such that $i_{e_1}(G_{e_1})$ is contained in a conjugate of $i_{e_2}(G_{e_2})$, 
        \begin{itemize}
        \item either $e_1=\ol e_2$ (\ie $e_1$ and $e_2$ are the two orientations of a loop)
        \item or $e_2$ is a loop and $G_v=i_{e_2}(G_{e_2})=i_{\ol e_2}(G_{e_2})$, and the only oriented edges incident on $v$ are $e_1,e_2,\ol e_2$.
        \end{itemize}
        \end{thm}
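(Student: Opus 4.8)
The plan is to deduce the statement from the theory of slide moves developed in \cite{GL2}, applied to the deformation space $\cald$ of $T$. First I would record that the additional hypothesis — that no edge stabiliser of $T$ is properly contained in a conjugate of itself — forces $\cald$ to be non-ascending, exactly as recalled in Section~\ref{sec_deformation} using \cite[Prop.~7.1]{GL2}. By \cite[Th.~7.2]{GL2}, a slide move applied to a reduced tree of $\cald$ again yields a reduced tree of $\cald$, and any two reduced trees of $\cald$ are connected by a finite sequence of slide moves. Hence $\cald$ is rigid — that is, $T$ is its unique reduced tree up to equivariant isomorphism — if and only if every slide move applied to $T$ produces a $G$-tree equivariantly isomorphic to $T$. (If all slides from $T$ return a tree equivariantly isomorphic to $T$, then transporting slides along such isomorphisms and inducting on the length of a slide sequence shows that every reduced tree of $\cald$ is equivariantly isomorphic to $T$; the reverse implication is clear.)

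Next I would rewrite ``a slide move is available at $T$'' purely in terms of the graph of groups $\Gamma = T/G$, following the description recalled just after Lemma~\ref{lem;slide_autom}. Such a move is determined by two distinct oriented edges $[e_1]\neq[e_2]$ of $\Gamma$ with the same terminal vertex $v$ and by an element $g\in G_v$ satisfying $i_{[e_1]}(G_{[e_1]})\subset i_{[e_2]}(G_{[e_2]})^g$; after the move the monomorphism of the slid edge is $i_{[\bar e_2]}\circ i_{[e_2]}^{-1}\circ\inn_g\circ i_{[e_1]}$. Thus the hypothesis of the Theorem — that $i_{e_1}(G_{e_1})$ lies in a conjugate of $i_{e_2}(G_{e_2})$ — is exactly the condition under which such a slide is formally defined, and the Theorem reduces to: this slide returns a tree equivariantly isomorphic to $T$ for all admissible $g$ \emph{if and only if} $(e_1,e_2)$ is in one of the two listed exceptional configurations.

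For the easy (``$\Longleftarrow$'') direction I would treat the two configurations separately. If $e_1=\bar e_2$, then $e_1$ is a loop at $v$ and the edge across which one would slide is a translate of $e_1$ itself, so no honest slide takes place — carrying out the construction only changes $i_{[e_1]}$ by conjugation, hence $\Gamma$ changes by an isomorphism of graphs of groups (Definition~\ref{dfn_Phi}) and the Bass--Serre trees are equivariantly isomorphic (Lemma~\ref{lem_Phi2tree}, cf.\ the proof of Lemma~\ref{lem;slide_autom}). In the second case, where $e_2$ is a loop at $v$ with $G_v=i_{e_2}(G_{e_2})=i_{\bar e_2}(G_{e_2})$ and $e_1,e_2,\bar e_2$ are the only oriented edges at $v$, a direct computation shows that sliding $e_1$ around the loop leaves the underlying graph unchanged and only precomposes $i_{[e_1]}$ with an automorphism of $G_v=G_{[e_2]}$ which is absorbed by the freedom in the choice of lifts defining the quotient graph of groups; again $T'$ is equivariantly isomorphic to $T$.

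The substantial (``$\Longrightarrow$'') direction is where the real work lies: assuming $(e_1,e_2)$ satisfies the containment but is not exceptional, I would exhibit a slide whose target $T'$ is \emph{not} equivariantly isomorphic to $T$. Since $e_1\neq\bar e_2$ the move genuinely changes the orbit of $e_1$, and since we are not in the degenerate loop situation one can extract an invariant of the pair (underlying graph, vertex and edge groups, incidence data, and induced peripheral structures on vertex groups) that is altered by the slide; reducedness of both $T$ and $T'$ is what prevents this alteration from being undone. Pinning down the correct invariant and verifying it always changes is the main obstacle, and is precisely Levitt's analysis of collapsible edges and the action of slides on the graph in \cite[Theorem~1]{Lev_rigid}; the role of our extra hypothesis is to eliminate the ``ascending'' (induction) moves of \cite{GL2}, so that his argument carries over directly to the present setting of finite or virtually cyclic edge groups.
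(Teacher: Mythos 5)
A preliminary remark: the paper does not prove this statement at all. It is quoted from \cite[Theorem 1]{Lev_rigid}, restated with the extra hypothesis that no edge group is properly contained in a conjugate of itself, whose only role is to make the deformation space non-ascending so that \cite[Th.~7.2]{GL2} applies. So there is no in-paper argument to compare you with, and your proposal has to stand on its own as a proof of Levitt's theorem.

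As such it has a genuine gap. Your reduction is fine: by non-ascendingness and \cite[Th.~7.2]{GL2}, rigidity is equivalent to the assertion that every slide move performed on $T$ returns a tree equivariantly isomorphic to $T$ (the transport-and-induct argument is correct), and the ``if'' direction is essentially right --- when $e_1=\ol e_2$ the two edges lie in the same orbit, so the paper's slide moves simply do not apply to that pair, and in the second configuration the slide replaces $i_{e_1}$ by $\psi\circ\inn_g\circ i_{e_1}$ with $\psi=i_{\ol e_2}\circ i_{e_2}\m\in\Aut(G_v)$, which is absorbed by a graph-of-groups isomorphism as in Definition \ref{dfn_Phi} with $\phi_v=\psi$; note this works because $G_v$ equals both images of the loop and $e_1$ is the only other edge at $v$, not because of any ``freedom in the choice of lifts''. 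The problem is the converse, which is the entire content of the theorem: a pair violating both bullets must force the existence of a reduced tree in the deformation space not isomorphic to $T$. You do not prove this; you propose to ``extract an invariant altered by the slide'', assert that reducedness prevents undoing it, and then defer the verification to ``Levitt's analysis in \cite[Theorem 1]{Lev_rigid}'' --- that is, to the statement being proved. Worse, the proposed mechanism fails as stated: a single non-exceptional slide need not change the tree at all. For example, if $e_2$ is a loop at $v$ with $G_v=i_{e_2}(G_{e_2})=i_{\ol e_2}(G_{e_2})$ and trivial monodromy $i_{\ol e_2}\circ i_{e_2}\m$ (say $G_v=\bbZ/2\times\bbZ/2$ with both loop inclusions the identity), but $v$ also carries two further edges over the two $\bbZ/2$ factors, then every available slide merely changes an edge map by an inner automorphism of $G_v$ and returns the same tree, so no invariant of a slid tree can detect anything; whatever non-rigidity the criterion asserts in such configurations cannot be witnessed by ``a slide changes something'', and in fact it is not even clear in such examples that any slide sequence ever produces a non-isomorphic tree --- which shows both how delicate this direction is and why the precise form of Levitt's condition and case analysis is needed. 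Without that analysis your proposal is a correct reduction plus a citation of the result itself, not a proof.
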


        When a deformation space is rigid, 
        the unique reduced tree it contains is  invariant under automorphisms of $G$ that preserve  
this deformation space.

        \begin{lem}\label{lem_rigid}
          Let $(\calo;\calp_\calo)$ be a basic orbisocket. 
          Then the  absolute (\ie not relative to $\calp_\calo$) Stallings-Dunwoody deformation space of $\calo$
          is rigid if and only if $\calo$ is of type $1$, $2$, or $3a$ as defined in Lemma \ref{lem_class_orbi_1} and Definition \ref{dfn_type3}.
        \end{lem}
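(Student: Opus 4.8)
The plan is to apply Levitt's characterisation of rigid deformation spaces (Theorem \ref{thm_levitt_rigid}) to the explicit graph-of-finite-groups decompositions of basic orbisockets obtained in Lemmas \ref{lem_type1_CNS}, \ref{lem_type2_CNS}, \ref{lem_type3_CNS}. First I would fix, for a basic orbisocket $(\calo;\calp_\calo)$ of a given type, the graph of finite groups $\Gamma$ produced by the relevant lemma, and reduce it if necessary (collapsing edges $e$ with $i_e$ onto and $o(e)\neq t(e)$); note that since $\calo$ is virtually free with finite edge groups and hyperbolic, no edge group is strictly contained in a conjugate of itself, so the hypothesis of Theorem \ref{thm_levitt_rigid} is satisfied. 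Then the statement becomes a finite bookkeeping check: for each vertex $v$ and each ordered pair of distinct oriented edges $e_1,e_2$ at $v$ with $i_{e_1}(G_{e_1})$ conjugate into $i_{e_2}(G_{e_2})$, verify whether the exceptional configurations allowed by Levitt occur.

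For the positive direction (types $1$, $2$, $3a$ give a rigid deformation space), I would go through the three pictures. In type $1$, $\Gamma$ is a segment with central edge group $F$ and vertex groups $\grp{\sigma_1,F}$, $\grp{F,\sigma_2}$, both strictly containing $F$ since $\sigma_i\notin F$; the only inclusion among images of incident edge groups at a vertex is between the two orientations of loops, of which there are none, so the condition holds vacuously and $T$ is rigid. In type $2$ the reduced graph has an edge labelled $F$ joining the Fuchsian vertex $\grp{\sigma,F}$ (which strictly contains $F$ because $\sigma\notin F$) to the vertex carrying the loop $F_1\into F_1$ with vertex group $\grp{b,F_1}$; here the loop edge group $F_1$ and the segment edge group $F$ both sit inside the vertex group, and since we are in the case $F\neq F_1$ or $n>1$ (type $3a$'s analogue for type $2$ — there is no proper inclusion forcing a non-loop-versus-loop coincidence), one checks the only potentially bad pair is $e_1$ the segment edge and $e_2$ the loop, and $G_v\neq i_{e_2}(G_{e_2})$ unless $F=F_1$, which is exactly excluded. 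Type $3a$ ($F\neq F_1,F_2$) is handled the same way at the two loop vertices: the segment edge group $F$ is a proper subgroup of each $F_i$, each loop vertex group equals $\grp{b_i,F_i}$, and since $F\subsetneq F_i$ the exceptional clause $G_v=i_{e_2}(G_{e_2})$ fails, so again the condition holds and the deformation space is rigid. The main obstacle here is being careful about the precise reduced form of each graph of groups and making sure every pair $(e_1,e_2)$ (including the pair where $e_2$ is a loop) is accounted for; this is routine but must be done without gaps.

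For the negative direction (types $3b$ and $3c$ are not rigid), I would exhibit, in each case, a slide move producing a non-isomorphic reduced tree in the same deformation space. In type $3c$ ($F=F_1=F_2$) the reduced graph is a segment $\grp{b_1,F}\sim_F\grp{b_2,F}$ with a loop at each endpoint, where each loop edge group equals its adjacent vertex group; one can slide the central edge across a loop, obtaining a genuinely different reduced graph of groups (the combinatorial type changes), so by Theorem \ref{thm_levitt_rigid} applied contrapositively — the bad configuration $i_{e_1}(G_{e_1})\subset i_{e_2}(G_{e_2})$ with $e_2$ a loop but more than three oriented edges at $v$, or with $e_1\neq\ol e_2$ — the deformation space is not rigid. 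In type $3b$ ($F=F_2\subsetneq F_1$) the central edge group $F$ coincides with the loop edge group $F_2$ at the right vertex but is properly contained in $F_1$ at the left vertex; at the right vertex the pair (central edge, right loop) realises $i_{e_1}(G_{e_1})=F\subset F=i_{e_2}(G_{e_2})$ with $G_v=\grp{b_2,F_2}\supsetneq F_2$ in general, so the exceptional clause of Levitt's criterion fails and rigidity fails. I would phrase this cleanly by simply quoting that in types $3b,3c$ some vertex violates the equivalent condition of Theorem \ref{thm_levitt_rigid}. The delicate point to get right is that in type $3b$ one must check $G_v\neq i_{e_2}(G_{e_2})$ at the relevant vertex, i.e.\ that the loop edge group $F_2=F$ is properly contained in the vertex group $\grp{b_2,F_2}$ — which holds since $b_2$ has positive translation length — and that the offending pair does not fall into the allowed loop-pair exception, which it does not because $e_1$ (the central edge) is not a loop at all.
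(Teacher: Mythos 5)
Your high-level plan (apply Theorem \ref{thm_levitt_rigid} to the graphs of finite groups of Lemmas \ref{lem_type1_CNS}--\ref{lem_type3_CNS}, after reducing) is exactly the paper's, but the execution has genuine errors. First, the vertex carrying an HNN loop $F_i*_{F_i}$ has vertex group $F_i$, not $\grp{b,F_1}$ or $\grp{b_i,F_i}$: the element $b_i$ is a stable letter, hyperbolic in the Bass--Serre tree, and does not lie in any vertex group. Second, you read Levitt's criterion backwards. The two clauses in Theorem \ref{thm_levitt_rigid} are the configurations \emph{allowed} in a rigid space; in types $2$ (with $F\subsetneq F_1$) and $3a$, the pair (segment edge $e_1$ with image $F$, loop $e_2$ with image $F_i$) does satisfy $i_{e_1}(G_{e_1})\subset i_{e_2}(G_{e_2})$, and rigidity holds precisely \emph{because} the loop exception holds: $G_v=F_i=i_{e_2}(G_{e_2})=i_{\ol e_2}(G_{e_2})$ and there are exactly three oriented edges at $v$. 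Your argument ("the exceptional clause $G_v=i_{e_2}(G_{e_2})$ fails, so the condition holds") would, if taken literally with your incorrect vertex group $\grp{b_i,F_i}$, prove \emph{non}-rigidity of type $3a$. Third, the case $F=F_1$ is \emph{not} excluded in type $2$ (Lemma \ref{lem_type2_CNS} allows it, only constraining the peripheral structure); in that case the displayed graph is not reduced, and one must collapse to the HNN decomposition $\grp{\sigma,F}*_F$, which is rigid because the only pairs at the unique vertex are the two orientations of the loop.

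The negative direction has the same problem compounded. For types $3b$ and $3c$ one has $F=F_2$, so the middle edge has an onto edge map at a non-loop edge: the three-edge graph is \emph{not} reduced, and Theorem \ref{thm_levitt_rigid} (stated for reduced trees) cannot be applied to it; your description of the type $3c$ reduced graph as a segment with vertex groups $\grp{b_i,F}$ is not the reduced decomposition at all. The paper collapses the middle edge, obtaining one vertex with group $F_1$ and two loops with edge groups $F_1$ and $F$; non-rigidity then follows because the pair ($F$-loop into $F_1$-loop) has four oriented edges at the vertex, so the loop exception fails. Note that your proposed witness for type $3b$ --- the pair (central edge, right loop) at the right vertex --- would, with the correct vertex group $F_2$, actually \emph{satisfy} the exception (three oriented edges, $G_v=F_2$), so even on a reduced graph it would not certify non-rigidity; and the slide-move argument for $3c$ is also unsubstantiated, since sliding an edge across a loop does not change the underlying graph, so "the combinatorial type changes" needs justification. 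In short, the correct conclusions are reached only because two errors (wrong vertex groups and an inverted reading of the criterion) partially cancel; carried out carefully, the argument as written does not go through.
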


        \begin{proof}
          The splittings described in Lemmas \ref{lem_type1_CNS}, \ref{lem_type2_CNS},
          and \ref{lem_type3_CNS} are graphs of finite groups so are in the Stallings-Dunwoody deformation space.
          One checks that under our hypotheses, they are reduced except for type 2 orbisockets
          when $F=F_1$. In this case, by collapsing an edge, we get the following reduced Stallings-Dunwoody decomposition
          $\calo= \grp{\sigma,F} *_{F}$. 
          Applying Levitt's criterion, we immediately see that in each case, the Stallings-Dunwoody deformation space is rigid.

        If $\calo$ is of type 3b or 3c, $F_2=F\subset F_1$, and by collapsing the middle edge of the graph of groups of Lemma \ref{lem_type3_CNS}, 
        one gets the following reduced Stallings-Dunwoody decomposition
        $\stackrel{\xymatrix{ 
            \ar@{{-}{-}{-}}@(ul,dl)_{F_1} \ar@{{-}{-}{-}}@(dl,ul) \bullet 
            \ar@{{-}{-}{-}}@(ur,dr)^{F} \ar@{{-}{-}{-}}@(dr,ur) }}{_{F_1}}$
        which is not rigid by Levitt's result.
        \end{proof}

     \begin{prop}\label{prop_recon_rigid}
       There exists an algorithm which takes as input a hyperbolic group with unmarked peripheral structure $(G;\calp)$       
       and which decides if $(G;\calp)$ is isomorphic to a basic orbisocket
        of type 1, 2, or 3a.
       If it is, it finds a socket decomposition for $(G;\calp)$.
     \end{prop}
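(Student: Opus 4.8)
The plan is to compute a canonical reduced Stallings-Dunwoody (SD) decomposition of $G$ and to confront it with the algebraic characterisations of Lemmas \ref{lem_type1_CNS}, \ref{lem_type2_CNS} and \ref{lem_type3_CNS}. First I would check, using Lemma \ref{lem_VC2}, that every peripheral subgroup in $\calp$ is a $\Z$-group and that $1\le\#\calp\le 3$ (as required by Lemma \ref{lem_class_orbi_1}); otherwise the answer is ``no''. Then I would run Gerasimov's algorithm together with Dunwoody accessibility to produce an SD decomposition of $G$ and, testing finiteness of its vertex groups, decide whether $G$ is virtually free; since the orbifold underlying a type $1$, $2$ or $3a$ basic orbisocket has virtually free fundamental group and its sockets are virtually cyclic, a non-virtually-free $G$ can be dismissed. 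Collapsing the non-reduced edges of the SD decomposition gives a reduced SD tree $\Gamma$ in the same ($\Out(G)$-invariant) deformation space, and Levitt's criterion (Theorem \ref{thm_levitt_rigid}, whose hypotheses hold since edge groups are finite) decides whether this deformation space is rigid. By Lemma \ref{lem_rigid}, a basic orbisocket is of type $1$, $2$ or $3a$ exactly when its SD deformation space is rigid, so if the space is not rigid the answer is ``no''; if it is rigid, $\Gamma$ is the unique reduced SD tree, hence canonical, so whenever $(G;\calp)$ is a type $1$/$2$/$3a$ basic orbisocket, $\Gamma$ must be one of the reduced graphs of finite groups appearing in the proofs of Lemmas \ref{lem_rigid}--\ref{lem_type3_CNS} (a segment; a segment with a single pendant loop, or one vertex with a loop; a segment with a loop at each end) carrying the prescribed peripheral structure.

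It then remains to decide whether $\Gamma$ together with $\calp$ matches one of these finitely many patterns. For each shape I would read the candidate finite groups $F,F_1,F_2$ off the edge groups of $\Gamma$ and test the elementary conditions on them, such as $F\normal G$, which is decidable since $F$ is finite. The subtlety---flagged in the remark preceding the proposition---is that the remaining data certifying a socket decomposition, namely the cone element $\sigma$ and the hyperbolic elements $b,b_1,b_2$ of the HNN pieces, is not directly visible in $\Gamma$, and the socket indices (equivalently the translation lengths of the $b_i$) are not a priori bounded, so a blind enumeration would not terminate. I would resolve this by extracting these elements from the given peripheral data: the vertex group carrying $\sigma$ is finite, so $\sigma$ ranges over an explicit finite set; each HNN piece $H_i=F_i*_{F_i}$ is a computable $\Z$-subgroup of $G$ (Lemma \ref{lem_VC2}); and the improper socket, namely $\grp{b_1b_2,F}$ (resp.\ $\grp{b\sigma,F}$, $\grp{\sigma_1\sigma_2,F}$), is one of the peripheral subgroups $P$ of $\calp$, from which---knowing $F$---a generator of $P$ modulo $F$ is determined up to finitely many choices (its image in $P/F\cong\bbZ$ must be $\pm$ a generator). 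Combining with the finite choices for $\sigma$, this leaves only finitely many candidates for $b_1b_2$, $b\sigma$ or $\sigma_i$, hence for the $b_i$. For each candidate I would check, using the word and simultaneous conjugacy problems in $G$ and Lemma \ref{lem_autZ}, that it lies in the relevant HNN piece, normalises $F$, has positive translation length, and that the assembled amalgam (as in Lemmas \ref{lem_type1_CNS}--\ref{lem_type3_CNS}) has $\calp$ as its improper sockets plus possibly some proper ones---a finite batch of conjugacy tests between virtually cyclic, hence quasiconvex, subgroups. If some candidate passes, output ``yes'' with the reconstructed socket decomposition; otherwise ``no''.

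Correctness in both directions is exactly the content of the iff statements of Lemmas \ref{lem_type1_CNS}, \ref{lem_type2_CNS} and \ref{lem_type3_CNS}: a passing candidate certifies that $(G;\calp)$ is a basic orbisocket of the corresponding type, while a type $1$/$2$/$3a$ basic orbisocket yields, through its canonical $\Gamma$, such a candidate. I expect the main obstacle to be precisely the point of the second paragraph---the lack of an a priori bound on socket indices---which is why the peripheral structure must be exploited to pin the sockets down rather than enumerating orbisockets directly; a secondary technical nuisance is keeping careful track of which of the (few) collapsed graph shapes corresponds to which orbisocket type and sub-type, and matching the peripheral structure through the reductions performed in the proof of Lemma \ref{lem_rigid}.
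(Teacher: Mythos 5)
Your overall route (compute a reduced Stallings--Dunwoody decomposition, invoke Levitt's criterion and Lemma \ref{lem_rigid} to reduce to the rigid shapes, then reconstruct the socket data and verify via the characterisations of Lemmas \ref{lem_type1_CNS}--\ref{lem_type3_CNS}) is the paper's, and it is sound up to the point where you must produce the elements $b$ (type 2) and $b_1,b_2$ (type 3a). There the argument has a genuine gap. A peripheral subgroup $P$ is given only up to conjugacy: its maximal finite normal subgroup is a conjugate $F^g$ of $F$ for an unknown $g$, and such a $g$ is determined only up to the (typically infinite) normaliser of $F$. So what you can extract from $P$ is the pair $(\grp{b\sigma,F},F)$, resp.\ $(\grp{b_1b_2,F},F)$, \emph{up to conjugacy} --- not finitely many candidate elements of $G$ positioned inside the computed decomposition. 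For a candidate $c$ read off from a particular representative of $[P]$, the element $c\sigma^{-1}$ has no reason to lie in the specific HNN piece $H_1$ of $\Gamma$ or to normalise the specific edge group $F$, so your verification would typically fail even when $(G;\calp)$ is an orbisocket (false negatives). Moreover, even if $b_1b_2$ were pinned down exactly, the step ``hence for the $b_i$'' is unjustified: nothing in your argument recovers a factorisation $c=b_1b_2$ with $b_1\in H_1$, $b_2\in H_2$, both normalising $F$ and of positive translation length.

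The paper closes exactly this hole with an abelianisation argument that you do not have: by rigidity, the pieces $H_i=F_i*_{F_i}$ and stable letters $h_i$ are canonical up to conjugacy and sign, so their images $\ol h_i$ in the abelianisation of $G$ modulo torsion (isomorphic to $\bbZ$ in type 2, to $\bbZ^2$ in type 3a) give a canonical basis up to sign; the image of the conjugacy class $[P]$ is well defined, and comparing it with this basis computes the translation lengths $n$ (resp.\ $n_1,n_2$). Only then is the search finite: $b$ ranges over the finitely many elements of translation length $n$ in $F_1*_{F_1}$, and $b_i\in F_ih_i^{\pm n_i}$ lies inside the already-computed $H_i$, after which finitely many conjugacy tests conclude. (Alternatively one could set up the reconstruction as a disjunction of systems of equations with \qie rational constraints, as the paper does for types 3b and 3c; your proposal invokes neither device.) A smaller omission: in type 2 with $F=F_1$ the reduced decomposition degenerates to a single HNN extension $A*_F$, and one must enumerate the possible blowups $A*_F(F*_F)$ (checking $F\normal G$ and running over stable letters $t=at_0$); you note this graph shape but give no procedure for handling it.
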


     \begin{proof} First, if $(G,\calP)$ is an basic orbisocket, then its peripheral structure consists of at most three non conjugate $\Zmax$-subgroups. 
       This can be checked by Lemma \ref{lem_VC2}, thus we can assume that $\calp$ contains at most $3$ conjugacy classes of peripheral subgroups.

       Using Gerasimov's algorithm, we can compute $\Gamma$ a Stallings-Dunwoody decomposition $G$. 
        We can check whether it is reduced, and make it reduced if it is not.
       Using Levitt's criterion (Lemma \ref{thm_levitt_rigid}), one can check whether it is rigid.
       We can assume it is rigid by Lemma \ref{lem_rigid}.
       By rigidity, any isomorphism $G\ra \calo$ to a basic orbisocket group should map the computed decomposition
       to the unique reduced Stallings-Dunwoody decomposition of $\calo$, described in Lemma \ref{lem_rigid}.
       So if the reduced Stallings-Dunwoody decomposition of $G$ does not have the required form, we are done.
       Furthermore, the topology of the graph of groups tells us which type of orbisocket we should look for.

       Assume first that $\Gamma=A_1*_F A_2$ is an amalgam of finite groups, so that $G$ has to be of type 1.
       If $\calp$ has more that $1$ conjugacy class of peripheral subgroups, 
       if $F$ is not normal in one of the two vertex groups, or if $A_1/F$ or $A_2/F$ is not cyclic,
       we know that $(G;\calp)$ is not an orbisocket of type 1, and not an orbisocket at all. 
       So rewrite $\Gamma=\grp{\sigma_1,F}*_F\grp{\sigma_2,F}$.
       If both $\sigma_1$ and $\sigma_2$ have order 2 modulo $F$, $G$ is virtually cyclic, and is not an orbisocket.
       We have finitely many choices for $\sigma_1$ and $\sigma_2$, and we can check whether
       for some choice, $\calp=\{[\grp{\sigma_1\sigma_2,F}]\}$. If this is not the case, 
       then we know by Lemma \ref{lem_type1_CNS} that $(G;\calp)$ is not an orbisocket
       since by rigidity of the deformation space,  the groups $\grp{\sigma_i,F}$ and $F$ are determined up to simultaneous
       conjugation.  
      If this is the case, then the two assertions of Lemma \ref{lem_type1_CNS} hold, and $G$ is an orbisocket of type 1.

       Assume now that $\Gamma=A*_F (F_1*_{F_1})$ or $\Gamma=A*_{F}$, so that $G$ has to be of type $2$. 
       If $\calp$ has more that $2$ conjugacy class of peripheral subgroups, 
       if $F$ is not normal in $A$, or if $A/F$ is not cyclic,
       $(G;\calp)$ is not an orbisocket. 

       We first consider the case where $\Gamma=A*_F (F_1*_{F_1})$.
       We claim that we can determine the translation length $n$ of the element $b$ (that we still don't know) appearing in Lemma \ref{lem_type2_CNS}.
       Consider $\ol G\simeq \bbZ$ the abelianization of $G$ modulo its torsion.
       If $(G;\calp)$ is an orbisocket of type $2$, then the image in $\ol G$ of one of the peripheral subgroups has index $n$, 
       and the image of the other (if any) has index $1$.
       Since these indices can be computed, this allows us to find $n$. 
       One can list all the finitely many elements $b$ of translation length $n$ in $F_1*_{F_1}$,  
       and list all possibilities for the choice of $\sigma$.
       Then for each such choice, we check whether $\calp$ is conjugate to $\{[\grp{b\sigma,F}]\}$ or to $\{[\grp{b\sigma,F}],[\grp{b,F}]\}$.
       Then by Lemma \ref{lem_type2_CNS}, $(G;\calp)$ is a socket of type 2 if and only if there was one successful choice.
       
       When $\Gamma=A*_F$, we will compute all possible $\Hat\Gamma=A*_F (F_1*_{F_1})$ that give $\Gamma$ when made reduced. 
       When this is done, we proceed as above for each possible $\Hat \Gamma$.
       First, if $\Hat \Gamma$ exists, then $F_1=F$ since otherwise $\Hat\Gamma$ would be reduced. 
       It follows that $F$ should be normal in $G$.
       This fact can be checked so we can assume that $F \normal G$.
       Let $t_0$ be a stable letter of the HNN extension $\Gamma$. 
       Given another stable letter $t=at_0 $ for some $a\in A$,
       we can write $G=A*_F \grp{F,t}$, hence produce the blow up $\Hat \Gamma_t=A*_F (F*_F)$ 
       where $t$ is a stable letter of the HNN extension $F*_F$. 
       One can easily check that every possible blowup $\Hat \Gamma$ of $\Gamma$ coincides with such a $\Gamma_t$,
       so all possible blowups can be listed.

       Assume finally that 
$\Gamma=  \xymatrix{ 
         \ar@{{-}{-}{-}}@(ul,dl)_{F_1} \ar@{{-}{-}{-}}@(dl,ul)  \ar@{{*}{-}{*}}[r]_F^{F_1 \quad F_2} &    
         \ar@{{-}{-}{-}}@(ur,dr)^{F_2} \ar@{{-}{-}{-}}@(dr,ur)
       } $  
 with $F\subsetneq F_1,F_2$,  so that $(G,\calp)$ is candidate to be a type 3a orbisocket.
       Let $\ol G\simeq \bbZ^2$ the abelianization of $G$ modulo its torsion.
       Consider $h_1,h_2$ two elements of translation length $1$ in $F_1*_{F_1}$ and $F_2*_{F_2}$ respectively.
       Their image $\ol h_1,\ol h_2$ in $\ol G$ is a basis of $\ol G$, which does not depend on the choice of $h_i$ up to sign.
       By rigidity of the Stallings-Dunwoody deformation space, 
       $F_1*_{F_1}$ and $F_2*_{F_2}$ don't depend on $\Gamma$ (up to simultaneous conjugation), 
       so  $\ol h_i$ does not depend on $\Gamma$ up to sign.
       On the other hand, Lemma \ref{lem_type3_CNS} says that if $(G;\calp)$ is an orbisocket of type 3,
       then $\calp$ consists of $[\grp{b_1b_2}]$, maybe together with $[F_1*_{F_1}]$ and/or $[F_2*_{F_2}]$.
       The groups conjugate to $[F_i*_{F_i}]$ project in $\ol G$ to $\grp{\ol h_i}$,
       and $\grp{b_1b_2}$ projects to $\grp{\ol h_1^{\pm n_1}\ol h_2^{\pm n_2}}$
       where $n_i$ is the translation length of $b_i$ in $F_i*_{F_i}$.
       This means that from the peripheral structure $\calp$ of $G$, we can read $n_1$ and $n_2$.
       Now, there are only finitely many possible candidates for $b_1$ and $b_2$, namely
       $b_i=f h_i^{\pm n_i}$ for some $f\in F_i$. For each possible choice of $b_1$ and $b_2$, there remains to check whether
       the peripheral structure has the form required by \ref{lem_type3_CNS}.
     \end{proof}

\subsubsection{ The semi-rigid case}

\begin{prop}\label{prop_recon_3b}
  There exists an algorithm which takes as input a hyperbolic group with unmarked peripheral structure $(G;\calp)$,
  and which decides if $(G;\calp)$ is isomorphic to a basic orbisocket of type $3b$ and in this case, gives a socket decomposition
of $(G;\calp)$.
\end{prop}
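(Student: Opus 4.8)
The plan is to mirror the rigid case treated in Proposition \ref{prop_recon_rigid}, but to cope with the fact that the Stallings-Dunwoody deformation space of a type $3b$ orbisocket is no longer rigid. Recall from Lemma \ref{lem_rigid} that for an orbisocket of type $3b$ one has $F=F_2\subsetneq F_1$, and collapsing the middle edge of the graph of finite groups of Lemma \ref{lem_type3_CNS} yields a reduced Stallings-Dunwoody decomposition of the form $\xymatrix{\ar@{{-}{-}{-}}@(ul,dl)_{F_1}\ar@{{-}{-}{-}}@(dl,ul)\bullet\ar@{{-}{-}{-}}@(dr,ur)_{F}\ar@{{-}{-}{-}}@(ur,dr)}$, i.e.\ a single vertex with two loops, with vertex group $F_1$, one loop of group $F_1$ (which forces $i_e$ to be onto one side) and one loop of group $F\subsetneq F_1$. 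So the first step is: run Gerasimov's algorithm to compute a Stallings-Dunwoody decomposition $\Gamma$ of $G$, reduce it, and check via Levitt's criterion (Lemma \ref{thm_levitt_rigid}) whether its deformation space is rigid. If it is rigid we are in the cases handled by Proposition \ref{prop_recon_rigid} and can declare $(G;\calp)$ not of type $3b$; so we may assume the deformation space is non-rigid, and moreover that the reduced graph of groups $\Gamma$ has the one-vertex-two-loops shape above (if not, $(G;\calp)$ is not of type $3b$, again by Lemma \ref{lem_rigid} together with the list of topological types in Lemma \ref{lem_class_orbi_1}). One also checks first, using Lemma \ref{lem_VC2}, that $\calp$ consists of at most three conjugacy classes of $\Zmax$-subgroups, as is necessary for any basic orbisocket.

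The heart of the argument is then to recover the candidate socket decomposition from this non-canonical reduced decomposition. Write $\Gamma$ as the group $\grp{F_1,s_1,s_2}$ where $s_1$ is a stable letter of the loop with group $F_1$ (so one of the two inclusions $F_1\hookrightarrow F_1$ is the identity, hence after conjugating we may assume $s_1$ normalises $F_1$ and the loop is just an HNN extension with identity edge maps on one side) and $s_2$ a stable letter of the loop with finite group $F$, where $F\subsetneq F_1$; here I am using the convention, as in the proof of Lemma \ref{lem_type2_CNS}, that fixing a maximal subtree identifies $G$ with $\pi_1(\Gamma,\tau)$ so that $F_1$, $s_1$, $s_2$ are actual subgroups/elements of $G$, not just conjugacy classes. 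Since the deformation space is non-rigid, Levitt's criterion identifies exactly one pair of loops causing non-rigidity, and one can ``un-collapse'' (blow up) the middle edge: the possible blow-ups $\widehat\Gamma = H_1 *_{\grp{b_1,F_1}} \grp{b,\sigma,F} *_{\grp{b_2,F_2}} H_2$ with $H_1=F_1*_{F_1}$, $H_2=F_2*_{F_2}$, $F_2=F$, are in finite-to-one correspondence with choices of stable letters obtained by modifying $s_1,s_2$ by elements of $F_1$ — exactly as in the $\Gamma=A*_F$ subcase of the proof of Proposition \ref{prop_recon_rigid}, where one lists $\widehat\Gamma_t = A*_F(F*_F)$ over stable letters $t=as_0$. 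As there, every genuine blow-up coincides with one of these finitely many $\widehat\Gamma_t$, so one enumerates them all.

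For each candidate blow-up $\widehat\Gamma$ one then verifies the two bullet conditions of Lemma \ref{lem_type3_CNS} specialised to type $3b$ ($F=F_2\subsetneq F_1$): namely, that there exist $b_1\in H_1$, $b_2\in H_2$ normalising $F$ with positive translation lengths $n_1,n_2$, and such that $\calp$ consists of $[\grp{b_1b_2,F}]$, maybe together with $[H_1]$ and/or $[H_2]$, with the constraint that $[H_i]\in\calp$ whenever $F_i=F$ and $n_i=1$ (so $[H_2]\in\calp$ is forced since $F_2=F$, unless $n_2>1$). To make this effective I would, as in the type $3a$ part of the proof of Proposition \ref{prop_recon_rigid}, pass to $\ol G\simeq\bbZ^2$, the abelianization of $G$ modulo torsion: fixing generators $\ol h_1,\ol h_2$ coming from translation-length-$1$ elements of $H_1,H_2$ (well defined up to sign and independent of the blow-up, by the non-rigid analogue of the argument there — the subgroups $H_1$ and $H_2$ being determined up to simultaneous conjugation by the reduced structure together with the blow-up data), the image of a conjugate of $H_i$ is $\grp{\ol h_i}$ and the image of $\grp{b_1b_2,F}$ is $\grp{\ol h_1^{\pm n_1}\ol h_2^{\pm n_2}}$. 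Reading $\calp$ in $\ol G$ thus determines $n_1,n_2$, hence there are only finitely many candidates $b_i = f\,h_i^{\pm n_i}$ with $f\in F_i$; for each finite choice of $(b_1,b_2,\sigma)$ one checks with the simultaneous conjugacy problem whether $\calp$ has the prescribed form. By Lemma \ref{lem_type3_CNS}, $(G;\calp)$ is a basic orbisocket of type $3b$ iff one such verification succeeds, in which case $\widehat\Gamma$ with the identified sockets $H_1,H_2,\grp{b_1b_2,F}$ is the desired socket decomposition. The main obstacle I anticipate is the non-canonicity at the level of $\widehat\Gamma$: unlike the rigid case, a priori several non-equivalent blow-ups might ``look'' like a type $3b$ socket decomposition, so one must argue carefully (using rigidity of $\Gamma$ as a reduced SD tree of its own deformation space, i.e.\ the invariance of the one-vertex collapsed form under $\Aut(G)$, and Proposition \ref{prop_JSJ_orbi} which pins down the $\Z$-JSJ of the resulting orbisocket) that all successful choices give the same orbisocket, i.e.\ genuinely decompose $G$ as one and the same orbisocket with one and the same peripheral structure $\calp$ — so that outputting any one of them is legitimate.
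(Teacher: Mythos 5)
Your reduction to a single computed reduced Stallings--Dunwoody decomposition plus \emph{finitely many} blow-ups is exactly where the argument breaks, and this is the specific difficulty that distinguishes type $3b$ from the rigid types. Because the deformation space is not rigid, the ``nice'' (three-edge) decompositions of a type $3b$ orbisocket form an infinite family of pairwise distinct $G$-trees: by the analogue of Lemma \ref{lem_basis}, two simultaneous pairs of stable letters $(h_1,h_2)$ and $(h'_1,h'_2)$ differ, up to conjugacy and sign, by $h'_1=h_1^{\pm1}f$ with $f\in F_1$ \emph{and} $h'_2=h_2^{\pm1}k_1$ with $k_1\in N_{H_1}(F)$, and $N_{H_1}(F)$ is an infinite (virtually cyclic) group, not a subset of $F_1$. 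Moreover, whether $\calp$ satisfies the criterion of Lemma \ref{lem_type3_CNS} genuinely depends on which nice decomposition one uses. Your abelianization step also fails to rescue the finiteness: since $\ol{h'_2}$ is only well defined modulo $\grp{\ol h_1}$, the image of $P_0$ in $\ol G$ determines $|n_2|$ but \emph{not} $n_1$; the required translation length of $b_1$ varies with the choice of nice decomposition, so there are infinitely many candidates $b_1\in H'_1$ and your enumeration can return a false negative (the orbisocket structure may only be visible from a nice decomposition not among your finitely many blow-ups). Your closing worry addresses the opposite, harmless direction (several successful choices), not this completeness issue. A secondary gap: having the right reduced two-loop shape does not yet imply existence of any nice decomposition; one must check that the two images of $F$ in $H_1$ (the normaliser of $F_1$) are conjugate in $H_1$ before blowing up.

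The paper's proof handles the infinite search by a different mechanism: it first shows the collapsed HNN tree $T_0$ with vertex group $H_1$ is canonical and decides existence of a nice decomposition; it then parametrises \emph{all} bases of \emph{all} nice decompositions by the relation above, reads off only $|n_2|$ from the abelianization, and encodes the existence of a basis $(h'_1,h'_2)$ together with $b_1,b_2$ and conjugators realising $\calp$ as a finite disjunction of systems of equations over $G$ (with normalisation conditions expressed via finitely many parameters in $F$ and $F_1$), whose solvability is decided by Theorem \ref{theo;eq} (\cite{DG1}). Some appeal to the equations-with-constraints machinery, or an equivalent way of deciding an existential statement over the infinite family of nice decompositions, is essential here and is absent from your proposal.
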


\begin{proof}
A basic orbisocket of type $3b$ has a decomposition as a graph of finite groups $\Gamma$ of the form
       $$\calo = \pi_1 \Big(  \xymatrix{ 
         \ar@{{-}{-}{-}}@(ul,dl)_{F_1} \ar@{{-}{-}{-}}@(dl,ul)  \ar@{{*}{-}{*}}[r]_F^{F_1 \quad F} &    
         \ar@{{-}{-}{-}}@(ur,dr)^{F} \ar@{{-}{-}{-}}@(dr,ur)
       }  \Big)$$  
with $F\subsetneq F_1$. For lack of a better name, we call such a decomposition a \emph{nice}
 decomposition.
 
We note that $\calo$ has infinitely many distinct such nice decompositions (distinct as actions on trees).
Moreover, the fact that $\calp$ satisfies the characterisations for being orbisockets of Lemma \ref{lem_type3_CNS}
depends on the choice of the nice decomposition.

Let $T$ be the Bass-Serre tree of $\Gamma$, and $T_0$ be obtained by collapsing the orbit of the left and middle edges.
It is dual to the HNN extension $\calo=H_1\xymatrix{\bullet\ar@{{-}{-}{-}}@(ur,dr)^{F}}$.
We claim that $T_0$ does not depend of the nice decomposition used to define it.
Indeed, $F_1$ is the unique maximal finite subgroup up to conjugacy,
so $H_1$ is well defined up to conjugacy as the normaliser of $F_1$.
Since the elliptic subgroups of $T_0$ are the conjugate of the subgroups of $H_1$, the deformation space of $T_0$
is independent of choices. By Levitt's rigidity criterion, $T_0$ is the unique reduced tree in its deformation space, which proves the claim.  

\begin{lem}
  One can decide if a hyperbolic group $G$ has a nice decomposition, and produce one if it does.
\end{lem}

\begin{proof}
 If $G$ has a nice decomposition, then it has a reduced Stallings-Dunwoody
  decomposition $\Gamma_{red}$ of the form $\stackrel{\xymatrix{ \ar@{{-}{-}{-}}@(ul,dl)_{F_1} \ar@{{-}{-}{-}}@(dl,ul) \bullet
      \ar@{{-}{-}{-}}@(ur,dr)^{F} \ar@{{-}{-}{-}}@(dr,ur) }}{_{F_1}}$ with $F\subsetneq F_1$, 
obtained by collapsing the middle edge. We claim that all
  reduced Stallings-Dunwoody decompositions of $G$ are of this form. This is because any two such decompositions are related by slide moves
  (see section \ref{sec_deformation}), and the only possible slide moves (which consist in sliding the edge labelled $F$ along the
  edge labelled $F_1$), give a new decomposition of the same form.  Using Gerasimov's algorithm \cite[Theorem 1.3]{DaGr_detecting}, one can compute a
  reduced Stallings-Dunwoody decomposition of $G$, and check whether it has the same form as $\Gamma_{red}$.  

But this is not enough to conclude that $G$ has a nice decomposition.  

The argument above shows that the tree $T_0$ obtained by collapsing the edge
  labelled $F_1$ in $\Gamma_{red}$ does not depend on choices.  The decomposition $T_0/G$ can be computed, and has the form
  $H_1\xymatrix{\bullet\ar@{{-}{-}{-}}@(ur,dr)^{F}}$ with $H_1$ a $\Zmax$-subgroup with maximal normal finite subgroup $F_1$.
   We claim that $G$ has a nice decomposition if and only if the two images of $F$ in $H_1$ are conjugate
  in $H_1$.  Since this condition is easy to check, one can decide if $G$ has a nice decomposition (and produce one if it does).  
  The claim is
  clear if $T_0$ comes from a nice decomposition.  Conversely, if the two images of $F$ in $H_1$ are conjugate in $H_1$, one can
  assume that they are equal up to post-conjugating the edge morphism, which allows us to blow up $T_0$ into $ \xymatrix{
    \ar@{{*}{-}{*}}[r]_F^{H_1 \quad F} & \ar@{{-}{-}{-}}@(ur,dr)^{F} \ar@{{-}{-}{-}}@(dr,ur) } $ and then into a nice
  decomposition.
\end{proof}

Now we can assume that $G$ has a nice decomposition $\Gamma$.
Denote by $T$ its Bass-Serre tree.

Say that $(h_1,h_2)$ is a \emph{basis} of $\Gamma$ if
the translation length of $h_1$ and $h_2$ is $1$, and 
there exists an edge $e=v_1v_2$ in the preimage of the middle edge of $\Gamma$
such that the axis of $h_i$ contains $v_i$
(in other words, $h_1,h_2$ are simultaneously stable letters of the two HNN extensions of $\Gamma$).

Such a basis $(h_1,h_2)$ defines $H_i$ for $i=1,2$ as the $\Zmax$-subgroup containing $h_i$, $F_1$ (resp. $F$) as the maximal finite
normal subgroup of $H_1$ (resp. $H_2$). In particular, $H_1=\grp{h_1,F_1}$ and $H_2=\grp{h_2,F}$.
 
\begin{lem}\label{lem_basis}
  Consider $(h_1,h_2)$ a basis coming from a nice decomposition of $G$ as above.

Then $(h'_1,h'_2)$ is another basis coming from some nice decomposition $\Gamma'$
if and only if 
$(h'_1,h'_2)^g=(h_1^{\pm1}f,h_2^{\pm1}k_1)$
for some $g\in G$, $k_1\in N_{H_1}(F)$ and $f\in F_1$  where $N_{H_1}(F)$ denotes the normaliser of $F$ in $H_1$.
\end{lem}

\begin{proof}
  Assume that $(h'_1,h'_2)$ is another basis. Denote by $H'_1,H'_2,F'_1,F'$ the subgroups of $G$ defined analogously using $(h'_1,h'_2)$
(namely, $H'_i$ is the $\Zmax$-subgroup containing $h'_i$, etc.).
Up to conjugating $(h'_1,h'_2)$  by some $g\in G$, we may assume that $H'_1=H_1$, and in particular $F'_1=F_1$.
Since $h_1$ and $h'_1$ give two generators of the cyclic group $H_1/F_1$, 
$h'_1=h_1^{\pm1}f$ for some $f\in F_1$.
The elements $h'_2$ and $h_2$ both have translation length $1$ in $T_0$, and their axis in $T_0$ contain the vertex $u_1\in T_0$ fixed by $H_1$.
Conjugating $(h'_1,h'_2)$ by some element of $H_1$, we may assume that the axes of $h_2$ and $h'_2$ share an edge incident on $u_1$
so in particular, $F=F'$.
It follows that $h'_2=h_2^{\pm 1}k_1$ for some $k_1\in H_1$. 
Since both $h_2$ and $h'_2$ normalise $F$, so does $k_1$. The conclusion follows.

Conversely, assume $(h'_1,h'_2)^g=(h_1^{\pm1}f,h_2^{\pm1}k_1)$ with $f,k_1,g$ as above.
A conjugate of a basis being a basis, we can assume $g=1$.
Since $h_2$ is a stable letter of the HNN extension $T_0/G$, so is $h'_2$.
Since $h_2$ and $k_1$ normalise $F$, so does $h'_2$.
It follows that one can blow up the HNN extension $G\simeq H_1\xymatrix{\bullet\ar@{{-}{-}{-}}@(ur,dr)^{F}}$ corresponding to $T_0$
into 
$ \xymatrix{ 
         \ar@{{*}{-}{*}}[r]_F^{H_1 \quad F} &    
         \ar@{{-}{-}{-}}@(ur,dr)^{F} \ar@{{-}{-}{-}}@(dr,ur)
       } $  
with $h'_2$ a stable letter of the HNN extension on the right.
This last decomposition can be blown up into a nice decomposition
$\Gamma'=  \xymatrix{ 
         \ar@{{-}{-}{-}}@(ul,dl)_{F_1} \ar@{{-}{-}{-}}@(dl,ul)  \ar@{{*}{-}{*}}[r]_F^{F_1 \quad F} &    
         \ar@{{-}{-}{-}}@(ur,dr)^{F} \ar@{{-}{-}{-}}@(dr,ur)
       } $  
of which $(h'_1,h'_2)$ is a basis.
\end{proof}

Recall that we have computed $\Gamma$ a nice decomposition of $(G,\calp)$ and $(h_1,h_2)$ a basis.
To simplify the discussion, assume that $\calp$ consists of exactly three conjugacy classes of subgroups $\{[P_0],[P_1],[P_2]\}$.
Given $(h'_1,h'_2)$ a basis coming from another nice decomposition,
denote by $H'_1,F'_1,H'_2,F'$ the subgroups defined by $(h'_1,h'_2)$ as before the statement of Lemma \ref{lem_basis}.
Writing $(h'_1,h'_2)^g=(h_1^{\pm1}f,h_2^{\pm1}k_1)$, we get that 
$H_1'^g=H_1$, $F_1'^g=F_1$ and $F'^g=F$ (but $H_2'^g\neq H_2$ in general).

Then one can rephrase Lemma \ref{lem_type3_CNS} as follows:
$(G,\calp)$ is a basic orbisocket of type $3b$ if and only if 
there exists a basis $(h'_1,h'_2)$ associated to some nice decomposition $\Gamma'$, 
$b_1\in H'_1$ normalising $F'$,
$b_2\in H'_2$ normalising $F'$ such that
$\calp=\{[\grp{F',b_1b_2}],[\grp{h_1',F'_1}],[\grp{h'_2,F'}]\}$.
Up to re-indexing the $P_i$'s, we may as well require that $[P_0]=[\grp{F',b_1b_2}]$,
$[P_1]=[\grp{h_1',F'_1}]$, $[P_2]=[\grp{h'_2,F'}]$.

Writing $b_2=h_2'^{n_2}f'$ for some $f'\in F'$, we claim the we can determine $n_2$ up to sign.
Indeed, if $(\calo,\calp_\calo)$ is a basic orbisocket of type 3b, write it as a the fundamental group of
a nice decomposition $\Gamma'$. Consider $\ol\calo\simeq\bbZ$  the abelianization of $\calo$ modulo its torsion and
$\ol H_1'\subset \ol\calo$ the image of $H_1'$. 
Since $H'_1$ is conjugate to $H_1$, one can compute the group $\ol\calo/\ol H'_1$ from $\Gamma$.
If $[P_0]=[\grp{b_1b_2,F'}]$, then 
$|n_2|$ is the index of the image of $P_0$ in $\ol\calo/\ol H'_1$, and can be computed.

To decide if $(G,\calp)$ is indeed a basic orbisocket, we have to decide the existence of a basis $(h'_1,h'_2)$, and elements $b_1,b_2$ as above.
We view this as disjunction of systems of equations as follows:
We view the relation $(h'_1,h'_2)^g=(h_1^{\pm1}f,h_2^{\pm1}k_1)$
as a finite family of systems of two equations relating some variables $h'_1,h'_2,k_1,g$ in $G$, parametrised by $f\in F$, and $h_1,h_2$ being constants.
We include additional disjunctions of systems of equations
saying that $k_1$ normalises $F_1$ (\ie lies in $H_1$), 
and another disjunction of systems of equations saying that $k_1$ normalises $F$.

Now introduce two new variables $b_1,b_2$, subject to
a disjunction of systems of equations saying that $b_1^g$ normalises $F_1$ (so that $b_1\in H_1'$),
and a disjunction of equations $b_2=h'_2{}^{\pm n_2}f_2^g$ parametrised by $f_2\in F$, $|n_2|$ having been already computed.
We add disjunctions of systems of equations saying that $b_1^g$ and $b_2^g$ normalise $F$.
Write $P_i=\grp{a_i,C_i}$ where $C_i\normal P_i$ is the maximal finite normal subgroup (one can compute such $a_i,C_i$).
The condition $[P_0]=[\grp{b_1b_2,F'}]$ (resp. $[P_1]=[\grp{h_1',F'_1}]$, $[P_2]=[\grp{h'_2,F'}]$)
 translates into a disjunction of systems of equations saying that $C_0^{g_0}=F^{g\m}$ (resp. $C_1^{g_1}=F_1^{g\m}$, $C_2^{g_2}=F^{g\m}$)
and a disjunction of equations $(b_1b_2)^{g_0}=a_0^{\pm 1} c_0$, (resp. $h'_1{}^{g_1}= a_{1}^{\pm 1} c_1$, $h'_2{}^{g_2}= a_{2}^{\pm 1} c_2$) indexed by $c_0\in C_0$
(resp. $c_1\in C_{1}$, $c_2\in C_2$)
where $g_0$ (resp. $g_1,g_2$) is a new variable.
This way, we get a disjunction of systems of equations, and the existence of a solution in $G$ is equivalent to the fact that
$(G;\calp)$ is an orbisocket of type $3b$.
Since one can solve systems of equations in virtually free groups by \cite{DG1}, one can decide whether $\calo$ is an orbisocket
of type $3b$.
By proposition \ref{lem_type3_CNS}, one can get an orbisocket decomposition of $G$ from a solution of this system of equations:
$G\simeq H_1*_{\grp{b_1,F}} \grp{b_1,b_2,F} *_{\grp{b_2,F}} H_2$.
\end{proof}

\subsubsection{The flexible case}

\begin{prop}\label{prop_recon3c}
  There exists an algorithm which takes as input a hyperbolic group with unmarked peripheral structure $(G;\calp)$,
  and which decides if $(G;\calp)$ is isomorphic to a basic orbisocket of type $3c$ and in this case, gives a socket decomposition
of $(G;\calp)$.
\end{prop}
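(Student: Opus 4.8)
The plan is to follow the strategy of the previous two cases (types $3a$ and $3b$), but with the extra difficulty that in type $3c$ the relevant deformation space is not rigid at all: all of $F_1=F_2=F$ coincide, so the Stallings--Dunwoody space of $\calo$ is flexible, and the orbifold underlying the socket decomposition --- a thrice-punctured sphere with all three finite groups equal --- has a genuine mapping class group acting on the space of socket decompositions. First I would set up the candidate normal form: a type $3c$ basic orbisocket has a decomposition as a graph of finite groups with one central edge labelled $F$ joining two loop-vertices each carrying a loop labelled $F$, i.e. $\calo\simeq H_1*_{\grp{b_1,F}}\grp{b_1,b_2,F}*_{\grp{b_2,F}}H_2$ with $H_i=F*_F$, and its peripheral structure consists of $[\grp{b_1b_2,F}]$ together with possibly $[H_1]$ and/or $[H_2]$, subject to the constraint from Lemma~\ref{lem_type3_CNS} that if $n_i=1$ then $[H_i]\in\calp$. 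I would first use Lemma~\ref{lem_VC2} to check that $\calp$ consists of at most three conjugacy classes of $\Zmax$-subgroups, and use Gerasimov's algorithm together with Levitt's criterion (Theorem~\ref{thm_levitt_rigid}) and slide moves to recognise, up to the connectedness of the reduced Stallings--Dunwoody space via slide moves, whether $G$ admits a decomposition of the required topological shape; since any two reduced Stallings--Dunwoody decompositions are connected by slide moves and the only available slides preserve this shape, if $G$ has such a decomposition at all then Gerasimov's output will be of the right form (after reducing). So I can assume a fixed such decomposition $\Gamma$, together with a ``basis'' $(h_1,h_2)$ of stable letters as in the type $3b$ proof, defining $H_1,H_2$ (both $\Zmax$) and their common maximal finite normal subgroup $F$.

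Next I would parametrise the ambiguity. As in Lemma~\ref{lem_basis}, other bases $(h_1',h_2')$ coming from other decompositions of the same shape are related to $(h_1,h_2)$ by conjugation together with multiplying each $h_i$ by an element of $F$ and, crucially, by the extra freedom of post-composing with elements normalising $F$ inside the opposite piece --- here the situation is more symmetric than in type $3b$ because both vertex pieces are of the same minimal kind. The key point is that, as in the other cases, I can compute the translation lengths $n_1,n_2$ (equivalently the indices $[\grp{b_i,F}:F]$, i.e. $[H_i:\grp{b_i,F}]$) from the peripheral structure: passing to $\ol G\simeq\bbZ^2$, the torsion-free abelianisation of $G$, the images of $h_1,h_2$ give a basis up to sign, the images of $[H_i]$ (when present) are $\grp{\ol h_i}$, and the image of $\grp{b_1b_2,F}$ is $\grp{\ol h_1^{\pm n_1}\ol h_2^{\pm n_2}}$, so $n_1,n_2$ are read off; when some $[H_i]$ is absent (forcing $n_i>1$) one still recovers $|n_i|$ the same way. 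With $n_1,n_2$ known, there are only finitely many candidates $b_i=h_i^{\pm n_i}f$ with $f\in F$ up to the residual ambiguity, and the condition that $(G,\calp)$ is a type $3c$ orbisocket becomes the existence of a basis $(h_1',h_2')$ (linked to $(h_1,h_2)$ by the explicit relation just described) and elements $b_1,b_2$ normalising $F$ with prescribed translation lengths such that $\calp$ equals (up to re-indexing and conjugacy) $\{[\grp{b_1b_2,F}],[\grp{h_1',F}],[\grp{h_2',F}]\}$ with the appropriate subset actually present.

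Finally I would encode this as a finite disjunction of systems of equations over $G$ (a virtually free, hence hyperbolic, group): the relation between $(h_1',h_2')$ and $(h_1,h_2)$ is a finite family of word equations parametrised by elements of $F$; normalising $F$ or $F_1$ is a disjunction of equations; ``$b_i^g$ normalises $F$'' and the conditions $b_2=h_2'^{\pm n_2}f_2^g$ etc.\ are disjunctions of equations; and each condition $[P_j]=[\grp{\cdots,F}]$, after writing $P_j=\grp{a_j,C_j}$ with $C_j\normal P_j$ maximal finite, translates into equations $C_j^{g_j}=F^{\,g\m}$ and $(\,\cdot\,)^{g_j}=a_j^{\pm1}c_j$ indexed by $c_j\in C_j$. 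Solvability of this disjunction is decidable by Theorem~\ref{theo;eq} (the constraints here are trivial, so the plain equation solver of \cite{DG1} suffices), and a solution directly yields an orbisocket decomposition $G\simeq H_1*_{\grp{b_1,F}}\grp{b_1,b_2,F}*_{\grp{b_2,F}}H_2$ via Lemma~\ref{lem_type3_CNS}. The main obstacle I expect is getting the parametrisation of bases exactly right in this most symmetric case --- pinning down precisely how much residual freedom remains after normalising the decomposition (the analogue of $k_1\in N_{H_1}(F)$ in Lemma~\ref{lem_basis}, now with a symmetric role for both vertices) so that the resulting system of equations faithfully captures ``is a type $3c$ orbisocket'' with no spurious solutions and no missed ones, including the boundary case $n_i=1$ where some $[H_i]$ must be in $\calp$.
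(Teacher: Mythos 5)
Your reduction to a decomposition of the right shape (Gerasimov plus connectedness of the reduced Stallings--Dunwoody space by slides) matches the paper, but the core of your argument has a genuine gap exactly at the point you flag as ``the main obstacle''. In type $3c$ neither $H_1$ nor $H_2$ is canonical: since $F_1=F_2=F$ is normal in $G$ and $G/F\simeq\bbF_2$, a socket decomposition amounts to a choice of free basis of $\bbF_2$ (lifted to $G$), and two such choices differ by an arbitrary element of $\Aut(\bbF_2)$, not by the one-step moves you propose. A single application of ``conjugate, multiply $h_i$ by an element of $F$, and post-multiply by an element of the opposite piece normalising $F$'' only realises elementary Nielsen moves; for instance $(h_1h_2,\,h_2h_1h_2)$ projects to a free basis of $\bbF_2$ but is not of the form $\bigl((h_1^{\pm1}w_2)^g,(h_2^{\pm1}w_1)^g\bigr)$ with $w_2\in\grp{h_2,F}$, $w_1\in\grp{h_1,F}$ (compare abelianisations and centralisers of $h_1h_2$). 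So the analogue of Lemma \ref{lem_basis} you rely on is false here, and your system of equations would miss valid socket decompositions, i.e.\ return ``no'' on genuine type $3c$ orbisockets. Iterating the moves would fix the parametrisation but destroys the encoding as a single finite disjunction of equations. The paper's missing ingredient is the Dehn--Magnus--Nielsen theorem: $(h_1',h_2')$ projects to a basis of $\bbF_2$ if and only if $[h_1',h_2']^g=[h_1,h_2]^{\pm1}f$ for some $g\in G$, $f\in F$, which captures the entire $\Aut(\bbF_2)$-orbit in one equation (with finitely many parameters $f\in F$).

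A second, smaller problem is your computation of $n_1,n_2$ from $\ol G\simeq\bbZ^2$. Unlike type $3a$ (rigid deformation space) or $3b$ (where $H_1$ is the normaliser of the unique maximal finite subgroup), in type $3c$ there is no canonical basis of $\bbZ^2$: when $[H_1]$ or $[H_2]$ is absent from $\calp$, the only invariant of the image of $P_0$ is the largest $d$ with $\ol P_0\subset d\,\bbZ^2$, and the pair $(|n_1|,|n_2|)$ is \emph{not} determined, so ``one still recovers $|n_i|$ the same way'' fails. The paper sidesteps this entirely: once the basis condition is encoded by the commutator equation, the membership $b_i\in\grp{h_i',F}$ is itself an equation, namely $[b_i,h_i']=f_i$ for some $f_i\in F$ (because $h_i'$ is not a proper power modulo $F$), so no knowledge of $n_i$ is needed; the peripheral conditions are then encoded as you do, and solvability is decided by Theorem \ref{theo;eq}. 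You should replace your basis parametrisation and the $n_i$ computation by these two observations.
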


\begin{proof}
If $\calo$ is an orbisocket of type $3c$, 
 any of its reduced Stallings-Dunwoody decompositions has the form                        $\stackrel{\xymatrix{ 
                            \ar@{{-}{-}{-}}@(ul,dl)_{F}\ar@{{-}{-}{-}}@(dl,ul)  \bullet 
                            \ar@{{-}{-}{-}}@(ur,dr)^{F}\ar@{{-}{-}{-}}@(dr,ur)  }}{_{F}}$,
and $G$ is an extension of the free group $\bbF_2$ by the finite normal subgroup $F$.

  Using Gerasimov's algorithm, compute a reduced Stallings-Dunwoody decomposition of $G$.
  We can check whether this decomposition has the right form, so we can assume this is the case.

  By lemma \ref{lem_type3_CNS}, $(G;\calp)$ is an orbisocket of type $3c$ if and only if there
  exists $h_1,h_2\in G$ whose image in $\bbF_2$ is a free basis, 
  and $b_1\in \grp{h_1,F}$, $b_2\in \grp{h_2,F}$, such that $\calp=\{[\grp{b_1b_2,F}]\}$
maybe together with $[\grp{h_1,F}]$ and/or $[\grp{h_2,F}]$.
To simplify, assume that $\calp=\{[P_0],[P_1],[P_2]\}$ has 3 elements.
One can first check that each $P_i$ contains $F$.
Choose $a_i\in P_i$ with $P_i=\grp{a_i,F}$

 From a reduced Stallings-Dunwoody decomposition, compute $h_1,h_2\in G$ (stable letters of the two HNN extensions) whose image in $\bbF_2$ is a free basis.
Now by Dehn-Magnus-Nielsen theorem  about bases of $\bbF_2$, 
the pair $(h'_1,h'_2)$ projects to a free basis of $\bbF_2$ if and only if $[h'_1,h'_2]^g=([h_1,h_2]^{\pm{1}})f$ for some $g\in G$ and some $f\in F$.
Moreover, given such $h'_1,h'_2\in G$,  $b_i\in \grp{h'_i,F}$ if and only if $[b_i,h'_i]=f_i$ for some $f_i\in F$.
Finally, for such $b_1,b_2$ the peripheral structure has the right form if and only if 
$a_0^{g_0}=(b_1b_2)^{\pm 1}f'_0$, $a_1^{g_1}=h_1^{\pm 1} f'_1$, and $a_2^{g_2}=h_2^{\pm 1} f'_2$, for some $g_0,g_1,g_2\in G$
and $f'_0,f'_1,f'_2\in F$. 
Thus we get a disjunction of systems of equations with unknowns $h_1,h_2,g,b_1,b_2,g_0,g_1,g_2$,
parametrised by the finitely many values of the parameters $f,f_1,f_2,f'_0,f'_1,f'_2$.
It has a solution if and only if $(G,\calp)$ is an orbisocket of type (3c).
By \cite{DG1}, this can be decided algorithmically.
\end{proof}

\subsection{Isomorphism problem for orbisockets} 

In this section, we solve the isomorphism problem for (non-basic) orbisockets,
given orbisocket decompositions as input.
We will reduce the problem to the case of bounded Fuchsian groups and apply Proposition \ref{prop_IP_BFG} which solves
the isomorphism problem for bounded Fuchsian groups.

\begin{thm}\label{theo;IP_orbi}

The extended isomorphism problem is solvable for orbisockets with marked peripheral structure.  

More precisely,  there exists an algorithm which takes as input two orbisocket groups $(\calo_1;\calm_1)$, $(\calo_2;\calm_2)$,
with marked peripheral structure, 
an orbisocket decomposition of both, and decides whether 
$(\calo_1;\calm_1)$ and $(\calo_2;\calm_2)$ are isomorphic (as groups with marked peripheral structure).

Moreover, there is an algorithm which takes as input an orbisocket group $(\calo;\calm)$,
with marked peripheral structure, and an orbisocket decomposition of it, and computes
generators for  $\Outm(\calo;\calm)$ (the group of outer automorphisms preserving the marked peripheral structure).
\end{thm}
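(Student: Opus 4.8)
\textbf{Proof plan for Theorem \ref{theo;IP_orbi}.}

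The plan is to reduce the isomorphism problem for orbisockets to the isomorphism problem for bounded Fuchsian groups (Proposition \ref{prop_IP_BFG}) by observing that an orbisocket decomposition is essentially canonical once one removes M\"obius sockets, and that the socket decomposition is recognised by the $\calz$-JSJ machinery (Proposition \ref{prop_JSJ_orbi}). First, given an orbisocket decomposition $\calo=G_0(*_{B_i}S_i)_{i=1}^k$ on input, I would normalise it: by the discussion in Section \ref{subsec_orbi} one transforms it into a socket decomposition \emph{without M\"obius socket} (this only requires checking finitely many index-2 containments $B_i\subset S_i$ with matching maximal finite normal subgroup, which is decidable via Lemma \ref{lem_VC2}), and by Lemma \ref{lem_class_orbi_1}'s lemma one may assume the underlying orbifold is orientable. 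Using Lemma \ref{lem_VC2} one also decides which sockets are proper (equivalently, detects the basic orbisockets inside, via Theorem \ref{thm_recon_orbisocket}) so that the peripheral structure $\calm$ and the socket structure are fully explicit.

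The key step is that $T$, the Bass-Serre tree of a socket decomposition without M\"obius socket, is by Proposition \ref{prop_JSJ_orbi} the $\calz$-JSJ tree of $(\calo;\calp_\calo)$ relative to its peripheral structure, and is its own tree of cylinders. Hence any isomorphism $\phi:(\calo_1;\calm_1)\to(\calo_2;\calm_2)$ respecting the peripheral structures must carry $T_1$ to $T_2$, so by Lemma \ref{lem_tree2Phi} it is induced by an isomorphism of graphs of groups $\Phi=(F,(\phi_e),(\phi_v),(\gamma_e))$ from the socket graph of groups of $\calo_1$ to that of $\calo_2$. The central vertex group is the bounded Fuchsian group $G_0^{(i)}$ with its natural peripheral structure (boundary subgroups), and the leaf groups are the $\calz$-groups $S_j^{(i)}$; the edge groups $B_j^{(i)}$ are $\calz$-groups too. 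So the existence of $\Phi$ reduces, by Corollary \ref{coro_IP_gog} (with $\cale$ the class of $\calz$-groups and $\calv$ the class of bounded Fuchsian groups without reflection together with $\calz$-groups, for which the marked isomorphism problem is solvable by Propositions \ref{prop_IP_BFG} and \ref{prop_IP_orbifold}, Lemma \ref{lem_autZ}), to graph-of-groups isomorphism of a controlled shape: there are finitely many graph isomorphisms $F$ and finitely many $\phi_e$ between the $\calz$-edge-groups to try, and for each, Proposition \ref{prop_IP_gog} decides whether it extends. Care is needed to feed the correct induced peripheral structures: on the central vertex one must combine the boundary peripheral structure $\calb$ of $G_0$ with the part of $\calm$ sitting at that vertex (improper sockets and any proper socket in $\calm$ attached to $G_0$), and on each leaf $S_j$ the induced structure is $\calm$'s tuple if $S_j\in\calm$, else empty; Lemma \ref{lem_gog_periph} guarantees this vertex-by-vertex reduction is faithful provided peripheral subgroups are conjugate into a vertex but not an edge group, which holds here since the sockets $S_j$ are the vertex/leaf groups themselves (or, for $S_j=B_j$ improper, are the leaf groups, not the edge groups $B_j$ — one should double-check this boundary case and, if needed, pre-subdivide the offending edge).

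For the automorphism-group statement, I would apply Corollary \ref{coro_gene_Aut_gog} in the same setup: this requires, for each vertex group with its marked peripheral structure, an algorithm computing generators of $\Autm$ — available from Proposition \ref{prop_IP_BFG} for the bounded Fuchsian central vertex and from Lemma \ref{lem_autZ} for the $\calz$-leaves — and an algorithm for centralisers $Z_{\G_{t(e)}}(\G_e)$ of peripheral subgroups, which for $\calz$-groups inside $\calz$-groups or inside bounded Fuchsian groups is routine (Lemma \ref{lem_cent_finite}, Lemma \ref{lem_autZ}, and the structure of virtually cyclic groups). This produces generators of $\Out_{\Tilde\Gamma}(\pi_1(\Gamma))$ preserving the peripheral structure; since $\Tilde\Gamma$ is the canonical $\calz$-JSJ tree of cylinders of $(\calo;\calm)$, \emph{every} automorphism preserving $\calm$ preserves $\Tilde\Gamma$, so $\Out_{\Tilde\Gamma,m}(\pi_1(\Gamma),\calm)=\Outm(\calo;\calm)$ and we are done.

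The main obstacle I anticipate is not the algorithmics but the bookkeeping of peripheral structures across the reduction: one must be sure that a proper socket $S_j$ that \emph{does} belong to $\calm$ is handled (it contributes a marked tuple at the leaf vertex), that an improper socket always contributes at the central vertex via the corresponding boundary subgroup $B_j$ of $G_0$, that these boundary subgroups are genuinely conjugate into $G_0$ but not into any incident edge group of the socket graph (which forces a small subdivision when $k<n$, i.e. when an improper socket $B_j$ is simultaneously an edge group and the "would-be" peripheral subgroup), and that the marked (ordered) versus unmarked distinction is tracked so that Lemma \ref{lem_eqv_um} or a direct application of Corollary \ref{coro_IP_gog}'s marked variant applies. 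Once this dictionary is set up correctly, the theorem follows by invoking the already-established graph-of-groups isomorphism machinery.
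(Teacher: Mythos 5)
Your proposal is correct and follows essentially the same route as the paper: remove M\"obius sockets, invoke Proposition \ref{prop_JSJ_orbi} to see the socket decomposition is a $\Z$-JSJ splitting equal to its own tree of cylinders (hence canonical, so any isomorphism is induced by a graph-of-groups isomorphism), and then reduce via Corollaries \ref{coro_IP_gog} and \ref{coro_gene_Aut_gog} to the vertex groups, handled by Proposition \ref{prop_IP_BFG} and Lemma \ref{lem_autZ}. Your worry about subdivision is unnecessary: improper sockets are boundary subgroups of the central Fuchsian vertex and do not correspond to edges of the socket decomposition (only proper sockets do), so peripheral subgroups are already conjugate into vertex groups but not into edge groups, as Lemma \ref{lem_gog_periph} requires.
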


\begin{proof}
  Let us denote by $\Gamma_1, \Gamma_2$ the graph of groups encoding the socket decompositions. 
  One can algorithmically remove M\"obius sockets from $\Gamma_i$ as in section \ref{subsec_orbi}.
  By Proposition \ref{prop_JSJ_orbi}, $\Gamma_i$ is a $\Z$-JSJ decomposition of $G_i$ relative to $\calm_i$.
  Moreover, any orbisocket decomposition is its own tree of cylinders, so
  any isomorphism between $(\calo_i;\calm_i)$, should be induced by an isomorphism of the graph of groups $\Gamma_i$.
  By Corollary \ref{coro_IP_gog},  there remains to solve
  the isomorphism problem for  vertex groups with a marking of their peripheral structure.
  This follows from Lemma \ref{lem_autZ} for virtually cyclic group, and from Proposition \ref{prop_IP_BFG}
  for hanging bounded Fuchsian groups.

  For the second assertion, we use Corollary \ref{coro_gene_Aut_gog}.
  Since generating sets of centralisers are computable,
  it is enough to solve the  extended
  isomorphism problem for vertex groups with a marking of their peripheral structures.
  This also follows from Lemma \ref{lem_autZ} and Proposition \ref{prop_IP_BFG}.
\end{proof}

\section{Computation of the $\Zmax$ JSJ decomposition, and isomorphism problem for one-ended hyperbolic groups}\label{sec_1end}

   \subsection{Computation of a maximal $\Zmax$-splitting}

The following proposition computes decomposition of $G$ over $\Zmax$-subgroups which is maximal
for domination.  This is not yet a JSJ decomposition:
 for instance, if $G$ is an orientable surface group, one would get a pants decomposition.

     \begin{prop}    \label{prop_max_ess} 
       There is an algorithm that takes as input a one-ended hyperbolic group $G$,
       and which computes a maximal $\Zmax$-splitting of $G$.
     \end{prop}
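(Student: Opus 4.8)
\textbf{Proof plan for Proposition \ref{prop_max_ess}.}

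The strategy is to iterate the decidability of the rigidity criterion. First I would run the algorithm of Corollary \ref{cor_decision_rigid} (equivalently Theorem \ref{thm_IP_rigid} with $(H;\calq)=(G;\calp)$, here with empty peripheral structure): since $G$ is one-ended it does not split over a finite group, so the algorithm either certifies that $G$ is rigid in the sense of Proposition \ref{prop_alt}, or produces an explicit non-trivial splitting of $G$ over a $\Zmax$-subgroup. If $G$ is rigid, then by the equivalence $(i)\Leftrightarrow(ii)$ in Proposition \ref{prop_alt} it has no non-trivial $\Zmax$-splitting at all, so the trivial splitting is already maximal and we are done. Otherwise we have a one-edge $\Zmax$-splitting $\Gamma_1$ of $G$, computed via Proposition \ref{enumerate_ess_splittings} (which also tells us the edge group is genuinely $\Zmax$ and, in the amalgam case, that the vertex groups are not themselves that $\Zmax$-group), so $\Gamma_1$ is non-trivial and its edge group is $\Zmax$ both in $G$ and — by Lemma \ref{lem_inter_Zmax} — in the vertex groups.

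Next I would recurse on the vertex groups, working relative to their induced peripheral structures. Each vertex group $G_v$ of $\Gamma_1$ is a one-ended hyperbolic group (one-endedness of the vertex groups relative to the incident edge groups follows from $G$ being one-ended, e.g.\ via Lemma \ref{lem_one_end_rel} type arguments; in any case $(G_v,\calp_v)$ does not split over a finite group relative to $\calp_v$), and is itself hyperbolic and quasiconvex, with a computable presentation and computable generating sets of the incident $\Zmax$ edge groups. Apply Corollary \ref{cor_decision_rigid} to $(G_v,\calp_v)$: either it is rigid relative to $\calp_v$, in which case we leave it alone, or we obtain a non-trivial $\Zmax$-splitting of $(G_v,\calp_v)$ relative to $\calp_v$, which we use to refine $\Gamma_1$ into a new $\Zmax$-splitting $\Gamma_2$ of $G$ (the refinement is again a $\Zmax$-tree by Lemma \ref{lem_sup_Zmax}, since the relevant edge groups are $\Zmax$ in $G_v$ hence in $G$). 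Since a refinement strictly dominates, and since edge groups are always $\Zmax$, the deformation space strictly increases at each step. I then repeat this on the vertex groups of $\Gamma_2$, and so on.

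The key point to make this halt is accessibility: by Delzant's accessibility (or Bestvina--Feighn accessibility) for hyperbolic groups over $\Zmax$-subgroups — equivalently, the non-ascending condition discussed in Section \ref{sec_deformation} combined with the fact that no $\Zmax$-subgroup is properly contained in a conjugate of itself — there is an a priori bound, computable from the hyperbolicity constant and presentation complexity, on the number of edges of any reduced $\Zmax$-splitting of $G$ (and iterating over vertex groups does not escape this, as a refinement of $G$'s splitting is a $\Zmax$-splitting of $G$). Hence the iteration terminates after finitely many steps, and the resulting graph of groups $\Gamma$ has every vertex group rigid relative to its incident edge groups, \ie\ no vertex group splits non-trivially over a $\Zmax$-subgroup relative to its incident edge groups. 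This precisely says $\Gamma$ is maximal for domination among $\Zmax$-splittings of $G$: any $\Zmax$-splitting $T'$ of $G$ makes every vertex group of $\Gamma$ elliptic (since rigid vertex groups are elliptic in all relative $\Zmax$-trees), so $\Gamma$ dominates $T'$.

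The main obstacle is the termination argument: one must be careful that ``refine at a vertex, then recurse on the new vertex groups'' cannot go on forever. This is exactly where an effective accessibility statement for $\Zmax$-splittings of hyperbolic groups is needed — the finiteness of reduced trees in a non-ascending deformation space alone is not quite enough; one wants a uniform bound on the complexity of any $\Zmax$-splitting of $G$, which then bounds the depth and width of the whole recursion. Given such a bound (available from Delzant/Bestvina--Feighn accessibility, with the constants computable from the data we have), everything else is a routine combination of the algorithms already established: deciding rigidity (Corollary \ref{cor_decision_rigid}), enumerating $\Zmax$-splittings (Proposition \ref{enumerate_ess_splittings}), and checking edge groups are $\Zmax$ (Lemma \ref{lem_VC2}).
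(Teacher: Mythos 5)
Your proposal is correct and follows essentially the same route as the paper: iterate the rigidity test of Corollary \ref{cor_decision_rigid} on vertex groups with their induced peripheral structures, refine via Proposition \ref{enumerate_ess_splittings} using Lemmas \ref{lem_inter_Zmax} and \ref{lem_sup_Zmax}, and stop when all vertex groups are relatively rigid, which is exactly maximality. Your worry about needing a \emph{computable} complexity bound is unnecessary: since maximality is certified directly by the rigidity checks at the final stage, mere termination — guaranteed by Bestvina--Feighn accessibility because the refined splittings are reduced — suffices, which is all the paper invokes.
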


\begin{remark} \label{rk_max}
A splitting of $G$  
over $\Zmax$ edge groups is maximal if and only if for each vertex group $G_v$ with the induced peripheral structure $\calp_v$,
$(G_v;\calp_v)$ has no non-trivial $\Zmax$-splitting. 
Indeed, denote by $T$ a maximal $\Zmax$-tree,
and assume that a vertex group $G_v$ has a splitting relative to $\calp_v$ whose edge groups are $\Zmax$ in $G_v$.
Then these edge groups are $\Zmax$ in $G$ by Lemma \ref{lem_inter_Zmax},
The converse is  Lemma \ref{lem_sup_Zmax}.
\end{remark}

   \begin{proof} 
     Starting with the trivial decomposition of $(G;\calp)$, assume that we have already constructed
     some $\Zmax$-decomposition $\Gamma$ of $(G;\calp)$.
     One can check if this decomposition can be refined at a vertex $v$ by applying
     Corollary \ref{cor_decision_rigid} to the vertex group $G_v$ with its peripheral structure $\calp_v$ induced by 
     the incident edge groups. Doing this for each vertex of $\Gamma$, one can check whether $\Gamma$ is maximal, in which case we are done.

     If $\Gamma$ is not maximal, one can produce a $\Zmax$-splitting of some $(G_v;\calp_v)$ by Proposition \ref{enumerate_ess_splittings}, 
     and use it to refine $\Gamma$.
      This refinement a $\Zmax$-splitting by Lemma \ref{lem_inter_Zmax}.

     Since the obtained splittings are reduced in the sense of Bestvina-Feighn \cite{BF_bounding}, this refinement process has to stop:
     a maximal splitting is then produced.
   \end{proof}

     \subsection{Computation of the  JSJ tree $\TZmax$}

     \begin{prop}\label{prop_calcul_TZmax}
       Let $G$ be a one-ended hyperbolic group.
       Then one can compute  $T_\Zmax(G)$ (as defined in section \ref{sec_TZmax}).
       
       Moreover, one can determine which vertices of this decomposition are flexible,
       and find their orbisocket decomposition.
     \end{prop}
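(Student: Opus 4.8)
The strategy is to follow the blueprint already set up in Section~\ref{sec_TZmax}: by Proposition~\ref{prop_JSJZmax}, $\TZmax(G)$ is the collapsed tree of cylinders $T_c^*$ of \emph{any} tree in $\cald_\Zmax$, and $\cald_\Zmax$ contains the $\Zmax$-fold of any $\Z$-JSJ splitting; moreover, by the proof of that proposition, a maximal-for-domination $\Z$-universally-elliptic $\Zmax$-tree is exactly what we need. The first step is therefore to compute some tree $T\in\cald_\Zmax$. By Proposition~\ref{prop_max_ess} we can compute a maximal $\Zmax$-splitting $\Gamma$ of $G$; by Remark~\ref{rk_max} each vertex group $(G_v;\calp_v)$ has no non-trivial $\Zmax$-splitting relative to $\calp_v$. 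We must check that such a maximal $\Zmax$-tree is $\Z$-universally elliptic and dominates every $\Z$-universally elliptic $\Zmax$-tree, so that it lies in $\cald_\Zmax$. Universal ellipticity over $\Z$ is not automatic for a maximal $\Zmax$-tree, so here I would instead start from a $\Z$-JSJ splitting: compute the virtually-cyclic (Bowditch) JSJ — wait, that is not known to be computable. The correct route, as in the proof of Proposition~\ref{prop_JSJZmax}, is: take the maximal $\Zmax$-splitting $T$ produced by Proposition~\ref{prop_max_ess}, which by construction is reduced in the Bestvina--Feighn sense; one shows $T\in\cald_\Zmax$ by first noting that every edge stabiliser is $\Zmax$ hence maximal virtually cyclic, so it is commensurated by nothing larger, giving $\Z$-universal ellipticity of each edge group along the lines of Lemma~\ref{lem_Zfold} / the fact that edge groups of the $\Zmax$-fold of a $\Z$-JSJ are $\Z$-universally elliptic; and domination follows since a non-dominated $\Z$-universally elliptic $\Zmax$-tree would give, after blowing up (Lemma~\ref{lem_sup_Zmax}), a strictly larger $\Zmax$-tree, contradicting maximality of $T$ once one checks the blow-up stays $\Z$-universally elliptic.

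Once we have $T\in\cald_\Zmax$ as a concrete graph of groups, Lemma~\ref{lem_calcul_Tc} lets us compute the graph of groups of its collapsed tree of cylinders $T_c^*$, which by definition equals $\TZmax(G)$. (The hypotheses of Lemma~\ref{lem_calcul_Tc} are met: $G$ is hyperbolic, edge stabilisers of $T$ are $\Zmax$, hence in $\calz$, and by Lemma~\ref{lem_VC2} one can decide $\calz$-membership, compute $VC$ and $\Zmax$, and test commensurability via Lemma~\ref{lem_autZ} together with the simultaneous conjugacy problem.) This computes $\TZmax(G)$.

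For the ``moreover'' part, we must decide which vertices $v$ of $\TZmax$ are flexible and, when flexible, produce an orbisocket decomposition. By the proposition preceding Lemma~\ref{lem_orbi_are_ell}, flexible vertices of $\TZmax$ are exactly the hanging orbisockets, and by the rigidity criterion (Proposition~\ref{prop_alt}, equivalence $(i)\Leftrightarrow(ii)$, applied to $(G_v;\calp_v)$ with $\calp_v$ the peripheral structure induced by $\TZmax$) a vertex is rigid precisely when $(G_v;\calp_v)$ has no non-trivial $\Zmax$-splitting relative to $\calp_v$. So for each vertex $v$ of the computed $\TZmax$, apply Corollary~\ref{cor_decision_rigid} to $(G_v;\calp_v)$: this decides whether $(G_v;\calp_v)$ satisfies the rigidity criterion, i.e.\ whether $v$ is rigid. (One first checks $G_v$ is one-ended relative to $\calp_v$; since $\TZmax$ has $\Zmax$, hence infinite, edge groups and $G$ is one-ended, $G_v$ is one-ended relative to $\calp_v$, so Corollary~\ref{cor_decision_rigid} applies.) For each flexible $v$, we still need an explicit orbisocket decomposition of $(G_v;\calp_v)$; here the plan is to first compute a maximal $\Zmax$-splitting $\Gamma_v$ of $(G_v;\calp_v)$ relative to $\calp_v$ (Proposition~\ref{prop_max_ess} in its relative form), whose vertex groups are either rigid or \emph{basic} orbisockets, then use Theorem~\ref{thm_recon_orbisocket} to recognise the basic-orbisocket vertices and extract their socket decompositions, and finally reassemble: by Lemma~\ref{lem_courbes_orbisocket} the flexible vertex is obtained by gluing basic orbisockets of $\Gamma_v$ along matching socket boundary groups, and the gluing data can be recovered by testing, via the simultaneous conjugacy problem, which edges of $\Gamma_v$ identify boundary curves of adjacent basic orbisockets; this produces the orbisocket decomposition of $(G_v;\calp_v)$.

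\textbf{Main obstacle.} The routine parts (computing tree of cylinders, deciding $\calz$-data, applying the rigidity decision procedure vertex by vertex) are handled by cited lemmas; the genuinely delicate point is the reassembly step for flexible vertices — verifying that the basic orbisockets recognised inside a maximal relative $\Zmax$-splitting $\Gamma_v$ glue up to the whole orbisocket $(G_v;\calp_v)$ in the canonical way, and that this gluing is algorithmically detectable. This is precisely the combinatorial matching problem flagged in the introduction as occupying ``a significant part of the study'', and it rests on Lemma~\ref{lem_courbes_orbisocket} (curves in an orbisocket are read off the $\Zmax$-fold of a $\Z$-JSJ) together with the canonicity of the socket decomposition (Section~\ref{sec_orbi_canonical}); I expect this to be where the real work lies, and it is likely deferred to a later subsection of Section~\ref{sec_1end} rather than being fully contained in the proof of this proposition.
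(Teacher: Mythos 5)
There is a genuine gap, and it sits at the very first step. You assert that the maximal $\Zmax$-splitting $T$ produced by Proposition \ref{prop_max_ess} lies in $\cald_\Zmax$, arguing that its edge stabilisers, being $\Zmax$, are ``commensurated by nothing larger, giving $\Z$-universal ellipticity''. This is a non sequitur: maximality of a virtually cyclic subgroup has nothing to do with its being elliptic in every $\Z$-splitting. The paper itself points this out in the remark preceding Proposition \ref{prop_max_ess}: for a closed orientable surface group, the computed maximal $\Zmax$-splitting is a pants decomposition, each of whose edge groups is crossed by other curves, so it is not $\Z$-universally elliptic; meanwhile $\TZmax$ is the trivial splitting. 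Your algorithm (take $T$, apply Lemma \ref{lem_calcul_Tc}) would output the pants decomposition (its collapsed tree of cylinders stays in the pants deformation space), not $\TZmax$. More generally, a maximal $\Zmax$-splitting cuts every orbisocket vertex of $\TZmax$ into basic pieces, so it is a strict refinement of $\TZmax$ whenever a flexible vertex exists. You in fact noticed the problem (``Universal ellipticity over $\Z$ is not automatic\dots''), correctly rejected the route through a $\Z$-JSJ (not known to be computable), but then papered over the difficulty with the incorrect claim. Only the domination $T \succeq \TZmax$ is true (and is what the paper proves via the blow-up of Lemma \ref{lem_sup_Zmax}); equality of deformation spaces fails.

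What the paper actually does to bridge this gap is the substance of the proof: starting from $T=T_c^*$ maximal, it recognises which vertex groups $(\Gamma_v;\calp_v)$ are \emph{basic} orbisockets (Theorem \ref{thm_recon_orbisocket}), computes their socket decompositions and hence which incident edges are improper sockets, identifies the \emph{junction segments} (length-two segments through a valence-two $\Z$-vertex joining improper sockets, and edges to hanging dihedral orbisockets), and collapses exactly those to obtain $\ol T$; it then proves $\ol T$ lies in the deformation space of $\TZmax$, using Lemma \ref{lem_collage} (glued basic orbisockets are orbisockets), Lemma \ref{lem_orbi_are_ell} (hanging orbisockets are elliptic in $\TZmax$), and, for the harder direction that $\TZmax$ dominates $\ol T$, Lemma \ref{lem_courbes_orbisocket} together with a blow-up and tree-of-cylinders analysis. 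The flexible vertices and their orbisocket decompositions then come for free from this construction. Your sketch for the ``moreover'' part (recognise basic orbisockets in a maximal relative splitting and reglue) gestures at the right machinery, but it is deployed only downstream of an already-computed $\TZmax$, whereas this gluing analysis is precisely what is needed to compute $\TZmax$ in the first place; as written, your proposed computation of $\TZmax$ is wrong.
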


     \begin{proof}
       By Lemma \ref{lem_calcul_Tc}, one can compute the collapsed tree of cylinders of any given $\Z$-tree.
       Therefore, we need only to produce some splitting in the deformation space of $\TZmax$.

       Start with a maximal $\Zmax$-tree $T$ as produced by Proposition \ref{prop_max_ess}.
       One can compute $T_c^*$ the collapsed tree of cylinders of $T$.
       By Lemma \ref{lem_same_D}, $T_c^*$ is a $\Zmax$-splitting in the same deformation space as $T$, hence is also maximal,
       so we can assume that $T=T_c^*$.
     
       Note that $T$ dominates $\TZmax$: the blowup $S$ of $\TZmax$ relative to $T$ (as defined at the beginning of Section \ref{sec_JSJ})
       is a $\Zmax$-tree  by Lemma \ref{lem_sup_Zmax}. By maximality of $T$, $S$ lies in the same deformation space as $T$,
       and since $S$ dominates $\TZmax$, so does $T$.

       Denote by $\Gamma=T/G$ the quotient graph of groups.
       For each vertex $v\in \Gamma$, let $\calp_v$ be the unmarked peripheral structure $\Gamma_v$ induced by $\Gamma$.
       Let $V_B\subset \Gamma$ be the set of vertices $v$ such that $(\G_v;\calp_v)$ is a basic orbisocket.
       Since $T$ is a maximal $\Zmax$-splitting, for each $v$, $(\G_v;\calp_v)$ have no non-trivial $\Zmax$-splitting.
       By Definition  \ref{dfn_basic}, it follows that an orbisocket $(\G_v;\calp_v)$ of $\Gamma$ is basic if and only if 
       it has at least one improper socket.
       The  set $V_B$ is computable by Theorem \ref{thm_recon_orbisocket}, and one can compute 
       its socket decomposition of the corresponding orbisockets. In particular, one can decide which elements of $\calp_v$ are proper sockets.
       
\begin{figure}[htbp]
  \centering
  \includegraphics{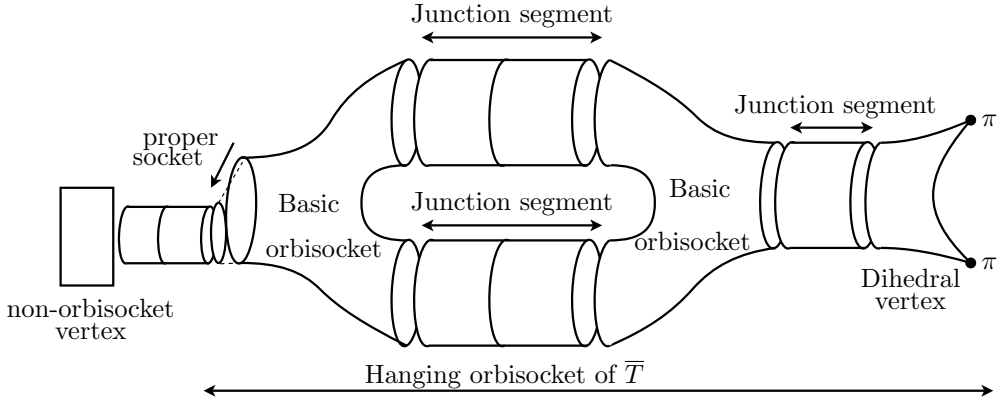}
  \caption{Junction segments}
  \label{fig_junction}
\end{figure}

Note that a disk with two cone points of angle $\pi$ has infinite dihedral fundamental group $D_\infty$, and its boundary subgroup $B$ is
is a cyclic subgroup of index $2$ (the unique $\Zmax$-subgroup in $D_\infty$). 
Corresponding to this situation, we call a pair $(D;B)$ a \emph{dihedral orbisocket}
if $D$ is virtually cyclic of dihedral type, and $B<D$ is its unique $\Z$-subgroup of index $2$. 
A \emph{hanging} dihedral orbisocket $v$ of $\Gamma$ is a terminal vertex 
whose peripheral structure induced by $\Gamma$ makes it a dihedral orbisocket.

A \emph{junction segment} of $\Gamma$ is (see Figure \ref{fig_junction}):
       \begin{itemize}
       \item either a segment $[u,v]=e_1\cup e_2$ of length $2$ (maybe with $u=v$) 
such that $u,v\in V_B$ are hanging basic orbisockets in which $\G_{e_1}$ and $\G_{e_2}$      are improper sockets, such that
the midpoint $w$ of $[u,v]$ has valence $2$ in $\Gamma$ and $\G_{e_1}=\G_w$ and $\G_{e_2}=\G_w$. 
       \item or an edge $e=[u,v]$ where $u\in V_B$ is a hanging basic orbisocket in which $\G_e$ is an improper socket,
         and $v$ is a hanging dihedral orbisocket.
       \end{itemize}

       Since for each $v\in V_B$ we know which edges correspond to proper sockets, we can compute effectively the set of junction segments of $\Gamma$.

       Let $\ol T$ be the $G$-tree obtained from $T$ by collapsing all edges whose image in $\Gamma$ lie in a junction segment.
       One can compute the corresponding  graph of groups decomposition $\ol \G$ of $G$.
       We will prove that $\ol T$ lies in the same deformation space as $\TZmax$.
       The following lemma says that new vertices of  $\ol \G$ are hanging orbisockets.

       \begin{lem}\label{lem_collage}
         Assume that $(G;\{[S_1],\dots,[S_p]\})$ and $(G',\{[S'_1],\dots,[S'_q]\})$ are two orbisockets (maybe not basic)
         in which $S_1$ and $S'_1$ are improper sockets.
         We allow one of them (but not both) to be a dihedral orbisocket as above.

         Assume that $S_1$ is isomorphic to $S'_1$.
         Then for any isomorphism between $S_1$ and $S'_1$,
         $$(G*_{S_1=S'_1} G';\{[S_2],\dots, [S_p],[S'_2],\dots,[S'_q]\})$$ is itself an orbisocket,
          and one can compute an orbisocket decomposition from orbisocket decompositions of $G$ and $G'$.

         Similarly, if $S_1$ and $S_2$ are two improper sockets of $G$ which are isomorphic, then the HNN extension
         $$(G*_{S_1=S_2};\{[S_3],\dots, [S_p]\})$$ is an orbisocket,
          and one can compute an orbisocket decomposition from orbisocket decompositions of $G$ and $G'$.

       \end{lem}

       \begin{proof}
         Consider the socket decompositions 
         $G=G_0(*_{B_i} S_i)_{i=1}^p$  and
         $G'=G'_0(*_{B'_i} S'_i)_{i=1}^q$ of $G,G'$.
         Let $F\normal G_0$ (resp.\ $F'\normal G'_0$) be the maximal normal finite subgroup of $G_0$
         (resp.\ of $G'_0$).
         Since the socket $S_1$ is improper, 
         $S_1$ is an extension of $\bbZ$ by $F$, and
         $F$ is the maximal finite subgroup of $S_1$.
         Similarly, $F'$ is the maximal finite subgroup of $S'_1$,
         so $F$ is necessarily mapped to $F'$ under the identification of $S_1$ with $S'_1$.
         Thus, $F=F'$ is normal in the group $H_0=G_0*_{S_1=S'_1} G'_0$, and $H_0$ is a bounded Fuchsian group
         with peripheral structure $\{[B_2],\dots, [B_p],[B'_2],\dots,[B'_q]\})$.
         It follows that $(G*_{S_1=S'_1} G',\{[S_2],\dots, [S_p],[S'_2],\dots,[S'_q]\})$ is an orbisocket.
         
         The same verification can be done for HNN extensions, or when one of the sockets is dihedral.
       \end{proof}

Applied inductively, Lemma \ref{lem_collage} says that the vertex groups of $\ol T$ which are not elliptic in $T$ are orbisockets.
Since $T$ dominates $\TZmax$, and since by Lemma \ref{lem_orbi_are_ell}, any hanging orbisocket is elliptic in $\TZmax$, 
it follows that $\ol T$ dominates $\TZmax$.

There remains to prove that  $T_\Zmax$ dominates $\ol T$.
Since flexible vertices of $T_\Zmax$ are hanging orbisockets, other vertices of $T_{\Zmax}$ fix a point in $T$,
so we need only to prove that hanging orbisockets of $\TZmax$ fix a point in $\ol T$.
Let $v$ be a hanging orbisocket of $T_{\Zmax}$ such that $\Gamma_v$ not elliptic in $T$, and 
 $Y_v\subset T$ be the minimal $\G_v$-invariant subtree.

By Lemma \ref{lem_courbes_orbisocket},
the action $\G_v\actson Y_v$ is dual to a finite set of non-peripheral disjoint  $2$-sided simple closed curves $c_1,\dots c_n$
in the orbifold $\Sigma_v$ underlying the orbisocket $\G_v$.
In particular, for each edge $e$ of $T_{\Zmax}$ incident on $v$, $G_e$ fixes a unique point $p_e\in Y_v$.
Let $\Hat T$ be the $G$-tree obtained by blowing up each orbisocket vertex $v$ of $T_{\Zmax}$ into $Y_v$, and by 
attaching the edges $e$ incident on $v$ to $p_e$.
We denote by $\Hat Y_v$ the copy of $Y_v$ in $\Hat T$.
Since for every non-orbisocket vertex $w$ of $\TZmax$, $\Gamma_w$ fixes a point in $T$, 
$\Hat T$ dominates $T$. 
Since $\Hat T$ is a $\Zmax$-splitting (Lemma \ref{lem_sup_Zmax}), and $T$ is a maximal one,
if follows that $T$ and $\Hat T $ are in the same deformation space.
Thus, $T_c^*=\Hat T_c^*$, and since $T=T_c^*$, $T=(\Hat T)_c^*$.

Now we will describe $T$ using a description of cylinders of $\Hat T$.
Since $\Gamma_v\actson \Hat Y_v$ is dual to a family of non-peripheral curves in $\Sigma_v$,
no edge stabiliser of $\Gamma_v\actson \Hat Y_v$  is commensurable with a peripheral subgroup of $\Gamma_v$.
It follows that each cylinder of $\Hat T$ is either contained in $\Hat Y_v$, or has no edge in $\Hat Y_v$.

We describe the tree of cylinders of $\Hat Y_v$.
One can assume that no two curves $c_i$ bound an annulus since removing one of them does not change the tree of cylinders $\Hat Y_v$.
Each connected component $U$ of $\Sigma_v\setminus c_1\cup\dots\cup c_n$ is a sub-orbifold.
Let $H$ be the preimage in $\Gamma_v$ of $\pi_1(U)$, a bounded
Fuchsian group if $H$ is not virtually cyclic.
If $\pi_1(U)$ is cyclic, then $U$ is a M\"obius band since there are no annuli,
which is excluded by the fact that edge stabilisers are $\Zmax$.
If $\pi_1(U)$ is infinite dihedral then $U$ is a disc with
two cone points of angle $\pi$. 
Since $\Sigma$ has no mirror, these are the only possibilities.
Finally, one easily checks that the tree of cylinders $(Y_v)_c$ 
is obtained from $Y_v$ by barycentric subdivision of all edges not incident on such a dihedral component $U$.

Using $(Y_v)_c$ instead of $Y_v$ to blow up $\TZmax$ into $\Hat T$, we get a tree in the same deformation space,
so we still have $T=(\Hat T)_c^*$.
On the other hand,  one easily checks that $\Hat T$ is its own collapsed tree of cylinders (see \cite[prop.\ 5.7]{GL4} for instance),
\ie  $\Hat T=(\Hat T)_c^*=T$.

Now it is clear from the description above that all vertices of $\Hat Y_v$ are hanging orbisockets,
dihedral, or subdivision vertices, and that all edges of $\Hat Y_v$ are junction edges.
It follows that the map $T\ra \ol T$ collapses $\Hat Y_v$ to a point, so $\G_v$ is elliptic in $\ol T$, and $\TZmax$ dominates $T$.

This proves that one can compute $\TZmax$. Since each flexible vertex is coming from the collapse of junction edges in $\ol T$,
we can compute an orbisocket decomposition by Lemma \ref{lem_collage} from orbisocket decompositions of basic orbisockets.
\end{proof}

\subsection{Isomorphism problem for one-ended hyperbolic groups}

\begin{theo} \label{theo;IP_one_ended}
  The extended isomorphism problem is solvable for the class of one-ended hyperbolic groups:

  there is an explicit algorithm that, given two one-ended hyperbolic groups, 
  terminates and indicates whether they are isomorphic, and which computes  generating systems of their automorphism groups
\end{theo}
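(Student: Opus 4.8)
The plan is to combine all the ingredients assembled so far: the $\Zmax$ JSJ decomposition $\TZmax$, which is canonical (Proposition \ref{prop_TZmax_is_canonical}), the description of its flexible vertices as orbisockets together with the isomorphism problem for orbisockets (Theorem \ref{theo;IP_orbi}), the rigidity criterion and the resulting solution of the extended isomorphism problem for rigid vertex groups with marked peripheral structure (Corollary \ref{coro_list}), and the machinery for assembling isomorphisms of graphs of groups from isomorphisms of vertex groups (Corollary \ref{coro_IP_gog} and Corollary \ref{coro_gene_Aut_gog}). Given two one-ended hyperbolic groups $G_1, G_2$, I would first compute the graph of groups decompositions $\Gamma_i$ realising $\TZmax(G_i)$ together with, for each flexible vertex, an orbisocket decomposition; this is exactly the content of Proposition \ref{prop_calcul_TZmax}. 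By Proposition \ref{prop_TZmax_is_canonical}, any isomorphism $G_1\ra G_2$ is induced by an isomorphism of graphs of groups $\Phi:\Gamma_1\ra\Gamma_2$ in the sense of Definition \ref{dfn_Phi}, so it suffices to decide the existence of such a $\Phi$, and to compute generators of $\delta\Aut(\Gamma_1)$ (whose image in $\Out(G_1)$ is then all of $\Out_{\TZmax}(G_1)=\Out(G_1)$ by canonicity).

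Next I would set up the application of Corollaries \ref{coro_IP_gog} and \ref{coro_gene_Aut_gog}. The edge groups of $\Gamma_i$ are $\Zmax$-groups, hence virtually cyclic, so the needed algorithmic facts about the class $\cale$ of virtually cyclic (or finite) groups hold: one can pass from generators to a presentation, list isomorphisms up to conjugacy (Lemma \ref{lem_autZ}), compute centralisers of peripheral subgroups inside vertex groups (Lemma \ref{lem_cent_finite} and the structure of virtually cyclic and Fuchsian groups), and one has the simultaneous conjugacy problem in hyperbolic groups. The class $\calv$ of vertex groups with their induced (marked) peripheral structures splits into two families: the flexible vertices, which are orbisockets, for which the extended isomorphism problem is solved by Theorem \ref{theo;IP_orbi}; and the rigid vertices, which by the very definition of the $\Zmax$ JSJ decomposition do not split over a $\Zmax$-subgroup relative to their incident edge groups, and for which Corollary \ref{coro_list} provides a solution of the extended isomorphism problem (after checking one-endedness relative to the peripheral structure, which holds since these vertices are one-ended hyperbolic groups arising in a reduced $\Zmax$-tree, and any exceptional virtually cyclic vertex is handled directly via Lemma \ref{lem_autZ}). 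Once it is verified that $\calv$ meets the hypotheses of Corollary \ref{coro_IP_gog}, one obtains an algorithm deciding whether there is a graph of groups isomorphism $\Phi:\Gamma_1\ra\Gamma_2$, hence whether $G_1\simeq G_2$; and Corollary \ref{coro_gene_Aut_gog} gives a generating set of $\delta\Aut(\Gamma_1)$, whose image is a generating set of $\Out(G_1)$, from which one recovers a generating set of $\Aut(G_1)$ by adding the inner automorphisms coming from a generating set of $G_1$.

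The main obstacle, and the step requiring genuine care rather than bookkeeping, is checking that the input to Corollaries \ref{coro_IP_gog} and \ref{coro_gene_Aut_gog} really is of the required form. Concretely one must verify that each peripheral subgroup induced on a vertex group by an incident edge of $\Gamma_i$ is conjugate into that vertex group but \emph{not} into an incident edge group: this is where the fact that $\TZmax$ is the collapsed tree of cylinders matters, since edge groups of a collapsed tree of cylinders are never commensurable to each other across a common vertex, so no incident edge group of a vertex $v$ is conjugate into another incident edge group of $v$; combined with the $\Zmax$ (hence malnormality-type) property of edge stabilisers this gives what is needed. One also has to be slightly careful about vertices that are themselves virtually cyclic or finite (which can appear, e.g.\ junction or dihedral vertices), but these are covered directly by Lemma \ref{lem_autZ} and the remarks surrounding Corollary \ref{coro_list}; the residual virtually cyclic cases of $G_1$ or $G_2$ themselves are excluded since $G_i$ is one-ended. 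With these verifications in place the theorem follows by simply invoking the two corollaries.
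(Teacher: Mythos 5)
Your proposal is correct and follows essentially the same route as the paper: compute $\TZmax$ with orbisocket decompositions of its flexible vertices (Proposition \ref{prop_calcul_TZmax}), use canonicity (Proposition \ref{prop_TZmax_is_canonical}) to reduce to isomorphisms of the quotient graphs of groups, and conclude via Corollaries \ref{coro_IP_gog} and \ref{coro_gene_Aut_gog}, with Theorem \ref{thm_IP_rigid}/Corollary \ref{coro_list} handling rigid vertices and Theorem \ref{theo;IP_orbi} handling orbisocket vertices. The only divergence is your worry about verifying that peripheral subgroups are not conjugate into incident edge groups: that hypothesis of Corollary \ref{coro_IP_gog} concerns an external marked peripheral structure $\calq$ on $\pi_1(\Gamma)$, which is absent in this absolute application, so the check is vacuous and the paper simply does not address it.
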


\begin{proof}
  Let $G,G'$ be two one-ended hyperbolic groups.  Compute $\Gamma,\Gamma'$ the graph of groups of their
  $T_\Zmax$ splittings, determine its flexible vertices, 
   and compute an orbisocket decomposition of all flexible vertex groups (Prop. \ref{prop_calcul_TZmax}).  
  Since $T_\Zmax$ is a canonical splitting, any
  isomorphism $G\ra G'$ is induced by an isomorphism between the graphs of groups $\Gamma,\Gamma'$ (Prop. \ref{prop_TZmax_is_canonical}).

  By Corollary \ref{coro_IP_gog}, deciding whether $\Gamma$ and $\Gamma'$ are isomorphic reduces
  to solve the isomorphism problem for vertex groups with a marking of their peripheral structure defined by
  their edge groups.  This is done by Th. \ref{thm_IP_rigid} if $v$ is a rigid vertex, and by
  Th. \ref{theo;IP_orbi} if $v$ is an orbisocket vertex.

  For the second assertion, since $T_\Zmax$ is a canonical splitting  (Prop. \ref{prop_TZmax_is_canonical}), 
  $\Out(G) = Out_{\Gamma}(G)$. 
  Corollary \ref{coro_gene_Aut_gog} allows us to conclude
  since the extended isomorphism problem for  rigid groups and orbisockets 
is solvable by Corollary \ref{coro_list} and Theorem \ref{theo;IP_orbi}. 
\end{proof}

Applying Lemma \ref{lem_periph_fini}, we deduce from Theorem \ref{theo;IP_one_ended} the following corollary:

\begin{coro} \label{coro_CIP_one_ended}
There is an effective algorithm that, given two one-ended hyperbolic groups, with marked peripheral structure consisting of finite subgroups, 
determines whether they are isomorphic. 

There is an effective algorithm that,  given a one-ended hyperbolic group $(G,\calp)$ with marked peripheral structure consisting of finite subgroups, 
determines a generating set of $\Outm(G,\calp)$.
\end{coro}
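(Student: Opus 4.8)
The statement to prove is Corollary \ref{coro_CIP_one_ended}, which deduces a marked-finite-peripheral version of the isomorphism problem from Theorem \ref{theo;IP_one_ended} via Lemma \ref{lem_periph_fini}.

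\textbf{Plan.} The plan is to obtain both assertions as a direct application of Lemma \ref{lem_periph_fini}, taking for $\calg$ the class of one-ended hyperbolic groups equipped with the \emph{empty} marked peripheral structure. First I would check that this class $\calg$ satisfies the hypotheses of Lemma \ref{lem_periph_fini}: its members are hyperbolic, their peripheral structures are trivial (hence vacuously have all peripheral subgroups ``infinite'', since the list is empty — there is simply nothing to check), and the extended isomorphism problem is solvable for $\calg$ by Theorem \ref{theo;IP_one_ended}. Indeed, Theorem \ref{theo;IP_one_ended} asserts exactly that one can decide isomorphism between two one-ended hyperbolic groups and compute generating sets of their automorphism groups; since $\Outm(G,\es)=\Out(G)$ and $\Out(G)$ is finitely generated for a one-ended hyperbolic group (this follows from the whole machinery, or more directly from Corollary \ref{cor_aut_hyp} together with the fact that each $PMCG^*_f(S_i)$ and each finite $\Out_m$ of a rigid vertex is finitely generated), the definedness condition in Definition \ref{dfn_eip} is met.

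\textbf{Second step.} With $\calg$ as above, Lemma \ref{lem_periph_fini} produces the class $\calg_F$ of groups $(G,\calp\cup\calq)$ where $(G,\calp)\in\calg$ and all peripheral subgroups in $\calq$ are finite. Since $\calp=\es$ here, $\calg_F$ is precisely the class of one-ended hyperbolic groups with a marked peripheral structure consisting of finite subgroups. Lemma \ref{lem_periph_fini} then directly gives that the extended isomorphism problem is solvable for $\calg_F$, which unpacks into exactly the two assertions of the corollary: an algorithm deciding isomorphism of such pairs, and an algorithm computing a generating set of $\Outm(G,\calp)$.

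\textbf{On obstacles.} There is essentially no obstacle: the corollary is a formal consequence. The only minor point requiring a word is verifying that the hypotheses of Lemma \ref{lem_periph_fini} genuinely apply — in particular that one may legitimately take the peripheral structure of a group in $\calg$ to be empty (so that ``peripheral subgroups are infinite'' holds vacuously), and that one-ended hyperbolic groups have finitely generated outer automorphism group so that the extended isomorphism problem is even well-posed for them; both are immediate from the earlier development. Thus the proof is a single sentence invoking Lemma \ref{lem_periph_fini} with $\calg$ the class of one-ended hyperbolic groups with empty peripheral structure, using Theorem \ref{theo;IP_one_ended} for the input hypothesis.

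\begin{proof}
Apply Lemma \ref{lem_periph_fini} with $\calg$ the class of one-ended hyperbolic groups with the empty marked peripheral structure. This class satisfies the hypotheses of the lemma: its members are hyperbolic, the (empty) collection of peripheral subgroups vacuously consists of infinite subgroups, and the extended isomorphism problem is solvable for $\calg$ by Theorem \ref{theo;IP_one_ended} (note $\Outm(G,\es)=\Out(G)$, which is finitely generated for a one-ended hyperbolic group, e.g.\ by Corollary \ref{cor_aut_hyp}). The class $\calg_F$ furnished by Lemma \ref{lem_periph_fini} is then exactly the class of one-ended hyperbolic groups with a marked peripheral structure consisting of finite subgroups, and the lemma asserts that the extended isomorphism problem is solvable for $\calg_F$. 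This yields both statements of the corollary.
\end{proof}
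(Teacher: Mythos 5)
Your proposal is correct and follows exactly the paper's route: the paper deduces this corollary by applying Lemma \ref{lem_periph_fini} (with trivial/empty peripheral structure) to Theorem \ref{theo;IP_one_ended}, which is precisely your argument. The extra verifications you include (vacuous infiniteness of the empty peripheral family, finite generation of $\Out(G)$) are harmless and consistent with the paper's setup.
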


\section{Groups with several ends}
\label{sec_ends}

 In this section,                     
we explain how to reduce the isomorphism problem for hyperbolic groups  
with several ends to the isomorphism problem between  one-ended hyperbolic groups.

\begin{theo}\label{theo;IP} 
The extended isomorphism problem is solvable for the class of hyperbolic groups:
there is an explicit procedure that, given two  hyperbolic groups (possibly with torsion and several ends), indicates whether they are isomorphic.

 There is an explicit procedure that given a hyperbolic group $G$, computes a generating set for $\Out(G)$.
\end{theo}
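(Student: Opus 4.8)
The strategy is to reduce the isomorphism problem for arbitrary hyperbolic groups to the one-ended case treated in Theorem~\ref{theo;IP_one_ended} (and its relative refinement, Corollary~\ref{coro_CIP_one_ended}), via the Stallings--Dunwoody deformation space. First I would use Gerasimov's algorithm \cite{Gerasimov,DaGr_detecting} to decide whether $G$ splits over a finite subgroup, and if so to produce a reduced Stallings--Dunwoody decomposition $\Gamma_0$ of $G$: a graph of groups with finite edge groups whose vertex groups are finite or one-ended. Iterating Gerasimov's algorithm on vertex groups (which is legitimate since vertex groups of such a decomposition are themselves hyperbolic) and reducing, one reaches such a $\Gamma_0$ after finitely many steps by Dunwoody accessibility. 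The one-ended vertex groups, equipped with the marked peripheral structure given by the conjugacy classes of incident (finite) edge groups, will carry the essential isomorphism-type information.

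The key point is that the Stallings--Dunwoody deformation space $\cald_{SD}$ is non-ascending (edge groups being finite, hence not properly contained in conjugates of themselves), so by \cite[Th.~7.2]{GL2} (cited in Section~\ref{sec_deformation}) all reduced trees in $\cald_{SD}$ are connected by slide moves, and the action of $\Out(G)$ on $\cald_{SD}$ is cocompact. Thus, starting from the reduced tree $T_0$ dual to $\Gamma_0$ and performing all possible slide moves (each slide move is algorithmically implementable: one enumerates pairs of adjacent edges with one edge group contained in a conjugate of the other, using the word problem and finiteness of the edge groups), one enumerates finitely many $\Out(G)$-orbits of reduced Stallings--Dunwoody trees; checking for $G$-equivariant isomorphism between the resulting graphs of groups (a finite combinatorial check since all edge groups are finite) lets one compute the finite set of isomorphism classes of reduced Stallings--Dunwoody decompositions of $G$. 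Two hyperbolic groups $G,G'$ are isomorphic if and only if some reduced Stallings--Dunwoody decomposition of $G$ is isomorphic, as a graph of groups, to one of $G'$; by Corollary~\ref{coro_IP_gog} applied with $\cale$ the class of finite groups and $\calv$ the class of one-ended (or finite) hyperbolic groups with marked peripheral structure, for which the marked isomorphism problem is solvable by Corollary~\ref{coro_CIP_one_ended} (finite vertex groups being handled directly), this is decidable. For the automorphism group: once one has computed the full finite list of reduced Stallings--Dunwoody decompositions and the slide moves connecting them, one obtains generators of $\Out(G)$ as follows --- fix one such decomposition $\Gamma_0$ with dual tree $T_0$, compute generators of $\Out_{T_0}(G)=\delta\Aut(\Gamma_0)/\!\sim$ by Corollary~\ref{coro_gene_Aut_gog} (using the solvability of the extended isomorphism problem for one-ended and finite hyperbolic groups with marked peripheral structure to handle vertex groups, and Lemma~\ref{lem_cent_finite} for centralisers of finite edge groups), and add, for each slide move or equivariant isomorphism carrying $T_0$ to another representative of its $\Out(G)$-orbit among the enumerated decompositions, a corresponding element of $\Out(G)$; Lemma~\ref{lem;slide_autom} guarantees these are well defined up to $\Out(G)$, and cocompactness of the action ensures the resulting set generates $\Out(G)$.

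The main obstacle is the correct bookkeeping around non-uniqueness: in the torsion-free case the Grushko decomposition is unique up to isomorphism, but here one genuinely has a finite deformation space modulo $\Out(G)$, and the algorithm must both enumerate it exhaustively (which requires the connectivity-by-slides theorem and the a priori bound from cocompactness to know when to stop) and extract from the slide/isomorphism graph a generating set of $\Out(G)$ rather than merely of the stabiliser $\Out_{T_0}(G)$ of a single tree. A secondary technical point is verifying that the hypotheses of Corollaries~\ref{coro_IP_gog} and~\ref{coro_gene_Aut_gog} are met: one needs the peripheral subgroups (here finite) to be conjugate into vertex groups but not into edge groups, which may require first collapsing or adjusting edges so that no finite peripheral subgroup coincides with an adjacent edge group --- a harmless normalisation. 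Modulo these points, the reduction is complete, and combined with Theorem~\ref{theo;IP_one_ended} it proves both statements.
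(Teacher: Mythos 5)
Your overall route coincides with the paper's: Gerasimov's algorithm, the Stallings--Dunwoody deformation space with its connectivity by slide moves and finite quotient under $\Out(G)$, graph-of-groups isomorphism tested through the one-ended case (Corollaries \ref{coro_IP_gog} and \ref{coro_CIP_one_ended}), then an exploration of orbit representatives. The isomorphism-decision half is essentially right, with two caveats. Comparing two reduced Stallings--Dunwoody decompositions is not ``a finite combinatorial check'': it requires the marked isomorphism problem for the one-ended vertex groups, i.e.\ exactly Corollary \ref{coro_CIP_one_ended} (which you do invoke a sentence later, so this is only an internal inconsistency). More substantively, the link of a reduced tree $T$ in the deformation-space graph is infinite: a slide is determined not just by a pair of adjacent edges but also by a conjugator $g\in\Gamma_v$ with $i_{e_1}(\Gamma_{e_1})\subset i_{e_2}(\Gamma_{e_2})^g$, and the set of admissible $g$ is a coset of the normaliser of a finite subgroup, hence usually infinite. ``Performing all possible slide moves'' is therefore not a finite procedure; you need the observation that slides differing by a centraliser element give trees in the same $\Out(G)$-orbit (Lemma \ref{lem;slide_autom}), the finiteness of normaliser modulo centraliser, and computability of both (Lemma \ref{lem_cent_finite}) — this is the content of the paper's Lemma \ref{lem;around_a_vertex}, and it is what makes the neighbour enumeration finite modulo $\Out_T(G)$. (Also, there is no ``a priori bound from cocompactness'' telling you when to stop the orbit search; the stopping criterion is stabilisation of the set of representatives, valid because the quotient is finite and the graph is connected by slides.)

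The genuine gap is in your assembly of generators of $\Out(G)$: you take generators of the stabiliser $\Out_{T_0}(G)$ of a single tree, together with elements realising slides that carry $T_0$ back into its own orbit. For a group acting on a connected graph with several vertex orbits this recipe fails in general — for $A*_CB$ acting on its Bass--Serre tree, the stabiliser of one vertex plus all elements moving that vertex to a vertex of the same orbit at distance one generate only $A$ — and in the presence of torsion the Stallings--Dunwoody deformation space genuinely has several $\Out(G)$-orbits of reduced trees, which is the whole difficulty here. The correct assembly, which is what the paper isolates as Lemma \ref{lem;cas_gene}, needs stabiliser generators at \emph{every} orbit representative in the computed set $V_0$, plus, for every edge representative at every such vertex, an element of $\Out(G)$ carrying its far endpoint back into $V_0$. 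All the algorithmic ingredients you list (Corollary \ref{coro_gene_Aut_gog} for stabilisers, orbit recognition for the connecting elements) suffice for this, so the fix is routine, but as stated your generating set can be a proper subgroup, and the appeal to ``cocompactness ensures the resulting set generates'' does not substitute for that lemma.
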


Recall that the Stallings-Dunwoody deformation space of $G$ 
is the set of $G$-trees with finite edge stabilisers and finite or one-ended vertex groups. 
We view this deformation space as a graph $\calD$
whose vertex set $V(\cald)$ is the set of reduced Stallings-Dunwoody decompositions, and whose
set of edges $E(\cald)$ is the set of pairs of trees $\{T,T'\}$ related by a slide move. 
This graph is connected as this deformation space is non-ascending \cite[Th. 7.2]{GL2}.
Recall that $\Out(G)$ acts by precomposition on the deformation space, and that this action extends to the graph $\calD$.  

 We are almost going to compute the quotient graph $\cald/\Out(G)$: we are going to compute exactly
its vertex set, and its edge set up to some uncertainty.

\begin{lemma}\label{lem;DF_fini}
  The  action of $\Out(G)$ on $\calD$ 
  has finite quotient.
\end{lemma}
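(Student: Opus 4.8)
The statement is that $\Out(G)$ acts on the Stallings--Dunwoody graph $\cald$ with finite quotient. The plan is to prove cocompactness on vertices first, and then deduce cocompactness on edges. For the vertices, recall that $V(\cald)$ is the set of reduced Stallings--Dunwoody $G$-trees up to equivariant isomorphism. The key input is a counting/accessibility bound: since $G$ is finitely presented, by Bestvina--Feighn (or Dunwoody's accessibility together with the boundedness of complexity of reduced splittings over finite groups, as in \cite{BF_bounding}), there is an explicit bound on the number of orbits of vertices and edges of any reduced $G$-tree with finite edge stabilisers. Concretely, the complexity (number of edges of the quotient graph) of a reduced Stallings--Dunwoody tree is bounded by a constant depending only on $G$. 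Since $G$ has only finitely many conjugacy classes of finite subgroups (each finite subgroup being conjugate into a ball of radius $20\delta$), there are only finitely many possibilities, up to isomorphism of graphs of groups, for the underlying finite graph, the finite edge groups, and the finitely generated (finite or one-ended) vertex groups of a reduced Stallings--Dunwoody decomposition. Two trees $T,T'\in V(\cald)$ lie in the same $\Out(G)$-orbit precisely when the corresponding quotient graphs of groups $\Gamma,\Gamma'$ are isomorphic (as graphs of groups, after a change of marking, by Lemma \ref{lem_tree2Phi} and the discussion preceding it): indeed $\Out(G)$ acts on $G$-trees by precomposition, and an equivariant isomorphism between $T$ and $\phi^*T'$ is exactly what it means for $\phi$ to carry one marked graph of groups to the other. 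Since there are only finitely many isomorphism types of graphs of groups that can arise as reduced Stallings--Dunwoody decompositions of $G$, the quotient $V(\cald)/\Out(G)$ is finite.

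For the edges, one uses the same idea together with the fact that a slide move is a local operation. An edge of $\cald$ is a pair $\{T,T'\}$ with $T'$ obtained from $T$ by a single slide move; such a move is determined by the data of a pair of adjacent non-orbit-equivalent edges $e_1,e_2$ of $T$ with $G_{e_1}\subset G_{e_2}$ (in the quotient: two oriented edges $[e_1]\neq[e_2]$ with the same terminal vertex $v$ and an element $g\in G_v$ with $i_{[e_1]}(G_{[e_1]})\subset i_{[e_2]}(G_{[e_2]})^g$). Since $V(\cald)/\Out(G)$ is finite, it suffices to show that for each reduced tree $T$, there are only finitely many slide moves emanating from $T$, up to the action of $\Out_T(G)$ (the stabiliser of $T$). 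A slide move from $T$ is specified by a choice of orbit of ordered pair of adjacent edges $([e_1],[e_2])$ — finitely many, since $\Gamma=T/G$ is a finite graph — and, having fixed such a pair, by the $G_v$-conjugacy class of the inclusion $i_{[e_1]}(G_{[e_1]})\subset i_{[e_2]}(G_{[e_2]})^g$, i.e. by a double coset $Z_{G_v}(i_{[e_2]}(G_{[e_2]}))\backslash G_v / N$ for an appropriate $N$. A priori this double coset set need not be finite; however, by Lemma \ref{lem;slide_autom}, sliding $e_1$ across $e_2$ and across $c\cdot e_2$ for $c$ in the centraliser $Z_{G_v}(G_{e_2})$ yield trees in the same $\Out(G)$-orbit. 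Thus the $\Out(G)$-orbit of the resulting edge of $\cald$ depends only on the coset modulo the centraliser, and since $G_{e_2}$ is finite, $Z_{G_v}(G_{e_2})$ has finite index in the normaliser controlling the ambiguity — more simply, one checks directly that the slide is determined up to $\Out(G)$-equivalence by finitely many choices (the finitely many $G_v$-conjugacy classes of subgroups of $G_v$ isomorphic to $G_{e_1}$, of which there are finitely many since... ), hence finitely many edges of $\cald$ emanate from each vertex up to $\Out_T(G)$. Combined with the finiteness of $V(\cald)/\Out(G)$, this gives finiteness of $E(\cald)/\Out(G)$.

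The main obstacle I anticipate is the edge-counting step: one must argue carefully that, although the set of slide moves from a fixed $T$ may in principle be infinite (the element $g\in G_v$ ranges over an infinite group), the resulting trees fall into finitely many $\Out(G)$-orbits. The clean way to see this is to observe that the target tree $T'$ of a slide, being reduced and Stallings--Dunwoody, lies in $V(\cald)$, which we have already shown has finite $\Out(G)$-quotient; so it remains only to bound, for each pair $(T,T')$ in the finitely many orbit-pairs of adjacent vertices of $\cald$, the number of distinct edges between the corresponding orbits modulo $\Out_T(G)\cap\Out_{T'}(G)$. Here one invokes Lemma \ref{lem;slide_autom} to kill the centraliser ambiguity, and what survives is a choice among the finitely many $G_v$-conjugacy classes of suitable edge-group images — finite because $G_v$ is finite-or-one-ended hyperbolic and the relevant subgroups are finite, of which (as for $G$ itself) there are only finitely many conjugacy classes. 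This yields the desired finiteness. I would also remark, as the excerpt hints, that this argument is effective: the vertex set $V(\cald)/\Out(G)$ and the edges up to the stated uncertainty can be computed, which is what the subsequent sections require.
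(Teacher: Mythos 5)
Your overall strategy coincides with the paper's: identify $\Out(G)$-orbits of vertices of $\cald$ with isomorphism classes of quotient graphs of groups (via Lemmas \ref{lem_Phi2tree} and \ref{lem_tree2Phi}) and show there are finitely many such classes; your last two paragraphs on edges essentially re-derive Lemma \ref{lem;around_a_vertex}, which the paper keeps separate (its proof of this lemma only carries out the vertex count, which is all that is used later). The genuine gap is at the crux of your vertex count: you assert that there are ``only finitely many possibilities\dots for the (finite or one-ended) vertex groups'', but the only justification you give --- finitely many conjugacy classes of finite subgroups of $G$ --- says nothing about the one-ended vertex groups. Your Bestvina--Feighn input bounds the size of the underlying graph, and the finite-subgroup count bounds the edge groups and the conjugacy classes of edge morphisms, but neither controls which one-ended groups can occur as vertex groups; without that, ``finitely many graphs of groups up to isomorphism'' does not follow.

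The paper closes exactly this point by a different mechanism: the Stallings--Dunwoody deformation space is non-ascending, so any two reduced trees in it are related by slide moves \cite[Th.~7.2]{GL2}, and a slide move changes neither the number of vertices and edges of the quotient graph nor the vertex and edge stabilisers themselves (the tree's vertex set is untouched), only the attaching maps and edge morphisms. Hence, starting from one decomposition, the graph size and the vertex and edge groups are pinned down, and only the edge morphisms vary, over finitely many conjugacy classes; this also makes the accessibility bound unnecessary. If you prefer your route, you can patch it by observing that the one-ended vertex groups of any Stallings--Dunwoody tree are exactly the maximal one-ended subgroups of $G$ (a one-ended subgroup cannot act non-trivially on a tree with finite edge stabilisers, by Stallings, so it is elliptic in every such tree), hence are independent of the tree and fall into finitely many conjugacy classes. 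Finally, in your edge count the residual ambiguity after applying Lemma \ref{lem;slide_autom} is the finiteness of $N_{\Gamma_v}(F)/Z_{\Gamma_v}(F)$, which embeds in $\Aut(F)$ for the finite group $F$, as in Lemma \ref{lem;around_a_vertex}; your attribution of this to ``finitely many conjugacy classes of finite subgroups of $G_v$'' (and the sentence that trails off before it) should be replaced by that argument.
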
  

\begin{proof}
  We view a vertex of $\cald/\Out(G)$ as a graph of groups $\Gamma$ up to graph of groups isomorphisms.  Since a
  slide move doesn't change the number of vertices and edges of a graph of groups, and since $\cald$ is connected
  by slides moves, there are finitely many possibilities for the graph $\Gamma$.  Similarly, the isomorphism types
  of vertex and edge groups is preserved by a slide move, so there are finitely possibilities for the assignments
  $v\mapsto \G_v$ and $e\mapsto \G_e$.  But since there are only finitely many conjugacy classes of finite subgroups
  in the vertex groups, there are only finitely many conjugacy classes of possible edge morphisms $i_e:\G_e\ra
  \G_v$.  If follows that $\cald/\Out(G)$ is finite.
\end{proof}

\begin{lemma}\label{lem_same_orbit}  
  There is an effective procedure that, given two graphs of groups decomposition of two hyperbolic groups,  
  whose Bass-Serre trees are  in  the Stallings-Dunwoody deformation space,  
  indicates whether there is an isomorphism of graph of groups between them. 
                
  Moreover, there is an effective procedure that, given a  graph of groups decomposition of a 
  hyperbolic group in Stallings-Dunwoody deformation space, computes a generating set of its automorphism group.
\end{lemma}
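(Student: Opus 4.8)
\textbf{Proof plan for Lemma \ref{lem_same_orbit}.}

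The plan is to reduce the question of isomorphism of two Stallings--Dunwoody graphs of groups, and the computation of the automorphism group of one of them, to problems already solved in the excerpt: the isomorphism problem for graphs of groups (Corollary \ref{coro_IP_gog}) and the computation of generators of $\delta\Aut(\Gamma)$ (Corollary \ref{coro_gene_Aut_gog}), together with the extended isomorphism problem for one-ended hyperbolic groups with finite marked peripheral structure (Corollary \ref{coro_CIP_one_ended}). The key point is that in a Stallings--Dunwoody decomposition every vertex group is either finite or one-ended, every edge group is finite, and the peripheral structure induced on each vertex group by the incident edge groups consists of finite subgroups. So the class $\calv$ of one-ended (or finite) hyperbolic groups with finite unmarked peripheral structure is exactly the class of vertex groups that can occur, and by Corollary \ref{coro_CIP_one_ended} the marked isomorphism problem is solvable for $\calv$; moreover Corollary \ref{coro_CIP_one_ended} also gives a generating set of $\Outm$ for such pairs, and Lemma \ref{lem_cent_finite} gives generating sets of centralisers of the (finite) edge groups.

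First I would handle the case where the vertex groups are one-ended or finite and treat these two kinds of vertices separately: for finite vertex groups the isomorphism problem and the computation of automorphisms is trivial (finitely many maps to check, using the word problem), and for one-ended vertex groups we invoke Corollary \ref{coro_CIP_one_ended}. Concretely, to decide whether $\Gamma_1\simeq\Gamma_2$ as graphs of groups, apply Corollary \ref{coro_IP_gog} with $\cale$ the class of finite groups and $\calv$ the class of finite or one-ended hyperbolic groups with finite unmarked peripheral structure: we need to check that $\Gamma_1,\Gamma_2$ lie in the class $\calc$ of that corollary, i.e. edge groups are finite (true), and each vertex group with its induced unmarked peripheral structure lies in $\calv$ (true, since vertex groups are finite or one-ended by definition of the Stallings--Dunwoody deformation space, and the induced peripheral subgroups are images of finite edge groups hence finite). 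There is no auxiliary peripheral structure $\calq$ on $\pi_1(\Gamma_i)$ here, so Corollary \ref{coro_IP_gog} applies directly and decides whether there is an isomorphism of graphs of groups between them. For the automorphism group, apply Corollary \ref{coro_gene_Aut_gog} with the same $\cale,\calv,\calc$: its hypotheses ask for an algorithm computing generators of $\Autm(H;\calp_m)$ for $(H;\calp_m)\in\calv$ (provided by Corollary \ref{coro_CIP_one_ended} for one-ended vertex groups, and trivial for finite ones) and an algorithm computing generators of the centraliser $Z_H(P)$ of a peripheral subgroup $P$ (here $P$ is finite, so Lemma \ref{lem_cent_finite} applies, and inside a finite vertex group it is trivial). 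The conclusion is a generating set for $\Out_{\Tilde\Gamma}(\pi_1(\Gamma))$, and since $\pi_1(\Gamma)$ is the group whose automorphism group we want and $\Tilde\Gamma$ is one of its Stallings--Dunwoody trees, this is $\delta\Aut(\Gamma)\onto\Out_{\Tilde\Gamma}(G)$ via Lemma \ref{lem_gen_set_deltaAut}.

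The remaining subtlety, and the step I expect to be the main obstacle, is that the statement asks about an \emph{isomorphism of graphs of groups} between the two given decompositions, whereas the excerpt's Corollary \ref{coro_IP_gog} already delivers exactly that; so in fact the bulk of the work is simply to verify the hypotheses of Corollaries \ref{coro_IP_gog} and \ref{coro_gene_Aut_gog} are met --- in particular that the vertex groups (being finite or one-ended hyperbolic) and their finite peripheral structures genuinely fall into the class $\calv$ for which the extended isomorphism problem was established in Corollary \ref{coro_CIP_one_ended}, and that one can algorithmically tell whether a given vertex group is finite or one-ended (which one can, by running Gerasimov's algorithm to detect splittings over finite subgroups, since a finitely presented hyperbolic group is one-ended iff it is infinite and has no such splitting and is not virtually cyclic --- all detectable). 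One must also note that the groups $\pi_1(\Gamma_i)$ are hyperbolic (they are the given hyperbolic groups), so all the auxiliary hyperbolic-group algorithms (word problem, simultaneous conjugacy problem, computation of finite subgroups) used inside the cited corollaries are available. Putting these verifications together with the direct applications of Corollaries \ref{coro_IP_gog} and \ref{coro_gene_Aut_gog} yields both assertions of the lemma.
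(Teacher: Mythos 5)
Your proposal is correct and follows essentially the same route as the paper: both assertions are reduced to Corollaries \ref{coro_IP_gog} and \ref{coro_gene_Aut_gog}, with the marked isomorphism problem and computation of $\Outm$ for the vertex groups (with their finite peripheral structures) supplied by Corollary \ref{coro_CIP_one_ended} and centralisers of finite edge groups by Lemma \ref{lem_cent_finite}. Your explicit handling of finite vertex groups and of verifying the hypotheses of the cited corollaries is a harmless elaboration of what the paper's proof leaves implicit.
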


\begin{proof}
  We use Corollary \ref{coro_IP_gog} to prove the first assertion.
  Indeed, the edge groups (which are finite) have finite computable automorphism groups, and the set of possible isomorphisms
  between them is computable. The vertex groups are one-ended hyperbolic, hence by Corollary \ref{coro_CIP_one_ended}, we have a
  solution to the marked isomorphism problem for the class of vertex groups (the peripheral structure is that of the adjacent
  finite edge groups, marked by a tuple of generator).
  Thus Corollary \ref{coro_IP_gog} applies.

  The second assertion is a consequence of Corollary \ref{coro_gene_Aut_gog}, since 
Theorem \ref{theo;IP_one_ended} allows us to compute a generating set of the automorphisms of the vertex groups with their marked peripheral structure, 
and  Lemma \ref{lem_cent_finite} allows us to compute a system of generators of the 
  centraliser of any finite group in a hyperbolic group.
\end{proof}

Given $T\in \cald$ a reduced Stallings-Dunwoody decomposition, we denote by $\Out_T(G)$ its stabiliser in $\Out(G)$,
and by $Lk(T)$ the set of edges of $\cald$ incident on $T$.

\begin{lemma} \label{lem;around_a_vertex}
There is an effective procedure that, given a reduced Stallings-Dunwoody decomposition $T$ of a hyperbolic group $G$, 
computes a finite set $E_T\subset Lk(T)$,
such that $\Out_T(G).E_T=Lk(T)$.
\end{lemma}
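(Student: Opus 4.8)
\textbf{Plan for the proof of Lemma \ref{lem;around_a_vertex}.}
The goal is to produce, from a reduced Stallings--Dunwoody tree $T$, a finite set $E_T\subset Lk(T)$ of slide moves whose $\Out_T(G)$-orbit is all of $Lk(T)$. A slide move based at $T$ is determined by the following data in the quotient graph of groups $\Gamma=T/G$: a vertex $v$, a pair of distinct oriented edges $[e_1]\neq[e_2]$ incident on $v$ with the same terminal vertex $v$, and an element $g\in G_v$ such that $i_{[e_1]}(G_{[e_1]})\subset i_{[e_2]}(G_{[e_2]})^g$ (using the description at the end of Section \ref{sec_deformation}). The plan is to enumerate all such pieces of data up to the action of $\Out_T(G)$, and for the ``$g$'' part, to exploit Lemma \ref{lem;slide_autom}, which says that replacing $g$ by $gz$ with $z$ centralising $G_{e_2}$ changes the resulting slid tree only within an $\Out(G)$-orbit.

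Concretely, first I would list the finitely many vertices $v$ of $\Gamma$ and, for each, the finitely many ordered pairs of distinct oriented edges $([e_1],[e_2])$ incident on $v$ with $t(e_1)=t(e_2)=v$; this is a finite combinatorial list since $\Gamma$ is a finite graph. For each such choice, I must decide whether there exists $g\in G_v$ realising the inclusion $i_{[e_1]}(G_{[e_1]})\subset i_{[e_2]}(G_{[e_2]})^g$, and if so produce at least one such $g$. Since $G_v$ is a hyperbolic group (it is one-ended or finite) and the edge groups $i_{[e_j]}(G_{[e_j]})$ are finite subgroups, this is exactly a simultaneous conjugacy question for finite subgroups of a hyperbolic group, which is decidable using the algorithmic toolbox (the simultaneous conjugacy problem in hyperbolic groups; see the discussion preceding Lemma \ref{lem_cent_finite}). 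When a solution $g_0$ exists, the set of all solutions is the coset $g_0\cdot Z_{G_v}(i_{[e_2]}(G_{[e_2]}))$ of the centraliser, whose generators are computable by Lemma \ref{lem_cent_finite}. By Lemma \ref{lem;slide_autom}, all slides obtained by multiplying $g_0$ by an element of this centraliser lie in a single $\Out(G)$-orbit — and in fact in a single $\Out_T(G)$-orbit, since the slid trees $T',T''$ in that lemma both lie in the same deformation space as $T$, hence their common $\Out(G)$-translate actually lies in $\Out_T(G)$ (precomposing by an automorphism sends reduced Stallings--Dunwoody trees to reduced Stallings--Dunwoody trees in the same deformation space, and sends $T$ to $T$ up to that translate). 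So for each combinatorial choice $(v,[e_1],[e_2])$ for which a slide exists, I add to $E_T$ the single edge of $\cald$ corresponding to the slide with the computed $g_0$.

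It remains to argue that $\Out_T(G).E_T=Lk(T)$. An arbitrary element of $Lk(T)$ is a slide determined by some $(v,[e_1],[e_2],g)$. Its combinatorial type $(v,[e_1],[e_2])$ appears in our list, so we have recorded a slide of the same combinatorial type with parameter $g_0$; writing $g=g_0 z$ with $z\in Z_{G_v}(i_{[e_2]}(G_{[e_2]}))$ — possible since both $g$ and $g_0$ solve the same conjugation equation and the solution set is a centraliser coset — Lemma \ref{lem;slide_autom} gives an element of $\Out(G)$ carrying the recorded slide to the given one, and as noted this element can be taken in $\Out_T(G)$. Hence every edge of $Lk(T)$ is in the $\Out_T(G)$-orbit of $E_T$, and $E_T$ is finite by construction. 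The main obstacle I anticipate is the bookkeeping needed to verify the last point carefully — namely that the ambiguity in the choice of $g$ is \emph{exactly} a centraliser coset and that the automorphism produced by Lemma \ref{lem;slide_autom} can be arranged to fix $T$ (not merely its deformation space); this requires unwinding the isomorphism of graphs of groups $\Phi$ constructed there and checking it induces an element of $\Out_T(G)$, using Lemma \ref{lem_tree2Phi} and the fact that $T$ is reduced so that its deformation space is detected by $T$ alone up to the $\Out(G)$-action. Everything else (finiteness of the combinatorial list, decidability of simultaneous conjugacy of finite subgroups, computability of centralisers) is already provided by the toolbox.
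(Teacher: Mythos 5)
Your overall strategy matches the paper's (enumerate slide data in the quotient graph of groups, decide existence of the conjugating element, and quotient the remaining ambiguity using Lemma \ref{lem;slide_autom} and computable centralisers), but there is a genuine gap in the key counting step. The set of $g\in\Gamma_v$ with $i_{[e_1]}(\Gamma_{[e_1]})\subset i_{[e_2]}(\Gamma_{[e_2]})^g$ is \emph{not} a single coset $g_0\cdot Z_{\Gamma_v}(i_{[e_2]}(\Gamma_{[e_2]}))$. Writing $A=i_{[e_1]}(\Gamma_{[e_1]})$ and $B=i_{[e_2]}(\Gamma_{[e_2]})$, a solution $g$ determines the subgroup $F=A^{g^{-1}}\subset B$, and for a \emph{fixed} $F$ the solutions form a coset of the normaliser $N_{\Gamma_v}(F)$, not of the centraliser of $B$; moreover different admissible subgroups $F\subset B$ (different $\Gamma_v$-conjugate copies of $A$ inside $B$) give further solutions lying in other normaliser cosets. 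Since Lemma \ref{lem;slide_autom} only identifies slides whose parameters differ by an element of the relevant \emph{centraliser}, recording a single $g_0$ per combinatorial type $(v,[e_1],[e_2])$ does not let you write an arbitrary admissible $g$ as $g_0z$ with $z$ central, and the covering claim $\Out_T(G).E_T=Lk(T)$ is unproved; your $E_T$ may simply be too small. The paper's proof fixes exactly this: for each pair of edges it enumerates the subgroups $F\subset i_{[e_2]}(\Gamma_{[e_2]})$ that arise as $i_{[e_1]}(\Gamma_{[e_1]})^{g^{-1}}$, observes that the solution set for each $F$ is $N_F\cdot g_0$, and, since only the $Z_F$-ambiguity is killed by Lemma \ref{lem;slide_autom}, it puts into $E_T$ one slide for each coset representative of $N_F/Z_F$ (finite because it embeds in $\Aut(F)$), both $N_F$ and $Z_F$ being computable by Lemma \ref{lem_cent_finite}.

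A secondary weak point: your argument that the equivalence provided by Lemma \ref{lem;slide_autom} can be taken in $\Out_T(G)$ rather than merely $\Out(G)$ ("the slid trees lie in the same deformation space, hence the translate lies in $\Out_T(G)$") is a non sequitur — an automorphism carrying $T'$ to $T''$ inside the deformation space need not fix $T$. What actually makes this work is the explicit form of the isomorphism in the proof of Lemma \ref{lem;slide_autom}: it is the identity on the underlying graph and on all vertex and edge groups and only twists the edge monomorphism of the slid edge by an inner automorphism, which is why the induced outer automorphism stabilises $T$ and hence maps the edge $\{T,T'\}$ of $\cald$ to the edge $\{T,T''\}$ in $Lk(T)$. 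You rightly flagged this as needing care, but the justification you sketch does not close it.
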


\begin{proof}
Denote by $\Gamma=T/G$ a quotient graph of groups.
Recall (see section \ref{sec_deformation}) that  a slide move is determined by two oriented edges $e_1,e_2$ in $\G$,    
terminating at the same vertex   $v$, and by an element $g\in \G_v$ such that  $i_{e_1} (\G_{e_1}) \subset  i_{e_2}(\G_{e_2})^g $.   
If $u$ denotes the other endpoint of $e_2$, after the slide, the edge $e_1$ is replaced by an edge $e'_1$ incident on $u$, and
with monomorphism
$ i_{e_1'} = i_{\ol e_2}\circ i_{e_2}\m \circ \inn_g\circ   i_{e_1}$.

In this case, the group $i_{e_1}(G_{e_1})^{g\m}$ is a subgroup of the finite group $ i_{e_2}(\G_{e_2})$.  One can decide which
subgroups $F\subset i_{e_2}(\G_{e_2})$ are equal to $i_{e_1}(G_{e_1})^{g\m}$ for some $g\in\G_v$. 
If there exists such an element $g_0\in G_v$, the set of all other such elements is  $N_F. g_0$ where $N_F$ is the normaliser
of $F$ in $\G_v$.

By Lemma \ref{lem;slide_autom}, all slide moves with $g\in Z_F. g_0$ where $Z_F$ is the centraliser of $F$ in $\G_v$, are
equivalent under the stabiliser of $T$.  This ensures that, in order to enumerate representatives of possible slide moves, it
suffices to compute a set of representatives for $N_F/Z_F$ (which is finite since $\Aut(F)$ is finite).  This is done by Lemma
\ref{lem_cent_finite}.
\end{proof}

\begin{lemma}\label{lem;stallings_DF}
There is an effective procedure that, given a hyperbolic group $G$, computes a finite set $V_0\subset V(\cald)$
such that $\Out(G).V_0=V(\cald)$, 
and no two points of $V_0$ are in the same orbit under $\Out(G)$.
\end{lemma}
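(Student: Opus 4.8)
The statement asks for an algorithm that computes a finite set $V_0\subset V(\cald)$ of reduced Stallings-Dunwoody decompositions that meets every $\Out(G)$-orbit exactly once. The plan is to combine three ingredients already available: Gerasimov's algorithm to produce a starting vertex of $\cald$, Lemma \ref{lem;around_a_vertex} to enumerate the link of a vertex up to its own stabiliser, and Lemma \ref{lem_same_orbit} to decide when two reduced decompositions are in the same orbit. The finiteness that guarantees termination is Lemma \ref{lem;DF_fini}, which says $\cald/\Out(G)$ is finite.

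\textbf{The algorithm.} First I would run Gerasimov's algorithm \cite{Gerasimov,DaGr_detecting} to compute \emph{some} Stallings-Dunwoody decomposition of $G$, and then make it reduced by collapsing edges whose edge morphism is onto and which are not loops; this produces a vertex $T_0\in V(\cald)$. Initialize $V_0:=\{T_0\}$ and maintain a ``frontier'' of decompositions whose neighbours have not yet been explored. At each step, pick an unexplored $T\in V_0$: by Lemma \ref{lem;around_a_vertex} compute a finite set $E_T\subset Lk(T)$ with $\Out_T(G).E_T=Lk(T)$, and for each edge in $E_T$ compute the reduced decomposition $T'$ at its other endpoint (a slide move preserves reducedness in a non-ascending deformation space by \cite[Th. 7.2]{GL2}). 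For each such $T'$, use Lemma \ref{lem_same_orbit} to test whether $T'$ is isomorphic as a graph of groups — equivalently, in the same $\Out(G)$-orbit — to some decomposition already in $V_0$; if not, add $T'$ to $V_0$ and to the frontier. Repeat until the frontier is empty, then output $V_0$.

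\textbf{Correctness.} Termination follows from Lemma \ref{lem;DF_fini}: since $\cald/\Out(G)$ is finite and we never add two decompositions in the same orbit, $V_0$ stays bounded and the frontier empties. That $\Out(G).V_0=V(\cald)$ is a connectivity argument: $\cald$ is connected by slide moves, so it suffices to show that whenever $T\in V_0$, every neighbour of $T$ in $\cald$ is in the orbit of some element of $V_0$. Given a neighbour $S$ of $T$, by the defining property of $E_T$ there is $\alpha\in\Out_T(G)$ with $\alpha S\in E_T$; the reduced representative of $\alpha S$ is one of the $T'$ examined when $T$ was processed, and it was either added to $V_0$ or found to lie in the orbit of an existing element. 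Propagating this along a slide path from $T_0$ to an arbitrary vertex of $\cald$ shows $\Out(G).V_0=V(\cald)$. Finally, the no-repeated-orbit property holds by construction, using the orbit-decidability of Lemma \ref{lem_same_orbit}.

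\textbf{Main obstacle.} The subtle point is not the enumeration itself but justifying that graph-of-groups isomorphism in Lemma \ref{lem_same_orbit} really detects $\Out(G)$-equivalence of Stallings-Dunwoody trees: two such trees $T,T'$ are in the same $\Out(G)$-orbit iff there is a $\phi$-equivariant isomorphism $T\to T'$ for some $\phi\in\Aut(G)$, which by Lemma \ref{lem_tree2Phi} is equivalent to the existence of an isomorphism of the quotient graphs of groups. One must also be careful that the finitely many ``reduced representatives'' computed from slid decompositions are genuinely reduced and that collapsing to make them reduced does not leave the deformation space — both guaranteed by non-ascendingness \cite[Th. 7.2, Prop. 7.1]{GL2}. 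These are the places where one has to invoke the earlier structural lemmas precisely rather than wave hands.
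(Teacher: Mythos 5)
Your proposal is correct and follows essentially the same route as the paper: start from a reduced decomposition produced by Gerasimov's algorithm, explore $\cald$ by computing links via Lemma \ref{lem;around_a_vertex}, discard orbit-repetitions via Lemma \ref{lem_same_orbit}, and terminate by finiteness of $\cald/\Out(G)$ (Lemma \ref{lem;DF_fini}) together with connectedness of $\cald$ by slide moves. The paper organizes the same search as an induction on the distance-$n$ ball around $T_0$ rather than a frontier, but the argument is identical in substance.
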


\begin{proof} Using Gerasimov's algorithm \cite{Gerasimov,DaGr_detecting}, one can compute a certain reduced $G$-tree $T_0$ in the
  Stallings-Dunwoody deformation space.  

  Proceed by induction, and assume that one has computed a finite set $S_n\subset V(\cald)$ such that
  $\Out(G).S_n$ contains all vertices of $\cald$ at distance $\leq n$ from $T_0$, and
  no two points of $S_n$  are in the same orbit under $\Out(G)$.

  By Lemma \ref{lem;around_a_vertex}, for each $T\in S_n$, we can compute a finite set $N_T$ of representatives of its neighbours. 
  Since one can check whether two trees are in the same $\Out(G)$-orbit by Lemma \ref{lem_same_orbit},
  one can choose from  $\bigcup_{T\in S_n} N_T$ one representative of each orbit not in the orbit of $S_n$.
  Then we can take $S_{n+1}\supset S_n$ to be the set obtained by adding these elements to $S_n$.

  Since $\cald/\Out(G)$ is finite, the sequence $S_n$ stabilises. We claim that if $S_{n+1}=S_n$, then $\Out(G).S_n=\cald$. Indeed,
  $S_{n'}=S_n$ for all $n'>n$, but by connectedness of $\cald$, any $T\in \cald$ lies in $\Out(G).S_{n'}$ for some $n'$.
  Therefore, one can output $V_0=S_n$.
\end{proof}

\begin{rem}
 The two previous  lemmas can be interpreted as the computation of a finite graph $\cald_0$ 
whose vertex set is $\cald/\Out(G)$, and whose link at $T\in \cald_0$ is given by $E_T$.
The natural map $\cald_0\ra \cald$
is an isomorphism the vertex sets, and is surjective on the edges of $\cald$.
\end{rem}

\begin{lemma} \label{lem;cas_gene}
 Let $G$ act on a  connected graph $K$.  
Let $V_0$ be a set of orbit representatives for the vertices. 
For all $v\in V_0$, let $S_v$ be a generating set of the stabiliser of $v$, 
let $E_v\subset Lk(v)$ be such that $S_v.E_v=Lk(v)$, 
and for all $e=vw\in E_v$, let $\alpha_e\in G$ be such that $\alpha_e.w\in V_0$. 
Denote by $S_{V}=\bigcup_{v\in V_0} S_v$ and $S_E=\{\alpha_e|e\in E_v, v\in V_0\}$.

Then $G$ is generated by $S_V\cup S_E$.
\end{lemma}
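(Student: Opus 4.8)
This is a standard fact about group actions on connected graphs (a variant of the Bass--Serre / Reidemeister--Schreier argument), and the plan is to show directly that the subgroup $H=\grp{S_V\cup S_E}$ equals $G$. First I would record a basic observation: for every vertex $v$ of $K$, there exists $h\in H$ with $h.v\in V_0$. Indeed, pick the orbit representative $v_0\in V_0$ with $v_0=g.v$ for some $g\in G$; I will show below that such a $g$ can be taken in $H$, but actually it is cleaner to prove the two claims simultaneously by induction on graph distance.

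The key step is the following claim, proved by induction on $d=d_K(v_0',w)$ where $v_0'$ ranges over $V_0$: for every vertex $w$ of $K$ and every $v_0'\in V_0$ at distance $d$ from $w$, there is $h\in H$ with $h.w\in V_0$. For $d=0$ there is nothing to prove. For the inductive step, let $w$ be at distance $d$ from some $v_0'\in V_0$, and let $e'=v'w$ be the last edge of a geodesic from $v_0'$ to $w$, so $v'$ is at distance $d-1$ from $v_0'$, hence (inductively) there is $h_1\in H$ with $v:=h_1.v'\in V_0$. Then $h_1.e'$ is an edge of $Lk(v)$. Since $S_v.E_v=Lk(v)$ and $S_v\subset H$, there is $s\in \grp{S_v}\subset H$ and an edge $e=vw''\in E_v$ with $s.e=h_1.e'$; in particular $s^{-1}h_1.w=w''$. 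Now $\alpha_e\in S_E\subset H$ satisfies $\alpha_e.w''\in V_0$, so $h:=\alpha_e s^{-1}h_1\in H$ sends $w$ into $V_0$. This proves the claim, and since $K$ is connected every vertex $w$ admits such an $h$.

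Finally I would deduce that $H=G$. Let $g\in G$ be arbitrary and fix a vertex $v_0\in V_0$. By the claim applied to $w=g^{-1}.v_0$ (choosing as the relevant representative the orbit representative of $v_0$ itself), there is $h\in H$ with $h g^{-1}.v_0 = v_1 \in V_0$. But $v_1$ and $v_0$ are in the same $G$-orbit, hence $v_1=v_0$ since $V_0$ contains one representative per orbit; thus $hg^{-1}$ stabilises $v_0$, so $hg^{-1}\in \grp{S_{v_0}}\subset H$ by hypothesis, whence $g\in H$. Therefore $G=H=\grp{S_V\cup S_E}$, as claimed.

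\textbf{Main obstacle.} There is no serious obstacle here; the only point requiring mild care is bookkeeping the fact that the vertex $v$ produced at each inductive stage genuinely lies in $V_0$ (so that $S_v$, $E_v$, and the elements $\alpha_e$ are all available), together with the observation that two elements of $V_0$ in the same $G$-orbit must coincide, which is exactly what lets us conclude that a stabiliser-of-$v_0$ element rather than a coset representative appears at the end. One should also note that no finiteness of $V_0$ or of the $S_v$ is needed for the statement as phrased — it is purely an existence-of-generating-set assertion — although in the application all these data are finite and computable.
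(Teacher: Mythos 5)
Your proof is correct and follows essentially the same route as the paper: a descent on the distance to $V_0$, using the hypothesis $S_v.E_v=Lk(v)$ together with the elements $\alpha_e$ to push a vertex one step closer to $V_0$, and concluding via the stabiliser $\grp{S_{v_0}}$ once two elements of $V_0$ in the same orbit are identified. The paper phrases this as a minimal-distance/contradiction argument while you run an explicit induction on distance, but the content is the same.
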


This is elementary, but convenient for our purpose. 
\begin{proof}  Let $G'=\grp{S_V\cup S_E}$.
Consider $\gamma\in G$, choose $v_0\in V_0$, and take $v=\gamma v_0$. 
Consider $\gamma'\in G'$ such that $d(\gamma 'v,V_0)$ is minimal.
If $\gamma'v\in V_0$, then $\gamma'v=v_0$, so $\gamma'\gamma\in \grp{S_{v_0}}$ and $\gamma\in G'$ so we are done.
Otherwise, consider a shortest path $v_1,v_2,\dots,v_n=\gamma'v$ from  $V_0$ to $\gamma'v$.
Let $e=v_1v_2$ be its first edge. There exists $e_1\in E_{v_1}$ and 
$\gamma_1 \in \langle S_{v_1}\rangle$ such that $\gamma_1 e_1=e$,
and $\alpha_e\in S_E$ with $\alpha_e.v_2\in V_0$.
Then  $\alpha_e\gamma_1\m .(v_2,\dots,v_n)$ is a path of length $n-1$ joining $V_0$ to a point in $G'v$, a contradiction.
\end{proof}

\begin{proof}[Proof of Theorem \ref{theo;IP}]
   Given $G_1,G_2$ two hyperbolic groups, compute (by Gerasimov's algorithm) a reduced Stallings-Dunwoody decomposition $\Gamma_1$ for $G_1$.
By Lemma \ref{lem;stallings_DF}, compute a finite set $V_0$ of representatives  of reduced Stallings-Dunwoody decompositions 
for $G_2$ modulo $\Out(G_2)$. Then, by Lemma \ref{lem_same_orbit}, one can check whether  $\Gamma_1$ is isomorphic 
(as a graph of group decomposition) to a decomposition $\Gamma_2\in V_0$.
This is enough to conclude whether the two groups are isomorphic or not.

In order to compute a generating set of $\Out(G)$, we look at the action of $\Out(G)$ on $\cald$.  
We use Lemma \ref{lem;stallings_DF}
a finite set $V_0\subset V(\cald)$ of representatives of $\Out(G)$-orbits in $V(\cald)$.
By Lemma \ref{lem_same_orbit}, for each $T\in V_0$, we can compute a finite generating set $S_T$ of the stabiliser of $T$.
For each $T\in V_0$, we use Lemma \ref{lem;around_a_vertex} to compute a finite set $E_T$ of oriented edges originating at $v$,
with $\Out_T(G).E_T=Lk(T)$.
By Lemma \ref{lem_same_orbit}, for each edge $e=\{T,T'\}\in E_T$, 
one can compute an element $\alpha_e\in\Out(G)$ sending $T'$ into $V_0$.
By connectedness of $\cald$, Lemma \ref{lem;cas_gene} gives us a generating set of $\Out(G)$.
\end{proof}

\section{Whitehead problems}\label{sec_Wh}

The goal of this section is to give a solution of the extended isomorphism problem
for hyperbolic groups with marked peripheral structure, and to deduce solutions of some Whitehead problems.

\begin{thm}\label{thm_whitehead_marque}
The extended isomorphism problem is solvable for the class of hyperbolic groups with marked peripheral structure.
More precisely, there is an algorithm that takes as input two hyperbolic groups with marked peripheral structure
$(G,\calp),(G',\calp')$, and decides whether $(G,\calp)\simeq (G',\calp')$.

There is an another algorithm which takes as input a  hyperbolic group with marked peripheral structure
$(G,\calp)$, and gives a generating set of $\Outm(G,\calp)$.
\end{thm}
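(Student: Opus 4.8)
The plan is to reduce the marked problem to the unmarked one via a general device, and then to combine the pieces already established in the paper. The key tool is Lemma \ref{lem_eqv_um}: for the class $\cale$ of finite or virtually cyclic groups, the extended isomorphism problem is solvable for a class of groups with \emph{marked} peripheral structures if and only if it is solvable for the corresponding class of groups with \emph{unmarked} peripheral structures. That lemma has hypotheses — a solution to the simultaneous conjugacy problem in the ambient groups, and the ability to compute a presentation of a peripheral subgroup from a generating set — both of which hold for hyperbolic groups (simultaneous conjugacy is solvable in hyperbolic groups, and Lemma \ref{lem_VC2} produces presentations of virtually cyclic subgroups). So the real content is to solve the \emph{unmarked} extended isomorphism problem for hyperbolic groups whose peripheral subgroups are finite or virtually cyclic.

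First I would reduce the unmarked problem over general peripheral structures to the case where all peripheral subgroups are \emph{infinite} (hence virtually cyclic), handled separately from the finite ones, by invoking Lemma \ref{lem_periph_fini}: once we can solve the extended isomorphism problem for the class $\calg$ of hyperbolic groups with marked peripheral structures whose peripheral subgroups are infinite virtually cyclic, we automatically get it for $\calg_F$, where we allow additional finite peripheral subgroups. (Note that a general hyperbolic group with marked peripheral structure may have peripheral subgroups that are finite, virtually cyclic, or neither; but the statement of Theorem \ref{thm_whitehead_marque} only asserts solvability, and one can first decide, using Lemma \ref{lem_VC2} and a solution of the word/order problem, which peripheral subgroups are finite, which are virtually cyclic, and discard the rest of the generality by the observation that the statement we must prove is for arbitrary marked peripheral structure — so in fact one should check carefully whether the intended scope restricts to finite-or-virtually-cyclic peripheral subgroups, or whether the argument must accommodate arbitrary finitely generated ones; I expect the former, matching the hypotheses of the auxiliary lemmas.) Then, for the core case of infinite virtually cyclic peripheral subgroups, I would split along the Stallings–Dunwoody deformation space exactly as in Section \ref{sec_ends}: decide whether each peripheral subgroup is elliptic in a relative Stallings–Dunwoody decomposition, compute the (finitely many, by the relative analogue of Lemma \ref{lem;DF_fini}) reduced relative Stallings–Dunwoody decompositions modulo $\Out$, and reduce to the one-ended relative case by Corollary \ref{coro_IP_gog} and Corollary \ref{coro_gene_Aut_gog}. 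For one-ended groups relative to a virtually cyclic peripheral structure, one computes the relative $\Zmax$-JSJ decomposition $\TZmax$ (Proposition \ref{prop_calcul_TZmax}, in its relative form), uses its canonicity (Proposition \ref{prop_TZmax_is_canonical}, relative version) so that every isomorphism is induced by a graph-of-groups isomorphism, and then invokes the isomorphism problem for the vertex groups: rigid vertices via Corollary \ref{coro_list}, orbisocket vertices via Theorem \ref{theo;IP_orbi}, feeding these into Corollaries \ref{coro_IP_gog} and \ref{coro_gene_Aut_gog}.

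Finally, running Lemma \ref{lem_eqv_um} backwards converts the unmarked solution into the marked one, which is exactly the statement of Theorem \ref{thm_whitehead_marque}; the computation of a generating set of $\Outm(G,\calp)$ comes out of the same machinery, since each of the cited corollaries produces generators of the relevant automorphism groups, and Lemma \ref{lem_markings} guarantees that $\Outm(G,\calp)$ has finite index in $\Outu(G,\calp_u)$ (peripheral subgroups being virtually cyclic or finite), so finite generation is preserved and the passage between marked and unmarked generating sets is effective. The main obstacle is bookkeeping: almost every ingredient in the paper was stated for \emph{absolute} hyperbolic groups or for groups with peripheral structures consisting of \emph{finite} subgroups, and here one needs the relative versions with \emph{virtually cyclic} peripheral subgroups throughout — in particular a relative Stallings–Dunwoody accessibility and cocompactness of $\Out$ on the relative deformation space, a relative $\Zmax$-JSJ theory (the rigidity criterion Proposition \ref{prop_alt} is already stated relatively, which is the crucial input), and relative versions of the computability statements in Section \ref{sec_1end}. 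I would organize the proof so that each such relative statement is obtained from its absolute counterpart by the standard trick of treating peripheral subgroups as additional elliptic constraints, checking in each case that the algorithmic inputs required (simultaneous conjugacy, presentations of virtually cyclic subgroups, equations with rational constraints) remain available; once that verification is in place, the assembly is exactly as above.
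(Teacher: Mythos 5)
There is a genuine gap, and it is the one you flagged but resolved in the wrong direction: the theorem is \emph{not} restricted to peripheral subgroups that are finite or virtually cyclic. The marked peripheral structure here consists of arbitrary tuples of elements, so the subgroups $\grp{S_i}$ can be arbitrary finitely generated (not even quasiconvex) subgroups — this generality is exactly what is needed to deduce the Whitehead problem (W1) in Corollary \ref{cor_whitehead_1234}, where $\calp=((g_1,\dots,g_n))$ is a single arbitrary tuple. Consequently your key reduction via Lemma \ref{lem_eqv_um} (and Lemma \ref{lem_markings}, which needs $\Out(\grp{S_i})$ finite) does not apply, and the whole plan of ``reduce marked to unmarked, then run a relative $\Zmax$-JSJ theory with virtually cyclic peripheral subgroups'' only covers a special case of the statement. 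Moreover, even in that special case, the relative versions of the computability results you invoke (relative analogues of Propositions \ref{prop_calcul_TZmax} and \ref{prop_TZmax_is_canonical}, cocompactness of $\Out$ on a relative deformation space with virtually cyclic peripheral subgroups, etc.) are not established in the paper and are not supplied by your ``additional elliptic constraints'' remark; this is real work that your outline leaves open.

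The paper's actual argument avoids all of this by a filling trick. From $\calp=(S_1,\dots,S_n)$ it forms enlarged tuples $T_i$ (elements of $S_i$ and their pairwise products, to exploit Serre's lemma), then amalgamates to $G$, along the maximal cyclic subgroups containing the infinite-order entries $t_{i,j}$, copies of a fixed torsion-free one-ended hyperbolic group $R_{i,j}$ with no cyclic splitting. The resulting group $\Hat G$ is hyperbolic (Bestvina--Feighn), and Lemma \ref{lem_chapeau} shows that when $\Hat G,\Hat G'$ are one-ended, $(G,\calq)\simeq(G',\calq')$ is equivalent to the existence of an isomorphism $\Hat G\to\Hat G'$ matching the conjugacy classes $[R_{i,j}]$ and $[T_i]$; since these classes have finite $\Out(\Hat G)$-orbits (they sit at rigid or universally elliptic positions of the JSJ of $\Hat G$), the question becomes an orbit problem in a finite set (Lemma \ref{lem_probleme_orbite}), decided using the already-proved absolute results (Theorem \ref{theo;IP}), quasiconvexity detection and equations with rational constraints. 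The non-relatively-one-ended case is then handled by a relative Stallings--Dunwoody decomposition (Lemma \ref{lem_SDrel}) and the Section \ref{sec_ends} machinery, with Lemma \ref{lem_periph_fini} disposing of finite peripheral subgroups. In short: the device that carries the proof — converting marked data into absolute isomorphism data by gluing on rigid groups — is absent from your proposal, and without it (or a fully worked-out relative theory valid for arbitrary finitely generated peripheral subgroups) the statement as given is not proved.
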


Restricting to finite or virtually cyclic subgroups, we can as well solve the unmarked isomorphism problem:

\begin{cor}\label{cor;ip_unmarked_hyp}
  The extended isomorphism problem is solvable for the class of hyperbolic groups with unmarked peripheral structures
with virtually cyclic or finite peripheral subgroups.
\end{cor}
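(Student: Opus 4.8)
The statement to prove is Corollary \ref{cor;ip_unmarked_hyp}: the extended isomorphism problem is solvable for hyperbolic groups equipped with \emph{unmarked} peripheral structures whose peripheral subgroups are finite or virtually cyclic. The plan is to deduce this from Theorem \ref{thm_whitehead_marque} (the marked case) together with the general equivalence between the marked and unmarked extended isomorphism problems already established in the toolbox, namely Lemma \ref{lem_eqv_um}.

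First I would set up the hypotheses of Lemma \ref{lem_eqv_um}. Take $\cale$ to be the class of finite or virtually cyclic groups, let $\calg_u$ be the class of hyperbolic groups with unmarked $\cale$-peripheral structures, and let $\calg_m$ be the corresponding class of hyperbolic groups with marked $\cale$-peripheral structures (whose induced unmarked structure lies in $\calg_u$). To apply the lemma we must check: (i) $\Out_u(G,\calp_u)$ is finitely generated for all objects of $\calg_u$; (ii) the simultaneous conjugacy problem is solvable for hyperbolic groups; and (iii) one can compute a presentation of a finite or virtually cyclic subgroup from a generating set. Point (ii) is a classical algorithmic fact for hyperbolic groups (the simultaneous conjugacy problem, available via the solution of systems of equations, Theorem \ref{theo;eq}, or by the usual conjugacy bounds). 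Point (iii) is exactly the content of Lemma \ref{lem_VC2} for virtually cyclic subgroups, and is immediate for finite subgroups (one lists their elements and reads off a multiplication table). Point (i) is the only item requiring a short argument: Theorem \ref{thm_whitehead_marque} produces a generating set of $\Outm(G,\calp_m)$ for any marking $\calp_m$ of $\calp_u$, so $\Outm(G,\calp_m)$ is finitely generated; since peripheral subgroups are finite or virtually cyclic, $\Out(\grp{S_i})$ is finite for each $i$, so Lemma \ref{lem_markings} gives that $\Outm(G,\calp_m)$ has finite index in $\Outu(G,\calp_u)$, whence $\Outu(G,\calp_u)$ is finitely generated as well.

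With these verifications in hand, Lemma \ref{lem_eqv_um} asserts that the extended isomorphism problem is solvable for $\calg_u$ \emph{if and only if} it is solvable for $\calg_m$. By Theorem \ref{thm_whitehead_marque} it is solvable for $\calg_m$, so it is solvable for $\calg_u$; this is precisely the statement of the corollary. Concretely, unwinding the proof of Lemma \ref{lem_eqv_um}: given $(G,\calp_u),(G',\calp'_u)$, one first uses Lemma \ref{lem_VC2} and Lemma \ref{lem_autZ} together with the simultaneous conjugacy problem to normalise the peripheral structures (removing redundant conjugacy classes and detecting when $\#\calp_u\neq\#\calp'_u$); then for each bijection between the conjugacy classes and each tuple of isomorphisms between the corresponding peripheral subgroups (finitely many, by Lemma \ref{lem_autZ}) one forms the induced marked peripheral structures and applies the marked algorithm of Theorem \ref{thm_whitehead_marque}; the groups are isomorphic in the unmarked sense iff one of these finitely many marked isomorphism tests succeeds. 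For the automorphism group, one fixes a marking $\calp_m$ of $\calp_u$, computes a generating set of $\Outm(G,\calp_m)$ by Theorem \ref{thm_whitehead_marque}, and enlarges it by the (finitely many) automorphisms detected as above that permute the markings within $\mathit{Markings}(\calp_u)$, using Lemma \ref{lem_probleme_orbite} to control the orbits.

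The bulk of the work is therefore already done: there is no genuine obstacle beyond checking that the hypotheses of Lemma \ref{lem_eqv_um} are met, and the only hypothesis with any content, finite generation of $\Outu(G,\calp_u)$, follows formally from Theorem \ref{thm_whitehead_marque} and Lemma \ref{lem_markings}. The one point to be slightly careful about is that the lemmas quoted require solvability of the simultaneous conjugacy problem and computability of presentations of peripheral subgroups uniformly; both hold for all hyperbolic groups, so the reduction is genuinely uniform and the corollary follows.
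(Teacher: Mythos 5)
Your proposal is correct and follows exactly the paper's own route: the paper deduces the corollary in one line from Theorem \ref{thm_whitehead_marque} together with Lemma \ref{lem_eqv_um}, and your verification of the lemma's hypotheses (finite generation of $\Outu$ via Lemma \ref{lem_markings}, the simultaneous conjugacy problem, and Lemma \ref{lem_VC2} for presentations of peripheral subgroups) is just a careful spelling-out of what the paper leaves implicit.
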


\begin{proof}
  This follows from Theorem \ref{thm_whitehead_marque} and Lemma \ref{lem_eqv_um}.
\end{proof}

Before proving the theorem, we first give a few applications. We consider
the following versions of the Whitehead problem:

Given a hyperbolic group $G$ 
and two families of elements $g_1, \dots g_n$, and $h_1, \dots, h_n$, 
\begin{itemize}
\item[(W1)] 
  is there an automorphism $\phi\in \Aut(G)$ such that $\phi(g_i)= h_i$ ?
\item[(W2)]  
  is there an automorphism $\phi\in \Aut(G)$ such that $\phi(g_i)$ is conjugate to  $h_i$ ? 
\item[(W3)]   
  is there an automorphism $\phi\in \Aut(G)$ such that $\langle \phi(g_i) \rangle = \langle h_i \rangle$ ?
\item[(W4)]   
  is there is an automorphism $\phi\in \Aut(G)$ such that $\langle \phi(g_i) \rangle$  
  is conjugate to $\langle h_i \rangle$ ?
\end{itemize}

\begin{cor}\label{cor_whitehead_1234}
  Whitehead problems (W1), (W2), (W3) and (W4) have a uniform solution in  hyperbolic groups.
\end{cor}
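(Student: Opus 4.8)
\textbf{Proof plan for Corollary \ref{cor_whitehead_1234}.}

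The plan is to reduce each of the four Whitehead problems to the extended isomorphism problem for hyperbolic groups with marked, respectively unmarked, peripheral structure, which is solved by Theorem \ref{thm_whitehead_marque} and Corollary \ref{cor;ip_unmarked_hyp}. The elements $g_1,\dots,g_n$ of $G$ form a marked peripheral structure $\calp=([ (g_1)],\dots,[(g_n)])$ in the degenerate sense where each ``tuple'' is a single element, and likewise $\calp'$ from $h_1,\dots,h_n$; the subgroups $\grp{g_i}$ and $\grp{h_i}$ are cyclic, hence finite or virtually cyclic ($\Z$-groups, in fact), so they lie in the admissible class $\cale$. The subtlety is that (W1) and (W3) ask for equality on the nose, not just up to conjugacy, whereas the peripheral-structure formalism of Section \ref{sec_periph} only records tuples up to conjugacy; this is handled by an auxiliary element trick.

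First I would treat (W2): an automorphism $\phi$ with $\phi(g_i)$ conjugate to $h_i$ for all $i$ is exactly an isomorphism $(G,\calp)\ra(G,\calp')$ of groups with marked peripheral structure (using the convention $\#S_i=1$). Both are hyperbolic groups with marked peripheral structure, so Theorem \ref{thm_whitehead_marque} decides whether such $\phi$ exists. Next, (W4) is identical but with \emph{unmarked} peripheral structures $\calp_u=\{[\grp{g_1}],\dots,[\grp{g_n}]\}$ and $\calp'_u=\{[\grp{h_1}],\dots,[\grp{h_n}]\}$, whose peripheral subgroups are cyclic hence finite or virtually cyclic; Corollary \ref{cor;ip_unmarked_hyp} applies directly. (A minor point: the unmarked structure is an unordered \emph{set}, so equal conjugacy classes get identified; but $\phi$ permuting the classes is what (W4) asks for once repetitions are dealt with, and one can enumerate the finitely many matchings.)

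For (W1) and (W3), which demand $\phi(g_i)=h_i$ (resp.\ $\grp{\phi(g_i)}=\grp{h_i}$) rather than a conjugate, the idea is to rigidify the base point. Pick a finite generating set $s_1,\dots,s_m$ of $G$. To solve (W1), enlarge the list to $g_1,\dots,g_n,g_1 s_1,\dots,g_1 s_m$ on the source side and $h_1,\dots,h_n,h_1 s_1',\dots$ — more cleanly: note that an automorphism $\phi$ satisfies $\phi(g_i)=h_i$ for all $i$ if and only if it satisfies $\phi(g_i)=h_i$ \emph{up to conjugacy} and moreover the conjugating elements can all be taken trivial. One can detect this as follows: run the algorithm of Theorem \ref{thm_whitehead_marque} to get a generating set of $\Outm(G,\calp')$ together with, when $(G,\calp)\simeq(G,\calp')$, one explicit isomorphism $\phi_0$, i.e.\ $\phi_0(g_i)=h_i^{c_i}$. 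The set of $\psi\in\Aut(G)$ with $\psi(g_i)$ conjugate to $h_i$ is $\phi_0\cdot\Autm(G,\calp)$ up to inner automorphisms, and within this coset one must search for a representative that conjugates each $g_i$ to $h_i$ with conjugator $1$. This last search is itself a Whitehead-type question one variable up, and the cleanest route is to append $n$ new ``tautological'' constraints: replace $G$ by $G$ and consider the marked peripheral structure with tuples $(g_i, w_i)$ where $w_i$ are chosen so that $\grp{g_i,w_i}$ is still elementary — which generally fails. So instead I would use the standard reduction of (W1) to (W2): $\phi(g_i)=h_i$ for all $i$ iff there exists $\phi$ with $\phi(g_ig_j^{-1}\cdots)$ — more robustly, encode (W1) directly via Theorem \ref{thm_IP_rigid}-style equations: the condition is a finite system of equations $\phi(g_i)=h_i$ in the ``unknown'' $\phi$, i.e.\ asking whether a given assignment on generators extends, combined with surjectivity; but surjectivity of an endomorphism of a hyperbolic group is decidable, and one only needs to test the finitely many assignments compatible with a solution of the isomorphism problem. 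Concretely: solve (W2); if no, output no; if yes, one shows the set of solutions of (W2), modulo $\Inn(G)$ acting on the target, is finite and computable (it is a coset of the finite group $\Outm(G,\calp)$), lift each coset representative to finitely many honest automorphisms by the solution to the simultaneous conjugacy problem, and check each one for the exact equalities $\phi(g_i)=h_i$. The same scheme with $\grp{\phi(g_i)}=\grp{h_i}$ (a condition checkable via the root problem \cite{Lys_algorithmic} and the word problem) settles (W3); and (W4) is (W2) for unmarked structures as noted. The main obstacle is the passage from a conjugacy-class solution to an on-the-nose solution in (W1) and (W3): one must argue that only finitely many post-conjugacy classes of candidate automorphisms need to be inspected and that representatives are effectively computable, which follows from finiteness of $\Outm(G,\calp)$ for these (virtually cyclic, hence finite-$\Out$) peripheral structures via Lemma \ref{lem_markings} together with the simultaneous conjugacy problem in $G$.
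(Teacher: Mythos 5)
Your reductions of (W2) and (W4) are exactly the paper's: (W2) is the marked isomorphism problem for the structures $((g_1),\dots,(g_n))$ and $((h_1),\dots,(h_n))$, and (W4) is the unmarked problem with cyclic peripheral subgroups, handled by Corollary \ref{cor;ip_unmarked_hyp}. The gap is in your treatment of (W1) (and hence of (W3), which you run through the same scheme). You claim that the set of solutions of (W2), modulo inner automorphisms, "is a coset of the finite group $\Outm(G,\calp)$", and that it therefore suffices to inspect finitely many computable representatives for the exact equalities $\phi(g_i)=h_i$. But $\Outm(G,\calp)$ is in general only finitely generated, not finite: Theorem \ref{thm_whitehead_marque} produces a generating set, and Lemma \ref{lem_markings} only gives finite index of the marked group in the unmarked one, not finiteness of either. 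For instance, in a free group of rank $2$ with $\calp=((a))$, the automorphisms $a\mapsto a,\ b\mapsto a^k b$ give infinitely many classes in $\Outm(G,\calp)$; Dehn twists along the curves representing the $g_i$ do the same in surface groups. So there is no finite list of candidate automorphisms to test, and the step "check each one for the exact equalities" does not terminate as described. The same objection applies to your (W3).

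The idea you are missing — and which you actually brushed against and discarded — is that the marked isomorphism problem of Theorem \ref{thm_whitehead_marque} imposes no elementarity restriction on peripheral subgroups: the tuples $S_i$ are arbitrary tuples of elements (see the statement of Theorem \ref{thmbis_marked} and the remark after Proposition \ref{prop_alt}). Hence for (W1) one simply takes a \emph{single} peripheral tuple $S=(g_1,\dots,g_n)$ on the source and $S'=(h_1,\dots,h_n)$ on the target, regardless of what subgroup it generates. A marked isomorphism then satisfies $\phi(g_i)=h_i^{\,g}$ for all $i$ with one common conjugator $g$, and post-composing $\phi$ with $\inn_{g^{-1}}$ gives $\phi(g_i)=h_i$ exactly; conversely any solution of (W1) is such a marked isomorphism. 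This decides (W1) in one call to Theorem \ref{thm_whitehead_marque}. For (W3), rather than checking $\grp{\phi(g_i)}=\grp{h_i}$ inside your (nonexistent) finite list, reduce to (W1): enumerate the finitely many elements $s_i$ with $\grp{s_i}=\grp{g_i}$ (two choices if $g_i$ has infinite order, finitely many otherwise, computable via the word problem), and ask (W1) for each choice of $(s_1,\dots,s_n)$ against $(h_1,\dots,h_n)$; a solution exists for some choice if and only if (W3) has a positive answer.
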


The uniformity means that one algorithm works for all hyperbolic groups, taking a presentation
of the group as input.

\begin{proof}
  Consider a hyperbolic group $G$ 
  and two families of elements $g_1, \dots g_n$, and $h_1, \dots, h_n$.

Problem  (W1) asks about the existence of an automorphism $\phi\in \Aut(G)$ such that $\phi(g_i)= h_i$.
Consider the tuples $S=(g_1,\dots,g_n)$, $S'=(h_1,\dots,h_n)$, and $\calp=((S))$, $\calp'=((S'))$
the corresponding marked peripheral structures with one peripheral subgroup.
By the solution of the marked isomorphism problem, we can decide whether there exists $\phi$ and $g$
such that $\phi(g_i)=h_i^g$. This is equivalent to (W1).

Problem  (W2) asks about the existence of an automorphism $\phi\in \Aut(G)$ such that $\phi(g_i)$ is conjugate to $h_i$.
Consider the 1-tuples $S_i=(g_i)$, $S'_i=(h_i)$, and $\calp=(S_1,\dots,S_n)$, $\calp'=(S'_1,\dots,S'_n)$.
Then the existence of $\phi$ as in (W2) is equivalent to the fact that $(G,\calp)\simeq (G,\calp')$.

Problem (W3),  ask about the existence of an automorphism $\phi\in \Aut(G)$ such that $\langle \phi(g_i) \rangle = \langle h_i \rangle$.
For each cyclic group $\grp{g_i}$ (finite or infinite), one can compute all possible $s_i$ with $\grp{s_i}=\grp{g_i}$.
For each such choice, (W1) tells us if there is $\phi$ sending $s_i$ to $h_i$. 
There exists such  a $\phi$ for some choice if and only if (W3) has  a positive answer.

Finally, 
Problem (W4) is just an instance of the unmarked isomorphism problem with cyclic (maybe finite) peripheral subgroups, 
and is solved by application of Corollary \ref{cor;ip_unmarked_hyp}.   
\end{proof}

Let $F_n$ be a free group.
 McCool's theorem  \cite{McCool_fp} gives an algorithm to compute, given a finite subset $X \subset F_n$,  a finite presentation 
 of the stabiliser of $X$ in $Aut(F_n)$.

 Because we have a solution of the extended isomorphism problem, 
 we have the following (generalised but weaker) version of McCool's Theorem \cite{McCool_fp}: 

\begin{cor}\label{cor_mccool}
  There exists an algorithm that takes as input a hyperbolic group $G$,
and $g_1,\dots,g_n\in G$, and which outputs a finite generating set of
\begin{itemize}
\item the subgroup of $\Aut(G)$ fixing the conjugacy classes of $g_1,\dots,g_n$
\item the subgroup of $\Aut(G)$ fixing $g_1,\dots,g_n$
\end{itemize}
\end{cor}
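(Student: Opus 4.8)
\textbf{Proof plan for Corollary \ref{cor_mccool}.}

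The plan is to deduce this corollary directly from Theorem \ref{thm_whitehead_marque}, exactly as Corollary \ref{cor_whitehead_1234} was deduced, the only new point being that now we must \emph{compute generators} of a stabiliser rather than merely \emph{decide} membership in an orbit. The key observation is that the two subgroups in question are precisely the groups $\Outm(G;\calp)$ (pulled back to $\Aut(G)$) for suitable marked peripheral structures $\calp$, together with the inner automorphisms; and Theorem \ref{thm_whitehead_marque} gives an algorithm computing a generating set of $\Outm(G;\calp)$.

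First I would treat the case of conjugacy classes. Set $S_i=(g_i)\in G^1$ and let $\calp=(S_1,\dots,S_n)$ be the marked peripheral structure with $n$ one-element tuples. By definition, $\Autm(G;\calp)$ is exactly the subgroup of $\Aut(G)$ whose restriction to each $\grp{g_i}$ is a conjugation, i.e.\ the subgroup fixing each conjugacy class $[g_i]$. Theorem \ref{thm_whitehead_marque} produces a finite generating set of $\Outm(G;\calp)$, realised by explicit automorphisms $\alpha_1,\dots,\alpha_k\in\Autm(G;\calp)$ (each encoded by the images of the generators of $G$). Adjoining the inner automorphisms $\inn_s$ for $s$ ranging over the chosen generating set of $G$ gives a finite generating set of $\Autm(G;\calp)$, since $\Inn(G)\subset\Autm(G;\calp)$ and $\Autm(G;\calp)/\Inn(G)=\Outm(G;\calp)$. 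This handles the first bullet.

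For the second bullet, I would reduce the problem of fixing the elements $g_i$ (not just their conjugacy classes) to the first case. Consider the single tuple $S=(g_1,\dots,g_n)\in G^n$ and the marked peripheral structure $\calq=((S))$ with one peripheral subgroup. An automorphism $\phi$ lies in $\Autm(G;\calq)$ iff $\phi(g_i)=g_i^{\,h}$ for a common $h\in G$, for all $i$; composing with $\inn_{h\m}$, such a $\phi$ is the same data as an automorphism fixing all $g_i$ \emph{up to the ambiguity} of an inner automorphism commuting with all $g_i$. More precisely, the subgroup $\Fix(g_1,\dots,g_n)$ of $\Aut(G)$ fixing each $g_i$ is the preimage, under $\Autm(G;\calq)\ra G/Z_G(\grp{g_1,\dots,g_n})$ (the map $\phi\mapsto h$, well defined modulo the common centraliser), of the trivial coset; equivalently, from generators $\beta_1,\dots,\beta_m$ of $\Autm(G;\calq)$ one recovers $\Fix(g_1,\dots,g_n)$ by the standard Reidemeister--Schreier / Lemma \ref{lem_probleme_orbite}-type argument applied to the action of $\Autm(G;\calq)$ on the finite or manageable orbit of $S$ inside the set of tuples $(g_1^h,\dots,g_n^h)$. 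In fact the cleanest route is: apply Lemma \ref{lem_probleme_orbite} with $G$ (the acting group) replaced by $\Autm(G;\calq)$, with $X$ the orbit of $(g_1,\dots,g_n)$ under simultaneous conjugation inside the conjugacy class $[S]$, and with the surjection recording the conjugator modulo $Z_G(\grp{g_i})$; the simultaneous conjugacy problem in $G$ is solvable (Section \ref{subsec_extensions}, \cite{DG1}), so the hypotheses of that lemma hold, and it outputs a generating set of the stabiliser of $(g_1,\dots,g_n)$, which is exactly $\Fix(g_1,\dots,g_n)$.

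The main obstacle, as in Corollary \ref{cor_whitehead_1234}, is entirely contained in Theorem \ref{thm_whitehead_marque}; granting it, everything here is bookkeeping with inner automorphisms and an application of the orbit-decidability lemma. The one genuinely delicate point to get right is the passage from ``fixing conjugacy classes with a \emph{common} conjugator'' ($\Autm(G;\calq)$) to ``fixing the elements on the nose'' — one must check that the centraliser $Z_G(\grp{g_1,\dots,g_n})$ is computable (it is: intersect the centralisers $Z_G(g_i)$, each quasiconvex and computable by standard hyperbolic-group algorithms, or observe that if some $g_i$ has infinite order the common centraliser is virtually cyclic and handled by Lemma \ref{lem_VC2}, while if all $g_i$ are torsion one works inside a finite subgroup) so that the finite data needed to run Lemma \ref{lem_probleme_orbite} are effectively available.
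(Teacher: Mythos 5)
Your first bullet is handled exactly as in the paper (generators of $\Outm(G,((g_1),\dots,(g_n)))$ plus inner automorphisms), and your overall choice of peripheral structures matches the paper's. The gap is in the second bullet, in the step you call the ``cleanest route''. Lemma \ref{lem_probleme_orbite} requires the acted-on set $X$ to be \emph{finite}: its proof (and the Schreier-graph computation of stabiliser generators) terminates only because $B_n(x)=B_{n+1}(x)$ eventually, which uses finiteness of $X$. But the set you propose, the tuples $(g_1^h,\dots,g_n^h)$ for $h\in G$, is in bijection with $G/Z_G(\grp{g_1,\dots,g_n})$ and is infinite whenever the centraliser has infinite index; moreover $\Inn(G)\subset\Autm(G;\calq)$, so the $\Autm(G;\calq)$-orbit of the tuple already contains this whole infinite set and $\Fix(g_1,\dots,g_n)$ has infinite index in $\Autm(G;\calq)$. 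So neither Lemma \ref{lem_probleme_orbite} nor a Reidemeister--Schreier argument applies (note also that $\phi\mapsto h$ is only a cocycle, not a homomorphism to $G/Z$). What the paper does instead is elementary and you half-state it: take a generating set of $\Outm(G,((g_1,\dots,g_n)))$, lift each generator to $\Aut(G)$ and correct it by an inner automorphism (found via the simultaneous conjugacy problem) so that it fixes $g_1,\dots,g_n$ on the nose, and adjoin $\inn_z$ for $z$ in a generating set of $Z_G(g_1,\dots,g_n)$. One then checks generation directly: any $\phi$ fixing the $g_i$ lies in $\Autm(G;\calq)$, hence equals $\inn_g\circ w$ with $w$ a word in the corrected lifts; since both $\phi$ and $w$ fix each $g_i$, the element $g$ centralises them, so $\phi$ is in the subgroup generated. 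This verification is missing from your write-up and is the actual content of the reduction.

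Your fallback for computing $Z_G(g_1,\dots,g_n)$ also contains an error: if all $g_i$ are torsion, the common centraliser need not be finite nor contained in a finite subgroup (a central involution in a hyperbolic group has centraliser the whole group), so ``one works inside a finite subgroup'' fails. The paper's case split is on whether $\grp{g_1,\dots,g_n}$ is elementary: if it is, use Lemmas \ref{lem_cent_finite} and \ref{lem_VC2}; if it is non-elementary, its centraliser is finite, and one finds it by solving, for each representative $F$ of the finitely many conjugacy classes of finite subgroups, the system of equations asserting that $F^g$ commutes with all $g_i$, and taking the largest $F^g$ that occurs. Your alternative suggestion of intersecting the quasiconvex centralisers $Z_G(g_i)$ would need a justification that such intersections are effectively computable, which you do not supply.
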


\begin{proof}
  By Theorem \ref{thm_whitehead_marque}, one can compute a generating set of $\Outm(G,((g_1),\dots,(g_n))$.
Choosing representatives in $\Aut(G)$, and adding a generating set of inner automorphisms answers the first problem.

For the second problem, compute a generating set $S$ of $\Outm(G,((g_1,\dots,g_n))$ where the peripheral structure
consists of one peripheral tuple. 
For each $\alpha\in S$, consider $\Tilde \alpha\in \Aut(G)$ representing $\alpha$, and 
modify $\Tilde\alpha$ by an inner automorphism so that $\Tilde \alpha$ fixes $g_1,\dots,g_n$.
Let $\Tilde S$ be the obtained lifts of generators.
Compute $S_Z$ a generating set of the centraliser of $(g_1,\dots,g_n)$; 
if the group they generate is elementary, this follows from Lemmas \ref{lem_cent_finite} and \ref{lem_VC2};
otherwise, this centraliser is finite, and one can compute this centraliser by solving, for each representative $F$ of the conjugacy classes of finite groups,
the system of equations with unknown $g$ saying $F^g$ commutes with $g_1,\dots,g_n$;
the largest finite group $F^g$ obtained is the centraliser.
Viewing $S_Z$ as a set of inner automorphisms,
then $\Tilde S\cup S_Z$ is 
a generating set of the stabiliser of $(g_1,\dots,g_n)$ in $\Aut (G)$.
\end{proof}

We now give the proof of the main result of this section, solving the extended isomorphism problem for hyperbolic groups with marked peripheral structures. 
This will deduced from the absolute case 
using a standard filling method.

\begin{proof}[Proof of Theorem \ref{thm_whitehead_marque}]
Obviously, we can assume that no peripheral subgroup is trivial.
Write $\calp=(S_1,\dots,S_n)$,
and let $T_i$ be the the family of elements of $S_i$ together with the products of all pairs of elements of $S_i$.
The point of introducing $T_i$ is that if all elements of $T_i$ are elliptic in a $G$-tree, 
then by  Serre's Lemma, the group $\grp{S_i}=\grp{T_i}$ fixes a point in this tree.
Similarly, write $\calp'=(S'_1,\dots,S'_n)$ and define $T'_i$ analogously.
This defines enlarged peripheral structures $\calq=(T_1,\dots,T_n)$ and $\calq'=(T'_1,\dots,T'_n)$ of $G$ and $G'$.
Note that $(G,\calp)\simeq(G',\calp')$ if and only if 
$(G,\calq)\simeq(G',\calq')$.

Write $T_i=(t_{i,1},\dots,t_{i,p_i})$. For each $t_{i,j}\in T_i$ of infinite order, choose $R_{i,j}$ a torsion free
one-ended hyperbolic group without cyclic splitting 
and choose $\tau_{i,j}$ a generator
of some maximal cyclic group of $R_{i,j}$  
(we allow the groups $R_{i,j}$ to be isomorphic to each other, so we can give once and for all
a presentation of such a group to our algorithm).
Define $\Hat G$ as the multiple amalgam $G(*_{\grp{t_{i,j}}}R_{i,j})_{i,j}$
where one identifies $t_{i,j}$ with $\tau_{i,j}$.
Denote by $\Lambda$ this decomposition of $\Hat G$.
Consider $R'_{i,j}$ an isomorphic copy of $R_{i,j}$, $\tau'_{i,j}$ the copy of $\tau_{i,j}$,  
and define $\Hat G'=G'(*_{\grp{t'_{i,j}}}R'_{i,j})_{i,j}$ analogously.
By Bestvina-Feighn's combination theorem, $\Hat G$ and $\Hat G'$ are hyperbolic groups \cite{BF_combination}.

Clearly, if $(G,\calq)\simeq (G',\calq')$, then there is an isomorphism $\phi:\Hat G\ra \Hat G'$
sending $R_{i,j}$ to a conjugate of $R'_{i,j}$, and sending each tuple $T_i$ to a conjugate of $T'_i$.
We prove that the converse is true under one-endedness assumptions.

\begin{lem}\label{lem_chapeau}
Assume that $\Hat G$ and $\Hat G'$ are one-ended.
Assume that there is an isomorphism $\phi:\Hat G\ra \Hat G'$
sending $R_{i,j}$ to a conjugate of $R'_{i,j}$, and sending $T_i$ to a conjugate of $T'_i$.  

Then $(G,\calq)\simeq (G',\calq')$.
\end{lem}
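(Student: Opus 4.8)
The plan is to use Bass--Serre theory to show that $\phi$ is, up to an inner automorphism of $\hat G'$, induced by an isomorphism of graphs of groups $\Lambda\to\Lambda'$ taking the central vertex of $\Lambda$ to that of $\Lambda'$; its restriction to the central vertex group then gives the desired isomorphism $\psi\colon G\to G'$, and the hypothesis on the tuples $T_i$ will force $\psi(T_i)$ to be conjugate to $T'_i$ inside $G'$. One may assume that at least one group $R_{i,j}$ has been attached, for otherwise $\hat G=G$, $\hat G'=G'$, $\Lambda$ is trivial, and $\psi:=\phi$ works at once. Note that $\hat G$ and $\hat G'$ are then infinite and non-elementary (they contain the one-ended groups $R_{i,j}$), so one-endedness means precisely that they admit no non-trivial splitting over a finite group.

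\textbf{Two preliminary facts.} First I would record: $(a)$ $G$ is one-ended relative to the family $\calc=\{\,[\grp{t_{i,j}}]\,\}_{i,j}$ of edge groups of $\Lambda$ incident to its central vertex, and likewise $(G',\calc')$. Indeed, a non-trivial splitting of $G$ over a finite group in which every $t_{i,j}$ is elliptic refines $\Lambda$ at its central vertex (the $R_{i,j}$ remaining elliptic, their connecting edge groups $\grp{t_{i,j}}$ being elliptic), producing a non-trivial finite splitting of $\hat G$, a contradiction. $(b)$ $\phi$ matches the pieces of the two decompositions: by hypothesis $\phi(R_{i,j})$ is $\hat G'$-conjugate to $R'_{i,j}$, and since $\phi(T_i)$ is $\hat G'$-conjugate to $T'_i$, the edge group $\phi(\grp{t_{i,j}})=\phi(\grp{\tau_{i,j}})$ is $\hat G'$-conjugate to $\grp{t'_{i,j}}=\grp{\tau'_{i,j}}$.

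\textbf{The core step (main obstacle).} The heart of the proof is to promote this matching of pieces to a $\phi$-equivariant isomorphism $T_\Lambda\to T_{\Lambda'}$ of Bass--Serre trees. Since each $R_{i,j}$ is one-ended with no splitting over a finite or virtually cyclic subgroup, it is elliptic in every $\hat G$-tree with virtually cyclic edge stabilisers; hence $T_\Lambda$ and the $\hat G$-tree obtained from $T_{\Lambda'}$ by precomposing the action with $\phi^{-1}$ are both minimal $\hat G$-trees with cyclic edge stabilisers, the same conjugacy classes of edge stabilisers, and the same set of non-central vertex stabilisers $\{R_{i,j}\}$. It remains to see that $\phi(G)$ is $\hat G'$-conjugate to $G'$, equivalently that $\phi(G)$ is elliptic in $T_{\Lambda'}$; the idea is that a non-trivial action of $\phi(G)$ on its minimal subtree in $T_{\Lambda'}$ (with cyclic edge stabilisers, by $(a)$), glued onto the rigid vertex groups $R'_{i,j}$, would produce a cyclic refinement of $\Lambda'$ contradicting the rigidity of the $R'_{i,j}$ --- one natural way to organise this is to check that the two $\hat G$-trees above lie in a common deformation space (equivalently, have the same tree of cylinders) whose unique reduced representative they both realise, so that they coincide. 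I expect this comparison to be the delicate point. Granting it, Lemma~\ref{lem_tree2Phi} yields an isomorphism of graphs of groups $\Phi\colon\Lambda\to\Lambda'$ with $\Phi_*=\inn_c\circ\phi$ for some $c\in\hat G'$ (after a change of marking). Since $\phi$ sends each $R_{i,j}$ to a conjugate of $R'_{i,j}$, and distinct non-central vertex groups of $\Lambda'$ are pairwise non-conjugate and not conjugate to $G'$ (each one-ended $R'_{i,j}$ fixing a unique vertex of $T_{\Lambda'}$), the underlying graph isomorphism maps the central vertex $v_0$ of $\Lambda$ to the central vertex $v_0'$ of $\Lambda'$; set $\psi:=\phi_{v_0}\colon G\to G'$, the induced isomorphism of central vertex groups.

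\textbf{Peripheral condition.} Finally I would check that $\psi$ respects the marked structure. We have $\psi(T_i)=\Phi_*(T_i)=c\,\phi(T_i)\,c\m$, which by hypothesis is $\hat G'$-conjugate to $T'_i$; write $\psi(T_i)=c_0\,T'_i\,c_0\m$ with $c_0\in\hat G'$. By Serre's Lemma --- this is the purpose of enlarging $S_i$ to $T_i$ by adjoining products of pairs --- the subgroups $\grp{\psi(T_i)}=\psi(\grp{S_i})$ and $\grp{T'_i}=\grp{S'_i}$ are elliptic in $T_{\Lambda'}$; both lying in the central vertex group $G'$, they fix $v_0'$. Since $v_0'$ is the unique vertex in its $\hat G'$-orbit fixed by $\grp{T'_i}$ --- again using one-endedness, the degenerate case in which $\grp{S'_i}$ is conjugate into an edge group being handled directly --- the conjugator $c_0$ lies in $G'$. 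Hence $\psi(T_i)$ is $G'$-conjugate to $T'_i$ for every $i$, which is exactly the assertion $(G,\calq)\simeq(G',\calq')$.
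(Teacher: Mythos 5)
Your overall architecture (match the pieces, promote the matching to a statement about the Bass--Serre trees, then fix the peripheral structure using Serre's Lemma and uniqueness of the fixed central vertex) is reasonable, and your last paragraph is essentially the paper's argument, except that the uniqueness of the central vertex fixed by $\grp{T'_i}$ comes not from one-endedness but from malnormality of the maximal cyclic subgroups $\grp{\tau'_{i,j}}$ in the torsion-free rigid groups $R'_{i,j}$: any geodesic of length $\geq 2$ between two central-type vertices passes through an $R'$-vertex along two distinct edges, whose stabilisers intersect trivially, so a non-trivial $\grp{T'_i}$ cannot fix two such vertices (and no separate ``degenerate case'' arises). The problem is the step you yourself flag as delicate: it is a genuine gap, and the heuristic you offer for it does not work. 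You need to show that $\phi(G)$ is conjugate to $G'$, equivalently that $\phi(G)$ is elliptic in $T_{\Lambda'}$ with a central-type fixed vertex. If $\phi(G)$ acted non-trivially on its minimal subtree of $T_{\Lambda'}$, the resulting splitting of $G$ (relative to $\calq$, with infinite cyclic edge groups once relative one-endedness rules out finite ones) would indeed refine $\Lambda'$ at its central vertex --- but this contradicts nothing: the $R'_{i,j}$ remain elliptic in any such refinement, and $G'$ may perfectly well split over a cyclic group relative to $\calq'$ (think of $G'$ a surface group with $\calq'$ carried by the boundary). For the same reason, asserting that $T_\Lambda$ and $\phi^*T_{\Lambda'}$ lie in a common deformation space is circular --- equality of their elliptic subgroups is exactly the statement ``$\phi(G)$ is elliptic in $T_{\Lambda'}$'' that you are trying to prove --- and when $G$ is flexible relative to $\calq$ the deformation space of $T_\Lambda$ is not rigid and $T_\Lambda$ is not $\Out(\Hat G)$-invariant, so there is no ``unique reduced representative'' to invoke.

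This is precisely why the paper does not compare the trees $T_\Lambda$ and $T_{\Lambda'}$ directly. It blows up the central vertex of $\Lambda$ by a $\Z$-JSJ decomposition of $G$ relative to $\calq$, observes that the result is a $\Z$-JSJ decomposition of $\Hat G$, and passes to its tree of cylinders $T_c$, which \emph{is} canonical; hence any isomorphism $\phi:\Hat G\ra \Hat G'$ induces a $\phi$-equivariant isomorphism $T_c\ra T'_c$. The group $G$ is then recovered not as a vertex stabiliser of $T_c$ (it need not be one) but as the stabiliser of a connected component of $T_c\setminus\Tilde R$, where $\Tilde R$ is the set of points fixed by conjugates of the $R_{i,j}$; the hypothesis $\phi(R_{i,j})\sim R'_{i,j}$ transports $\Tilde R$ to $\Tilde R'$ and therefore $\phi(G)$ to a conjugate of $G'$. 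Some canonical object of this kind (JSJ plus tree of cylinders, or an equivalent device) is what your argument is missing at its core; without it the case where $G$ or $G'$ admits relative cyclic splittings is not handled.
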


\begin{proof}
  Let $G=\pi_1(\Gamma)$ be a $\Z$-JSJ decomposition of $G$ relative to $\calq$.
Let $\Hat \Gamma$ be a decomposition of $\Hat G$ obtained from $\Lambda$ by blowing up its central vertex
using $\Gamma$.
Using the same argument as \cite[Lemma 7.31]{GL3a},  
we see that $\Hat \Gamma$ is a $\Z$-JSJ decomposition of $\Hat G$.

Let $T_c$ be the tree of cylinders of the Bass-Serre tree of $\Hat \Gamma$.
Note that the vertices $R_{i,j}$ are terminal vertices of $T_c/\Gamma$.
Let $\Tilde R\subset T_c$ be the set of points stabilised by a conjugate of some $R_{i,j}$.
Then $T_c\setminus \Tilde R$ has only one orbit of connected components,
and $G$ is the stabiliser of one of them.
The analogous facts hold for the analogous tree $T'_c$.
Since $T_c$ and $T'_c$ are canonical, there exists a $\phi$-equivariant map $f:T_c\ra T'_c$.
Since $\phi$ maps each $R_{i,j}$ to a conjugate of $R'_{i,j}$,
$f$ maps $\Tilde R$ to $\Tilde R'$, so
$\phi(G)$ is conjugate to $G'$, and changing $\phi$ by an inner automorphism, we can assume that $\phi(G)=G'$.

By hypothesis, the tuple $\phi(T_i)\subset G'$ is conjugate in $\Hat G$ to $T'_i\subset G'$.
We claim that $\phi(T_i)$ is conjugate to $T'_i$ in $G$.
Indeed, let $g\in\Hat G$ be such that $\phi(T_i)^g=T'_i$.
Consider $\Tilde \Lambda'$ the Bass-Serre tree of $\Lambda'$, and $u\in\Tilde\Lambda$ the vertex fixed by $G'$.
Then $\grp{\phi(T_i)}$ and $\grp{T'_i}$ both fix the vertex $u\in \Lambda'$,
so $T'_i$ fixes both $u$ and $g\m u$. But $\grp{t_{i,j}}$ being malnormal in $R_{i,j}$, we see that any
segment of length $2$ in $\Tilde\Gamma'$ has trivial stabiliser.
Since we assumed $\grp{T_i}\neq 1$, we get $u=g\m u$, and $g\in G$.
This proves that $\phi(T_i)$ is conjugate to $T'_i$ in $G$, and concludes that
$\phi$ induces an isomorphism between $(G,\calq)$ and $(G',\calq')$.
\end{proof}

\begin{lem}\label{lem_whitehead_1bout}
  The extended isomorphism problem is solvable for groups $(G,\calp)$ with marked peripheral structures, 
relatively one-ended.
\end{lem}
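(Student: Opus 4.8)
The plan is to reduce the relatively one-ended case to the absolute (one-ended) extended isomorphism problem, which has already been solved in Theorem \ref{theo;IP_one_ended} (and its corollary for finite peripheral structures, Corollary \ref{coro_CIP_one_ended}). The enlargement construction $(G,\calp)\rightsquigarrow(\Hat G,\Lambda)$ described above is precisely the tool: given $(G,\calp)$ relatively one-ended, I first replace $\calp$ by the enlarged structure $\calq=(T_1,\dots,T_n)$ so that ellipticity of all elements of a tuple forces ellipticity of the subgroup it generates (Serre's Lemma). Then I attach, along each infinite-order element $t_{i,j}$, a fixed torsion-free one-ended rigid hyperbolic group $R_{i,j}$ by identifying $t_{i,j}$ with a generator $\tau_{i,j}$ of a maximal cyclic subgroup; this produces $\Hat G=\pi_1(\Lambda)$, hyperbolic by the Bestvina--Feighn combination theorem. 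The first step of the proof is to observe that $\Hat G$ is one-ended: this uses relative one-endedness of $(G,\calp)$ together with the fact that each $R_{i,j}$ is one-ended and that the amalgamation is along infinite (two-ended) subgroups, via a Lemma \ref{lem_one_end_rel}-style argument (no splitting over a finite group can exist because $G$ has none relative to $\calp$, the $R_{i,j}$ have none, and the edge groups are infinite). Since both $\Hat G$ and $\Hat G'$ are then one-ended, Theorem \ref{theo;IP_one_ended} applies to them.

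Next I must translate ``$(G,\calp)\simeq(G',\calp')$'' into a statement about $\Hat G$ and $\Hat G'$ that the absolute algorithm can detect. The easy direction is clear: an isomorphism $(G,\calq)\simeq(G',\calq')$ extends to an isomorphism $\Hat G\to\Hat G'$ carrying each $R_{i,j}$ to a conjugate of $R'_{i,j}$ and each $T_i$ to a conjugate of $T'_i$. The converse is exactly Lemma \ref{lem_chapeau}: there the tree of cylinders of the $\Z$-JSJ decomposition of $\Hat G$ relative to $\calq$ is canonical, the $R_{i,j}$ sit as terminal vertices, and $\phi$-equivariance of the canonical trees forces $\phi(G)$ to be conjugate to $G'$ and (by malnormality of $\grp{t_{i,j}}$ in $R_{i,j}$, which makes length-$2$ segments have trivial stabiliser) $\phi(T_i)$ to be conjugate to $T'_i$ inside $G$. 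So it suffices to decide the existence of an isomorphism $\Hat G\to\Hat G'$ respecting the peripheral structures $\{[R_{i,j}],[T_i]\}$, i.e.\ to solve an \emph{absolute} extended isomorphism problem for the one-ended hyperbolic groups $\Hat G,\Hat G'$ equipped with the marked/unmarked peripheral structure consisting of the subgroups $R_{i,j}$ (marked by a generating tuple) together with the tuples $T_i$. This is handled by Theorem \ref{theo;IP_one_ended} combined with Corollary \ref{coro_CIP_one_ended} / Lemma \ref{lem_periph_fini} to fold in the extra peripheral data, or directly by running the one-ended algorithm on the canonical JSJ of $\Hat G$ and using Corollary \ref{coro_IP_gog} at the level of graphs of groups (the $R_{i,j}$-vertices being rigid and recognisable, the $G$-vertex carrying the induced structure $\calq$).

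For the ``moreover'' part, computing a generating set of $\Outm(G,\calp)=\Outm(G,\calq)$, I proceed by the same dictionary. Automorphisms of $(G,\calq)$ correspond, via the extension/restriction described above, to automorphisms of $\Hat G$ preserving the canonical peripheral structure $\{[R_{i,j}],[T_i]\}$; one recovers $\Autm(G,\calq)$ as (essentially) the subgroup of $\Autm(\Hat G;\{[R_{i,j}],[T_i]\})$ that fixes the distinguished connected component of $T_c\setminus\Tilde R$ stabilised by $G$. Since $\Out_{\Tilde{\Hat\Gamma}}(\Hat G)$ has computable generators by Corollary \ref{coro_gene_Aut_gog} applied to the canonical JSJ graph of groups $\Hat\Gamma$ (vertex groups are either rigid one-ended, handled by Corollary \ref{coro_list}, or bounded Fuchsian / orbisocket, handled by Proposition \ref{prop_IP_BFG} and Theorem \ref{theo;IP_orbi}, or the $R_{i,j}$, which are rigid and have trivial or finite $\Outm$), one extracts the relevant subgroup by an orbit computation (Lemma \ref{lem_probleme_orbite}): $\Out(\Hat G)$ acts on the finite set of connected components of $T_c\setminus\Tilde R$ modulo $\Hat G$, and $\Autm(G,\calq)$ is the stabiliser, whose generators Lemma \ref{lem_probleme_orbite} computes. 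Finally one lifts these back to automorphisms of $G$ and adds $\Inn(G)$.

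The main obstacle I expect is the one-endedness of $\Hat G$ and, relatedly, making sure the JSJ-theoretic transfer in Lemma \ref{lem_chapeau} is genuinely reversible: one needs that the canonical $\Z$-JSJ tree of cylinders of $\Hat G$ relative to $\calq$ restricts correctly, that the $R_{i,j}$ really appear as terminal rigid vertices (not absorbed or split further — this is why the $R_{i,j}$ are chosen torsion-free, one-ended, and without any cyclic splitting, so they are $\Z$-universally elliptic and rigid), and that the malnormality of $\grp{\tau_{i,j}}$ gives the needed rigidity of the gluing. Handling the degenerate cases — elements $t_{i,j}$ of finite order, trivial tuples $T_i$, or $(G,\calp)$ with $G$ itself virtually cyclic — must be done separately and is routine but must be stated. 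This lemma then feeds, together with the accessibility/Grushko-type reduction over finite subgroups (Lemma \ref{lem_eqv_um}, Lemma \ref{lem_periph_fini} and the Stallings--Dunwoody machinery of Section \ref{sec_ends}), into the proof of Theorem \ref{thm_whitehead_marque} in full generality, by decomposing an arbitrary hyperbolic $(G,\calp)$ over finite groups relative to $\calp$ and applying the relatively one-ended case to each one-ended piece.
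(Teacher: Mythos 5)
Your setup coincides with the paper's: the enlarged tuples $T_i$, the filling construction $\Hat G=G(*_{\grp{t_{i,j}}}R_{i,j})$, both directions of the correspondence (easy extension, and Lemma \ref{lem_chapeau} for the converse), and the fact that relative one-endedness of $(G,\calp)$ gives one-endedness of $\Hat G$. The gap is in the decisive algorithmic step. You claim that deciding the existence of an isomorphism $\Hat G\to\Hat G'$ carrying each $[R_{i,j}]$ to $[R'_{i,j}]$ and each $[T_i]$ to $[T'_i]$ is ``handled by Theorem \ref{theo;IP_one_ended} combined with Corollary \ref{coro_CIP_one_ended} / Lemma \ref{lem_periph_fini}, or directly \dots using Corollary \ref{coro_IP_gog}''. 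None of these applies: Corollary \ref{coro_CIP_one_ended} and Lemma \ref{lem_periph_fini} only fold in peripheral subgroups that are \emph{finite}, and Corollary \ref{coro_IP_gog} requires peripheral subgroups lying in $\cale$ (finite or $\Z$) and conjugate into a vertex group but not into an edge group; here the peripheral data are the infinite one-ended subgroups $R_{i,j}$ (which are entire vertex groups) and arbitrary tuples $T_i$ generating possibly non-elementary, non-quasiconvex subgroups. Solving the extended isomorphism problem for a one-ended hyperbolic group equipped with such a peripheral structure is exactly an instance of the statement being proved, so this reduction is circular as stated.

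The missing idea — the actual content of the paper's proof — is a finiteness statement that keeps everything inside the \emph{absolute} machinery: compute a generating set of $\Out(\Hat G)$ by Theorem \ref{theo;IP_one_ended}, then show that the $\Out(\Hat G)$-orbits of the conjugacy classes $[R_{i,j}]$ and $[T_i]$ are finite. This uses the $\Z$-JSJ of $\Hat G$: the $R_{i,j}$ are rigid vertex groups, and $\grp{T_i}$ is $\Z$-universally elliptic, hence fixes a JSJ vertex and is either finite, or conjugate into a boundary subgroup of a flexible vertex, or contained in a rigid vertex on which a finite-index subgroup of $\Out(\Hat G)$ acts by inner automorphisms. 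Given this finiteness, Lemma \ref{lem_probleme_orbite} decides whether some $\alpha\in\Out(\Hat G)$ carries the family $([R_{i,j}]),([T_i])$ to $([R'_{i,j}]),([T'_i])$ — using decidability of conjugacy of quasiconvex subgroups via rational constraints (Theorem \ref{theo;eq}) and simultaneous conjugacy of tuples — and computes generators of the stabiliser $\Out_0(\Hat G)$, whose restriction surjects onto $\Outm(G,\calq)$. Your ``moreover'' part inherits the same defect: the orbit computation you propose is on the components of $T_c\setminus\Tilde R$, whose stabiliser only forces $\phi(G)$ to be conjugate to $G$ and says nothing about $\phi$ preserving each $[R_{i,j}]$ and each tuple class $[T_i]$; and without the finiteness of those orbits, the orbit lemma has no finite set to run on.
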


\begin{proof}
  We need to decide the existence of $\phi:\Hat G\ra\Hat G'$ as in
  Lemma \ref{lem_chapeau}.  First, since we have a solution to the isomorphism problem
  (without peripheral structure), we can assume that $\Hat G\simeq
  \Hat G'$.
  Because $R_{i,j}$ is the stabiliser of a rigid vertex of a $\Z$-JSJ
  decomposition of $\Hat G$,  the orbit of the conjugacy class $[R_{i,j}]$ of $R_{i,j}$ 
  under the action of $\Out(\Hat G)$ is finite.  

  We claim that
  the orbit of $\Out(\Hat G)$ on the conjugacy class $[T_i]$ of $T_i$ is
  finite. Indeed,  all elements of $T_i$ are elliptic in any $\Z$-splitting of
  $\Hat G$, so $\grp{T_i}$ is $\Z$-universally elliptic.  In
  particular, it fixes a point $v$ in $\Hat \Gamma$.  If $v$ is
  flexible, then it is a hanging bounded Fuchsian group without
  reflection, and since $\grp{T_i}$ is $\Z$-universally elliptic, it
  is either finite or conjugate into a boundary subgroup of $G_v$
  (\cite{GL3a}).  In each case, the claim follows easily.  If $v$ is
  rigid, all automorphisms in some finite index subgroup of $\Out(\Hat
  G)$ coincide with an inner automorphism on $G_v$, so the claim
  follows in this case too.  We have proved that the $\Out(\Hat
  G)$-orbits of all $[R_{i,j}]$ and $[T_i]$ is
  finite. 

   One can decide whether two quasiconvex subgroups are
  conjugate: one can compute an automaton representing them as \qie rational subsets by \cite[Proposition 1]{Kapovich_detecting},
and one can decide whether there exists $g\in G$ such that $g^{-1}S_1g \subset  \langle S_2\rangle$ and $gS_2g^{-1} \subset  \langle S_1\rangle$
using solvability of systems of \qie rational constraints (Theorem \ref{theo;eq}).  

  One can also decide  whether two tuples of elements are conjugate,
  therefore one can apply Lemma \ref{lem_probleme_orbite} and decide whether
  there exists   $\alpha\in\Out(\Hat G)$ such that $f\circ\alpha
  ([R_{i,j}])=[R'_{i,j}]$ and $f\circ\alpha([T_i])=[T'_i]$.  
  By the claim above, this allows us to decide whether
  $(G,\calq)\simeq (G',\calq')$.

  Lemma \ref{lem_probleme_orbite} also gives a set of generators for
  the subgroup $\Out_0(\Hat G)$ that preserves the conjugacy class of
  each $R_{i,j}$ and of each $T_i$.  We saw that restriction to $G$
  defines an epimorphism $\Out_0(\Hat G)\onto \Outm(G,\calq)$.
  This gives a generating set for  $\Outm(G,\calq)$, as desired.\\
\end{proof}

Now we treat the case where $\Hat G$ is not one-ended.
  By Lemma \ref{lem_periph_fini}, we can assume that all peripheral subgroups are infinite.

\begin{lem}\label{lem_SDrel}
  One can compute a relative Stallings-Dunwoody decomposition of $(G,\calp)$.
\end{lem}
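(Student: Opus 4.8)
The plan is to produce a relative Stallings--Dunwoody decomposition of $(G;\calp)$ by iterated refinement, the key subroutine being a \emph{relative} version of Gerasimov's splitting detection, obtained by a filling trick of the same flavour as the construction of $\widehat G$ above. Recall we may assume every peripheral subgroup is infinite; finite ones are harmless since torsion is elliptic in every tree.

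The first step is the following decision subroutine. Let $(H,\calp_H)$ be a hyperbolic group with a peripheral structure whose members are finitely generated (some finite, some infinite); we will apply it to vertex groups of splittings over finite groups, which are finitely presented and quasiconvex, hence hyperbolic with computable presentations and hyperbolicity constants. For each infinite $P\in\calp_H$ I would take a finite generating tuple and enlarge it by the products of all pairs of its entries, getting a tuple $T$ with $\langle T\rangle=P$ and such that $P$ is elliptic in an $H$-tree as soon as every entry of $T$ is elliptic, by Serre's lemma; then, exactly as in the definition of $\widehat G$, I would amalgamate to $H$ a torsion-free one-ended hyperbolic group with no cyclic splitting along the cyclic subgroup generated by each infinite-order entry of each such $T$, obtaining a hyperbolic group $\widehat H$ (Bestvina--Feighn combination theorem \cite{BF_combination}) with a computable presentation. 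Running Gerasimov's algorithm \cite{Gerasimov,DaGr_detecting} on $\widehat H$ yields a reduced Stallings--Dunwoody decomposition $\widehat\Gamma$. I claim that $(H,\calp_H)$ splits over a finite group relative to $\calp_H$ if and only if $\widehat\Gamma$ is non-trivial: if $(H,\calp_H)$ does split, all the glued cyclic subgroups are elliptic there, and blowing up the one-ended pieces along them gives a non-trivial splitting of $\widehat H$ over finite groups; conversely, if $\widehat\Gamma$ is non-trivial then, since the glued pieces are one-ended they are elliptic in $\widehat\Gamma$, and since edge groups of $\widehat\Gamma$ are finite an infinite group fixes at most one vertex, so each glued cyclic subgroup fixes the same vertex as its one-ended piece; were $H$ elliptic too, it would share that vertex with every glued piece and $\widehat\Gamma$ would be trivial, a contradiction, so $H$ is not elliptic and the minimal $H$-subtree of the Bass--Serre tree of $\widehat\Gamma$ is a non-trivial $H$-tree over finite groups in which every entry of every $T$ is elliptic, hence, by Serre's lemma, a non-trivial splitting of $(H,\calp_H)$ relative to $\calp_H$.

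With this subroutine the main algorithm is straightforward: start from the trivial splitting of $(G;\calp)$; given a current splitting $\Gamma$ of $(G;\calp)$ over finite groups, for each vertex $v$ equip $G_v$ with the peripheral structure $\calp_v$ made of the incident edge groups together with the peripheral subgroups of $\calp$ conjugate into $G_v$, and run the subroutine on $(G_v,\calp_v)$; if some $(G_v,\calp_v)$ splits over a finite group relative to $\calp_v$, refine $\Gamma$ at $v$ by that splitting and make it reduced. Every such refinement keeps all edge groups finite and every peripheral subgroup of $\calp$ elliptic, and, being a proper reduced refinement, strictly increases the Bestvina--Feighn complexity \cite{BF_complexity,BF_bounding}; by relative Dunwoody accessibility for finitely presented groups \cite[Th.\ 4.8]{GL3a} that complexity is bounded, so the loop halts. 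When it halts, every vertex group is finite or one-ended relative to its induced peripheral structure, so $\Gamma$ is the desired relative Stallings--Dunwoody decomposition.

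The main obstacle is the correctness of the detection subroutine, specifically the direction asserting that a non-trivial Stallings--Dunwoody decomposition of $\widehat H$ restricts to a non-trivial \emph{relative} splitting of $(H,\calp_H)$: this is exactly where one needs the one-ended pieces to be rigid and the observation that infinite subgroups fix unique vertices in trees with finite edge stabilisers, which together prevent $H$ from staying elliptic in $\widehat\Gamma$. Termination and the bookkeeping with reduced trees are then routine, quoting the relative accessibility already used in the paper.
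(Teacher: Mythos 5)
Your detection subroutine is mathematically sound, and it in fact supplies the proof of the equivalence that the paper's own argument only asserts (namely that $(H,\calp_H)$ is one-ended relative to $\calp_H$ if and only if the filled-in group $\widehat H$ is one-ended): the filling construction modelled on $\Hat G$, the use of Serre's lemma via the enlarged tuples, and the observation that ellipticity of $H$ in a non-trivial Stallings--Dunwoody tree of $\widehat H$ would force all the glued one-ended pieces to share its fixed vertex (infinite subgroups fixing at most one vertex when edge stabilisers are finite) are all correct; and the minimal $H$-subtree of that tree is indeed a non-trivial splitting of $H$ over finite groups relative to $\calp_H$.

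The genuine gap is one of effectiveness, exactly at the point you name as the crux and then pass over: you use the minimal $H$-subtree not only to prove existence but to \emph{produce} the refinement (``refine $\Gamma$ at $v$ by that splitting''), yet you give no algorithm computing the quotient graph of groups of $H$ acting on its minimal subtree inside the Bass--Serre tree of $\widehat\Gamma$ --- that is, presentations of its vertex groups, the finite edge groups with their monomorphisms, and the vertex groups into which the peripheral tuples are conjugate. Gerasimov's algorithm outputs a decomposition of $\widehat H$, not an induced decomposition of the subgroup $H$, and extracting the latter effectively is not something the paper's toolbox provides. The paper sidesteps this: once relative non-one-endedness is certified, it \emph{searches} for a relative splitting by enumerating presentations of the group via Tietze transformations, recognising those displaying a splitting over finite groups, and then checking that each peripheral tuple is conjugate into a vertex group by representing the quasiconvex vertex groups as quasi-isometrically embeddable rational subsets and solving equations with such rational constraints (Theorem \ref{theo;eq}); this semi-algorithm halts precisely because existence is guaranteed. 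Your argument is repaired the same way: keep your decision subroutine, but replace the extraction of the minimal subtree by such an enumeration (or an enumeration of relative splittings over finite subgroups in the spirit of Proposition \ref{enumerate_ess_splittings}), which your correctness proof guarantees will terminate. The remaining bookkeeping --- induced peripheral structures on vertex groups and termination via relative accessibility --- matches the paper's argument.
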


\begin{proof}
  We note that $G$ is one-ended relative to $\calp$ if and only if $\Hat G$
is one-ended. Using Gerasimov's algorithm, this decide allows us to decide whether
$(G,\calp)$ is relatively one-ended.
If it is, we are done. 
If it is not, one can enumerate by Tietze transformations 
all presentations of $G$. We need to recognise among them which correspond to splittings of $G$
over a finite group relative to $\calp$. One can proceed as follows. One can first easily recognise presentations on which one can read a splitting over 
 finite groups. 

 Since vertex groups are quasiconvex, one can
compute an automaton representing them as \qie rational subsets by \cite[Proposition 1]{Kapovich_detecting}.
Then, one can check whether the tuples of  $\calp$ can each be conjugated into a vertex group, 
 by solving an explicit system of equations with \qie constraints  (Theorem \ref{theo;eq}). 
This determines whether the considered presentation  corresponds to a splitting of $G$
over a finite group relative to $\calp$.
 
 Then, we can look at a vertex group $G_v$ of this decomposition, compute its peripheral structure
induced by $\calp$, and iterate this procedure. This will stop by Dunwoody's accessibility.
When it stops, we have a decomposition of $(G,\calp)$ over finite groups such that vertex group
don't split over finite groups relative to the peripheral subgroups they contain.
\end{proof}

The following Lemma concludes the proof of Theorem \ref{thm_whitehead_marque}.

\begin{lem}
The extended isomorphism problem is solvable for hyperbolic groups with marked peripheral structures
whose peripheral subgroups are infinite.
\end{lem}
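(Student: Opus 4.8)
The plan is to reduce the problem to the relatively one-ended case handled by Lemma~\ref{lem_whitehead_1bout}, by decomposing each group along a relative Stallings--Dunwoody decomposition and then applying the machinery of Corollary~\ref{coro_IP_gog} and Corollary~\ref{coro_gene_Aut_gog} to the resulting graph of groups. First I would invoke Lemma~\ref{lem_SDrel} to compute, for the given hyperbolic group with marked peripheral structure $(G,\calp)$ (all peripheral subgroups infinite), a relative Stallings--Dunwoody decomposition $\Gamma$: a splitting of $G$ over finite groups relative to $\calp$ whose vertex groups do not split over finite groups relative to the peripheral subgroups they contain, i.e.\ are relatively one-ended. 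Since peripheral subgroups are infinite, each peripheral tuple $S_i$ of $\calp$ is elliptic in $\Gamma$ and hence conjugates into a vertex group; up to conjugacy we may assume $\grp{S_i}$ sits inside a vertex group $\Gamma_{v}$ and, being relatively one-ended there, it is not conjugate into any incident (finite) edge group. Thus $\calp$ distributes as a marked peripheral structure $\calq_v$ on each vertex group, as in Lemma~\ref{lem_gog_periph}, and we enlarge each vertex peripheral structure by the marked peripheral structure $\calp_v$ coming from the incident edge groups (marked by tuples of generators of the finite edge groups).

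Next I would check that the class $\calv$ of vertex groups with their combined marked peripheral structures $\calp_v\cup\calq_v$ is one for which the marked isomorphism problem is solvable and for which one can compute generating sets of the corresponding $\Autm$ and of centralisers of peripheral subgroups. Each vertex group $(\Gamma_v;\calp_v\cup\calq_v)$ is a hyperbolic group with marked peripheral structure, and it is one-ended relative to the infinite part $\calq_v$ of its peripheral structure (the finite part $\calp_v$ does not affect relative one-endedness); by Lemma~\ref{lem_whitehead_1bout}, and then Lemma~\ref{lem_periph_fini} to add back the finite edge-group peripheral subgroups, the extended isomorphism problem is solvable for such vertex groups. Centralisers of the finite edge groups are computable by Lemma~\ref{lem_cent_finite}; centralisers of infinite (quasiconvex) peripheral subgroups $\grp{S_i}$ can be computed by representing them as \qiers via \cite[Proposition~1]{Kapovich_detecting} and solving a system with \qie rational constraints (Theorem~\ref{theo;eq}), or — if one prefers to stay with the filling trick used in Lemma~\ref{lem_whitehead_1bout} — by reading them off from $\Out_0(\Hat{\Gamma_v})$. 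With these inputs, Corollary~\ref{coro_IP_gog} applies (edge groups finite, vertex groups in $\calv$): given two such groups $(G,\calp),(G',\calp')$ with their computed relative Stallings--Dunwoody decompositions, one lists all graph isomorphisms and edge-group isomorphisms, and for each applies Proposition~\ref{prop_IP_gog} to decide whether it extends to a graph-of-groups isomorphism compatible with the peripheral structures.

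There is one gap to close before this works: the relative Stallings--Dunwoody deformation space is in general \emph{not} rigid — there may be several reduced trees, connected by slide moves — so a graph-of-groups isomorphism need not exist even when $(G,\calp)\simeq(G',\calp')$. To handle this I would follow exactly the strategy of Section~\ref{sec_ends}: work with the graph $\cald$ of reduced relative Stallings--Dunwoody decompositions with slide-move edges, which is connected since the deformation space is non-ascending (edge groups finite), note that $\Outm(G,\calp)$ acts on $\cald$ with finite quotient (Lemma~\ref{lem;DF_fini}, whose proof only uses finiteness of edge groups and of conjugacy classes of finite subgroups of the vertex groups), and run the orbit-enumeration algorithm of Lemmas~\ref{lem;stallings_DF}, \ref{lem;around_a_vertex}, \ref{lem_same_orbit}, \ref{lem;cas_gene}. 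Concretely: compute a reduced relative Stallings--Dunwoody decomposition $\Gamma_1$ of $(G,\calp)$ via Lemma~\ref{lem_SDrel}; compute a finite set $V_0$ of $\Outm(G',\calp')$-orbit representatives of reduced relative decompositions of $(G',\calp')$; by the graph-of-groups isomorphism algorithm above (the analogue of Lemma~\ref{lem_same_orbit} in the relative setting) check whether $\Gamma_1$ is isomorphic, respecting peripheral structures, to some $\Gamma_2\in V_0$. For the generating set of $\Outm(G,\calp)$, compute $V_0$, a generating set of each stabiliser $\Out_{\Gamma,m}(G,\calp)$ via the relative version of Corollary~\ref{coro_gene_Aut_gog}, the representative slide-move edges $E_\Gamma$ around each $\Gamma\in V_0$ via Lemmas~\ref{lem;around_a_vertex} and~\ref{lem;slide_autom}, and glue via Lemma~\ref{lem;cas_gene}. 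The main obstacle is the bookkeeping of the slide-move analysis in the presence of the marked peripheral structure: one must verify that a slide move does not disturb the peripheral ellipticity data and that the counting argument of Lemma~\ref{lem;around_a_vertex} (reducing to representatives of $N_F/Z_F$ via Lemma~\ref{lem;slide_autom}) still produces a set of edges whose $\Out_{\Gamma,m}(G,\calp)$-orbit is all of $Lk(\Gamma)$ — but this is routine since slide moves only involve the finite edge groups and peripheral tuples stay elliptic throughout, hence are permuted among vertices in a way already tracked by the graph-isomorphism step.
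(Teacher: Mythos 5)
Your proposal is correct and follows essentially the same route as the paper: compute a relative Stallings--Dunwoody decomposition (Lemma~\ref{lem_SDrel}), treat the relatively one-ended vertex groups with their induced marked peripheral structures via Lemma~\ref{lem_whitehead_1bout}, apply Corollaries~\ref{coro_IP_gog} and~\ref{coro_gene_Aut_gog} (legitimate here because the infinite peripheral subgroups cannot lie in the finite edge groups), and handle the non-uniqueness of reduced decompositions by running the slide-move orbit enumeration of Section~\ref{sec_ends}. Your only deviation --- invoking Lemma~\ref{lem_periph_fini} to reattach the finite edge-group peripheral subgroups, and the superfluous discussion of centralisers of infinite peripheral subgroups (only centralisers of the finite edge groups are needed, via Lemma~\ref{lem_cent_finite}) --- is harmless and does not change the argument.
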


\begin{proof}
The proof is similar to the argument in Section \ref{sec_ends}.
We define $\cald$ the deformation space of Stallings-Dunwoody decompositions of $G$ relative to $\calp$.
As in Lemma \ref{lem;DF_fini}, $\cald/\Outm(G,\calp)$ is finite.
Indeed, we view each element of $\cald/\Outm(G,\calp)$  as a graph of groups where each vertex group $\Gamma_v$
has a marked peripheral structure $\calp_v$.
Since, $\cald$ is connected by slides, the vertex groups $(\Gamma_v,\calp_v)$ with their marked peripheral structure don't depend
on the decomposition considered. Since each $\Gamma_v$ has finitely many conjugacy classes of finite groups,
there are only finitely many graph of groups that can be constructed from the given $(\Gamma_v,\calp_v)$.

By  Lemma \ref{lem_whitehead_1bout}, 
we have a solution to the extended isomorphism problem for vertex groups with marked peripheral structure $(\Gamma_v,\calp_v)$.

Since peripheral subgroups are infinite, they don't fix any edge, 
so 
 we can apply Lemma \ref{coro_IP_gog} and \ref{coro_gene_Aut_gog} to decide whether two decompositions
are in the same orbit under $\Outm(G,\calp)$. Therefore, Lemmas \ref{lem;around_a_vertex} and \ref{lem;stallings_DF} still apply.
This allows us to decide whether $(G,\calp)\simeq (G',\calp')$ 
so we can solve the extended isomorphism problem as in Theorem \ref{theo;IP}.
\end{proof}

Theorem \ref{thm_whitehead_marque} is now proved.
\end{proof}

\bibliographystyle{alpha2}
\bibliography{published,unpublished}

\def\cprime{$'$} \newcommand{\noopsort}[1]{}
\begin{thebibliography}{MKS04}

\bibitem[Ady55]{Adyan_algorithmic}
S.~I. Adyan.
\newblock Algorithmic unsolvability of problems of recognition of certain
  properties of groups.
\newblock {\em Dokl. Akad. Nauk SSSR (N.S.)}, 103:533--535, 1955.

\bibitem[AB87]{AlperinBass_length}
Roger Alperin and Hyman Bass.
\newblock Length functions of group actions on ${\Lambda}$-trees.
\newblock In {\em Combinatorial group theory and topology (Alta, Utah, 1984)},
  pages 265--378. Princeton Univ. Press, Princeton, NJ, 1987.

\bibitem[Bah05]{Bahls_isomorphism}
Patrick Bahls.
\newblock {\em The isomorphism problem in {C}oxeter groups}.
\newblock Imperial College Press, London, 2005.

\bibitem[Bas93]{Bass_covering}
Hyman Bass.
\newblock Covering theory for graphs of groups.
\newblock {\em J. Pure Appl. Algebra}, 89(1-2):3--47, 1993.

\bibitem[BJ96]{BaJi_automorphism}
Hyman Bass and Renfang Jiang.
\newblock Automorphism groups of tree actions and of graphs of groups.
\newblock {\em J. Pure Appl. Algebra}, 112(2):109--155, 1996.

\bibitem[BGS85]{BGS_algorithmically2}
Gilbert Baumslag, Dion Gildenhuys, and Ralph Strebel.
\newblock Algorithmically insoluble problems about finitely presented solvable
  groups, {L}ie and associative algebras. {II}.
\newblock {\em J. Algebra}, 97(1):278--285, 1985.

\bibitem[BF92]{BF_combination}
M.~Bestvina and M.~Feighn.
\newblock A combination theorem for negatively curved groups.
\newblock {\em J. Differential Geom.}, 35(1):85--101, 1992.

\bibitem[Bes88]{Be_degenerations}
Mladen Bestvina.
\newblock Degenerations of the hyperbolic space.
\newblock {\em Duke Math. J.}, 56(1):143--161, 1988.

\bibitem[BF91a]{BF_bounding}
Mladen Bestvina and Mark Feighn.
\newblock Bounding the complexity of simplicial group actions on trees.
\newblock {\em Invent. Math.}, 103(3):449--469, 1991.

\bibitem[BF91b]{BF_complexity}
Mladen Bestvina and Mark Feighn.
\newblock Bounding the complexity of simplicial group actions on trees.
\newblock {\em Invent. Math.}, 103(3):449--469, 1991.

\bibitem[Boo57]{Boone_WP}
William~W. Boone.
\newblock Certain simple, unsolvable problems of group theory. {V}, {VI}.
\newblock {\em Nederl. Akad. Wetensch. Proc. Ser. A. 60 = Indag. Math.},
  19:22--27, 227--232, 1957.

\bibitem[Bow98]{Bo_cut}
Brian~H. Bowditch.
\newblock Cut points and canonical splittings of hyperbolic groups.
\newblock {\em Acta Math.}, 180(2):145--186, 1998.

\bibitem[BH99]{BridsonHaefliger_metric}
Martin~R. Bridson and Andr{\'e} Haefliger.
\newblock {\em Metric spaces of non-positive curvature}, volume 319 of {\em
  Grundlehren der Mathematischen Wissenschaften [Fundamental Principles of
  Mathematical Sciences]}.
\newblock Springer-Verlag, Berlin, 1999.

\bibitem[Bro82]{Brown_cohomology}
Kenneth~S. Brown.
\newblock {\em Cohomology of groups}, volume~87 of {\em Graduate Texts in
  Mathematics}.
\newblock Springer-Verlag, New York, 1982.

\bibitem[Chi69]{Chillingworth_generator}
D.~R.~J. Chillingworth.
\newblock A finite set of generators for the homeotopy group of a
  non-orientable surface.
\newblock {\em Proc. Cambridge Philos. Soc.}, 65:409--430, 1969.

\bibitem[CF08]{ClFo_isomorphism}
Matt Clay and Max Forester.
\newblock On the isomorphism problem for generalized {B}aumslag-{S}olitar
  groups.
\newblock {\em Algebr. Geom. Topol.}, 8(4):2289--2322, 2008.


\bibitem[CDP]{CDP} Michel Coornaert, Thomas Delzant, Athanase
  Papadopoulos. 
\newblock G\'eom\'etrie et th\'eorie des groupes. Les groupes hyperboliques de Gromov. 
\newblock Lecture Notes in Mathematics, 1441. Berlin etc.: Springer-Verlag. x, 165 p. DM 30.00 (1990).

\bibitem[CM87]{CuMo}
Marc Culler and John~W. Morgan.
\newblock Group actions on {$\mathbb{R}$}-trees.
\newblock {\em Proc. London Math. Soc. (3)}, 55(3):571--604, 1987.

\bibitem[DG08a]{DaGr_detecting}
Fran{\c{c}}ois Dahmani and Daniel Groves.
\newblock Detecting free splittings in relatively hyperbolic groups.
\newblock {\em Trans. Amer. Math. Soc.}, 360(12):6303--6318, 2008.

\bibitem[DG08b]{DaGr_isomorphism}
Fran{\c{c}}ois Dahmani and Daniel Groves.
\newblock The isomorphism problem for toral relatively hyperbolic groups.
\newblock {\em Publ. Math. Inst. Hautes \'Etudes Sci.}, (107):211--290, 2008.

\bibitem[DG09]{DG1}
Fran{\c c}ois Dahmani and Vincent Guirardel.
\newblock Foliations for solving equations in groups: free, virtually free and
  hyperbolic groups.
\newblock arXiv:0901.1830 [math.GR]. Journal of Topology, to appear., 2009.

\bibitem[Del96]{Delzant_sous-groupes}
Thomas Delzant.
\newblock Sous-groupes distingu\'es et quotients des groupes hyperboliques.
\newblock {\em Duke Math. J.}, 83(3):661--682, 1996.

\bibitem[Dun85]{Dun_accessibility}
M.~J. Dunwoody.
\newblock The accessibility of finitely presented groups.
\newblock {\em Invent. Math.}, 81(3):449--457, 1985.

\bibitem[DS99]{DuSa_JSJ}
M.~J. Dunwoody and M.~E. Sageev.
\newblock {J}{S}{J}-splittings for finitely presented groups over slender
  groups.
\newblock {\em Invent. Math.}, 135(1):25--44, 1999.

\bibitem[FP06]{FuPa_JSJ}
K.~Fujiwara and P.~Papasoglu.
\newblock {JSJ}-decompositions of finitely presented groups and complexes of
  groups.
\newblock {\em Geom. Funct. Anal.}, 16(1):70--125, 2006.

\bibitem[Fuj02]{Fujiwara_outer}
Koji Fujiwara.
\newblock On the outer automorphism group of a hyperbolic group.
\newblock {\em Israel J. Math.}, 131:277--284, 2002.

\bibitem[Ger]{Gerasimov}
V.~Gerasimov.
\newblock Detecting connectedness of the boundary of a hyperbolic group.
\newblock unpublished.

\bibitem[Gro87]{Gromov_hyperbolic}
M.~Gromov.
\newblock Hyperbolic groups.
\newblock In {\em Essays in group theory}, volume~8 of {\em Math. Sci. Res.
  Inst. Publ.}, pages 75--263. Springer, New York, 1987.

\bibitem[GS80]{GruSe_nilpotent}
Fritz Grunewald and Daniel Segal.
\newblock Some general algorithms. {II}. {N}ilpotent groups.
\newblock {\em Ann. of Math. (2)}, 112(3):585--617, 1980.

\bibitem[Gui00]{Gui_reading}
Vincent Guirardel.
\newblock Reading small actions of a one-ended hyperbolic group on
  {$\mathbb{R}$}-trees from its {J}{S}{J} splitting.
\newblock {\em Amer. J. Math.}, 122(4):667--688, 2000.

\bibitem[Gui08]{Gui_actions}
Vincent Guirardel.
\newblock Actions of finitely generated groups on {$\Bbb R$}-trees.
\newblock {\em Ann. Inst. Fourier (Grenoble)}, 58(1):159--211, 2008.

\bibitem[GL07]{GL2}
Vincent Guirardel and Gilbert Levitt.
\newblock Deformation spaces of trees.
\newblock {\em Groups Geom. Dyn.}, 1(2):135--181, 2007.

\bibitem[GL08]{GL4}
Vincent Guirardel and Gilbert Levitt.
\newblock Trees of cylinders and canonical splittings.
\newblock arXiv:0811.2383 [math.GR], 2008.

\bibitem[GL09]{GL3a}
Vincent Guirardel and Gilbert Levitt.
\newblock {JSJ} decompositions: definitions, existence and uniqueness. {I}: The
  {JSJ} deformation space.
\newblock arXiv:0911.3173 [math.GR], 2009.

\bibitem[HL74]{HigLyn_Whitehead}
P.~J. Higgins and R.~C. Lyndon.
\newblock Equivalence of elements under automorphisms of a free group.
\newblock {\em J. London Math. Soc. (2)}, 8:254--258, 1974.

\bibitem[Kap96]{Kapovich_detecting}
Ilya Kapovich.
\newblock Detecting quasiconvexity: algorithmic aspects.
\newblock In {\em Geometric and computational perspectives on infinite groups
  ({M}inneapolis, {MN} and {N}ew {B}runswick, {NJ}, 1994)}, volume~25 of {\em
  DIMACS Ser. Discrete Math. Theoret. Comput. Sci.}, pages 91--99. Amer. Math.
  Soc., Providence, RI, 1996.

\bibitem[Kha81]{Kharlampovich_unsolvable81} Olga Kharlampovich.
\newblock A finitely presented solvable group with non-solvable word
problem. 
\newblock {\em Izv. Akad. Nauk SSSR, Ser. Mat.} 45, 852-873 (1981). 


\bibitem[Kor02]{Korkmaz_MCG}
Mustafa Korkmaz.
\newblock Mapping class groups of nonorientable surfaces.
\newblock {\em Geom. Dedicata}, 89:109--133, 2002.

\bibitem[Lev05a]{Lev_automorphisms}
Gilbert Levitt.
\newblock Automorphisms of hyperbolic groups and graphs of groups.
\newblock {\em Geom. Dedicata}, 114:49--70, 2005.

\bibitem[Lev05b]{Lev_rigid}
Gilbert Levitt.
\newblock Characterizing rigid simplicial actions on trees.
\newblock In {\em Geometric methods in group theory}, volume 372 of {\em
  Contemp. Math.}, pages 27--33. Amer. Math. Soc., Providence, RI, 2005.

\bibitem[LV00]{LeVo_Whitehead}
Gilbert Levitt and Karen Vogtmann.
\newblock A {W}hitehead algorithm for surface groups.
\newblock {\em Topology}, 39(6):1239--1251, 2000.

\bibitem[Lic64]{Lickorish_homeotopy}
W.~B.~R. Lickorish.
\newblock A finite set of generators for the homeotopy group of a
  {$2$}-manifold.
\newblock {\em Proc. Cambridge Philos. Soc.}, 60:769--778, 1964.

\bibitem[Lys89]{Lys_algorithmic}
I.~G. Lys{\"e}nok.
\newblock Some algorithmic properties of hyperbolic groups.
\newblock {\em Izv. Akad. Nauk SSSR Ser. Mat.}, 53(4):814--832, 912, 1989.

\bibitem[MH75]{MacH_MCG}
C.~Maclachlan and W.~J. Harvey.
\newblock On mapping-class groups and {T}eichm\"uller spaces.
\newblock {\em Proc. London Math. Soc. (3)}, 30(part 4):496--512, 1975.

\bibitem[MKS04]{MKS_combinatorial}
Wilhelm Magnus, Abraham Karrass, and Donald Solitar.
\newblock {\em Combinatorial group theory}.
\newblock Dover Publications Inc., Mineola, NY, second edition, 2004.
\newblock Presentations of groups in terms of generators and relations.

\bibitem[McC75]{McCool_fp}
James McCool.
\newblock Some finitely presented subgroups of the automorphism group of a free
  group.
\newblock {\em J. Algebra}, 35:205--213, 1975.

\bibitem[MNS99]{MNS_downunder}
C.~F. Miller, III, Walter~D. Neumann, and G.~A. Swarup.
\newblock Some examples of hyperbolic groups.
\newblock In {\em Geometric group theory down under ({C}anberra, 1996)}, pages
  195--202. de Gruyter, Berlin, 1999.

\bibitem[Mil71]{Miller_decision}
Charles~F. Miller, III.
\newblock {\em On group-theoretic decision problems and their classification}.
\newblock Princeton University Press, Princeton, N.J., 1971.
\newblock Annals of Mathematics Studies, No. 68.

\bibitem[MS84]{MS_valuationsI}
John~W. Morgan and Peter~B. Shalen.
\newblock Valuations, trees, and degenerations of hyperbolic structures. {I}.
\newblock {\em Ann. of Math. (2)}, 120(3):401--476, 1984.

\bibitem[New68]{Newman_one-relator}
B.~B. Newman.
\newblock Some results on one-relator groups.
\newblock {\em Bull. Amer. Math. Soc.}, 74:568--571, 1968.

\bibitem[Nov52]{Novikov_WP}
P.~S. Novikov.
\newblock On algorithmic unsolvability of the problem of identity.
\newblock {\em Doklady Akad. Nauk SSSR (N.S.)}, 85:709--712, 1952.

\bibitem[Pap96]{Papasoglu_algorithm}
P.~Papasoglu.
\newblock An algorithm detecting hyperbolicity.
\newblock In {\em Geometric and computational perspectives on infinite groups
  ({M}inneapolis, {MN} and {N}ew {B}runswick, {NJ}, 1994)}, volume~25 of {\em
  DIMACS Ser. Discrete Math. Theoret. Comput. Sci.}, pages 193--200. Amer.
  Math. Soc., Providence, RI, 1996.

\bibitem[Pau88]{Pau_topologie}
Fr{\'e}d{\'e}ric Paulin.
\newblock Topologie de {G}romov \'equivariante, structures hyperboliques et
  arbres r\'eels.
\newblock {\em Invent. Math.}, 94(1):53--80, 1988.

\bibitem[Pau89]{Pau_Gromov}
Fr{\'e}d{\'e}ric Paulin.
\newblock The {G}romov topology on {$\mathbb {R}$}-trees.
\newblock {\em Topology Appl.}, 32(3):197--221, 1989.

\bibitem[Pie74]{Pietrowski_isomorphism}
Alfred Pietrowski.
\newblock The isomorphism problem for one-relator groups with non-trivial
  centre.
\newblock {\em Math. Z.}, 136:95--106, 1974.

\bibitem[Pri77]{Pride_isomorphism}
Stephen~J. Pride.
\newblock The isomorphism problem for two-generator one-relator groups with
  torsion is solvable.
\newblock {\em Trans. Amer. Math. Soc.}, 227:109--139, 1977.

\bibitem[Rab58]{Rabin_recursive}
Michael~O. Rabin.
\newblock Recursive unsolvability of group theoretic problems.
\newblock {\em Ann. of Math. (2)}, 67:172--194, 1958.

\bibitem[Seg90]{Segal_decidable} Dan Segal.
\newblock Decidable properties of polycyclic groups. 
\newblock {\em Proc. Lond. Math. Soc.}  61(3), 497-528 (1990).

\bibitem[Sco83]{Scott_geometries}
Peter Scott.
\newblock The geometries of $3$-manifolds.
\newblock {\em Bull. London Math. Soc.}, 15(5):401--487, 1983.

\bibitem[Sel95]{Sela_isomorphism}
Z.~Sela.
\newblock The isomorphism problem for hyperbolic groups. {I}.
\newblock {\em Ann. of Math. (2)}, 141(2):217--283, 1995.

\bibitem[Sel97]{Sela_structure}
Z.~Sela.
\newblock Structure and rigidity in ({G}romov) hyperbolic groups and discrete
  groups in rank $1$ {L}ie groups. {I}{I}.
\newblock {\em Geom. Funct. Anal.}, 7(3):561--593, 1997.

\bibitem[Ser77]{Serre_arbres}
Jean-Pierre Serre.
\newblock {\em Arbres, amalgames, ${\rm {S}{L}}\sb{2}$}.
\newblock Soci\'et\'e Math\'ematique de France, Paris, 1977.
\newblock R\'edig\'e avec la collaboration de Hyman Bass, Ast\'erisque, No. 46.

\bibitem[Whi36]{Whitehead_equivalent}
J.~H.~C. Whitehead.
\newblock On equivalent sets of elements in a free group.
\newblock {\em Ann. of Math. (2)}, 37(4):782--800, 1936.

\bibitem[Zim85]{Zimmermann_klassifikation}
Bruno Zimmermann.
\newblock Zur {K}lassifikation h\"oherdimensionaler {S}eifertscher
  {F}aserr\"aume.
\newblock In {\em Low-dimensional topology ({C}helwood {G}ate, 1982)},
  volume~95 of {\em London Math. Soc. Lecture Note Ser.}, pages 214--255.
  Cambridge Univ. Press, Cambridge, 1985.

\end{thebibliography}

 \end{document}